\documentclass[10pt]{book}
\usepackage{amssymb}
\usepackage{amsthm,amsmath}
\usepackage{cite}
\usepackage{makeidx}

\usepackage{ccfonts,eulervm}
\usepackage[T1]{fontenc}

\setlength{\topmargin}{1.5cm}
\setlength{\oddsidemargin}{1.5cm}
\setlength{\evensidemargin}{1.5cm}
\setlength{\textwidth}{13cm}
\setlength{\textheight}{20cm}

\title{A Generalization of Bohr-Mollerup's Theorem for Higher Order Convex Functions}

\author{Jean-Luc Marichal\thanks{University of Luxembourg, Department of Mathematics, Maison du Nombre, 6, avenue de la Fonte, L-4364 Esch-sur-Alzette, Luxembourg. Email: jean-luc.marichal@uni.lu}
\and
Na\"im Zena\"idi\thanks{University of Li\`ege, Department of Mathematics, All\'ee de la D\'ecouverte, 12 - B37, B-4000 Li\`ege, Belgium. Email: nzenaidi@uliege.be}}

\date{Revised, \today}

\makeindex
\pagestyle{myheadings}

\begin{document}

\theoremstyle{plain}
\newtheorem{theorem}{Theorem}[chapter]
\newtheorem{lemma}[theorem]{Lemma}
\newtheorem{proposition}[theorem]{Proposition}
\newtheorem{corollary}[theorem]{Corollary}
\newtheorem{fact}[theorem]{Fact}

\theoremstyle{definition}
\newtheorem{definition}[theorem]{Definition}
\newtheorem{ex}[theorem]{Example} 
    \newenvironment{example}    
    {\renewcommand{\qedsymbol}{$\lozenge$}%
    \pushQED{\qed}\begin{ex}}
    {\popQED\end{ex}}

\theoremstyle{remark}
\newtheorem{rem}[theorem]{Remark} 
    \newenvironment{remark}    
    {\renewcommand{\qedsymbol}{$\lozenge$}%
    \pushQED{\qed}\begin{rem}}
    {\popQED\end{rem}}
\newtheorem{pro}[theorem]{Project} 
    \newenvironment{project}    
    {\renewcommand{\qedsymbol}{$\lozenge$}%
    \pushQED{\qed}\begin{pro}}
    {\popQED\end{pro}}
\newtheorem{conjecture}[theorem]{Conjecture}
\newtheorem*{claim}{Claim}

\newcommand{\R}{\mathbb{R}}
\newcommand{\N}{\mathbb{N}}
\newcommand{\Z}{\mathbb{Z}}
\renewcommand{\S}{\mathrm{S}}
\newcommand{\cC}{\mathcal{C}}
\newcommand{\cD}{\mathcal{D}}
\newcommand{\cK}{\mathcal{K}}
\newcommand{\cR}{\mathcal{R}}
\newcommand{\emptybox}{\mathstrut^{\mathstrut}_{\mathstrut}}

\def\tchoose#1#2{{\textstyle{{{#1}\choose{#2}}}}}
\def\parag#1{\medskip\noindent{\textbf{#1.}}}

\frontmatter
\maketitle

\newpage
\null

\newpage
\null

\vspace{20ex}
\begin{large}
\begin{flushright}
\emph{To Pascale, Olivia, Jean-Philippe, and Claudia}\\
\emph{Jean-Luc Marichal}
\end{flushright}
\end{large}

\vspace{10ex}
\begin{large}
\begin{flushright}
\emph{To Elise, Nils, and Eva}\\
\emph{Na\"im Zena\"idi}
\end{flushright}
\end{large}

\newpage
\null

\clearpage
\addcontentsline{toc}{chapter}{Preface}
\chaptermark{Preface}
\chapter*{Preface}

In this work, we provide a general and unified setting for a systematic and in-depth investigation of a broad variety of functions, including several special functions like the Euler gamma function, the polygamma functions, the Barnes $G$-function, the Hurwitz zeta function, and the generalized Stieltjes constants.

We know for instance that the gamma function
$$
\Gamma(x) ~=~ \int_0^{\infty}t^{x-1}{\,}e^{-t}{\,}dt
$$
satisfies several fundamental properties and identities such as Bohr-Mollerup's characterization, Euler's infinite product, Gauss' multiplication formula, Stirling's formula, and Weierstrass' infinite product. In this book, we show through a series of new and elementary results that a large range of functions of mathematical analysis satisfy analogues of several properties of the gamma function, including those mentioned above.

The starting point of our theory is the remarkable characterization of the gamma function on the open half-line $\R_+=(0,\infty)$ by Harald Bohr and Johannes Mollerup \cite{BohMol22}. It simply states that the log-gamma function $f(x)=\ln\Gamma(x)$ is the unique convex solution vanishing at $x=1$ to the equation
$$
f(x+1) - f(x) ~=~ \ln x,\qquad x>0.
$$
This result can actually be slightly generalized as follows, where $\Delta$ denotes the classical forward difference operator.
\begin{quote}
\emph{All eventually convex solutions to the equation $\Delta f(x)=\ln x$ on $\R_+$ are of the form $f(x)=c+\ln\Gamma(x)$, where $c\in\R$.}
\end{quote}
(Here and throughout, a function is said to be eventually convex if it is convex in a neighborhood of infinity.)

This characterization was later generalized to a wide class of functions by Wolfgang Krull \cite{Kru48} and then independently by Roger Webster \cite{Web97b}. They essentially showed that for any eventually concave function $g\colon\R_+\to\R$ having the asymptotic property that the sequence $n\mapsto\Delta g(n)$ converges to zero, there exists exactly one (up to an additive constant) eventually convex solution $f\colon\R_+\to\R$ to the equation $\Delta f=g$. When $g(x)=\ln x$, this latter result clearly reduces to the above Bohr-Mollerup characterization of the gamma function.

Krull-Webster's result constitutes an important contribution to the resolution of the difference equation $\Delta f=g$ on the real half-line $\R_+$. Indeed, it provides analogues of Bohr-Mollerup's characterization for many functions, including the gamma function, the digamma function, and the $q$-gamma functions. Nevertheless, we can see that the asymptotic condition imposed on the function $g$ remains rather restrictive. For instance, it is not satisfied by the functions $g(x)=x\ln x$ and $g(x)=\ln\Gamma(x)$. In fact, it is not even satisfied by the identity function $g(x)=x$.

In this monograph, we generalize Krull-Webster's result by relaxing considerably the asymptotic condition into requiring that the sequence $n\mapsto\Delta^p g(n)$ converges to zero for some nonnegative integer $p$. Each of the functions $g(x)=x\ln x$, $g(x)=\ln\Gamma(x)$, and $g(x)=x$ clearly satisfies this new assumption for $p=2$. Moreover, in our generalization the convexity and concavity properties used by Krull and Webster are naturally replaced with their $p$-order versions. On this matter, it is noteworthy that many of the familiar functions of real analysis are eventually convex or concave of any order.

The solutions arising from Krull-Webster's characterization are called \emph{$\log\Gamma$-type functions}. Those arising from our generalized version are called \emph{multiple $\log\Gamma$-type functions}. As we demonstrate through this work, this latter class of functions is very rich and includes a wide variety of special functions.

In the diagram opposite, we describe how our result generalizes to any nonnegative integer $p$ the special case when $p=1$ obtained by Krull and Webster, who both generalized Bohr-Mollerup's theorem.

\begin{figure}[p]
\bigskip
\begin{center}
\fbox{
\begin{minipage}{65ex}
\begin{center}
\textbf{Higher order version of Krull-Webster's theory}\par\medskip $\Delta f(x) ~=~ g(x)$\par $g$ is eventually $p$-concave and $\Delta^p g(n)\to 0$\par $f$ is eventually $p$-convex \par \bigskip\textbf{Solutions: Multiple $\log\Gamma$-type functions}
\end{center}
\end{minipage}
}

\vspace{4ex} $\mbox{\Large $\boldsymbol{\uparrow}$}$ \vspace{4ex}

\fbox{
\begin{minipage}{50ex}
\begin{center}
\textbf{Krull-Webster's theory}\par\medskip $\Delta f(x) ~=~ g(x)$ \par $g$ is eventually concave and $\Delta g(n)\to 0$ \par $f$ is eventually convex \par\bigskip\textbf{Solutions: $\log\Gamma$-type functions}
\end{center}
\end{minipage}
}

\vspace{4ex} $\mbox{\Large $\boldsymbol{\uparrow}$}$ \vspace{4ex}

\fbox{
\begin{minipage}{50ex}
\begin{center}
\textbf{Bohr-Mollerup's characterization}\par\medskip $\Delta f(x) ~=~ \ln x$\par $f$ is eventually convex\par\bigskip \textbf{Solutions: $f(x) = c+\ln\Gamma(x)$}
\end{center}
\end{minipage}
}

\end{center}
\end{figure}

We also follow and generalize Webster's approach and provide for multiple $\log\Gamma$-type functions analogues of \emph{Euler's constant}, \emph{Euler's infinite product}, \emph{Gauss' limit}, \emph{Gauss' multiplication formula}, \emph{Gautschi's inequality}, \emph{Legendre's duplication formula}, \emph{Raabe's formula}, \emph{Stirling's constant}, \emph{Stirling's formula}, \emph{Wallis's product formula}, \emph{Weierstrass' infinite product}, and \emph{Wendel's inequality} for the gamma function. We also introduce and discuss analogues of \emph{Binet's function}, \emph{Burnside's formula}, \emph{Euler's reflection formula}, \emph{Fontana-Mascheroni's series}, \emph{Gauss' digamma theorem}, and \emph{Webster's functional equation}. Some additional properties of multiple $\log\Gamma$-type functions are also provided and discussed, including asymptotic equivalences, asymptotic expansion formulas, Taylor series expansion formulas, and Gregory formula-based series representations.

Lastly, we apply our results thoroughly to several usual special functions, including the gamma and digamma functions, the polygamma functions, the $q$-gamma function, the Barnes $G$-function, the Hurwitz zeta function and its higher order derivatives, and the generalized Stieltjes constants. We also briefly discuss some further special functions such as the Gauss error function, the exponential integral, the regularized incomplete gamma function, the multiple gamma functions, and the Bernoulli polynomials. All these examples illustrate how powerful is our theory to produce formulas and identities almost mechanically.

For example, applying our results to the gamma function $\Gamma\colon\R_+\to\R_+$ itself, we easily retrieve the following Gauss limit
$$
\Gamma(x) ~=~ \lim_{n\to\infty}\frac{n!{\,}n^x}{x(x+1){\,}\cdots{\,}(x+n)}{\,},\qquad x>0,
$$
and the Weierstrass infinite product
$$
\Gamma(x) ~=~ \frac{e^{-\gamma x}}{x}\,\prod_{k=1}^{\infty}\frac{e^{\frac{x}{k}}}{1+\frac{x}{k}}{\,},\qquad x>0,
$$
where $\gamma$ is the Euler constant. We also easily establish the double inequality
$$
\left(1+\frac{1}{x}\right)^{-\frac{1}{2}} \leq ~ \frac{\Gamma(x)}{\sqrt{2\pi}{\,}x^{x-\frac{1}{2}}{\,}e^{-x}} ~\leq \left(1+\frac{1}{x}\right)^{\frac{1}{2}}{\,},\qquad x>0,
$$
from which we immediately derive the Stirling formula
$$
\Gamma(x) ~\sim ~ \sqrt{2\pi}{\,}x^{x-\frac{1}{2}}e^{-x}\qquad\text{as $x\to\infty$}.
$$

To give another example, let us consider the restriction to $\R_+$ of the Barnes $G$-function (see Barnes \cite{Bar99,Bar01,Bar04}). That is, the function $G\colon\R_+\to\R_+$ whose logarithm $f(x)=\ln G(x)$ is the unique $2$-convex solution vanishing at $x=1$ to the equation
$$
f(x+1) - f(x) ~=~ \ln\Gamma(x),\qquad x>0.
$$
Thus defined, the function $\ln G(x)$ is a multiple $\log\Gamma$-type function, and we can therefore state the following analogue of Bohr-Mollerup's characterization.
\begin{quote}
\emph{All eventually $2$-convex solutions to the equation $\Delta f(x)=\ln\Gamma(x)$ on $\R_+$ are of the form $f(x)=c+\ln G(x)$, where $c\in\R$.}
\end{quote}
Using our results, we can also easily show that the Barnes $G$-function satisfies the following analogue of Gauss' limit for the gamma function
$$
G(x) ~=~ \lim_{n\to\infty}\frac{\Gamma(1)\Gamma(2){\,}\cdots{\,}\Gamma(n)}{\Gamma(x)\Gamma(x+1){\,}\cdots{\,}\Gamma(x+n)}{\,}
n!^x{\,}n^{{x\choose 2}},\qquad x>0.
$$
Moreover, it satisfies the following analogue of Weierstrass' infinite product
$$
G(x) ~=~ \frac{e^{(-\gamma -1){x\choose 2}}}{\Gamma(x)}{\,}
\prod_{k=1}^{\infty}\frac{\Gamma(k)}{\Gamma(x+k)}{\,}k^xe^{\psi_1(k){\,}{x\choose 2}},\qquad x>0,
$$
where $\psi_1$ is the trigamma function defined by the equation
$$
\psi_1(x) ~=~ D^2\ln\Gamma(x)\qquad\text{for $x>0$}.
$$
We also establish the double inequality
$$
\left(1+\frac{1}{x}\right)^{-\frac{5}{12}} \leq ~ \frac{G(x)\,\Gamma(x)^{\frac{1}{2}}{\,}A^2{\,}(2\pi)^{\frac{1}{4}}}{x^{\frac{1}{12}}
{\,}e^{\psi_{-2}(x)+\frac{1}{12}}} ~\leq \left(1+\frac{1}{x}\right)^{\frac{5}{12}},\qquad x>0,
$$
from which we immediately derive the following analogue of Stirling's formula
$$
G(x) ~\sim ~ A^{-2}{\,}(2\pi)^{-\frac{1}{4}}{\,}x^{\frac{1}{12}}
\,\Gamma(x)^{-\frac{1}{2}}{\,}e^{\psi_{-2}(x)+\frac{1}{12}}\qquad\text{as $x\to\infty$},
$$
where $\psi_{-2}$ is the polygamma function defined by the equation
$$
\psi_{-2}(x) ~=~ \int_0^x\ln\Gamma(t){\,}dt\qquad\text{for $x>0$}
$$
and $A$ is the Glaisher-Kinkelin constant defined by the equation
$$
\zeta'(-1) ~=~ \frac{1}{12}-\ln A.
$$
In this work, we also derive many other properties of the Barnes $G$-function simply as analogues of properties of the gamma function.

To sum up, in this monograph we develop a far-reaching generalization of the Bohr-Mollerup theorem, along lines initiated by Krull, Webster, and some others but going considerably further than past work. In particular, we show using elementary techniques that many classical properties of the gamma function have counterparts for a very wide variety of functions.

In this regard we observe that, in his outstanding exposition of the gamma function, Emil Artin \cite[p.~vi]{Art15} wrote:
\begin{quote}
``\emph{I feel that this monograph will help to show that the gamma function can be thought of as one of the elementary functions, and that all of its basic properties can be established using elementary methods of the calculus.}''
\end{quote}
In writing this book, our hope is to convince the reader that Artin's statement applies also to all the multiple $\log\Gamma$-type functions.

Lastly, since Bohr-Mollerup's theorem dates back to 1922, this work is also an opportunity to mark the 100th anniversary of this remarkable result and to spark the interest and enthusiasm of a large number of researchers in this theory.

\bigskip

\begin{flushright}
Jean-Luc Marichal\\
Na\"im Zena\"idi
\end{flushright}


\vspace{8ex}

\begin{footnotesize}
\noindent {\em 2020 Mathematics Subject Classification.}

\noindent Primary: 39B22, 39A06, 26A51. Secondary: 39A60, 33B15, 33B20.
\end{footnotesize}

%
%
%

\vspace{3ex}

\begin{footnotesize}
\noindent{\em Key words and phrases.}

\noindent Difference equation, higher order convexity, Bohr-Mollerup-Artin's theorem, Krull-Webster's theory, generalized Stirling's formula, generalized Stirling's constant, generalized Euler's constant, Euler's reflection formula, Euler's infinite product, Weierstrass' infinite product, Gauss' multiplication theorem, Gauss' digamma theorem, Raabe's formula, Wallis's product formula, Fontana-Mascheroni's series, Barnes $G$-function, Hurwitz zeta function, gamma-related function, multiple gamma-type function, generalized Stieltjes constant.
\end{footnotesize}

\clearpage
\addcontentsline{toc}{chapter}{List of main symbols}
\chaptermark{List of main symbols}
\chapter*{List of main symbols}

\begin{tabbing}
$\sim$ \hspace{12ex}\= asymptotic equivalence, p.~\pageref{p:AEq7} \\
$\to_{\S}$ \> convergence over $\S\in\{\N,\R\}$, p.~\pageref{p:xtoS} \\
$\lfloor x\rfloor$ \> floor of $x$, p.~\pageref{p:floor} \\
$\lceil x\rceil$ \> ceiling of $x$, p.~\pageref{p:ceiling} \\
$\{x\}$ \> fractional part of $x$, p.~\pageref{p:fracpart} \\
$A$ \> Glaisher-Kinkelin's constant, p.~\pageref{p:Glaisher} \\
$B_n$ \> $n$th Bernoulli number, p.~\pageref{p:BernN} \\
 \> $n$th Bernoulli polynomial, p.~\pageref{p:BernP} \\
$\cC^k$ \> set of $k$ times continuously differentiable functions on $\R_+$, p.~\pageref{p:Ck} \\
$\cC^k(I)$ \> set of $k$ times continuously differentiable functions on $I$, p.~\pageref{p:CkI} \\
$\cC^{\infty}$ \> $\bigcap_{k\geq 0}\cC^k$, p.~\pageref{p:Cinf} \\
$\cC^{\infty}(I)$ \> $\bigcap_{k\geq 0}\cC^k(I)$, p.~\pageref{p:Cinf} \\
$\deg f$ \> asymptotic degree of $f$, see Definition~\ref{de:de4g3f5}, p.~\pageref{p:degf} \\
$\mathrm{dom}(\Sigma)$ \> domain of the map $\Sigma$, p.~\pageref{p:domS} \\
$D$ \> ordinary derivative operator, p.~\pageref{p:Diff} \\
$\cD^p_{\S}$ ($p\geq 0$) \> $\{g\colon\R_+\to\R:\text{$\Delta^pg(t)\to 0$ as $t\to_{\S}\infty$}\}$, p.~\pageref{p:DpS} \\
$\cD^{\infty}_{\S}$ \> $\bigcup_{p\geq 0}\cD^p_{\S}{\,}$, p.~\pageref{p:Dinfty} \\
$\cD^{-1}_{\N}$ \> $\{g\colon\R_+\to\R:\text{the sequence $n\mapsto g(n)$ is summable}\}$, p.~\pageref{p:D-1} \\
$\cD^{-1}_{\S}$ \> subset of $\cD^{-1}_{\N}$ introduced in Definition~\ref{de:D-15R23}, p.~\pageref{p:D-1R} \\
$f[x_0,\ldots,x_p]$ \> divided difference of $f$ at the points $x_0,\ldots,x_p$, p.~\pageref{p:DivDiff} \\
$f^p_n[g]$ \> function defined in \eqref{eq:defFpn}, p.~\pageref{p:fnp} \\
$G_n$ \> $n$th Gregory coefficient, $G_n=\int_0^1{t\choose n}{\,}dt$, p.~\pageref{p:Gn} \\
$\overline{G}_n$ \> $1-\sum_{j=1}^n|G_j|$, p.~\pageref{p:bGn} \\
$H_x$ \> harmonic number function, p.~\pageref{p:harmonic} \\
$I$ \> arbitrary real interval whose interior is nonempty, p.~\pageref{p:IntI} \\
$J^q[g]$ \> generalized Binet's function defined in \eqref{eq:Binet643780}, p.~\pageref{p:Jqg}\\
$\cK^p$ \> $\cK^p_+\cup\cK^p_-$, p.~\pageref{p:KpI} \\
$\cK^p_+$ \> set of functions $f\colon\R_+\to\R$ that are eventually $p$-convex, p.~\pageref{p:Kppm} \\
$\cK^p_-$ \> set of functions $f\colon\R_+\to\R$ that are eventually $p$-concave, p.~\pageref{p:Kppm} \\
$\cK^p(I)$ \> $\cK^p_+(I)\cup\cK^p_-(I)$, p.~\pageref{p:KpI} \\
$\cK^p_+(I)$ \> set of functions $f\colon I\to\R$ that are $p$-convex, p.~\pageref{p:KppmI} \\
$\cK^p_-(I)$ \> set of functions $f\colon I\to\R$ that are $p$-concave, p.~\pageref{p:KppmI} \\
$\cK^{\infty}$ \> $\bigcap_{p\geq 0}\cK^p$, p.~\pageref{p:Kinfty} \\
$\mathrm{Log}\Gamma_p$ \> set of $\log\Gamma_p$-type functions, p.~\pageref{p:LGps} \\
$\N$ \> $\{0,1,2,\ldots\}$, p.~\pageref{p:NN} \\
$\N^*$ \> $\{1,2,\ldots\}$, p.~\pageref{p:NNs} \\
$P_p[f]$ \> interpolating polynomial of degree $\leq p$ of $f$, p.~\pageref{p:IntPol} \\
$\overline{P}_p[f]$ \> piecewise polynomial function defined in \eqref{eq:PieceWPol48}, p.~\pageref{p:PieceWPol23} \\
$\R_+$ \> open half-line $(0,\infty)$, p.~\pageref{p:Rplus} \\
$\cR^p_{\S}$ \> $\{g\colon\R_+\to\R:\text{for each $x>0$, $\rho^p_t[g](x)\to 0$ as $t\to_{\S}\infty$}\}$, p.~\pageref{p:RpS} \\
$\cR^{\infty}_{\S}$ \> $\bigcup_{p\geq 0}\cR^p_{\S}{\,}$, p.~\pageref{p:Rinfty} \\
$R^q_{m,n}[g]$ \> remainder in Gregory's summation formula \eqref{eq:GregoryMN}, p.~\pageref{p:Rqmng} \\
$\mathrm{ran}(\Sigma)$ \> range of the map $\Sigma$, p.~\pageref{p:ranS} \\
$\S$ \> $\S=\N$ or $\R$, p.~\pageref{p:sS} \\
$x^{\underline{k}}$ \> $x(x-1){\,}\cdots{\,}(x-k+1)$, p.~\pageref{p:xFFP} \\
$x\to_{\S}\infty$ \> $x$ tends to infinity over $\S\in\{\N,\R\}$, p.~\pageref{p:xtoS} \\
$x_+$ \> $\max\{0,x\}$, p.~\pageref{p:xPl} \\
$\gamma$ \> Euler's constant, p.~\pageref{p:EuC} \\
$\gamma[g]$ \> generalized Euler's constant associated with $g$, p.~\pageref{p:genEC} \\
$\gamma_q$ \> Stieltjes's constants, p.~\pageref{p:Sc630} \\
 \> generalized Stieltjes's constants, p.~\pageref{p:gSc63} \\
$\Gamma$ \> gamma function, p.~\pageref{p:gamma3} \\
$\Gamma_p$ \> multiple gamma function $\Gamma_p$, p.~\pageref{p:Gp1s} \\
 \> set of $\Gamma_p$-type functions, p.~\pageref{p:Gps} \\
$\Gamma_q$ \> $q$-gamma function, p.~\pageref{p:qG53} \\
$\varepsilon_k(x)$ \> sign of $x^{\underline{k}}$, p.~\pageref{p:xSigFFP} \\
$\zeta(s)$ \> Riemann zeta function, p.~\pageref{p:Rzeta} \\
$\zeta(s,x)$ \> Hurwitz zeta function, p.~\pageref{p:Hzeta} \\
$\Delta$ \> forward difference operator, p.~\pageref{p:Delta} \\
$\Delta_{[h]}$ \> forward difference operator with step $h$, p.~\pageref{p:DeltaSh} \\
$\rho^p_a[f]$ \> function defined in \eqref{eq:deflambdapt}, p.~\pageref{p:rho} \\
$\sigma[g]$ \> asymptotic constant associated with $g$; see \eqref{eq:sigmagg86}, p.~\pageref{p:sigmag} \\
$\overline{\sigma}[g]$ \> logarithm of the generalized Stirling constant associated with $g$, p.~\pageref{p:bsigmag} \\
$\Sigma$ \> map defined in \eqref{eq:DefGStar}, p.~\pageref{p:Sigma} \\
$\psi$ \> digamma function, p.~\pageref{p:digamma} \\
$\psi_{\nu}$ \> polygamma functions, p.~\pageref{p:polygamma}
\end{tabbing}

\tableofcontents

\mainmatter

\chapter{Introduction}
\label{chapter:1}

Let $\R_+$ denote the open half-line $(0,\infty)$\label{p:Rplus} and let $\Delta$ denote the forward difference operator on the space of functions from $\R_+$ to $\R$. In this book, we are interested in the classical difference equation $\Delta f=g$ on $\R_+$, which can be written explicitly as
$$
f(x+1)-f(x) ~=~ g(x),\qquad x>0,
$$
where $g\colon\R_+\to\R$ is a given function. This equation appears naturally in the theory of the Euler gamma function, with $f(x) = \ln\Gamma(x)$\label{p:gamma3} and $g(x) = \ln x$, but also in the study of many other special functions such as the Barnes $G$-function and the Hurwitz zeta function (see Examples~\ref{ex:Bar261} and \ref{ex:HuZe82} below).

It is easily seen that, for any function $g\colon\R_+\to\R$, the equation above has infinitely many solutions, and each of them can be uniquely determined by prescribing its values in the interval $(0,1]$. Moreover, any two solutions always differ by a $1$-periodic function, i.e., a periodic function of period $1$.

For certain functions $g$, however, special solutions can be determined by their local properties or their asymptotic behaviors. On this issue, a seminal result is the very nice characterization of the gamma function by Bohr and Mollerup~\cite{BohMol22}. We recall this important result in the following theorem.

\begin{theorem}[Bohr-Mollerup's theorem]\label{thm:BM538Thm9}\index{Bohr-Mollerup theorem|textbf}
All log-convex solutions $f\colon\R_+\to\R_+$ to the equation
\begin{equation}\label{eq:FunctEqMainMult}
f(x+1) ~=~ x{\,}f(x),\qquad x>0,
\end{equation}
are of the form $f(x)=c{\,}\Gamma(x)$, where $c>0$.
\end{theorem}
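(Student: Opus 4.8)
The plan is to pass to logarithms and run the classical squeeze argument based on convexity. Writing $\varphi=\ln f$, the hypothesis that $f$ is positive and log-convex means $\varphi\colon\R_+\to\R$ is a finite convex function, and the functional equation \eqref{eq:FunctEqMainMult} becomes the additive equation $\Delta\varphi(x)=\varphi(x+1)-\varphi(x)=\ln x$ for $x>0$. The easy direction is immediate: since $\Gamma$ is positive, log-convex, and satisfies $\Gamma(x+1)=x\,\Gamma(x)$ with $\Gamma(1)=1$, every function $c\,\Gamma$ with $c>0$ is a log-convex solution. So the whole content is the uniqueness statement, namely that a log-convex solution is completely determined by the single value $f(1)=c$. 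Because any two solutions of \eqref{eq:FunctEqMainMult} differ by a $1$-periodic factor, and the functional equation propagates values from $(0,1]$ to all of $\R_+$, it suffices to show that $\varphi(x)-\varphi(1)$ is forced to take one specific value for every $x\in(0,1]$.

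Next I would exploit convexity through the monotonicity of secant slopes. Iterating the functional equation gives, for every integer $n\ge 1$, the two exact identities $\varphi(n)=\varphi(1)+\ln((n-1)!)$ and $\varphi(n+x)-\varphi(x)=\sum_{k=0}^{n-1}\ln(x+k)$. Fix $x\in(0,1]$ and an integer $n\ge 2$, and consider the three ordered abscissas $n-1<n<n+x\le n+1$. Convexity of $\varphi$ forces the slope on $[n-1,n]$ to be at most the slope on $[n,n+x]$, which in turn is at most the slope on $[n,n+1]$; using $\Delta\varphi(n-1)=\ln(n-1)$ and $\Delta\varphi(n)=\ln n$, this reads $\ln(n-1)\le\frac{\varphi(n+x)-\varphi(n)}{x}\le\ln n$. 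Substituting the two identities above converts it into the double bound $x\ln(n-1)\le \varphi(x)-\varphi(1)+\ln\frac{\prod_{k=0}^{n-1}(x+k)}{(n-1)!}\le x\ln n$.

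The final step is to let $n\to\infty$ and squeeze. The two outer expressions differ by $x\,(\ln n-\ln(n-1))\to 0$, so both converge to the same limit, whence $\varphi(x)-\varphi(1)=\lim_{n\to\infty}\bigl(x\ln n-\ln\frac{\prod_{k=0}^{n-1}(x+k)}{(n-1)!}\bigr)$ for all $x\in(0,1]$. The right-hand side is exactly the Gauss limit for $\ln\Gamma(x)$, so $\varphi(x)-\varphi(1)=\ln\Gamma(x)$ on $(0,1]$. Both sides satisfy $\Delta(\cdot)=\ln x$ and agree on the unit interval $(0,1]$; since their difference is $1$-periodic and vanishes there, it vanishes on all of $\R_+$, giving $\varphi(x)=\varphi(1)+\ln\Gamma(x)$ and hence $f(x)=c\,\Gamma(x)$ with $c=f(1)$.

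The main obstacle is not any single computation but making the squeeze rigorous and self-contained: one must verify that log-convexity legitimately yields the monotone-slope inequalities for arbitrary real $x\in(0,1]$ (not merely at lattice points), and that both bounds genuinely converge to a common finite limit, which simultaneously establishes existence of the limit and its independence of the chosen solution. Identifying that limit with $\ln\Gamma$ is the only place a known representation of $\Gamma$ is invoked; if instead $\Gamma$ is simply taken as a distinguished log-convex solution normalized by $\Gamma(1)=1$, the squeeze already proves uniqueness and the identification becomes unnecessary.
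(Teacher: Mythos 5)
Your proof is correct and is essentially the classical Bohr--Mollerup--Artin squeeze argument, which is exactly what the paper itself presents in its self-contained proof (the remark ``A proof of Bohr-Mollerup's theorem'' in Chapter~\ref{chapter:3}): there too, convexity-based chord inequalities force the error term $\rho_n(x)=f(x+n)-f(n)-x\ln n$ to vanish, identifying the solution with Gauss' limit and then extending via the functional equation. The only cosmetic differences are that you squeeze at $x\in(0,1]$ using the chords on $[n-1,n]$, $[n,n+x]$, $[n,n+1]$ whereas the paper works with $x>1$ and eventually convex solutions, and that the paper's official route additionally obtains the theorem as the $p=1$ special case of its general uniqueness theorem (Theorem~\ref{thm:unic}).
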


The additive, but equivalent, version of this result, obtained by taking the logarithm of both sides of \eqref{eq:FunctEqMainMult}, can be stated as follows.
\begin{quote}
\emph{For $g(x)=\ln x$, all convex solutions $f\colon\R_+\to\R$ to the difference equation $\Delta f=g$ are of the form $f(x)=c+\ln\Gamma(x)$, where $c\in\R$.}
\end{quote}

As we can see, this characterization enables one to single out the gamma function as a kind of \emph{principal solution} to its equation (N\"orlund~\cite[Chapter~5]{Nor24} calls it the ``Hauptl\"osung'').

It is noteworthy that the proof of Bohr-Mollerup's characterization was simplified later by Artin \cite{Art31} (see also Artin~\cite{Art15}) and, as observed by Webster~\cite{Web97b}, this result has then become known also ``as the Bohr-Mollerup-Artin Theorem, and was adopted by Bourbaki \cite{Bou51} as the starting point for his exposition of the gamma function.''

\begin{remark}
In their original result, Bohr and Mollerup actually considered the additional assumption that $f(1)=1$, thus leading to the gamma function as the unique solution (see Artin~\cite[p.~14]{Art15}). However, it is easy to see that Theorem~\ref{thm:BM538Thm9} immediately follows from this original result (just replace $f(x)$ with $f(x)/f(1)$).
\end{remark}

A remarkable generalization of Bohr-Mollerup's theorem was provided by Krull \cite{Kru48,Kru49} and then independently by Webster \cite{Web97a,Web97b}. Recall that a function $g\colon\R_+\to\R$ is said to be eventually convex (resp.\ eventually concave) if it is convex (resp.\ concave) in a neighborhood of infinity. Krull \cite{Kru48} essentially showed that for any eventually concave function $g\colon\R_+\to\R$ having the asymptotic property that, for each $h>0$,
\begin{equation}\label{eq:KWac3g}
g(x+h)-g(x) ~\to ~ 0\qquad\text{as $x\to\infty$},
\end{equation}
there exists exactly one (up to an additive constant) eventually convex solution $f\colon\R_+\to\R$ to the equation $\Delta f=g$ (and dually, if $g$ is eventually convex, then $f$ is eventually concave). He also provided an explicit expression for this solution as a pointwise limit of functions, namely
$$
f(x) ~=~ f(1)+\lim_{n\to\infty}f^1_n[g](x),\qquad x>0,
$$
where
\begin{equation}\label{eq:defF1n04}
f^1_n[g](x) ~=~ -g(x)+\sum_{k=1}^{n-1}(g(k)-g(x+k))+x{\,}g(n).
\end{equation}
Much later, and independently, Webster \cite{Web97a,Web97b} established the multiplicative version of Krull's result.

We can actually show that this result still holds if we replace the asymptotic condition \eqref{eq:KWac3g} imposed on the function $g$ with the slightly more general condition that the sequence $n\mapsto\Delta g(n)$ converges to zero. However, although this result constitutes a very nice generalization of Bohr-Mollerup's theorem, we note that the latter asymptotic condition remains a rather restrictive assumption. For instance, it is not satisfied by the functions $g(x)=x\ln x$ and $g(x)=\ln\Gamma(x)$.

In this work, we generalize Krull-Webster's result above by relaxing the asymptotic condition on $g$ into the much weaker requirement that the sequence $n\mapsto\Delta^pg(n)$ converges to zero for some nonnegative integer $p$. More precisely, we show that Krull-Webster's result still holds if we assume this weaker condition, provided that we replace the convexity and concavity properties with the $p$-convexity\index{$p$-convexity} and $p$-concavity\index{$p$-concavity} properties (see Definition~\ref{de:pconcconv}) and the function $f_n^1[g]$ defined in \eqref{eq:defF1n04} with an appropriate version of it, which we now introduce.

Throughout this book, we let $\N$\label{p:NN} denote the set of nonnegative integers and we let $\N^*$\label{p:NNs} denote the set of strictly positive integers.

\begin{definition}
For any $p\in\N$, any $n\in\N^*$, and any $g\colon\R_+\to\R$, we define the function $f^p_n[g]\colon\R_+\to\R$ by the equation\label{p:fnp}
\begin{equation}\label{eq:defFpn}
f^p_n[g](x) ~=\null -g(x)+\sum_{k=1}^{n-1}(g(k)-g(x+k))+\sum_{j=1}^p\tchoose{x}{j}\,\Delta^{j-1}g(n).
\end{equation}
\end{definition}

We now state our result in the following existence theorem. It actually constitutes the $p$-order version of Krull-Webster's result.

\begin{theorem}[Existence]\label{thm:int2}\index{existence theorem}
Let $p\in\N$ and suppose that the function $g\colon\R_+\to\R$ is eventually $p$-convex or eventually $p$-concave and has the asymptotic property that the sequence $n\mapsto\Delta^pg(n)$ converges to zero. Then there exists a unique (up to an additive constant) eventually $p$-convex or eventually $p$-concave solution $f\colon\R_+\to\R$ to the difference equation $\Delta f=g$. Moreover,
\begin{equation}\label{eq:int2}
f(x) ~=~ f(1)+\lim_{n\to\infty}f^p_n[g](x),\qquad x>0,
\end{equation}
and $f$ is $p$-convex (resp.\ $p$-concave) on any unbounded subinterval of\/ $\R_+$ on which $g$ is $p$-concave (resp.\ $p$-convex).
\end{theorem}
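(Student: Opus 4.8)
The plan is to treat the eventually $p$-concave case (the $p$-convex case following by replacing $g$ with $-g$ and $f$ with $-f$) and to reduce every assertion to the asymptotic behaviour of divided differences through two elementary identities. First I would record that $f^p_n[g](1)=0$ for all $n$, since the corresponding sum telescopes, and then compute the increment
\[
f^p_{n+1}[g](x)-f^p_n[g](x) ~=~ \sum_{j=0}^p\tchoose{x}{j}\Delta^j g(n)-g(x+n) ~=~ -x^{\underline{p+1}}\,g[n,n+1,\ldots,n+p,n+x],
\]
where the second equality is Newton's forward-difference interpolation formula with remainder, applied to the degree-$p$ interpolant of $g$ at $n,\ldots,n+p$ evaluated at $n+x$. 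In the same spirit, writing $g=\Delta\tilde f$ for an \emph{arbitrary} solution $\tilde f$ of $\Delta f=g$ and telescoping, I would obtain the substitution identity
\[
f^p_n[g](x) ~=~ \tilde f(x)-\tilde f(1)-x^{\underline{p+1}}\,\tilde f[n,n+1,\ldots,n+p,n+x].
\]
These two identities reduce existence of the limit, the equation $\Delta f=g$, the convexity statement, and uniqueness to the vanishing (and summability) of suitable divided differences of $g$, resp.\ of $\tilde f$, at infinity.

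The hard part, and the crux of the whole argument, will be this asymptotic analysis. The key observation is that eventual $p$-concavity means precisely that the divided differences of $g$ of order $p+1$ are eventually nonpositive, and that this sign condition forces the divided differences of order $p$ to be \emph{monotone} in each of their nodes. For fixed $x\in(0,1)$ the interleaved quantity $g[n,\ldots,n+p-1,n+x]$ is thereby squeezed between the consecutive integer-node values $\Delta^pg(n-1)/p!$ and $\Delta^pg(n)/p!$, both of which tend to $0$ by hypothesis, so it tends to $0$ as well. For the increment series I would establish absolute convergence: the terms are eventually of one sign, so $n\mapsto f^p_n[g](x)$ is eventually monotone, and its boundedness comes from comparing the interleaved differences of order $p+1$ with the integer-node differences, whose series telescopes, $\sum_n\Delta^{p+1}g(n)=\Delta^pg(A)-\lim_n\Delta^pg(n)<\infty$ (finite because $p$-concavity makes $\Delta^pg$ eventually monotone and the hypothesis forces convergence). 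Passing from the discrete hypothesis $\Delta^pg(n)\to0$ at integer nodes to control at all real nodes, using only the sign of the $(p{+}1)$st divided difference, is the main obstacle.

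Granting this analysis, the remaining steps are short. Existence of the limit and formula \eqref{eq:int2} are then immediate, while $\Delta f=g$ follows from
\[
f^p_n[g](x+1)-f^p_n[g](x) ~=~ g(x)-x^{\underline{p}}\,g[n,\ldots,n+p-1,n+x]~\longrightarrow~g(x),
\]
the error tending to $0$ by the squeeze above. For the convexity statement I would note that the correction term $\sum_{j=1}^p\tchoose{x}{j}\Delta^{j-1}g(n)$ is a polynomial of degree $\le p$ and is therefore annihilated by every divided difference of order $p+1$; consequently the order-$(p{+}1)$ divided difference of $f^p_n[g]$ at nodes $t_0<\cdots<t_{p+1}$ equals $-\sum_{k=0}^{n-1}g[t_0+k,\ldots,t_{p+1}+k]$. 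If these nodes lie in an unbounded subinterval on which $g$ is $p$-concave, then so do all their right-translates, every summand is $\le0$, and the limit is $\ge0$; hence $f$ is $p$-convex there, and dually.

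Finally, uniqueness follows from the substitution identity. If $\tilde f$ is any eventually $p$-convex solution, then $\Delta^{p+1}\tilde f(n)=\Delta^pg(n)\to0$, so the same divided-difference lemma applied to $\tilde f$ gives $x^{\underline{p+1}}\,\tilde f[n,\ldots,n+p,n+x]\to0$. Passing to the limit in the identity yields $\tilde f(x)=\tilde f(1)+\lim_n f^p_n[g](x)=f(x)+(\tilde f(1)-f(1))$, so any two eventually $p$-convex (or $p$-concave) solutions differ by an additive constant, as claimed.
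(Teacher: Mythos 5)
Your proposal is correct and follows essentially the same route as the paper: your substitution and increment identities are the paper's \eqref{eq:ff01flam} and \eqref{eq:fng}--\eqref{eq:fng0}, and your sign-of-$(p{+}1)$st-divided-difference $\Rightarrow$ monotonicity $\Rightarrow$ squeeze mechanism is precisely the content of Lemmas~\ref{lemma:pCInc5} and \ref{lemma:VarEpsIneq}, which drive the paper's uniqueness and existence theorems (Theorems~\ref{thm:unic} and \ref{thm:exist}), including your treatment of the $p$-convexity transfer and the constant-sign-plus-telescoping summability argument (Lemma~\ref{lemma:VarEpsIneqSum}). The ``main obstacle'' you flag --- controlling interleaved nodes for all $x>0$ rather than $x\in(0,1)$ --- is handled in the paper by exactly the tools you already assemble (split off the node $n+x$ via the recurrence \eqref{eq:DivDiffRec7}, compare order-$p$ divided differences by monotonicity, and telescope over $\lceil x\rceil$ integer steps, as in the third inequality of Lemma~\ref{lemma:VarEpsIneq}), so it is bookkeeping rather than a missing idea.
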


Webster \cite[Theorem 3.1]{Web97b} also established (in the multiplicative notation) a uniqueness theorem, which does not require the function $g$ to be eventually convex or eventually concave. In the next theorem, we provide the $p$-order version of this result.

\begin{theorem}[Uniqueness]\label{thm:int1}\index{uniqueness theorem}
Let $p\in\N$ and let the function $g\colon\R_+\to\R$ have the property that the sequence $n\mapsto\Delta^pg(n)$ converges to zero. Suppose that $f\colon\R_+\to\R$ is an eventually $p$-convex or eventually $p$-concave function satisfying the difference equation $\Delta f=g$. Then $f$ is uniquely determined (up to an additive constant) by $g$ through the equation
$$
f(x) ~=~ f(1)+\lim_{n\to\infty}f^p_n[g](x),\qquad x>0.
$$
\end{theorem}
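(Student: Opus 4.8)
The plan is to reduce the statement to a purely asymptotic fact about $f$ alone, and then to settle that fact by a divided-difference sandwich.

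\textbf{Reduction.} Since $\Delta f=g$, we have $\Delta^{j-1}g(n)=\Delta^jf(n)$, and the two sums in \eqref{eq:defFpn} telescope. A direct computation then gives the \emph{exact} identity
\[
f^p_n[g](x) ~=~ f(x)-f(1)-R_n(x),\qquad R_n(x):=f(x+n)-\sum_{j=0}^p\binom{x}{j}\,\Delta^jf(n),
\]
valid for every $x>0$ and $n\in\N^*$ (with $\binom{x}{0}=1$ and $\Delta^0f(n)=f(n)$). By Newton's forward-difference formula, $\sum_{j=0}^p\binom{x}{j}\Delta^jf(n)$ is exactly the value at $x+n$ of the polynomial of degree $\leq p$ interpolating $f$ at the consecutive integers $n,n+1,\ldots,n+p$; hence $R_n(x)$ is the corresponding interpolation error, and in divided-difference form
\[
R_n(x) ~=~ x^{\underline{p+1}}\,f[n,n+1,\ldots,n+p,x+n].
\]
Because the identity is exact, the theorem is equivalent to showing $R_n(x)\to 0$ as $n\to\infty$ for each fixed $x>0$. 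Note that $\Delta^{p+1}f(n)=\Delta^pg(n)\to 0$ by hypothesis.

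\textbf{Base case $x\in(0,1]$ (the crux).} Assume first that $f$ is eventually $p$-convex, say on $[N_0,\infty)$, and take $n\geq N_0$. The naive move---bounding the $(p+1)$st-order divided difference $f[n,\ldots,n+p,x+n]$ by a pure-integer one such as $f[n,\ldots,n+p,n+p+1]$---is \emph{not} available, since $(p+1)$st-order divided differences of a merely $p$-convex function need not be monotone in their nodes. The remedy is to drop one order: splitting off the node $x+n$ against $n$ in the divided-difference recurrence gives
\[
f[n,\ldots,n+p,x+n] ~=~ \frac{A-B}{x},\qquad A:=f[n+1,\ldots,n+p,x+n],\quad B:=f[n,n+1,\ldots,n+p],
\]
where $A$ and $B$ are now \emph{$p$}th-order divided differences. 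For these the right tool \emph{does} apply: $p$-convexity is equivalent to the monotonicity of $p$th-order divided differences under a rightward shift of the nodes (shifting one node to the right changes such a divided difference by a nonnegative $(p+1)$st-order divided difference). Since $(n,n+1,\ldots,n+p)\preceq(x+n,n+1,\ldots,n+p)\preceq(n+1,\ldots,n+p,n+p+1)$ componentwise for $x\in(0,1)$, this monotonicity yields
\[
B ~\leq~ A ~\leq~ B':=f[n+1,\ldots,n+p,n+p+1],\qquad B'-B=\frac{\Delta^{p+1}f(n)}{p!}.
\]
Hence $0\leq f[n,\ldots,n+p,x+n]=(A-B)/x\leq \Delta^{p+1}f(n)/(p!\,x)$, and as $|x^{\underline{p+1}}|/x=\prod_{k=1}^p(k-x)<p!$ on $(0,1)$, we get $|R_n(x)|<\Delta^{p+1}f(n)\to 0$; the squeeze gives $R_n(x)\to 0$ for $x\in(0,1)$, while $R_n(1)=0$ trivially. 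I expect this sandwich to be the only genuinely delicate point; everything else is bookkeeping.

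\textbf{From $(0,1]$ to all $x>0$, and the concave case.} To lift the restriction on $x$ I would compare the interpolants based at $n$ and at $n+1$: they share the nodes $n+1,\ldots,n+p$ and differ only through the extra node $n$ versus $n+p+1$, which produces the relation
\[
R_n(x+1) ~=~ R_{n+1}(x)+\frac{x^{\underline p}}{p!}\,\Delta^{p+1}f(n).
\]
Since $\Delta^{p+1}f(n)\to 0$ and $x^{\underline p}/p!$ is constant in $n$, the correction vanishes, so $R_{n+1}(x)\to 0$ forces $R_n(x+1)\to 0$; induction on $\lfloor x\rfloor$ then propagates the base case to every $x>0$. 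Finally, if $f$ is eventually $p$-concave rather than $p$-convex, I would apply the above to $-f$, which is eventually $p$-convex and satisfies $\Delta(-f)=-g$ with $\Delta^p(-g)(n)\to 0$; as $f^p_n[-g]=-f^p_n[g]$, the formula for $-f$ immediately yields the one for $f$. Combining the three steps gives $f^p_n[g](x)\to f(x)-f(1)$, that is, $f(x)=f(1)+\lim_{n\to\infty}f^p_n[g](x)$ for all $x>0$.
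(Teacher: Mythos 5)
Your proof is correct, and its skeleton coincides with the paper's: your exact identity is precisely \eqref{eq:ff01flam} (with $R_n=\rho^{p+1}_n[f]$), your divided-difference representation of $R_n$ is \eqref{eq:LDDiv}, and your monotonicity tool is Lemma~\ref{lemma:pCInc5}; moreover, your observation that one must ``drop one order'' before invoking monotonicity is exactly the move made in the paper's proof of Lemma~\ref{lemma:VarEpsIneq}. Where you genuinely diverge is in how the sandwich covers general $x$. The paper's Lemma~\ref{lemma:VarEpsIneq} treats all $x\geq 0$ at once: after splitting off the node $a+x$, it bounds $f[a+x,a+1,\ldots,a+p]$ by $f[a+x,a+x+1,\ldots,a+x+p]=\frac{1}{p!}\,\Delta^pf(a+x)$ (shifting the $p$ integer nodes rather than the one real node), which produces the middle term $\bigl|\binom{x-1}{p}\bigr|\left(\Delta^pf(a+x)-\Delta^pf(a)\right)$ and then telescopes into $\lceil x\rceil$ summands $\Delta^{p+1}f(a+j)$. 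You instead shift the single node $x+n$ up to $n+p+1$, which is only legitimate for small $x$, so you confine the sandwich to $x\in(0,1]$ and propagate by induction on $\lfloor x\rfloor$ through the recurrence $R_n(x+1)=R_{n+1}(x)+\binom{x}{p}\Delta^{p+1}f(n)$ (itself obtainable by combining the paper's identities \eqref{eq:DeltaXLam00}, \eqref{eq:DeltaXLam}, \eqref{eq:fng2}, and \eqref{eq:LamEqDel2}). For the bare uniqueness statement your route is marginally lighter and your bound on $(0,1]$ is cleaner. What the paper's global lemma buys is reusability and explicit bounds that are uniform on bounded sets: the very same inequality later yields assertion (c) of Theorem~\ref{thm:unic} (uniform convergence), the existence theorem, and the generalized Wendel and Stirling results, whereas your inductive propagation disperses the quantitative information across steps and, as stated, delivers pointwise convergence only unless one unwinds the recursion.
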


We observe that Theorem~\ref{thm:int2} was first proved in the case when $p=0$ by John~\cite{Joh39}. As mentioned above, it was also established in the case when $p=1$ by Krull \cite{Kru48} and then by Webster \cite{Web97b}. More recently, the case when $p=2$ was investigated by Rassias and Trif \cite{RasTri07}, but the asymptotic condition they imposed on the function $g$ is much stronger than ours and hence it defines a very specific subclass of functions. (We discuss Rassias and Trif's result in Appendix~\ref{chapter:A-KW561}.) We also observe that attempts to establish Theorem~\ref{thm:int2} for any value of $p$ were made by Kuczma~\cite[Theorem~1]{Kuc64} (see also Kuczma~\cite[pp.\ 118--121]{Kuc68}) and then by Ardjomande \cite{Ard68}. However, the representation formulas they provide for the solutions are rather intricate. Thus, to the best of our knowledge, both Theorems~\ref{thm:int2} and \ref{thm:int1}, as stated above in their full generality and simplicity, were previously unknown.

For any solution $f$ arising from Theorem~\ref{thm:int2} when $p=1$, Webster~\cite{Web97b} calls the function $\exp\circ f$ a \emph{$\Gamma$-type function}.\index{$\Gamma$-type function} In fact, $\exp\circ f$ reduces to the gamma function (i.e., $f(x)=\ln\Gamma(x)$) when $\exp\circ g$ is the identity function (i.e., $g(x)=\ln x$), which simply means that the gamma function restricted to $\R_+$ is itself a $\Gamma$-type function. In this particular case, the limit given in \eqref{eq:int2} reduces to the following Gauss well-known limit\index{Gauss' limit}\index{Gauss' limit!analogue} for the gamma function (see Artin~\cite[p.~15]{Art15})
\begin{equation}\label{eq:GaussLimit42}
\Gamma(x) ~=~ \lim_{n\to\infty}\frac{n!{\,}n^x}{x(x+1){\,}\cdots{\,}(x+n)}{\,},\qquad x>0.
\end{equation}

Similarly, for any fixed $p\in\N$ and any solution $f$ arising from Theorem~\ref{thm:int2}, we call the function $\exp\circ f$ a \emph{$\Gamma_p$-type function},\index{$\Gamma_p$-type function} and we naturally call the function $f$ a \emph{$\log\Gamma_p$-type function}.\index{$\log\Gamma_p$-type function} When the value of $p$ is not specified, we call these functions \emph{multiple $\Gamma$-type function}\index{multiple $\Gamma$-type function} and \emph{multiple $\log\Gamma$-type function},\index{multiple $\log\Gamma$-type function} respectively. This terminology will be introduced more formally and justified in Section~\ref{subsec:MLG-t}.

Interestingly, Webster established for $\Gamma$-type functions analogues of \emph{Euler's constant}, \emph{Gauss' multiplication formula}, \emph{Legendre's duplication formula}, \emph{Stirling's formula}, and \emph{Weierstrass' infinite product} for the gamma function. In this work, we also establish for multiple $\Gamma$-type functions and multiple $\log\Gamma$-type functions analogues of all the formulas above as well as analogues of \emph{Euler's infinite product}, \emph{Gautschi's inequality}, \emph{Raabe's formula}, \emph{Stirling's constant}, \emph{Wallis's product formula}, and \emph{Wendel's inequality}. We also introduce and discuss analogues of \emph{Binet's function}, \emph{Burnside's formula}, \emph{Euler's reflection formula}, \emph{Fontana-Mascheroni's series}, and \emph{Gauss' digamma theorem}. Thus, (to paraphrase Webster~\cite[p.~607]{Web97b}) for each multiple $\Gamma$-type function, it is no longer surprising for instance that ``some analogue of \emph{Legendre's duplication formula} must hold, almost rendering a formal proof unnecessary!''

All these results, together with the uniqueness and existence theorems above, show that the theory we develop in this book provides a very general and unified framework to study the properties of a large variety of functions. Thus, for each of these functions we can retrieve known formulas and sometimes establish new ones.

At the risk of repeating a large part of our preface, we now present two representative examples to illustrate the way our results can be applied to derive formulas methodically.

\begin{example}[The Barnes $G$-function, see Section~\ref{sec:Barnes558}]\label{ex:Bar261}\index{Barnes's $G$-function|textbf}
The restriction to $\R_+$ of the Barnes $G$-function can be defined as the function $G\colon\R_+\to\R_+$ whose logarithm $f(x)=\ln G(x)$ is the unique eventually 2-convex solution that vanishes at $x = 1$ to the equation
$$
f(x+1)-f(x) ~=~ \ln\Gamma(x),\qquad x>0.
$$
Thus, our Theorems~\ref{thm:int2} and \ref{thm:int1} apply with $g(x)=\ln\Gamma(x)$ and $p=2$, which shows that the function $\ln G(x)$ is a $\log\Gamma_2$-type function and hence that the function $G(x)$ is a $\Gamma_2$-type function. In particular, formula \eqref{eq:int2} provides the following analogue of \emph{Gauss' limit} for the gamma function
$$
G(x) ~=~ \lim_{n\to\infty}\frac{\Gamma(1)\Gamma(2){\,}\cdots{\,}\Gamma(n)}{\Gamma(x)\Gamma(x+1){\,}\cdots{\,}\Gamma(x+n)}{\,}
n!^x{\,}n^{{x\choose 2}}{\,}.
$$
Using some of our new results, we are also able to derive various unusual formulas and properties. For instance, we have the following analogue of \emph{Euler's infinite product}
$$
G(x) ~=~ \frac{1}{\Gamma(x)}{\,}
\prod_{k=1}^{\infty}\frac{\Gamma(k)}{\Gamma(x+k)}{\,}k^x(1+1/k)^{{x\choose 2}}
$$
and the following analogue of \emph{Weierstrass' infinite product}
$$
G(x) ~=~ \frac{e^{(-\gamma -1){x\choose 2}}}{\Gamma(x)}{\,}
\prod_{k=1}^{\infty}\frac{\Gamma(k)}{\Gamma(x+k)}{\,}k^xe^{\psi'(k){\,}{x\choose 2}},
$$
where $\gamma$ is the Euler constant\index{Euler's constant} and $\psi$ is the digamma function.\index{digamma function} We also have the following analogue of \emph{Stirling's formula}
$$
G(x) ~\sim ~ A^{-2}{\,}(2\pi)^{-\frac{1}{4}}{\,}x^{\frac{1}{12}}\,\Gamma(x)^{-\frac{1}{2}}{\,}e^{\psi_{-2}(x)+\frac{1}{12}}\qquad\text{as $x\to\infty$}{\,},
$$
where $\psi_{-2}$ is the polygamma function defined by the equation
$$
\psi_{-2}(x) ~=~ \int_0^x\ln\Gamma(t){\,}dt\qquad\text{for $x>0$}{\,},
$$
and $A$ is Glaisher-Kinkelin's constant\index{Glaisher-Kinkelin's constant|textbf} defined by the equation\label{p:Glaisher}
$$
\zeta'(-1) ~=~ \frac{1}{12}-\ln A{\,}.
$$
(Here the map $s\mapsto\zeta'(s)$ denotes the derivative of the Riemann zeta function.) We can also easily derive the following analogue of \emph{Wendel's double inequality}
$$
\left(1+\frac{a}{x}\right)^{-\left|{a-1\choose 2}\right|} \leq ~ \frac{G(x+a)}{G(x)\,\Gamma(x)^a{\,}x^{a\choose 2}} ~\leq \left(1+\frac{a}{x}\right)^{\left|{a-1\choose 2}\right|},
$$
which holds for any $x>0$ and any $a\geq 0$. As a corollary, this inequality immediately provides the following asymptotic equivalence
$$
\frac{G(x+a)}{G(x)} ~\sim ~ \Gamma(x)^a{\,}x^{{a\choose 2}} \qquad\text{as $x\to\infty$}{\,},
$$
which reveals the asymptotic behavior of $G(x+a)/G(x)$ for large values of $x$.
\end{example}

\begin{example}[The Hurwitz zeta function, see Section~\ref{sec:Hurw49}]\label{ex:HuZe82}\index{Hurwitz zeta function|textbf}
Consider the Hurwitz zeta function\label{p:Hzeta} $s\mapsto\zeta(s,a)$, defined when $\Re(a)>0$ as an analytic continuation to $\mathbb{C}\setminus\{1\}$ of the series
$$
\sum_{k=0}^{\infty}(a+k)^{-s}{\,\,},\qquad\Re(s)>1{\,}.
$$
This function is known to satisfy the difference equation
$$
\zeta(s,a+1)-\zeta(s,a) ~=~ -a^{-s}.
$$
Thus, it is not difficult to see that, for any $s\in\R\setminus\{1\}$, the restriction of the map $x\mapsto\zeta(s,x)$ to $\R_+$ is a $\log\Gamma_{p(s)}$-type function, where
$$
p(s) ~=~ \max\{0,\lfloor 1-s\rfloor\}.
$$
Theorem~\ref{thm:int1} then tells us that all eventually $p(s)$-convex or eventually $p(s)$-concave solutions $f_s\colon\R_+\to\R$ to the difference equation
$$
f_s(x+1)-f_s(x) ~=~ -x^{-s}
$$
are of the form
$$
f_s(x) ~=~ c_s+\zeta(s,x),
$$
where $c_s\in\R$. Moreover, equation \eqref{eq:int2} provides the following analogue of \emph{Gauss' limit} for the gamma function
$$
\zeta(s,x) ~=~ \zeta(s) + x^{-s}
+\lim_{n\to\infty}
\left(\sum_{k=1}^{n-1}\left((x+k)^{-s}-k^{-s}\right)-\sum_{j=1}^{p(s)}\tchoose{x}{j}\,\Delta_n^{j-1}n^{-s}\right),
$$
where $s\mapsto\zeta(s)=\zeta(s,1)$\label{p:Rzeta} is the Riemann zeta function\index{Riemann zeta function}. Some of our results also enable us to derive the following analogues of \emph{Stirling's formula}
\begin{eqnarray*}
\zeta(s,x) + \frac{x^{1-s}}{1-s}-\sum_{j=1}^{p(s)}G_j\,\Delta_x^{j-1}x^{-s} & \to & 0\qquad\text{as $x\to\infty$},\\
\zeta(s,x) + \frac{1}{1-s}\sum_{j=0}^{p(s)}\tchoose{1-s}{j}\,\frac{B_j}{x^{s+j-1}} & \to & 0\qquad\text{as $x\to\infty$},
\end{eqnarray*}
where $G_n$ is the $n$th Gregory coefficient\index{Gregory coefficients} and $B_n$ is the $n$th Bernoulli number.\index{Bernoulli numbers} For instance, setting $s=-\frac{3}{2}$ in these asymptotic formulas, we obtain
\begin{eqnarray*}
\textstyle{\zeta\left(-\frac{3}{2},x\right)+\frac{2}{5}{\,}x^{5/2}-\frac{7}{12}{\,}x^{3/2}
+\frac{1}{12}{\,}(x+1)^{3/2}} &\to & 0\qquad\text{as $x\to\infty$}{\,},\\
\textstyle{\zeta\left(-\frac{3}{2},x\right)+\frac{2}{5}{\,}x^{5/2}-\frac{1}{2}{\,}x^{3/2}+\frac{1}{8}{\,}x^{1/2}} &\to & 0\qquad\text{as $x\to\infty$}{\,}.
\end{eqnarray*}
Many more formulas and properties involving the Hurwitz zeta function will be provided and discussed in Section~\ref{sec:Hurw49}.
\end{example}

The two examples above illustrate the scope of our theory and the diversity of our results. These examples and many others will be explored and discussed in the last chapters of this book. However, in the first chapters we will almost always use the basic function $g(x)=\ln x$ as the guiding example to illustrate our results.

\parag{Outline of the book} Let us now see how this book is organized. On the whole, Chapters~\ref{chapter:2} to \ref{chapter:8} are devoted to the conceptual part: we develop our theory and establish our results. Chapters~\ref{chapter:10} to \ref{chapter:12} focus on applications to a large number of functions, including several classical special functions. In between, Chapter~\ref{chapter:9} presents an overview and a summary of our results. After reading this introduction, the reader interested by such an overview can go immediately to Chapter~\ref{chapter:9}.

In Chapter~\ref{chapter:2}, we present some definitions and preliminary results on Newton interpolation theory as well as on higher order convexity properties.

In Chapter~\ref{chapter:3}, we establish Theorems~\ref{thm:int2} and \ref{thm:int1} and provide conditions for the sequence $n\mapsto f^p_n[g](x)$ to converge uniformly on any bounded subset of $\R_+$. We also examine the particular case when the sequence $n\mapsto g(n)$ is summable, and we provide historical remarks on some improvements of Krull-Webster's theory.

In Chapter~\ref{chapter:4}, we investigate the functions that satisfy the asymptotic condition stated in Theorems~\ref{thm:int2} and \ref{thm:int1}. We also investigate those functions that are eventually $p$-convex or eventually $p$-concave.

In Chapter~\ref{chapter:5}, we introduce, investigate, and characterize the multiple $\log\Gamma$-type functions.

Chapter~\ref{chapter:6} is devoted to an asymptotic analysis of multiple $\log\Gamma$-type functions. More specifically, in that chapter we show how Euler's constant, Stirling's constant, Stirling's formula, and Wendel's inequality for the gamma function can be generalized to the multiple $\Gamma$-type functions and multiple $\log\Gamma$-type functions and we introduce and discuss analogues of Binet's function and Burnside's formula. We also show how the so-called Gregory summation formula, with an integral form of the remainder, can be very easily derived in this setting.

In Chapter~\ref{chapter:7}, we discuss conditions for the multiple $\log\Gamma$-type functions to be differentiable and establish several important properties of the higher order derivatives of these functions.

In Chapter~\ref{chapter:8}, we explore further properties of the multiple $\log\Gamma$-type functions. Specifically, we provide asymptotic expansions of these functions as well as analogues of Euler's infinite product, Fontana-Mascheroni's series, Gauss' multiplication formula, Gautschi's inequality, Raabe's formula, Wallis's product formula, and Weierstrass' infinite product for the gamma function. We also discuss analogues of Euler's reflection formula and Gauss' digamma theorem, and we define and solve a generalized version of a functional equation proposed by Webster.

Chapter~\ref{chapter:9} is the transition from the theory to the applications. It provides a catalogue of our most relevant results, which can be used as a checklist to investigate the multiple $\log\Gamma$-type functions. Chapter~\ref{chapter:9} is self-contained and can be read right after this introduction.

In Chapters~\ref{chapter:10} to \ref{chapter:12}, we apply our results to a number of multiple $\Gamma$-type functions and multiple $\log\Gamma$-type functions, some of which are well-known special functions related to the gamma function.

In Chapter~\ref{chapter:13}, we make some concluding remarks and propose a list of interesting open questions.

\parag{Notation and basic definitions} Throughout this book, we use the following notation and definitions. Further definitions will be given in the subsequent chapters.

Unless indicated otherwise, the symbol $I$\label{p:IntI} always denotes an arbitrary interval of the real line whose interior is nonempty.

The symbol $\S$\label{p:sS} represents either $\N$ or $\R$. For any $\S\in\{\N,\R\}$, the notation $x\to_{\S}\infty$\label{p:xtoS} means that $x$ tends to infinity, assuming only values in $\S$. We sometimes omit the subscript $\S$ when no confusion may arise.

Two functions $f\colon\R_+\to\R$ and $g\colon\R_+\to\R$ such that $f(x)/g(x)\to 1$ as $x\to_{\S}\infty$ are said to be asymptotically equivalent (over $\S$). In this case, we write
$$
f(x) ~\sim ~ g(x)\qquad\text{as $x\to_{\S}\infty$}.\label{p:AEq7}
$$

For any $x\in\R$, we set
$$
x_+ ~=~ \max\{0,x\}.\label{p:xPl}
$$
As usual, we also let $\lfloor x\rfloor$ denote the floor of $x$,\label{p:floor} i.e., the greatest integer less than or equal to $x$. Similarly, we let $\lceil x\rceil$ denote the ceiling of $x$,\label{p:ceiling} i.e., the smallest integer greater than or equal to $x$. When no confusion may arise, we let $\{x\}$ denote the fractional part of $x$,\label{p:fracpart} i.e., $\{x\}=x-\lfloor x\rfloor$.

For any $x\in\R$ and any $k\in\N$, we set
$$
x^{\underline{k}} ~=~ x(x-1){\,}\cdots{\,}(x-k+1) ~=~ \frac{\Gamma(x+1)}{\Gamma(x-k+1)}\label{p:xFFP}
$$
and we let
$$
\varepsilon_k(x) ~\in ~\{-1,0,1\}\label{p:xSigFFP}
$$
denote the sign of $x^{\underline{k}}$.

For any $k\in\N$ and any nonempty open real interval $I$, we let $\cC^k(I)$\label{p:CkI} denote the set of $k$ times continuously differentiable functions on $I$, and we set $\cC^k=\cC^k(\R_+)$.\label{p:Ck} We also introduce the intersection sets
$$
\cC^{\infty}(I) ~=~ \bigcap_{k\geq 0}\cC^k(I)\qquad\text{and}\qquad\cC^{\infty} ~=~ \bigcap_{k\geq 0}\cC^k.\label{p:Cinf}
$$

We let $\Delta$\label{p:Delta} and $D$\label{p:Diff} denote the usual difference and derivative operators, respectively. We sometimes add a subscript to specify the variable on which the operator acts, e.g., writing $\Delta_n$ and $D_x$.

Recall that the digamma function\index{digamma function|textbf} $\psi$\label{p:digamma} is defined on $\R_+$ by the equation
$$
\psi(x) ~=~ D\ln\Gamma(x)\qquad\text{for $x>0$}.
$$
The polygamma functions\index{polygamma functions|textbf} $\psi_{\nu}$\label{p:polygamma} ($\nu\in\Z$) are defined on $\R_+$ as follows (see, e.g., Srivastava and Choi \cite{SriCho12}). If $\nu\in\N$, then
$$
\psi_{\nu}(x) ~=~ D^{\nu}\psi(x) ~=~ \psi^{(\nu)}(x).
$$
In particular, $\psi_0=\psi$ is the digamma function. If $\nu\in\Z\setminus\N$, then we introduce the functions
$$
\psi_{-1}(x) ~=~ \ln\Gamma(x)
$$
and
$$
\psi_{\nu-1}(x) ~=~ \int_0^x\psi_{\nu}(t){\,}dt ~=~ \int_0^x\frac{(x-t)^{-\nu-1}}{(-\nu-1)!}{\,}\ln\Gamma(t){\,}dt.
$$

Recall also that the harmonic number function\index{harmonic number function|textbf} $x\mapsto H_x$\label{p:harmonic} is defined on $(-1,\infty)$ by the equation
$$
H_x ~=~ \sum_{k=1}^{\infty}\left(\frac{1}{k}-\frac{1}{x+k}\right)\qquad\text{for $x>-1$}.
$$
Clearly, this function has the property that
$$
\Delta_x H_x ~=~ \frac{1}{x+1}{\,},\qquad x > -1.
$$
Moreover, both functions $H_x$ and $\psi(x)$ are strongly related: we have
$$
H_{x-1} ~=~ \psi(x)+\gamma{\,},\qquad x>0{\,},
$$
where $\gamma$\label{p:EuC} is Euler's constant\index{Euler's constant|textbf} (also called Euler-Mascheroni constant).

We end this first chapter by introducing some new concepts that will be very useful in this book.

\begin{definition}
For any $a>0$, any $p\in\N$, and any $g\colon\R_+\to\R$, we define the function $\rho^p_a[g]\colon [0,\infty)\to\R$ by the equation\label{p:rho}
\begin{equation}\label{eq:deflambdapt}
\rho^p_a[g](x) ~=~ g(x+a)-\sum_{j=0}^{p-1}\tchoose{x}{j}\,\Delta^jg(a)\qquad\text{for $x\geq 0$}.
\end{equation}
\end{definition}

Identity \eqref{eq:deflambdapt} clearly shows that the function $\rho^p_a[g]$ is actually defined on the open interval $(-a,\infty)$. However, in this work we will almost always consider it as a function defined on the interval $[0,\infty)$. We also note that $\rho^p_a[g](0)=0$.

\begin{definition}\label{de:R0Sp42}
For any $p\in\N$ and any $\S\in\{\N,\R\}$, we let $\cR^p_{\S}$\label{p:RpS} denote the set of functions $g\colon\R_+\to\R$ having the asymptotic property that, for each $x>0$,
$$
\rho^p_a[g](x) ~\to ~0\qquad\text{as $a\to_{\S}\infty$}.
$$
We also let $\cD^p_{\S}$\label{p:DpS} denote the set of functions $g\colon\R_+\to\R$ having the asymptotic property that
$$
\Delta^pg(x) ~\to ~0\qquad\text{as $x\to_{\S}\infty$}.
$$
\end{definition}

We immediately observe that the inclusion $\cD^p_{\S}\subset\cD^{p+1}_{\S}$ holds for every $p\in\N$. We will see in Sections~\ref{sec:31M8r0} and \ref{sec:Ac1} that the inclusion $\cR^p_{\S}\subset\cR^{p+1}_{\S}$ also holds for every $p\in\N$.

\chapter{Preliminaries}
\label{chapter:2}

This chapter is devoted to some basic definitions and results that are needed in this book. We essentially focus on the Newton interpolation theory and the higher order convexity and concavity properties.

Recall that, unless indicated otherwise, the symbol $I$ always denotes an arbitrary real interval whose interior is nonempty.

\section{Newton interpolation theory}
\index{divided difference|(}

In this first section, we recall some basic facts about Newton interpolation theory and divided differences. We also establish a result on the derivatives of interpolating polynomials. For background see, e.g., de Boor \cite[Chapter~1]{deBoo01}, Gel'fond \cite[Chapter 1]{Gel71}, Quarteroni {\em et al.}\cite[Section 8.2.2]{QuaSacSal10}, and Stoer and Bulirsch \cite[Section 2.1.3]{StoBul10}.

Let $n\in\N$ and let $x_0,x_1,\ldots,x_n$ be any (not necessarily distinct) points of $I$. Let also $f\colon I\to\R$ be so that $D^{m_i-1}f(x_i)$ exists for $i=0,\ldots,n$, where $m_i$ is the multiplicity of $x_i$ among the points $x_0,x_1,\ldots,x_n$.

We let
$$
f[x_0,x_1,\ldots,x_n]\label{p:DivDiff}
$$
denote the divided difference of $f$ at the points $x_0,x_1,\ldots,x_n$, and we let the map
$$
x ~\mapsto ~ P_n[f](x_0,x_1,\ldots,x_n;x)\label{p:IntPol}
$$
denote the interpolating polynomial\index{interpolating polynomial|textbf} of $f$ with nodes at $x_0,x_1,\ldots,x_n$, i.e., the unique polynomial $P$ satisfying the equations
$$
D^kP(x_i) ~=~ D^kf(x_i),\qquad 0\leq k \leq m_i-1,\qquad i=0,\ldots,n.
$$
This polynomial has degree at most $n$.

Recall that $f[x_0,x_1,\ldots,x_n]$ is precisely the coefficient of $x^n$ in the interpolating polynomial $P_n[f](x_0,x_1,\ldots,x_n;x)$. More precisely, the Newton interpolation formula\index{Newton interpolation formula|textbf} states that
\begin{equation}\label{NewIntFor582}
P_n[f](x_0,x_1,\ldots,x_n;x) ~=~ \sum_{k=0}^nf[x_0,x_1,\ldots,x_k]\,\prod_{i=0}^{k-1}(x-x_i).
\end{equation}
Moreover, the corresponding interpolation error\index{interpolation error|textbf} at any $x\in I$ can take the following form
\begin{equation}\label{IntError582}
f(x) - P_n[f](x_0,x_1,\ldots,x_n;x) ~=~ f[x_0,x_1,\ldots,x_n,x]\,\prod_{i=0}^n(x-x_i).
\end{equation}

Recall also that the map
$$
(z_0,z_1,\ldots,z_n) ~\mapsto ~ f[z_0,z_1,\ldots,z_n]
$$
is symmetric, i.e., invariant under any permutation of its arguments. Moreover, the divided differences of $f$ can be computed via the following recurrence relation. For any $k\in\{0,1,\ldots,n\}$, we have $f[x_k]=f(x_k)$ and
\begin{equation}\label{eq:DivDiffRec7}
f[x_0,\ldots,x_k] ~=~
\begin{cases}
\displaystyle{\frac{f[x_1,\ldots,x_k]-f[x_0,\ldots,x_{k-1}]}{x_k-x_0}}{\,}, & \text{if $x_k\neq x_0$},\\
\displaystyle{\frac{1}{k!}{\,}D^kf(x_0)}{\,}, & \text{if $x_0 =x_1 =\cdots =x_k$}.
\end{cases}
\end{equation}
When the points $x_0,x_1,\ldots,x_n$ are pairwise distinct, we also have the following explicit expression
\begin{equation}\label{eq:DivDiffPai4Dis82}
f[x_0,x_1,\ldots,x_n] ~=~ \sum_{k=0}^n\frac{f(x_k)}{\prod_{j\neq k}(x_k-x_j)}{\,}.
\end{equation}

We now establish a proposition that shows how the derivative of an interpolating polynomial of a differentiable function $f$ is related to the derivative of $f$.

\begin{proposition}\label{prop:2A4DiffInt4Pol}
Suppose that $I$ is an arbitrary nonempty open real interval. For any $n\in\N^*$, any system $x_0 < x_1 < \cdots < x_n$ of $n+1$ points in $I$, and any differentiable function $f\colon I\to\R$, there exist $n$ points $\xi_0,\ldots,\xi_{n-1}$ in $I$ such that, for $i=0,\ldots,n-1$, we have $x_i<\xi_i<x_{i+1}$ and
\begin{equation}\label{eq:AppA3sq1}
D_xP_n[f](x_0,\ldots,x_n;x)\big|_{x=\xi_i} ~=~ f'(\xi_i).
\end{equation}
Moreover, we have
\begin{equation}\label{eq:AppA3sq2}
D_xP_n[f](x_0,\ldots,x_n;x) ~=~ P_{n-1}[f'](\xi_0,\ldots,\xi_{n-1};x)
\end{equation}
and
\begin{equation}\label{eq:AppA3sq3}
n{\,}f[x_0,\ldots,x_n] ~=~ f'[\xi_0,\ldots,\xi_{n-1}].
\end{equation}
\end{proposition}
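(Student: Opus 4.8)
The plan is to exploit the fact that the interpolant matches $f$ at the nodes, so that the difference $f-P_n[f]$ has $n+1$ zeros, and then to extract zeros of its derivative via Rolle's theorem. First I would write $P = P_n[f](x_0,\ldots,x_n;\cdot)$, a polynomial of degree at most $n$, and set $g = f - P$. Since the nodes $x_0 < \cdots < x_n$ are pairwise distinct, each has multiplicity one, so the interpolation conditions $D^kP(x_i)=D^kf(x_i)$ reduce to $g(x_i) = 0$ for $i = 0, \ldots, n$. As $f$ is differentiable (hence continuous) on $I$ and $P$ is a polynomial, $g$ is continuous on each $[x_i, x_{i+1}]$ and differentiable on its interior, so Rolle's theorem furnishes, for each $i = 0, \ldots, n-1$, a point $\xi_i$ with $x_i < \xi_i < x_{i+1}$ and $g'(\xi_i) = 0$, that is, $P'(\xi_i) = f'(\xi_i)$. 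This is exactly \eqref{eq:AppA3sq1}, and since $\xi_i < x_{i+1} < \xi_{i+1}$, the points $\xi_0 < \cdots < \xi_{n-1}$ are automatically pairwise distinct.

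For \eqref{eq:AppA3sq2} I would invoke the uniqueness of the interpolating polynomial. The derivative $P'$ is a polynomial of degree at most $n-1$ that, by the previous step, agrees with $f'$ at the $n$ distinct points $\xi_0, \ldots, \xi_{n-1}$. Since $P_{n-1}[f'](\xi_0,\ldots,\xi_{n-1};\cdot)$ is by definition the unique polynomial of degree at most $n-1$ meeting these same $n$ interpolation conditions, the two must coincide.

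Finally, \eqref{eq:AppA3sq3} should drop out by comparing the coefficients of $x^{n-1}$ on both sides of \eqref{eq:AppA3sq2}. On the left, recalling that $f[x_0,\ldots,x_n]$ is the coefficient of $x^n$ in $P$, differentiating the leading term shows the coefficient of $x^{n-1}$ in $P'$ to be $n\,f[x_0,\ldots,x_n]$. On the right, the Newton interpolation formula \eqref{NewIntFor582} identifies the coefficient of $x^{n-1}$ in $P_{n-1}[f'](\xi_0,\ldots,\xi_{n-1};\cdot)$ as the leading divided difference $f'[\xi_0,\ldots,\xi_{n-1}]$. Equating the two yields the claimed identity, and this remains valid even when $P$ has degree strictly less than $n$, as both sides then vanish.

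I expect the only delicate point to be the identification in \eqref{eq:AppA3sq2}: one must check that the $\xi_i$ are genuinely distinct, so that interpolation at these nodes is well posed, and that the degree bound $\deg P' \le n-1$ matches the degree of the target interpolant. Both are immediate here, distinctness from the nesting $\xi_i < x_{i+1} < \xi_{i+1}$ and the degree bound from $\deg P \le n$, so the argument reduces to a clean combination of Rolle's theorem with the uniqueness of polynomial interpolation.
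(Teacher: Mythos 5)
Your proof is correct and follows essentially the same route as the paper's: Rolle's theorem applied to $f-P_n[f]$ on each interval $(x_i,x_{i+1})$ gives \eqref{eq:AppA3sq1}, uniqueness of the interpolating polynomial gives \eqref{eq:AppA3sq2}, and comparing the coefficients of $x^{n-1}$ gives \eqref{eq:AppA3sq3}. The extra care you take (distinctness of the $\xi_i$, the degree bound, the degenerate case $\deg P<n$) only makes explicit what the paper leaves implicit.
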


\begin{proof}
The function $g\colon I\to\R$ defined by the equation
$$
g(x) ~=~ P_n[f](x_0,\ldots,x_n;x)-f(x)\qquad\text{for $x\in I$}
$$
vanishes at the $n+1$ points $x_0,x_1,\ldots,x_n$. The first part of the proposition then follows from applying Rolle's theorem in each interval $(x_i,x_{i+1})$. Now, identity \eqref{eq:AppA3sq2} immediately follows from \eqref{eq:AppA3sq1} and the very definition of the interpolating polynomial. Identity \eqref{eq:AppA3sq3} then follows by equating the coefficients of $x^{n-1}$ in \eqref{eq:AppA3sq2}.
\end{proof}

\index{divided difference|)}

\section{Higher order convexity and concavity}
\index{higher order convexity and concavity|(}

Let us recall the definitions of $p$-convex and $p$-concave functions and present some related results. For background see, e.g., Kuczma \cite{Kuc64}, Kuczma \cite[Chapter~15]{Kuc85}, Popoviciu \cite{Pop33}, and Roberts and Varberg \cite[pp.\ 237--240]{RobVar73}.

\begin{definition}[$p$-convexity and $p$-concavity]\label{de:pconcconv}\index{$p$-convexity|textbf}\index{$p$-concavity|textbf}
A function $f\colon I\to\R$ is said to be \emph{convex of order $p$} (resp.\ \emph{concave of order $p$}) or simply \emph{$p$-convex} (resp.\ \emph{$p$-concave}) for some integer $p\geq -1$ if for any system $x_0<x_1<\cdots < x_{p+1}$ of $p+2$ points in $I$ it holds that
$$
f[x_0,x_1,\ldots,x_{p+1}]~\geq ~0\qquad (\text{resp.}~f[x_0,x_1,\ldots,x_{p+1}]~\leq ~0).
$$
\end{definition}

Thus defined, a function $f\colon I\to\R$ is $1$-convex if it is an ordinary convex function; it is $0$-convex if it is increasing (in the wide sense); it is $(-1)$-convex if it is nonnegative.

Let us now introduce a practical notation to denote the set of $p$-convex functions and the set of $p$-concave functions.

\begin{definition}
Let $p\geq -1$ be an integer.
\begin{itemize}
\item We let $\cK^p_+(I)$ (resp.\ $\cK^p_-(I)$)\label{p:KppmI} denote the set of functions $f\colon I\to\R$ that are $p$-convex (resp.\ $p$-concave).
\item We let $\cK^p_+$ (resp.\ $\cK^p_-$)\label{p:Kppm} denote the set of functions $f\colon\R_+\to\R$ that are eventually $p$-convex (resp.\ eventually $p$-concave), i.e., $p$-convex (resp.\ $p$-concave) in a neighborhood of infinity.
\end{itemize}
We also set
$$
\cK^p(I) ~=~ \cK^p_+(I)\cup\cK^p_-(I)\qquad\text{and}\qquad\cK^p ~=~ \cK^p_+\cup\cK^p_-.\label{p:KpI}
$$
\end{definition}

The following proposition shows that both sets $\cK^p_+(I)$ and $\cK^p_-(I)$ are convex cones whose intersection is precisely the real linear space of all polynomials of degree less than or equal to $p$. A similar description of the sets $\cK^p_+$ and $\cK^p_-$ will be given in Corollary~\ref{cor:convCones48}.

\begin{proposition}\label{prop:convCones48}
For any $p\in\N$, the sets $\cK^p_+(I)$ and $\cK^p_-(I)$ are convex cones. These cones are opposite in the sense that $f$ lies in $\cK^p_+(I)$ if and only if $-f$ lies in $\cK^p_-(I)$. Moreover, the intersection $\cK^p_+(I)\cap\cK^p_-(I)$ is the real linear space of all polynomials of degree less than or equal to $p$.
\end{proposition}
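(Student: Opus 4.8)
The plan is to reduce the whole statement to two basic features of divided differences: the real-linearity of the functional $f\mapsto f[x_0,\ldots,x_{p+1}]$ and its invariance under permutations of the nodes. Since Definition~\ref{de:pconcconv} only ever evaluates divided differences at pairwise distinct points, I may read off linearity directly from the explicit formula \eqref{eq:DivDiffPai4Dis82}, which writes $f[x_0,\ldots,x_{p+1}]$ as a fixed linear combination of the values $f(x_0),\ldots,f(x_{p+1})$. Consequently, for $f,g\in\cK^p_+(I)$ and $\alpha,\beta\geq 0$ one gets $(\alpha f+\beta g)[x_0,\ldots,x_{p+1}]=\alpha f[x_0,\ldots,x_{p+1}]+\beta g[x_0,\ldots,x_{p+1}]\geq 0$, so $\cK^p_+(I)$ is closed under addition and nonnegative scalar multiplication and is therefore a convex cone; the identical computation with reversed inequalities treats $\cK^p_-(I)$. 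The same linearity, applied to the scalar $-1$, gives $(-f)[x_0,\ldots,x_{p+1}]=-f[x_0,\ldots,x_{p+1}]$, which is exactly the assertion that $f\in\cK^p_+(I)$ if and only if $-f\in\cK^p_-(I)$.

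It remains to identify the intersection. A function $f$ lies in $\cK^p_+(I)\cap\cK^p_-(I)$ exactly when $f[x_0,\ldots,x_{p+1}]$ is simultaneously $\geq 0$ and $\leq 0$ for every admissible system, i.e.\ when it vanishes on every system of $p+2$ points; by symmetry this is the same as vanishing on every set of $p+2$ distinct points of $I$. For the easy inclusion, if $f$ is a polynomial of degree $\leq p$, then $f$ has degree $\leq p+1$ and interpolates itself at any $p+2$ nodes, so uniqueness of the interpolating polynomial forces $P_{p+1}[f]=f$; its coefficient of $x^{p+1}$, which is precisely $f[x_0,\ldots,x_{p+1}]$, is therefore zero, and $f$ belongs to the intersection.

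The delicate direction, which I expect to be the main obstacle, is the converse: showing that the vanishing of all $(p+1)$-st divided differences forces $f$ to agree globally with a polynomial of degree $\leq p$. Here I would fix $p+1$ distinct nodes $a_0<\cdots<a_p$ in $I$ (available because $I$ has nonempty interior) and set $P=P_p[f](a_0,\ldots,a_p;\cdot)$, a polynomial of degree $\leq p$ matching $f$ at the $a_i$ by construction. For any remaining $x\in I\setminus\{a_0,\ldots,a_p\}$ the Newton error formula \eqref{IntError582} yields
$$
f(x)-P(x) ~=~ f[a_0,\ldots,a_p,x]\,\prod_{i=0}^p(x-a_i),
$$
and the divided difference on the right is taken at $p+2$ distinct points and hence vanishes by hypothesis. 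Thus $f(x)=P(x)$ off the nodes, while $f(a_i)=P(a_i)$ at the nodes, so $f=P$ on all of $I$. This identifies $\cK^p_+(I)\cap\cK^p_-(I)$ with the space of polynomials of degree at most $p$, which is visibly a real linear space, completing the proof.
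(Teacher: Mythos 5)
Your proposal is correct and follows essentially the same route as the paper: the cone and oppositeness claims come from linearity of divided differences, and the identification of the intersection rests on the interpolation error formula \eqref{IntError582}, exactly as in the paper's proof (your only variation is deducing the easy inclusion from the leading coefficient of $P_{p+1}[f]=f$ rather than from \eqref{IntError582} directly, which is an equivalent one-line argument).
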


\begin{proof}
That the sets $\cK^p_+(I)$ and $\cK^p_-(I)$ are convex cones is trivial; indeed, if $f_1$ and $f_2$ lie in $\cK^p_+(I)$ for instance, then so does $c_1f_1+c_2f_2$ for any $c_1,c_2\geq 0$. By definition of $\cK^p_+(I)$ and $\cK^p_-(I)$, these cones are clearly opposite. Now, let $f$ lie in $\cK^p_+(I)\cap\cK^p_-(I)$ and let $x_0<\cdots <x_p$ be $p+1$ points in $I$. By \eqref{IntError582}, for any $x\in I$ we must have
$$
f(x) - P_p[f](x_0,x_1,\ldots,x_p;x) ~=~ 0,
$$
which shows that $f$ is a polynomial of degree at most $p$. Conversely, using \eqref{IntError582} again, we can readily see that any such polynomial lies in $\cK^p_+(I)\cap\cK^p_-(I)$.
\end{proof}

We now present an important lemma. It is interesting in its own right and will be very useful in the subsequent chapters. A variant of this result can be found in Kuczma~\cite[Lemma 15.7.2]{Kuc85}.

Recall first that for any $f\colon I\to\R$, any $p\in\N$, and any $x\in I$ such that $x+p\in I$, we have
\begin{equation}\label{eq:DelDD62}
\Delta^pf(x) ~=~ p!{\,}f[x,x+1,\ldots,x+p].
\end{equation}

\begin{lemma}\label{lemma:pCInc5}
Let $p\in\N$ and let $\mathcal{I}_{p+1}$ denote the set of tuples of $I^{p+1}$ whose components are pairwise distinct. A function $f\colon I\to\R$ lies in $\cK^p_+(I)$ (resp.\ $\cK^p_-(I)$) if and only if the restriction of the map
$$
(z_0,\ldots,z_p) ~\mapsto ~ f[z_0,\ldots,z_p]
$$
to $\mathcal{I}_{p+1}$ is increasing (resp.\ decreasing) in each place. In particular, if $I$ is not upper bounded, then for any function $f$ lying in $\cK^p_+(I)$ (resp.\ $\cK^p_-(I)$), the function $\Delta^pf$ is increasing (resp.\ decreasing) on $I$.
\end{lemma}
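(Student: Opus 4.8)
The plan is to reduce everything to the two-term recurrence \eqref{eq:DivDiffRec7} for divided differences together with their symmetry. Since the map $(z_0,\ldots,z_p)\mapsto f[z_0,\ldots,z_p]$ is symmetric, its value depends only on the underlying set of nodes, and so ``increasing in each place'' is equivalent to ``increasing in the last place'' (relabeling coordinates by a permutation leaves the divided difference unchanged). I would therefore fix pairwise distinct points $z_0,\ldots,z_{p-1}\in I$, take two further points $u<v$ in $I\setminus\{z_0,\ldots,z_{p-1}\}$, and compare $f[z_0,\ldots,z_{p-1},u]$ with $f[z_0,\ldots,z_{p-1},v]$.

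First I would apply \eqref{eq:DivDiffRec7} to these $p+2$ pairwise distinct points, letting $u$ and $v$ play the role of the two outer nodes $x_0$ and $x_k$ that get removed in the two sub-differences, and use symmetry to rewrite the outcome as
\begin{equation*}
f[z_0,\ldots,z_{p-1},u,v] ~=~ \frac{f[z_0,\ldots,z_{p-1},v]-f[z_0,\ldots,z_{p-1},u]}{v-u}.
\end{equation*}
As $v-u>0$, this identity shows immediately that $f[z_0,\ldots,z_{p-1},v]\geq f[z_0,\ldots,z_{p-1},u]$ holds if and only if the divided difference of order $p+1$ satisfies $f[z_0,\ldots,z_{p-1},u,v]\geq 0$. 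Because this last quantity depends only on the node set, as $z_0,\ldots,z_{p-1},u,v$ range over all tuples of pairwise distinct points of $I$ with $u<v$ it runs exactly over all $f[x_0,\ldots,x_{p+1}]$ with $x_0<\cdots<x_{p+1}$. Hence ``increasing in the last place'' is equivalent to ``$f[x_0,\ldots,x_{p+1}]\geq 0$ for every increasing system of $p+2$ nodes in $I$'', which is precisely $f\in\cK^p_+(I)$; the concave case is identical with the inequalities reversed. This establishes the equivalence.

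For the ``in particular'' statement I would invoke \eqref{eq:DelDD62}, giving $\Delta^pf(x)=p!{\,}f[x,x+1,\ldots,x+p]$, so that monotonicity of $\Delta^pf$ is monotonicity of $x\mapsto f[x,x+1,\ldots,x+p]$. Given $x<x'$ in $I$, I would pass from the tuple $(x,x+1,\ldots,x+p)$ to $(x',x'+1,\ldots,x'+p)$ by raising the coordinates one at a time in decreasing order of index: at the $i$-th step the $i$-th coordinate is moved from $x+i$ up to $x'+i$ while the lower-index coordinates still sit at $x+j<x+i$ and the higher-index coordinates already sit at $x'+j>x'+i$. Applying the monotonicity-in-each-place property (now available) along each of these $p+1$ single-coordinate increases yields $f[x,\ldots,x+p]\leq f[x',\ldots,x'+p]$, as required.

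The algebra is short; the step requiring the most care is the bookkeeping in this last paragraph, namely verifying that the staircase path from $(x,\ldots,x+p)$ to $(x',\ldots,x'+p)$ never leaves $\mathcal{I}_{p+1}$ — that no two coordinates ever coincide, and that all intermediate values remain in $I$ (which is where the hypothesis that $I$ is unbounded above, together with the fact that the coordinates only increase, is used). This is exactly what licenses applying a property about moving one node at a time to a simultaneous shift of all nodes. Everything else reduces to the single recurrence identity and the symmetry of divided differences.
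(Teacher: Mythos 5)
Your proof is correct and takes essentially the same route as the paper's: the equivalence is derived from the recurrence relation \eqref{eq:DivDiffRec7} together with the symmetry of divided differences (you monotonize in the last place, the paper in the first, which is immaterial), and the ``in particular'' statement comes from \eqref{eq:DelDD62}. Your explicit staircase argument passing from $(x,x+1,\ldots,x+p)$ to $(x',x'+1,\ldots,x'+p)$ one coordinate at a time merely spells out the bookkeeping that the paper leaves implicit in its one-line appeal to \eqref{eq:DelDD62}.
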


\begin{proof}
Using the definition of $p$-convexity and the standard recurrence relation \eqref{eq:DivDiffRec7} for divided differences,\index{divided difference} we can see that $f$ lies in $\cK^p_+(I)$ if and only if, for any pairwise distinct $x_0,\ldots,x_{p+1}\in I$, we have
$$
\frac{f[x_1,x_2\ldots,x_{p+1}]-f[x_0,x_2\ldots,x_{p+1}]}{x_1-x_0} ~\geq ~ 0.
$$
Equivalently, for any pairwise distinct $x_0,\ldots,x_{p+1}\in I$, we have
$$
x_1>x_0\quad\Rightarrow\quad f[x_1,x_2\ldots,x_{p+1}]-f[x_0,x_2\ldots,x_{p+1}] ~\geq ~ 0.
$$
The latter condition exactly means that the map defined in the statement is increasing in the first place. Since this map is symmetric, it must be increasing in each place. The second part of the lemma follows from \eqref{eq:DelDD62}.
\end{proof}

We end this section with a second lemma, which provides some important connections between higher order convexity and higher order differentiability. In fact, these connections can be derived (sometimes tediously) from various results given in the references mentioned in the beginning of this section, especially the book by Kuczma~\cite[Chapter~15]{Kuc85}. However, for the sake of self-containment we provide a detailed proof in Appendix~\ref{chapter:pconv184}.

\begin{lemma}\label{lemma:PrelKp}
Let $I$ be an nonempty open real interval and let $p\in\N$. Then the following assertions hold.
\begin{enumerate}
\item[(a)] We have $\cK^{p+1}(I)\subset\cC^p(I)$.
\item[(b)] Assume that $I$ is not upper bounded. If $f\in\cK^p_+(I)$, then $\Delta^jf\in\cK^{p-j}_+(I)$ for every $j\in\{0,\ldots,p+1\}$.
\item[(c)] If $f\in\cC^{j}(I)\cap\cK^p_+(I)$ for some $j\in\{0,\ldots,p+1\}$, then $f^{(j)}\in\cK^{p-j}_+(I)$.
\item[(d)] If $f\in\cC^1(I)$ and $f'\in\cK^{p-1}_+(I)$, then $f\in\cK^p_+(I)$.
\end{enumerate}
\end{lemma}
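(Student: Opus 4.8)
The plan is to treat the four assertions separately, drawing on the tools already assembled: the explicit formula \eqref{eq:DivDiffPai4Dis82} and the recurrence \eqref{eq:DivDiffRec7} for divided differences, the monotonicity statement of Lemma~\ref{lemma:pCInc5}, and the Rolle-type identity \eqref{eq:AppA3sq3} of Proposition~\ref{prop:2A4DiffInt4Pol}. Throughout I reduce to the convex (subscript $+$) case, the concave case following at once by replacing $f$ with $-f$. Assertion (d) is then immediate from Proposition~\ref{prop:2A4DiffInt4Pol}: given $f\in\cC^1(I)$ with $f'\in\cK^{p-1}_+(I)$ and any $x_0<\cdots<x_{p+1}$ in $I$, identity \eqref{eq:AppA3sq3} with $n=p+1$ produces points $\xi_0<\cdots<\xi_p$ with $(p+1){\,}f[x_0,\ldots,x_{p+1}]=f'[\xi_0,\ldots,\xi_p]$; the right-hand side is a divided difference of $f'$ on $p+1$ points, hence nonnegative because $f'\in\cK^{p-1}_+(I)$, so $f[x_0,\ldots,x_{p+1}]\geq 0$, i.e.\ $f\in\cK^p_+(I)$.

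For assertion (b), I use that divided differences are linear in the function and covariant under translation of the nodes, so that $(\Delta f)[z_0,\ldots,z_p]=f[z_0+1,\ldots,z_p+1]-f[z_0,\ldots,z_p]$. Since $I$ is not upper bounded, all shifted nodes remain in $I$, and Lemma~\ref{lemma:pCInc5} (applied to $f\in\cK^p_+(I)$) shows that raising the nodes one at a time, largest first to avoid collisions, never decreases the divided difference; hence $(\Delta f)[z_0,\ldots,z_p]\geq 0$ and $\Delta f\in\cK^{p-1}_+(I)$. Iterating this $j$ times, each step lowering the order by one, yields $\Delta^jf\in\cK^{p-j}_+(I)$ for every $j\in\{0,\ldots,p+1\}$, the terminal case $j=p+1$ recording merely that the forward difference of an increasing function is nonnegative.

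The engine behind assertions (a) and (c) is the one-step lemma: if $g\colon I\to\R$ is differentiable and lies in $\cK^m_+(I)$, then $g'\in\cK^{m-1}_+(I)$. To prove it, fix distinct $\xi_0<\cdots<\xi_m$ and set $F(t)=g[\xi_0+t,\ldots,\xi_m+t]$ for small $t$. By \eqref{eq:DivDiffPai4Dis82}, $F(t)=\sum_k g(\xi_k+t)/\prod_{j\neq k}(\xi_k-\xi_j)$, so $F$ is differentiable at $0$ and, applying \eqref{eq:DivDiffPai4Dis82} to $g'$, we get $F'(0)=g'[\xi_0,\ldots,\xi_m]$; on the other hand, for $t$ smaller than the least gap between nodes, Lemma~\ref{lemma:pCInc5} shows $F$ is nondecreasing, whence $F'(0)\geq 0$. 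Assertion (c) then follows by induction on $j$: since $f\in\cC^j(I)$, each iterate $f^{(i)}$ with $i<j$ is differentiable, and applying the one-step lemma to $g=f^{(i)}\in\cK^{p-i}_+(I)$ lowers both the differentiation and the convexity order by one, giving $f^{(j)}\in\cK^{p-j}_+(I)$.

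Assertion (a) is the deep one and is where the real work lies. I would argue by induction on the order, proving $\cK^{n}(I)\subset\cC^{n-1}(I)$ for $n\geq 1$. In the inductive step ($n\geq 2$) one knows $f\in\cK^n_+(I)$ is differentiable (see below), so the one-step lemma gives $f'\in\cK^{n-1}_+(I)$, and the induction hypothesis upgrades $f'$ to $\cC^{n-2}(I)$, whence $f\in\cC^{n-1}(I)$. The base case $n=1$ and the differentiability claim used in the step constitute the classical regularity theory of higher-order convex functions, and this is the main obstacle: one must show that a convex function is continuous and that a convex function of order $\geq 2$ is everywhere differentiable on the interior. Both are obtained from the monotonicity of divided differences in Lemma~\ref{lemma:pCInc5} together with a careful analysis of one-sided difference quotients, proving that the left and right derivatives exist and coincide. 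Because this argument is genuinely technical and the orders of convexity are not nested (a $2$-convex function need not be convex, so one cannot simply feed $f$ back into the induction hypothesis), I would carry it out in full in Appendix~\ref{chapter:pconv184} rather than inline.
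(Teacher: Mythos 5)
Your treatments of (b), (c), and (d) are correct. For (d) you use exactly the paper's argument (identity \eqref{eq:AppA3sq3} with $n=p+1$). For (b) and (c) you take genuinely different, but valid, routes: the paper proves a telescoping identity for step-$h$ difference quotients (its Proposition~\ref{prop:4d65SIOT}), deduces that $\frac{1}{h}\Delta_{[h]}f$ inherits convexity of one order lower (Corollary~\ref{cor:4d65SIOTc}), gets (b) with $h=1$, and then gets (c) by writing $f'$ as a pointwise limit of the functions $n\Delta_{[1/n]}f$ and using closure of $\cK^{p-1}_+(I)$ under pointwise limits. Your node-raising argument for (b) and your shifted-divided-difference derivative $F'(0)=g'[\xi_0,\ldots,\xi_m]$ for (c) are cleaner in one respect: they are purely local, whereas the paper's limit functions $n\Delta_{[1/n]}f$ are only defined on all of $I$ when $I$ is unbounded above, a point the paper glosses over. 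What the paper's route buys is a single step-$h$ identity that serves both (b) and (c) at once.

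Assertion (a), however, contains a genuine gap, and it is precisely where you say "the real work lies." Your inductive skeleton (differentiability of $f$, one-step lemma for $f'$, induction hypothesis applied to $f'$) is exactly the paper's, but the claim that a $q$-convex function with $q\geq 2$ is differentiable is only asserted, with a sketch ("one-sided difference quotients, proving that the left and right derivatives exist and coincide") that does not by itself close the argument. Monotonicity from Lemma~\ref{lemma:pCInc5} makes $h\mapsto f[a,a+h,x_1,\ldots,x_{q-2}]$ increasing, which (with local boundedness) gives existence of finite one-sided limits as $h\to 0^{\pm}$, but gives no reason for those limits to \emph{coincide}. The paper's mechanism for coincidence is a Lipschitz estimate: monotonicity in each place bounds the $(q+1)$-point divided-difference map on compact boxes $J^{q+1}$, and the recurrence \eqref{eq:DivDiffRec7} applied to the three clustered nodes $a$, $a+h_1$, $a+h_2$ yields
$$
\left|f[a,a+h_2,x_1,\ldots,x_{q-2}]-f[a,a+h_1,x_1,\ldots,x_{q-2}]\right| ~\leq ~ C_J{\,}|h_2-h_1|
$$
for $h_1,h_2$ of either sign, so the limit as $h\to 0$ exists two-sidedly (this is where $q\geq 2$ enters: one needs three nodes to cluster at $a$). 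A separate ingredient, which your sketch also omits, is the conversion of this limit into differentiability of $f$ at $a$: the paper proves a differentiability test (its Lemma~\ref{lemma:Diff4Test72}, an induction on \eqref{eq:DivDiffRec7}) stating that finiteness of $\lim_{h\to 0}f[a,a+h,x_1,\ldots,x_n]$ implies $f$ is differentiable at $a$. Since the whole point of the lemma in this paper is a self-contained proof (the main text explicitly declines to import these facts from Kuczma), deferring the differentiability claim to "classical regularity theory" leaves the central assertion unproved.
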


\begin{proof}
See Appendix~\ref{chapter:pconv184}.
\end{proof}

\index{higher order convexity and concavity|)}

\section{A key lemma}
\label{sec:NewtInt62}

Let $p\in\N$, $a>0$, and $f\colon\R_+\to\R$. Combining Newton's interpolation formula \eqref{NewIntFor582}\index{Newton interpolation formula}  with identity \eqref{eq:DelDD62}, we can readily see that the unique interpolating polynomial\index{interpolating polynomial} of $f$ with nodes at the $p$ points $a,a+1,\ldots,a+p-1$ takes the form
\begin{equation}\label{eq:NewtonInt}
P_{p-1}[f](a,a+1,\ldots,a+p-1;x) ~=~ \sum_{j=0}^{p-1}\tchoose{x-a}{j}\,\Delta^jf(a).
\end{equation}
If $p=0$, then this polynomial is naturally the zero polynomial, which is assumed to have degree $-1$. Moreover, using \eqref{IntError582} we can immediately see that the corresponding interpolation error\index{interpolation error} at any $x>0$ is
\begin{equation}\label{eq:GenErrIn}
f(x)-\sum_{j=0}^{p-1}\tchoose{x-a}{j}\,\Delta^jf(a) ~=~ (x-a)^{\underline{p}}{\,}f[a,a+1,\ldots,a+p-1,x].
\end{equation}
Now, the right side of \eqref{eq:GenErrIn} is actually the remainder of the $(p-1)$th degree Newton expansion of $f(x)$ about $x=a$ (see, e.g., Graham {\em et al.} \cite[Section 5.3]{GraKnuPat94}). Note also that formula \eqref{eq:GenErrIn} is a pure identity in the sense that it is valid without any restriction on the form of $f(x)$.


Using \eqref{eq:NewtonInt} and \eqref{eq:GenErrIn} we see that, for any $a>0$, any $x\geq 0$, any $p\in\N$, and any $g\colon\R_+\to\R$, the quantity $\rho^p_a[g](x)$ defined in \eqref{eq:deflambdapt} is precisely the interpolation error at $a+x$ when considering the interpolating polynomial\index{interpolating polynomial} of $g$ with nodes at $a,a+1,\ldots,a+p-1$. We then immediately derive the following identities:
\begin{eqnarray}
\rho^p_a[g](x) &=& g(a+x)-P_{p-1}[g](a,a+1,\ldots,a+p-1;a+x){\,},\label{eq:LErrIn}\\
\rho^p_a[g](x) &=& x^{\underline{p}}{\,}g[a,a+1,\ldots,a+p-1,a+x]{\,}.\label{eq:LDDiv}
\end{eqnarray}
We note that identity \eqref{eq:LDDiv} also extends to the case when $x\in\{0,1,\ldots,p-1\}$, even if $g$ is not differentiable. Indeed, in this case we must have $\rho^p_a[g](x)=0$ by \eqref{eq:LErrIn}.

We now end this chapter with a key lemma that will be used repeatedly in this book. Although this lemma is rather technical, it is at the root of various fundamental convergence results of our theory. Recall first that, for any $k\in\N$, the symbol $\varepsilon_k(x)$ stands for the sign of $x^{\underline{k}}$.

\begin{lemma}\label{lemma:VarEpsIneq}
Let $p\in\N$, $f\in\cK^p$, and $a>0$ be so that $f$ is $p$-convex or $p$-concave on $[a,\infty)$. Then, for any $x\geq 0$, we have
\begin{eqnarray*}
0 ~\leq ~ \pm{\,}\varepsilon_{p+1}(x){\,}\rho_a^{p+1}[f](x)
&\leq &  \pm{\,}\left|\tchoose{x-1}{p}\right|\left(\Delta^pf(a+x)-\Delta^pf(a)\right)\\
&\leq & \pm\left|\tchoose{x-1}{p}\right|\sum_{j=0}^{\lceil x\rceil -1}\Delta^{p+1}f(a+j),
\end{eqnarray*}
where $\pm$ stands for $1$ or $-1$ according to whether $f$ lies in $\cK^p_+$ or $\cK^p_-${\,}. Moreover, if $x\in\{0,1,\ldots,p\}$ (i.e., $\varepsilon_{p+1}(x)=0$), then $\rho_a^{p+1}[f](x)=0$.
\end{lemma}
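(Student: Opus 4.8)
The plan is to prove the statement first for $f\in\cK^p_+$, the $\cK^p_-$ case following at the end by applying the result to $-f$: since $\rho^{p+1}_a$, $\Delta^p$ and $\Delta^{p+1}$ are all linear in $f$, and since $-f\in\cK^p_+$ whenever $f\in\cK^p_-$ by Proposition~\ref{prop:convCones48}, negating the entire chain obtained for $-f$ reproduces the chain for $f$ with the sign $\pm$ reversed. The case $x\in\{0,1,\ldots,p\}$ is dealt with at once: here $x^{\underline{p+1}}=0$, so identity \eqref{eq:LDDiv} gives $\rho^{p+1}_a[f](x)=0$, and one checks that the middle and right members vanish as well (for $x\in\{1,\ldots,p\}$ the factor $\binom{x-1}{p}$ is zero, for $x=0$ both $\Delta^pf(a+x)-\Delta^pf(a)$ and the empty sum vanish). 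Thus I may assume $x>0$ with $x\notin\{1,\ldots,p\}$, i.e.\ $\varepsilon_{p+1}(x)\neq 0$.

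For the lower bound I would start from \eqref{eq:LDDiv}, which gives $\rho^{p+1}_a[f](x)=x^{\underline{p+1}}\,f[a,a+1,\ldots,a+p,a+x]$, whence $\varepsilon_{p+1}(x)\,\rho^{p+1}_a[f](x)=|x^{\underline{p+1}}|\,f[a,\ldots,a+p,a+x]$. The $p+2$ points $a,a+1,\ldots,a+p,a+x$ are pairwise distinct and lie in $[a,\infty)$, so the definition of $p$-convexity (Definition~\ref{de:pconcconv}) forces this divided difference to be nonnegative, giving $0\leq \varepsilon_{p+1}(x)\,\rho^{p+1}_a[f](x)$.

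The middle inequality is the crux. Writing $|x^{\underline{p+1}}|=x\,p!\,\bigl|\binom{x-1}{p}\bigr|$ (valid since $x>0$) and cancelling the common positive factor $\bigl|\binom{x-1}{p}\bigr|$, then using the recurrence \eqref{eq:DivDiffRec7} in the form $x\,f[a,\ldots,a+p,a+x]=f[a+1,\ldots,a+p,a+x]-f[a,\ldots,a+p]$ together with \eqref{eq:DelDD62} to identify $\Delta^pf(a+x)-\Delta^pf(a)$ with $p!\bigl(f[a+x,\ldots,a+x+p]-f[a,\ldots,a+p]\bigr)$, the inequality reduces to the single comparison
\[
f[a+1,a+2,\ldots,a+p,a+x]~\leq~ f[a+x,a+x+1,\ldots,a+x+p].
\]
Both sides are $p$th order divided differences, so I would invoke Lemma~\ref{lemma:pCInc5}: the map $(z_0,\ldots,z_p)\mapsto f[z_0,\ldots,z_p]$ is increasing in each place. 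The argument is then to verify that the $(p+1)$-element node set on the right dominates the one on the left componentwise after sorting, via a short case analysis according to whether $x\in(0,1)$, $x\in(k,k+1)$ for some $1\leq k\leq p-1$, or $x>p$, and then to raise the left nodes to the right nodes one at a time, choosing the order of the moves (largest first) so that distinctness is preserved at each step and Lemma~\ref{lemma:pCInc5} applies. I expect this componentwise domination, together with the bookkeeping needed to keep the intermediate tuples distinct, to be the main obstacle.

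For the final inequality I would telescope: by \eqref{eq:DelDD62} and the definition of $\Delta$, $\sum_{j=0}^{\lceil x\rceil-1}\Delta^{p+1}f(a+j)=\Delta^pf(a+\lceil x\rceil)-\Delta^pf(a)$, so after cancelling $\bigl|\binom{x-1}{p}\bigr|$ it remains to show $\Delta^pf(a+x)\leq \Delta^pf(a+\lceil x\rceil)$. Since $\lceil x\rceil\geq x$ and, by the second part of Lemma~\ref{lemma:pCInc5}, $\Delta^pf$ is increasing on the unbounded interval $[a,\infty)$, this is immediate. Assembling the three inequalities and the reduction to the $\cK^p_-$ case completes the proof.
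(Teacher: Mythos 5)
Your proof is correct and follows essentially the same route as the paper's: reduction to $\cK^p_+$ by negation, the trivial case $x\in\{0,1,\ldots,p\}$ (where the paper cites \eqref{eq:LErrIn} rather than \eqref{eq:LDDiv}, since the latter involves repeated nodes there), nonnegativity via \eqref{eq:LDDiv} and $p$-convexity, the recurrence \eqref{eq:DivDiffRec7} with \eqref{eq:DelDD62} for the middle inequality, and monotonicity of $\Delta^pf$ plus telescoping for the last. The componentwise-domination bookkeeping you flag as the main obstacle is exactly what the paper dispatches in one line by Lemma~\ref{lemma:pCInc5}: raise $a+j$ to $a+x+j$ one place at a time, which preserves pairwise distinctness at every step precisely because $x\notin\{0,1,\ldots,p\}$, so no genuine case analysis is needed.
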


\begin{proof}
If $x\in\{0,1,\ldots,p\}$, then we have that $\rho_a^{p+1}[f](x)=0$ by \eqref{eq:LErrIn}, and then the inequalities hold trivially. Let us now assume that $x\notin\{0,1,\ldots,p\}$, which means that $\varepsilon_{p+1}(x)\neq 0$. Negating $f$ if necessary, we may assume that it lies in $\cK^p_+$. By \eqref{eq:LDDiv} we then have
$$
\varepsilon_{p+1}(x)\,\rho^{p+1}_a[f](x) ~=~ \varepsilon_{p+1}(x){\,}x^{\underline{p+1}}{\,}f[a,a+1,\ldots,a+p,a+x] ~\geq 0.
$$
Hence, using identities \eqref{eq:DivDiffRec7} and \eqref{eq:DelDD62} and Lemma~\ref{lemma:pCInc5}, we obtain
\begin{eqnarray*}
0 &\leq & \varepsilon_{p+1}(x)\,\rho^{p+1}_a[f](x)\\
&=& \varepsilon_{p+1}(x){\,}x^{\underline{p+1}}{\,}f[a,a+1,\ldots,a+p,a+x]\\
&=& \varepsilon_{p+1}(x){\,}(x-1)^{\underline{p}}{\,}(f[a+x,a+1,\ldots,a+p]-f[a,a+1,\ldots,a+p])\\
&\leq & \varepsilon_{p+1}(x)\,\tchoose{x-1}{p}{\,}(\Delta^pf(a+x)-\Delta^pf(a))\\
&\leq & \varepsilon_{p+1}(x)\,\tchoose{x-1}{p}{\,}(\Delta^pf(a+\lceil x\rceil)-\Delta^pf(a)),
\end{eqnarray*}
which proves the first two inequalities. The third one can be immediately proved using a telescoping sum.
\end{proof}


\chapter{Uniqueness and existence results}
\label{chapter:3}

In this chapter, we establish Theorems~\ref{thm:int2} and \ref{thm:int1} and show that, under the assumptions of these theorems, the sequence $n\mapsto f^p_n[g]$ converges uniformly on any bounded subset of $\R_+$. We also discuss the particular case where the sequence $n\mapsto g(n)$ is summable. Lastly, we provide historical notes on Krull-Webster's theory and some of its improvements.

Although their proofs are short and elementary, the main results given in this chapter are of utmost importance. They constitute the fundamental cornerstone of the whole theory developed in this book.

\section{Main results}
\label{sec:31M8r0}

We start this chapter by establishing a slightly improved version of our uniqueness Theorem~\ref{thm:int1}. We state this new version in Theorem~\ref{thm:unic} below and provide a very short proof. Let us first note that any solution $f\colon\R_+\to\R$ to the equation $\Delta f=g$ satisfies trivially the equations
\begin{eqnarray}
f(n) &=& f(1) +\sum_{k=1}^{n-1}g(k),\qquad n\in\N^*;\label{eq:fnEf1Sum}\\
f(x+n) &=& f(x) + \sum_{k=0}^{n-1}g(x+k),\qquad n\in\N.\label{eq:fxnEfxSum}
\end{eqnarray}
Moreover, using \eqref{eq:defFpn}, \eqref{eq:deflambdapt}, \eqref{eq:fnEf1Sum}, and \eqref{eq:fxnEfxSum}, we can easily derive the identity
\begin{equation}\label{eq:ff01flam}
f(x) ~=~ f(1) + f^p_n[g](x) + \rho^{p+1}_n[f](x),\qquad n\in\N^*.
\end{equation}
We also observe that the identity obtained by setting $p=0$ in \eqref{eq:ff01flam} can also be derived by subtracting \eqref{eq:fxnEfxSum} from \eqref{eq:fnEf1Sum}.

\begin{theorem}[Uniqueness]\label{thm:unic}\index{uniqueness theorem|textbf}
Let $p\in\N$ and $g\in\cD^p_{\S}$. Suppose that $f\colon\R_+\to\R$ is a solution to the equation $\Delta f=g$ that lies in $\cK^p$. Then, the following assertions hold.
\begin{enumerate}
\item[(a)] We have that $f\in\cR^{p+1}_{\S}$.
\item[(b)] For each $x>0$, the sequence $n\mapsto f^p_n[g](x)$ converges and we have
$$
f(x) ~=~ f(1) + \lim_{n\to\infty}f^p_n[g](x){\,}.
$$
\item[(c)] The sequence $n\mapsto f^p_n[g]$ converges uniformly on any bounded subset of\/ $\R_+$ to $f-f(1)$.
\end{enumerate}
\end{theorem}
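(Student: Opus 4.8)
The plan is to derive all three assertions from the single pointwise identity \eqref{eq:ff01flam}, namely $f(x) = f(1) + f^p_n[g](x) + \rho^{p+1}_n[f](x)$, by controlling the remainder term $\rho^{p+1}_n[f](x)$ through the key Lemma~\ref{lemma:VarEpsIneq}. The crucial preliminary observation is that, since $\Delta f = g$, we have $\Delta^{p+1}f = \Delta^p g$, so the hypothesis $g\in\cD^p_{\S}$ says precisely that $\Delta^{p+1}f(t)\to 0$ as $t\to_{\S}\infty$. This is exactly the quantity that appears on the right-hand side of the estimate in Lemma~\ref{lemma:VarEpsIneq}.

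First I would establish (a). Since $f\in\cK^p$, there is some $a_0>0$ such that $f$ is $p$-convex or $p$-concave on $[a_0,\infty)$, so for every $a\geq a_0$ Lemma~\ref{lemma:VarEpsIneq} applies. Taking absolute values in its conclusion and using $\Delta^{p+1}f=\Delta^p g$, one obtains the bound
$$
|\rho^{p+1}_a[f](x)| ~\leq~ \left|\tchoose{x-1}{p}\right|\sum_{j=0}^{\lceil x\rceil -1}|\Delta^p g(a+j)|,
$$
which also holds trivially when $x\in\{0,1,\ldots,p\}$ since then $\rho^{p+1}_a[f](x)=0$. For each fixed $x>0$ the right-hand side is a finite sum, each of whose terms tends to $0$ as $a\to_{\S}\infty$; hence $\rho^{p+1}_a[f](x)\to 0$, which is exactly the statement $f\in\cR^{p+1}_{\S}$.

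Assertion (b) is then immediate: rewriting \eqref{eq:ff01flam} as $f^p_n[g](x) = f(x)-f(1)-\rho^{p+1}_n[f](x)$ and letting $n\to\infty$ over the integers, the remainder vanishes by (a) (the integer convergence being covered whether $\S=\N$ or $\S=\R$), so the sequence converges and $f(x)=f(1)+\lim_{n\to\infty}f^p_n[g](x)$. For (c), I would note that the bound above is in fact uniform on bounded sets: on any subset contained in $(0,M]$, the polynomial $x\mapsto|\tchoose{x-1}{p}|$ is bounded by some constant $C_M$ and the number of summands is at most $\lceil M\rceil =: N$, so
$$
\sup_{0<x\leq M}|\rho^{p+1}_n[f](x)| ~\leq~ C_M\sum_{j=0}^{N-1}|\Delta^p g(n+j)|,
$$
and this fixed finite sum tends to $0$ as $n\to\infty$. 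Since $f^p_n[g]-(f-f(1))=-\rho^{p+1}_n[f]$, this yields the claimed uniform convergence to $f-f(1)$.

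The genuinely substantive work is all packaged inside Lemma~\ref{lemma:VarEpsIneq}, so once that lemma is granted, the proof is short and the only points requiring care are bookkeeping: correctly invoking the lemma (which needs $f$ eventually $p$-convex or $p$-concave, obtained from $f\in\cK^p$, together with the identification $\Delta^{p+1}f=\Delta^p g$), and observing that the $x$-dependence of the bound is controlled uniformly on bounded sets by the polynomial factor and by the bounded number of terms $\lceil x\rceil$. I expect the main obstacle, if any, to be merely the sign bookkeeping in passing from the signed estimate of Lemma~\ref{lemma:VarEpsIneq} to the absolute-value bound used throughout.
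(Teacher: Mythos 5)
Your proposal is correct and follows essentially the same route as the paper's own proof: both rest on identity \eqref{eq:ff01flam} combined with Lemma~\ref{lemma:VarEpsIneq} applied to $f$ (via the identification $\Delta^{p+1}f=\Delta^p g$, i.e.\ $f\in\cD^{p+1}_{\S}$), with the uniform bound on bounded sets obtained exactly as in the paper by majorizing $\bigl|\tchoose{x-1}{p}\bigr|$ and the number of summands $\lceil x\rceil$. The only cosmetic difference is that you phrase the estimate with absolute values where the paper invokes the squeeze theorem on the signed inequalities, which is the same argument.
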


\begin{proof}
We clearly have that $f\in\cD^{p+1}_{\S}$. Assertion (a) then follows from Lemma~\ref{lemma:VarEpsIneq} and the squeeze theorem. Assertion (b) follows from assertion (a) and identity \eqref{eq:ff01flam}. Now, let $E$ be any bounded subset of\/ $\R_+$. Using again identity \eqref{eq:ff01flam} and Lemma~\ref{lemma:VarEpsIneq}, for large integer $n$ we obtain
\begin{eqnarray*}
\sup_{x\in E}\left|f^p_n[g](x)-f(x)+f(1)\right| &=& \sup_{x\in E}\left|\rho^{p+1}_n[f](x)\right|\\
&\leq & \sup_{x\in E}\left|\tchoose{x-1}{p}\right|~\sum_{j=0}^{\lceil \sup E\rceil -1}\left|\Delta^{p+1}f(n+j)\right|.
\end{eqnarray*}
This establishes assertion (c).
\end{proof}

\begin{example}\label{ex:unicGa4}
Using Theorem~\ref{thm:unic} with $g(x)=\ln x$ and $p=1$, we obtain that all solutions $f\colon\R_+\to\R$ lying in $\cK^1$ to the equation $\Delta f(x)=\ln x$ are of the form $f(x)=c+\ln\Gamma(x)$, where $c\in\R$. We thus simply retrieve both Bohr-Mollerup's Theorem~\ref{thm:BM538Thm9}\index{Bohr-Mollerup theorem} and Gauss' limit \eqref{eq:GaussLimit42}, as expected. We also observe that the set $\cK^1$ cannot be replaced with $\cK^0$ in this characterization. For example, the function
$$
f(x) ~=~ \textstyle{\ln\Gamma (x)+\ln(1+\frac{1}{2}\sin(2\pi x))}
$$
is also a solution lying in $\cK^0$ to the equation $\Delta f(x)=\ln x$.
\end{example}

\begin{remark}
We note that the assumption that $\ln f$ is convex in Bohr-Mollerup's Theorem~\ref{thm:BM538Thm9} can be easily replaced with the fact that $\ln f$ lies in $\cK^1_+$ (without using the uniqueness Theorem~\ref{thm:unic}). Indeed, if $\ln f$ is convex on $[n,\infty)$ for some $n\in\N$, then using \eqref{eq:fxnEfxSum} we have that
$$
\ln f(x) ~=~ \ln f(x+n)-\sum_{k=0}^{n-1}\ln(x+k),\qquad x>0,
$$
and hence $\ln f$ must be convex on $\R_+$ (as a finite sum of convex functions on $\R_+$). We can also replace $\cK^1_+$ with $\cK^1$; indeed, assuming that $\ln f$ lies in $\cK^1_-$, we would obtain that $\Delta\ln f(x)=\ln x$ lies in $\cK^0_-$ by Lemma~\ref{lemma:PrelKp}(b), a contradiction.
\end{remark}

\begin{remark}[A proof of Bohr-Mollerup's theorem]\index{Bohr-Mollerup theorem}
We have seen in Example~\ref{ex:unicGa4} how both Bohr-Mollerup's theorem and Gauss' limit can be retrieved using our results. Let us now examine our proof in a self-contained way, using the needed arguments only. Let $f\colon\R_+\to\R$ be an eventually convex solution to the equation $\Delta f(x)=\ln x$. The nature of this equation shows that it is actually enough to assume that $x>1$ to find the form of $f(x)$. For any $n\in\N^*$ and any $x>1$, we then have
$$
f(n) ~=~ f(1)+\sum_{k=1}^{n-1}\ln k\qquad\text{and}\qquad f(x+n) ~=~ f(x)+\sum_{k=0}^{n-1}\ln(x+k)
$$
and hence also the identity
$$
f(x) ~=~ f(1)+\left(\sum_{k=1}^{n-1}\ln k-\sum_{k=0}^{n-1}\ln(x+k)+x\ln n\right)+\rho_n(x),
$$
where
$$
\rho_n(x) ~=~ f(x+n)-f(n)-x\ln n.
$$
To conclude the proof, we only need to show that, for each $x>1$, the sequence $n\mapsto \rho_n(x)$ converges to zero. Let $n\in\N^*$ be so that $f$ is convex on $[n,\infty)$. Using the convexity of $f$ we then obtain the following two inequalities
\begin{eqnarray*}
f(n+1) & \leq & \textstyle{(1-\frac{1}{x}){\,}f(n)+\frac{1}{x}{\,}f(x+n)}{\,},\\
f(n+x) & \leq & \textstyle{\frac{1}{x}{\,}f(n+1)+(1-\frac{1}{x}){\,}f(x+n+1)}{\,}.
\end{eqnarray*}
Using these inequalities and the identity $f(n+1)-f(n)=\ln n$, we obtain
\begin{eqnarray*}
0 &\leq & \rho_n(x) ~=~ f(x+n)-f(n+1)-(x-1)\ln n\\
&\leq & (x-1){\,}(f(x+n+1)-f(x+n)-\ln n) ~=~ (x-1){\,}\textstyle{\ln(1+\frac{x}{n})}.
\end{eqnarray*}
The proof is now complete since the latter expression converges to zero as $n\to\infty$. This shows to which extent the proofs of Bohr-Mollerup's theorem and Gauss' limit can be short and elementary. Note that a variant of this proof can be derived from the proof of Webster's uniqueness theorem \cite[Theorem 3.1]{Web97b}.
\end{remark}

Now that we have established the uniqueness Theorem~\ref{thm:unic}, let us prepare the ground for the existence theorem. Using the definition of $\rho^p_a[g](x)$ given in \eqref{eq:deflambdapt}, we can easily derive the following two identities
\begin{eqnarray}
\rho_a^p[g](p) &=& \Delta^pg(a){\,};\label{eq:LamEqDel2}\\
\rho^p_a[g](x)-\rho^{p+1}_a[g](x) &=& \tchoose{x}{p}\,\rho^p_a[g](p){\,}.\label{eq:fng2}
\end{eqnarray}
These identities clearly show that the inclusions $\cR^p_{\S}\subset\cD^p_{\S}$ and $\cR^p_{\S}\subset\cR^{p+1}_{\S}$ hold for any $p\in\N$. We will see in Proposition~\ref{prop:4042RsDs5} that these inclusions are actually strict.

Now, the following straightforward identities will also be useful as we continue
\begin{eqnarray}
f^p_{n+1}[g](x)-f^p_n[g](x) &=& -\rho^{p+1}_n[g](x){\,};\label{eq:fng}\\
f^p_n[g](x+1)-f^p_n[g](x) &=& g(x) - \rho^p_n[g](x){\,}.\label{eq:fng0}
\end{eqnarray}
For any integers $1\leq m\leq n$, from \eqref{eq:fng} we obtain
\begin{equation}\label{eq:fngsum}
f^p_n[g](x) ~=~ f^p_m[g](x) -\sum_{k=m}^{n-1}\rho_k^{p+1}[g](x){\,},
\end{equation}
which shows that, for any $x>0$, the convergence of the sequence $n\mapsto f^p_n[g](x)$ is equivalent to the summability of the sequence $n\mapsto \rho^{p+1}_n[g](x)$.

We now establish a slightly improved version of our existence Theorem~\ref{thm:int2}. We first present a technical lemma, which follows straightforwardly from Lemma \ref{lemma:VarEpsIneq}.

\begin{lemma}\label{lemma:VarEpsIneqSum}
Let $p\in\N$, $g\in\cK^p$, and $m\in\N^*$ be so that $g$ is $p$-convex or $p$-concave on $[m,\infty)$. Then, for any $x\geq 0$ and any integer $n\geq m$, we have
$$
\left|\sum_{k=m}^{n-1}\rho^{p+1}_k[g](x)\right| ~\leq ~ \left|\tchoose{x-1}{p}\right|\sum_{j=0}^{\lceil x\rceil -1}|\Delta^pg(n+j)-\Delta^pg(m+j)|.
$$
\end{lemma}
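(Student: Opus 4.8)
The plan is to apply Lemma~\ref{lemma:VarEpsIneq} termwise, with $f=g$ and $a=k$ ranging over the integers $k\in\{m,\ldots,n-1\}$, and then to sum the resulting bounds. Since $g$ is $p$-convex or $p$-concave on $[m,\infty)$, it has the same property on each subinterval $[k,\infty)$ with $k\geq m$, so the hypotheses of Lemma~\ref{lemma:VarEpsIneq} are met for every such $k$. Negating $g$ if necessary, I may assume that $g$ is $p$-convex on $[m,\infty)$, so that $g\in\cK^p_+$ and the symbol $\pm$ in Lemma~\ref{lemma:VarEpsIneq} is $+1$ throughout; the $p$-concave case is entirely symmetric.

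First I would invoke the third (outermost) inequality of Lemma~\ref{lemma:VarEpsIneq} to obtain, for each $k\in\{m,\ldots,n-1\}$ and each $x\geq 0$,
$$
\left|\rho^{p+1}_k[g](x)\right| ~=~ \varepsilon_{p+1}(x){\,}\rho^{p+1}_k[g](x) ~\leq ~ \left|\tchoose{x-1}{p}\right|\sum_{j=0}^{\lceil x\rceil -1}\Delta^{p+1}g(k+j),
$$
where the leftmost equality uses that $\varepsilon_{p+1}(x)\in\{-1,0,1\}$ together with the sign information $\varepsilon_{p+1}(x){\,}\rho^{p+1}_k[g](x)\geq 0$ furnished by the first inequality of the lemma (and the fact that $\rho^{p+1}_k[g](x)=0$ whenever $\varepsilon_{p+1}(x)=0$). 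Summing over $k$ and applying the triangle inequality on the left then gives
$$
\left|\sum_{k=m}^{n-1}\rho^{p+1}_k[g](x)\right| ~\leq ~ \left|\tchoose{x-1}{p}\right|\sum_{j=0}^{\lceil x\rceil -1}\sum_{k=m}^{n-1}\Delta^{p+1}g(k+j).
$$

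Next I would interchange the two finite sums and telescope the inner one. For each fixed $j$, the identity $\Delta^{p+1}g(y)=\Delta^pg(y+1)-\Delta^pg(y)$ yields
$$
\sum_{k=m}^{n-1}\Delta^{p+1}g(k+j) ~=~ \Delta^pg(n+j)-\Delta^pg(m+j),
$$
a genuine telescoping sum. This produces precisely the claimed right-hand side, provided one may replace each $\Delta^pg(n+j)-\Delta^pg(m+j)$ by its absolute value. That replacement is legitimate because $g\in\cK^p_+$ forces $\Delta^pg$ to be increasing on $[m,\infty)$ by Lemma~\ref{lemma:pCInc5}, so that $\Delta^pg(n+j)-\Delta^pg(m+j)\geq 0$ for $n\geq m$; the very same monotonicity also ensures $\Delta^{p+1}g\geq 0$ on $[m,\infty)$, which is what kept every summand above nonnegative.

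The argument is essentially bookkeeping once Lemma~\ref{lemma:VarEpsIneq} is available, so there is no serious obstacle; the only points demanding care are the sign conventions — tracking whether $g$ sits in $\cK^p_+$ or $\cK^p_-$ so that every occurrence of $\pm$ and every passage to an absolute value is correctly oriented — and the interchange of the two finite sums, which is immediate. I would also remark in passing that, since all the terms $\rho^{p+1}_k[g](x)$ carry the common sign $\varepsilon_{p+1}(x)$ (independent of $k$), the triangle inequality invoked above is in fact an equality, although only the inequality is needed here.
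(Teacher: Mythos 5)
Your proof is correct and follows essentially the same route as the paper: apply Lemma~\ref{lemma:VarEpsIneq} termwise with $a=k$, sum over $k$, interchange the finite sums, and telescope $\sum_{k=m}^{n-1}\Delta^{p+1}g(k+j)$ to $\Delta^pg(n+j)-\Delta^pg(m+j)$, with the sign bookkeeping handled via $p$-convexity and Lemma~\ref{lemma:pCInc5}. The only cosmetic difference is that the paper opens by noting the sequence $k\mapsto\rho^{p+1}_k[g](x)$ has constant sign, so that $\bigl|\sum_k\rho^{p+1}_k[g](x)\bigr|=\sum_k\bigl|\rho^{p+1}_k[g](x)\bigr|$ holds as an equality from the start — exactly the observation you relegate to your closing remark.
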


\begin{proof}
For any fixed $x\geq 0$, the sequence $k\mapsto\rho^{p+1}_k[g](x)$ for $k\geq m$ does not change in sign by Lemma~\ref{lemma:VarEpsIneq} and hence we have
$$
\left|\sum_{k=m}^{n-1}\rho^{p+1}_k[g](x)\right| ~=~ \sum_{k=m}^{n-1}\left|\rho^{p+1}_k[g](x)\right| ~\leq ~ \left|\tchoose{x-1}{p}\right|\sum_{j=0}^{\lceil x\rceil -1}\left|\sum_{k=m}^{n-1}\Delta^{p+1}g(k+j)\right|,
$$
where the inner sum clearly telescopes to $\Delta^pg(n+j)-\Delta^pg(m+j)$.
\end{proof}

\begin{theorem}[Existence]\label{thm:exist}\index{existence theorem|textbf}
Let $p\in\N$ and $g\in\cD^p_{\S}\cap\cK^p$. The following assertions hold.
\begin{enumerate}
\item[(a)] We have that $g\in\cR^p_{\S}${\,}.
\item[(b)] The sequence $n\mapsto f^p_n[g](x)$ converges for every $x>0$, and the function $f\colon\R_+\to\R$ defined by
$$
f(x) ~=~ \lim_{n\to\infty} f^p_n[g](x){\,},\qquad x>0,
$$
is a solution to the equation $\Delta f=g$ that is $p$-concave (resp.\ $p$-convex) on any unbounded subinterval $I$ of\/ $\R_+$ on which $g$ is $p$-convex (resp.\ $p$-concave). Moreover, we have $f(1)=0$ and
$$
|f^p_n[g](x)-f(x)| ~\leq ~ \lceil x\rceil\left|\tchoose{x-1}{p}\right|\left|\Delta^pg(n)\right|,\qquad x>0,~n\in I\cap\N^*.
$$
If $p\geq 1$, we also have the following tighter inequality
$$
|f^p_n[g](x)-f(x)| ~\leq ~ \left|\tchoose{x-1}{p}\right|\left|\Delta^{p-1}g(n+x)-\Delta^{p-1}g(n)\right|,\quad x>0,~n\in I\cap\N^*.
$$
\item[(c)] The sequence $n\mapsto f^p_n[g]$ converges uniformly on any bounded subset of\/ $\R_+$ to $f$.
\end{enumerate}
\end{theorem}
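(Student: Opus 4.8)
The plan is to derive all three assertions from the three exact identities \eqref{eq:LamEqDel2}, \eqref{eq:fng2}, \eqref{eq:fngsum} together with the sign-and-size estimates of Lemmas~\ref{lemma:VarEpsIneq} and \ref{lemma:VarEpsIneqSum}. Negating $g$ if necessary, I may assume there is an $m\in\N^*$ with $g$ $p$-convex on $[m,\infty)$; by Lemma~\ref{lemma:pCInc5} this makes $\Delta^p g$ increasing on $[m,\infty)$, and since $g\in\cD^p_{\S}$ forces $\Delta^p g\to 0$, we get $\Delta^p g\le 0$ and $|\Delta^p g|$ decreasing to $0$ there. For (a) I would start from \eqref{eq:fng2}, namely $\rho^p_a[g](x)=\rho^{p+1}_a[g](x)+\tchoose{x}{p}\rho^p_a[g](p)$. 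By \eqref{eq:LamEqDel2} the last term equals $\tchoose{x}{p}\Delta^p g(a)\to 0$. For the first term I apply the third (telescoped) inequality of Lemma~\ref{lemma:VarEpsIneq} to get $|\rho^{p+1}_a[g](x)|\le\bigl|\tchoose{x-1}{p}\bigr|\sum_{j=0}^{\lceil x\rceil-1}|\Delta^{p+1}g(a+j)|$; since $\cD^p_{\S}\subset\cD^{p+1}_{\S}$ and the sum is finite, this tends to $0$ (choosing the telescoped bound rather than the middle one keeps every argument in $\S$ when $\S=\N$). Hence $\rho^p_a[g](x)\to 0$, i.e.\ $g\in\cR^p_{\S}$.

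For (b), convergence of $n\mapsto f^p_n[g](x)$ follows from \eqref{eq:fngsum}: by Lemma~\ref{lemma:VarEpsIneq} the summands $\rho^{p+1}_k[g](x)$ keep a constant sign for $k\ge m$, so the partial sums are monotone, while Lemma~\ref{lemma:VarEpsIneqSum} bounds them (the bound stays finite because $\Delta^p g(n+j)\to 0$); a monotone bounded sequence converges. That $f(1)=0$ is immediate: in \eqref{eq:defFpn} the middle sum telescopes and $\sum_{j=1}^p\tchoose{1}{j}\Delta^{j-1}g(n)=g(n)$, so $f^p_n[g](1)=0$ for every $n$. That $\Delta f=g$ comes from letting $n\to\infty$ in \eqref{eq:fng0} and invoking (a) to annihilate $\rho^p_n[g](x)$. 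For the convexity statement, on an unbounded $I$ on which $g$ is $p$-convex each shifted map $x\mapsto g(x+k)$ is $p$-convex on $I$ (shifting a ray only enlarges it), so $-\sum_{k=0}^{n-1}g(x+k)$ lies in the convex cone $\cK^p_-(I)$; adding the remaining $x$-dependence, a polynomial of degree $\le p$, which lies in $\cK^p_+(I)\cap\cK^p_-(I)$ by Proposition~\ref{prop:convCones48}, shows $f^p_n[g]\in\cK^p_-(I)$; and $p$-concavity passes to the pointwise limit $f$ because each order-$(p+1)$ divided difference is a fixed linear combination of values. The dual case follows by negating $g$.

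The error bounds are where the real work lies. Writing \eqref{eq:fngsum} with lower index $n$ and sending the upper index to infinity gives $f^p_n[g](x)-f(x)=\sum_{k\ge n}\rho^{p+1}_k[g](x)$, a sum of constant-sign terms. For the first bound I feed this into Lemma~\ref{lemma:VarEpsIneqSum} (with $m=n$, $N\to\infty$), obtaining $\bigl|\tchoose{x-1}{p}\bigr|\sum_{j=0}^{\lceil x\rceil-1}|\Delta^p g(n+j)|$, and then use that $|\Delta^p g|$ is decreasing to replace each $|\Delta^p g(n+j)|$ by $|\Delta^p g(n)|$, producing the factor $\lceil x\rceil$. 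The tighter bound for $p\ge 1$ is the main obstacle. Here I bound $|\rho^{p+1}_k[g](x)|$ by the middle term $\bigl|\tchoose{x-1}{p}\bigr|(\Delta^p g(k+x)-\Delta^p g(k))$ of Lemma~\ref{lemma:VarEpsIneq} and sum in $k$; writing $\Delta^p g=\Delta(\Delta^{p-1}g)$, the sum $\sum_{k=n}^{N-1}(\Delta^p g(k+x)-\Delta^p g(k))$ telescopes to $[\Delta^{p-1}g(N+x)-\Delta^{p-1}g(N)]-[\Delta^{p-1}g(n+x)-\Delta^{p-1}g(n)]$. The crux is that the boundary term at $N$ vanishes as $N\to\infty$: this I would get from the fact that $\Delta^{p-1}g$ is convex (Lemma~\ref{lemma:PrelKp}(b) with $j=p-1$), which, via monotonicity of its difference quotients, sandwiches $\Delta^{p-1}g(N+s)-\Delta^{p-1}g(N)$ (for $0\le s\le 1$) between $s\,\Delta^p g(N-1)$ and $s\,\Delta^p g(N)$, both tending to $0$, the integer part being a finite sum of terms $\Delta^p g(N+i)\to 0$. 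This leaves exactly $\bigl|\tchoose{x-1}{p}\bigr|\,|\Delta^{p-1}g(n+x)-\Delta^{p-1}g(n)|$.

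Finally, (c) is a direct consequence of the first error bound: for a bounded set $E\subset\R_+$ and $n$ large enough that $[n,\infty)\subset I$, one has $\sup_{x\in E}|f^p_n[g](x)-f(x)|\le|\Delta^p g(n)|\,\sup_{x\in E}\lceil x\rceil\bigl|\tchoose{x-1}{p}\bigr|$, where the supremum is finite and $|\Delta^p g(n)|\to 0$, giving uniform convergence.
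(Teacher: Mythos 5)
Your proposal is correct, and for assertion (a), the convergence, $\Delta f=g$, and concavity claims of (b), and assertion (c), it runs along the same lines as the paper's proof (constant sign of $k\mapsto\rho^{p+1}_k[g](x)$ via Lemma~\ref{lemma:VarEpsIneq}, boundedness via Lemma~\ref{lemma:VarEpsIneqSum}, pointwise limit of $p$-concave functions, and so on). Where you genuinely diverge is in the error bounds, which you rightly identify as the real work. The paper never sums estimates over $k$: since $\Delta f=g$ and $f(1)=0$ have already been established, identity \eqref{eq:ff01flam} gives $f^p_n[g](x)-f(x)=-\rho^{p+1}_n[f](x)$, and Lemma~\ref{lemma:VarEpsIneq} is applied \emph{once, to the limit function $f$} (legitimate because $f$ was just shown $p$-concave on $I$); both bounds then drop out of a single chain, the tighter one because $\Delta^pf=\Delta^{p-1}g$ and the coarser one because $\Delta^{p+1}f=\Delta^pg$. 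Your route instead writes $f^p_n[g](x)-f(x)=\sum_{k\geq n}\rho^{p+1}_k[g](x)$ from \eqref{eq:fngsum} and estimates the series: Lemma~\ref{lemma:VarEpsIneqSum} with the upper index sent to infinity for the first bound, and, for the tighter one, the middle term of Lemma~\ref{lemma:VarEpsIneq} summed over $k$, telescoped, plus a convexity-sandwich argument (convexity of $\Delta^{p-1}g$ from Lemma~\ref{lemma:PrelKp}(b)) to show the boundary term $\Delta^{p-1}g(N+x)-\Delta^{p-1}g(N)$ vanishes as $N\to\infty$. This costs you the extra boundary-term analysis, which you carry out correctly, but it buys a derivation that operates purely on $g$ and the approximants $f^p_n[g]$, never invoking the key lemma on the limit function; the paper's version is shorter but leans on properties of $f$ itself. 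One small correction: your claim that $f^p_n[g](1)=0$ for every $n$ holds only when $p\geq 1$; for $p=0$ the polynomial sum in \eqref{eq:defFpn} is empty, so $f^0_n[g](1)=-g(n)$, which still tends to $0$ because $g\in\cD^0_{\N}$, so the conclusion $f(1)=0$ is unaffected.
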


\begin{proof}
We have that $g\in\cD^p_{\S}\subset\cD^{p+1}_{\S}$. By Lemma~\ref{lemma:VarEpsIneq}, it follows immediately that $g$ lies in $\cR^{p+1}_{\S}$, and hence also in $\cR^p_{\S}$ by \eqref{eq:LamEqDel2} and \eqref{eq:fng2}. This establishes assertion (a). Now, suppose for instance that $g$ lies in $\cK^p_+$. Let $I$ be any unbounded subinterval of $\R_+$ on which $g$ is $p$-convex and let $m\in I\cap\N^*$. For any $x>0$, the sequence $k\mapsto\rho^{p+1}_k[g](x)$ for $k\geq m$ does not change in sign by Lemma~\ref{lemma:VarEpsIneq}. Thus, since $g$ lies in $\cD^p_{\N}${\,}, for any $x>0$ the series
$$
\sum_{k=m}^{\infty}\rho^{p+1}_k[g](x)
$$
converges by Lemma~\ref{lemma:VarEpsIneqSum}. By \eqref{eq:fngsum} it follows that the sequence $n\mapsto f^p_n[g](x)$ converges. Denoting the limiting function by $f$, we necessarily have $f(1)=0$. Moreover, by \eqref{eq:fng0} and assertion (a) we must have $\Delta f=g$.

Since $g$ is $p$-convex on $I$, for every $n\in\N^*$ the function $f^p_n[g]$ is clearly $p$-concave on $I$. (Note that the second sum in \eqref{eq:defFpn} is a polynomial of degree less than or equal to $p$ in $x$, hence by Proposition~\ref{prop:convCones48} it is both $p$-convex and $p$-concave.) Since $f$ is a pointwise limit of functions $p$-concave on $I$, it too is $p$-concave on $I$.

The claimed inequalities then follow from identity~\eqref{eq:ff01flam}, Lemma~\ref{lemma:VarEpsIneq}, and the observation that the restriction of the sequence $n\mapsto\Delta^pg(n)$ to $I\cap\N^*$ increases to zero by Lemma~\ref{lemma:pCInc5}. Indeed, for any $x>0$ and any $n\in I\cap\N^*$, we then have
\begin{eqnarray*}
|f^p_n[g](x)-f(x)| &=& |\rho^{p+1}_n[f](x)| ~\leq ~ \left|\tchoose{x-1}{p}\right|\left|\Delta^pf(n+x)-\Delta^pf(n)\right|\\
& \leq & \left|\tchoose{x-1}{p}\right|\,\sum_{j=0}^{\lceil x\rceil -1}|\Delta^p g(j+n)| ~\leq ~ \lceil x\rceil\left|\tchoose{x-1}{p}\right|\left|\Delta^pg(n)\right|.
\end{eqnarray*}
This proves assertion (b). Assertion (c) immediately follows from the first inequality of assertion (b).
\end{proof}

\begin{remark}\label{rem:ConvCl82}
We have shown in Theorems~\ref{thm:unic} and \ref{thm:exist} that the sequence $n\mapsto f_n^p[g]$ converges uniformly on any bounded subset of $\R_+$. In fact, we have proved the slightly more general property that the sequence $n\mapsto\rho_n^{p+1}[f]$ converges uniformly on any bounded subset of $[0,\infty)$ to $0$.
\end{remark}

Theorems~\ref{thm:unic} and \ref{thm:exist} show that the assumption that $g\in\cD^p_{\S}\cap\cK^p$ constitutes a sufficient condition to ensure both the uniqueness (up to an additive constant) and existence of solutions to the equation $\Delta f=g$ that lie in $\cK^p$. Nevertheless, we can show that this condition is actually not quite necessary. We discuss and elaborate on this natural question in Appendix~\ref{chapter:B-KW562}.

We now present an important property of the sequence $n\mapsto f^p_n[g]$. Considering the straightforward identity
$$
f^{p+1}_n[g](x)-f^p_n[g](x) ~=~ \tchoose{x}{p+1}\,\Delta^pg(n),
$$
we immediately see that if the sequence
$$
n ~\mapsto ~f^{p+1}_n[g](x)-f^p_n[g](x)
$$
approaches zero for some $x\in\R_+\setminus\{0,1,\ldots,p\}$, then $g$ must lie in $\cD^p_{\N}$. More importantly, the identity above also shows that if $g$ lies in $\cD^p_{\N}$ and if the sequence $n\mapsto f^p_n[g](x)$ converges, then so does the sequence $n\mapsto f^{p+1}_n[g](x)$ and both sequences converge to the same limit. Since the inclusion $\cD^p_{\N}\subset\cD^{p+1}_{\N}$ holds for any $p\in\N$, we immediately obtain the following important proposition.

\begin{proposition}\label{prop:ThetaFpFq}
Let $p\in\N$. If $g\in\cD^p_{\N}$ and if the sequence $n\mapsto f^p_n[g](x)$ converges, then for any integer $q\geq p$ the sequence
$$
n ~\mapsto ~|f^p_n[g](x)-f^q_n[g](x)|
$$
converges to zero. Moreover, the convergence is uniform on any bounded subset of\/ $\R_+$.
\end{proposition}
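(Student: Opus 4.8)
The plan is to reduce everything to the single-step identity displayed just above the proposition, namely
$$
f^{r+1}_n[g](x) - f^r_n[g](x) ~=~ \tchoose{x}{r+1}\,\Delta^r g(n),
$$
which holds for every $r\in\N$ and follows at once from the defining formula \eqref{eq:defFpn}, since passing from $f^r_n$ to $f^{r+1}_n$ only adds the single summand indexed by $j=r+1$. First I would sum this identity over $r$ from $p$ to $q-1$, obtaining the telescoping expression
$$
f^q_n[g](x) - f^p_n[g](x) ~=~ \sum_{r=p}^{q-1}\tchoose{x}{r+1}\,\Delta^r g(n).
$$

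Next I would invoke the inclusion $\cD^p_{\N}\subset\cD^r_{\N}$, valid for every $r\geq p$ by iterating the one-step inclusion observed immediately after Definition~\ref{de:R0Sp42}. Since $g\in\cD^p_{\N}$, this gives $\Delta^r g(n)\to 0$ as $n\to\infty$ for each $r$ in the finite range $p\leq r\leq q-1$. For fixed $x$, every coefficient $\tchoose{x}{r+1}$ is a constant, so each summand tends to $0$; being a finite sum, the whole difference tends to $0$, which is the pointwise claim. I would note in passing that the convergence of $n\mapsto f^p_n[g](x)$ is not actually needed for this conclusion about the difference; its role is only to guarantee that all the sequences $n\mapsto f^q_n[g](x)$ then share the same limit.

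For the uniform statement, let $E\subset\R_+$ be bounded. I would estimate
$$
\sup_{x\in E}\bigl|f^q_n[g](x) - f^p_n[g](x)\bigr| ~\leq ~ \sum_{r=p}^{q-1}\Bigl(\sup_{x\in E}\bigl|\tchoose{x}{r+1}\bigr|\Bigr)\,\bigl|\Delta^r g(n)\bigr|.
$$
Here each factor $\sup_{x\in E}\bigl|\tchoose{x}{r+1}\bigr|$ is finite, because $x\mapsto\tchoose{x}{r+1}$ is a polynomial and hence bounded on the bounded set $E$. Letting $n\to\infty$ and using $\Delta^r g(n)\to 0$ for each of the finitely many indices $r$ in the sum forces the right-hand side to $0$, which yields the uniform convergence.

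Since the whole argument rests on a finite telescoping sum of products of bounded coefficients with null sequences, there is no genuine obstacle here. The only point requiring a word of care is the uniform bound on the binomial coefficients, and that is immediate once one recalls that $\tchoose{x}{r+1}$ is a polynomial in $x$ and therefore bounded on any bounded subset of $\R_+$.
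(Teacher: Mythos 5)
Your proof is correct and takes essentially the same route as the paper: the paper derives the proposition from the very same one-step identity $f^{p+1}_n[g](x)-f^p_n[g](x)=\tchoose{x}{p+1}\,\Delta^pg(n)$ together with the nested inclusions $\cD^p_{\N}\subset\cD^{p+1}_{\N}$, and your telescoping sum and sup-estimate over a bounded set merely make explicit what the paper leaves implicit. Your side remark that the convergence of $n\mapsto f^p_n[g](x)$ is not needed for the difference to vanish, but only to ensure that all the sequences $n\mapsto f^q_n[g](x)$ share the same limit, is also accurate and consistent with how the paper uses the result.
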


Let us end this section with the following observation about our uniqueness and existence results. In Theorem~\ref{thm:unic}, we have proved the uniqueness of the solution $f$ that lies in $\cK^p$ by first proving that this solution necessarily lies in $\cR^{p+1}_{\S}$. Although this latter asymptotic condition may seem a bit less natural than the assumption that $f$ lies in $\cK^p$, we could as well consider it as a sufficient condition to guarantee uniqueness. A similar observation can be made for the existence Theorem~\ref{thm:exist}. We can therefore establish the following two alternative results.

\begin{proposition}[Uniqueness]\label{prop:90unic41}\index{uniqueness theorem!alternative forms}
Let $p\in\N$ and let $g\colon\R_+\to\R$. Suppose that $f\colon\R_+\to\R$ is a solution to the equation $\Delta f=g$ that lies in $\cR^{p+1}_{\N}$. Then assertion (b) of Theorem~\ref{thm:unic} holds, and hence $f$ is unique (up to an additive constant).
\end{proposition}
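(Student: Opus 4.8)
The plan is to read everything off the master identity \eqref{eq:ff01flam}, which expresses any solution of $\Delta f=g$ through the decomposition
$$
f(x) ~=~ f(1) + f^p_n[g](x) + \rho^{p+1}_n[f](x),\qquad n\in\N^*.
$$
The crucial observation is that this identity is a purely algebraic consequence of the difference equation $\Delta f=g$ together with \eqref{eq:fnEf1Sum} and \eqref{eq:fxnEfxSum}; it uses no regularity or convexity hypothesis on $f$ whatsoever. Hence it is available for the arbitrary solution $f$ appearing in the statement, even though here $g$ is completely arbitrary and $f$ is not assumed to lie in $\cK^p$.

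First I would rearrange the identity into
$$
f^p_n[g](x) ~=~ f(x) - f(1) - \rho^{p+1}_n[f](x),\qquad x>0,~n\in\N^*.
$$
By hypothesis $f\in\cR^{p+1}_{\N}$, so by the very definition of this class (Definition~\ref{de:R0Sp42}) we have, for each fixed $x>0$, that $\rho^{p+1}_n[f](x)\to 0$ as $n\to_{\N}\infty$. Therefore the right-hand side converges as $n\to\infty$, which shows that the sequence $n\mapsto f^p_n[g](x)$ converges and that
$$
\lim_{n\to\infty}f^p_n[g](x) ~=~ f(x) - f(1).
$$
This is exactly assertion (b) of Theorem~\ref{thm:unic}.

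For the uniqueness clause, I would note that the limit $\lim_{n\to\infty}f^p_n[g](x)$ depends only on $g$, through the formula \eqref{eq:defFpn}, and not on the particular solution $f$. Thus if $f_1$ and $f_2$ are both solutions to $\Delta f=g$ lying in $\cR^{p+1}_{\N}$, then assertion (b) applied to each yields $f_1(x)-f_1(1)=f_2(x)-f_2(1)$ for every $x>0$, so that $f_1-f_2$ equals the constant $f_1(1)-f_2(1)$. This gives uniqueness up to an additive constant.

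There is essentially no hard step: the whole argument is a one-line manipulation of \eqref{eq:ff01flam} followed by an appeal to the definition of $\cR^{p+1}_{\N}$. The only point worth emphasizing is the conceptual reversal relative to Theorem~\ref{thm:unic}. There, the asymptotic membership $f\in\cR^{p+1}_{\S}$ was \emph{derived} from the convexity hypothesis $f\in\cK^p$ via Lemma~\ref{lemma:VarEpsIneq} and the squeeze theorem; here it is taken \emph{directly} as the hypothesis. Consequently the entire part of the proof of Theorem~\ref{thm:unic} that relied on Lemma~\ref{lemma:VarEpsIneq} is simply bypassed, and what remains is precisely the trivial tail of that earlier argument.
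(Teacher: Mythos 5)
Your proof is correct and is exactly the paper's argument: the paper's proof of Proposition~\ref{prop:90unic41} reads ``This follows immediately from identity \eqref{eq:ff01flam},'' and your write-up simply spells out that one-line appeal, using that \eqref{eq:ff01flam} is a purely algebraic consequence of $\Delta f=g$ and that membership in $\cR^{p+1}_{\N}$ makes the remainder $\rho^{p+1}_n[f](x)$ vanish in the limit. Your closing remark about the conceptual reversal relative to Theorem~\ref{thm:unic} also matches the paper's own discussion preceding the proposition.
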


\begin{proof}
This follows immediately from identity \eqref{eq:ff01flam}.
\end{proof}

\begin{proposition}[Existence]\index{existence theorem!alternative form}
Let $p\in\N$ and suppose that the function $g\colon\R_+\to\R$ lies in $\cD^p_{\N}$ and has the property that, for each $x>0$, the sequence $n\mapsto\rho^{p+1}_n[g](x)$ is summable. Then $g$ lies in $\cR_{\N}^p$ and there exists a unique (up to an additive constant) solution $f\colon\R_+\to\R$ to the equation $\Delta f=g$ that lies in $\cR^{p+1}_{\N}$.
\end{proposition}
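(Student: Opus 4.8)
The plan is to mirror the proof of the existence Theorem~\ref{thm:exist}, but with the hypothesis $g\in\cK^p$ replaced by the summability assumption on $n\mapsto\rho^{p+1}_n[g](x)$. The key observation is that every ingredient needed—convergence of the defining sequence, the fact that the limit solves $\Delta f=g$, and membership of the limit in $\cR^{p+1}_{\N}$—can be extracted directly from the identities already established in this section, without invoking any $p$-convexity.

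First I would establish that $g\in\cR^p_{\N}$. Combining \eqref{eq:LamEqDel2} and \eqref{eq:fng2} gives, for each $x>0$ and each $n\in\N^*$, the identity
$$
\rho^p_n[g](x) ~=~ \rho^{p+1}_n[g](x) + \tchoose{x}{p}\,\Delta^pg(n){\,}.
$$
The summability hypothesis forces $\rho^{p+1}_n[g](x)\to 0$ as $n\to\infty$ (a summable sequence has vanishing terms), while $g\in\cD^p_{\N}$ gives $\Delta^pg(n)\to 0$. Hence $\rho^p_n[g](x)\to 0$, which is precisely the statement $g\in\cR^p_{\N}$. I regard this step as the crux of the argument: it is here that both hypotheses are genuinely used, since $g\in\cD^p_{\N}$ alone would only yield $\rho^p_n[g](p)\to 0$, whereas summability alone controls only the $(p+1)$-level remainder.

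Next I would construct the solution. By \eqref{eq:fngsum}, for each fixed $x>0$ the convergence of the sequence $n\mapsto f^p_n[g](x)$ is equivalent to the summability of $n\mapsto\rho^{p+1}_n[g](x)$, which holds by assumption; I therefore define $f(x)=\lim_{n\to\infty}f^p_n[g](x)$. Passing to the limit in \eqref{eq:fng0} and using $g\in\cR^p_{\N}$ (so that $\rho^p_n[g](x)\to 0$) shows that $\Delta f=g$. To see that $f\in\cR^{p+1}_{\N}$, I would apply identity \eqref{eq:ff01flam} to this solution and rewrite it as $\rho^{p+1}_n[f](x)=f(x)-f(1)-f^p_n[g](x)$; since $f^p_n[g](x)\to f(x)$ and $f(1)=\lim_n f^p_n[g](1)=0$ (a one-line telescoping computation of $f^p_n[g](1)$), the right-hand side tends to $0$, giving the desired membership.

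Finally, uniqueness up to an additive constant is immediate from the alternative uniqueness Proposition~\ref{prop:90unic41}: any solution of $\Delta f=g$ lying in $\cR^{p+1}_{\N}$ automatically satisfies $f(x)=f(1)+\lim_{n\to\infty}f^p_n[g](x)$, so two such solutions differ by a constant. Since the whole argument is a matter of stitching together \eqref{eq:LamEqDel2}, \eqref{eq:fng2}, \eqref{eq:fng0}, \eqref{eq:fngsum}, and \eqref{eq:ff01flam}, I expect no serious analytic obstacle once the membership $g\in\cR^p_{\N}$ has been secured.
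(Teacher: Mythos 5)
Your proposal is correct and follows essentially the same route as the paper's own proof: convergence of $n\mapsto f^p_n[g](x)$ via \eqref{eq:fngsum}, membership $g\in\cR^p_{\N}$ from the vanishing of $\rho^{p+1}_n[g](x)$ combined with $g\in\cD^p_{\N}$ through \eqref{eq:LamEqDel2} and \eqref{eq:fng2}, the equation $\Delta f=g$ from \eqref{eq:fng0}, membership $f\in\cR^{p+1}_{\N}$ from \eqref{eq:ff01flam}, and uniqueness from Proposition~\ref{prop:90unic41}. The only difference is cosmetic ordering (you secure $g\in\cR^p_{\N}$ before constructing $f$, the paper after), which changes nothing of substance.
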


\begin{proof}
Since the sequence $n\mapsto\rho^{p+1}_n[g](x)$ is summable, by \eqref{eq:fngsum} the sequence $n\mapsto f^p_n[g](x)$ converges. Denoting the limiting function by $f$, we necessarily have $f(1)=0$. By \eqref{eq:fng}, the function $g$ necessarily lies in $\cR_{\N}^{p+1}$, and hence also in $\cR_{\N}^p$ by \eqref{eq:LamEqDel2} and \eqref{eq:fng2}. Thus, we must have $\Delta f=g$ by \eqref{eq:fng0} and $f$ lies in $\cR^{p+1}_{\N}$ by \eqref{eq:ff01flam}.
\end{proof}

\begin{example}\label{ex:90unic41g}
Let us apply Proposition~\ref{prop:90unic41} to $g(x)=\ln x$ and $p=1$. We then obtain the following alternative characterization of the gamma function (in the multiplicative notation).
\begin{quote}
\emph{If $f\colon\R_+\to\R_+$ is a solution to the equation $f(x+1)=x{\,}f(x)$ having the asymptotic property that, for each $x>0$,
$$
f(x+n) ~\sim ~ n^x f(n)\qquad\text{as $n\to_{\N}\infty$},
$$
then $f(x)=c{\,}\Gamma(x)$ for some $c>0$.}
\end{quote}
It is easy to see that this characterization also holds on the whole complex domain of the gamma function, namely $\mathbb{C}\setminus (-\N)$.
\end{example}

\section{The case when the sequence $g(n)$ is summable}

Let $\cD^{-1}_{\N}$\label{p:D-1} be the set of functions $g\colon\R_+\to\R$ having the asymptotic property that the series $\sum_{k=1}^{\infty}g(k)$ converges. We immediately observe that $\cD^{-1}_{\N}\subset\cD^0_{\N}$. In this context, our uniqueness and existence results can be complemented by the following two theorems.

\begin{theorem}[Uniqueness]\label{thm:uniqzz0}\index{uniqueness theorem!when $g(n)$ is summable}
Let $g\in\cD^{-1}_{\N}$ and suppose that $f\colon\R_+\to\R$ is a solution to the equation $\Delta f=g$ that lies in $\cK^0$. Then, the following assertions hold.
\begin{enumerate}
\item[(a)] $f(x)$ has a finite limit as $x\to\infty$, denote it by $f(\infty)$.
\item[(b)] For each $x>0$, the series $\sum_{k=0}^{\infty}g(x+k)$ converges and we have
$$
f(x) ~=~ f(\infty) - \sum_{k=0}^{\infty}g(x+k){\,}.
$$
\item[(c)] The series $x\mapsto\sum_{k=0}^{\infty}g(x+k)$ converges uniformly on $\R_+$ to $f(\infty)-f$.
\end{enumerate}
\end{theorem}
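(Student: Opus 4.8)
The plan is to run everything off the two elementary summation identities \eqref{eq:fnEf1Sum} and \eqref{eq:fxnEfxSum}, with the eventual monotonicity hidden in the hypothesis $f\in\cK^0$ doing the one piece of real work. First I would dispose of assertion (a). Since $g\in\cD^{-1}_{\N}$, the series $\sum_{k=1}^{\infty}g(k)$ converges, so by \eqref{eq:fnEf1Sum} the sequence $n\mapsto f(n)$ converges to the finite value $f(1)+\sum_{k=1}^{\infty}g(k)$. This gives convergence only along the integers, and the content of (a) is to upgrade it to a genuine limit as $x\to\infty$. Here $f\in\cK^0$ is essential: $f$ is monotone on some half-line $[N,\infty)$, being $0$-convex (increasing) or $0$-concave (decreasing) there. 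A function monotone on $[N,\infty)$ and bounded along the integers is bounded, hence converges, and its full limit must coincide with the limit along the integers. I would make this rigorous by the squeeze $f(\lfloor x\rfloor)\leq f(x)\leq f(\lceil x\rceil)$ for $x\geq N+1$ (inequalities reversed in the concave case), letting both integer bounds tend to their common limit. This produces the finite limit $f(\infty)=f(1)+\sum_{k=1}^{\infty}g(k)$.

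For assertion (b) I would simply read off \eqref{eq:fxnEfxSum}, namely $f(x+n)=f(x)+\sum_{k=0}^{n-1}g(x+k)$, and let $n\to\infty$. By (a) the left-hand side tends to $f(\infty)$, so the partial sums $\sum_{k=0}^{n-1}g(x+k)$ converge; this simultaneously proves that the series $\sum_{k=0}^{\infty}g(x+k)$ converges and yields the stated formula $f(x)=f(\infty)-\sum_{k=0}^{\infty}g(x+k)$ for every $x>0$.

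For assertion (c) I would estimate the tail directly. Subtracting the finite-$n$ identity of (b) from the limiting identity of (b) gives, for each $x>0$ and each $n\in\N$,
$$
\sum_{k=n}^{\infty}g(x+k) ~=~ f(\infty)-f(x+n),
$$
so the uniform error over $\R_+$ is
$$
\sup_{x>0}\left|\sum_{k=n}^{\infty}g(x+k)\right| ~=~ \sup_{x>0}|f(\infty)-f(x+n)| ~=~ \sup_{z>n}|f(\infty)-f(z)|,
$$
which tends to $0$ as $n\to\infty$ precisely because $f(z)\to f(\infty)$ by assertion (a). This is exactly the asserted uniform convergence of $x\mapsto\sum_{k=0}^{\infty}g(x+k)$ to $f(\infty)-f$ on all of $\R_+$.

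The only genuine obstacle is the passage from integer convergence to the full limit in (a); once that is in hand, (b) and (c) are immediate consequences of the two summation identities. It is at this passage, and nowhere else, that the hypothesis $f\in\cK^0$ is used, and it is also what makes the clean uniform statement in (c) available, since the uniform tail bound is governed entirely by the decay of $f(z)-f(\infty)$.
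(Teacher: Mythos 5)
Your proposal is correct and follows essentially the same route as the paper: convergence of $f(n)$ via \eqref{eq:fnEf1Sum}, the monotonicity squeeze between $f(\lfloor x\rfloor)$ and $f(\lceil x\rceil)$ to get the full limit in (a), then \eqref{eq:fxnEfxSum} for (b), and the identity $\sum_{k=n}^{\infty}g(x+k)=f(\infty)-f(x+n)$ for (c). The only cosmetic difference is in (c), where the paper uses monotonicity once more to bound the supremum by $|f(n)-f(\infty)|$, while you observe directly that $\sup_{z>n}|f(z)-f(\infty)|\to 0$; both are immediate from (a).
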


\begin{proof}
The sequence $n\mapsto f(n)$ converges by \eqref{eq:fnEf1Sum}. Assuming for instance that $f$ lies in $\cK^0_+$, for any $x>0$ we obtain
$$
f(\lfloor x\rfloor +n) ~\leq ~ f(x+n) ~\leq ~ f(\lceil x\rceil +n)\quad\text{for large integer $n$}.
$$
Letting $n\to_{\N}\infty$ in these inequalities and using the squeeze theorem, we get assertion (a). Assertion (b) follows from assertion (a) and identity \eqref{eq:fxnEfxSum}. Now, for large integer $n$, by assertion (b) and identity \eqref{eq:fxnEfxSum} we have
$$
\sup_{x\in\R_+}\left|\sum_{k=n}^{\infty}g(x+k)\right| ~=~ \sup_{x\in\R_+}\left|f(x+n)-f(\infty)\right| ~\leq ~ |f(n)-f(\infty)|.
$$
This proves assertion (c).
\end{proof}

\begin{theorem}[Existence]\label{thm:existzz0}\index{existence theorem!when $g(n)$ is summable}
Let $g\in\cD^{-1}_{\N}\cap\cK^0$. The following assertions hold.
\begin{enumerate}
\item[(a)] We have that $g\in\cR^0_{\N}$.
\item[(b)] The series $\sum_{k=0}^{\infty}g(x+k)$ converges for every $x>0$, and the function $f\colon\R_+\to\R$ defined by
\begin{equation}\label{eq:3F3Sezzr72}
f(x) ~=~ -\sum_{k=0}^{\infty}g(x+k){\,},\qquad x>0,
\end{equation}
is a solution to the equation $\Delta f=g$ that is decreasing (resp.\ increasing) on any unbounded subinterval $I$ of\/ $\R_+$ on which $g$ is increasing (resp.\ decreasing). Moreover, we have $f(x)\to 0$ as $x\to\infty$ and, for every $n\in I\cap\N^*$,
$$
\left|\sum_{k=n}^{\infty}g(x+k)\right| ~=~ |f(x+n)| ~\leq ~ |f(n)|,\qquad x>0.
$$
\item[(c)] The series $x\mapsto\sum_{k=0}^{\infty}g(x+k)$ converges uniformly on $\R_+$ to $-f$.
\end{enumerate}
\end{theorem}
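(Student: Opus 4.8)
The plan is to reduce everything to the general existence theorem already proved. Since $\cD^{-1}_{\N}\subset\cD^0_{\N}$, the hypothesis $g\in\cD^{-1}_{\N}\cap\cK^0$ gives in particular $g\in\cD^0_{\N}\cap\cK^0$, so Theorem~\ref{thm:exist} applies with $p=0$. Its assertion (a) is exactly assertion (a) here, namely $g\in\cR^0_{\N}$, and its assertion (b) produces the limiting function $\tilde f(x)=\lim_{n\to\infty}f^0_n[g](x)$, which solves $\Delta\tilde f=g$, satisfies $\tilde f(1)=0$, and is decreasing (resp.\ increasing) on any unbounded subinterval of $\R_+$ on which $g$ is increasing (resp.\ decreasing). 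This does the bulk of the work; what remains is to rewrite $\tilde f$ as the series in \eqref{eq:3F3Sezzr72} and to extract the finer asymptotic statements.

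Next I would identify the series. Writing out $f^0_n[g](x)=-g(x)+\sum_{k=1}^{n-1}g(k)-\sum_{k=1}^{n-1}g(x+k)$ from \eqref{eq:defFpn} and letting $n\to\infty$, the term $\sum_{k=1}^{n-1}g(k)$ tends to $S:=\sum_{k=1}^{\infty}g(k)$ (finite since $g\in\cD^{-1}_{\N}$) while the left-hand side tends to $\tilde f(x)$; hence $\sum_{k=1}^{n-1}g(x+k)$ converges as well, which proves convergence of $\sum_{k=0}^{\infty}g(x+k)$ for every $x>0$ and yields the identity $\sum_{k=0}^{\infty}g(x+k)=S-\tilde f(x)$. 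Defining $f$ by \eqref{eq:3F3Sezzr72} then gives $f=\tilde f-S$, so $f$ differs from $\tilde f$ by a constant; consequently $f$ is again a solution of $\Delta f=g$ and inherits verbatim the monotonicity on unbounded subintervals stated in assertion (b).

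It then remains to establish the asymptotic behaviour and the bound. First I would note that $f(n)=-\sum_{j\ge n}g(j)$ is a tail of the convergent series $\sum_k g(k)$, hence $f(n)\to 0$ as $n\to_{\N}\infty$; since $f$ is eventually monotone, squeezing $f(x)$ between $f(\lfloor x\rfloor)$ and $f(\lceil x\rceil)$ upgrades this to $f(x)\to 0$ as $x\to\infty$. Reindexing gives $\sum_{k=n}^{\infty}g(x+k)=\sum_{j\ge0}g(x+n+j)=-f(x+n)$, which is the claimed identity. For the inequality $|f(x+n)|\le|f(n)|$ with $n\in I\cap\N^*$, I would use that on the unbounded interval $I$ the function $f$ is monotone and tends to $0$, so it keeps a constant sign there (nonnegative when decreasing, nonpositive when increasing); combined with $x+n\ge n$ and monotonicity this yields $|f(x+n)|\le|f(n)|$, completing assertion (b). Finally, assertion (c) follows at once, since for $n\in I\cap\N^*$ the tail satisfies $\sup_{x\in\R_+}\bigl|\sum_{k=n}^{\infty}g(x+k)\bigr|=\sup_{x\in\R_+}|f(x+n)|\le|f(n)|\to 0$.

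The main obstacle I anticipate is the careful sign and monotonicity bookkeeping in the last paragraph: the passage from the integer limit $f(n)\to0$ to the real limit, and especially the uniform bound $|f(x+n)|\le|f(n)|$, both rely on correctly exploiting that an eventually monotone function converging to $0$ has constant sign past a point, and on matching the direction of monotonicity of $f$ to that of $g$ (via Theorem~\ref{thm:exist}). Everything else is routine bookkeeping with convergent series.
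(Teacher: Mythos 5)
Your proof is correct and follows essentially the same route as the paper's: both reduce everything to Theorem~\ref{thm:exist} with $p=0$ and then settle the vanishing of $f$ at infinity. The only cosmetic difference is that the paper cites Theorem~\ref{thm:uniqzz0} for $f(x)\to 0$ as $x\to\infty$, whereas you inline the same tail-plus-squeeze argument (and you are in fact more explicit than the paper in identifying the series \eqref{eq:3F3Sezzr72} with the limit of $f^0_n[g]$ up to the constant $\sum_{k=1}^{\infty}g(k)$).
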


\begin{proof}
By Theorem~\ref{thm:exist}, assertion (a) clearly holds (since $g$ also lies in $\cD^0_{\N}$) and, for each $x>0$, the series \eqref{eq:3F3Sezzr72} converges and is a solution to the equation $\Delta f=g$ that satisfies the claimed monotonicity properties. Theorem~\ref{thm:uniqzz0} then shows that the function $f$ vanishes at infinity. The rest of assertion (b) follows from \eqref{eq:fxnEfxSum}. Assertion (c) is then immediate.
\end{proof}

Theorems~\ref{thm:uniqzz0} and \ref{thm:existzz0} motivate the following definition.

\begin{definition}\label{de:D-15R23}
For any $\S\in\{\N,\R\}$, we let $\cD^{-1}_{\S}$\label{p:D-1R} denote the set of functions $g\colon\R_+\to\R$ having the asymptotic property that, for each $x\in\S$, the series
$$
\sum_{k=0}^{\infty}g(x+k)
$$
converges and tends to zero as $x\to_{\S}\infty$.
\end{definition}

Clearly, this definition is consistent with our prior definition of $\cD^{-1}_{\N}$ and we can immediately see that the inclusion $\cD^{-1}_{\R}\subset\cD^{-1}_{\N}$ holds. Moreover, by Theorem~\ref{thm:existzz0} we have that
\begin{equation}\label{eq:d93n621ff}
\cD^{-1}_{\R}\cap\cK^0 ~=~ \cD^{-1}_{\N}\cap\cK^0.
\end{equation}

\begin{example}[The trigamma function]\index{trigamma function}
The trigamma function $\psi_1$ is defined on $\R_+$ as the derivative $\psi'$ of the digamma function.\index{digamma function} Hence, it has the property that
$$
\Delta\psi_1(x) ~=~ D\Delta\psi(x) ~=~ -1/x^2\qquad\text{for all $x>0$.}
$$
Since the function $\psi$ lies in $\cD^1_{\N}\cap\cK^1_-$, one can show (see Proposition~\ref{prop:LMpGpLMp1} in the next chapter) that $\psi_1$ lies in $\cD^0_{\N}\cap\cK^0_-$. Now, the function $g(x)=-1/x^2$ clearly lies in $\cD^{-1}_{\N}\cap\cK^0_+$ and hence also in $\cD^0_{\N}\cap\cK^0_+$. It also lies in $\cD^{-1}_{\R}\cap\cK^0_+$ by \eqref{eq:d93n621ff}. Thus, by Theorems~\ref{thm:exist}, \ref{thm:uniqzz0}, and \ref{thm:existzz0}, we see that the trigamma function $\psi_1$ is the unique decreasing solution $f$ to the equation $\Delta f=g$ that vanishes at infinity. Moreover, we have that
$$
\psi_1(x) ~=~ \sum_{k=0}^{\infty}\frac{1}{(x+k)^2}\qquad\text{and}\qquad \psi_1(1) ~=~ \sum_{k=1}^{\infty}\frac{1}{k^2} ~=~ \frac{\pi^2}{6}{\,}.
$$
Furthermore, the sequence of functions
$$
n ~\mapsto ~ \sum_{k=0}^{n-1}\frac{1}{(x+k)^2} ~=~ \psi_1(x)-\psi_1(x+n)
$$
converges uniformly on $\R_+$ to the function $\psi_1(x)$, and Theorem~\ref{thm:existzz0} provides the following inequalities
$$
0 ~ \leq ~ \psi_1(x+n) ~=~ \sum_{k=n}^{\infty}\frac{1}{(x+k)^2} ~\leq ~ \psi_1(n){\,},\qquad x>0,~n\in\N^*.
$$
Finally, Theorem~\ref{thm:exist} provides the following additional inequalities
$$
0 ~ \leq ~ \psi_1(n)-\psi_1(x+n) ~\leq ~ \frac{\lceil x\rceil}{n^2}{\,},\qquad x>0,~n\in\N^*.
$$
We will further investigate the trigamma function $\psi_1$ as a special polygamma function\index{polygamma functions} in Section~\ref{sec:Polyg&82}.
\end{example}

\section{Historical notes}

As mentioned in Chapter~\ref{chapter:1}, the uniqueness and existence result in the case when $p=1$ was established in the pioneering work of Krull~\cite{Kru48,Kru49} and then independently by Webster~\cite{Web97a,Web97b} as a generalization of Bohr-Mollerup's theorem\index{Bohr-Mollerup theorem}. We observe that it was also partially rediscovered by Dinghas~\cite{Din59}. In addition, we note that Krull's result was presented and somewhat revisited by Kuczma \cite{Kuc58} (see also Kuczma~\cite{Kuc65} and Kuczma~\cite[pp.\ 114--118]{Kuc68}) as well as by Anastassiadis \cite[pp.\ 69--73]{Ana64}. To our knowledge, the only attempts to establish uniqueness and existence results for any value of $p$ were made by Kuczma~\cite[pp.\ 118--121]{Kuc68} and Ardjomande~\cite{Ard68}. Independently of these latter results, an investigation of the special case when $p=2$, illustrated by the Barnes $G$-function,\index{Barnes's $G$-function} was made by Rassias and Trif~\cite{RasTri07} (see our Appendix~\ref{chapter:A-KW561}).

We also observe that Gronau and Matkowski~\cite{GroMat93,GroMat94} improved the multiplicative version of Krull's result by replacing the log-convexity property with the much weaker condition of geometrical convexity (see also Guan~\cite{Gua15} for a recent application of this result), thus providing another characterization of the gamma function, which was later improved by Alzer and Matkowski~\cite{AlzMat13} and Matkowski \cite{Mat15,Mat18}. (For further characterizations of the gamma function and generalizations, see also Anastassiadis~\cite{Ana64} and Muldoon~\cite{Mul78}.)

Many other variants and improvements of Krull's result can actually be found in the literature. For instance, Anastassiadis~\cite{Ana61} (see also Anastassiadis \cite[p.~71]{Ana64}) generalized it by modifying the asymptotic condition. Rohde~\cite{Roh65} also generalized it by modifying the convexity property. Gronau~\cite{Gro04} proposed a variant of Krull's result and applied it to characterize the Euler beta and gamma functions and study certain spirals (see also Gronau~\cite{Gro04b}). Merkle and Ribeiro Merkle \cite{MerRib11} proposed to combine Krull's approach with differentiation techniques to characterize the Barnes $G$-function. Himmel and Matkowski~\cite{HimMat18} also proposed improvements of Krull's result to characterize the beta and gamma functions.

\chapter{Interpretations of the asymptotic conditions}
\label{chapter:4}

In this chapter, we study some important properties of the sets $\cR^p_{\S}$ and $\cD^p_{\S}$ and provide interpretations of the asymptotic condition that defines the set $\cR^p_{\S}$.

We also investigate the sets $\cR^p_{\S}\cap\cK^p$ and $\cD^p_{\S}\cap\cK^p$ and show that they actually coincide and are independent of $\S$ (and hence we can remove this subscript). We also provide an interpretation of this common set $\cD^p\cap\cK^p$ and present some of its properties that will be very useful in the next chapters. In particular, we show that the intersection set $\cC^p\cap\cD^p\cap\cK^p$ is precisely the set of functions $g\in\cC^p$ for which $g^{(p)}$ eventually increases or decreases to zero (see Theorem~\ref{thm:intCpTpKp}).

\section{Some properties of the sets $\cR^p_{\S}$ and $\cD^p_{\S}$}
\label{sec:Ac1}


Although the definition of the set $\cR^p_{\S}$ seems rather technical (see Definition~\ref{de:R0Sp42}), the following proposition shows that this set can be nicely characterized in terms of interpolating polynomials.\index{interpolating polynomial} We omit the proof for it follows immediately from \eqref{eq:LErrIn} and \eqref{eq:LDDiv}. 

\begin{proposition}\label{prop:LamIntPol}
Let $p\in\N$. A function $g\colon\R_+\to\R$ lies in $\cR^p_{\S}$ if and only if for each $x>0$ such that $x^{\underline{p}}\neq 0$, we have that
$$
g[a,a+1,\ldots,a+p-1,a+x] ~\to ~0\qquad\text{as $a\to_{\S}\infty$}.
$$
When $\S=\R$ (resp.\ $\S=\N$), this latter condition means that $g$ asymptotically coincides with its interpolating polynomial whose nodes are any $p$ points equally spaced by $1$ (resp.\ any $p$ consecutive integers).
\end{proposition}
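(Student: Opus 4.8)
The plan is to derive the stated equivalence directly from the two expressions for the interpolation error recorded in \eqref{eq:LErrIn} and \eqref{eq:LDDiv}, since Definition~\ref{de:R0Sp42} makes membership in $\cR^p_{\S}$ nothing more than the requirement that $\rho^p_a[g](x)\to 0$ as $a\to_{\S}\infty$ for every $x>0$. The entire argument therefore reduces to comparing this convergence with the convergence of the divided difference $g[a,a+1,\ldots,a+p-1,a+x]$, and no genuine estimation is involved.

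First I would invoke identity \eqref{eq:LDDiv}, namely $\rho^p_a[g](x)=x^{\underline{p}}\,g[a,a+1,\ldots,a+p-1,a+x]$. For any fixed $x>0$ with $x^{\underline{p}}\neq 0$, the factor $x^{\underline{p}}$ is a nonzero constant independent of $a$, so $\rho^p_a[g](x)\to 0$ holds if and only if $g[a,a+1,\ldots,a+p-1,a+x]\to 0$. This settles both implications simultaneously for all such $x$: the forward direction simply reads off the divided-difference convergence from membership in $\cR^p_{\S}$, while the converse recovers $\rho^p_a[g](x)\to 0$ by multiplying back by the constant $x^{\underline{p}}$.

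The one point requiring care — and the only real obstacle, minor as it is — is the converse direction at the excluded values $x\in\{1,\ldots,p-1\}$, where $x^{\underline{p}}=0$ and the divided-difference hypothesis is vacuous. Here I would appeal to identity \eqref{eq:LErrIn}: for these $x$ the point $a+x$ coincides with one of the interpolation nodes $a,a+1,\ldots,a+p-1$, so the interpolation error $\rho^p_a[g](x)$ vanishes identically in $a$, and $\rho^p_a[g](x)\to 0$ is automatic. This shows that restricting the stated condition to $x^{\underline{p}}\neq 0$ discards no information and closes the equivalence.

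Finally, for the interpretation I would read identity \eqref{eq:LErrIn} once more: $\rho^p_a[g](x)=g(a+x)-P_{p-1}[g](a,a+1,\ldots,a+p-1;a+x)$ is exactly the gap between $g$ and its interpolating polynomial of degree at most $p-1$ on the nodes $a,a+1,\ldots,a+p-1$, evaluated at $a+x$. These nodes are $p$ points equally spaced by $1$; letting $a\to_{\R}\infty$ through all real values (resp.\ $a\to_{\N}\infty$ through integers) lets the block of nodes slide over arbitrary points equally spaced by $1$ (resp.\ arbitrary $p$ consecutive integers), so $\rho^p_a[g](x)\to 0$ for every $x$ is precisely the statement that $g$ asymptotically coincides with that interpolating polynomial.
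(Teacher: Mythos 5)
Your proof is correct and follows exactly the route the paper intends: the paper omits the proof precisely because it "follows immediately from \eqref{eq:LErrIn} and \eqref{eq:LDDiv}," and your argument is the careful spelling-out of that reduction, including the observation (also made in the text after \eqref{eq:LDDiv}) that $\rho^p_a[g](x)$ vanishes identically when $x\in\{1,\ldots,p-1\}$, so the excluded values cost nothing in the converse direction.
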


Interestingly, from \eqref{eq:fxnEfxSum} and \eqref{eq:ff01flam} we can also immediately derive the following alternative characterization of the set $\cR^p_{\N}$. For any function $f\colon\R_+\to\R$, we have
\begin{eqnarray*}
f\in\cR^0_{\N} & \Leftrightarrow & f(x) ~=~ -\sum_{k=0}^{\infty}\Delta f(x+k){\,},\qquad x>0{\,};\\
f\in\cR^{p+1}_{\N} & \Leftrightarrow & f(x) ~=~ f(1)+\lim_{n\to\infty}f^p_n[\Delta f](x){\,},\qquad x>0{\,}.
\end{eqnarray*}
(Note that we have already used these equivalences in the proofs of the uniqueness Theorems~\ref{thm:unic} and \ref{thm:uniqzz0} and Proposition~\ref{prop:90unic41}.)

We now present a proposition that reveals some interesting inclusions among the sets $\cR^p_{\S}$ and $\cD^p_{\S}$. In particular, it shows that just as the sets $\cD_{\S}^0, \cD_{\S}^1, \cD_{\S}^2, \ldots$ are increasingly nested, so are the sets $\cR_{\S}^0, \cR_{\S}^1, \cR_{\S}^2, \ldots$, and hence each of these families defines a filtration.

\begin{proposition}\label{prop:4042RsDs5}
For any $p\in\N$ and any $\S\in\{\N,\R\}$, the sets $\cR^p_{\S}$ and $\cD^p_{\S}$ are real linear spaces that satisfy the identity
\begin{equation}\label{eq:LpLp1Tp}
\cR^p_{\S} ~=~ \cR^{p+1}_{\S}\cap\cD^p_{\S}
\end{equation}
and the strict inclusions
$$
\cR^p_{\S} ~\varsubsetneq ~\cR^{p+1}_{\S}\qquad\text{and}\qquad\cD^p_{\S} ~\varsubsetneq ~\cD^{p+1}_{\S}.
$$
When $p\geq 1$ we also have
$$
\cR^p_{\S} ~\varsubsetneq ~\cD^p_{\S}.
$$
Finally, when $p=0$ we have
$$
\cD^0_{\R} ~=~ \cR^0_{\R} ~\varsubsetneq ~\cR^0_{\N} ~\varsubsetneq ~\cD^0_{\N}{\,}.
$$
\end{proposition}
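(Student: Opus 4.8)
The plan is to treat the linear-space claim and the identity \eqref{eq:LpLp1Tp} first, since the strict inclusions all follow cleanly from \eqref{eq:LpLp1Tp} together with a few explicit witnesses. That $\cR^p_{\S}$ and $\cD^p_{\S}$ are real linear spaces is immediate from the linearity of $g\mapsto\rho^p_a[g](x)$ (visible in \eqref{eq:deflambdapt}) and of $g\mapsto\Delta^pg$, combined with the linearity of limits. For the identity, the inclusion $\cR^p_{\S}\subset\cR^{p+1}_{\S}\cap\cD^p_{\S}$ is already recorded in the text following \eqref{eq:fng2}. For the reverse inclusion I would start from \eqref{eq:fng2} and \eqref{eq:LamEqDel2}, which give $\rho^p_a[g](x)=\rho^{p+1}_a[g](x)+\tchoose{x}{p}\,\Delta^pg(a)$; if $g\in\cR^{p+1}_{\S}\cap\cD^p_{\S}$, then both terms on the right tend to $0$ as $a\to_{\S}\infty$, whence $\rho^p_a[g](x)\to0$ and $g\in\cR^p_{\S}$.

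Next I would dispatch the two filtration statements and the $p\geq1$ comparison using \eqref{eq:LpLp1Tp}. The single witness $g(x)=\tchoose{x}{p}$ handles both filtration strictnesses: since $\Delta\tchoose{x}{k}=\tchoose{x}{k-1}$, we have $\Delta^pg\equiv1$ and $\Delta^{p+1}g\equiv0$, so $g\in\cD^{p+1}_{\S}\setminus\cD^p_{\S}$; and since a polynomial of degree $\leq p$ is reproduced exactly by interpolation at the $p+1$ nodes $a,\ldots,a+p$, we get $\rho^{p+1}_a[g]\equiv0$ by \eqref{eq:LErrIn}, so $g\in\cR^{p+1}_{\S}$, while $g\notin\cD^p_{\S}$ forces $g\notin\cR^p_{\S}$ through \eqref{eq:LpLp1Tp}. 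For $\cR^p_{\S}\subsetneq\cD^p_{\S}$ with $p\geq1$, the inclusion is $\cR^p_{\S}=\cR^{p+1}_{\S}\cap\cD^p_{\S}\subset\cD^p_{\S}$, and strictness needs an element of $\cD^p_{\S}\setminus\cR^{p+1}_{\S}$; any nonconstant continuous $1$-periodic $g$ works, because $\Delta g\equiv0$ yields $\Delta^pg\equiv0$ (so $g\in\cD^p_{\S}$), whereas $\rho^{p+1}_a[g](x)=g(x+a)-g(a)$ does not tend to $0$ (so $g\notin\cR^{p+1}_{\S}$, hence $g\notin\cR^p_{\S}$).

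It remains to assemble the $p=0$ chain $\cD^0_{\R}=\cR^0_{\R}\subsetneq\cR^0_{\N}\subsetneq\cD^0_{\N}$. The equality $\cD^0_{\R}=\cR^0_{\R}$ and the inclusion $\cR^0_{\N}\subset\cD^0_{\N}$ both come from the elementary observation that specializing $x=1$ turns the defining condition of $\cR^0_{\S}$ (that $g(x+a)\to0$) into the defining condition of $\cD^0_{\S}$ (that $g\to0$), while conversely $g\to0$ forces $g(x+a)\to0$ for every $x$; the inclusion $\cR^0_{\R}\subset\cR^0_{\N}$ is just that convergence over $\R$ implies convergence over $\N$. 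The strictness $\cR^0_{\N}\subsetneq\cD^0_{\N}$ is witnessed by a nonconstant continuous $1$-periodic function vanishing at the integers: it lies in $\cD^0_{\N}$ because $g(n)=0$, but $\rho^0_n[g](x)=g(x+n)=g(x)\not\to0$ at any $x$ with $g(x)\neq0$, so it is not in $\cR^0_{\N}$.

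The one genuinely delicate point, and what I expect to be the main obstacle, is the strictness $\cR^0_{\R}\subsetneq\cR^0_{\N}$: I need a function that tends to $0$ along every coset $x+\N$ yet does not tend to $0$ over $\R$. Here I would build $g$ from disjoint continuous bumps of height $1$, the $m$-th supported on $[m+\tfrac1m,\,m+\tfrac1m+\tfrac1{m^3}]$. Since the bumps avoid the integers and recur at arbitrarily large arguments, $g(n)=0$ for all integers $n$ but $g\not\to0$ over $\R$, so $g\notin\cR^0_{\R}=\cD^0_{\R}$. On the other hand, for fixed $x>0$ the relation $g(x+n)\neq0$ forces $x+n$ into the bump indexed $m=n+\lfloor x\rfloor$, hence $\{x\}\in[\tfrac1m,\tfrac1m+\tfrac1{m^3}]$, a condition that holds for only finitely many $m$ once $\{x\}$ is fixed; thus $g(x+n)=0$ for all large $n$ and $g\in\cR^0_{\N}$. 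The care needed is precisely in choosing the bump positions and widths so that \emph{every} fixed fractional offset $\{x\}$ is met by only finitely many bumps, since a Borel--Cantelli-type argument over $\{x\}$ would only control almost every $x$; this is why the explicit geometric choice (positions $1/m$, widths $1/m^3$), which places each point of $[0,1)$ in only finitely many of the intervals $[\tfrac1m,\tfrac1m+\tfrac1{m^3}]$, is the crucial device.
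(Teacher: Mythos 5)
Your proposal is correct and follows essentially the same route as the paper's proof: identity \eqref{eq:LpLp1Tp} obtained from \eqref{eq:LamEqDel2} and \eqref{eq:fng2}, a degree-$p$ polynomial witness for both strict filtration inclusions (the paper uses $x\mapsto x^p$ where you use $\tchoose{x}{p}$), a nonconstant continuous $1$-periodic witness for $\cD^p_{\S}\setminus\cR^p_{\S}$ and $\cD^0_{\N}\setminus\cR^0_{\N}$ (the paper takes $\sin(2\pi x)$), and a bump-function construction with shrinking, drifting windows for $\cR^0_{\R}\varsubsetneq\cR^0_{\N}$. The only blemish is cosmetic: your $m=1$ bump occupies all of $[2,3]$ and overlaps the $m=2$ bump, so the construction should simply start at $m=2$; this does not affect the tail argument on which both claims rest.
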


\begin{proof}
It is clear that the sets $\cR^p_{\S}$ and $\cD^p_{\S}$ are closed under linear combinations; hence they are real linear spaces. Identity \eqref{eq:LpLp1Tp} then follows immediately from \eqref{eq:LamEqDel2} and \eqref{eq:fng2}. This identity also shows that $\cR^p_{\S}\subset\cR^{p+1}_{\S}$. As already observed, we also have $\cD^p_{\S}\subset\cD^{p+1}_{\S}$ trivially. Now, identity \eqref{eq:LErrIn} shows that the polynomial function $x\mapsto x^p$ lies in $\cR^{p+1}_{\S}\setminus\cR^p_{\S}$ and we can easily see that it lies also in $\cD^{p+1}_{\S}\setminus\cD^p_{\S}$. The inclusion $\cR^p_{\S}\subset\cD^p_{\S}$ follows from \eqref{eq:LpLp1Tp} and we can easily see that the $1$-periodic function $x\mapsto\sin(2\pi x)$ lies in  $\cD^p_{\S}\setminus\cR^p_{\S}$ for any $p\in\N^*$ as well as in $\cD^0_{\N}\setminus\cR^0_{\N}$. Finally, let us now show that $\cR^0_{\R} \varsubsetneq \cR^0_{\N}$. Using bump functions for instance, we can easily construct a smooth function $f\colon\R_+\to\R$ such that for any $n\in\N^*$, we have $f=0$ on the interval $[n-1,n-\frac{1}{n}]$ and $f(n-\frac{1}{2n})=1$. Such a function clearly lies in $\cR^0_{\N}$. However, it does not vanish at infinity, i.e., it does not lie in $\cR^0_{\R}$.
\end{proof}


We now present an important result that will be used repeatedly as we continue. It actually follows from the second of the following straightforward identities
\begin{eqnarray}
\rho_{a+1}^p[f](x) - \rho_a^p[f](x) &=& \rho_a^p[\Delta f](x){\,},\label{eq:DeltaXLam00}\\
\rho_{a}^{p+1}[f](x+1) - \rho_a^{p+1}[f](x) &=& \rho_a^p[\Delta f](x){\,}.\label{eq:DeltaXLam}
\end{eqnarray}

\begin{proposition}\label{prop:fpDpj}
Let $j,p\in\N$ be such that $j\leq p$. The following assertions hold.
\begin{enumerate}
\item[(a)] If $f\in\cR^p_{\S}$, then $\Delta^jf\in\cR^{p-j}_{\S}$.
\item[(b)] $f\in\cD^p_{\S}$ if and only if $\Delta^jf\in\cD^{p-j}_{\S}$.
\end{enumerate}
\end{proposition}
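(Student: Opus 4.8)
The plan is to treat the two parts separately, dispatching (b) by a one-line observation and proving (a) by reducing it to a single application of the difference identity \eqref{eq:DeltaXLam}.

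For part (b), I would simply use the fact that difference operators compose additively in the exponent, $\Delta^{p-j}\circ\Delta^j=\Delta^p$, so that $\Delta^{p-j}(\Delta^j f)=\Delta^p f$. By the very definition of the sets involved (Definition~\ref{de:R0Sp42}), the assertion $\Delta^j f\in\cD^{p-j}_{\S}$ reads $\Delta^{p-j}(\Delta^j f)(x)\to 0$ as $x\to_{\S}\infty$, which is literally the condition $\Delta^p f(x)\to 0$, i.e.\ $f\in\cD^p_{\S}$. Hence the two membership statements coincide and the equivalence is immediate; no induction is needed here, only the hypothesis $j\le p$, which is what makes $\cD^{p-j}_{\S}$ meaningful.

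For part (a), I would first reduce the general case $j\le p$ to the single step $j=1$. Writing $\Delta^j f=\Delta(\Delta^{j-1}f)$ and arguing by induction on $j$, it suffices to show that $f\in\cR^p_{\S}$ (with $p\ge 1$) forces $\Delta f\in\cR^{p-1}_{\S}$; iterating this $j$ times then yields $\Delta^j f\in\cR^{p-j}_{\S}$, the iteration being legitimate precisely because $j\le p$ keeps every intermediate order nonnegative. The single step is where identity \eqref{eq:DeltaXLam} does all the work: lowering its order by one gives, for every $a>0$ and $x\ge 0$,
$$
\rho_a^{p-1}[\Delta f](x) ~=~ \rho_a^{p}[f](x+1)-\rho_a^{p}[f](x){\,}.
$$
Fixing $x>0$, both $x$ and $x+1$ are positive, so the hypothesis $f\in\cR^p_{\S}$ guarantees (again via Definition~\ref{de:R0Sp42}) that each of $\rho_a^{p}[f](x+1)$ and $\rho_a^{p}[f](x)$ tends to $0$ as $a\to_{\S}\infty$. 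Their difference therefore tends to $0$, which is exactly the statement $\rho_a^{p-1}[\Delta f](x)\to 0$, i.e.\ $\Delta f\in\cR^{p-1}_{\S}$.

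As for the main obstacle: there is essentially no analytic difficulty, since the entire content is packaged in the algebraic identity \eqref{eq:DeltaXLam}. The only points that require a little care are bookkeeping: verifying that the order indices stay nonnegative throughout the induction (ensured by $j\le p$), and confirming that evaluating $\rho_a^p[f]$ at the shifted argument $x+1$ is legitimate, which it is because $\rho_a^p[f]$ is defined on all of $[0,\infty)$ and $x>0$ gives $x+1>0$. I expect the reduction-to-$j=1$ step to be the only place where one might be tempted to overcomplicate matters; keeping it as a clean induction on $j$ avoids that.
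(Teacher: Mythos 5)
Your proof is correct and follows essentially the same route as the paper: part (b) via the composition $\Delta^{p-j}\circ\Delta^j=\Delta^p$, and part (a) via the shift identity \eqref{eq:DeltaXLam} applied to a single difference, iterated over $j\leq p$. The paper's proof is just a terser version of yours (it states the one-step case and leaves the iteration implicit), so there is nothing to add.
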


\begin{proof}
If $f$ lies in $\cR^{p+1}_{\S}$, then $\Delta f$ lies in $\cR^p_{\S}$ by \eqref{eq:DeltaXLam}. On the other hand, it is clear that $f$ lies in $\cD^{p+1}_{\S}$ if and only if $\Delta f$ lies in $\cD^p_{\S}$.
\end{proof}

It is easy to see that a function $f\colon\R_+\to\R$ whose difference $\Delta f$ lies in $\cR^p_{\S}$ for some $p\in\N$ need not lie in $\cR^{p+1}_{\S}$. For instance, the function $f\colon\R_+\to\R$ defined by the equation $f(x) = \sin(2\pi x)$ for $x>0$ does not lie in $\cR^1_{\S}$ but its difference $\Delta f=0$ lies in $\cR^0_{\S}$. However, we will see in Corollary~\ref{cor:fpDpj1} that, if $f\in\cK^{p-1}$, then the implication in assertion (a) of Proposition~\ref{prop:fpDpj} becomes an equivalence.

\begin{remark}
In view of Proposition~\ref{prop:fpDpj}(b), it is natural to wonder whether there exists a set $\cD$ of functions from $\R_+$ to $\R$ having the property that $f\in\cD^0_{\S}$ if and only if $\Delta f\in\cD$. However, such a set does not exist. Indeed, identities \eqref{eq:fnEf1Sum} and \eqref{eq:fxnEfxSum} show that if $f$ lies in $\cD^0_{\S}$, then necessarily $\Delta f$ lies in $\cD^{-1}_{\S}$. Conversely, for any $g\in\cD^{-1}_{\S}$, there are infinitely many functions $f\colon\R_+\to\R$ that satisfy $\Delta f=g$ but that do not lie in $\cD^0_{\S}$.
\end{remark}

It is clear that, for any $p\in\N$, if both functions $h$ and $g-h$ lie in the space $\cR^p_{\S}$, then so does the function $g$. For instance, if $g\colon\R_+\to\R$ has the asymptotic property that
$$
g(x)-P(x) ~\to ~0\qquad\text{as $x\to_{\S}\infty$}
$$
for some polynomial $P$ of degree less than or equal to $p-1$, then $g$ must lie in $\cR^p_{\S}$. Indeed, $P$ clearly lies in $\cR^p_{\S}$ and we also have that $g-P$ lies in $\cR^0_{\S}$ (which is included in $\cR^p_{\S}$ by Proposition~\ref{prop:4042RsDs5}). Thus, the space $\cR^p_{\S}$ contains not only every polynomial of degree less than or equal to $p-1$ but also every function that behaves asymptotically like a polynomial of degree less than or equal to $p-1$. To give another illustration of the property above, we observe for instance that both functions $\ln x$ and $H_x-\ln x$ (the latter tends to Euler's constant\index{Euler's constant} $\gamma$ as $x\to_{\S}\infty$) lie in $\cR^1_{\R}$ and hence so does the function $H_x$, which means that, for each $a\geq 0$,
$$
H_{x+a}-H_x ~\to ~0\qquad\text{as $x\to\infty$}
$$
(which, a priori, is a not completely trivial result).

These examples illustrate the fact that the spaces
$$
\cR^{\infty}_{\S} ~=~ \bigcup_{p\geq 0}\cR^p_{\S}\label{p:Rinfty}\qquad\text{and}\qquad\cD^{\infty}_{\S}=\bigcup_{p\geq 0}\cD^p_{\S}\label{p:Dinfty}
$$
are very rich and contain a huge variety of functions, including not only all the functions that have polynomial behaviors at infinity as discussed above, and in particular all the rational functions, but also many other functions. We observe, however, that they do not contain any strictly increasing exponential function. For instance, if $g(x)=2^x$, then $\Delta^p g(x)=2^x$ for any $p\in\N$, and this function does not vanish at infinity. Actually, such exponential functions grow asymptotically much faster than polynomial functions and may remain eventually $p$-convex even after adding nonconstant $1$-periodic functions. For instance, both functions $2^x$ and $2^x+\sin(2\pi x)$ are eventually $p$-convex for any $p\in\N$.

\begin{remark}
Using \eqref{eq:deflambdapt} and \eqref{eq:LpLp1Tp}, we also obtain $\cR^p_{\S} = \cR^{\infty}_{\S}\cap\cD^p_{\S}$ for any $p\in\N$.
\end{remark}

\section{The intersection sets $\cR^p_{\S}\cap\cK^p$ and $\cD^p_{\S}\cap\cK^p$}

Let us now consider the set $\cK^p$ and its subsets $\cR^p_{\S}\cap\cK^p$ and $\cD^p_{\S}\cap\cK^p$. As these sets will be used repeatedly throughout this book, it is important to study their basic properties. In this section, we present a number of results about these sets that will be very useful in the subsequent chapters.

Let us first observe that the set $\cK^p$ is not a linear space. For instance, using Lemma~\ref{lemma:PrelKp} we can see that both functions
$$
f(x) ~=~ x^{p+1}+\sin x\qquad\text{and}\qquad g(x) ~=~ x^{p+1}
$$
lie in $\cK^p$ but $f-g$ does not. We also have that $\Delta f$ does not lie in $\cK^p$ (because $D^p\Delta f=\Delta D^pf$ does not lie in $\cK^0$), which shows that $\cK^p$ is not closed under the operator $\Delta$.

The following corollary shows that $\cK^p$ is actually the union of two convex cones. This result is an immediate consequence of Proposition~\ref{prop:convCones48}.

\begin{corollary}\label{cor:convCones48}
For any $p\in\N$, the sets $\cK^p_+$ and $\cK^p_-$ are convex cones. These cones are opposite in the sense that $f$ lies in $\cK^p_+$ if and only if $-f$ lies in $\cK^p_-$. Moreover, the intersection $\cK^p_+\cap\cK^p_-$ is the real linear space of all the real functions on $\R_+$ that are eventually polynomials of degree less than or equal to $p$.
\end{corollary}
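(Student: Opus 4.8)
The plan is to deduce every assertion from the fixed-interval version already established in Proposition~\ref{prop:convCones48}, the only genuinely new ingredient being the passage from ``on a fixed interval'' to ``in a neighborhood of infinity.'' The device I would use throughout is to replace the two (or more) thresholds beyond which the relevant convexity properties hold by their maximum, so that all the functions involved become simultaneously $p$-convex or $p$-concave on one common unbounded interval $[a,\infty)$, to which Proposition~\ref{prop:convCones48} then applies directly (note that $[a,\infty)$ has nonempty interior, as required there).

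First I would check that $\cK^p_+$ is a convex cone. Given $f_1,f_2\in\cK^p_+$, say $f_i$ is $p$-convex on $[a_i,\infty)$, I set $a=\max\{a_1,a_2\}$; then both $f_1$ and $f_2$ lie in $\cK^p_+([a,\infty))$, and since that set is a convex cone by Proposition~\ref{prop:convCones48}, so does $c_1f_1+c_2f_2$ for all $c_1,c_2\geq 0$. Hence $c_1f_1+c_2f_2\in\cK^p_+$, and the same argument handles $\cK^p_-$. The opposition of the two cones is even more immediate: $f$ is $p$-convex on some $[a,\infty)$ if and only if $-f$ is $p$-concave on the same interval, by the corresponding statement of Proposition~\ref{prop:convCones48}; so $f\in\cK^p_+$ if and only if $-f\in\cK^p_-$.

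For the intersection, I would take $f\in\cK^p_+\cap\cK^p_-$, with $f$ being $p$-convex on $[a_1,\infty)$ and $p$-concave on $[a_2,\infty)$, and put $a=\max\{a_1,a_2\}$. Then $f$ lies in $\cK^p_+([a,\infty))\cap\cK^p_-([a,\infty))$, which by Proposition~\ref{prop:convCones48} is exactly the space of polynomials of degree $\leq p$; thus $f$ coincides on $[a,\infty)$ with such a polynomial, i.e., $f$ is eventually a polynomial of degree $\leq p$. Conversely, if $f$ agrees on some $[a,\infty)$ with a polynomial of degree $\leq p$, then that polynomial lies in both $\cK^p_+([a,\infty))$ and $\cK^p_-([a,\infty))$, whence $f\in\cK^p_+\cap\cK^p_-$. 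Finally, the same max-of-thresholds argument shows that a linear combination of two functions, each eventually equal to a polynomial of degree $\leq p$, is again eventually equal to such a polynomial, so this set is indeed a real linear space.

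There is no real obstacle here; the only point requiring any care is the systematic use of a common threshold $a=\max\{a_i\}$, which is what lets one invoke Proposition~\ref{prop:convCones48} on a single interval rather than juggling several distinct neighborhoods of infinity at once.
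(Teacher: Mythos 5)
Your proof is correct and follows exactly the route the paper intends: the paper states this corollary as an ``immediate consequence'' of Proposition~\ref{prop:convCones48}, and your max-of-thresholds argument is precisely the routine reduction to a common unbounded interval $[a,\infty)$ that makes that consequence immediate. Nothing is missing.
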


It is now clear that $\cD^p_{\S}\cap\cK^p$ is also the union of two opposite convex cones that is not a linear space. For instance, both functions
$$
f(x) ~=~ 2\ln x+\frac{\sin x}{x^2}\qquad\text{and}\qquad g(x) ~=~ 2\ln x
$$
lie in $\cD^1_{\S}\cap\cK^1$ (use, e.g., Theorem~\ref{thm:intCpTpKp}(b) below) but $f-g$ does not.

Now, the following proposition shows that, just as the sets $\cC^0, \cC^1, \cC^2, \ldots$ are decreasingly nested, so are the sets $\cK^{-1}{\!}, \cK^0, \cK^1, \ldots$. Thus, this latter family defines a descending filtration and we can therefore introduce the intersection set
$$
\cK^{\infty} ~=~ \bigcap_{p\geq 0}\cK^p.\label{p:Kinfty}
$$

\begin{proposition}\label{prop:MpDescFiltr}
For any integer $p\geq -1$, we have $\cK^{p+1}\varsubsetneq\cK^p$.
\end{proposition}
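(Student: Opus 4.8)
The plan is to prove the two parts of the statement separately: the nesting $\cK^{p+1}\subseteq\cK^p$ and its strictness. For the nesting, I would first reduce to the convex side. Since by Corollary~\ref{cor:convCones48} the cones $\cK^q_+$ and $\cK^q_-$ are opposite, $\cK^q=\cK^q_+\cup\cK^q_-$ is stable under negation, so it suffices to prove that $f\in\cK^{p+1}_+$ implies $f\in\cK^p$. Fix an open unbounded interval $I=(a,\infty)$ on which $f$ is $(p+1)$-convex. For $p\geq 0$, Lemma~\ref{lemma:PrelKp}(a) gives $f\in\cC^p(I)$, and then Lemma~\ref{lemma:PrelKp}(c), applied with order $p+1$ and $j=p$, yields $f^{(p)}\in\cK^1_+(I)$; that is, $f^{(p)}$ is an ordinary convex function on $I$.

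The crux of the argument---and the one step that is not a direct invocation of the preliminary lemmas---is the elementary fact that a convex function on an unbounded interval is eventually monotone. I would establish this via right derivatives: the right derivative of $f^{(p)}$ is nondecreasing on $I$, so either it stays $\leq 0$ (and $f^{(p)}$ is nonincreasing on $I$) or it becomes strictly positive at some point and remains so afterwards (and $f^{(p)}$ is increasing near infinity). In either case $f^{(p)}\in\cK^0$ on some $(b,\infty)\subseteq I$. From there I would climb back up to $f$ by applying Lemma~\ref{lemma:PrelKp}(d) exactly $p$ times: since $f\in\cC^p(I)$ we have $(f^{(k-1)})'=f^{(k)}$, so $f^{(p)}\in\cK^0_{\pm}$ propagates to $f^{(p-1)}\in\cK^1_{\pm}$, then $f^{(p-2)}\in\cK^2_{\pm}$, and finally $f=f^{(0)}\in\cK^p_{\pm}$ on $(b,\infty)$, i.e.\ $f\in\cK^p$. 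The remaining base case $\cK^0\subseteq\cK^{-1}$ (the value $p=-1$) I would treat by hand: a monotone function on $(b,\infty)$ has a limit in $[-\infty,+\infty]$, and it is eventually of constant sign when that limit is nonzero, while a function increasing (resp.\ decreasing) to $0$ stays $\leq 0$ (resp.\ $\geq 0$); either way it lies in $\cK^{-1}$.

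For strictness I would exhibit, for every $p\geq -1$, the single witness $f_p(x)=x^{p+1}+\sin x$, which lies in $\cK^p\setminus\cK^{p+1}$. Since $f_p\in\cC^{\infty}$, the characterization obtained by combining the forward and backward directions of Lemma~\ref{lemma:PrelKp}(c) and (d) shows that for any integer $q\geq -1$ a smooth function is $q$-convex (resp.\ $q$-concave) on an interval exactly when its $(q+1)$th derivative is $\geq 0$ (resp.\ $\leq 0$) there. Now $f_p^{(p+1)}(x)=(p+1)!+s(x)$, where $s(x)$ is one of $\pm\sin x,\pm\cos x$, so $f_p^{(p+1)}\geq(p+1)!-1\geq 0$, which gives $f_p\in\cK^p_+$. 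On the other hand $f_p^{(p+2)}=s'$ is again one of $\pm\sin x,\pm\cos x$ and hence changes sign on every neighborhood of infinity, so $f_p$ is neither eventually $(p+1)$-convex nor eventually $(p+1)$-concave, i.e.\ $f_p\notin\cK^{p+1}$.

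I expect the main obstacle to be the ``eventually monotone'' step, precisely because $(p+1)$-convexity does not imply $p$-convexity (a convex function need not be increasing); this is exactly why $\cK^p$ must be taken as the union $\cK^p_+\cup\cK^p_-$, and why the climbing argument lands in $\cK^p_+$ or $\cK^p_-$ depending on the eventual direction of monotonicity of $f^{(p)}$. Everything else reduces to bookkeeping with Lemma~\ref{lemma:PrelKp}, so the only genuinely analytic input is this convexity fact and its $p=-1$ analogue on signs.
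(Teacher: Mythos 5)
Your proof is correct, and your strictness argument (the witness $x^{p+1}+\sin x$ combined with the derivative test extracted from Lemma~\ref{lemma:PrelKp}) is exactly the paper's. The inclusion $\cK^{p+1}\subseteq\cK^p$, however, is proved by a genuinely different route. The paper never differentiates: it applies Lemma~\ref{lemma:pCInc5} at order $p+1$, so that on an interval of $(p+1)$-convexity the map $(z_0,\ldots,z_{p+1})\mapsto f[z_0,\ldots,z_{p+1}]$ is increasing in each place, and then runs a dichotomy — if $f$ is not eventually $p$-concave, some configuration $x_0<\cdots<x_{p+1}$ in that interval satisfies $f[x_0,\ldots,x_{p+1}]>0$, and monotonicity in each argument forces every configuration lying in $(x_{p+1},\infty)$ to give a positive value, so $f$ is $p$-convex there. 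This argument is uniform in $p\geq -1$ (your hand-treated base case $p=-1$ is just the instance where the divided differences are function values), requires no smoothness, no climbing, and no sign bookkeeping. Your route instead descends to $f^{(p)}$ via Lemma~\ref{lemma:PrelKp}(a) and (c), invokes the classical fact that a convex function on an unbounded interval is eventually monotone, and climbs back with $p$ applications of Lemma~\ref{lemma:PrelKp}(d) (in its $\pm$ form, obtained by negating). What your approach buys is that the only analytic input is a familiar elementary fact about convex functions; the price is that it rests on the full strength of Lemma~\ref{lemma:PrelKp}, whose proof is the heavy machinery deferred to Appendix~\ref{chapter:pconv184}, whereas the paper's proof needs only Lemma~\ref{lemma:pCInc5}, which has a short self-contained proof from the recurrence \eqref{eq:DivDiffRec7}. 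Both arguments expose the same structural point you emphasize at the end: a function of $\cK^{p+1}_+$ may land in either $\cK^p_+$ or $\cK^p_-$, which is why $\cK^p$ must be the union of the two cones.
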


\begin{proof}
Let $f$ lie in $\cK^{p+1}$ for some integer $p\geq -1$. Suppose for instance that $f$ lies in $\cK^{p+1}_+$ and let $I$ be an unbounded subinterval of $\R_+$ on which $f$ is $(p+1)$-convex. Let $\mathcal{I}_{p+2}$ denote the set of tuples of $I^{p+2}$ whose components are pairwise distinct. By Lemma~\ref{lemma:pCInc5}, it follows that the restriction of the map
$$
(z_0,\ldots,z_{p+1}) ~\mapsto ~f[z_0,\ldots,z_{p+1}]
$$
to $\mathcal{I}_{p+2}$ is increasing in each place. If $f$ does not lie in $\cK^p_-$, then there are $p+2$ points $x_0 < \cdots < x_{p+1}$ in $I$ such that $f[x_0,\ldots,x_{p+1}]>0$. But then, $f$ is $p$-convex on the interval $(x_{p+1},\infty)$, and hence it lies in $\cK^p_+$, which establishes the inclusion. To see that the inclusion is strict, using Lemma~\ref{lemma:PrelKp} we just observe that the function $f\colon\R_+\to\R$ defined by the equation
$$
f(x) ~=~ x^{p+1}+\sin x\qquad\text{for $x>0$}
$$
lies in $\cK^p\setminus\cK^{p+1}$.
\end{proof}

Interestingly, Proposition~\ref{prop:MpDescFiltr} shows that the assumption that $g$ lies in $\cK^p$, which occurs in many statements (e.g., in Theorem~\ref{thm:exist}), can be given equivalently by the condition that $g$ lies in $\cup_{q\geq p}\cK^q$.

We now present two useful propositions. The first one is very important: it shows that the sets $\cR^p_{\S}\cap\cK^p$ and $\cD^p_{\S}\cap\cK^p$ coincide and are actually independent of $\S$.

\begin{proposition}\label{prop:ClClIntg}
For any $p\in\N$, we have
$$
\cR^p_{\R}\cap\cK^p ~=~ \cD^p_{\R}\cap\cK^p ~=~ \cR^p_{\N}\cap\cK^p ~=~ \cD^p_{\N}\cap\cK^p.
$$
\end{proposition}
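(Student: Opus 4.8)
The plan is to prove the four sets coincide through a short cycle of inclusions, isolating the one genuinely nontrivial step. I would split the argument into three parts: the cheap inclusions coming from $\N\subset\R$ and from $\cR^p_{\S}\subset\cD^p_{\S}$; an application of the existence theorem to collapse $\cR$ onto $\cD$ within $\cK^p$; and a final promotion of $\N$-convergence to $\R$-convergence that rests essentially on higher order convexity.

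First I would record the trivial inclusions. Since $\N\subset\R$, convergence as $x\to_{\R}\infty$ forces convergence as $x\to_{\N}\infty$, whence $\cD^p_{\R}\subset\cD^p_{\N}$ and $\cR^p_{\R}\subset\cR^p_{\N}$. The inclusion $\cR^p_{\S}\subset\cD^p_{\S}$ is already available (it follows from \eqref{eq:LamEqDel2} and \eqref{eq:fng2}, equivalently from \eqref{eq:LpLp1Tp}). Intersecting with $\cK^p$ then gives $\cR^p_{\R}\cap\cK^p\subset\cD^p_{\R}\cap\cK^p\subset\cD^p_{\N}\cap\cK^p$ and $\cR^p_{\R}\cap\cK^p\subset\cR^p_{\N}\cap\cK^p\subset\cD^p_{\N}\cap\cK^p$.

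Second, I would invoke Theorem~\ref{thm:exist}(a), which states exactly that $\cD^p_{\S}\cap\cK^p\subset\cR^p_{\S}$ for each $\S\in\{\N,\R\}$. Combined with $\cR^p_{\S}\subset\cD^p_{\S}$, this yields the equality $\cR^p_{\S}\cap\cK^p=\cD^p_{\S}\cap\cK^p$ for each $\S$. Consequently all four sets reduce to the two sets $\cD^p_{\R}\cap\cK^p$ and $\cD^p_{\N}\cap\cK^p$, and since the former is contained in the latter, it remains only to prove the reverse inclusion $\cD^p_{\N}\cap\cK^p\subset\cD^p_{\R}$.

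The crux, and the only place where the convexity hypothesis does real work, is this last inclusion. Let $g\in\cD^p_{\N}\cap\cK^p$; negating $g$ if necessary, I may assume $g$ is $p$-convex on some half-line $[a,\infty)$. By Lemma~\ref{lemma:pCInc5} the function $\Delta^p g$ is then increasing on $[a,\infty)$. Hence for all sufficiently large $x$ one has the squeeze $\Delta^p g(\lfloor x\rfloor)\leq\Delta^p g(x)\leq\Delta^p g(\lceil x\rceil)$, and both outer terms tend to $0$ because $g\in\cD^p_{\N}$; the squeeze theorem yields $\Delta^p g(x)\to 0$ as $x\to_{\R}\infty$, i.e.\ $g\in\cD^p_{\R}$. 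This completes the chain $\cR^p_{\R}\cap\cK^p=\cD^p_{\R}\cap\cK^p=\cD^p_{\N}\cap\cK^p=\cR^p_{\N}\cap\cK^p$. I expect this monotonicity-plus-squeeze argument to be the only real obstacle: the $\cK^p$ hypothesis is essential, since already for $p=0$ the function $x\mapsto\sin(2\pi x)$ lies in $\cD^0_{\N}$ but not in $\cD^0_{\R}$, and is excluded precisely because it is not eventually monotone, i.e.\ not in $\cK^0$.
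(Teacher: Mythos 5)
Your proof is correct and follows essentially the same route as the paper's: the trivial inclusions, the collapse of $\cR^p_{\S}\cap\cK^p$ onto $\cD^p_{\S}\cap\cK^p$ via Theorem~\ref{thm:exist}(a), and the key step $\cD^p_{\N}\cap\cK^p\subset\cD^p_{\R}$ proved by combining Lemma~\ref{lemma:pCInc5} (monotonicity of $\Delta^p g$) with the floor/ceiling squeeze. The only cosmetic difference is that you reduce the concave case by negation where the paper says ``suppose for instance,'' and your closing remark on $\sin(2\pi x)$ is a nice, though inessential, illustration of why $\cK^p$ cannot be dropped.
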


\begin{proof}
We already know that $\cR^p_{\S}\subset\cD^p_{\S}$ (cf.\ Proposition~\ref{prop:4042RsDs5}) and $\cD^p_{\R}\subset\cD^p_{\N}$. Moreover, we have that $\cD^p_{\S}\cap\cK^p\subset\cR^p_{\S}$ by Theorem~\ref{thm:exist}. It remains to show that $\cD^p_{\N}\cap\cK^p\subset\cD^p_{\R}$. Let $g$ lie in $\cD^p_{\N}\cap\cK^p$. Suppose for instance that $g$ lies in $\cK^p_+$ and let $a>0$ be so that $g$ is $p$-convex on $[a,\infty)$. By Lemma~\ref{lemma:pCInc5}, $\Delta^pg$ is increasing on $[a,\infty)$. Thus, for any $x\geq a+1$, we have
$$
\Delta^pg(\lfloor x\rfloor) ~\leq ~ \Delta^pg(x) ~\leq ~ \Delta^pg(\lceil x\rceil).
$$
Letting $x\to\infty$ and using the squeeze theorem, we obtain that $g$ lies in $\cD^p_{\R}$.
\end{proof}

\begin{proposition}\label{prop:4eqgv}
If $f\in\cK^p$ for some $p\in\N$, then the following assertions are equivalent:
$$
\begin{array}{rlcrlcrlcrl}
\text{\emph{(i)}} & f\in\cR^{p+1}_{\S}, & & \text{\emph{(ii)}} & f\in\cD^{p+1}_{\S}, & & \text{\emph{(iii)}} & \Delta f\in\cR^p_{\S}{\,}, & & \text{\emph{(iv)}} & \Delta f\in\cD^p_{\S}{\,}.
\end{array}
$$
\end{proposition}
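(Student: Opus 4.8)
The plan is to prove the four assertions equivalent by establishing the cycle (i)$\Rightarrow$(iii)$\Rightarrow$(iv)$\Rightarrow$(ii)$\Rightarrow$(i). The first three implications are purely formal and hold for \emph{any} $f\colon\R_+\to\R$; only the last implication (ii)$\Rightarrow$(i) will use the standing hypothesis $f\in\cK^p$, and this is where the real work lies. Organizing the argument as a single cycle avoids having to prove each of the six pairwise implications separately.

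For the three formal implications I would argue as follows. The implication (i)$\Rightarrow$(iii) is Proposition~\ref{prop:fpDpj}(a) applied with the index $p+1$ and $j=1$, which yields $\Delta f\in\cR^p_{\S}$ whenever $f\in\cR^{p+1}_{\S}$. The implication (iii)$\Rightarrow$(iv) is immediate from the inclusion $\cR^p_{\S}\subset\cD^p_{\S}$ recorded in Proposition~\ref{prop:4042RsDs5}. Finally, (iv)$\Rightarrow$(ii) is the nontrivial direction of the equivalence in Proposition~\ref{prop:fpDpj}(b), again with index $p+1$ and $j=1$: from $\Delta f\in\cD^p_{\S}$ we recover $f\in\cD^{p+1}_{\S}$. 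None of these three steps requires any convexity hypothesis.

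The crux is (ii)$\Rightarrow$(i): assuming $f\in\cK^p$ together with $f\in\cD^{p+1}_{\S}$, I must show $f\in\cR^{p+1}_{\S}$, i.e.\ that $\rho^{p+1}_a[f](x)\to 0$ as $a\to_{\S}\infty$ for each $x>0$. After negating $f$ if necessary (which preserves membership in the linear space $\cD^{p+1}_{\S}$ and only swaps $\cK^p_+$ with $\cK^p_-$), I may assume $f$ is $p$-convex on some $[a_0,\infty)$. Lemma~\ref{lemma:VarEpsIneq} then supplies, for every $a\geq a_0$ and every $x\geq 0$, the bound
$$
0 ~\leq ~ \varepsilon_{p+1}(x)\,\rho^{p+1}_a[f](x) ~\leq ~ \left|\tchoose{x-1}{p}\right|\sum_{j=0}^{\lceil x\rceil-1}\Delta^{p+1}f(a+j).
$$
For fixed $x$ the sum on the right has only finitely many ($\lceil x\rceil$) terms, each of the form $\Delta^{p+1}f(a+j)$ with $a+j\to_{\S}\infty$ as $a\to_{\S}\infty$; since $f\in\cD^{p+1}_{\S}$, each such term tends to $0$, and hence so does the entire right-hand side. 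By the squeeze theorem $\varepsilon_{p+1}(x)\,\rho^{p+1}_a[f](x)\to 0$, which gives $\rho^{p+1}_a[f](x)\to 0$ whenever $\varepsilon_{p+1}(x)\neq 0$; and when $x\in\{0,1,\ldots,p\}$ we have $\rho^{p+1}_a[f](x)=0$ identically by the last clause of Lemma~\ref{lemma:VarEpsIneq}. Thus $f\in\cR^{p+1}_{\S}$, which closes the cycle.

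I expect (ii)$\Rightarrow$(i) to be the only delicate step, and it is essentially the argument already used in the opening lines of the proof of Theorem~\ref{thm:exist}(a), where the hypothesis $g\in\cD^p_{\S}\cap\cK^p$ is first weakened to $g\in\cD^{p+1}_{\S}\cap\cK^p$ before invoking Lemma~\ref{lemma:VarEpsIneq}. The main point to keep straight is the index bookkeeping: the convexity hypothesis sits at order $p$, whereas the asymptotic conditions and the quantity $\rho^{p+1}$ live at order $p+1$. It is precisely Lemma~\ref{lemma:VarEpsIneq}—which controls $\rho^{p+1}_a[f]$ using only $p$-convexity of $f$—that bridges this one-unit gap, and this is why the weaker hypothesis $f\in\cK^p$ (rather than $f\in\cK^{p+1}$) suffices.
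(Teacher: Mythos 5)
Your proof is correct and takes essentially the same route as the paper: the formal implications come from Propositions~\ref{prop:4042RsDs5} and \ref{prop:fpDpj}, and the one substantive step is closed by combining $p$-convexity with the asymptotic hypothesis via Lemma~\ref{lemma:VarEpsIneq} and the squeeze theorem. The only cosmetic difference is that the paper cites Theorem~\ref{thm:unic} for the closing implication (iv)$\Rightarrow$(i), whereas you inline exactly the argument by which Theorem~\ref{thm:unic}(a) is itself proved, so the underlying content is identical.
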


\begin{proof}
By Proposition~\ref{prop:4042RsDs5}, we clearly have that (i) implies (ii) and that (iii) implies (iv). By Proposition~\ref{prop:fpDpj}, we also have that (i) implies (iii) and that (ii) implies (iv). Finally, by Theorem~\ref{thm:unic}, we have that (iv) implies (i).
\end{proof}

Combining Proposition~\ref{prop:fpDpj} with Propositions~\ref{prop:MpDescFiltr} and \ref{prop:4eqgv}, we immediately obtain the following corollary, which naturally complements Proposition~\ref{prop:fpDpj}.

\begin{corollary}\label{cor:fpDpj1}
Let $j,p\in\N$ be such that $j\leq p$. If $f\in\cK^{p-1}$, then we have $f\in\cR^p_{\S}$ if and only if $\Delta^jf\in\cR^{p-j}_{\S}$.
\end{corollary}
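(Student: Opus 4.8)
The plan is to prove the two implications separately; only the reverse one will use the hypothesis $f\in\cK^{p-1}$. The forward implication is already Proposition~\ref{prop:fpDpj}(a): if $f\in\cR^p_{\S}$ then $\Delta^jf\in\cR^{p-j}_{\S}$, with no convexity assumption whatsoever. Thus the whole content of the corollary lies in the converse, and the strategy is to route that argument through the $\cD$-spaces---where Proposition~\ref{prop:fpDpj}(b) supplies an unconditional equivalence---invoking higher order convexity only once, at the end, to climb back from $\cD^p_{\S}$ to $\cR^p_{\S}$.

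Concretely, for the reverse implication I would assume $\Delta^jf\in\cR^{p-j}_{\S}$ and follow the chain
$$
\Delta^jf\in\cR^{p-j}_{\S}~\Longrightarrow~\Delta^jf\in\cD^{p-j}_{\S}~\Longrightarrow~f\in\cD^p_{\S}~\Longrightarrow~f\in\cR^p_{\S}.
$$
The first step is the inclusion $\cR^{p-j}_{\S}\subset\cD^{p-j}_{\S}$ from Proposition~\ref{prop:4042RsDs5}. The second is the ``only if'' half of Proposition~\ref{prop:fpDpj}(b), valid for all functions. The third is where the hypothesis enters: since $f\in\cK^{p-1}$, Proposition~\ref{prop:4eqgv} (with its parameter taken to be $p-1$, so that it compares the spaces at level $p$) gives the equivalence of $f\in\cD^p_{\S}$ and $f\in\cR^p_{\S}$, in particular the implication (ii)$\Rightarrow$(i) needed here. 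The case $j=0$ is vacuous, and for $j\geq 1$ we have $p\geq 1$, so $p-1\geq 0$ and every proposition is invoked at a legal level.

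I do not expect a serious obstacle, since the quoted propositions do all the real work; the only care needed is index bookkeeping. The single conceptual point is to resist transferring convexity from $f$ to the iterated difference $\Delta^jf$: passing through the $\cD$-spaces sidesteps this, using the convexity bridge of Proposition~\ref{prop:4eqgv} exactly once, on $f$ itself. An equivalent but more laborious route is an induction on $j$ that applies the case $j=1$ of Proposition~\ref{prop:4eqgv} to each $\Delta^if$ in turn; that version does require $\Delta^if\in\cK^{p-1-i}$ at every step, which is guaranteed by the fact that differencing lowers the order of convexity together with the descending filtration $\cK^{p}\subset\cK^{p-1}$ of Proposition~\ref{prop:MpDescFiltr}. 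I would prefer the direct three-step chain above for its brevity.
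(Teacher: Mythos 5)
Your proof is correct and is essentially the paper's own argument: the paper simply states that the corollary follows by combining Proposition~\ref{prop:fpDpj} with Propositions~\ref{prop:MpDescFiltr} and \ref{prop:4eqgv}, and your chain (Proposition~\ref{prop:fpDpj}(a) for the forward direction; the inclusion $\cR^{p-j}_{\S}\subset\cD^{p-j}_{\S}$, Proposition~\ref{prop:fpDpj}(b), and then Proposition~\ref{prop:4eqgv} at level $p-1$ for the converse) is exactly that combination written out, with the case $j=0$ correctly dismissed as trivial. The only nitpick is terminological: the implication $\Delta^jf\in\cD^{p-j}_{\S}\Rightarrow f\in\cD^p_{\S}$ is the ``if'' half of Proposition~\ref{prop:fpDpj}(b), not the ``only if'' half, though this is immaterial since (b) is a two-way equivalence.
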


Due to Proposition~\ref{prop:ClClIntg}, we will henceforth write $\cD^p\cap\cK^p$ instead of $\cD^p_{\S}\cap\cK^p$. In view of \eqref{eq:d93n621ff}, we will also write $\cD^{-1}\cap\cK^0$ instead of $\cD^{-1}_{\S}\cap\cK^0$.

Since the set $\cD^p\cap\cK^p$ is clearly a central object of our theory (cf.\ our existence Theorem~\ref{thm:exist}), it is important to investigate its properties. In this respect, we have the following two propositions.

\begin{proposition}\label{prop:LMpGpLMp1D7}
Let $j,p\in\N$ be such that $j\leq p$. The following assertions hold.
\begin{enumerate}
\item[(a)] If $g\in\cK^p_+$, then $\Delta^j g\in\cK^{p-j}_+$. More precisely, for any unbounded open interval $I$ of\/ $\R_+$, if $g$ is $p$-convex on $I$, then $\Delta^j g$ is $(p-j)$-convex on $I$.
\item[(b)] If $g\in\cD^p\cap\cK^p_+$, then $\Delta^j g\in\cD^{p-j}\cap\cK^{p-j}_+$.
\end{enumerate}
\end{proposition}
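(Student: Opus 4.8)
The plan is to obtain both assertions almost for free from the structural results already established, with Lemma~\ref{lemma:PrelKp}(b) doing essentially all of the real work; the proof amounts to assembling that lemma with the filtration and coincidence results of this chapter.

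First I would prove the local statement in (a), which is the substantive one. Given an unbounded open interval $I\subseteq\R_+$ on which $g$ is $p$-convex, observe that $I$ is open and (being an unbounded subinterval of $\R_+$) not upper bounded, so the hypotheses of Lemma~\ref{lemma:PrelKp}(b) are satisfied on $I$. Since $j\leq p$ lies in $\{0,\ldots,p+1\}$, that lemma immediately yields that $\Delta^j g$ is $(p-j)$-convex on $I$, which is exactly the claim. The global statement of (a) then follows by localizing at infinity: if $g\in\cK^p_+$, then by definition $g$ is $p$-convex in a neighborhood of infinity, hence $p$-convex on some $I=(a,\infty)$, an unbounded open subinterval of $\R_+$; applying the local statement to this $I$ shows that $\Delta^j g$ is $(p-j)$-convex on $(a,\infty)$, i.e.\ eventually $(p-j)$-convex, so $\Delta^j g\in\cK^{p-j}_+$.

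For (b) I would combine (a) with the asymptotic machinery already in place. By the convention fixed after Proposition~\ref{prop:ClClIntg}, the hypothesis $g\in\cD^p\cap\cK^p_+$ means $g\in\cD^p_{\N}\cap\cK^p_+$; in particular $g\in\cD^p_{\N}$. Proposition~\ref{prop:fpDpj}(b) then gives $\Delta^j g\in\cD^{p-j}_{\N}$, while part (a) gives $\Delta^j g\in\cK^{p-j}_+$. Hence $\Delta^j g\in\cD^{p-j}_{\N}\cap\cK^{p-j}$, which by Proposition~\ref{prop:ClClIntg} coincides with $\cD^{p-j}\cap\cK^{p-j}$; together with the already-established membership in $\cK^{p-j}_+$ this yields $\Delta^j g\in\cD^{p-j}\cap\cK^{p-j}_+$, as desired.

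I do not anticipate a genuine obstacle here, since the real content — that applying one forward difference to a $p$-convex function lowers the convexity order by exactly one — is packaged in Lemma~\ref{lemma:PrelKp}(b). The only matters requiring a little care are bookkeeping ones: verifying that the given interval $I$ meets the openness and non-upper-boundedness hypotheses of that lemma, that the index range $j\leq p$ stays inside $\{0,\ldots,p+1\}$, and that the subscript-free notation $\cD^{p-j}\cap\cK^{p-j}$ is legitimately invoked through Proposition~\ref{prop:ClClIntg}.
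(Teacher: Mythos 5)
Your proof is correct and follows essentially the same route as the paper, whose entire argument is that the result ``immediately follows from Lemma~\ref{lemma:PrelKp}(b) and Proposition~\ref{prop:fpDpj}''; you simply make explicit the bookkeeping (openness and unboundedness of $I$, the index range $j\leq p\subset\{0,\ldots,p+1\}$, and the appeal to Proposition~\ref{prop:ClClIntg} legitimizing the subscript-free notation) that the paper leaves implicit.
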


\begin{proof}
This result immediately follows from Lemma~\ref{lemma:PrelKp}(b) and Proposition~\ref{prop:fpDpj}.
\end{proof}

\begin{proposition}\label{prop:LMpGpLMp1}
Let $j,p\in\N$ be such that $j\leq p$ and let $g\in\cC^j$. The following assertions hold.
\begin{enumerate}
\item[(a)] $g\in\cK^p_+$ if and only if $g^{(j)}\in\cK^{p-j}_+$. More precisely, for any unbounded open interval $I$ of\/ $\R_+$, we have that $g$ is $p$-convex on $I$ if and only if $g^{(j)}$ is $(p-j)$-convex on $I$.
\item[(b)] $g\in\cD^p\cap\cK^p_+$ if and only if $g^{(j)}\in\cD^{p-j}\cap\cK^{p-j}_+$.
\end{enumerate}
\end{proposition}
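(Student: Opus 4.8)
The plan is to prove the sharper ``more precisely'' statements on a fixed unbounded open interval $I\subset\R_+$ and then read off the eventual ($\cK^p_+$) versions by specializing to $I=(a,\infty)$ for a suitable $a$. Throughout, since every function in sight lies in the relevant $\cK$-cone, Proposition~\ref{prop:ClClIntg} lets me drop the subscript $\S$ and work with the continuous limit $x\to_{\R}\infty$ whenever that is convenient.

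For assertion (a), I would first settle the case $j=1$: if $g\in\cC^1(I)$, then Lemma~\ref{lemma:PrelKp}(c) (with $j=1$) gives the forward implication ``$g$ is $p$-convex on $I$ $\Rightarrow$ $g'$ is $(p-1)$-convex on $I$'', while Lemma~\ref{lemma:PrelKp}(d) gives the converse. For general $j\leq p$ with $g\in\cC^j(I)$, I would iterate this equivalence along the chain $g,g',\ldots,g^{(j)}$: at the $i$-th step ($0\leq i\leq j-1$) the function $g^{(i)}$ lies in $\cC^1(I)$, and the $j=1$ case yields ``$g^{(i)}$ is $(p-i)$-convex on $I$ iff $g^{(i+1)}$ is $(p-i-1)$-convex on $I$''. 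Composing these equivalences proves the local statement. The eventual version follows at once: $g\in\cK^p_+$ means $g$ is $p$-convex on some $(a,\infty)$, and applying the local statement on $I=(a,\infty)$ shows this is equivalent to $g^{(j)}$ being $(p-j)$-convex on $(a,\infty)$, i.e.\ to $g^{(j)}\in\cK^{p-j}_+$.

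For assertion (b), the $\cK$-membership is already handled by (a), so it remains to show that, under the standing convexity, $g\in\cD^p$ iff $g^{(j)}\in\cD^{p-j}$. The crux is a one-step bridge between the difference and derivative operators: if $\phi\in\cC^1$ is eventually $1$-convex, then $\Delta\phi\to 0$ iff $\phi'\to 0$. I would prove this by noting that $\phi'$ is eventually increasing (Lemma~\ref{lemma:PrelKp}(c) with $p=j=1$), whence $\phi'(x)\leq\int_x^{x+1}\phi'(t)\,dt=\Delta\phi(x)\leq\phi'(x+1)$ for large $x$; since $\phi'$ is monotone it has a (possibly infinite) limit, and the squeeze forces $\Delta\phi$ and $\phi'$ to share that limit, in particular the value $0$. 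To deduce the case $j=1$ of (b), set $\phi=\Delta^{p-1}g$, which lies in $\cC^1\cap\cK^1_+$ by Proposition~\ref{prop:LMpGpLMp1D7}(a); then $g\in\cD^p$ iff $\Delta^p g=\Delta\phi\to 0$ iff $\phi'=\Delta^{p-1}(g')\to 0$ iff $g'\in\cD^{p-1}$, using that $\Delta$ and $D$ commute. For general $j$ I would iterate: each $g^{(i)}$ ($0\leq i\leq j-1$) lies in $\cC^1\cap\cK^{p-i}_+$ with $p-i\geq 1$, so the $j=1$ case gives $g^{(i)}\in\cD^{p-i}$ iff $g^{(i+1)}\in\cD^{p-i-1}$, and chaining these yields $g\in\cD^p$ iff $g^{(j)}\in\cD^{p-j}$.

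The two inductions are harmless; the only genuine content is the bridge lemma of the previous paragraph, and that is where the convexity hypothesis is actually used, since it converts the integral mean $\Delta\phi$ into a two-sided bound by values of $\phi'$ and thereby ties the asymptotics of the difference operator to those of the derivative. I expect this step to be the main obstacle, mostly in keeping the bookkeeping clean: the index ranges $p-i\geq 1$, the commutation $\Delta^{p-1}D=D\Delta^{p-1}$, and the appeal to Proposition~\ref{prop:ClClIntg} to render all the limits independent of $\S$.
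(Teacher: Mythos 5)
Your proposal is correct and follows essentially the same route as the paper: assertion (a) is obtained from Lemma~\ref{lemma:PrelKp}(c)--(d) iterated along $g,g',\ldots,g^{(j)}$, and assertion (b) is reduced to the one-step case $j=1$ via the monotonicity of $\Delta^{p-1}g'$ (Proposition~\ref{prop:LMpGpLMp1D7}(a)) and the two-sided bound $\Delta^{p-1}g'(x)\leq\Delta^pg(x)\leq\Delta^{p-1}g'(x+1)$ followed by a squeeze. The only cosmetic difference is that the paper derives the forward direction of (b) from the mean value theorem, whereas you reuse the same integral sandwich together with the existence of the (possibly infinite) limit of the monotone function $\phi'$.
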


\begin{proof}
Assertion (a) follows from assertions (c) and (d) of Lemma~\ref{lemma:PrelKp}. To see that assertion (b) holds, it is enough to show that, for any $p\geq 1$, we have $g\in\cD^p\cap\cK^p_+$ if and only if $g'\in\cD^{p-1}\cap\cK^{p-1}_+$.

Suppose first that $g$ lies in $\cD^p\cap\cK^p_+$. Then $g'$ lies in $\cK^{p-1}_+$ by assertion (a). Let $x>1$ be so that $g$ is $p$-convex on $[x-1,\infty)$. Then $\Delta^{p-1}g'$ is increasing on $[x-1,\infty)$ by assertion (a) and Proposition~\ref{prop:LMpGpLMp1D7}(a). By the mean value theorem, there exist $\xi^1_x,\xi^2_x\in (0,1)$ such that
\begin{eqnarray*}
\Delta^pg(x-1) ~=~ \Delta^{p-1}g'(x-1+\xi^1_x) & \leq & \Delta^{p-1}g'(x)\\
& \leq & \Delta^{p-1}g'(x+\xi^2_x) ~=~ \Delta^pg(x).
\end{eqnarray*}
Letting $x\to\infty$, we see that $g'$ lies in $\cD^{p-1}_{\R}$ by the squeeze theorem.

Conversely, suppose that $g'$ lies in $\cD^{p-1}\cap\cK^{p-1}_+$. Then $g$ lies in $\cK^p_+$ by assertion (a). Let $x>0$ be so that $g'$ is $(p-1)$-convex on $[x,\infty)$ and let $t\in (x,x+1)$. Then $\Delta^{p-1}g'$ is increasing on $[x,\infty)$ by Proposition~\ref{prop:LMpGpLMp1D7}(a), and hence we have
$$
\Delta^{p-1}g'(x) ~\leq ~ \Delta^{p-1}g'(t) ~\leq ~\Delta^{p-1}g'(x+1).
$$
Integrating on $t\in(x,x+1)$, we obtain
$$
\Delta^{p-1}g'(x) ~\leq ~ \Delta^pg(x) ~\leq ~\Delta^{p-1}g'(x+1).
$$
Letting $x\to\infty$, we see that $g$ lies in $\cD^p_{\R}$.
\end{proof}

\begin{remark}\label{rem:SiNNKp14}
If a function $f\colon\R_+\to\R$ is such that $\Delta f$ lies in $\cK^p$ for some $p\in\N$, then $f$ need not lie in $\cK^{p+1}$, even if $\Delta f$ lies in $\cD^p\cap\cK^p$. For instance, the function $f\colon\R_+\to\R$ defined by the equation
$$
f(x) ~=~ \frac{1}{2^x}\left(1+\frac{1}{3}\sin x\right)\qquad\text{for $x>0$}
$$
lies in $\cK^0_-\setminus\cK^1$. Indeed, $2^xf'(x)$ is $2\pi$-periodic and negative while $2^xf''(x)$ is $2\pi$-periodic and change in sign from $x=\frac{\pi}{6}$ to $x=\pi$. However, the function $\Delta f$ lies in $\cD^0\cap\cK^0_+$ for $2^x\Delta f'(x)$ is $2\pi$-periodic and positive. This example shows that the implications in Proposition~\ref{prop:LMpGpLMp1D7} cannot be equivalences.
\end{remark}

If a function $g\colon\R_+\to\R$ lies in $\cD^p\cap\cK^p$ for some $p\in\N$, then by Proposition~\ref{prop:LMpGpLMp1D7} the function $\Delta^pg$ lies in $\cD^0\cap\cK^0$, i.e., $\Delta^pg$ eventually increases or decreases to zero. However, a function $g\colon\R_+\to\R$ that satisfies this latter property need not lie in $\cD^p\cap\cK^p$, unless $g$ lies in $\cK^p$ or $p=0$. The example introduced in Remark~\ref{rem:SiNNKp14} illustrates this phenomenon when $p=1$. On the other hand, when $g$ lies in $\cC^p$, by Proposition~\ref{prop:LMpGpLMp1} we have that $g$ lies in $\cD^p\cap\cK^p$ if and only if $g^{(p)}$ lies in $\cD^0\cap\cK^0$.

We gather these important observations in the following theorem.

\begin{theorem}\label{thm:intCpTpKp}
Let $p\in\N$. The following assertions hold.
\begin{enumerate}
  \item [(a)] Let $g\in\cK^p_+$ (resp.\ $\cK^p_-$). Then $g$ lies in $\cD^p\cap\cK^p_+$ (resp.\ $\cD^p\cap\cK^p_-$) if and only if $\Delta^pg$ eventually increases (resp.\ decreases) to zero.
  \item [(b)] Let $g\in\cC^p$. Then $g$ lies in $\cD^p\cap\cK^p_+$ (resp.\ $\cD^p\cap\cK^p_-$) if and only if $g^{(p)}$ eventually increases (resp.\ decreases) to zero.
\end{enumerate}
\end{theorem}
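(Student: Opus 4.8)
The plan is to reduce both statements to the $j=p$ specializations of Propositions~\ref{prop:LMpGpLMp1D7} and \ref{prop:LMpGpLMp1}, after first recording an elementary reformulation of the target condition. The key observation I would isolate at the outset is that, for any $h\colon\R_+\to\R$, membership in $\cD^0\cap\cK^0_+$ is exactly the assertion that $h$ eventually increases to zero: indeed $\cK^0_+$ means $h$ is eventually increasing (a $0$-convex function being increasing in the wide sense), while $\cD^0$ means $\Delta^0h=h\to 0$, so an eventually increasing $h$ tending to $0$ increases to $0$. Dually, $h\in\cD^0\cap\cK^0_-$ says that $h$ eventually decreases to zero. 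This turns the right-hand side of each equivalence into a membership assertion about $\Delta^pg$ (for (a)) or $g^{(p)}$ (for (b)).

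For the concave (``resp.'') halves I would observe once and for all that they follow from the convex halves by negation: by Corollary~\ref{cor:convCones48} we have $g\in\cK^p_-$ iff $-g\in\cK^p_+$, while $\Delta^p(-g)=-\Delta^pg$ and $(-g)^{(p)}=-g^{(p)}$, so ``$\Delta^pg$ (resp.\ $g^{(p)}$) eventually decreases to zero'' becomes ``$\Delta^p(-g)$ (resp.\ $(-g)^{(p)}$) eventually increases to zero.'' Hence it suffices to treat the $\cK^p_+$ cases.

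To prove (a), for the forward implication I would apply Proposition~\ref{prop:LMpGpLMp1D7}(b) with $j=p$: if $g\in\cD^p\cap\cK^p_+$ then $\Delta^pg\in\cD^0\cap\cK^0_+$, which by the opening observation says precisely that $\Delta^pg$ eventually increases to zero. For the converse I would note that the hypothesis ``$\Delta^pg$ eventually increases to zero'' contains in particular $\Delta^pg\to 0$, that is $g\in\cD^p_{\R}$; combined with the standing assumption $g\in\cK^p_+$ and Proposition~\ref{prop:ClClIntg} this yields $g\in\cD^p\cap\cK^p_+$. It is worth flagging that, under the hypothesis $g\in\cK^p_+$, the ``eventually increasing'' half of the target phrase is automatic, being forced by Lemma~\ref{lemma:pCInc5} (or Proposition~\ref{prop:LMpGpLMp1D7}(a) with $j=p$), so the genuine content of the equivalence is only the ``tends to zero'' half.

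Assertion (b) is then immediate from Proposition~\ref{prop:LMpGpLMp1}(b) with $j=p$, applicable since $g\in\cC^p$: it gives $g\in\cD^p\cap\cK^p_+$ iff $g^{(p)}\in\cD^0\cap\cK^0_+$, and the opening observation identifies the latter with ``$g^{(p)}$ eventually increases to zero.'' I do not expect any genuine obstacle; the whole argument is bookkeeping, and the only points needing care are the correct decomposition of the bundled phrase ``eventually increases to zero'' into the two memberships $\cK^0_+$ and $\cD^0$, and the verification that the $j=p$ instances of the two propositions state exactly what is required.
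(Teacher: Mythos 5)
Your proposal is correct and follows essentially the same route as the paper: the paper likewise obtains (a) from Proposition~\ref{prop:LMpGpLMp1D7} together with Proposition~\ref{prop:fpDpj} (whose $j=p$ instance is exactly your definition-level observation that $\Delta^pg\to 0$ means $g\in\cD^p_{\R}$), and (b) directly from Proposition~\ref{prop:LMpGpLMp1} with $j=p$. Your explicit unpacking of ``eventually increases to zero'' as membership in $\cD^0\cap\cK^0_+$, the negation argument for the concave halves, and the appeal to Proposition~\ref{prop:ClClIntg} are exactly the bookkeeping the paper leaves implicit.
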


\begin{proof}
Assertion (a) immediately follows from Propositions~\ref{prop:fpDpj} and \ref{prop:LMpGpLMp1D7}. Assertion (b) immediately follows from Proposition~\ref{prop:LMpGpLMp1}.
\end{proof}

\begin{remark}\label{rem:I5mpPConv63}
It is not difficult to see that the function $g(x)=\frac{1}{x}\sin x^2$ vanishes at infinity while its derivative does not. Theorem~\ref{thm:intCpTpKp}(b) shows that if $g$ lies in $\cC^q\cap\cD^p\cap\cK^q$ for some $p,q\in\N$ such that $p\leq q$, then all the functions $g^{(p)},g^{(p+1)},\ldots,g^{(q)}$ vanish at infinity.
\end{remark}

Propositions~\ref{prop:LMpGpLMp1D7} and \ref{prop:LMpGpLMp1} do not provide any information on the functions $\Delta g$ and $g'$ when $g$ lies in $\cD^0\cap\cK^0$ and $\cC^1\cap\cD^0\cap\cK^0$, respectively. The following proposition fills this gap under the additional assumptions that $\Delta g$  and $g'$ lie in $\cK^0$, respectively.

\begin{proposition}\label{prop:sa6f575sf}
The following assertions hold.
\begin{enumerate}
\item[(a)] If $g$ lies in $\cD^0\cap\cK^0_-$ and is such that $\Delta g$ lies in $\cK^0$, then $\Delta g$ lies in $\cD^{-1}\cap\cK^0_+$.
\item[(b)] If $g$ lies in $\cC^1\cap\cD^0\cap\cK^0_-$ and is such that $g'$ lies in $\cK^0$ (or equivalently, $g$ lies in $\cK^1$), then $g'$ lies in $\cC^0\cap\cD^{-1}\cap\cK^0_+$.
\end{enumerate}
\end{proposition}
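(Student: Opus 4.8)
The plan is to prove the two membership claims---summability (membership in $\cD^{-1}$) and the correct sign of the monotonicity (membership in $\cK^0_+$)---separately, isolating the one genuinely delicate point, namely that the difference $\Delta g$ in (a) and the derivative $g'$ in (b) are eventually \emph{increasing} rather than decreasing. Throughout I would first fix $a>0$ so that $g$ is decreasing on $[a,\infty)$; since $g\in\cD^0\cap\cK^0$ and this set is independent of $\S$ by Proposition~\ref{prop:ClClIntg}, we have $g(x)\to 0$ as $x\to\infty$, and hence $g\ge 0$ and $\Delta g\le 0$ on $[a,\infty)$.

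For assertion (a), summability is immediate by telescoping: the partial sums $\sum_{k=1}^{n-1}\Delta g(k)=g(n)-g(1)$ converge to $-g(1)$, so $\Delta g\in\cD^{-1}_{\N}$. For the sign, I would argue as follows. By hypothesis $\Delta g\in\cK^0$, so it is eventually monotone. If it is eventually increasing we are done. If instead it is eventually decreasing, then summability forces $\Delta g(k)\to 0$, and squeezing between consecutive integers (using the monotonicity) gives $\Delta g(x)\to 0$ as $x\to\infty$; but a decreasing function tending to $0$ is nonnegative, which together with $\Delta g\le 0$ forces $\Delta g$ to vanish identically near infinity, a degenerate member of $\cK^0_+$. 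In either case $\Delta g\in\cK^0_+$, and combined with the summability and \eqref{eq:d93n621ff} this gives $\Delta g\in\cD^{-1}\cap\cK^0_+$.

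For assertion (b), the continuity $g'\in\cC^0$ is immediate from $g\in\cC^1$. Since $g\in\cC^1\cap\cK^1$ and $\cD^0\subset\cD^1$, we have $g\in\cC^1\cap\cD^1\cap\cK^1$, so Theorem~\ref{thm:intCpTpKp}(b) tells us that $g'$ eventually increases or decreases \emph{to zero}; in particular $g'\to 0$. Exactly the degenerate-collapse argument of (a)---now with $g'\le 0$ in place of $\Delta g\le 0$---then shows that $g'\in\cK^0_+$. Finally, summability of $g'$ follows from the integral test: the improper integral $\int_a^\infty g'(t)\,dt=-g(a)$ converges, and since $-g'$ is eventually nonnegative and decreasing (as $g'\in\cK^0_+$), we conclude $\sum_k|g'(k)|<\infty$, i.e.\ $g'\in\cD^{-1}$. (Alternatively, a mean-value-theorem squeeze $\Delta g(x-1)\le g'(x)\le\Delta g(x)$ together with part~(a) yields the same conclusion.) Hence $g'\in\cC^0\cap\cD^{-1}\cap\cK^0_+$.

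The routine ingredients here are the telescoping sum, the integral test, and the trivial continuity. The crux---and the only place requiring care---is pinning down the \emph{direction} of monotonicity, i.e.\ landing in $\cK^0_+$ rather than $\cK^0_-$. This is precisely the gap left open by Propositions~\ref{prop:LMpGpLMp1D7} and \ref{prop:LMpGpLMp1} (cf.\ Remark~\ref{rem:SiNNKp14}), and I expect it to be the main obstacle; the mechanism closing it is that the sign constraint $\Delta g\le 0$ (resp.\ $g'\le 0$) forced by $g\in\cK^0_-$ is incompatible with ``decreasing to zero'' unless the quantity collapses to $0$ near infinity.
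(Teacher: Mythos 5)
Your proposal is correct, and most of it coincides with the paper's own proof: part (a) is argued exactly the same way (telescoping for summability, and the ``collapse'' argument that eventual negativity of $\Delta g$ is incompatible with decreasing-to-zero, forcing $\Delta g\in\cK^0_+$), and your sign analysis in (b) via Theorem~\ref{thm:intCpTpKp}(b) is the same as the paper's appeal to Proposition~\ref{prop:LMpGpLMp1}, of which that theorem is a restatement. The only genuine divergence is the summability step in (b). You use the integral test: $-g'$ is eventually nonnegative and decreasing (once $g'\in\cK^0_+$ is known), and $\int_a^\infty(-g'(t))\,dt=g(a)$ converges since $g(x)\to 0$, whence $\sum_k|g'(k)|<\infty$. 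The paper instead uses a one-sided mean value comparison, $\Delta g(k-1)=g'(k-1+\xi_k)\le g'(k)\le 0$ for large $k$ (using that $g'$ is eventually increasing), and telescopes to get
$$
g(n-1)-g(m-1) ~\le ~ \sum_{k=m}^{n-1}g'(k) ~\le ~ 0,
$$
so the partial sums are decreasing and bounded below (the left side tends to $-g(m-1)$), hence convergent. Your parenthetical alternative --- the squeeze $\Delta g(x-1)\le g'(x)\le\Delta g(x)$ combined with part (a) --- is essentially the paper's mechanism, except that to invoke part (a) one should note that $\Delta g\in\cK^0$ (which does follow, since $\Delta g(x)=\int_0^1 g'(x+t)\,dt$ inherits eventual monotonicity from $g'$); the paper's version avoids this detour by comparing directly with the telescoping sum. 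The two routes are equally elementary and rest on the same ingredients (monotonicity of $g'$, its sign, and $g\to 0$); yours is marginally more streamlined given that the monotonicity of $-g'$ has already been established at that point.
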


\begin{proof}
Let us first prove assertion (a). Since $g$ is eventually decreasing, $\Delta g$ must be eventually negative. But since $\Delta g$ also lies in $\cD^0\cap\cK^0$, it must be eventually increasing to zero. On the other hand, since $g$ lies in $\cD^0$, $\Delta g$ must lie in $\cD^{-1}_{\N}$. This proves assertion (a).

Let us now prove assertion (b). Since $g$ is eventually decreasing, $g'$ must be eventually negative. Since $g'$ lies in $\cK^0$ (and hence $g$ lies in $\cK^1$ by Lemma~\ref{lemma:PrelKp}), we have that $g$ lies in $\cD^1\cap\cK^1$ (since $\cD^0_{\S}\subset\cD^1_{\S}$). Proposition~\ref{prop:LMpGpLMp1} then tells us that $g'$ lies in $\cD^0\cap\cK^0$, and hence it must be eventually increasing to zero.

It remains to show that $g'$ lies in $\cD^{-1}_{\N}$. Let $x>1$ be so that $g$ is decreasing and $g'$ is increasing on $I_x=[x-1,\infty)$. By the mean value theorem, for any integer $k\geq x$ there exist $\xi_k\in (0,1)$ such that
$$
\Delta g(k-1) ~=~ g'(k-1+\xi_k) ~\leq ~ g'(k).
$$
For any integers $m,n$ such that $x\leq m\leq n$, we then have
$$
g(n-1)-g(m-1) ~=~ \sum_{k=m}^{n-1}\Delta g(k-1) ~\leq ~\sum_{k=m}^{n-1}g'(k) ~\leq ~ 0.
$$
Letting $n\to_{\N}\infty$, we can see that $g'$ lies in $\cD^{-1}_{\N}$.
\end{proof}

\begin{remark}
The assumption that $\Delta g$ lies in $\cK^0$ cannot be ignored in Proposition~\ref{prop:sa6f575sf}(a). Indeed, take for instance the function $g=\Delta f$, where $f$ is the function defined in Remark~\ref{rem:SiNNKp14}. We have seen that this function lies in $\cD^0\cap\cK^0$. However, it is not difficult to see that $\Delta g$ does not lie in $\cK^0$. Similarly, the assumption that $g'$ lies in $\cK^0$ cannot be ignored in Proposition~\ref{prop:sa6f575sf}(b). Indeed, one can show that the same function $g$ has the property that $g'$ does not lie in $\cK^0$. To give another example, one can show that the function
$$
g(x) ~=~ \frac{1}{x^3}(x+\sin x)
$$
lies in $\cD^0\cap\cK^0$ whereas its derivative $g'$ does not lie in $\cK^0$.
\end{remark}

We also have the following two corollaries, in which the symbols $\cR$ and $\cD$ can be used interchangeably.

\begin{corollary}\label{cor:dfmm7saa5}
Let $g$ lie in $\cK^p_+$ (resp.\ $\cK^p_-$) for some $p\in\N$. Then $g$ lies in $\cD^p_{\S}$ if and only if there exists a solution $f\colon\R_+\to\R$ to the equation $\Delta f=g$ that lies in $\cD^{p+1}_{\S}\cap\cK^p_-$ (resp.\ $\cD^{p+1}_{\S}\cap\cK^p_+$).
\end{corollary}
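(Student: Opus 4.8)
The plan is to reduce both implications to Proposition~\ref{prop:fpDpj}(b), which in the special case $j=1$ (applied with $p$ replaced by $p+1$) states that a function $f$ lies in $\cD^{p+1}_{\S}$ if and only if $\Delta f$ lies in $\cD^p_{\S}$. By the opposite-cone symmetry of Corollary~\ref{cor:convCones48} it suffices to treat the case $g\in\cK^p_+$: the statement for $g\in\cK^p_-$ then follows by applying the result to $-g$, which lies in $\cK^p_+$ and whose antidifference is $-f$.

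For the reverse implication I would argue as follows. Suppose $f\colon\R_+\to\R$ solves $\Delta f=g$ and lies in $\cD^{p+1}_{\S}\cap\cK^p_-$. Since $f\in\cD^{p+1}_{\S}$, Proposition~\ref{prop:fpDpj}(b) immediately gives $g=\Delta f\in\cD^p_{\S}$. Note that the concavity hypothesis $f\in\cK^p_-$ is not used here at all; the membership $f\in\cD^{p+1}_{\S}$ alone forces $g\in\cD^p_{\S}$.

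For the forward implication, assume $g\in\cD^p_{\S}\cap\cK^p_+$, so that $g\in\cD^p_{\S}\cap\cK^p$ and the existence Theorem~\ref{thm:exist} applies, producing a solution $f$ to $\Delta f=g$. Because $g\in\cK^p_+$ is $p$-convex on some half-line $[a,\infty)$, which is an unbounded subinterval of $\R_+$, that theorem guarantees $f$ is $p$-concave there, i.e.\ $f\in\cK^p_-$. Finally, since $\Delta f=g\in\cD^p_{\S}$, Proposition~\ref{prop:fpDpj}(b) yields $f\in\cD^{p+1}_{\S}$, so $f\in\cD^{p+1}_{\S}\cap\cK^p_-$, as required.

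I do not expect a genuine obstacle here: the whole argument rests on the difference/antidifference bookkeeping of the filtration $\cD^0_{\S}\subset\cD^1_{\S}\subset\cdots$ already recorded in Proposition~\ref{prop:fpDpj}, together with the existence theorem to manufacture the solution in the nontrivial direction. The only point needing a moment's care is matching the convexity class, since Theorem~\ref{thm:exist} only asserts $p$-concavity of $f$ on unbounded subintervals where $g$ is $p$-convex; one must observe that ``eventually $p$-convex'' for $g$ supplies such a half-line, placing $f$ in $\cK^p_-$. The interchangeability of $\cR$ and $\cD$ noted after the statement then follows from Proposition~\ref{prop:ClClIntg} applied to $g$ (at order $p$, using $g\in\cK^p$) and from Proposition~\ref{prop:4eqgv} applied to $f$ (at order $p+1$, using $f\in\cK^p$).
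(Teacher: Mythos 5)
Your proof is correct and follows essentially the same route as the paper: the forward implication via the existence Theorem~\ref{thm:exist} (which supplies a solution in $\cK^p_-$) combined with Proposition~\ref{prop:fpDpj}(b), and the reverse implication via Proposition~\ref{prop:fpDpj}(b) alone, with the $\cR$-version recovered by the same interchangeability results (the paper cites Propositions~\ref{prop:4eqgv} and \ref{prop:fpDpj}(a), while you invoke Propositions~\ref{prop:ClClIntg} and \ref{prop:4eqgv}, which is only a cosmetic difference). Your added observations — that the negation symmetry reduces to the $\cK^p_+$ case and that the reverse direction never uses $f\in\cK^p_-$ — are accurate refinements of the same argument.
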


\begin{proof}
The $\cD$-version immediately follows from Theorem~\ref{thm:exist} and Proposition~\ref{prop:fpDpj}(b). The $\cR$-version then follows from Proposition~\ref{prop:4eqgv} and Proposition~\ref{prop:fpDpj}(a).
\end{proof}

\begin{corollary}\label{cor:dfmm7s}
For any $p\in\N$, we have that
$$
\cD^p\cap\cK^p_+ ~\subset ~\cK^{p-1}_-\qquad\text{and}\qquad\cD^p\cap\cK^p_- ~\subset ~\cK^{p-1}_+.
$$
More precisely, if $g$ lies in $\cD^p\cap\cK^p$ and is $p$-convex (resp.\ $p$-concave) on an unbounded interval of\/ $\R_+$, then on this interval it is also $(p-1)$-concave (resp.\ $(p-1)$-convex).
\end{corollary}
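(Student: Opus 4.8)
The plan is to prove the \emph{more precise} statement first and then read off the two inclusions. By the symmetry between the two cones ($g$ is $p$-convex on $I$ if and only if $-g$ is $p$-concave on $I$ by Corollary~\ref{cor:convCones48}, and $\cD^p$ is a linear space by Proposition~\ref{prop:4042RsDs5}), it suffices to treat the convex case. So I would assume $g\in\cD^p\cap\cK^p_+$ and let $I$ be an unbounded subinterval of $\R_+$ on which $g$ is $p$-convex; the goal is then to show that $g$ is $(p-1)$-concave on $I$, that is, $g[x_0,\ldots,x_p]\le 0$ for every system $x_0<x_1<\cdots<x_p$ of points in $I$.

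The engine of the argument is Lemma~\ref{lemma:pCInc5}: since $g$ is $p$-convex on $I$, the divided-difference map $(z_0,\ldots,z_p)\mapsto g[z_0,\ldots,z_p]$ is increasing in each place on $I$. I would fix an arbitrary system $x_0<\cdots<x_p$ in $I$ and compare it with the ``equally spaced'' system $y,y+1,\ldots,y+p$ for a large real $y$ chosen so that $y+i\ge x_i$ for every $i\in\{0,\ldots,p\}$ (take any $y\ge\max_i(x_i-i)$ large enough that $y,\ldots,y+p$ lie in $I$). Raising the coordinates one at a time from the top down keeps every intermediate tuple sorted, pairwise distinct, and inside $I$, so repeated use of the monotonicity in each place yields
\[
g[x_0,\ldots,x_p] ~\le~ g[y,y+1,\ldots,y+p].
\]

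Finally I would invoke identity \eqref{eq:DelDD62} to rewrite the right-hand side as $\tfrac{1}{p!}\,\Delta^p g(y)$. Because $g\in\cD^p\cap\cK^p$ and this set does not depend on $\S$ (Proposition~\ref{prop:ClClIntg}), we have $\Delta^p g(y)\to 0$ as $y\to\infty$ over $\R$; letting $y\to\infty$ in the displayed inequality gives $g[x_0,\ldots,x_p]\le 0$. As the system was arbitrary, $g$ is $(p-1)$-concave on $I$, which proves the refined statement and hence the inclusion $\cD^p\cap\cK^p_+\subset\cK^{p-1}_-$; the inclusion $\cD^p\cap\cK^p_-\subset\cK^{p-1}_+$ then follows by negating $g$.

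The one delicate point is the monotone comparison step: the monotonicity furnished by Lemma~\ref{lemma:pCInc5} applies only to tuples with pairwise distinct entries, so one must move the coordinates in the correct order (largest first) and check at each move that the entry being raised stays strictly below the already-raised entry above it and strictly above its lower neighbor --- both of which hold thanks to $y+i\ge x_i>x_{i-1}$. Everything else is routine; note in particular that the argument is uniform in $p$ and covers the base case $p=0$, where it simply says that a function increasing to $0$ is eventually nonpositive.
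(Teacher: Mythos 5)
Your proof is correct, but it takes a genuinely different route from the paper's. The paper deduces the corollary in two lines from heavy machinery already in place: by the existence Theorem~\ref{thm:exist}, the function $f=\Sigma g$ is $p$-concave on any unbounded subinterval $I$ on which $g$ is $p$-convex, and then Lemma~\ref{lemma:PrelKp}(b) applied to $f$ shows that $g=\Delta f$ is $(p-1)$-concave on $I$. You instead argue directly at the level of divided differences: fix $x_0<\cdots<x_p$ in $I$, use the coordinatewise monotonicity from Lemma~\ref{lemma:pCInc5} (with the top-down raising to keep all intermediate tuples distinct and sorted, which you correctly flag as the delicate point) to get
$$
g[x_0,\ldots,x_p] ~\leq ~ g[y,y+1,\ldots,y+p] ~=~ \frac{1}{p!}\,\Delta^pg(y)
$$
for all large $y$, and then let $y\to\infty$ using $g\in\cD^p$. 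What each approach buys: the paper's is shorter given that Theorem~\ref{thm:exist} and the appendix lemma are available, but it routes a statement purely about $g$ through the existence of $\Sigma g$; yours is self-contained (only Lemma~\ref{lemma:pCInc5}, identity \eqref{eq:DelDD62}, and the asymptotic hypothesis), has lighter logical dependencies, and in fact yields the quantitative bound $g[x_0,\ldots,x_p]\leq\frac{1}{p!}\,\Delta^pg(y)$ rather than just the sign condition. One small simplification available to you: since the inequality only needs to hold along a sequence tending to infinity, you could take $y$ to be large integers and use $g\in\cD^p_{\N}$ directly, bypassing the appeal to Proposition~\ref{prop:ClClIntg} (though your use of it is not circular, as its proof does not rely on this corollary).
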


\begin{proof}
Let $g$ lie in $\cD^p\cap\cK^p_+$. Then the function $f\colon\R_+\to\R$ defined in the existence Theorem~\ref{thm:exist} is $p$-concave on any unbounded subinterval of $\R_+$ on which $g$ is $p$-convex. By Lemma~\ref{lemma:PrelKp}(b), the function $g=\Delta f$ is also $(p-1)$-concave on this interval.
\end{proof}

We end this chapter by providing a characterization of the set $\cR^p\cap\cK^p=\cD^p\cap\cK^p$ in terms of interpolating polynomials.\index{interpolating polynomial} We also give a corollary that will be very useful in the subsequent chapters.

\begin{proposition}\label{prop:CLamBehAsyPol}
Let $g$ lie in $\cK^p$ for some $p\in\N$. Then we have that $g$ lies in $\cD^p_{\S}$ if and only if for any pairwise distinct $x_0,\ldots,x_p>0$, we have that
$$
g[a+x_0,\ldots,a+x_p]\to 0\qquad\text{as $a\to_{\S}\infty$}.
$$
This latter condition means that $g$ asymptotically coincides with its interpolating polynomial with any $p$ nodes.
\end{proposition}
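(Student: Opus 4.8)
The plan is to prove the two implications separately. The forward implication—that the divided-difference condition forces $g\in\cD^p_\S$—is immediate, while the converse is the substantive part and is where higher order convexity enters.

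First I would dispose of the easy direction. Suppose that $g[a+x_0,\ldots,a+x_p]\to 0$ as $a\to_\S\infty$ for every choice of pairwise distinct positive $x_0,\ldots,x_p$. Applying this to the particular points $x_j=j+1$ ($j=0,\ldots,p$), which are positive and pairwise distinct, the $p+1$ nodes $a+x_j$ are consecutive integers, so identity \eqref{eq:DelDD62} gives $g[a+1,\ldots,a+p+1]=\frac{1}{p!}\,\Delta^pg(a+1)$. By hypothesis the left-hand side tends to $0$, whence $\Delta^pg(t)\to 0$ as $t\to_\S\infty$, i.e.\ $g\in\cD^p_\S$. (Note this direction uses only the limit condition and not the convexity assumption.)

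For the converse, assume $g\in\cD^p_\S\cap\cK^p$ and fix pairwise distinct $x_0,\ldots,x_p>0$; by symmetry of the divided difference I may assume $x_0<\cdots<x_p$, and by negating $g$ if necessary I may assume $g\in\cK^p_+$, say $p$-convex on $[b,\infty)$. The key idea is to sandwich the general divided difference between two divided differences taken at $p+1$ \emph{consecutive integers}, to which \eqref{eq:DelDD62} and the hypothesis $g\in\cD^p_\S$ directly apply. Concretely, I would set $c=\lfloor\min_j(x_j-j)\rfloor$ and $C=\lceil\max_j(x_j-j)\rceil$ and put $k_j=c+j$, $K_j=C+j$; then $k_0<\cdots<k_p$ and $K_0<\cdots<K_p$ are strings of consecutive integers with $k_j\le x_j\le K_j$ for each $j$. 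For $a$ large enough that $a+c\ge b$, all the points $a+k_j,a+x_j,a+K_j$ lie in $[b,\infty)$, and Lemma~\ref{lemma:pCInc5} says that $(z_0,\ldots,z_p)\mapsto g[z_0,\ldots,z_p]$ is increasing in each argument on tuples of pairwise distinct points there. Moving the coordinates one at a time, first from $a+k_j$ up to $a+x_j$ and then from $a+x_j$ up to $a+K_j$, I obtain the sandwich $g[a+k_0,\ldots,a+k_p]\le g[a+x_0,\ldots,a+x_p]\le g[a+K_0,\ldots,a+K_p]$. Finally, since the $k_j$ and the $K_j$ are consecutive integers, \eqref{eq:DelDD62} rewrites the two outer terms as $\frac{1}{p!}\Delta^pg(a+c)$ and $\frac{1}{p!}\Delta^pg(a+C)$, both of which tend to $0$ as $a\to_\S\infty$ because $g\in\cD^p_\S$ and $c,C\in\Z$ (so that $a+c,a+C\to_\S\infty$, including when $\S=\N$). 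The squeeze theorem then gives $g[a+x_0,\ldots,a+x_p]\to 0$, and the case $g\in\cK^p_-$ follows by reversing the inequalities.

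The hard part will be the bookkeeping in the sandwiching step: one must order the one-coordinate-at-a-time moves (for instance in decreasing order of index) so that the moving coordinate always stays strictly between its fixed neighbors, keeping the tuple pairwise distinct at every intermediate stage so that Lemma~\ref{lemma:pCInc5} genuinely applies. Everything else—the reduction to an ordered, $p$-convex $g$, the conversion of consecutive-integer divided differences into $\Delta^p g$ via \eqref{eq:DelDD62}, and the final squeeze—is routine.
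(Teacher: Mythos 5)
Your proof is correct, and both implications go through as you sketch them; the coordinate-by-coordinate moves you worry about do work (in fact Lemma~\ref{lemma:pCInc5} gives monotonicity in a single place between \emph{any} two positions of the moving coordinate that keep the tuple pairwise distinct, so the ``stay between the neighbors'' precaution is harmless but not strictly needed). The substantive direction, however, is argued genuinely differently from the paper. The paper first invokes Corollary~\ref{cor:dfmm7s} to conclude that a function in $\cD^p\cap\cK^p_+$ is also $(p-1)$-concave near infinity, so that every $(p+1)$-point divided difference there is $\leq 0$; it then needs only the one-sided comparison
$$
\frac{1}{p!}\,\Delta^pg(a) ~=~ g[a,a+1,\ldots,a+p] ~\leq ~ g[a+p+x_0,\ldots,a+p+x_p] ~\leq ~ 0,
$$
and squeezes against the constant $0$. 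You instead trap the general divided difference between \emph{two} consecutive-integer divided differences, $\frac{1}{p!}\Delta^pg(a+c)$ and $\frac{1}{p!}\Delta^pg(a+C)$, both of which tend to zero, and never use any sign information. What each buys: the paper's argument is shorter given the machinery already in place, but Corollary~\ref{cor:dfmm7s} rests on the existence Theorem~\ref{thm:exist}, so the paper's proof of this proposition implicitly depends on it; your two-sided sandwich needs only Lemma~\ref{lemma:pCInc5}, identity \eqref{eq:DelDD62}, and the squeeze theorem, hence is more elementary and self-contained. Your choice of integer shifts $c,C$ (via floor and ceiling) is also exactly what is needed for the hypothesis $\Delta^pg(t)\to 0$ as $t\to_{\S}\infty$ to apply when $\S=\N$. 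Finally, for the easy direction the paper routes through Propositions~\ref{prop:LamIntPol} and \ref{prop:ClClIntg}, whereas your direct evaluation at consecutive integers is simpler and makes visible that no convexity is needed there.
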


\begin{proof}
(Necessity) Suppose for instance that $g$ lies in $\cD^p\cap\cK^p_+$. By Corollary~\ref{cor:dfmm7s}, it also lies in $\cK^{p-1}_-$. Let $x_0,\ldots,x_p>0$ be any pairwise distinct points and let $a>0$ be so that $g$ is $p$-convex and $(p-1)$-concave on $[a,\infty)$. Then the map
$$
x ~ \mapsto ~ g[x+x_0,\ldots,x+x_p]
$$
is nonpositive on $[a,\infty)$ and, by Lemma~\ref{lemma:pCInc5}, it is also increasing on $[a,\infty)$. By \eqref{eq:DelDD62}, we then have
$$
\frac{1}{p!}\,\Delta^pg(a) ~=~ g[a,a+1,\ldots,a+p] ~\leq ~ g[a+p+x_0,\ldots,a+p+x_p] ~\leq ~ 0,
$$
where the left side increases to zero as $a\to_{\S}\infty$.

(Sufficiency) This immediately follows from Propositions~\ref{prop:LamIntPol} and \ref{prop:ClClIntg}.
\end{proof}

%

\begin{corollary}\label{cor:Hom4}
Let $g$ lie in $\cK^p_+$ (resp.\ $\cK^p_-$) for some $p\in\N$, let $a>0$ and $b\geq 0$, and let $h\colon\R_+\to\R$ be defined by the equation $h(x)=g(ax+b)$ for $x>0$. Then
\begin{enumerate}
\item[(a)] $h$ lies in $\cK^p_+$ (resp.\ $\cK^p_-$);
\item[(b)] if $g$ lies in $\cD^p\cap\cK^p$, then $h$ lies in $\cD^p\cap\cK^p_+$ (resp.\ $\cD^p\cap\cK^p_-$).
\end{enumerate}
\end{corollary}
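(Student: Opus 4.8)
The plan is to reduce both parts to a single observation about how divided differences behave under the affine substitution $y=ax+b$. First I would record the scaling identity
$$
h[x_0,\ldots,x_n] ~=~ a^n\,g[ax_0+b,\ldots,ax_n+b]
$$
for any pairwise distinct points $x_0,\ldots,x_n\in\R_+$. Since the definition of $p$-convexity and the formula $\Delta^pf(x)=p!\,f[x,\ldots,x+p]$ in \eqref{eq:DelDD62} only ever involve pairwise distinct nodes, it suffices to prove this identity for distinct points, where it is immediate from the explicit expression \eqref{eq:DivDiffPai4Dis82}: each denominator $\prod_{j\neq k}(x_k-x_j)$ becomes $a^{-n}\prod_{j\neq k}\bigl((ax_k+b)-(ax_j+b)\bigr)$, and collecting the resulting factor $a^n$ gives the claim. (Alternatively one induces on $n$ via the recurrence \eqref{eq:DivDiffRec7}.) The point worth stressing is that the confluent, repeated-node version is never needed here.

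For part (a), fix $p+2$ points $x_0<\cdots<x_{p+1}$ in $\R_+$. Because $a>0$, the images satisfy $ax_0+b<\cdots<ax_{p+1}+b$, and the scaling identity gives $h[x_0,\ldots,x_{p+1}]=a^{p+1}\,g[ax_0+b,\ldots,ax_{p+1}+b]$ with $a^{p+1}>0$. If $g$ is $p$-convex on $[A,\infty)$, then as soon as $x_0\geq (A-b)/a$ all transformed nodes lie in $[A,\infty)$, so the divided difference on the right is $\geq 0$; hence $h$ is $p$-convex on a neighborhood of infinity, i.e.\ $h\in\cK^p_+$. The positive factor $a^{p+1}$ preserves the direction of the inequality, so the concave case $g\in\cK^p_-$ is identical with $\leq$ in place of $\geq$.

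For part (b), assume in addition $g\in\cD^p\cap\cK^p_+$. Applying the scaling identity to the consecutive nodes $x,x+1,\ldots,x+p$ and using $\Delta^p h(x)=p!\,h[x,\ldots,x+p]$ from \eqref{eq:DelDD62}, I obtain
$$
\Delta^p h(x) ~=~ p!\,a^p\,g[ax+b,\,ax+b+a,\,\ldots,\,ax+b+pa].
$$
Now I would invoke the asymptotic characterization of $\cD^p\cap\cK^p$ in Proposition~\ref{prop:CLamBehAsyPol}: since $g\in\cD^p_{\R}$ (using that $\cD^p\cap\cK^p$ is $\S$-independent by Proposition~\ref{prop:ClClIntg}), for any pairwise distinct positive nodes $t_0,\ldots,t_p$ the divided difference $g[c+t_0,\ldots,c+t_p]\to 0$ as $c\to_{\R}\infty$. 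Writing $c=ax+b-1$ and $t_j=1+ja$ (these are positive and pairwise distinct since $a>0$), one has $c+t_j=ax+b+ja$, so the divided difference above tends to $0$ as $x\to\infty$; therefore $\Delta^p h(x)\to 0$, i.e.\ $h\in\cD^p_{\R}$. Combined with $h\in\cK^p_+$ from part (a) and Proposition~\ref{prop:ClClIntg}, this yields $h\in\cD^p\cap\cK^p_+$, and the $\cK^p_-$ case follows by negating $g$.

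The main obstacle I anticipate is purely bookkeeping in part (b): matching the moving base point $c$ and the fixed nodes $t_j$ required by Proposition~\ref{prop:CLamBehAsyPol} to the equally-spaced nodes $ax+b+ja$ that arise, while ensuring the fixed nodes are strictly positive even in the boundary case $b=0$. The choice $c=ax+b-1$, $t_j=1+ja$ handles this cleanly; everything else is a direct application of the scaling identity and the already-established characterizations.
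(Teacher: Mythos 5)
Your proof is correct and follows essentially the same route as the paper: the affine scaling identity for divided differences, obtained from \eqref{eq:DivDiffPai4Dis82}, gives part (a), and Proposition~\ref{prop:CLamBehAsyPol} (together with Proposition~\ref{prop:ClClIntg}) gives part (b). The only cosmetic difference is that the paper checks the divided-difference condition for $h$ at arbitrary shifted nodes and then invokes the sufficiency direction of Proposition~\ref{prop:CLamBehAsyPol}, whereas you compute $\Delta^p h$ directly via \eqref{eq:DelDD62} and use only the necessity direction applied to $g$; the ingredients are identical.
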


\begin{proof}
The result is trivial if $p=0$. So let us assume that $p\geq 1$ and for instance that $g$ is $p$-convex on $[s,\infty)$ for some $s>0$. Using \eqref{eq:DivDiffPai4Dis82}, we can easily show that for any pairwise distinct points $x_0,\ldots,x_p>0$ we have
$$
h[x_0,\ldots,x_p] ~=~ a^p{\,}g[ax_0+b,\ldots,ax_p+b].
$$
This immediately shows that $h$ is $p$-convex on $[\frac{1}{a}(s-b),\infty)$ and hence that assertion (a) holds. Now, suppose that $g$ lies in $\cD^p\cap\cK^p_+$. Then $h$ lies in $\cK^p_+$ by assertion (a). Moreover, for any pairwise distinct $x_0,\ldots,x_p>0$, by Proposition~\ref{prop:CLamBehAsyPol} we have that
$$
h[n+x_0,\ldots,n+x_p] ~=~ a^p{\,}g[an+ax_0+b,\ldots,an+ax_p+b] ~\to ~0
$$
as $n\to_{\N}\infty$. Hence $h$ also lies in $\cD^p\cap\cK^p_+$ by Proposition~\ref{prop:CLamBehAsyPol}. This establishes assertion (b).
\end{proof}

\chapter{Multiple $\log\Gamma$-type functions}
\label{chapter:5}

In this chapter, we introduce and investigate the map, denote it by $\Sigma$, that carries any function $g$ lying in
$$
\bigcup_{p\geq 0}(\cD^p\cap\cK^p)
$$
into the unique solution $f$ to the equation $\Delta f=g$ that arises from the existence Theorem~\ref{thm:exist}. We call these solutions \emph{multiple $\log\Gamma$-type functions} and we investigate certain of their properties. We also discuss the search for simple conditions on the function $g\colon\R_+\to\R$ to ensure the existence of $\Sigma g$. Further important properties of these functions, including counterparts of several classical properties of the gamma function, will be investigated in the next three chapters.

The map $\Sigma$ is actually a central concept of the theory developed here. Its definition and properties seem to show that it is as fundamental as the basic antiderivative operation. In the next chapter we show that both concepts actually share many common features.

\section{The map $\Sigma$ and its basic properties}

In this section, we introduce the map $\Sigma$ and discuss some of its basic properties. We begin with the following important definition.

\begin{definition}[Asymptotic degree]\label{de:de4g3f5}
The \emph{asymptotic degree}\index{asymptotic degree|textbf} of a function $f\colon\R_+\to\R$, denoted $\deg f$, is defined by the equation
$$
\deg f ~=~ -1+\min\{q\in\N : f\in\cD^q_{\R}\}.\label{p:degf}
$$
\end{definition}

For instance, if $f$ is a polynomial of degree $p$ for some $p\in\N$, then $\deg f=p$. If $f(x)=0$ or $f(x)=\frac{1}{x}$, or $f(x)=\ln(1+\frac{1}{x})$, then $\deg f=-1$. If $f(x)=\sin x$ or $f(x)=x+\sin x$, or $f(x)=2^x$, then $\deg f=\infty$.

It is easy to see that the identity
$$
\deg f ~=~ 1+\deg\Delta f
$$
holds whenever $\deg f$ is a nonnegative integer. However, it is no longer true when $\deg f=-1$. For instance, for the function $f(x)=0$ or the function $f(x)=\frac{1}{x}$, we have $\deg f=\deg\Delta f=-1$. This shows that in general we have
$$
(\deg f)_+ ~=~ 1+\deg\Delta f.
$$

We are now ready to introduce the map $\Sigma$. Here and throughout, the symbols $\mathrm{dom}(\Sigma)$\label{p:domS} and $\mathrm{ran}(\Sigma)$\label{p:ranS} denote the domain and range of $\Sigma$, respectively.

\begin{definition}[The map $\Sigma$]
We define the map $\Sigma\colon\mathrm{dom}(\Sigma)\to\mathrm{ran}(\Sigma)$, where
$$
\mathrm{dom}(\Sigma) ~=~ \bigcup_{p\geq 0}(\cD^p\cap\cK^p),
$$
by the following condition: if $g\in\cD^p\cap\cK^p$ for some $p\in\N$, then\label{p:Sigma}
\begin{equation}\label{eq:DefGStar}
\Sigma g ~=~ \lim_{n\to\infty}f^p_n[g].
\end{equation}
\end{definition}

It is important to note that the map is well defined; indeed, if $g$ lies in both sets $\cD^p\cap\cK^p$ and $\cD^q\cap\cK^q$ for some integers $0\leq p<q$, then by Proposition~\ref{prop:ThetaFpFq} both sequences $n\mapsto f^p_n[g]$ and $n\mapsto f^q_n[g]$ have the same limiting function. Thus, in view of Proposition~\ref{prop:MpDescFiltr}, we can see that condition \eqref{eq:DefGStar} holds for $p=1+\deg g$.

Thus defined, it is clear that the map $\Sigma$ is one-to-one; indeed, if $\Sigma g_1=\Sigma g_2$ for some functions $g_1$ and $g_2$ lying in $\mathrm{dom}(\Sigma)$, then $g_1=\Delta\Sigma g_1=\Delta\Sigma g_2=g_2$. This map is even a bijection since we have restricted its codomain to its range. We then have the following immediate result.

\begin{proposition}
The map $\Sigma$ is a bijection and its inverse is the restriction of the difference operator $\Delta$ to $\mathrm{ran}(\Sigma)$.
\end{proposition}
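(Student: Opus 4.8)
The plan is to reduce everything to the single identity $\Delta\,\Sigma g = g$, valid for every $g\in\mathrm{dom}(\Sigma)$, after which both the bijectivity and the description of the inverse are purely formal. First I would fix $g\in\mathrm{dom}(\Sigma)$ and choose $p\in\N$ with $g\in\cD^p\cap\cK^p$ (for instance $p=1+\deg g$, as noted after \eqref{eq:DefGStar}). Then Theorem~\ref{thm:exist}(b) guarantees that the sequence $n\mapsto f^p_n[g]$ converges pointwise and that its limit, which is exactly $\Sigma g$ by \eqref{eq:DefGStar}, solves $\Delta f = g$. Hence $\Delta\,\Sigma g = g$, i.e.\ $\Delta$ is a left inverse of $\Sigma$ on $\mathrm{dom}(\Sigma)$. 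I would also recall that $\Sigma$ is well defined independently of the choice of $p$ by Proposition~\ref{prop:ThetaFpFq}, so that this identity is unambiguous.

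Next I would deduce injectivity: if $\Sigma g_1=\Sigma g_2$ with $g_1,g_2\in\mathrm{dom}(\Sigma)$, then applying $\Delta$ to both sides and using the identity above gives $g_1=\Delta\,\Sigma g_1=\Delta\,\Sigma g_2=g_2$. Surjectivity onto $\mathrm{ran}(\Sigma)$ holds by definition, since we have restricted the codomain to the range. Thus $\Sigma$ is a bijection from $\mathrm{dom}(\Sigma)$ onto $\mathrm{ran}(\Sigma)$.

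Finally I would identify the inverse. Every $f\in\mathrm{ran}(\Sigma)$ is of the form $f=\Sigma g$ for a unique $g\in\mathrm{dom}(\Sigma)$, and then $\Delta f=\Delta\,\Sigma g=g=\Sigma^{-1}(f)$. In particular $\Delta f\in\mathrm{dom}(\Sigma)$ for such $f$, so the restriction $\Delta\big|_{\mathrm{ran}(\Sigma)}$ indeed carries $\mathrm{ran}(\Sigma)$ into $\mathrm{dom}(\Sigma)$ and coincides with $\Sigma^{-1}$.

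There is no genuine obstacle here: the entire analytic content—namely the convergence of $n\mapsto f^p_n[g]$ and the fact that its limit solves the difference equation—has already been packaged into Theorem~\ref{thm:exist}, so the proposition is a formal consequence of that theorem together with the one-to-one argument sketched in the preceding paragraph. The only point worth stating carefully is the well-definedness of $\Sigma$ via Proposition~\ref{prop:ThetaFpFq}, which underlies the identity $\Delta\,\Sigma g=g$ and which the text has already settled.
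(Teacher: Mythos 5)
Your proof is correct and follows essentially the same route as the paper: the identity $\Delta\,\Sigma g=g$ (from the existence theorem, with well-definedness via Proposition~\ref{prop:ThetaFpFq}) gives injectivity, surjectivity is by restriction of the codomain to the range, and the inverse is then $\Delta$ restricted to $\mathrm{ran}(\Sigma)$. The paper packages this exact argument in the two sentences preceding the proposition statement; your version merely spells out the references more explicitly.
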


Just as the indefinite integral (or antiderivative) of a function $g$ is the class of functions whose derivative is $g$, the indefinite sum (or antidifference) of a function $g$ is the class of functions whose difference is $g$ (see, e.g., Graham {\em et al.} \cite[p.~48]{GraKnuPat94}). Recall also that any two indefinite integrals of a function differ by a constant while any two indefinite sums of a function differ by a $1$-periodic function. The map $\Sigma$ now enables one to refine the definition of an indefinite sum as follows.

\begin{definition}\label{de:PIS43}\index{principal indefinite sum|textbf}
We say that the \emph{principal indefinite sum}\index{principal indefinite sum|textbf} of a function $g$ lying in $\mathrm{dom}(\Sigma)$ is the class of functions $c+\Sigma g$, where $c\in\R$.
\end{definition}

\begin{example}[The log-gamma function]\label{ex:PIS43LOGG}
If $g(x)=\ln x$, then we have $\Sigma g(x)=\ln\Gamma(x)$, and we simply write
$$
\Sigma\ln x ~=~ \ln\Gamma(x),\qquad x>0.
$$
Thus, the principal indefinite sum of the function $x\mapsto \ln x$ is the class of functions $x\mapsto c+\ln\Gamma(x)$, where $c\in\R$. With some abuse of language, we can say that the principal indefinite sum of the log function is the log-gamma function.
\end{example}

Exactly as for the difference operator $\Delta$, we will sometimes add a subscript to the symbol $\Sigma$ to specify the variable on which the map $\Sigma$ acts. For instance, $\Sigma_x{\,}g(2x)$ stands for the function obtained by applying $\Sigma$ to the function $x\mapsto g(2x)$ while $\Sigma g(2x)$ stands for the value of the function $\Sigma g$ at $2x$.

The following proposition provides some straightforward properties of the map $\Sigma$ that will be very useful as we continue.

\begin{proposition}\label{prp:56GathZZPro8}
Let $g$ lie in $\cD^p\cap\cK^p$ for some $p\in\N$. The following assertions hold.
\begin{enumerate}
\item[(a)] $\Sigma g$ is the unique solution to the equation $\Delta f=g$ that lies in $\cK^p$ and that vanishes at $1$.
\item[(b)] $\Sigma g$ lies in $\cD^{p+1}\cap\cK^p=\cR^{p+1}\cap\cK^p$.
\item[(c)] $\Sigma g$ satisfies the identities
\begin{eqnarray}
\Sigma g(n) &=& \sum_{k=1}^{n-1}g(k){\,},\qquad n\in\N^*,\label{eq:RestrInt}\\
\Sigma g(x+n) &=& \Sigma g(x) + \sum_{k=0}^{n-1}g(x+k){\,},\qquad n\in\N,\label{eq:56zzSec32S6}
\end{eqnarray}
and
\begin{equation}\label{eq:33ConvSig52}
\Sigma g(x) ~=~ f^p_n[g](x) + \rho^{p+1}_n[\Sigma g](x){\,},\qquad n\in\N^*.
\end{equation}
\end{enumerate}
\end{proposition}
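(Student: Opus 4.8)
The plan is to read this proposition as a consolidation of the existence Theorem~\ref{thm:exist}, the uniqueness Theorem~\ref{thm:unic}, and the structural facts about the spaces $\cR$ and $\cD$ gathered in Chapter~\ref{chapter:4}; virtually every claim is obtained by specializing a general identity or a general theorem to the distinguished solution $f=\Sigma g$ and using that $\Sigma g(1)=0$. So first I would fix $p\in\N$ with $g\in\cD^p\cap\cK^p$ and recall, via Proposition~\ref{prop:ClClIntg}, that this membership does not depend on $\S$, so that $g\in\cD^p_{\N}$ and $g\in\cD^p_{\R}$ are both available; by definition $\Sigma g=\lim_{n\to\infty}f^p_n[g]$.

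For assertion (a), I would invoke Theorem~\ref{thm:exist}(b): the limit $f=\lim_{n\to\infty}f^p_n[g]$, which is $\Sigma g$, solves $\Delta f=g$ and satisfies $f(1)=0$. The one point deserving a sentence is that $\Sigma g\in\cK^p$, not merely that it solves the equation: if $g\in\cK^p_+$ is $p$-convex on some $[a,\infty)$, then Theorem~\ref{thm:exist}(b) makes $\Sigma g$ $p$-concave there, so $\Sigma g\in\cK^p_-\subset\cK^p$, and dually when $g\in\cK^p_-$. For uniqueness, let $f$ be any solution of $\Delta f=g$ lying in $\cK^p$ with $f(1)=0$; since $g\in\cD^p_{\S}$, Theorem~\ref{thm:unic}(b) forces $f=f(1)+\lim_{n\to\infty}f^p_n[g]=\Sigma g$.

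For assertion (b), Theorem~\ref{thm:unic}(a), applied to the solution $\Sigma g\in\cK^p$, gives $\Sigma g\in\cR^{p+1}_{\S}$; this applies for each of $\S=\N,\R$ because $g$ lies in $\cD^p_{\S}$ for both. The displayed equality $\cD^{p+1}\cap\cK^p=\cR^{p+1}\cap\cK^p$ is precisely Proposition~\ref{prop:4eqgv}: for a function lying in $\cK^p$, membership in $\cR^{p+1}_{\S}$ is equivalent to membership in $\cD^{p+1}_{\S}$. Combining these, $\Sigma g\in\cR^{p+1}\cap\cK^p=\cD^{p+1}\cap\cK^p$, and since the input membership of $g$ is $\S$-free the conclusion holds for both choices of $\S$, which is what legitimizes the $\S$-free notation.

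Finally, assertion (c) is pure specialization. Identity \eqref{eq:RestrInt} is \eqref{eq:fnEf1Sum} with $f=\Sigma g$ and $\Sigma g(1)=0$; identity \eqref{eq:56zzSec32S6} is \eqref{eq:fxnEfxSum} with $f=\Sigma g$; and identity \eqref{eq:33ConvSig52} is \eqref{eq:ff01flam} with $f=\Sigma g$ and $\Sigma g(1)=0$, all three being valid for any solution of $\Delta f=g$. The main obstacle here is not analytic but organizational: one must verify that $\Sigma g$ genuinely lands in $\cK^p$ before Theorem~\ref{thm:unic} and Proposition~\ref{prop:4eqgv} can be brought to bear, and one must keep the two roles of $p$ straight, remembering that $\cK^p$ is the \emph{larger} space (so that $\cK^{p+1}\subset\cK^p$ rather than the reverse) when matching it against the superscripts $p+1$ on $\cR$ and $\cD$.
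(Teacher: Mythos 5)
Your proposal is correct and follows essentially the same route as the paper, whose proof is simply a one-line citation of Theorems~\ref{thm:unic} and \ref{thm:exist} and Proposition~\ref{prop:4eqgv} for assertions (a) and (b), and of identities \eqref{eq:fnEf1Sum}--\eqref{eq:ff01flam} for assertion (c). Your write-up just unpacks those citations with the appropriate details (the convexity transfer $g\in\cK^p_+\Rightarrow\Sigma g\in\cK^p_-$ from Theorem~\ref{thm:exist}(b), and the $\S$-independence coming from applying Theorem~\ref{thm:unic}(a) for both $\S=\N$ and $\S=\R$), all of which are accurate.
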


\begin{proof}
Assertions (a) and (b) immediately follow from Theorems~\ref{thm:unic} and \ref{thm:exist} and Proposition~\ref{prop:4eqgv}. Identities \eqref{eq:RestrInt}--\eqref{eq:33ConvSig52} follow from \eqref{eq:fnEf1Sum}--\eqref{eq:ff01flam}.
\end{proof}

Quite surprisingly, we observe that if $g$ lies in $\cD^p\cap\cK^p$ for some $p\in\N$, then $\Sigma g$ need not lie in $\cK^{p+1}$. The example given in Remark~\ref{rem:SiNNKp14} illustrates this observation.

We also have that
$$
\deg\Sigma g ~=~ 1+\deg g
$$
whenever $\deg\Sigma g$ is a nonnegative integer; but this property no longer holds if $\deg\Sigma g=-1$. For instance, considering the functions
$$
g(x) ~=~ \frac{2-x}{x(x+1)(x+2)}\qquad\text{and}\qquad\Sigma g(x) ~=~ \frac{x-1}{x(x+1)}{\,},
$$
we have $\deg g=\deg\Sigma g = -1$. Thus, in general we have
$$
(\deg\Sigma g)_+ ~=~ 1+\deg g.
$$

We now give two important propositions, which were essentially proved by Webster~\cite[Theorem 5.1]{Web97b} in the special case when $p=1$.

\begin{proposition}\label{prop:gStHa}
Let $g_1$ and $g_2$ lie in $\cD^p\cap\cK^p$ for some $p\in\N$ and let $c_1,c_2\in\R$. If $c_1g_1+c_2g_2$ lies in $\cD^p\cap\cK^p$, then
$$
\Sigma(c_1g_1+c_2g_2) ~=~ c_1\Sigma g_1+c_2\Sigma g_2.
$$
\end{proposition}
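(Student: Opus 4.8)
The plan is to exploit the fact that, for each fixed $p\in\N$ and $n\in\N^*$, the map $g\mapsto f^p_n[g]$ defined in \eqref{eq:defFpn} is linear. Indeed, each of the three terms on the right-hand side of \eqref{eq:defFpn} depends linearly on $g$: the first two are pointwise linear combinations of values of $g$, and the last involves the forward differences $\Delta^{j-1}g(n)$, which are linear in $g$ since $\Delta$ is a linear operator. Hence
$$
f^p_n[c_1g_1+c_2g_2] ~=~ c_1{\,}f^p_n[g_1]+c_2{\,}f^p_n[g_2]
$$
for every $n\in\N^*$.

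The point that makes the argument clean is that, by hypothesis, all three functions $g_1$, $g_2$, and $c_1g_1+c_2g_2$ lie in $\cD^p\cap\cK^p$ for one and the same $p$. Consequently, the defining formula \eqref{eq:DefGStar} for $\Sigma$ may be applied to each of them with this common value of $p$, so that $\Sigma g_i=\lim_{n\to\infty}f^p_n[g_i]$ for $i=1,2$ and $\Sigma(c_1g_1+c_2g_2)=\lim_{n\to\infty}f^p_n[c_1g_1+c_2g_2]$. Passing to the limit $n\to\infty$ in the displayed linearity identity---each of the three limits existing by the existence Theorem~\ref{thm:exist}---yields
$$
\Sigma(c_1g_1+c_2g_2) ~=~ c_1{\,}\Sigma g_1+c_2{\,}\Sigma g_2,
$$
as desired.

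The one subtlety worth flagging is the role of the hypothesis that $c_1g_1+c_2g_2$ lies in $\cD^p\cap\cK^p$. Since $\cD^p$ is a real linear space (Proposition~\ref{prop:4042RsDs5}), the combination $c_1g_1+c_2g_2$ automatically belongs to $\cD^p$; what is not automatic is membership in $\cK^p$, because $\cK^p$ is merely the union of two opposite convex cones and is not a linear space. This is precisely why linear combinations are not handled for free and the extra assumption must be imposed. Provided it holds, no genuine obstacle remains: the whole proof reduces to the linearity of $f^p_n[\cdot]$ together with the fact that $\Sigma$ is, by Proposition~\ref{prop:ThetaFpFq}, well defined independently of the admissible value of $p$ used to compute the limit.
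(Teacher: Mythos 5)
Your proof is correct, but it takes a genuinely different route from the paper's. The paper proves the result through the uniqueness theorem (Theorem~\ref{thm:unic}): after noting homogeneity, it checks that the candidate $f=\Sigma g_1+\Sigma g_2$ solves $\Delta f=g_1+g_2$ and vanishes at $1$, so the whole issue is to show that $f$ lies in $\cK^p$; since $\cK^p$ is only a union of two cones, this forces a case analysis — when $g_1$ and $g_2$ lie in the same cone $\cK^p_{\pm}$, the convex-cone property places $f$ in the opposite cone, and when they lie in opposite cones the paper reduces to the first case by writing $g_2=(g_1+g_2)+(-g_1)$. Your route — linearity of $g\mapsto f^p_n[g]$, plain from \eqref{eq:defFpn}, together with the observation that each of $g_1$, $g_2$, and $c_1g_1+c_2g_2$ admits the representation \eqref{eq:DefGStar} with the one common $p$ (each limit existing by Theorem~\ref{thm:exist}) — dispenses with the case analysis and with any tracking of which cone the functions inhabit, at the cost of working directly with the limit representation rather than with the characterization of $\Sigma g$ as the unique normalized solution in $\cK^p$. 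Both arguments consume the hypothesis on $c_1g_1+c_2g_2$ exactly where you flag it: since $\cD^p_{\S}$ is a linear space (Proposition~\ref{prop:4042RsDs5}), the only non-automatic point is membership in $\cK^p$, which in your proof licenses applying \eqref{eq:DefGStar} to the combination, and in the paper's proof makes the uniqueness theorem applicable to it. A minor remark: your closing appeal to Proposition~\ref{prop:ThetaFpFq} is not strictly needed once the common $p$ is fixed; it only serves to know that $\Sigma$ itself is well defined independently of the admissible $p$.
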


\begin{proof}
It is clear that if $g$ lies in $\cD^p\cap\cK^p$, then we have $\Sigma cg=c\Sigma g$ for any $c\in\R$. Now, suppose that $g_1$, $g_2$, and $g_1+g_2$ lie in $\cD^p\cap\cK^p$ and let us show that
$$
\Sigma(g_1+g_2) ~=~ \Sigma g_1+\Sigma g_2.
$$
It is actually enough to consider the following two cases.
\begin{itemize}
\item[1.] If both $g_1$ and $g_2$ lie in $\cD^p\cap\cK^p_+$ (resp.\ $\cD^p\cap\cK^p_-$), then so does $g_1+g_2$. It follows that the function $f=\Sigma g_1+\Sigma g_2$ is a solution to the equation $\Delta f=g_1+g_2$ that lies in $\cK^p_-$ (resp.\ $\cK^p_+$) and satisfies $f(1)=0$. By the uniqueness Theorem~\ref{thm:unic}, we must have $\Sigma (g_1+g_2)=f$.
\item[2.] If both $g_1+g_2$ and $-g_1$ lie in $\cD^p\cap\cK^p_+$ (resp.\ $\cD^p\cap\cK^p_-$), then so does $g_2$ (use the first case) and we have
$$
\Sigma g_2 ~=~ \Sigma ((g_1+g_2)+(-g_1)) ~=~ \Sigma (g_1+g_2) - \Sigma g_1.
$$
\end{itemize}
This completes the proof.
\end{proof}

\begin{proposition}\label{prop:gStHa223}
Let $g$ lie in $\cD^p\cap\cK^p_+$ (resp.\ $\cD^p\cap\cK^p_-$) for some $p\in\N$, let $a\geq 0$, and let $h\colon\R_+\to\R$ be defined by the equation $h(x)=g(x+a)$ for $x>0$. Then $h$ lies in $\cD^p\cap\cK^p_+$ (resp.\ $\cD^p\cap\cK^p_-$) and
$$
\Sigma h(x) ~=~ \Sigma_x{\,}g(x+a) ~=~ \Sigma g(x+a)-\Sigma g(a+1).
$$
\end{proposition}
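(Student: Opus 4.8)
The plan is to deduce both claims from the uniqueness characterization of $\Sigma$ recorded in Proposition~\ref{prp:56GathZZPro8}(a), after first placing $h$ in the appropriate cone so that $\Sigma h$ is even defined.

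First I would settle the membership of $h$. Writing $h(x)=g(1\cdot x+a)$, this is exactly the situation of Corollary~\ref{cor:Hom4} with slope $1$ and shift $a\geq 0$. Assertion (b) of that corollary then gives at once that $h$ lies in $\cD^p\cap\cK^p_+$ (resp.\ $\cD^p\cap\cK^p_-$) whenever $g$ does, so in particular $h\in\mathrm{dom}(\Sigma)$ and $\Sigma h$ makes sense.

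Next I would introduce the candidate $f(x)=\Sigma g(x+a)-\Sigma g(a+1)$ and verify the three defining properties of $\Sigma h$ from Proposition~\ref{prp:56GathZZPro8}(a). The difference equation $\Delta f=h$ is immediate: the term $\Sigma g(a+1)$ is a constant, and $\Delta_x\Sigma g(x+a)=g(x+a)=h(x)$ since $\Delta\Sigma g=g$. The normalization $f(1)=0$ is immediate as well, because $f(1)=\Sigma g(1+a)-\Sigma g(a+1)=0$. The only point requiring a little care is that $f$ lies in $\cK^p$. For this I would use that $\Sigma g\in\cK^p$ by Proposition~\ref{prp:56GathZZPro8}(b); the shifted map $x\mapsto\Sigma g(x+a)$ then lies in the same cone by Corollary~\ref{cor:Hom4}(a), and subtracting the constant $\Sigma g(a+1)$—which, as a degree-$0$ polynomial, belongs to $\cK^p_+\cap\cK^p_-$ by Corollary~\ref{cor:convCones48}—does not leave that cone, since $\cK^p_+$ and $\cK^p_-$ are convex cones (Corollary~\ref{cor:convCones48}).

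Having checked that $f$ solves $\Delta f=h$, lies in $\cK^p$, and vanishes at $1$, the uniqueness part of Proposition~\ref{prp:56GathZZPro8}(a) forces $f=\Sigma h$, which is precisely the asserted identity $\Sigma h(x)=\Sigma_x{\,}g(x+a)=\Sigma g(x+a)-\Sigma g(a+1)$. There is no real obstacle here beyond the $\cK^p$-membership bookkeeping just described; the whole argument is a direct application of the characterization of $\Sigma$ together with the shift-invariance supplied by Corollary~\ref{cor:Hom4}.
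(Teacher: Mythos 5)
Your proposal is correct and follows essentially the same route as the paper's own proof: both introduce the candidate $f(x)=\Sigma g(x+a)-\Sigma g(a+1)$, use Corollary~\ref{cor:Hom4} to handle the membership of $h$ and of the shifted function, and invoke the uniqueness characterization of $\Sigma$ (Proposition~\ref{prp:56GathZZPro8}(a)) to conclude $f=\Sigma h$. The only difference is that you spell out the cone bookkeeping (constants lying in $\cK^p_+\cap\cK^p_-$ and closure of the cones under addition) that the paper leaves implicit.
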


\begin{proof}
Define a function $f\colon\R_+\to\R$ by the equation
$$
f(x) ~=~ \Sigma g(x+a)-\Sigma g(a+1)
$$
for $x>0$. By Corollary~\ref{cor:Hom4}, $f$ is a solution to the equation $\Delta f=h$ that lies in $\cK^p_-$ (resp.\ $\cK^p_+$) and satisfies $f(1)=0$. Hence, $\Sigma h=f$, as required.
\end{proof}

\begin{example}[{see Webster \cite{Web97b}}]
For any $a>0$, consider the function $g_a\colon\R_+\to\R$ defined by
$$
g_a(x) ~=~ \ln\frac{x}{x+a} ~=~ \ln x-\ln(x+a)\qquad\text{for $x>0$}.
$$
Then $g_a$ lies in $\cD^0\cap\cK^0_+$ (and also in $\cD^1\cap\cK^1_-$) and Propositions~\ref{prop:gStHa} and \ref{prop:gStHa223} show that
$$
\Sigma g_a(x) ~=~ \ln\frac{\Gamma(x)\Gamma(a+1)}{\Gamma(x+a)}{\,}.
$$
Also, since $g_a$ is concave on $\R_+$, we have that $\Sigma g_a$ is convex on $\R_+$. As Webster \cite[p.~615]{Web97b} observed, this is ``a not completely trivial result, but one immediate from the approach adopted here.''
\end{example}

\begin{example}[A rational function]\label{ex:5aRat7Fct8}
The function
$$
g(x) ~=~ \frac{x^4+1}{x^3+x} ~=~ x+\frac{1}{x}-\frac{2x}{x^2+1}
$$
clearly lies in $\cD^2\cap\cK^2$. Using Proposition~\ref{prop:gStHa}, we then have
$$
\Sigma g(x) ~=~ \tchoose{x}{2}+H_{x-1}-2\,\Sigma h(x),
$$
where the function
$$
h(x) ~=~ \frac{x}{x^2+1} ~=~ \Re\left(\frac{1}{x+i}\right)
$$
lies in $\cD^0\cap\cK^0$. Now, recalling that $\Sigma_x \frac{1}{x}=H_{x-1}$, it is not difficult to see that
$$
\Sigma h(x) ~=~ c+\Re H_{x+i-1}
$$
for some $c\in\R$, where the function $z\mapsto H_z$ on $\mathbb{C}\setminus(-\N^*)$ satisfies the identity
$$
H_z ~=~ \sum_{k=1}^{\infty}\left(\frac{1}{k}-\frac{1}{z+k}\right).
$$
Indeed, the function $f\colon\R_+\to\R$ defined by the equation
$$
f(x) ~=~ \Re H_{x+i-1} ~=~ \sum_{k=1}^{\infty}\left(\frac{1}{k}-\frac{x+k-1}{(x+k-1)^2+1}\right),\qquad x>0,
$$
lies in $\cK^0$ and satisfies $\Delta f=h$.
\end{example}

We also have the following surprising proposition, which says that if a function $g$ lies in $\cD^p\cap\cK^p_-\cap\cK^q$ for some integers $0\leq p\leq q$, then it actually lies in
$$
\cK^p_-\cap\cK^{p+1}_+\cap\cK^{p+2}_-\cap\cK^{p+3}_+\cap\cdots\cap\cK^q_{\pm}{\,},
$$
where the subscripts alternate in sign. The same property holds for $\Sigma g$.

\begin{proposition}\label{prop:5alternKpm}
Let $g$ lie in $\cD^p\cap\cK^p_-\cap\cK^{p+1}$ for some $p\in\N$. Then it lies in $\cK^{p+1}_+$ and $\Sigma g$ lies in $\cD^{p+1}\cap\cK^p_+\cap\cK^{p+1}_-$.
\end{proposition}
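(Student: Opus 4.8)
The plan is to treat the two assertions in turn: first I would pin down the sign of $\Delta^{p+1}g$ to force $g\in\cK^{p+1}_+$, and then transfer the resulting higher order convexity across to $\Sigma g$ by invoking the existence theorem twice, at consecutive orders. For the first assertion, since $g\in\cD^p\cap\cK^p_-$, Theorem~\ref{thm:intCpTpKp}(a) tells us that $\Delta^pg$ eventually decreases to zero. An eventually decreasing function with limit zero stays nonnegative, so $\Delta^pg\ge 0$ eventually, whence $\Delta^{p+1}g=\Delta(\Delta^pg)\le 0$ eventually; moreover $\Delta^{p+1}g(x)=\Delta^pg(x+1)-\Delta^pg(x)\to 0$. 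Now I use the hypothesis $g\in\cK^{p+1}=\cK^{p+1}_+\cup\cK^{p+1}_-$. If $g\in\cK^{p+1}_+$ there is nothing more to do, so suppose instead that $g\in\cK^{p+1}_-$. Then Lemma~\ref{lemma:pCInc5} forces $\Delta^{p+1}g$ to be eventually decreasing; being decreasing with limit zero it must be nonnegative, and combined with $\Delta^{p+1}g\le 0$ this yields $\Delta^{p+1}g=0$ on some half-line $[a,\infty)$.

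The main obstacle is to absorb this degenerate alternative, i.e.\ to see that $\Delta^{p+1}g\equiv 0$ together with $g\in\cK^{p+1}_-$ still gives $g\in\cK^{p+1}_+$. Here I would invoke Lemma~\ref{lemma:pCInc5} once more: on $[a,\infty)$ the symmetric map $(z_0,\dots,z_{p+1})\mapsto g[z_0,\dots,z_{p+1}]$ is decreasing in each place, while by \eqref{eq:DelDD62} its diagonal values $g[x,x+1,\dots,x+p+1]=\frac{1}{(p+1)!}\Delta^{p+1}g(x)$ vanish for $x\ge a$. Given any system $y_0<\cdots<y_{p+1}$ in $[a,\infty)$ and any integer $x\ge y_{p+1}$, each coordinate of $(x,x+1,\dots,x+p+1)$ dominates the corresponding $y_i$, so raising the coordinates one at a time and using monotonicity in each place gives $g[y_0,\dots,y_{p+1}]\ge g[x,\dots,x+p+1]=0$. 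Hence $g$ is $(p+1)$-convex on $[a,\infty)$, i.e.\ $g\in\cK^{p+1}_+$, which settles the first assertion in all cases.

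Finally I would establish $\Sigma g\in\cD^{p+1}\cap\cK^p_+\cap\cK^{p+1}_-$. The membership $\Sigma g\in\cD^{p+1}\cap\cK^p$ is immediate from Proposition~\ref{prp:56GathZZPro8}(b). To sharpen $\cK^p$ to $\cK^p_+$, note that $g\in\cK^p_-$, so the existence Theorem~\ref{thm:exist}, which produces exactly $\Sigma g$, guarantees that $\Sigma g$ is $p$-convex on every unbounded subinterval on which $g$ is $p$-concave; thus $\Sigma g\in\cK^p_+$. For the remaining factor $\cK^{p+1}_-$, I would apply Theorem~\ref{thm:exist} a second time, but at order $p+1$: this is legitimate because $g\in\cD^p\subset\cD^{p+1}$ and, by the first assertion, $g\in\cK^{p+1}_+$, so $g\in\cD^{p+1}\cap\cK^{p+1}$. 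The theorem then yields a solution that is $(p+1)$-concave on the half-line where $g$ is $(p+1)$-convex, and by the well-definedness of $\Sigma$ (Proposition~\ref{prop:ThetaFpFq}) this solution coincides with $\Sigma g$. Assembling the three memberships gives the claim. The one point requiring care throughout is to keep the two invocations of Theorem~\ref{thm:exist}, at orders $p$ and $p+1$, consistently identified with the single function $\Sigma g$.
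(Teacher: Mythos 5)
Your proof is correct, but it takes a genuinely different route from the paper's for the first assertion. The paper disposes of the case $g\in\cK^{p+1}_-$ in three strokes: since then $g\in\cD^{p+1}\cap\cK^{p+1}_-$, Corollary~\ref{cor:dfmm7s} gives $g\in\cK^p_+$; combined with the hypothesis $g\in\cK^p_-$, Corollary~\ref{cor:convCones48} forces $g$ to be eventually a polynomial of degree at most $p$; and such a polynomial lies in $\cK^{p+1}_+$ by Corollary~\ref{cor:convCones48} again. You instead argue at the level of iterated differences: Theorem~\ref{thm:intCpTpKp}(a) and Lemma~\ref{lemma:pCInc5} pin down the sign of $\Delta^{p+1}g$ from both sides, forcing $\Delta^{p+1}g\equiv 0$ eventually, and then a direct monotonicity argument on divided differences (vanishing diagonal values plus decreasing-in-each-place) yields $(p+1)$-convexity. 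Your route is more elementary and self-contained --- in particular it avoids Corollary~\ref{cor:dfmm7s}, whose proof passes through the existence theorem --- while the paper's is shorter and exploits the structural description of $\cK^p_+\cap\cK^p_-$ as the eventual polynomials of degree at most $p$. For the second assertion the two arguments coincide (the paper merely calls it trivial): one applies Theorem~\ref{thm:exist} at orders $p$ and $p+1$ and identifies both limiting functions with $\Sigma g$ via Proposition~\ref{prop:ThetaFpFq}, exactly as you do. One cosmetic point: in your interpolation step you should take $x$ strictly greater than $y_{p+1}$ (and $x$ need not be an integer), so that every intermediate tuple remains pairwise distinct when you raise the coordinates one at a time; as written, $x=y_{p+1}$ would create a repeated node.
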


\begin{proof}
Suppose that $g$ lies in $\cK^{p+1}_-$. Since it also lies in $\cD^{p+1}\cap\cK^{p+1}_-$, by Corollary~\ref{cor:dfmm7s} it must lie in $\cK^p_+$. By Corollary~\ref{cor:convCones48}, $g$ is eventually a polynomial of degree less than or equal to $p$. But then, using Corollary~\ref{cor:convCones48} again, $g$ lies in $\cK^{p+1}_+$. The result about $\Sigma g$ is then trivial.
\end{proof}

\begin{example}
Let us apply Proposition~\ref{prop:5alternKpm} to the function $g(x)=\ln x$ with $p=1$. We then obtain that
\begin{eqnarray*}
g & \text{lies in} & \cD^1\cap\cK^1_-\cap\cK^2_+\cap\cK^3_-\cap\cK^4_+\cap\cdots\\
\text{while}\quad\Sigma g & \text{lies in} & \cD^2\cap\cK^1_+\cap\cK^2_-\cap\cK^3_+\cap\cK^4_-\cap\cdots{\,},
\end{eqnarray*}
where $\Sigma g(x)=\ln\Gamma(x)$. Moreover, it is easy to see that $g$ is $1$-concave on $\R_+$, $2$-convex on $\R_+$, and so on, and similarly for $\Sigma g$.
\end{example}

\begin{example}\label{ex:5Alt88Stiel}
Applying Proposition~\ref{prop:5alternKpm} to the function $g(x)=-\frac{1}{x}\ln x$ with $p=0$, we obtain that
\begin{eqnarray*}
g & \text{lies in} & \cD^0\cap\cK^0_+\cap\cK^1_-\cap\cK^2_+\cap\cK^3_-\cap\cdots\\
\text{while}\quad\Sigma g & \text{lies in} & \cD^1\cap\cK^0_-\cap\cK^1_+\cap\cK^2_-\cap\cK^3_+\cap\cdots{\,},
\end{eqnarray*}
where $\Sigma g(x)=\gamma_1(x)-\gamma_1$ is a generalized Stieltjes constant\index{Stieltjes constants!generalized Stieltjes constants} (see Section \ref{sec:Stie62}). Now, for every $q\in\N$, we have $g^{(q+1)}(x)=0$ if and only if $x=e^{H_{q+1}}$. Hence we can easily see that $g$ is $q$-convex or $q$-concave on the unbounded interval $(e^{H_{q+1}},\infty)$.
\end{example}

\begin{remark}\index{asymptotic degree}
Although the asymptotic degree of a function (see Definition~\ref{de:de4g3f5}) defines an important and useful concept, it is not always easy to compute. For instance, we can show after some calculus that, for any $p\in\N$, the function $h_p\colon\R_+\to\R$ defined by the equation (see Section~\ref{sec:PISGre47})
$$
h_p(x) ~=~ \frac{x^p}{\ln(x+1)}\qquad\text{for $x>0$}
$$
has the asymptotic degree $\deg h_p=p-1$. Thus, it would be useful to have a simple formula to compute easily the asymptotic degree of any function. On this matter, let us consider the limiting value (when it exists)
$$
e_f ~=~ \lim_{x\to\infty}x\,\frac{\Delta f(x)}{f(x)}{\,},
$$
which is inspired from the concept of the elasticity of a function $f$ (see, e.g., Nievergelt \cite{Nie83}). Computing this limit for the function $h_p$ above for instance, we easily obtain $e_{h_p}=p$. Interestingly, we can observe empirically that many functions $f$ lying in $\cK^0$ satisfy the double inequality
$$
\lfloor e_f\rfloor_+ ~\leq ~ 1+\deg f ~\leq ~ \lfloor 1+e_f\rfloor_+.
$$
It would then be useful to find necessary and sufficient conditions on the function $f$ for this double inequality to hold.
\end{remark}

\section{Multiple $\log\Gamma$-type functions}
\label{subsec:MLG-t}

Barnes~\cite{Bar99,Bar01,Bar04} introduced a sequence of functions $\Gamma_1,\Gamma_2,\ldots$,\label{p:Gp1s} called \emph{multiple gamma functions}\index{multiple gamma function|textbf}, that generalize the Euler gamma function. The restrictions of these functions to $\R_+$ are characterized by the equations
\begin{eqnarray*}
&& \Gamma_{p+1}(x+1) ~=~ \frac{\Gamma_{p+1}(x)}{\Gamma_p(x)}{\,},\\
&& \Gamma_1(x) ~=~ \Gamma(x),\quad \Gamma_p(1) ~=~ 1,\qquad\text{for $x>0$ and $p\in\N^*$},
\end{eqnarray*}
together with the convexity condition
$$
(-1)^{p+1}D^{p+1}\ln\Gamma_p(x) ~\geq ~0,\qquad x>0.
$$
For more recent references, see, e.g., Adamchik~\cite{Ada05,Ada14} and Srivastava and Choi \cite{SriCho12}.

Thus defined, this sequence of functions satisfies the conditions
$$
\ln\Gamma_{p+1}(x) ~=~ -\Sigma\ln\Gamma_p(x)\qquad\text{and}\qquad\deg(\ln\circ\Gamma_p) ~=~ p.
$$
Moreover, it can be naturally extended to the case when $p=0$ by setting $\Gamma_0(x)=1/x$.

Now, these observations motivate the following definition.
\begin{definition}
Let $p\in\N$.
\begin{itemize}
\item A \emph{$\Gamma_p$-type function}\index{$\Gamma_p$-type function|textbf} (resp.\ a \emph{$\log\Gamma_p$-type function})\index{$\log\Gamma_p$-type function|textbf} is a function of the form $\exp\circ\Sigma g$ (resp.\ $\Sigma g$), where $g$ lies in $\cD^p\cap\cK^p$ with $p=1+\deg g$.
\item A \emph{multiple $\Gamma$-type function}\index{multiple $\Gamma$-type function|textbf} (resp.\ \emph{multiple $\log\Gamma$-type function})\index{multiple $\log\Gamma$-type function|textbf} is a $\Gamma_p$-type function (resp.\ $\log\Gamma_p$-type function) for some $p\in\N$.
\end{itemize}
\end{definition}

When $p\geq 1$, $\exp\circ\Sigma g$ reduces to the function $\Gamma_p$ when $\exp\circ g$ is precisely the function $1/\Gamma_{p-1}$, which simply shows that the function $\Gamma_p$ restricted to $\R_+$ is itself a $\Gamma_p$-type function.

We also introduce the following notation. We let $\Gamma_p$\label{p:Gps} (resp.\ $\mathrm{Log}\Gamma_p$)\label{p:LGps} denote the set of $\Gamma_p$-type functions (resp.\ $\log\Gamma_p$-type functions). Thus, by definition the set $\mathrm{ran}(\Sigma)$ can be decomposed using the following disjoint union
$$
\mathrm{ran}(\Sigma) ~=~ \bigcup_{p\geq 0}\mathrm{ran}(\Sigma|_{\cD^p\cap\cK^p}) ~=~ \bigcup_{p\geq 0}\mathrm{Log}\Gamma_p{\,}.
$$
Thus defined, the set of $\log\Gamma_p$-type functions can be characterized as follows.

\begin{proposition}\label{prop:fffdde9}
For any function $f\colon\R_+\to\R$ and any $p\in\N$, the following assertions are equivalent.
\begin{enumerate}
  \item[(i)] $f\in\mathrm{Log}\Gamma_p${\,}.
  \item[(ii)] $f(1)=0$, $f\in\cK^p$, $\Delta f\in\cD^p\cap\cK^p$, and $\deg\Delta f=p-1$.
  \item[(iii)] $f=\Sigma\Delta f$, $\Delta f\in\cD^p\cap\cK^p$, and $\deg\Delta f=p-1$.
  \item[(iv)] $f\in\mathrm{ran}(\Sigma)$ and $\deg\Delta f=p-1$.
  \item[(v)] If $p\geq 1$, then $f\in\mathrm{ran}(\Sigma)$ and $\deg f=p${\,}. \\ If $p=0$, then $f\in\mathrm{ran}(\Sigma)$ and $\deg f\in\{-1,0\}$.
\end{enumerate}
\end{proposition}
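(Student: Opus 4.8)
The plan is to close the cycle of implications $(i)\Rightarrow(ii)\Rightarrow(iii)\Rightarrow(iv)\Rightarrow(v)\Rightarrow(i)$, relying throughout on two tools already in hand: first, that $\Sigma$ is a bijection onto $\mathrm{ran}(\Sigma)$ whose inverse is $\Delta$, so that $\Delta\Sigma g=g$ and, whenever $f=\Sigma g$, we have $g=\Delta f$; second, the degree identity $(\deg f)_+=1+\deg\Delta f$ recorded just before Proposition~\ref{prp:56GathZZPro8}. Combined with Proposition~\ref{prp:56GathZZPro8}(a), which characterizes $\Sigma g$ as the \emph{unique} solution of $\Delta h=g$ lying in $\cK^p$ and vanishing at $1$, each implication becomes a short verification.

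For $(i)\Rightarrow(ii)$ I would write $f=\Sigma g$ with $g\in\cD^p\cap\cK^p$ and $p=1+\deg g$; then $\Delta f=g$ yields $\Delta f\in\cD^p\cap\cK^p$ and $\deg\Delta f=p-1$, while $f\in\cK^p$ and $f(1)=0$ come straight from Proposition~\ref{prp:56GathZZPro8}(a). For $(ii)\Rightarrow(iii)$, setting $g:=\Delta f$, the function $f$ is a solution of $\Delta h=g$ that lies in $\cK^p$ and vanishes at $1$, so Proposition~\ref{prp:56GathZZPro8}(a) forces $f=\Sigma g=\Sigma\Delta f$. For $(iii)\Rightarrow(iv)$, since $\Delta f\in\cD^p\cap\cK^p\subset\mathrm{dom}(\Sigma)$, the identity $f=\Sigma\Delta f$ places $f$ in $\mathrm{ran}(\Sigma)$, and the degree condition is simply inherited.

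The two implications that genuinely engage the truncation $(\cdot)_+$ are $(iv)\Rightarrow(v)$ and $(v)\Rightarrow(i)$. For the former I would invoke $(\deg f)_+=1+\deg\Delta f=p$: when $p\ge 1$ this pins down $\deg f=p$, whereas when $p=0$ it only gives $\deg f\le 0$, hence $\deg f\in\{-1,0\}$, recalling that $\deg$ takes values in $\{-1,0,1,\dots\}\cup\{\infty\}$. For $(v)\Rightarrow(i)$, since $f\in\mathrm{ran}(\Sigma)$ it lies in some $\mathrm{Log}\Gamma_q$, so by definition $f=\Sigma g$ with $q=1+\deg g=1+\deg\Delta f=(\deg f)_+$; the hypothesis on $\deg f$ then gives $q=p$ in both regimes ($p\ge 1$ via $\deg f=p$, and $p=0$ via $\deg f\in\{-1,0\}$), whence $f\in\mathrm{Log}\Gamma_p$.

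The hard part is precisely keeping the $p=0$ case honest. Because the passage $\deg f\mapsto(\deg f)_+$ collapses the values $-1$ and $0$, the integer $p=0$ cannot distinguish $\deg f=-1$ from $\deg f=0$; this is exactly why assertion (v) must be stated with the disjunction $\deg f\in\{-1,0\}$ rather than a single value, and it is the only point where the argument does not run uniformly in $p$. Everything else is bookkeeping with the bijectivity of $\Sigma$ and the characterization in Proposition~\ref{prp:56GathZZPro8}(a).
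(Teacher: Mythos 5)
Your proof is correct and takes essentially the same route as the paper's: every step reduces to the definition of $\Sigma$, the uniqueness statement of Proposition~\ref{prp:56GathZZPro8}(a), and the identity $(\deg f)_+=1+\deg\Delta f$, which are precisely the ingredients behind the paper's terse proof. Your cyclic organization $\text{(i)}\Rightarrow\text{(ii)}\Rightarrow\text{(iii)}\Rightarrow\text{(iv)}\Rightarrow\text{(v)}\Rightarrow\text{(i)}$ is only a cosmetic rearrangement of the paper's chain of sub-equivalences, and your treatment of the $p=0$ case correctly isolates the one place where the truncation $(\cdot)_+$ matters.
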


\begin{proof}
The equivalence (i) $\Leftrightarrow$ (ii) $\Leftrightarrow$ (iii) is immediate by definition of $\Sigma$. The implications (iii) $\Rightarrow$ (iv) $\Rightarrow$ (ii) are straightforward. Finally, the equivalence (iv) $\Leftrightarrow$ (v) is trivial.
\end{proof}

From Proposition~\ref{prop:fffdde9} we immediately derive the following characterization of the set $\mathrm{ran}(\Sigma)$ of all multiple $\log\Gamma$-type functions.\index{multiple $\log\Gamma$-type function}

\begin{corollary}
A function $f\colon\R_+\to\R$ lies in $\mathrm{ran}(\Sigma)$ if and only if there exists $p\in\N$ such that $f(1)=0$, $f\in\cK^p$, and $\Delta f\in\cD^p\cap\cK^p$.
\end{corollary}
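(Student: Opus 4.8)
The plan is to deduce the equivalence directly from Proposition~\ref{prop:fffdde9}, together with the decomposition $\mathrm{ran}(\Sigma)=\bigcup_{p\geq 0}\mathrm{Log}\Gamma_p$. The only genuine work lies in handling the fact that the condition in the corollary quantifies existentially over $p\in\N$ without pinning down the exact value, whereas assertion (ii) of Proposition~\ref{prop:fffdde9} additionally demands the precise matching $\deg\Delta f=p-1$. So the strategy splits into the two implications, with a degree-calibration step in the sufficiency direction.

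For the necessity, I would suppose $f\in\mathrm{ran}(\Sigma)$. By the disjoint union decomposition, $f\in\mathrm{Log}\Gamma_p$ for some $p\in\N$. Applying the implication (i)$\Rightarrow$(ii) of Proposition~\ref{prop:fffdde9}, one obtains in particular that $f(1)=0$, $f\in\cK^p$, and $\Delta f\in\cD^p\cap\cK^p$; discarding the extra information $\deg\Delta f=p-1$ yields exactly the stated conditions for this $p$.

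For the sufficiency, I would assume $f(1)=0$, $f\in\cK^p$, and $\Delta f\in\cD^p\cap\cK^p$ for some $p\in\N$. Set $g=\Delta f$ and let $p'=1+\deg g$. Since $g\in\cD^p$ forces $\deg g\leq p-1$, we have $p'\leq p$. Using the descending filtration $\cK^{p}\subset\cK^{p'}$ from Proposition~\ref{prop:MpDescFiltr}, both $f$ and $g$ lie in $\cK^{p'}$; moreover $\deg g=p'-1$ gives $g\in\cD^{p'}_{\R}$, so $g\in\cD^{p'}\cap\cK^{p'}$ by Proposition~\ref{prop:ClClIntg}. Thus $f$ now satisfies all four clauses of assertion (ii) with $p'$ in place of $p$ (the degree clause $\deg\Delta f=p'-1$ holding by construction), and the implication (ii)$\Rightarrow$(i) of Proposition~\ref{prop:fffdde9} gives $f\in\mathrm{Log}\Gamma_{p'}\subset\mathrm{ran}(\Sigma)$.

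The main obstacle—though a mild one—is precisely this calibration of the index: one must lower the given $p$ to $p'=1+\deg\Delta f$ and verify that the higher order convexity and asymptotic hypotheses survive this descent, which is guaranteed by the nesting of the cones $\cK^q$ and by Proposition~\ref{prop:ClClIntg} identifying the set $\cD^{q}\cap\cK^{q}$ independently of $\S$. Everything else is a direct transcription of Proposition~\ref{prop:fffdde9}.
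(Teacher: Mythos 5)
Your proof is correct and takes essentially the same route as the paper: the corollary is derived there directly from Proposition~\ref{prop:fffdde9} and the disjoint decomposition $\mathrm{ran}(\Sigma)=\bigcup_{p\geq 0}\mathrm{Log}\Gamma_p$, the paper simply leaving implicit the degree-calibration step (lowering $p$ to $p'=1+\deg\Delta f$ via the nesting $\cK^p\subset\cK^{p'}$ and Proposition~\ref{prop:ClClIntg}) that you make explicit. Nothing is missing.
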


\section{Integration of multiple $\log\Gamma$-type functions}
\index{multiple $\log\Gamma$-type function!integration|(}

The uniform convergence of the sequence $n\mapsto f^p_n[g]$ (cf.\ Theorem~\ref{thm:exist}) shows that the function $\Sigma g$ is continuous whenever so is $g$. More generally, we also have the following result.

\begin{proposition}\label{prop:intMLGt}
Let $g$ lie in $\cC^0\cap\cD^p\cap\cK^p$ for some $p\in\N$. The following assertions hold.
\begin{enumerate}
\item[(a)] $\Sigma g$ lies in $\cC^0\cap\cD^{p+1}\cap\cK^p$.
\item[(b)] $\Sigma g$ is integrable at $0$ if and only if so is $g$.
\item[(c)] Let $n\in\N^*$ be so that $g$ is $p$-convex or $p$-concave on $[n,\infty)$ and let $0\leq a\leq x$. The following inequality holds
$$
\left|\int_a^x(f^p_n[g](t)-\Sigma g(t)){\,}dt\right| ~\leq ~ \int_a^x\lceil t\rceil\left|\tchoose{t-1}{p}\right| dt ~ \left|\Delta^pg(n)\right|.
$$
If $p\geq 1$, we also have the following tighter inequality
$$
\left|\int_a^x(f^p_n[g](t)-\Sigma g(t)){\,}dt\right| ~\leq ~ \int_a^x\left|\tchoose{t-1}{p}\right|\left|\Delta^{p-1}g(n+t)-\Delta^{p-1}g(n)\right|{\,}dt.
$$
Moreover, the following assertions hold.
\begin{enumerate}
\item[(c1)] The sequence
$$
n ~\mapsto ~\int_a^x\left(f^p_n[g](t)-\Sigma g(t)\right){\,}dt
$$
converges to zero.
\item[(c2)] The sequence
$$
n ~\mapsto ~\int_a^x(f^p_n[g](t)+ g(t)){\,}dt
$$
converges to
$$
\int_a^x(\Sigma g(t) + g(t)){\,}dt ~=~ \int_a^x\Sigma g(t+1){\,}dt.
$$
\item[(c3)] For any $m\in\N^*$, the sequence
$$
n ~\mapsto ~\int_a^x(f^p_n[g](t)-f^p_m[g](t)){\,}dt
$$
converges to
$$
\int_a^x(\Sigma g(t)-f^p_m[g](t)){\,}dt.
$$
\end{enumerate}
\end{enumerate}
\end{proposition}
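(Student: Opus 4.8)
The plan is to treat the three assertions in order, reducing each to facts already proved for the approximating sequence $n\mapsto f^p_n[g]$, and to exploit throughout one simple observation: the singular term $-g$ occurring in $f^p_n[g]$ (see \eqref{eq:defFpn}) cancels in all the differences that appear, so that every integrand in the statement is continuous up to $0$. For assertion (a), I would first note that each $f^p_n[g]$ is continuous on $\R_+$, being by \eqref{eq:defFpn} a finite sum of translates of the continuous function $g$ together with a polynomial. Theorem~\ref{thm:exist}(c) then gives uniform convergence of $n\mapsto f^p_n[g]$ to $\Sigma g$ on every bounded subset of $\R_+$, whence $\Sigma g\in\cC^0$; the memberships $\Sigma g\in\cD^{p+1}\cap\cK^p$ are exactly Proposition~\ref{prp:56GathZZPro8}(b). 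For assertion (b), I would use the functional equation $\Delta\Sigma g=g$ in the form $\Sigma g(x)=\Sigma g(x+1)-g(x)$. Since $\Sigma g$ is continuous on $\R_+$ with $\Sigma g(1)=0$, the map $x\mapsto\Sigma g(x+1)$ is continuous and bounded on $[0,1]$, hence integrable at $0$; therefore $\Sigma g$ is integrable at $0$ if and only if $g$ is.

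For assertion (c), the key preliminary step is identity \eqref{eq:33ConvSig52}, which gives $f^p_n[g](t)-\Sigma g(t)=-\rho^{p+1}_n[\Sigma g](t)$. As $\Sigma g\in\cC^0$ and the right-hand side involves only values of $\Sigma g$ at arguments $\geq n\geq 1$, this difference extends continuously to $[0,\infty)$ (with value $0$ at $t=0$); consequently every integral in (c) is proper even when $a=0$, although $g$, $\Sigma g$, and each $f^p_n[g]$ may individually blow up at the origin. Granting this, the two displayed inequalities follow by integrating over $[a,x]$ the two pointwise estimates for $|f^p_n[g](t)-\Sigma g(t)|$ supplied by Theorem~\ref{thm:exist}(b), applied with $f=\Sigma g$ (the unique solution vanishing at $1$): in the coarser estimate the factor $|\Delta^pg(n)|$ is constant in $t$ and leaves the integral, while in the sharper one the $t$-dependent factor $|\Delta^{p-1}g(n+t)-\Delta^{p-1}g(n)|$ stays inside.

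The three convergence claims then follow readily. For (c1), the first inequality of (c) bounds the integral by $\bigl(\int_a^x\lceil t\rceil\,|\tchoose{t-1}{p}|\,dt\bigr)\,|\Delta^pg(n)|$, whose first factor is finite and independent of $n$ while $|\Delta^pg(n)|\to 0$ since $g\in\cD^p$. For (c2), I would record that $t\mapsto f^p_n[g](t)+g(t)$ and $t\mapsto\Sigma g(t)+g(t)=\Sigma g(t+1)$ are both continuous on $[0,\infty)$, their difference being $f^p_n[g]-\Sigma g$; combined with (c1) and the identity $\Sigma g(t)+g(t)=\Sigma g(t+1)$ coming from $\Delta\Sigma g=g$, this yields the claimed limit. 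Assertion (c3) is of the same type: $f^p_n[g]-f^p_m[g]$ and $\Sigma g-f^p_m[g]$ are again free of the $-g$ singularity, their difference is $f^p_n[g]-\Sigma g$, and (c1) concludes.

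I expect the only genuine subtlety to be the case $a=0$: one must resist splitting $\int_0^x(f^p_n[g]-\Sigma g)$ into the two separately divergent integrals $\int_0^x f^p_n[g]$ and $\int_0^x\Sigma g$, and instead invoke the cancellation observation above, after which the remaining work is the routine integration of the pointwise bounds from Theorem~\ref{thm:exist} and the elementary limit $\Delta^pg(n)\to 0$.
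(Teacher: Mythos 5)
Your proof is correct and follows essentially the same route as the paper's: assertion (a) via uniform convergence and Proposition~\ref{prp:56GathZZPro8}, (b) via the functional equation $\Delta\Sigma g=g$, and (c) by integrating the pointwise bounds of Theorem~\ref{thm:exist}(b) after observing through \eqref{eq:33ConvSig52} that $f^p_n[g]-\Sigma g=-\rho^{p+1}_n[\Sigma g]$ is continuous up to $0$ (vanishing there), which is exactly how the paper handles the possible singularity at the origin. The only cosmetic difference is in (c3), where the paper cites identity \eqref{eq:fngsum} to get integrability of $f^p_n[g]-f^p_m[g]$ while you argue by direct cancellation of the $-g$ terms; the two justifications amount to the same thing.
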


\begin{proof}
Assertion (a) follows from Proposition~\ref{prp:56GathZZPro8} and the uniform convergence of the sequence $n\mapsto f^p_n[g]$. Assertion (b) follows from assertion (a) and the identity $\Sigma g(x+1)-\Sigma g(x)=g(x)$. Now, for any $n\in\N^*$, since $\rho_n^{p+1}[\Sigma g](0)=0$ by \eqref{eq:deflambdapt}, the function $\rho^{p+1}_n[\Sigma g]$ is clearly integrable on $(0,x)$ and hence on $(a,x)$. Using \eqref{eq:33ConvSig52}, it follows that the function $f^p_n[g]-\Sigma g$ is also integrable on $(a,x)$. The inequalities of assertion (c) then follows from Theorem~\ref{thm:exist}(b); and hence assertion (c1) also holds. Assertion (c2) follows from assertion (c1) and the identity $\Sigma g(x+1)-\Sigma g(x)=g(x)$. Finally, using \eqref{eq:fngsum} we see that the function $f^p_m[g]-f^p_n[g]$ is integrable on $(a,x)$ and hence assertion (c3) follows from assertion (c1).
\end{proof}

\begin{remark}\label{rem:5repeated0Int53}
Assertion (c) of Proposition~\ref{prop:intMLGt} has been obtained by integrating the function $\rho^{p+1}_n[\Sigma g]$ on $(a,x)$. The first inequality in assertion (c) then clearly shows that the sequences of functions defined in assertions (c1)--(c3) converge uniformly on any bounded subset of $\R_+$. Now, we also observe that the integral
$$
\int_a^x\rho^{p+1}_n[\Sigma g](t){\,}dt
$$
itself can be integrated on $(a,x)$, and we can repeat this process as often as we wish. After $n$ integrations, we obtain
$$
\frac{1}{(n-1)!}\,\int_a^x(x-t)^{n-1}\,\rho^{p+1}_n[\Sigma g](t){\,}dt,
$$
and, proceeding as in Proposition~\ref{prop:intMLGt}, it is then clear that the following inequality holds
$$
\left|\int_a^x(x-t)^{n-1}{\,}(f^p_n[g](t)-\Sigma g(t)){\,}dt\right| ~\leq ~ \int_a^x(x-t)^{n-1}\,\lceil t\rceil\left|\tchoose{t-1}{p}\right| dt ~ \left|\Delta^pg(n)\right|.
$$
In particular, this inequality shows that the left-hand integral converges uniformly on any bounded subset of $\R_+$ to zero.
\end{remark}

Let us end this section with the following important remark. In Proposition~\ref{prop:intMLGt} we have assumed the continuity of function $g$ to ensure that the integrals of both functions $g$ and $\Sigma g$ be defined. Of course, we could somewhat generalize our result by relaxing this continuity assumption into weaker properties such as local integrability of both $g$ and $\Sigma g$. However, for the sake of simplicity, in this work we will always assume the continuity of any function whenever we need to integrate it on a compact interval (see also Remark~\ref{rem:IntVSCont3}). In this case, continuity can be regarded simply as a handy assumption to keep the results simple. We then encourage the interested reader to generalize those results by searching for the weakest assumptions. This may sometimes lead to challenging but stimulating problems.

\index{multiple $\log\Gamma$-type function!integration|)}

\section{The quest for a characterization of $\mathrm{dom}(\Sigma)$}

Recall that the map $\Sigma$ is defined on the set
$$
\mathrm{dom}(\Sigma) ~=~ \bigcup_{p\geq 0}(\cD^p\cap\cK^p).
$$
In this respect, it would be useful to have a very simple test to check whether a given function $g\colon\R_+\to\R$ lies in this set. By Propositions~\ref{prop:4042RsDs5} and \ref{prop:MpDescFiltr}, the condition that $g$ lies in $\cD^{\infty}_{\N}\cap\cK^0$ is clearly necessary. In the next proposition we show that, if $g$ is not eventually identically zero, then it must also satisfy the following property
\begin{equation}\label{eq:CharSigDom2}
\limsup_{n\to_{\N}\,\infty}\,\frac{g(n+1)}{g(n)} ~\leq ~1.
\end{equation}
We first recall the following discrete version of L'Hospital's rule, also called Stolz-Ces\`aro theorem\index{Stolz-Ces\`aro theorem}. For a recent reference see, e.g., Ash {\em et al.} \cite{AshAllSte12}.

\begin{lemma}[Stolz-Ces\`aro theorem]\label{lemma:6Stolz52}
Let $n\mapsto a_n$ and $n\mapsto b_n$ be two real sequences. If the second sequence is strictly monotone and unbounded, then
$$
\liminf_{n\to\infty}\,\frac{a_{n+1}-a_n}{b_{n+1}-b_n} ~\leq ~ \liminf_{n\to\infty}\,\frac{a_n}{b_n} ~\leq ~ \limsup_{n\to\infty}\,\frac{a_n}{b_n} ~\leq ~ \limsup_{n\to\infty}\,\frac{a_{n+1}-a_n}{b_{n+1}-b_n}{\,}.
$$
In particular, if
$$
\lim_{n\to\infty}\frac{a_{n+1}-a_n}{b_{n+1}-b_n} ~=~ \ell
$$
for some $\ell\in\R$, then
$$
\lim_{n\to\infty}\frac{a_n}{b_n} ~=~ \ell.
$$
\end{lemma}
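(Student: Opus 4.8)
The plan is to reduce the four-fold chain of inequalities to a single one and then prove that one by a telescoping estimate. The central inequality $\liminf\leq\limsup$ in the middle of the chain is automatic, so the whole statement amounts to its two outer inequalities. These two are moreover interchanged by the substitution $a_n\mapsto -a_n$, which flips both sides through the identity $\limsup(-c_n)=-\liminf c_n$ while leaving the hypotheses on $b_n$ untouched. Hence it suffices to establish the rightmost inequality
$$
\limsup_{n\to\infty}\frac{a_n}{b_n} ~\leq ~ \limsup_{n\to\infty}\frac{a_{n+1}-a_n}{b_{n+1}-b_n}{\,}.
$$

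First I would normalize the hypothesis on $b_n$. Since $b_n$ is strictly monotone and unbounded, either $b_n$ increases to $+\infty$ or $b_n$ decreases to $-\infty$; in the latter case, replacing both $a_n$ and $b_n$ by their negatives restores the increasing case while leaving both quotients $a_n/b_n$ and $(a_{n+1}-a_n)/(b_{n+1}-b_n)$ unchanged. So I may assume $b_{n+1}>b_n$ for all $n$ and $b_n\to +\infty$. Write $L=\limsup_{n\to\infty}(a_{n+1}-a_n)/(b_{n+1}-b_n)$. If $L=+\infty$ there is nothing to prove, so I assume $L<+\infty$ and fix an arbitrary real $\ell>L$.

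The key step is then a telescoping estimate. By definition of $\limsup$, there is an integer $N$ such that $(a_{k+1}-a_k)/(b_{k+1}-b_k)<\ell$ for all $k\geq N$; since $b_{k+1}-b_k>0$, this reads $a_{k+1}-a_k<\ell(b_{k+1}-b_k)$. Summing over $k=N,\ldots,n-1$ gives $a_n-a_N<\ell(b_n-b_N)$, hence, for $n$ large enough that $b_n>0$,
$$
\frac{a_n}{b_n} ~<~ \ell + \frac{a_N-\ell{\,}b_N}{b_n}{\,}.
$$
Letting $n\to\infty$, the last term tends to $0$ because $b_n\to +\infty$, so $\limsup_{n\to\infty}a_n/b_n\leq\ell$; letting $\ell\downarrow L$ then yields the claimed inequality. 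Applying this result to the sequence $(-a_n)$ delivers the leftmost (liminf) inequality, which completes the chain.

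Finally, the \emph{in particular} assertion follows by squeezing: if the quotient of differences converges to $\ell$, then both its $\liminf$ and its $\limsup$ equal $\ell$, so the chain forces $\liminf_{n\to\infty}a_n/b_n=\limsup_{n\to\infty}a_n/b_n=\ell$, which means $\lim_{n\to\infty}a_n/b_n=\ell$. The argument is essentially routine; the only points requiring care are the reduction to the increasing case (checking that negation genuinely preserves both quotients) and the harmless but necessary exclusion of the case $L=+\infty$ before choosing $\ell>L$. I do not expect any serious obstacle.
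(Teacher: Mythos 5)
Your proof is correct and complete. Note that the paper itself does not prove this lemma at all: it is stated as a known result (the discrete L'Hospital rule) with a citation to Ash \emph{et al.}, so there is no in-paper argument to compare against; your proposal supplies the standard textbook proof that the citation points to. The structure is sound: the middle inequality is trivial, the two outer ones are exchanged by $a_n\mapsto -a_n$ via $\limsup(-c_n)=-\liminf c_n$, the decreasing case of $b_n$ reduces to the increasing one by negating both sequences (which indeed leaves both quotients unchanged), and the telescoping estimate $a_n-a_N<\ell\,(b_n-b_N)$ followed by division by $b_n>0$ and the limit $b_n\to+\infty$ gives $\limsup a_n/b_n\leq\ell$ for every $\ell>L$. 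You correctly dispose of $L=+\infty$, and your argument also covers $L=-\infty$ automatically (the bound holds for every real $\ell$, forcing $\limsup a_n/b_n=-\infty$). The squeeze deduction of the \emph{in particular} statement is immediate. No gaps.
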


\begin{proposition}\label{prop:Conj541q}
If $g$ lies in $\mathrm{dom}(\Sigma)$ and is not eventually identically zero, then condition \eqref{eq:CharSigDom2} holds.
\end{proposition}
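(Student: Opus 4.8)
The plan is to prove the statement by a reduction to a single nontrivial case, followed by an induction on the order $p$. Suppose $g\in\cD^p\cap\cK^p$ for some $p\in\N$ and that $g$ is not eventually identically zero. Since $\cK^p\subset\cK^0$ by Proposition~\ref{prop:MpDescFiltr}, the function $g$ is eventually monotone; combined with the fact that $g$ is not eventually identically zero, this forces the sequence $n\mapsto g(n)$ to be eventually nonzero and of constant sign (a monotone function vanishing at infinitely many integers would vanish on a whole tail), so that the ratio in \eqref{eq:CharSigDom2} is well defined for large $n$. As this ratio is unchanged when $g$ is replaced by $-g$, and $\cD^p\cap\cK^p$ is stable under negation, I may assume $g(n)>0$ for large $n$. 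If $g$ is eventually decreasing, then $g(n+1)\le g(n)$ gives the ratio $\le 1$ at once; if $g$ is eventually increasing with $g(n)\to\ell<\infty$, then the ratio tends to $\ell/\ell=1$. The only remaining case is that $g$ is eventually positive and increases to $+\infty$, and this is what the rest of the argument addresses.

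First I would note that this case cannot occur for $p=0$, since $g\in\cD^0$ forces $g(n)\to 0$; hence $p\ge 1$, and I set $h=\Delta g$. By Proposition~\ref{prop:fpDpj}(b) and Proposition~\ref{prop:LMpGpLMp1D7}(a) (equivalently Lemma~\ref{lemma:PrelKp}(b)), the function $h$ lies in $\cD^{p-1}\cap\cK^{p-1}$. Because $g$ is eventually increasing, $h\ge 0$ eventually, and since $h\in\cK^0$ is eventually monotone and not eventually identically zero (otherwise $g$ would be eventually constant, contradicting $g\to\infty$), one checks that in fact $h>0$ for large $n$. The inductive hypothesis, applied to $h$ at order $p-1$, then yields $\limsup_{n} h(n+1)/h(n)\le 1$.

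The conclusion for $g$ will follow from the elementary fact that a sequence $(b_n)$ with $b_n>0$ for large $n$ and $\limsup_n b_{n+1}/b_n\le 1$ satisfies $b_n\big/\sum_{k=N}^{n}b_k\to 0$. To see this, fix $\epsilon\in(0,1)$ and choose $N_1\ge N$ with $b_{k+1}\le(1+\epsilon)b_k$ for $k\ge N_1$; then $b_{n-j}\ge(1+\epsilon)^{-j}b_n$ for $0\le j\le n-N_1$, and summing this geometric lower bound gives $\sum_{k=N_1}^{n}b_k\ge b_n\,\frac{1+\epsilon}{\epsilon}\,(1-o(1))$, whence $\limsup_n b_n\big/\sum_{k=N}^n b_k\le\epsilon/(1+\epsilon)$; letting $\epsilon\to 0$ proves the claim. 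Applying this with $b_n=h(n)=\Delta g(n)$ and using the telescoping identity $\sum_{k=N}^{n}\Delta g(k)=g(n+1)-g(N)\sim g(n+1)$ (valid since $g\to\infty$), I obtain $(g(n+1)-g(n))/g(n+1)\to 0$, that is, $g(n+1)/g(n)\to 1$. This establishes \eqref{eq:CharSigDom2} in the remaining case and completes the induction.

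I expect the main obstacle to be precisely this last quantitative step: passing from control of the consecutive ratios of $h=\Delta g$ to control of the ratios of $g$ itself. The geometric estimate above is the crux, and it is worth emphasizing that the induction terminates after finitely many steps exactly because $g\in\cD^p$ guarantees $\Delta^p g(n)\to 0$; this prevents the ``increasing to $+\infty$'' branch from recurring beyond level $p$, so the recursion necessarily bottoms out at an iterated difference that is either bounded or decreasing. The remaining case distinctions (eventually decreasing, or increasing to a finite limit) are routine and independent of the order $p$.
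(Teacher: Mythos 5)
Your proof is correct, and its skeleton matches the paper's: both arguments descend through the iterated differences $g,\Delta g,\Delta^2 g,\ldots$, exploiting that the recursion must bottom out because $g\in\cD^p$ forces $\Delta^pg(n)\to 0$. The differences are in the engine and in the bookkeeping. The paper works top-down from $\Delta^pg$ (which lies in $\cD^0\cap\cK^0$, so its ratios are under control) and transfers the bound $\limsup_n\Delta^jg(n+1)/\Delta^jg(n)\le 1$ down one level at a time by invoking the Stolz--Ces\`aro theorem (Lemma~\ref{lemma:6Stolz52}), after checking that $\Delta^{j-1}g(n)$ is eventually \emph{strictly} monotone and unbounded; it also normalizes $p=1+\deg g$ so that, in the case where $\Delta^{p-1}g(n)$ has a finite limit, minimality of $p$ guarantees that limit is nonzero. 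You instead run a bottom-up induction on $p$ and replace the citation of Stolz--Ces\`aro by a self-contained geometric-sum estimate; your key lemma is exactly the instance of Stolz--Ces\`aro needed here (take $a_n=g(n+1)$, $b_n=g(n)$ in that lemma), but proved from scratch, with positivity of $b_n=\Delta g(n)$ playing the role of the paper's strict monotonicity. Your other organizational choice — normalizing the sign of $g$ and splitting into decreasing / increasing-bounded / increasing-unbounded — lets you dispense both with the minimality assumption $p=1+\deg g$ and with the paper's separate ``$g$ eventually a polynomial'' case, since in your setup the bounded-increasing limit is automatically positive. What each approach buys: the paper's proof is shorter given the prepackaged lemma and makes the role of the discrete L'Hospital rule explicit; yours is entirely elementary and even yields the slightly stronger conclusion $g(n+1)/g(n)\to 1$ in the unbounded branch, rather than only $\limsup\le 1$.
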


\begin{proof}
Assume that $g$ lies in $\cD^p\cap\cK^p$ for some $p\in\N$. Of course we can assume that $p=1+\deg g$. We can also assume that $g$ is not eventually a polynomial; for otherwise the condition \eqref{eq:CharSigDom2} clearly holds. If $p=0$, then the function $x\mapsto |g(x)|$ eventually decreases to zero and hence condition \eqref{eq:CharSigDom2} holds. Now suppose that $p\geq 1$. Then the function $\Delta^pg$ lies in $\cD^0\cap\cK^0$ and there are two exclusive cases to consider.
\begin{itemize}
\item[(a)] Suppose that the eventually monotone sequence $n\mapsto\Delta^{p-1}g(n)$ is unbounded. This sequence is actually eventually strictly monotone. Indeed, otherwise the function $\Delta^p g\in\cK^0$ would vanish in any unbounded interval of $\R_+$, and hence would eventually be identically zero. Equivalently, $g$ would eventually be a polynomial of degree less than or equal to $p-1$, a contradiction. Using the Stolz-Ces\`aro theorem (see Lemma~\ref{lemma:6Stolz52}) and the fact that condition \eqref{eq:CharSigDom2} holds for $\Delta^pg$, we then obtain
$$
\limsup_{n\to_{\N}\,\infty}\,\frac{\Delta^{p-1}g(n+1)}{\Delta^{p-1}g(n)} ~\leq ~ \limsup_{n\to_{\N}\,\infty}\,\frac{\Delta^pg(n+1)}{\Delta^pg(n)} ~\leq ~1.
$$
Iterating this process, we see that condition \eqref{eq:CharSigDom2} holds for $g$.

\item[(b)] Suppose that the sequence $n\mapsto\Delta^{p-1}g(n)$ has a finite limit (which is necessarily nonzero by minimality of $p$). If $p=1$, then condition \eqref{eq:CharSigDom2} holds trivially. If $p\geq 2$, then the eventually monotone sequence $n\mapsto\Delta^{p-2}g(n)$ is unbounded and we can show as in the previous case that it is actually eventually strictly monotone. Using the Stolz-Ces\`aro theorem, we then obtain
$$
\limsup_{n\to_{\N}\,\infty}\,\frac{\Delta^{p-2}g(n+1)}{\Delta^{p-2}g(n)} ~\leq ~ \limsup_{n\to_{\N}\,\infty}\,\frac{\Delta^{p-1}g(n+1)}{\Delta^{p-1}g(n)} ~=~1.
$$
Iterating this process, we see that condition \eqref{eq:CharSigDom2} holds.
\end{itemize}
This completes the proof.
\end{proof}

\begin{remark}
We observe that the left side of \eqref{eq:CharSigDom2} is not always a limit. For instance, the function $g\colon\R_+\to\R$ defined by the equation
$$
g(x) ~=~ \frac{1}{2^x}\left(1+\frac{1}{3}\sin x\right)\qquad\text{for $x>0$}
$$
lies in $\cD^0\cap\cK^0$ (see Remark~\ref{rem:SiNNKp14}) but the function $g(x+1)/g(x)$ is a nonconstant periodic function. The first example in Remark~\ref{rem:ToSE5P3} also illustrates this behavior.

On the other hand, a function $g\in\cK^0$ that satisfies condition \eqref{eq:CharSigDom2} need not lie in $\cD^{\infty}_{\N}$. For instance, for any $q\in\N$ the function
$$
g_q(x) ~=~ x^{q+1}+\sin x
$$
lies in $\cK^q\setminus\cK^{q+1}$, and hence also in $\cK^0$, and satisfies
$$
\lim_{n\to_{\N}\,\infty}\frac{g_q(n+1)}{g_q(n)} ~=~ 1.
$$
However, it does not lie in $\cD^{\infty}_{\N}$.
\end{remark}

We observe that condition~\eqref{eq:CharSigDom2} is very easy to check for many functions $g$ lying in $\cK^0$. Thus, this condition provides a simple and useful test. In particular, when the inequality in \eqref{eq:CharSigDom2} is strict, the sequence $n\mapsto g(n)$ is summable by the ratio test, and hence $g$ lies in $\cD^0\cap\cK^0$. On the other hand, when the inequality is an equality, it is not known whether this condition, together with the property that $g$ lies in $\cK^0$, are also sufficient for $g$ to lie in $\mathrm{dom}(\Sigma)$.

Now, it is easy to see that a function $g\colon\R_+\to\R$ lies in $\cD^{\infty}_{\N}$ if and only if there exists $p\in\N$ for which the sequence $n\mapsto\Delta^pg(n)$ converges. In particular, if we assume that $g$ lies in $\cK^{\infty}$, then $g$ does not lie in $\cD^{\infty}_{\N}$ (and hence it does not lie in $\mathrm{dom}(\Sigma)$) if and only if for every $p\in\N$ the sequence $n\mapsto\Delta^pg(n)$ tends to infinity. On the other hand, we can observe empirically that condition \eqref{eq:CharSigDom2} fails to hold for many functions $g$ lying in $\cK^{\infty}\setminus\cD^{\infty}_{\N}$. Examples of such functions include $g(x)=2^x$ and $g(x)=\Gamma(x)$. It seems then reasonable to think that this observation follows from a general rule. We then formulate the following conjecture.

\begin{conjecture}\label{conj:End661}
If a function $g\colon\R_+\to\R$ lies $\cK^{\infty}$ and is not eventually identically zero, then it also lies in $\cD^{\infty}_{\N}$ if and only if condition \eqref{eq:CharSigDom2} holds.
\end{conjecture}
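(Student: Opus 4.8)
The plan is to split the biconditional into its two implications, since one of them is already within reach. The forward implication — that $g\in\cD^{\infty}_{\N}$ forces \eqref{eq:CharSigDom2} — is essentially available: if $g\in\cK^{\infty}$ and $g\in\cD^{\infty}_{\N}$, then $g\in\cD^p_{\N}$ for some $p$, and since $g\in\cK^{\infty}\subset\cK^p$ we get $g\in\cD^p_{\N}\cap\cK^p=\cD^p\cap\cK^p\subset\mathrm{dom}(\Sigma)$; Proposition~\ref{prop:Conj541q}, applied to a function that is not eventually identically zero, then yields \eqref{eq:CharSigDom2}. So the entire content of the conjecture is concentrated in the reverse implication: that \eqref{eq:CharSigDom2} together with $g\in\cK^{\infty}$ should force $g\in\cD^{\infty}_{\N}$.

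For that reverse implication I would argue by contraposition and try to mirror the Stolz--Ces\`aro machinery of Proposition~\ref{prop:Conj541q}. Assume $g\in\cK^{\infty}$, not eventually zero, and $g\notin\cD^{\infty}_{\N}$. Since $g\in\cK^p$ for every $p$, Lemma~\ref{lemma:pCInc5} makes each sequence $n\mapsto\Delta^pg(n)$ eventually monotone, hence either convergent or divergent to $\pm\infty$; because $g\notin\cD^{\infty}_{\N}$ none of them can converge, so $\Delta^pg(n)\to\pm\infty$ for \emph{every} $p$. Writing $\Delta g(n)/g(n)=g(n+1)/g(n)-1$, condition \eqref{eq:CharSigDom2} is exactly $\limsup_n\Delta g(n)/g(n)\le 0$, so the goal becomes to show that unbounded growth of all the differences forces $\limsup_n\Delta g(n)/g(n)>0$. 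The natural attempt is to climb down the difference hierarchy, showing that $\Delta^{p+1}g(n)\to\infty$ propagates a lower bound on $\Delta^pg(n+1)/\Delta^pg(n)$ down to $p=0$ via Lemma~\ref{lemma:6Stolz52}, exactly as in case (a) of Proposition~\ref{prop:Conj541q}.

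The main obstacle, however, is precisely the equality case $\limsup=1$, which the discussion preceding the conjecture already flags as unsettled, and here I expect the strategy above to fail; indeed I suspect the reverse implication, and hence the conjecture as stated, is false. Consider $g(x)=e^{\sqrt{x}}$. A short induction gives $g^{(k)}(x)=e^{\sqrt{x}}Q_k(x)$ with $Q_{k+1}=Q_k'+Q_k/(2\sqrt{x})$ and $Q_k(x)\sim(2\sqrt{x})^{-k}>0$, so every derivative is eventually positive and increasing; by Proposition~\ref{prop:LMpGpLMp1}(a) this gives $g\in\cK^p_+$ for all $p$, i.e.\ $g\in\cK^{\infty}$. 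By the mean value theorem for divided differences, $\Delta^pg(n)=g^{(p)}(\xi_n)\to\infty$ for every $p$, so $g\notin\cD^{\infty}_{\N}$. Yet $g(n+1)/g(n)=e^{\sqrt{n+1}-\sqrt{n}}\to 1$, so \eqref{eq:CharSigDom2} holds, with the $\limsup$ equal to $1$. Thus $g$ satisfies all the hypotheses and \eqref{eq:CharSigDom2} while failing to lie in $\cD^{\infty}_{\N}$. The phenomenon is that $\cK^{\infty}$ permits intermediate, super-polynomial but sub-exponential, growth for which the ratio tends to $1$, so higher-order convexity alone cannot rule out the borderline case. A genuine proof therefore seems to require first strengthening the hypothesis — for instance by imposing a convexity condition on $\ln\circ g$, which $e^{\sqrt{x}}$ violates since $\sqrt{x}$ is concave — before any Stolz--Ces\`aro descent has a chance; establishing the conjecture verbatim I do not expect to be possible.
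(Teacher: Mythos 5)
Your proposal does not prove Conjecture~\ref{conj:End661} --- it refutes it --- and after checking the details I believe the refutation is correct. First, be aware that there is no proof in the paper to compare against: the statement is deliberately left as an open problem, and the discussion preceding it already isolates the equality case of \eqref{eq:CharSigDom2} as the unsettled one. Your forward implication is exactly the paper's own observation (membership in $\cD^{\infty}_{\N}\cap\cK^{\infty}$ places $g$ in $\cD^p\cap\cK^p\subset\mathrm{dom}(\Sigma)$ for some $p$, and Proposition~\ref{prop:Conj541q} then yields \eqref{eq:CharSigDom2}), so the entire substance of your proposal is the counterexample $g(x)=e^{\sqrt{x}}$, and it works. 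Writing $g^{(k)}(x)=e^{\sqrt{x}}Q_k(x)$ with $Q_0=1$ and $Q_{k+1}=Q_k'+Q_k/(2\sqrt{x})$, an easy induction shows that each $Q_k$ is a finite linear combination of powers $x^{-j/2}$ with $j\geq k$ and leading term $2^{-k}x^{-k/2}$; hence every $g^{(k)}$ is eventually positive, so every $g^{(p)}$ is eventually increasing, and Proposition~\ref{prop:LMpGpLMp1}(a) with $j=p$ gives $g\in\cK^p_+$ for all $p$, i.e., $g\in\cK^{\infty}$. Since $\Delta^pg(n)=p!{\,}g[n,n+1,\ldots,n+p]=g^{(p)}(\xi_n)$ for some $\xi_n\in(n,n+p)$ (identity \eqref{eq:DelDD62} plus the mean value theorem for divided differences), and $g^{(p)}(x)\sim e^{\sqrt{x}}\,(2\sqrt{x}\,)^{-p}\to\infty$, no sequence $n\mapsto\Delta^pg(n)$ tends to zero, so $g\notin\cD^{\infty}_{\N}$. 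Finally $g(n+1)/g(n)=e^{1/(\sqrt{n+1}+\sqrt{n})}\to 1$, so \eqref{eq:CharSigDom2} holds with the $\limsup$ equal to $1$. All hypotheses of the conjecture are met while its conclusion fails: the conjecture as stated is false, and you have settled the paper's open problem in the negative rather than proved it.

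Two comments for the write-up. First, the phenomenon is robust, not an accident of this one function: the same computation works for $g(x)=e^{x^{\alpha}}$ with any $0<\alpha<1$ (and for $-e^{x^{\alpha}}$; the sign plays no role, since $\Delta g(n)/g(n)=g(n+1)/g(n)-1$ is a purely algebraic identity). Second, your counterexample actually shows something stronger than the falsity of the conjecture: since $\ln x$ and $e^{\sqrt{x}}$ both lie in $\cK^{\infty}$, are not eventually zero, and both satisfy $\lim_n g(n+1)/g(n)=1$, while the first lies in $\cD^{\infty}_{\N}$ and the second does not, \emph{no} condition formulated solely in terms of the asymptotics of the ratio $g(n+1)/g(n)$ can characterize $\cD^{\infty}_{\N}$ inside $\cK^{\infty}$. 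In particular, replacing \eqref{eq:CharSigDom2} by the strict inequality is hopeless (it would exclude $\ln x$), so any repair must bring in genuinely new information --- your suggestion of a convexity hypothesis on $\ln\circ g$, which $e^{\sqrt{x}}$ violates, is a reasonable candidate. The exploratory Stolz--Ces\`aro paragraph is sound as far as it goes, but you correctly abandon it; I would shorten it to a sentence, since the descent argument of Proposition~\ref{prop:Conj541q} manifestly cannot be reversed once the counterexample is on the table.
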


\chapter{Asymptotic analysis}
\label{chapter:6}

The asymptotic behavior of the gamma function for large values of its argument can be summarized as follows: for any $a\geq 0$, we have the following asymptotic equivalences (see Titchmarsh \cite[Section 1.87]{Tit39})
\begin{eqnarray}
\Gamma(x+a) ~\sim ~ x^a\,\Gamma(x) && \text{as $x\to\infty$}{\,},\label{eq:StirAA4}\\
\Gamma(x) ~\sim ~ \sqrt{2\pi}{\,}e^{-x}x^{x-\frac{1}{2}} && \text{as $x\to\infty$}{\,},\label{eq:StirAA5}\\
\Gamma(x+1) ~\sim ~ \sqrt{2\pi x}{\,}e^{-x}x^x && \text{as $x\to\infty$}{\,},\label{eq:StirAA521}
\end{eqnarray}
where both formulas \eqref{eq:StirAA5} and \eqref{eq:StirAA521} are known by the name \emph{Stirling's formula}\index{Stirling's formula}.

In this chapter, we investigate the asymptotic behaviors of the multiple $\log\Gamma$-type functions and provide analogues of the formulas above.

More specifically, for these functions we establish analogues of \emph{Wendel's inequality}, \emph{Stirling's formula}, and \emph{Burnside's formula} for the gamma function. We also introduce the concept of the \emph{asymptotic constant}, an analogue of \emph{Stirling's constant}, and an analogue of \emph{Binet's function} related to the log-gamma function, and we show how all these generalized concepts can be used in the asymptotic analysis of multiple $\log\Gamma$-type functions. We also establish a general asymptotic equivalence for these functions.

We revisit \emph{Gregory's summation formula}, with an integral form of the remainder, and show how it can be derived very easily in this context. Using this formula, we then introduce a generalization of \emph{Euler's constant} and provide a geometric interpretation.

\section{Generalized Wendel's inequality}
\label{sec:6Wen0delI6eq2}
\index{Wendel's inequality!generalized|(}

Recall that if a function $g$ lies in $\cD^p\cap\cK^p$ for some $p\in\N$, then the function $\Sigma g$ lies in $\cR^{p+1}_{\R}$ by Proposition~\ref{prp:56GathZZPro8}. At first glance, this observation may seem rather unimportant. However, its explicit statement tells us that for any $a\geq 0$ we have
$$
\rho_x^{p+1}[\Sigma g](a)\to 0\qquad\text{as $x\to\infty$},
$$
or equivalently,
\begin{equation}\label{eq:convRes79}
\Sigma g(x+a)-\Sigma g(x)-\sum_{j=1}^p\tchoose{a}{j}\,\Delta^{j-1}g(x) ~\to ~0\qquad\text{as $x\to\infty$}{\,}.
\end{equation}
This is actually a nice convergence result that reveals the asymptotic behavior of the difference $\Sigma g(x+a)-\Sigma g(x)$ for large values of $x$. The special case when $p=1$ was established by Webster \cite[Theorem 6.1]{Web97b}.

When $g(x)=\ln x$ and $p=1$, this result reduces to
$$
\ln\Gamma(x+a)-\ln\Gamma(x)-a\ln x ~\to ~0\qquad\text{as $x\to\infty$}{\,},
$$
which is precisely the additive version of the asymptotic equivalence given in \eqref{eq:StirAA4}. We thus observe that \eqref{eq:convRes79} immediately provides an analogue of the asymptotic equivalence \eqref{eq:StirAA4} for all the multiple $\log\Gamma$-type functions.

Now, we observe that formula \eqref{eq:StirAA4} was also established by Wendel~\cite{Wen48}, who first provided a short and elegant proof of the following double inequality\index{Wendel's inequality}
\begin{equation}\label{eq:Orig9Wendel2}
\left(1+\frac{a}{x}\right)^{a-1} \leq ~ \frac{\Gamma(x+a)}{\Gamma(x){\,}x^a} ~\leq ~ 1{\,}, \qquad x>0{\,},\quad 0\leq a\leq 1{\,},
\end{equation}
or equivalently, in the additive notation,
\begin{equation}\label{eq:Orig9Wendel2add7}
(a-1)\ln\left(1+\frac{a}{x}\right) ~\leq ~ \rho_x^2[\ln\circ\Gamma](a) ~\leq ~ 0{\,}, \qquad x>0{\,},\quad 0\leq a\leq 1{\,},
\end{equation}
where
\begin{equation}\label{eq:6Rho2Ln0G24}
\rho_x^2[\ln\circ\Gamma](a) ~=~ \ln\Gamma(x+a)-\ln\Gamma(x)-a\ln x.
\end{equation}
We can readily see that this double inequality is actually a simple application of Lemma~\ref{lemma:VarEpsIneq} to the log-gamma function with $p=1$. Its generalization to all the multiple $\log\Gamma$-type functions is then straightforward and we present it in the following theorem. We call it the \emph{generalized Wendel inequality}.

\begin{theorem}[Generalized Wendel's inequality]\label{thm:AsymBehSol}\index{Wendel's inequality!generalized|textbf}
Let $g$ lie in $\cD^p\cap\cK^p$ for some $p\in\N$ and let $\pm$ stand for $1$ or $-1$ according to whether $g$ lies in $\cK^p_+$ or $\cK^p_-${\,}. Let also $x>0$ be so that $g$ is $p$-convex or $p$-concave on $[x,\infty)$ and let $a\geq 0$. Then we have
\begin{eqnarray*}
0 ~\leq ~ \pm (-1)\,\varepsilon_{p+1}(a){\,}\rho_x^{p+1}[\Sigma g](a)
&\leq &  \pm (-1)\left|\tchoose{a-1}{p}\right|\left(\Delta^p\Sigma g(x+a)-\Delta^p\Sigma g(x)\right)\\
&\leq & \pm (-1)\,\lceil a\rceil\left|\tchoose{a-1}{p}\right|\Delta^pg(x),
\end{eqnarray*}
with equalities if $a\in\{0,1,\ldots,p\}$. In particular, $\rho_x^{p+1}[\Sigma g](a)\to 0$ as $x\to\infty$. If $p\geq 1$, we also have
\begin{eqnarray*}
0 ~\leq ~ \pm (-1)\,\varepsilon_p(a){\,}\rho_x^p[g](a)
&\leq &  \pm (-1)\left|\tchoose{a-1}{p-1}\right|\left(\Delta^{p-1}g(x+a)-\Delta^{p-1}g(x)\right)\\
&\leq & \pm (-1)\,\lceil a\rceil\left|\tchoose{a-1}{p-1}\right|\Delta^pg(x),
\end{eqnarray*}
with equalities if $a\in\{0,1,\ldots,p-1\}$. In particular, $\rho_x^p[g](a)\to 0$ as $x\to\infty$.
\end{theorem}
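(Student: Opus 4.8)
The plan is to obtain both chains of inequalities as direct applications of the key Lemma~\ref{lemma:VarEpsIneq}, with the roles of the base point and the increment variable interchanged relative to that lemma (there the function is $p$-convex on $[a,\infty)$ and the increment is $x$; here it is $p$-convex on $[x,\infty)$ and the increment is $a$), and with a sign reversal accounting for the factor $\pm(-1)$. The two sources of this reversal are that $\Sigma g$ sits in the $p$-convexity cone opposite to that of $g$ (Theorem~\ref{thm:exist}(b)) and that a function in $\cD^p\cap\cK^p_+$ (resp.\ $\cK^p_-$) sits in $\cK^{p-1}_-$ (resp.\ $\cK^{p-1}_+$) by Corollary~\ref{cor:dfmm7s}.

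For the first chain, I would apply Lemma~\ref{lemma:VarEpsIneq} to $f=\Sigma g$ at order $p$, taking its base point to be $x$ and its increment to be $a$. This is legitimate because $\Sigma g\in\cK^p$ by Proposition~\ref{prp:56GathZZPro8}(b), and since $g$ is $p$-convex (resp.\ $p$-concave) on $[x,\infty)$, the function $\Sigma g$ is $p$-concave (resp.\ $p$-convex) there by Theorem~\ref{thm:exist}(b); hence the intrinsic sign the lemma attaches to $\Sigma g$ is precisely $\pm(-1)$. The lemma then yields the first two inequalities verbatim once one uses $\Delta\Sigma g=g$, so that $\Delta^{p+1}\Sigma g=\Delta^p g$ inside the terminal sum. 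To reach the last inequality I would replace the telescoping sum $\sum_{j=0}^{\lceil a\rceil-1}\Delta^p g(x+j)$ by $\lceil a\rceil\,\Delta^p g(x)$: since $g$ is $p$-convex (resp.\ $p$-concave) on $[x,\infty)$, the function $\Delta^p g$ is increasing (resp.\ decreasing) there by Lemma~\ref{lemma:pCInc5}, so the quantity $S=\sum_{j=0}^{\lceil a\rceil-1}(\Delta^p g(x+j)-\Delta^p g(x))$ is $\geq 0$ (resp.\ $\leq 0$); as the factor $\pm(-1)\bigl|\tchoose{a-1}{p}\bigr|$ then carries the opposite sign to $S$, their product is $\leq 0$, which is exactly the last inequality.

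For the second chain (where $p\geq 1$), I would run the same argument one order lower, applying Lemma~\ref{lemma:VarEpsIneq} to $f=g$ at order $p-1$. Here $g\in\cK^{p-1}$ by Proposition~\ref{prop:MpDescFiltr}, and Corollary~\ref{cor:dfmm7s} guarantees that $g$ is $(p-1)$-concave (resp.\ $(p-1)$-convex) on $[x,\infty)$, which again produces the sign $\pm(-1)$. The terminal sum is now $\sum_{j=0}^{\lceil a\rceil-1}\Delta^p g(x+j)$ directly, and the identical monotonicity-plus-telescoping bound, using that $\Delta^p g$ is increasing (resp.\ decreasing) on $[x,\infty)$, yields the final inequality.

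Finally, the boundary equalities and the two limit statements are routine. When $a\in\{0,1,\dots,p\}$ (resp.\ $\{0,\dots,p-1\}$) we have $\varepsilon_{p+1}(a)=0$ (resp.\ $\varepsilon_p(a)=0$), so $\rho_x^{p+1}[\Sigma g](a)=0$ (resp.\ $\rho_x^p[g](a)=0$) by the last clause of Lemma~\ref{lemma:VarEpsIneq}, while the two outer members vanish because either the relevant binomial coefficient is zero or the $\Delta^p$-difference telescopes to zero; hence all three members coincide. The limits follow by the squeeze theorem: since $g\in\cD^p$, we have $\Delta^p g(x)\to 0$ as $x\to\infty$, so the right-hand member tends to $0$ and forces $\rho_x^{p+1}[\Sigma g](a)\to 0$ and $\rho_x^p[g](a)\to 0$. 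I expect the only real care to be needed in the sign bookkeeping---tracking the interplay of $\pm$, the $(-1)$, the vanishing of $\varepsilon_{p+1}(a)$, and the orientation of the monotonicity of $\Delta^p g$---rather than in any substantive analytic difficulty.
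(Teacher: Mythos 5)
Your proposal is correct and follows essentially the same route as the paper's proof: apply Lemma~\ref{lemma:VarEpsIneq} to $f=\Sigma g$ (with the cone flip supplied by Theorem~\ref{thm:exist}) and, for the second chain, to $f=g$ at order $p-1$ (with the flip supplied by Corollary~\ref{cor:dfmm7s}), then pass from the telescoped sum to $\lceil a\rceil\,\Delta^p g(x)$ via the monotonicity of $\Delta^p g$ from Lemma~\ref{lemma:pCInc5}. The only cosmetic difference is that the paper first normalizes by negating $g$ to assume $p$-convexity, whereas you carry the $\pm$ bookkeeping throughout; your treatment of the boundary equalities and the squeeze-theorem limits is likewise consistent with the paper.
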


\begin{proof}
Negating $g$ if necessary, we can assume that it is $p$-convex on $[x,\infty)$. By the existence Theorem~\ref{thm:exist}, the function $\Sigma g$ is then $p$-concave on $[x,\infty)$. By Lemma~\ref{lemma:pCInc5} and Proposition~\ref{prop:LMpGpLMp1D7}, the function $\Delta^p g$ is negative and increases to zero on $[x,\infty)$. Thus, for any $a\geq 0$ we have
$$
(-1)\sum_{j=0}^{\lceil a\rceil -1}\Delta^p g(x+j) ~\leq ~ (-1)\,\lceil a\rceil\Delta^pg(x).
$$
We then derive the first inequalities by applying Lemma~\ref{lemma:VarEpsIneq} to $f=\Sigma g$. Suppose now that $p\geq 1$. By Corollary~\ref{cor:dfmm7s}, we have that $g$ is $(p-1)$-concave on $[x,\infty)$. We then derive the remaining inequalities by applying Lemma~\ref{lemma:VarEpsIneq} to $f=g$.
\end{proof}

A symmetrized version of the generalized Wendel inequality can be easily obtained simply by taking the absolute value of each of its sides. This provides a coarsened, but simplified form of the generalized Wendel inequality. For instance, when $g(x)=\ln x$ and $p=1$ we then obtain the following inequality
\begin{equation}\label{eq:ti68a20add}
\big|\ln\Gamma(x+a)-\ln\Gamma(x)-a\ln x\big| ~\leq ~ |a-1|\,\ln\left(1+\frac{a}{x}\right), \qquad x>0{\,},~a\geq 0{\,},
\end{equation}
that is, in the multiplicative notation,
\begin{equation}\label{eq:ti68a20}
\left(1+\frac{a}{x}\right)^{-\left|a-1\right|} \leq ~ \frac{\Gamma(x+a)}{\Gamma(x){\,}x^a} ~\leq ~ \left(1+\frac{a}{x}\right)^{\left|a-1\right|}, \qquad x>0{\,},~a\geq 0{\,}.
\end{equation}
We then have the following immediate corollary, which provides a symmetrized version of the generalized Wendel inequality.

\begin{corollary}\label{cor:AsymBehSolCo}
Let $g$ lie in $\cD^p\cap\cK^p$ for some $p\in\N$. Let also $x>0$ be so that $g$ is $p$-convex or $p$-concave on $[x,\infty)$ and let $a\geq 0$. Then we have
$$
\left|\rho_x^{p+1}[\Sigma g](a)\right|
~\leq ~ \left|\tchoose{a-1}{p}\right|\left|\Delta^p\Sigma g(x+a)-\Delta^p\Sigma g(x)\right|
~\leq ~ \lceil a\rceil\left|\tchoose{a-1}{p}\right| |\Delta^pg(x)|,
$$
with equalities if $a\in\{0,1,\ldots,p\}$. In particular, $\rho_x^{p+1}[\Sigma g](a)\to 0$ as $x\to\infty$. If $p\geq 1$, we also have
$$
\left|\rho_x^p[g](a)\right|
~\leq ~  \left|\tchoose{a-1}{p-1}\right|\left|\Delta^{p-1}g(x+a)-\Delta^{p-1}g(x)\right|
~\leq ~ \lceil a\rceil\left|\tchoose{a-1}{p-1}\right| |\Delta^pg(x)|,
$$
with equalities if $a\in\{0,1,\ldots,p-1\}$. In particular, $\rho_x^p[g](a)\to 0$ as $x\to\infty$.
\end{corollary}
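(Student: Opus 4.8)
The plan is to read off Corollary~\ref{cor:AsymBehSolCo} from the generalized Wendel inequality (Theorem~\ref{thm:AsymBehSol}) by taking absolute values throughout its two chains of inequalities. So I would fix $g\in\cD^p\cap\cK^p$, a point $x>0$ on whose tail $[x,\infty)$ the function $g$ is $p$-convex or $p$-concave, and a number $a\geq 0$, exactly as in the hypotheses, and let $\pm$ denote the sign attached to $g$ lying in $\cK^p_+$ or $\cK^p_-$. Theorem~\ref{thm:AsymBehSol} then furnishes a chain $0\leq A\leq B\leq C$ of three nonnegative quantities, with $A=\pm(-1)\,\varepsilon_{p+1}(a)\,\rho_x^{p+1}[\Sigma g](a)$, with $B$ the product of $\pm(-1)\,\bigl|\tchoose{a-1}{p}\bigr|$ and $\Delta^p\Sigma g(x+a)-\Delta^p\Sigma g(x)$, and with $C=\pm(-1)\,\lceil a\rceil\,\bigl|\tchoose{a-1}{p}\bigr|\,\Delta^pg(x)$.

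The whole content of the proof is to recognize $A$, $B$, $C$ as exactly the three terms appearing in the statement of the corollary, which amounts to bookkeeping of the sign factors $\pm$, $(-1)$, and $\varepsilon_{p+1}(a)$. For $A$: the prefactor $\pm(-1)\varepsilon_{p+1}(a)$ lies in $\{-1,0,1\}$, so when it equals $\pm1$ the nonnegativity of $A$ forces $A=\bigl|\rho_x^{p+1}[\Sigma g](a)\bigr|$, while when $\varepsilon_{p+1}(a)=0$ (that is, $a\in\{0,1,\ldots,p\}$) the final clause of Lemma~\ref{lemma:VarEpsIneq} gives $\rho_x^{p+1}[\Sigma g](a)=0$, so the identity $A=\bigl|\rho_x^{p+1}[\Sigma g](a)\bigr|$ persists. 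For $B$: the fixed sign $\pm(-1)$ together with $B\geq0$ shows $\pm(-1)\bigl(\Delta^p\Sigma g(x+a)-\Delta^p\Sigma g(x)\bigr)\geq0$ whenever $\tchoose{a-1}{p}\neq0$, whence $B=\bigl|\tchoose{a-1}{p}\bigr|\,\bigl|\Delta^p\Sigma g(x+a)-\Delta^p\Sigma g(x)\bigr|$; the case $\tchoose{a-1}{p}=0$ is trivial since both sides vanish. For $C$: the proof of Theorem~\ref{thm:AsymBehSol} records that $\Delta^pg$ is negative when $g\in\cK^p_+$ and positive when $g\in\cK^p_-$, so $\pm(-1)\Delta^pg(x)=|\Delta^pg(x)|$ and $C=\lceil a\rceil\,\bigl|\tchoose{a-1}{p}\bigr|\,|\Delta^pg(x)|$. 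Substituting these three identifications into $0\leq A\leq B\leq C$ yields the first displayed inequality of the corollary.

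The supplementary assertions then follow immediately: equalities hold for $a\in\{0,1,\ldots,p\}$ since $A$ and $C$ vanish there, and $\rho_x^{p+1}[\Sigma g](a)\to0$ as $x\to\infty$ is already part of Theorem~\ref{thm:AsymBehSol}. For $p\geq1$ I would run the identical argument on the second chain of Theorem~\ref{thm:AsymBehSol}, replacing $\rho_x^{p+1}[\Sigma g]$, $\Delta^p\Sigma g$, and $\tchoose{a-1}{p}$ by $\rho_x^p[g]$, $\Delta^{p-1}g$, and $\tchoose{a-1}{p-1}$ respectively. There is no genuine obstacle: the only care required is the treatment of the degenerate cases $\varepsilon_{p+1}(a)=0$ and $\tchoose{a-1}{p}=0$, both of which are absorbed by the nonnegativity already guaranteed by Theorem~\ref{thm:AsymBehSol}.
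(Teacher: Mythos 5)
Your proposal is correct and is exactly the paper's route: the paper presents this corollary as immediate from Theorem~\ref{thm:AsymBehSol} by taking absolute values of each side, and your sign bookkeeping (using the nonnegativity of each term in the chain, the vanishing of $\rho_x^{p+1}[\Sigma g](a)$ when $\varepsilon_{p+1}(a)=0$, and the sign of $\Delta^p g$ on $[x,\infty)$) just makes that "immediate" step explicit. Note, if anything, your argument can be compressed further: since each of the three quantities in the theorem's chain is nonnegative, each equals its own absolute value, and the absolute value factors term by term, so no case analysis on the sign of $\Delta^p\Sigma g(x+a)-\Delta^p\Sigma g(x)$ or $\Delta^p g(x)$ is really needed.
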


\begin{example}\label{ex:gamma582}
Applying Theorem~\ref{thm:AsymBehSol} and Corollary~\ref{cor:AsymBehSolCo} to the function $g(x)=\ln x$, for which we have $p=1+\deg g = 1$ and $\Sigma g(x)=\ln\Gamma(x)$, we immediately retrieve the inequalities \eqref{eq:Orig9Wendel2}--\eqref{eq:ti68a20} and hence also the asymptotic equivalence \eqref{eq:StirAA4}. Further inequalities can actually be obtained by considering higher values of $p$. For instance, since $g$ also lies in $\cD^2\cap\cK^2$, we can set $p=2$ in Corollary~\ref{cor:AsymBehSolCo} and we then obtain the inequalities
\begin{eqnarray*}
\lefteqn{\left(1+\frac{1}{x}\right)^{{a\choose 2}}\left(1+\frac{a}{x}\right)^{-\left|{a-1\choose 2}\right|}\left(1+\frac{a}{x+1}\right)^{\left|{a-1\choose 2}\right|} ~\leq ~ \frac{\Gamma(x+a)}{\Gamma(x){\,}x^a}}\\
&\leq & \left(1+\frac{1}{x}\right)^{{a\choose 2}}\left(1+\frac{a}{x}\right)^{\left|{a-1\choose 2}\right|}\left(1+\frac{a}{x+1}\right)^{-\left|{a-1\choose 2}\right|}.
\end{eqnarray*}
Thus, we can see that the central function in these inequalities can always be ``sandwiched'' by finite products of powers of rational functions. For further inequalities involving this central function, see, e.g., Srivastava and Choi \cite[pp.\ 106--107]{SriCho12}.
\end{example}

\parag{Discrete version of the generalized Wendel inequality}\index{Wendel's inequality!generalized!discrete version} The restrictions to the natural integers of the generalized Wendel inequality and its symmetrized form are obtained by setting $x=n\in\N^*$ in the inequalities of Theorem~\ref{thm:AsymBehSol} and Corollary~\ref{cor:AsymBehSolCo}. In view of identity \eqref{eq:33ConvSig52}, the symmetrized forms then reduce to those of the existence Theorem~\ref{thm:exist}.

For instance, when $g(x)=\ln x$ and $p=1$, the symmetrized version of generalized Wendel's inequality is given in \eqref{eq:ti68a20add} while its discrete version can take the form
$$
|\ln\Gamma(x)-f_n^1[\ln](x)| ~\leq ~ |x-1|\,\ln\left(1+\frac{x}{n}\right), \qquad x>0,~n\in\N^*,
$$
where
$$
f_n^1[\ln](x) ~=~ \sum_{k=1}^{n-1}\ln k-\sum_{k=0}^{n-1}\ln(x+k)+x\ln n.
$$
This latter inequality clearly generalizes Gauss' limit \eqref{eq:GaussLimit42}, which simply expresses that
$$
\ln\Gamma(x) ~=~ \lim_{n\to\infty} f_n^1[\ln](x),\qquad x>0.
$$

\index{Wendel's inequality!generalized|)}

\section{The asymptotic constant}
\index{asymptotic constant|(}

We now introduce a new important concept that will play a key role in our theory, namely the \emph{asymptotic constant}. This concept will actually be used intensively throughout the rest of this book.

\begin{definition}[Asymptotic constant]\label{de:AsyConst632}
The \emph{asymptotic constant}\index{asymptotic constant|textbf} associated with a function $g\in\cC^0\cap\mathrm{dom}(\Sigma)$ is the number\label{p:sigmag}
\begin{equation}\label{eq:sigmagg86}
\sigma[g] ~=~ \int_0^1\Sigma g(t+1){\,}dt ~=~ \int_0^1(\Sigma g(t)+g(t)){\,}dt{\,}.
\end{equation}
\end{definition}

Using Definition~\ref{de:AsyConst632}, we can readily see that the following identity holds for any function $g$ lying in $\cC^0\cap\mathrm{dom}(\Sigma)$
\begin{equation}\label{eq:ds68ffds}
\int_x^{x+1}\Sigma g(t){\,}dt ~=~ \sigma[g]+\int_1^xg(t){\,}dt,\qquad x>0.
\end{equation}
Indeed, both sides are functions of $x$ that have the same derivative and the same value at $x=1$.

\begin{example}[Raabe's formula]\label{ex:Raab286}
Taking $g(x)=\ln x$ in \eqref{eq:sigmagg86}, we obtain
$$
\sigma[g] ~=~ \int_0^1\ln\Gamma(t+1){\,}dt ~=~ -1+\frac{1}{2}\,\ln(2\pi){\,}.
$$
Combining this result with \eqref{eq:ds68ffds}, we obtain the following more general identity
$$
\int_x^{x+1}\ln\Gamma(t){\,}dt ~=~ \frac{1}{2}\,\ln(2\pi)+x\ln x-x{\,},\qquad x>0.
$$
This identity is known by the name \emph{Raabe's formula}\index{Raabe's formula} (see, e.g., Cohen and Friedman \cite{CohFri08}). We will discuss this formula and investigate its analogues in Section~\ref{sec:Raabe448}.
\end{example}

Identity \eqref{eq:ds68ffds} will also play a very important role in this work. In this respect, it is clear that the integral
\begin{equation}\label{eq:ds68ffdsf}
\int_x^{x+1}\Sigma g(t){\,}dt,\qquad x>0,
\end{equation}
cancels out the cyclic variations of any $1$-periodic additive component of $\Sigma g$ in the sense that the function
$$
x ~\mapsto ~ \int_x^{x+1}\omega(t){\,}dt
$$
is constant for any $1$-periodic function $\omega\colon\R_+\to\R$. Thus, the integral \eqref{eq:ds68ffdsf} can be interpreted as the \emph{trend} of the function $\Sigma g$, just as a moving average enables one to decompose a time series into its trend and its seasonal variation. In this light, identity \eqref{eq:ds68ffds} simply tells us that the trend of the function $\Sigma g$ is precisely the antiderivative of $g$ (up to an additive constant).

Let us end this section with the following two technical results related to the asymptotic constant.

\begin{proposition}
Let $g_1$ and $g_2$ lie in $\cD^p\cap\cK^p$ for some $p\in\N$ and let $c_1,c_2\in\R$. If $c_1g_1+c_2g_2$ lies in $\cD^p\cap\cK^p$, then
$$
\sigma[c_1g_1+c_2g_2] ~=~ c_1\sigma[g_1]+c_2\sigma[g_2].
$$
Moreover, we have $\sigma[\boldsymbol{1}]=\frac{1}{2}$, where $\boldsymbol{1}\colon\R_+\to\R$ is the constant function $\boldsymbol{1}(x)=1$.
\end{proposition}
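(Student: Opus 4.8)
The plan is to derive both claims directly from the linearity of the map $\Sigma$ established in Proposition~\ref{prop:gStHa} together with the definition of the asymptotic constant in \eqref{eq:sigmagg86}. Since $\sigma$ is defined only on $\cC^0\cap\mathrm{dom}(\Sigma)$, I read the hypotheses as including that $g_1$ and $g_2$ (and hence $c_1g_1+c_2g_2$) are continuous, so that all three asymptotic constants appearing in the statement are well defined.

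For the additivity, I would first invoke Proposition~\ref{prop:gStHa}: because $g_1$, $g_2$, and $c_1g_1+c_2g_2$ all lie in $\cD^p\cap\cK^p$, we have $\Sigma(c_1g_1+c_2g_2)=c_1\Sigma g_1+c_2\Sigma g_2$. Substituting this identity into the definition $\sigma[g]=\int_0^1\Sigma g(t+1){\,}dt$ and using the linearity of the integral then gives
$$
\sigma[c_1g_1+c_2g_2] ~=~ \int_0^1\bigl(c_1\Sigma g_1(t+1)+c_2\Sigma g_2(t+1)\bigr){\,}dt ~=~ c_1\sigma[g_1]+c_2\sigma[g_2],
$$
which is the desired equality.

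For the value $\sigma[\boldsymbol{1}]$, the key step is to identify $\Sigma\boldsymbol{1}$ explicitly. I would observe that $\boldsymbol{1}$ lies in $\cD^1\cap\cK^1\subset\mathrm{dom}(\Sigma)$: indeed $\Delta\boldsymbol{1}=0\to 0$, and $\boldsymbol{1}$ is a polynomial of degree $0$, hence lies in $\cK^p_+\cap\cK^p_-$ for every $p$ by Corollary~\ref{cor:convCones48}. Now the function $f(x)=x-1$ is a polynomial of degree $1$, so it lies in $\cK^1$; moreover it satisfies $\Delta f=\boldsymbol{1}$ and vanishes at $x=1$. By the uniqueness assertion (a) of Proposition~\ref{prp:56GathZZPro8}, we therefore have $\Sigma\boldsymbol{1}(x)=x-1$. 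It then remains only to compute
$$
\sigma[\boldsymbol{1}] ~=~ \int_0^1\Sigma\boldsymbol{1}(t+1){\,}dt ~=~ \int_0^1 t{\,}dt ~=~ \frac{1}{2}{\,}.
$$

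I do not expect any genuine obstacle here: both parts are immediate consequences of results already in place, the linearity reducing to that of $\Sigma$ and of the integral, and the evaluation reducing to the identification of $\Sigma\boldsymbol{1}$ via uniqueness. The only point deserving a moment's care is checking that $\boldsymbol{1}\in\mathrm{dom}(\Sigma)$ (which fails at the level $p=0$ since $\boldsymbol{1}\notin\cD^0$, but holds at $p=1$), so that $\sigma[\boldsymbol{1}]$ is legitimately defined before the computation is carried out.
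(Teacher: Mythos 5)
Your proof is correct and takes essentially the same route as the paper: the linearity claim is read off from Proposition~\ref{prop:gStHa} combined with the definition \eqref{eq:sigmagg86} of $\sigma$, and the value $\sigma[\boldsymbol{1}]=\frac{1}{2}$ from the identification $\Sigma\boldsymbol{1}(x)=x-1$. The extra care you take (noting that $\boldsymbol{1}\in\cD^1\cap\cK^1$ but not $\cD^0$, and justifying $\Sigma\boldsymbol{1}=x-1$ via the uniqueness assertion of Proposition~\ref{prp:56GathZZPro8}) simply makes explicit details the paper leaves implicit.
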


\begin{proof}
The first part of the statement is an immediate consequence of Proposition~\ref{prop:gStHa}. Now, we clearly have $\Sigma\boldsymbol{1}=x-1$ and hence $\sigma[\boldsymbol{1}]=\frac{1}{2}$.
\end{proof}

\begin{proposition}\label{prop:SigTr62}
Let $g$ lie in $\cC^0\cap\mathrm{dom}(\Sigma)$, let $a\geq 0$, and let $h\colon\R_+\to\R$ be defined by the equation $h(x)=g(x+a)$ for $x>0$. Then
$$
\sigma[h] ~=~ \sigma[g]+\int_1^{a+1}g(t){\,}dt -\Sigma g(a+1).
$$
\end{proposition}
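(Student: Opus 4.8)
The plan is to reduce everything to the translation formula for $\Sigma$ together with the key identity \eqref{eq:ds68ffds}. First I would check that $\sigma[h]$ is well defined: since $g$ lies in $\cC^0\cap\mathrm{dom}(\Sigma)$, the translate $h(x)=g(x+a)$ is continuous, and Proposition~\ref{prop:gStHa223} shows that $h$ lies in $\cD^p\cap\cK^p$ (for the same $p$ as $g$), hence in $\mathrm{dom}(\Sigma)$. Moreover, that same proposition supplies the explicit formula
$$
\Sigma h(x) ~=~ \Sigma g(x+a)-\Sigma g(a+1),\qquad x>0,
$$
which is the only structural input I need.

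Next I would simply unwind the definition \eqref{eq:sigmagg86} of the asymptotic constant in the form $\sigma[h]=\int_0^1\Sigma h(t+1){\,}dt$, substitute the displayed expression for $\Sigma h$, and pull the constant term out of the integral:
$$
\sigma[h] ~=~ \int_0^1\bigl(\Sigma g(t+1+a)-\Sigma g(a+1)\bigr){\,}dt ~=~ \int_0^1\Sigma g(t+1+a){\,}dt-\Sigma g(a+1).
$$
The remaining integral is then handled by the change of variable $s=t+a+1$, which turns it into $\int_{a+1}^{a+2}\Sigma g(s){\,}ds$. At this point the whole statement follows from the trend identity \eqref{eq:ds68ffds} evaluated at $x=a+1$, namely
$$
\int_{a+1}^{a+2}\Sigma g(t){\,}dt ~=~ \sigma[g]+\int_1^{a+1}g(t){\,}dt.
$$
Combining the last two displays yields exactly $\sigma[h]=\sigma[g]+\int_1^{a+1}g(t){\,}dt-\Sigma g(a+1)$, as claimed.

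I do not anticipate any genuine obstacle here: the argument is a short computation whose only substantive ingredients are the translation formula of Proposition~\ref{prop:gStHa223} and the identity \eqref{eq:ds68ffds}, both already established. The single point requiring a little care is the bookkeeping of the change of variable together with the choice $x=a+1$ in \eqref{eq:ds68ffds}; everything else is routine. It is worth observing that \eqref{eq:ds68ffds} is really the crux, since it already encodes the antiderivative of $g$ as the trend of $\Sigma g$, so the formula for $\sigma[h]$ is essentially a restatement of that identity after translating the argument by $a$.
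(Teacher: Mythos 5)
Your proof is correct and follows essentially the same route as the paper's: both invoke Proposition~\ref{prop:gStHa223} for the translation formula $\Sigma h(x)=\Sigma g(x+a)-\Sigma g(a+1)$, integrate over $(0,1)$, change variables to obtain $\int_{a+1}^{a+2}\Sigma g(t)\,dt$, and conclude with identity \eqref{eq:ds68ffds}. The only difference is that you spell out the domain check and the change of variable explicitly, which the paper leaves implicit.
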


\begin{proof}
Using Proposition~\ref{prop:gStHa223} we obtain
$$
\sigma[h] ~=~ \int_0^1\Sigma g(t+a+1){\,}dt -\Sigma g(a+1) ~=~ \int_{a+1}^{a+2}\Sigma g(t){\,}dt -\Sigma g(a+1).
$$
We then get the result using \eqref{eq:ds68ffds}.
\end{proof}

\index{asymptotic constant|)}

\section{Generalized Binet's function}
\label{sec:6Asym4Cons6Bine}
\index{Binet's function!generalized|(}

The \emph{Binet function}\index{Binet's function|textbf} related to the log-gamma function is the function $J\colon\R_+\to\R$ defined by the equation (see, e.g., Cuyt {\em et al.} \cite[p.~224]{CuyVigVerWaaJon08})
\begin{equation}\label{eq:Binet5The}
J(x) ~=~ \ln\Gamma(x)-\frac{1}{2}\ln(2\pi)+x-\left(x-\frac{1}{2}\right)\ln x\qquad\text{for $x>0$}.
\end{equation}
Using identity \eqref{eq:6Rho2Ln0G24} and Raabe's formula (see Example~\ref{ex:Raab286}), we can easily provide the following integral form of Binet's function
$$
J(x) ~=~ -\int_0^1 \rho_x^2[\ln\circ\Gamma](t){\,}dt,\qquad x>0.
$$

This latter identity motivates the following definition, in which we introduce a generalization of Binet's function. Recall first that, for any $q\in\N$ and any $x>0$, the function $t\mapsto\rho_x^q[g](t)$ is continuous whenever so is $g$. In this case, since it also vanishes at $t=0$, it must be integrable on $(0,1)$.

\begin{definition}[Generalized Binet's function]
For any $g\in\cC^0$ and any $q\in\N$, we define the function $J^q[g]\colon\R_+\to\R$\label{p:Jqg} by the equation
\begin{equation}\label{eq:Binet643780}
J^q[g](x) ~=~ -\int_0^1\rho_x^q[g](t){\,}dt\qquad\text{for $x>0$}.
\end{equation}
We say that the function $J^q[g]$ is the \emph{generalized Binet function}\index{Binet's function!generalized|textbf}  associated with the function $g$ and the parameter $q$.
\end{definition}

Taking $g=\ln\circ\Gamma$ and $q=1+\deg g =2$ in identity \eqref{eq:Binet643780}, we thus simply retrieve the Binet function $J(x)=J^2[\ln\circ\Gamma](x)$ related to the log-gamma function, as defined in \eqref{eq:Binet5The}.

In the following two propositions, we collect a few immediate properties of the generalized Binet function. To this end, recall first that, for any $n\in\N$, the \emph{$n$th Gregory coefficient}\index{Gregory coefficients|textbf} (also called the \emph{$n$th Bernoulli number of the second kind}) is the number $G_n$ defined by the equation (see, e.g., \cite{Bla16,Bla17,BlaCop18,MerSprVer06})\label{p:Gn}
$$
G_n ~=~ \int_0^1\tchoose{t}{n}{\,}dt\qquad\text{for $n\geq 0$}.
$$
The first few values of $G_n$ are: $1, \frac{1}{2}, -\frac{1}{12}, \frac{1}{24}, -\frac{19}{720},\ldots$. These numbers are decreasing in absolute value and satisfy the equations
\begin{equation}\label{eq:Gr0Co36bis}
\sum_{n=1}^{\infty}|G_n| ~=~ 1\qquad\text{and}\qquad G_n ~=~ (-1)^{n-1}|G_n|\quad\text{for $n\geq 1$}.
\end{equation}

\begin{proposition}
Let $g\in\cC^0$ and $q\in\N$. Then, for any $x>0$, we have
\begin{equation}\label{eq:Binet64378}
J^q[g](x) ~=~ \sum_{j=0}^{q-1}G_j\Delta^jg(x)-\int_x^{x+1}g(t){\,}dt{\,}.
\end{equation}
In particular,
\begin{equation}\label{eq:Binet6141378}
\Delta J^q[g] = J^q[\Delta g]\qquad\text{and}\qquad J^{q+1}[g]-J^q[g] ~=~ G_q\,\Delta^q g.
\end{equation}
\end{proposition}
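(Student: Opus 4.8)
The plan is to unfold the definition \eqref{eq:Binet643780} of $J^q[g]$ by inserting the explicit formula \eqref{eq:deflambdapt} for $\rho_x^q[g]$, namely $\rho_x^q[g](t) = g(t+x) - \sum_{j=0}^{q-1}\binom{t}{j}\,\Delta^j g(x)$. Substituting this into the integral and splitting it into $q+1$ pieces (legitimate since $g\in\cC^0$ makes the integrand continuous on $[0,1]$), I would obtain
$$
J^q[g](x) ~=~ -\int_0^1 g(t+x)\,dt + \sum_{j=0}^{q-1}\Big(\int_0^1\binom{t}{j}\,dt\Big)\Delta^j g(x).
$$
The change of variable $u=t+x$ turns the first integral into $\int_x^{x+1}g(t)\,dt$, while each coefficient $\int_0^1\binom{t}{j}\,dt$ is exactly the Gregory coefficient $G_j$ by its defining equation. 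This yields \eqref{eq:Binet64378} at once; no genuine obstacle arises beyond the bookkeeping.

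For the first identity in \eqref{eq:Binet6141378}, the cleanest route is to return to the integral definition and apply the difference identity \eqref{eq:DeltaXLam00}, which gives $\rho_{x+1}^q[g](t) - \rho_x^q[g](t) = \rho_x^q[\Delta g](t)$. Hence
$$
\Delta J^q[g](x) ~=~ -\int_0^1\big(\rho_{x+1}^q[g](t)-\rho_x^q[g](t)\big)\,dt ~=~ -\int_0^1\rho_x^q[\Delta g](t)\,dt ~=~ J^q[\Delta g](x).
$$
Equivalently, one could apply $\Delta$ termwise to \eqref{eq:Binet64378}, using that $\Delta_x\int_x^{x+1}g(t)\,dt = \int_x^{x+1}\Delta g(t)\,dt$ and that $\sum_{j=0}^{q-1}G_j\,\Delta^{j+1}g$ is precisely the sum appearing in $J^q[\Delta g]$.

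For the second identity in \eqref{eq:Binet6141378}, I would simply subtract two instances of the already-proven formula \eqref{eq:Binet64378}: the integral terms cancel and the two sums differ by the single extra term $G_q\,\Delta^q g(x)$. Alternatively, straight from the definition one has $\rho_x^q[g](t)-\rho_x^{q+1}[g](t) = \binom{t}{q}\,\rho_x^q[g](q) = \binom{t}{q}\,\Delta^q g(x)$ by \eqref{eq:fng2} and \eqref{eq:LamEqDel2}, and integrating over $t\in(0,1)$ again produces $G_q\,\Delta^q g(x)$. The whole argument is elementary; the only point deserving a word of justification is the interchange of integration with the finite summation, which is immediate from the continuity of $g$ on the compact interval $[0,1]$.
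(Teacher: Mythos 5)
Your proposal is correct and follows essentially the same route as the paper: identity \eqref{eq:Binet64378} is obtained by unfolding \eqref{eq:deflambdapt} inside the definition \eqref{eq:Binet643780} and recognizing the Gregory coefficients, and the two identities in \eqref{eq:Binet6141378} then follow by elementary manipulations (the paper simply calls them trivial). Your extra details — the change of variable, the use of \eqref{eq:DeltaXLam00}, and the alternative derivation via \eqref{eq:fng2} and \eqref{eq:LamEqDel2} — are all sound and merely make explicit what the paper leaves to the reader.
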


\begin{proof}
Identity \eqref{eq:Binet64378} follows immediately from \eqref{eq:deflambdapt}. The other two identities are trivial.
\end{proof}

\begin{proposition}
Let $g$ lie in $\cC^0\cap\mathrm{dom}(\Sigma)$ and let $q\in\N$.  Then, for any $x>0$ and any $n\in\N^*$, we have
\begin{eqnarray}
J^{q+1}[\Sigma g](x) &=& \Sigma g(x)-\sigma[g]-\int_1^xg(t){\,}dt + \sum_{j=1}^qG_j\Delta^{j-1} g(x){\,},\label{eq:Binet64378S}\\
J^{q+1}[\Sigma g](n) &=& \int_0^1\left(f_n^q[g](t)- \Sigma g(t)\right){\,}dt{\,}.\label{eq:Binet64378S04}
\end{eqnarray}
In particular,
$$
\Delta J^{q+1}[\Sigma g] = J^{q+1}[g]{\,},\qquad J^{q+1}[c+\Sigma g] ~=~ J^{q+1}[\Sigma g],\quad c\in\R,
$$
and
$$
\sigma[g] ~=~ -J^1[\Sigma g](1).
$$
\end{proposition}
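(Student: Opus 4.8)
The plan is to derive everything from the already-proved closed form \eqref{eq:Binet64378} together with the defining properties of $\Sigma g$, namely $\Delta\Sigma g=g$ and $\Sigma g(1)=0$ (Proposition~\ref{prp:56GathZZPro8}(a)), and the trend identity \eqref{eq:ds68ffds}. No delicate estimate is required; the whole proposition reduces to bookkeeping with these known identities, together with the remark that the pure identity \eqref{eq:ff01flam} holds for \emph{every} $p\in\N$.

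First I would establish \eqref{eq:Binet64378S}. Applying \eqref{eq:Binet64378} with $g$ replaced by $\Sigma g$ and $q$ replaced by $q+1$ gives
$$
J^{q+1}[\Sigma g](x) ~=~ \sum_{j=0}^{q}G_j{\,}\Delta^j\Sigma g(x)-\int_x^{x+1}\Sigma g(t){\,}dt.
$$
Since $\Delta\Sigma g=g$, we have $\Delta^j\Sigma g=\Delta^{j-1}g$ for every $j\geq 1$, while the $j=0$ term equals $G_0{\,}\Sigma g(x)=\Sigma g(x)$ because $G_0=\int_0^1 dt=1$. Thus the sum becomes $\Sigma g(x)+\sum_{j=1}^{q}G_j{\,}\Delta^{j-1}g(x)$. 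Replacing the remaining integral by $\sigma[g]+\int_1^x g(t){\,}dt$ via \eqref{eq:ds68ffds} yields exactly \eqref{eq:Binet64378S}.

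Next I would prove \eqref{eq:Binet64378S04}. The key point is that identity \eqref{eq:ff01flam}, hence its specialization \eqref{eq:33ConvSig52} to $f=\Sigma g$, is a pure identity valid for every $p\in\N$, so I may take $p=q$ even though $q$ is unrelated to $\deg g$. This gives $\rho^{q+1}_n[\Sigma g](t)=\Sigma g(t)-f^q_n[g](t)$ for all $t$ and all $n\in\N^*$. Integrating over $t\in(0,1)$ and using the definition \eqref{eq:Binet643780} of $J^{q+1}[\Sigma g](n)$ as $-\int_0^1\rho^{q+1}_n[\Sigma g](t){\,}dt$ produces \eqref{eq:Binet64378S04} at once.

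Finally, the three ``in particular'' identities fall out immediately. The relation $\Delta J^{q+1}[\Sigma g]=J^{q+1}[g]$ follows from the first identity in \eqref{eq:Binet6141378} applied to $\Sigma g$, using $\Delta\Sigma g=g$. The invariance $J^{q+1}[c+\Sigma g]=J^{q+1}[\Sigma g]$ follows from the linearity of $J^{q+1}$ in its argument (inherited from the linearity of $\rho^{q+1}_x$ in \eqref{eq:deflambdapt}) together with $J^{q+1}[c]=0$, which holds because $\rho^{q+1}_x[c]\equiv 0$ for a constant function $c$. And setting $q=0$, $x=1$ in \eqref{eq:Binet64378S}, where the sum is empty, $\Sigma g(1)=0$, and $\int_1^1 g(t){\,}dt=0$, gives $J^1[\Sigma g](1)=-\sigma[g]$. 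There is no real obstacle here; the only point requiring a moment's care is flagging that \eqref{eq:33ConvSig52} may legitimately be invoked with the arbitrary parameter $q$, precisely because \eqref{eq:ff01flam} is an identity holding for every $p\in\N$.
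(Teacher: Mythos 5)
Your proposal is correct and follows essentially the same route as the paper: identity \eqref{eq:Binet64378S} from \eqref{eq:Binet64378} (applied to $\Sigma g$ with order $q+1$) combined with \eqref{eq:ds68ffds}, identity \eqref{eq:Binet64378S04} from \eqref{eq:33ConvSig52} combined with the definition \eqref{eq:Binet643780}, and the remaining identities as routine consequences. Your explicit remark that \eqref{eq:33ConvSig52} may be invoked with arbitrary $q$ because \eqref{eq:ff01flam} is a pure identity is a point the paper leaves implicit, but it is the same argument.
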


\begin{proof}
Identity \eqref{eq:Binet64378S} follows from \eqref{eq:ds68ffds} and \eqref{eq:Binet64378}. Identity \eqref{eq:Binet64378S04} follows from \eqref{eq:33ConvSig52} and \eqref{eq:Binet643780}. The remaining identities are trivial.
\end{proof}

As we will see in the rest of this book, many subsequent definitions and results can be expressed in terms of the generalized Binet function.

\index{Binet's function!generalized|)}

\section{Generalized Stirling's formula}
\label{sec:6Gen4St2Fo0}
\index{Stirling's formula!generalized|(}

Interestingly, the Binet function\index{Binet's function} $J(x)=J^2[\Sigma\ln](x)$ defined in \eqref{eq:Binet5The} clearly satisfies the following identity (compare with Artin \cite[p.~24]{Art15})
$$
\Gamma(x) ~=~ \sqrt{2\pi}{\,}x^{x-\frac{1}{2}}{\,}e^{-x+J(x)}
$$
and hence Stirling's formula \eqref{eq:StirAA5} simply states that $J(x)\to 0$ as $x\to\infty$. This observation seems to reveal a way to find a counterpart of Stirling's formula for any continuous multiple $\log\Gamma$-type function. In fact, we only need to show that the function $J^{p+1}[\Sigma g]$ vanishes at infinity whenever $g$ lies in $\cC^0\cap\cD^p\cap\cK^p$ for some $p\in\N$. In the next theorem and its corollary, we establish this fact by simply integrating each side of the generalized Wendel inequality and its symmetrized version on $a\in (0,1)$.

Let us first define the sequence $n\mapsto\overline{G}_n$ by the equations
$$
\overline{G}_n ~=~ 1-\sum_{j=1}^n|G_j| ~=~ \sum_{j=n+1}^{\infty}|G_j|\qquad\text{for $n\in\N$}.\label{p:bGn}
$$
In view of \eqref{eq:Gr0Co36bis}, we see that the sequence $n\mapsto\overline{G}_n$ decreases to zero. Its first values are: $1, \frac{1}{2}, \frac{5}{12}, \frac{3}{8}, \frac{251}{720},\ldots$. Moreover, from the straightforward identity (see, e.g., Graham {\em et al.} \cite[p.~165]{GraKnuPat94})
$$
(-1)^n\tchoose{t-1}{n} ~=~ 1-\sum_{j=1}^n(-1)^{j-1}\tchoose{t}{j}{\,},
$$
we easily derive
\begin{equation}\label{eq:GintF4}
\int_0^1\left|\tchoose{t-1}{n}\right|{\,}dt ~=~ (-1)^n\int_0^1\tchoose{t-1}{n}{\,}dt ~=~ \left|\int_0^1\tchoose{t-1}{n}{\,}dt\right| ~=~ \overline{G}_n{\,}.
\end{equation}

We now have the following two results, which immediately follow from Theorem~\ref{thm:AsymBehSol}, Corollary~\ref{cor:AsymBehSolCo}, and identities \eqref{eq:GintF4}.

\begin{theorem}\label{thm:6GenStFo0BaIneq}
Let $g$ lie in $\cC^0\cap\cD^p\cap\cK^p$ for some $p\in\N$ and let $\pm$ stand for $1$ or $-1$ according to whether $g$ lies in $\cK^p_+$ or $\cK^p_-${\,}. Let also $x>0$ be so that $g$ is $p$-convex or $p$-concave on $[x,\infty)$. Then we have
\begin{eqnarray*}
0 ~\leq ~ \pm (-1)^p{\,}J^{p+1}[\Sigma g](x)
&\leq &  \pm {\,}(-1)^{p+1}\int_0^1\tchoose{t-1}{p}\left(\Delta^p\Sigma g(x+t)-\Delta^p\Sigma g(x)\right)dt\\
&\leq & \pm{\,}(-1)\,\overline{G}_p\,\Delta^pg(x).
\end{eqnarray*}
In particular, $J^{p+1}[\Sigma g](x)\to 0$ as $x\to\infty$. If $p\geq 1$, we also have
\begin{eqnarray*}
0 ~\leq ~ \pm (-1)^{p+1}{\,}J^p[g](x)
&\leq &  \pm {\,}(-1)^p\int_0^1\tchoose{t-1}{p-1}\left(\Delta^{p-1}g(x+t)-\Delta^{p-1}g(x)\right)dt\\
&\leq & \pm{\,}(-1)\,\overline{G}_{p-1}\,\Delta^pg(x).
\end{eqnarray*}
In particular, $J^p[g](x)\to 0$ as $x\to\infty$.
\end{theorem}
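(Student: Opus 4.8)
The plan is to integrate the two chains of inequalities from the generalized Wendel inequality (Theorem~\ref{thm:AsymBehSol}) term by term over $a\in(0,1)$, and then to read off the three resulting quantities as, respectively, a value of the generalized Binet function, the stated integral, and a multiple of $\overline{G}_p\,\Delta^pg(x)$. Since each chain holds pointwise for every $a\geq 0$, in particular for $a\in(0,1)$, and integration preserves the ordering $0\leq A(a)\leq B(a)\leq C(a)$, the integrated inequalities follow at once; the only real work is the sign bookkeeping.

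First I would pin down the signs on the interval $(0,1)$. For $a\in(0,1)$ the product $a^{\underline{p+1}}=a(a-1)\cdots(a-p)$ has exactly $p$ negative factors, so $\varepsilon_{p+1}(a)=(-1)^p$ is constant on $(0,1)$; likewise $\lceil a\rceil=1$ and $\left|\tchoose{a-1}{p}\right|=(-1)^p\tchoose{a-1}{p}$. With these in hand, integrating the leftmost term and using the definition \eqref{eq:Binet643780} of $J^{p+1}[\Sigma g]$ (namely $\int_0^1\rho_x^{p+1}[\Sigma g](a)\,da=-J^{p+1}[\Sigma g](x)$) converts the left side into $\pm(-1)^p J^{p+1}[\Sigma g](x)$. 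Integrating the middle term and substituting $\left|\tchoose{a-1}{p}\right|=(-1)^p\tchoose{a-1}{p}$ produces the stated integral with prefactor $\pm(-1)^{p+1}$, after renaming $a$ to $t$. Integrating the rightmost term, the factor $\Delta^pg(x)$ comes out of the integral and $\int_0^1\left|\tchoose{a-1}{p}\right|\,da=\overline{G}_p$ by \eqref{eq:GintF4}, giving $\pm(-1)\overline{G}_p\,\Delta^pg(x)$. This establishes the first chain.

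For the second chain (when $p\geq 1$) I would repeat the argument starting from the second inequality of Theorem~\ref{thm:AsymBehSol}, now with $\varepsilon_p(a)=(-1)^{p-1}$ on $(0,1)$ (the product $a^{\underline{p}}$ has $p-1$ negative factors), $\left|\tchoose{a-1}{p-1}\right|=(-1)^{p-1}\tchoose{a-1}{p-1}$, and $\int_0^1\left|\tchoose{a-1}{p-1}\right|\,da=\overline{G}_{p-1}$ by \eqref{eq:GintF4}. The same steps then yield $\pm(-1)^{p+1}J^p[g](x)$ on the left, the stated integral with prefactor $\pm(-1)^p$ in the middle, and $\pm(-1)\overline{G}_{p-1}\,\Delta^pg(x)$ on the right.

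Finally, the two ``in particular'' statements follow by the squeeze theorem: since $g$ lies in $\cD^p\cap\cK^p=\cD^p_{\R}\cap\cK^p$ by Proposition~\ref{prop:ClClIntg}, we have $\Delta^pg(x)\to 0$ as $x\to\infty$, so the rightmost bound in each chain tends to $0$, forcing $J^{p+1}[\Sigma g](x)\to 0$ and $J^p[g](x)\to 0$. I expect no serious obstacle here; the main point requiring care is the sign bookkeeping with $\varepsilon_{p+1}(a)$, $\varepsilon_p(a)$, and the absolute values of the binomial coefficients, all of which must be shown to be constant on $(0,1)$ before integrating so that the monotonicity of the integral can be applied cleanly.
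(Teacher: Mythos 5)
Your proposal is correct and follows essentially the same route as the paper: the paper obtains this theorem precisely by integrating the generalized Wendel inequality (Theorem~\ref{thm:AsymBehSol}) over the unit interval, invoking the definition \eqref{eq:Binet643780} of the generalized Binet function and the identity \eqref{eq:GintF4}. Your sign bookkeeping ($\varepsilon_{p+1}(a)=(-1)^p$, $\varepsilon_p(a)=(-1)^{p-1}$, $\lceil a\rceil=1$ on $(0,1)$) and the squeeze argument via Proposition~\ref{prop:ClClIntg} for the limits are exactly right.
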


\begin{corollary}\label{cor:6GenStFo0BaIneqCo}
Let $g$ lie in $\cC^0\cap\cD^p\cap\cK^p$ for some $p\in\N$. Let also $x>0$ be so that $g$ is $p$-convex or $p$-concave on $[x,\infty)$. Then we have
$$
\left|J^{p+1}[\Sigma g](x)\right|
~\leq ~  \left|\int_0^1\tchoose{t-1}{p}\left(\Delta^p\Sigma g(x+t)-\Delta^p\Sigma g(x)\right)dt\right|
~\leq ~ \overline{G}_p{\,}|\Delta^pg(x)|.
$$
In particular, $J^{p+1}[\Sigma g](x)\to 0$ as $x\to\infty$. If $p\geq 1$, we also have
$$
\left|J^p[g](x)\right|
~\leq ~  \left|\int_0^1\tchoose{t-1}{p-1}\left(\Delta^{p-1}g(x+t)-\Delta^{p-1}g(x)\right)dt\right|
~\leq ~ \overline{G}_{p-1}{\,}|\Delta^pg(x)|.
$$
In particular, $J^p[g](x)\to 0$ as $x\to\infty$.
\end{corollary}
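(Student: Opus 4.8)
The plan is to read the corollary as the absolute-value version of Theorem~\ref{thm:6GenStFo0BaIneq}, from which it follows almost immediately. In that theorem the three quantities $\pm(-1)^p{\,}J^{p+1}[\Sigma g](x)$, the signed integral $\pm(-1)^{p+1}\int_0^1\tchoose{t-1}{p}(\Delta^p\Sigma g(x+t)-\Delta^p\Sigma g(x)){\,}dt$, and $\pm(-1){\,}\overline{G}_p{\,}\Delta^pg(x)$ are all nonnegative and arranged in increasing order, where $\pm$ is $+$ or $-$ according as $g\in\cK^p_+$ or $g\in\cK^p_-$. Since each is a fixed sign times the corresponding unsigned expression, taking absolute values throughout strips the common sign and leaves precisely the chain $|J^{p+1}[\Sigma g](x)|\leq\big|\int_0^1\tchoose{t-1}{p}(\cdots){\,}dt\big|\leq\overline{G}_p{\,}|\Delta^pg(x)|$ asserted in the corollary.

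For a self-contained argument I would instead work from the definition \eqref{eq:Binet643780}, $J^{p+1}[\Sigma g](x)=-\int_0^1\rho_x^{p+1}[\Sigma g](t){\,}dt$, and from the symmetrized Wendel bounds of Corollary~\ref{cor:AsymBehSolCo}. The one point that needs care is that the middle term of the corollary is the absolute value of an integral rather than an integral of absolute values; this is reconciled by two sign-constancy observations on $(0,1)$. First, $\varepsilon_{p+1}(t)=(-1)^p$ there, so by Lemma~\ref{lemma:VarEpsIneq} the integrand $t\mapsto\rho_x^{p+1}[\Sigma g](t)$ keeps a constant sign and $|J^{p+1}[\Sigma g](x)|=\int_0^1|\rho_x^{p+1}[\Sigma g](t)|{\,}dt$. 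Second, $\tchoose{t-1}{p}$ has sign $(-1)^p$ on $(0,1)$ while $\Delta^p\Sigma g(x+t)-\Delta^p\Sigma g(x)$ has constant sign there (as $\Delta^p\Sigma g$ is monotone on $[x,\infty)$ by Lemma~\ref{lemma:pCInc5}), so their product keeps a constant sign and the middle term equals $\int_0^1\big|\tchoose{t-1}{p}\big|\,|\Delta^p\Sigma g(x+t)-\Delta^p\Sigma g(x)|{\,}dt$.

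With this common expression in hand, both inequalities become pointwise. The first bound of Corollary~\ref{cor:AsymBehSolCo}, $|\rho_x^{p+1}[\Sigma g](t)|\leq\big|\tchoose{t-1}{p}\big|\,|\Delta^p\Sigma g(x+t)-\Delta^p\Sigma g(x)|$, yields the first inequality after integration; its second bound, together with $\lceil t\rceil=1$ on $(0,1)$ and identity \eqref{eq:GintF4}, $\int_0^1|\tchoose{t-1}{p}|{\,}dt=\overline{G}_p$, yields the second. The case $p\geq 1$ is entirely parallel: start from $J^p[g](x)=-\int_0^1\rho_x^p[g](t){\,}dt$, use the $\rho_x^p[g]$ chain of Corollary~\ref{cor:AsymBehSolCo}, and replace $\overline{G}_p$ by $\overline{G}_{p-1}$. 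Finally, the limits $J^{p+1}[\Sigma g](x)\to 0$ and $J^p[g](x)\to 0$ follow from the right-hand bounds, since $g\in\cD^p\cap\cK^p=\cD^p_{\R}\cap\cK^p$ by Proposition~\ref{prop:ClClIntg} forces $\Delta^pg(x)\to 0$ as $x\to\infty$. There is no real obstacle here; the only subtlety is the bookkeeping of signs just described.
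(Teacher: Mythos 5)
Your proposal is correct and follows the paper's own route: the corollary is obtained from Theorem~\ref{thm:6GenStFo0BaIneq} (itself the integrated Wendel inequality) by taking absolute values of each single-signed side, together with identity \eqref{eq:GintF4}. Your additional self-contained argument — using the constancy of sign of $\rho_x^{p+1}[\Sigma g](t)$ and of $\tchoose{t-1}{p}\left(\Delta^p\Sigma g(x+t)-\Delta^p\Sigma g(x)\right)$ on $(0,1)$ to move the absolute value inside the integral — merely makes explicit the sign bookkeeping the paper leaves implicit, and is not a different method.
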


Both Theorem~\ref{thm:6GenStFo0BaIneq} and Corollary~\ref{cor:6GenStFo0BaIneqCo} state that $J^{p+1}[\Sigma g]$ vanishes at infinity whenever $g$ lies in $\cC^0\cap\cD^p\cap\cK^p$ for some $p\in\N$. This result is precisely the analogue of Stirling's formula for all the continuous multiple $\log\Gamma$-type functions. As it is one of the central results of our theory, we state it explicitly in the following theorem. We call it the \emph{generalized Stirling formula}. We also include the property that $J^p[g]$ vanishes at infinity.

\begin{theorem}[Generalized Stirling's formula]\label{thm:dgf7dds}\index{Stirling's formula!generalized|textbf}
Let $g$ lie in $\cC^0\cap\cD^p\cap\cK^p$ for some $p\in\N$. Then both functions $J^{p+1}[\Sigma g]$ and $J^p[g]$ vanish at infinity. More precisely, we have
\begin{equation}\label{eq:dgf7dds}
\Sigma g(x) -\int_1^x g(t){\,}dt +\sum_{j=1}^pG_j\Delta^{j-1}g(x) ~\to ~ \sigma[g]\qquad\text{as $x\to\infty$}
\end{equation}
and
\begin{equation}\label{eq:dgf7ddsp1}
\int_x^{x+1}g(t){\,}dt -\sum_{j=0}^{p-1}G_j\Delta^jg(x) ~\to ~ 0\qquad\text{as $x\to\infty$}{\,}.
\end{equation}
\end{theorem}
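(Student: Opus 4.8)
The plan is to recognize that the substance of the result has already been delivered by Theorem~\ref{thm:6GenStFo0BaIneq} and Corollary~\ref{cor:6GenStFo0BaIneqCo}, so that the present statement reduces to two short steps: (i) reading off the vanishing at infinity of $J^{p+1}[\Sigma g]$ and $J^p[g]$ from those results, and (ii) rewriting each of these two facts in the explicit closed forms supplied by \eqref{eq:Binet64378} and \eqref{eq:Binet64378S}. I would therefore organize the argument around these two steps.

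For the first step, since $g\in\cK^p$ there is some $x_0>0$ such that $g$ is $p$-convex or $p$-concave on $[x_0,\infty)$, so for every $x\geq x_0$ Corollary~\ref{cor:6GenStFo0BaIneqCo} applies and gives $|J^{p+1}[\Sigma g](x)|\leq\overline{G}_p\,|\Delta^pg(x)|$; because $g\in\cD^p$ we have $\Delta^pg(x)\to 0$, and the squeeze theorem yields $J^{p+1}[\Sigma g](x)\to 0$. For $J^p[g]$ with $p\geq 1$ the same corollary provides the companion bound $\overline{G}_{p-1}\,|\Delta^pg(x)|$, whence $J^p[g](x)\to 0$ in the same manner. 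The one case outside the reach of that corollary is $J^0[g]$ when $p=0$; here I would argue directly that $g\in\cD^0\cap\cK^0$ forces $g$ to be eventually monotone with $g(x)\to 0$, so that $J^0[g](x)=-\int_x^{x+1}g(t)\,dt\to 0$. (Equivalently, one can treat all $p$ uniformly via \eqref{eq:Binet6141378}: $J^{p+1}[g]=\Delta J^{p+1}[\Sigma g]\to 0$ since $J^{p+1}[\Sigma g]\to 0$, and then $J^p[g]=J^{p+1}[g]-G_p\,\Delta^pg\to 0$ because $\Delta^pg\to 0$.)

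For the second step I would simply substitute $q=p$ into the two closed forms already at hand. Identity \eqref{eq:Binet64378S} reads $J^{p+1}[\Sigma g](x)=\Sigma g(x)-\sigma[g]-\int_1^xg(t)\,dt+\sum_{j=1}^pG_j\,\Delta^{j-1}g(x)$, so the limit $J^{p+1}[\Sigma g](x)\to 0$ is precisely the asserted convergence \eqref{eq:dgf7dds} to $\sigma[g]$. Likewise \eqref{eq:Binet64378} gives $J^p[g](x)=\sum_{j=0}^{p-1}G_j\,\Delta^jg(x)-\int_x^{x+1}g(t)\,dt$, so $J^p[g](x)\to 0$ is exactly \eqref{eq:dgf7ddsp1}. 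This completes the translation from the Binet-function language to the two displayed limits.

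The reassuring point is that there is essentially no hard analytic obstacle remaining: all the convergence is inherited from the generalized Wendel inequality through Corollary~\ref{cor:6GenStFo0BaIneqCo}, and the rest is bookkeeping with the explicit formulas for $J^q[g]$ and $J^{q+1}[\Sigma g]$. The only spot demanding a moment's care is the degenerate endpoint $p=0$ for $J^p[g]$, which lies outside the range $p\geq 1$ of the second bound in Corollary~\ref{cor:6GenStFo0BaIneqCo} and so must be handled either by the direct observation that $g(x)\to 0$ or by the uniform argument via \eqref{eq:Binet6141378}. I expect the final write-up to be only a few lines long.
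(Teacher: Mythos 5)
Your proposal is correct and follows essentially the same route as the paper's own proof: the vanishing of $J^{p+1}[\Sigma g]$ and $J^p[g]$ is read off from the Wendel-inequality-based bounds (the paper cites Theorem~\ref{thm:6GenStFo0BaIneq}, you cite its immediate corollary, Corollary~\ref{cor:6GenStFo0BaIneqCo}), the degenerate case $J^0[g]$ when $p=0$ is handled by the same direct monotonicity estimate $|J^0[g](x)|\leq |g(x)|\to 0$, and the two displayed limits are then obtained by substituting $q=p$ into \eqref{eq:Binet64378} and \eqref{eq:Binet64378S}. Your alternative uniform treatment of the endpoint via $J^{p+1}[g]=\Delta J^{p+1}[\Sigma g]$ and $J^p[g]=J^{p+1}[g]-G_p\,\Delta^pg$ from \eqref{eq:Binet6141378} is a valid minor variant but does not change the substance of the argument.
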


\begin{proof}
By Theorem~\ref{thm:6GenStFo0BaIneq}, the functions $J^{p+1}[\Sigma g]$ and $J^p[g]$ vanish at infinity when $p\geq 0$ and $p\geq 1$, respectively. The function $J^p[g]$ also vanishes at infinity  when $p=0$; indeed, in this case $|g(x)|$ eventually decreases to zero and we have
$$
|J^0[g](x)| ~=~ \left|\int_0^1g(x+t){\,}dt\right| ~\leq ~ |g(x)| ~\to ~ 0\qquad\text{as $x\to\infty$}.
$$
Formulas~\eqref{eq:dgf7dds} and \eqref{eq:dgf7ddsp1} then immediately follow from \eqref{eq:Binet64378} and \eqref{eq:Binet64378S}.
\end{proof}

The generalized Stirling formula \eqref{eq:dgf7dds} is actually the highlight of this chapter. It enables one to investigate the asymptotic behavior of the function $\Sigma g$ for large values of its argument. It also justifies the name ``asymptotic constant''\index{asymptotic constant} given to the quantity $\sigma[g]$ introduced in Definition~\ref{de:AsyConst632}. Moreover, combining \eqref{eq:convRes79} with \eqref{eq:dgf7dds}, we immediately derive the asymptotic behavior of $\Sigma g(x+a)$ for any $a\geq 0$. We also observe that alternative formulations of \eqref{eq:dgf7dds} in the case when $p=1$ were established by Krull~\cite[p.~368]{Kru48} and later by Webster \cite[Theorem 6.3]{Web97b}.

In the special case when $g$ lies in $\cD^{-1}\cap\cK^0$, the generalized Stirling formula and the asymptotic constant take very special forms.\index{asymptotic constant} We present them in the following proposition.

\begin{proposition}\label{prop:diffzz0}
If $g$ lies in $\cD^{-1}\cap\cK^0$, then we have
\begin{equation}\label{eq:diffzz092}
\Sigma g(x) ~\to ~ \sum_{k=1}^{\infty}g(k)\qquad\text{as $x\to\infty$}.
\end{equation}
If, in addition, we have $g\in\cC^0$, then $g$ is integrable at infinity and
$$
\sigma[g] ~=~ \sum_{k=1}^{\infty}g(k)-\int_1^{\infty}g(t){\,}dt.
$$
\end{proposition}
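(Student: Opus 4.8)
The plan is to realize $\Sigma g$ as the eventually monotone solution to $\Delta f=g$ normalized by $\Sigma g(1)=0$, and then to read off its behavior at infinity from the results already established for summable data. First I would note that $g\in\cD^{-1}\cap\cK^0$ means precisely $g\in\cD^{-1}_{\N}\cap\cK^0$ by \eqref{eq:d93n621ff}, and since $\cD^{-1}_{\N}\subset\cD^0_{\N}$ the function $g$ lies in $\cD^0\cap\cK^0\subset\mathrm{dom}(\Sigma)$; hence $\Sigma g$ is defined, lies in $\cK^0$, and vanishes at $1$ by Proposition~\ref{prp:56GathZZPro8}.

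For the limit \eqref{eq:diffzz092} I would simply apply the uniqueness Theorem~\ref{thm:uniqzz0} with $f=\Sigma g$: this function is a solution to $\Delta f=g$ lying in $\cK^0$, so it has a finite limit $(\Sigma g)(\infty)$ at infinity and satisfies $\Sigma g(x)=(\Sigma g)(\infty)-\sum_{k=0}^{\infty}g(x+k)$. Evaluating this identity at $x=1$ and using $\Sigma g(1)=0$ gives $(\Sigma g)(\infty)=\sum_{k=0}^{\infty}g(1+k)=\sum_{k=1}^{\infty}g(k)$, which is exactly \eqref{eq:diffzz092}.

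For the second part, assume in addition $g\in\cC^0$. Integrability at infinity I would obtain from the classical integral test: since $\sum_k g(k)$ converges we have $g(k)\to 0$, and since $g$ is eventually monotone (negating $g$ if necessary to make it eventually decreasing) it follows that $g(x)\to 0$ and that $g$ is eventually nonnegative, so the convergence of $\sum_{k}g(k)$ forces the convergence of $\int_1^{\infty}g(t)\,dt$. To compute $\sigma[g]$ I would then let $x\to\infty$ in the key identity \eqref{eq:ds68ffds}, namely $\int_x^{x+1}\Sigma g(t)\,dt=\sigma[g]+\int_1^x g(t)\,dt$. The right-hand side tends to $\sigma[g]+\int_1^{\infty}g(t)\,dt$, while on the left, because $\Sigma g(t)\to\sum_{k=1}^{\infty}g(k)$ the sliding average $\int_x^{x+1}\Sigma g(t)\,dt$ tends to the same limit $\sum_{k=1}^{\infty}g(k)$; solving for $\sigma[g]$ yields the stated formula.

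The steps are all short, and the main (minor) obstacle is simply making the two limiting arguments rigorous: justifying that the average $\int_x^{x+1}\Sigma g$ inherits the limit of $\Sigma g$ (an immediate $\varepsilon$-estimate since $\Sigma g$ converges at infinity), and confirming the integral-test comparison uniformly across the case distinction between $\cK^0_+$ and $\cK^0_-$. Neither presents any real difficulty.
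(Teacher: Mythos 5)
Your proof is correct, and its first half is essentially the paper's argument in a different dress: the paper obtains the representation $\Sigma g(x)=\sum_{k=1}^{\infty}g(k)-\sum_{k=0}^{\infty}g(x+k)$ directly from the definition of $\Sigma$ (the sequence $f^0_n[g]$) together with the existence Theorem~\ref{thm:existzz0}, whereas you apply the uniqueness Theorem~\ref{thm:uniqzz0} to $f=\Sigma g$ and then pin down $f(\infty)$ by evaluating the series identity at $x=1$; both routes rest on the same summable-case results of Section~3.2. Where you genuinely diverge is the computation of $\sigma[g]$: the paper invokes the generalized Stirling formula \eqref{eq:dgf7dds} with $p=0$, which in one stroke gives both that $\int_1^x g(t){\,}dt=\Sigma g(x)-\sigma[g]+o(1)$ converges (hence $g$ is integrable at infinity) and the stated value of $\sigma[g]$. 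You instead use only the elementary identity \eqref{eq:ds68ffds} plus the observation that the unit average $\int_x^{x+1}\Sigma g(t){\,}dt$ inherits the limit of $\Sigma g$ — in effect re-deriving the $p=0$ case of the Stirling formula on the fly, which is a perfectly sound and arguably more self-contained route. One remark: this makes your integral-test step redundant. Since the left side of \eqref{eq:ds68ffds} converges as $x\to\infty$, so does $\int_1^x g(t){\,}dt$, and that is precisely integrability of $g$ at infinity; the separate monotonicity/sign analysis (with its case distinction between $\cK^0_+$ and $\cK^0_-$) buys you nothing you do not already get for free.
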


\begin{proof}
By definition of the map $\Sigma$, we have
$$
\Sigma g(x) ~=~ \sum_{k=1}^{\infty}g(k)-\sum_{k=0}^{\infty}g(x+k),\qquad x>0.
$$
where the second series tends to zero as $x\to\infty$ by Theorem~\ref{thm:existzz0}. The claimed expression for $\sigma[g]$ then immediately follows from formula \eqref{eq:dgf7dds}.
\end{proof}

\begin{example}\label{ex:StriCons}
Let us apply our results to the concave function $g(x)=\ln x$ with $p=1$. Using \eqref{eq:Binet64378} and \eqref{eq:Binet64378S}, we first obtain
\begin{eqnarray*}
J^2[\ln\circ\Gamma](x) &=& J(x) ~=~ \ln\Gamma(x)-\frac{1}{2}\ln(2\pi)+x-\left(x-\frac{1}{2}\right)\ln x{\,},\\
J^1[\ln](x) &=& 1-(x+1)\,\ln\left(1+\frac{1}{x}\right).
\end{eqnarray*}
Now, Theorem~\ref{thm:6GenStFo0BaIneq} provides the following inequalities for any $x>0$
\begin{equation}\label{eq:6QR46Wen22add}
0 ~\leq ~ J(x) ~\leq ~ \frac{1}{2}(x+1)^2\,\ln\left(1+\frac{1}{x}\right)-\frac{x}{2}-\frac{3}{4} ~\leq ~ \frac{1}{2}\,\ln\left(1+\frac{1}{x}\right){\,},
\end{equation}
$$
0 ~\leq ~ -1+(x+1)\,\ln\left(1+\frac{1}{x}\right) ~\leq ~ \ln\left(1+\frac{1}{x}\right).
$$
That is, in the multiplicative notation,
\begin{equation}\label{eq:6QR46Wen22}
1 ~\leq ~ \frac{\Gamma(x)}{\sqrt{2\pi}{\,}e^{-x}{\,}x^{x-\frac{1}{2}}} ~\leq ~ e^{-\frac{x}{2}-\frac{3}{4}}\left(1+\frac{1}{x}\right)^{\frac{1}{2}(x+1)^2} \leq ~ \left(1+\frac{1}{x}\right)^{\frac{1}{2}},
\end{equation}
$$
\left(1+\frac{1}{x}\right)^x ~\leq ~ e ~\leq ~ \left(1+\frac{1}{x}\right)^{x+1}.
$$
Thus, we retrieve Stirling's formula \eqref{eq:StirAA5} and \eqref{eq:StirAA521}, together with the well-known asymptotic equivalence (compare with Artin \cite[p.~20]{Art15})
$$
\left(1+\frac{1}{x}\right)^x ~\sim ~ e\qquad\text{as $x\to\infty$}.
$$
It is actually quite remarkable that the first two inequalities in \eqref{eq:6QR46Wen22add} and \eqref{eq:6QR46Wen22} are precisely what we get when we ``integrate'' the additive version of the Wendel inequality \eqref{eq:Orig9Wendel2} on the unit interval $(0,1)$.

Now, the coarsened inequality
$$
\left|J^{p+1}[\Sigma g](x)\right| ~\leq ~ \overline{G}_p{\,}|\Delta^pg(x)|
$$
given in Corollary~\ref{cor:6GenStFo0BaIneqCo} takes the following simple form (in the multiplicative notation)
$$
\left(1+\frac{1}{x}\right)^{-\frac{1}{2}} \leq ~ \frac{\Gamma(x)}{\sqrt{2\pi}{\,}e^{-x}{\,}x^{x-\frac{1}{2}}} ~\leq ~ \left(1+\frac{1}{x}\right)^{\frac{1}{2}}.
$$
Note that tighter inequalities can also be obtained by considering higher values of $p$ in Corollary~\ref{cor:6GenStFo0BaIneqCo}. For instance, taking $p=2$ we obtain
$$
\left(1+\frac{1}{x}\right)^{-\frac{3}{4}}\left(1+\frac{2}{x}\right)^{\frac{5}{12}} ~\leq ~ \frac{\Gamma(x)}{\sqrt{2\pi}{\,}e^{-x}{\,}x^{x-\frac{1}{2}}} ~\leq ~ \left(1+\frac{1}{x}\right)^{\frac{11}{12}}\left(1+\frac{2}{x}\right)^{-\frac{5}{12}}.
$$
Taking $p=3$ we obtain
\begin{multline*}
\left(1+\frac{1}{x}\right)^{-\frac{23}{24}}
\left(1+\frac{2}{x}\right)^{\frac{13}{12}}
\left(1+\frac{3}{x}\right)^{-\frac{3}{8}} ~\leq ~
\frac{\Gamma(x)}{\sqrt{2\pi}{\,}e^{-x}{\,}x^{x-\frac{1}{2}}}\\
\leq ~
\left(1+\frac{1}{x}\right)^{\frac{31}{24}}
\left(1+\frac{2}{x}\right)^{-\frac{7}{6}}
\left(1+\frac{3}{x}\right)^{\frac{3}{8}}.
\end{multline*}
Thus, we see that the central function in these inequalities can always be bracketed by finite products of radical functions.
\end{example}

In the last part of Example~\ref{ex:StriCons}, we have illustrated the possibility of obtaining closer bounds for the generalized Binet function\index{Binet's function!generalized} $J^{p+1}[\Sigma\ln](x)$ by considering in Corollary~\ref{cor:6GenStFo0BaIneqCo} any value of $p$ that is higher than $1+\deg g$. Actually, it is not difficult to see that this feature applies to every continuous multiple $\log\Gamma$-type function. We discuss this topic in Appendix~\ref{chapter:C-StBr49} and show that the inequalities actually get tighter and tighter as $p$ increases.

\begin{remark}\label{rem:WenSti41}
We observe that Theorem~\ref{thm:6GenStFo0BaIneq} together with the generalized Stirling formula (Theorem~\ref{thm:dgf7dds}) have been immediately obtained by ``integrating'' the generalized Wendel inequality (Theorem~\ref{thm:AsymBehSol}) on the unit interval. In turn, the generalized Wendel inequality is a straight application of Lemma~\ref{lemma:VarEpsIneq} to the function $f=\Sigma g$. These remarkable facts show the considerable importance of Lemma~\ref{lemma:VarEpsIneq} in this theory: it was first crucial to derive our uniqueness and existence results, and now it provides very nice counterparts of Wendel's inequality and Stirling's formula, with short and elegant proofs. We will use Lemma~\ref{lemma:VarEpsIneq} again in Section~\ref{sec:G8S3F5R} for an in-depth investigation of Gregory's summation formula.
\end{remark}

\parag{Improvements of Stirling's formula}\index{Stirling's formula!improvements} The following estimate of the gamma function is due to Gosper \cite{Gos78}
$$
\Gamma(x) ~\sim ~ \sqrt{2\pi}{\,}e^{-x}{\,}x^{x-\frac{1}{2}}\left(1+\frac{1}{6x}\right)^{\frac{1}{2}}\qquad\text{as $x\to\infty$},
$$
and is more accurate than Stirling's formula. On the basis of this alternative approximation, Mortici~\cite{Mor11} provided the following narrow inequalities
$$
\left(1+\frac{\alpha}{2x}\right)^{\frac{1}{2}} ~< ~ \frac{\Gamma(x)}{\sqrt{2\pi}{\,}e^{-x}{\,}x^{x-\frac{1}{2}}} ~< ~ \left(1+\frac{\beta}{2x}\right)^{\frac{1}{2}},\qquad\text{for $x\geq 2$},
$$
where $\alpha=\frac{1}{3}$ and $\beta=(391/30)^{1/3}-2\approx 0.353$. We actually observe that the quest for finer and finer bounds and approximations for the gamma function has gained an increasing interest during this last decade (see \cite{Bur19,CheLiu15,CheLin16,FenWan13,LuLiuQu17,Mor10,Mor11,Mor11b,Mor11c,XuHuTan16,YanTia19} and the references therein). Some of these investigations could be generalized to various multiple $\Gamma$-type functions. New results along this line would be welcome.

\parag{Webster's double inequality} We have seen that Theorems~\ref{thm:AsymBehSol} and \ref{thm:6GenStFo0BaIneq} provide very useful bounds for both quantities $\rho_x^{p+1}[\Sigma g](a)$ and $J^{p+1}[\Sigma g](x)$. It is actually possible to provide tighter bounds for these quantities using again the $p$-convexity\index{$p$-convexity} or $p$-concavity\index{$p$-concavity} properties of the function $g$. For instance, one can show that if $g$ lies in $\cD^1\cap\cK^1$ and if $x>0$ and $a>0$ are so that $g$ is concave on $[x+a,\infty)$, then the following double inequality hold
\begin{eqnarray}
\lefteqn{\sum_{k=0}^{\lfloor a\rfloor}g(x+k) + (\{a\}-1){\,}g(x+a)-a{\,}g(x) ~\leq ~ \rho^2_x[\Sigma g](a)}\nonumber\\
&\leq & \sum_{k=0}^{\lfloor a\rfloor} g(x+k)-g(x+a)+\{a\}{\,}g(x+\lfloor a\rfloor+1)-a{\,}g(x).\label{eq:6WebsInzz36}
\end{eqnarray}
This inequality was actually provided by Webster~\cite[Eq.~(6.4)]{Web97b}\index{Webster's inequality} to establish the limit \eqref{eq:convRes79} in the case when $p=1$.

Now, assuming that $g$ is continuous, we can integrate every expression in the inequalities above on $a\in (0,1)$, and we then obtain the following bounds for $J^2[\Sigma g](x)$
\begin{eqnarray}
0 &\leq & -J^2[g](x) ~\leq ~ J^2[\Sigma g](x)\nonumber\\
&\leq & -J^2[g](x)-\int_0^1 t{\,}g(x+t){\,}dt+\frac{1}{2}{\,}g(x+1).\label{eq:6WebsInzz36int22}
\end{eqnarray}
For instance, for $g(x)=\ln x$, we obtain (in the multiplication notation)
\begin{equation}\label{eq:6fAppF42}
1 ~\leq ~ e^{-1}\left(1+\frac{1}{x}\right)^{x+\frac{1}{2}} \leq ~ \frac{\Gamma(x)}{\sqrt{2\pi}{\,}e^{-x}{\,}x^{x-\frac{1}{2}}} ~\leq ~ e^{-\frac{x}{2}-\frac{3}{4}}\left(1+\frac{1}{x}\right)^{\frac{1}{2}(x+1)^2},
\end{equation}
which provides a better lower bound in the inequalities \eqref{eq:6QR46Wen22}.

In Appendix~\ref{chapter:E-GenWeIn5}, we discuss this interesting issue and provide a generalization to multiple $\log\Gamma$-type functions of the Webster double inequality \eqref{eq:6WebsInzz36} and its ``integrated'' version \eqref{eq:6WebsInzz36int22}.

\parag{Generalized Stirling's constant} The number $\sqrt{2\pi}$ arising in Stirling's formula \eqref{eq:StirAA5} and Example~\ref{ex:StriCons} is called \emph{Stirling's constant}\index{Stirling's constant} (see, e.g., Finch \cite{Fin03}). For certain multiple $\Gamma$-type functions, analogues of Stirling's constant can be easily defined as follows.

\begin{definition}[Generalized Stirling's constant]\label{de:GSC556}\index{Stirling's constant!generalized|textbf}
For any function $g\in\cC^0\cap\mathrm{dom}(\Sigma)$ that is integrable at $0$, we define the number
$$
\overline{\sigma}[g] ~=~ \sigma[g]-\int_0^1g(t){\,}dt ~=~ \int_0^1\Sigma g(t){\,}dt.\label{p:bsigmag}
$$
We say that the number $\exp(\overline{\sigma}[g])$ is the \emph{generalized Stirling constant} associated with $g$.
\end{definition}

When $g$ is integrable at $0$, the generalized Stirling constant exists and hence the generalized Stirling formula \eqref{eq:dgf7dds}\index{Stirling's formula!generalized} can take the following form
$$
\Sigma g(x) -\int_0^x g(t){\,}dt +\sum_{j=1}^pG_j\Delta^{j-1}g(x) ~\to ~ \overline{\sigma}[g]\qquad\text{as $x\to\infty$}{\,}.
$$

It is important to note that, contrary to the generalized Stirling constant, the asymptotic constant\index{asymptotic constant} $\sigma[g]$ exists for any function $g$ lying in $\cC^0\cap\mathrm{dom}(\Sigma)$, even if it is not integrable at $0$. For instance, for the function $g(x)=\frac{1}{x}$, we have that $\sigma[g]$ is the Euler constant\index{Euler's constant} $\gamma$ (see Example~\ref{ex:Dig5Int9}) while $\overline{\sigma}[g]$ does not exist.

This shows that the asymptotic constant\index{asymptotic constant} is the ``good'' constant to consider in this new theory. It actually enables us to derive for multiple $\log\Gamma$-type functions analogues of several properties of the gamma function. For instance, we have seen that it was very useful to derive the generalized Stirling formula. To give a second example, we will see in Section~\ref{sec:GaussMultF51} that it also enables us to derive analogues of Gauss' multiplication formula for the gamma function.

\index{Stirling's formula!generalized|)}

\section{Analogue of Burnside's formula}
\label{sec:6An4Buzzrns2}
\index{Burnside's formula!analogue|(}

Let us recall \emph{Burnside's formula}\index{Burnside's formula}, which states that
\begin{equation}\label{eq:Burn35side}
\Gamma(x) ~\sim ~ \sqrt{2\pi}\left(\frac{x-\frac{1}{2}}{e}\right)^{x-\frac{1}{2}}\qquad \text{as $x\to\infty$}.
\end{equation}
This formula actually provides a much better approximation of the gamma function than Stirling's formula. It was first established by Burnside \cite{Bur17} (see also Mortici \cite{Mor10}) and then rediscovered by Spouge \cite{Spo94}. In this section, we provide an analogue of Burnside's formula for any continuous $\Gamma_p$-type function when $p=0$ and $p=1$, and we note that such an analogue no longer exists when $p\geq 2$.

Let us first state the following corollary, which particularizes the generalized Stirling formula\index{Stirling's formula!generalized} when the function $g$ lies in $\cC^0\cap\cD^0\cap\cK^0$. This corollary actually follows immediately from \eqref{eq:ds68ffds} and \eqref{eq:dgf7dds}.

\begin{corollary}\label{cor:5Trend33}
Let $g$ lie in $\cC^0\cap\cD^0\cap\cK^0$. Then
$$
\Sigma g(x)-\int_x^{x+1}\Sigma g(t){\,}dt ~\to ~ 0 \qquad \text{as $x\to\infty$}{\,}.
$$
Equivalently,
$$
\Sigma g(x) -\int_1^xg(t){\,}dt ~\to ~ \sigma[g] \qquad \text{as $x\to\infty$}{\,}.
$$
\end{corollary}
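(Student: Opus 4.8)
The plan is to derive both displayed limits directly from the generalized Stirling formula (Theorem~\ref{thm:dgf7dds}) together with identity~\eqref{eq:ds68ffds}, treating the hypothesis $g\in\cC^0\cap\cD^0\cap\cK^0$ as the case $p=0$. I would begin with the second (equivalent) assertion, since it is literally what \eqref{eq:dgf7dds} gives when $p=0$: because $g$ lies in $\cC^0\cap\cD^p\cap\cK^p$ with $p=0$, the sum $\sum_{j=1}^{p}G_j\Delta^{j-1}g(x)$ in \eqref{eq:dgf7dds} is empty, and the formula collapses to
$$
\Sigma g(x)-\int_1^x g(t)\,dt ~\to~ \sigma[g]\qquad\text{as $x\to\infty$}.
$$
Thus the second statement requires no work beyond invoking Theorem~\ref{thm:dgf7dds} with $p=0$.

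Next I would obtain the first assertion from the second by substituting the trend identity~\eqref{eq:ds68ffds}, namely $\int_x^{x+1}\Sigma g(t)\,dt=\sigma[g]+\int_1^x g(t)\,dt$, which holds for every $x>0$ since $g\in\cC^0\cap\mathrm{dom}(\Sigma)$. Subtracting this from $\Sigma g(x)$ gives the exact identity
$$
\Sigma g(x)-\int_x^{x+1}\Sigma g(t)\,dt ~=~ \Big(\Sigma g(x)-\int_1^x g(t)\,dt\Big)-\sigma[g],
$$
so that the left-hand side tends to $\sigma[g]-\sigma[g]=0$ as $x\to\infty$, by the limit already established. This also makes the claimed equivalence of the two forms transparent, as each is obtained from the other through \eqref{eq:ds68ffds}.

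There is essentially no hard step here: the whole content has already been extracted in Theorem~\ref{thm:dgf7dds}, and the only thing to verify is the bookkeeping that $p=0$ renders the correction sum empty and that \eqref{eq:ds68ffds} is available under the stated regularity. The sole point deserving a line of care is confirming the applicability of both ingredients, i.e.\ that $g\in\cC^0\cap\cD^0\cap\cK^0$ simultaneously satisfies the continuity needed for \eqref{eq:ds68ffds} (through $g\in\cC^0\cap\mathrm{dom}(\Sigma)$) and the $p=0$ hypotheses of Theorem~\ref{thm:dgf7dds}; both are immediate from the hypothesis. Accordingly the proof is a two-line specialization and substitution, with no genuine obstacle.
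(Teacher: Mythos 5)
Your proof is correct and coincides with the paper's own argument: the paper states that Corollary~\ref{cor:5Trend33} "follows immediately from \eqref{eq:ds68ffds} and \eqref{eq:dgf7dds}," which is exactly your specialization of the generalized Stirling formula to $p=0$ (empty correction sum) combined with the trend identity \eqref{eq:ds68ffds} to pass between the two displayed limits. Nothing is missing; the bookkeeping you flag (applicability of both ingredients under $g\in\cC^0\cap\cD^0\cap\cK^0$) is indeed the only point to check.
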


Corollary~\ref{cor:5Trend33} tells us that, when $g$ lies in $\cC^0\cap\cD^0\cap\cK^0$, the function $\Sigma g(x)$ coincides asymptotically with its trend (i.e., the integral \eqref{eq:ds68ffdsf}) and, in a sense, behaves asymptotically like the antiderivative of function $g$.

It is natural to think that a more accurate trend of $\Sigma g$ can be obtained by considering the centered version of the integral \eqref{eq:ds68ffdsf}, namely
$$
\int_{x-\frac{1}{2}}^{x+\frac{1}{2}}\Sigma g(t){\,}dt ~=~ \sigma[g]+\int_1^{x-\frac{1}{2}}g(t){\,}dt,\qquad x>\textstyle{\frac{1}{2}}{\,}.
$$
On this matter, in the following proposition we provide a double inequality that shows that $\Sigma g(x)$ coincides asymptotically with this latter trend whenever $g$ lies in $\cC^0\cap\cD^0\cap\cK^0$ or in $\cC^0\cap\cD^1\cap\cK^1$. However, it is not difficult to see that in general this result no longer holds when $g$ lies in $\cC^0\cap\cD^2\cap\cK^2$. The logarithm of the Barnes $G$-function\index{Barnes's $G$-function} (see Section~\ref{sec:Barnes558}) could serve as an example here.

\begin{proposition}\label{prop:Burnside0}
Let $p\in\{0,1\}$, $g\in\cC^0\cap\cD^p\cap\cK^p$, and $x>0$ be so that $g$ is $p$-convex or $p$-concave on $[x,\infty)$. Then
$$
\left|\Sigma g\left(x+\frac{1}{2}\right)-\int_x^{x+1}\Sigma g(t){\,}dt\right| ~\leq ~ \left|J^{p+1}[\Sigma g](x)\right| ~\leq ~ \overline{G}_p{\,}|\Delta^p g(x)|.
$$
In particular,
$$
\Sigma g(x)-\int_{x-\frac{1}{2}}^{x+\frac{1}{2}}\Sigma g(t){\,}dt ~\to ~ 0 \qquad \text{as $x\to\infty$}{\,},
$$
or equivalently,
$$
\Sigma g(x) -\int_1^{x-\frac{1}{2}}g(t){\,}dt ~\to ~ \sigma[g] \qquad \text{as $x\to\infty$}{\,}.
$$
\end{proposition}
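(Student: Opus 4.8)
The plan is to treat the right-hand inequality and the left-hand inequality separately, and then read off the two limits. The right-hand inequality $|J^{p+1}[\Sigma g](x)|\le\overline{G}_p{\,}|\Delta^p g(x)|$ is nothing but Corollary~\ref{cor:6GenStFo0BaIneqCo} applied under the present hypotheses, which are exactly those of that corollary; so there is nothing to prove there, and the whole content lies in the left-hand inequality.

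For the left-hand inequality I would first make $J^{p+1}[\Sigma g](x)$ explicit. Write $\phi=\Sigma g$, which is continuous by Proposition~\ref{prop:intMLGt}(a), and set $I=\int_x^{x+1}\phi(t){\,}dt$ and $M=\phi(x+\tfrac12)$. Applying \eqref{eq:Binet64378} to $\Sigma g$ with $q=p+1$ and using $\Delta\Sigma g=g$ gives, for $p=0$,
$$
J^1[\Sigma g](x) ~=~ \phi(x)-I,
$$
and for $p=1$ (since $G_0=1$, $G_1=\tfrac12$ and $g(x)=\phi(x+1)-\phi(x)$),
$$
J^2[\Sigma g](x) ~=~ \phi(x)+\tfrac12 g(x)-I ~=~ T-I,\qquad T ~:=~ \tfrac12\bigl(\phi(x)+\phi(x+1)\bigr).
$$
Thus the claim reduces to the two midpoint-versus-endpoint comparisons $|M-I|\le|\phi(x)-I|$ (for $p=0$) and $|M-I|\le|T-I|$ (for $p=1$).

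Both comparisons I would obtain by splitting $I$ at the midpoint and estimating each half, using the shape of $\phi$ on $[x,x+1]$ supplied by the existence theorem. By Theorem~\ref{thm:exist}, $\phi$ is monotone on $[x,\infty)$ when $p=0$ and is convex or concave there when $p=1$; in particular it has a fixed shape on $[x,x+1]$, and on negating $g$ (which replaces $\phi$ by $-\phi$ and leaves all three absolute-value inequalities invariant) we may take $\phi$ decreasing, respectively convex. For $p=0$, monotonicity gives $\int_x^{x+1/2}\phi\le\tfrac12\phi(x)$ and $\int_{x+1/2}^{x+1}\phi\le\tfrac12 M$, whence $2I\le\phi(x)+M$, i.e.\ $I-M\le\phi(x)-I$; together with $M\le\phi(x)$ (so $M-I\le\phi(x)-I$) and $\phi(x)\ge I$ this yields $|M-I|\le|\phi(x)-I|$. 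For $p=1$, applying the Hermite--Hadamard inequality on each of $[x,x+\tfrac12]$ and $[x+\tfrac12,x+1]$ gives $I\le\tfrac12(M+T)$, i.e.\ $I-M\le T-I$, while the basic Hermite--Hadamard ordering gives $M\le I\le T$; combining these yields $|M-I|\le|T-I|$. I expect this refined Hermite--Hadamard step (the inequality $2I\le M+T$ for a continuous convex function) to be the only genuinely substantive point, and it is itself elementary once the interval is split at its midpoint.

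Finally, the two limits follow at once. Since $g\in\cD^p$, we have $\Delta^p g(x)\to 0$, so the right-hand side $\overline{G}_p{\,}|\Delta^p g(x)|$ of the established double inequality tends to $0$; hence $\Sigma g(x+\tfrac12)-\int_x^{x+1}\Sigma g(t){\,}dt\to 0$ as $x\to\infty$. Replacing $x$ by $x-\tfrac12$ gives the centered form $\Sigma g(x)-\int_{x-1/2}^{x+1/2}\Sigma g(t){\,}dt\to 0$, and the equivalent statement $\Sigma g(x)-\int_1^{x-1/2}g(t){\,}dt\to\sigma[g]$ then follows from identity \eqref{eq:ds68ffds} evaluated at $x-\tfrac12$.
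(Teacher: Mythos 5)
Your proposal is correct and follows essentially the same route as the paper's proof: the right-hand inequality is delegated to Corollary~\ref{cor:6GenStFo0BaIneqCo}, the shape of $\Sigma g$ on $[x,\infty)$ is read off from Theorem~\ref{thm:exist}, and the left-hand inequality is obtained for $p=0$ from monotonicity estimates on the two half-intervals and for $p=1$ from the Hermite--Hadamard inequality together with the trapezoidal rule on $[x,x+\tfrac12]$ and $[x+\tfrac12,x+1]$ (the paper works with the concave case where you work with the convex one, which is the same by negation). If anything, your midpoint-splitting estimate $2I\le\phi(x)+M$ makes rigorous a step the paper dismisses as ``geometrically clear.''
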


\begin{proof}
Using Corollary~\ref{cor:6GenStFo0BaIneqCo}, we see that it is enough to prove the first inequality. Let
$$
h(x) ~=~ \Sigma g\left(x+\frac{1}{2}\right)-\int_x^{x+1}\Sigma g(t){\,}dt.
$$
Consider first the case when $p=0$ and suppose for instance that $g$ lies in $\cK_+^0$; hence $\Sigma g$ is decreasing on $[x,\infty)$. If $h(x)\geq 0$, then we clearly have
$$
|h(x)| ~=~ h(x) ~\leq ~ \Sigma g(x)-\int_x^{x+1}\Sigma g(t){\,}dt ~=~ J^1[\Sigma g](x).
$$
If $h(x)\leq 0$, then we have
$$
|h(x)| ~=~ \int_x^{x+1}\Sigma g(t){\,}dt-\Sigma g\left(x+\frac{1}{2}\right) ~\leq ~ \int_x^{x+\frac{1}{2}}\Sigma g(t){\,}dt-\frac{1}{2}\Sigma g\left(x+\frac{1}{2}\right)
$$
and it is geometrically clear that the latter quantity is less than $J^1[\Sigma g](x)$.

Suppose now that $p=1$ and for instance that $g$ lies in $\cK_+^1$; hence $\Sigma g$ is concave on $[x,\infty)$. Applying the Hermite-Hadamard inequality to $\Sigma g$ on the interval $[x,x+1]$, we obtain that $h(x)\geq 0$. Applying the trapezoidal rule to $\Sigma g$ on the intervals $[x,x+\frac{1}{2}]$ and $[x+\frac{1}{2},x+1]$, we obtain the following inequality
$$
h(x) ~\leq ~ \int_x^{x+1}\Sigma g(t){\,}dt-\frac{1}{2}\,\Sigma g(x+1)-\frac{1}{2}\,\Sigma g(x),
$$
where the right-hand quantity is exactly $-J^2[\Sigma g](x)$. This completes the proof.
\end{proof}

Applying Proposition~\ref{prop:Burnside0} to the function $g(x)=\ln x$ with $p=1$, we retrieve Burnside's formula \eqref{eq:Burn35side}. Thus, Proposition~\ref{prop:Burnside0} gives an analogue of Burnside's formula for any continuous $\Gamma_p$-type function when $p\in\{0,1\}$. It also shows that this new formula provides a better approximation than the generalized Stirling formula whenever $g$ lies in $\cC^0\cap\cD^p\cap\cK^p$ with $p\in\{0,1\}$.

\index{Burnside's formula!analogue|)}

\section{A general asymptotic equivalence}

The following result provides a sufficient condition for a continuous multiple $\log\Gamma$-type function to be asymptotically equivalent to its (possibly shifted) trend.

\begin{proposition}\label{prop:conv6v6}
Let $g$ lie in $\cC^0\cap\mathrm{dom}(\Sigma)$ and let $a\geq 0$ and $c\in\R$. When $c+\Sigma g$ vanishes at infinity, we also assume that
\begin{equation}\label{eq:EqA62}
c+\Sigma g(n+1) ~\sim ~c+\Sigma g(n)\qquad\text{as $n\to_{\N}\infty$}.
\end{equation}
Then we have
\begin{equation}\label{eq:Ratio44t2}
c+\Sigma g(x+a) ~\sim ~ c+\int_x^{x+1}\Sigma g(t){\,}dt\qquad\text{as $x\to\infty$}.
\end{equation}
If $g$ does not lie in $\cD^{-1}_{\N}$, then we also have
$$
\Sigma g(x+a) ~\sim ~ c+\int_1^x g(t){\,}dt\qquad\text{as $x\to\infty$}.
$$
\end{proposition}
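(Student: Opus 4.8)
The plan is to set $\Phi = c + \Sigma g$ and to rewrite the target equivalence \eqref{eq:Ratio44t2} as $\Phi(x+a) \sim \int_x^{x+1}\Phi(t){\,}dt$, which is legitimate since $\int_x^{x+1}(c+\Sigma g(t)){\,}dt = c + \int_x^{x+1}\Sigma g(t){\,}dt$. Two structural facts drive the argument. First, $\Sigma g$ lies in $\cK^p\subset\cK^0$ by Proposition~\ref{prp:56GathZZPro8}(b) and Proposition~\ref{prop:MpDescFiltr}, so $\Phi$ is eventually monotone, and it is continuous by Proposition~\ref{prop:intMLGt}(a). Second, when $\Phi$ does not vanish at infinity it has a nonzero limit in $[-\infty,\infty]$ and is therefore eventually of constant sign; after replacing $(g,c)$ by $(-g,-c)$ if necessary, which turns $\Phi$ into $-\Phi$ and preserves both the hypotheses and the conclusion, I may assume $\Phi>0$ in a neighborhood of infinity.

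The crux is the integer-shift equivalence \eqref{eq:EqA62}, i.e.\ $\Phi(n+1)\sim\Phi(n)$. In the degenerate case (when $c+\Sigma g$ vanishes at infinity) this is exactly the standing hypothesis. In the non-degenerate case I would prove it: since $\Phi(n+1)-\Phi(n)=g(n)$ by \eqref{eq:RestrInt}, it suffices to show $g(n)=o(\Phi(n))$. If $\Phi$ tends to a finite nonzero limit this is immediate; if $\Phi(n)\to+\infty$ (equivalently $\sum_k g(k)$ diverges) I would apply the Stolz--Ces\`aro theorem (Lemma~\ref{lemma:6Stolz52}) to $a_n=g(n)$ and $b_n=\Phi(n)$, the latter being eventually strictly monotone and unbounded with $b_{n+1}-b_n=g(n)$. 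This yields
$$
\limsup_{n\to\infty}\frac{g(n)}{\Phi(n)} ~\leq~ \limsup_{n\to\infty}\frac{g(n+1)-g(n)}{g(n)} ~=~ \limsup_{n\to\infty}\frac{g(n+1)}{g(n)}-1 ~\leq~ 0,
$$
the last inequality being the necessary condition \eqref{eq:CharSigDom2} from Proposition~\ref{prop:Conj541q} (the excluded case where $g$ is eventually zero makes $\Phi$ eventually constant, so \eqref{eq:EqA62} is then trivial). As the quotient is eventually nonnegative, it tends to $0$, giving $\Phi(n+1)/\Phi(n)\to 1$. This step is the main obstacle: $g(n)=o(\Phi(n))$ would fail for an unrestricted monotone $g$ (e.g.\ $g(x)=2^x$), and what rescues it is precisely that membership in $\mathrm{dom}(\Sigma)$ forces the ratio bound \eqref{eq:CharSigDom2}, after which Stolz--Ces\`aro does the rest.

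Once \eqref{eq:EqA62} is available, I would bootstrap it to real arguments. Telescoping gives $\Phi(n+m)\sim\Phi(n)$ for each fixed $m\in\N$, and the eventual monotonicity of $\Phi$ squeezes $\Phi(x)$ between $\Phi(\lfloor x\rfloor)$ and $\Phi(\lfloor x\rfloor+1)$, so $\Phi(x+b)\sim\Phi(x)$ for every fixed $b\geq 0$ as $x\to\infty$. Finally, monotonicity of $\Phi$ on $[x,x+1]$ places $\int_x^{x+1}\Phi(t){\,}dt$ between $\Phi(x)$ and $\Phi(x+1)$, both of which are $\sim\Phi(x+a)$; the squeeze theorem then delivers $\Phi(x+a)\sim\int_x^{x+1}\Phi(t){\,}dt$, which is \eqref{eq:Ratio44t2}.

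For the last assertion I would use \eqref{eq:ds68ffds} to write the right-hand side of \eqref{eq:Ratio44t2} as $c+\sigma[g]+\int_1^x g(t){\,}dt$. When $g\notin\cD^{-1}_{\N}$, the eventually monotone and eventually single-signed function $g$ is not integrable at infinity (by comparison of the series $\sum_k g(k)$ with $\int g$), so $\int_1^x g(t){\,}dt\to\pm\infty$. The additive constants $c$ and $c+\sigma[g]$ are then asymptotically negligible, and combining this with the equivalence just proved gives $\Sigma g(x+a)\sim c+\int_1^x g(t){\,}dt$, as required.
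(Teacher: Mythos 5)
Your proof is correct, and its overall skeleton is the same as the paper's: prove the integer-shift equivalence \eqref{eq:EqA62} when $c+\Sigma g$ does not vanish at infinity, bootstrap it to real shifts by squeezing with the eventual monotonicity of $\Sigma g$, compare with $\int_x^{x+1}\Sigma g(t){\,}dt$ by the same monotonicity, and settle the last assertion via \eqref{eq:ds68ffds} and divergence. Where you genuinely differ is in the crux step. The paper splits on $p=1+\deg g$: for $p=0$ it obtains unboundedness of $n\mapsto\Sigma g(n)$ from the generalized Stirling formula and uses $g(n)\to 0$; for $p\geq 1$ it starts from $\Delta^p\Sigma g=c_p+\Sigma\Delta^pg$ and applies Stolz--Ces\`aro $p$ times to push the ratio limit down the hierarchy $\Delta^p\Sigma g,\Delta^{p-1}\Sigma g,\ldots,\Sigma g$. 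You instead make a single application of Stolz--Ces\`aro to $a_n=g(n)$, $b_n=\Phi(n)$ with $\Phi=c+\Sigma g$, and control $\limsup_n\big(g(n+1)-g(n)\big)/g(n)$ by the ratio condition \eqref{eq:CharSigDom2} of Proposition~\ref{prop:Conj541q}. This is legitimate --- Proposition~\ref{prop:Conj541q} precedes this result and does not depend on it --- and it is more economical, since the hierarchy-descent work is already encapsulated in that proposition's proof; it also isolates exactly what membership in $\mathrm{dom}(\Sigma)$ buys, as your $g(x)=2^x$ remark makes explicit. The paper's route, in exchange, stays self-contained within Chapter~\ref{chapter:6} (no appeal to Proposition~\ref{prop:Conj541q}) and shows concretely where unboundedness and strict monotonicity come from at each level of the difference hierarchy.

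Two loose ends should be tied up, both easy. First, your use of Stolz--Ces\`aro and of the ratio \eqref{eq:CharSigDom2} needs $g(n)\neq 0$ for large $n$ and $n\mapsto\Phi(n)$ eventually \emph{strictly} monotone; both follow from the one-line observation that $g\in\cK^p\subset\cK^0$ is eventually monotone, so if $g(n)=0$ for infinitely many $n$ then $g$ is eventually identically zero, which is precisely your excluded trivial case. Second, your positivity normalization is announced only in the non-vanishing case, yet the bootstrap and the integral squeeze divide by $\Phi$; in the degenerate case where $c+\Sigma g$ vanishes at infinity you should note that $\Phi$, being continuous, eventually monotone, with limit $0$ and not eventually identically zero, has eventually constant sign, so the same negation trick (or a sign-tracking version of the squeeze) applies there too.
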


\begin{proof}
Let us first prove that \eqref{eq:EqA62} holds for any $g$ lying in $\cC^0\cap\mathrm{dom}(\Sigma)$, even if $c+\Sigma g$ does not vanish at infinity. Of course, this result clearly holds if $g$ is eventually a polynomial (since so is $\Sigma g$ in this case). Thus, we will now assume that $g$ is not eventually a polynomial.

Suppose first that $p=1+\deg g=0$. If $g$ lies in $\cD^{-1}_{\N}$, then \eqref{eq:EqA62} follows immediately from \eqref{eq:diffzz092}. If $g$ lies in $\cD^0_{\N}\setminus\cD^{-1}_{\N}$, then it is not integrable at infinity by the integral test for convergence. By the generalized Stirling formula \eqref{eq:dgf7dds}, it follows that the eventually  monotone sequence $n\mapsto\Sigma g(n)$ is unbounded. This sequence is actually eventually strictly monotone; indeed, otherwise the function $\Delta\Sigma g=g\in\cK^0$ would vanish in any unbounded interval of $\R_+$, and hence would eventually be identically zero, a contradiction. We then obtain
$$
\frac{c+\Sigma g(n+1)}{c+\Sigma g(n)} ~=~ 1+\frac{g(n)}{c+\Sigma g(n)} ~\to ~ 1\qquad\text{as $n\to_{\N}\infty$},
$$
and hence \eqref{eq:EqA62} holds whenever $p=0$.

Suppose now that $p=1+\deg g\geq 1$. In this case, we have that $\Delta^pg$ lies in $\cD^0\cap\cK^0$. By the uniqueness Theorem~\ref{thm:unic}, we also have
$$
\Delta^p\Sigma g ~=~ c_p+\Sigma\Delta^p g
$$
for some $c_p\in\R$, and it is clear (by minimality of $p$) that this latter function cannot vanish at infinity. Moreover, we can show as above that the sequence $n\mapsto\Sigma\Delta^p g(n)$ is eventually strictly monotone. In view of the first case, we then have
$$
\frac{\Delta^p\Sigma g(n+1)}{\Delta^p\Sigma g(n)} ~=~ \frac{c_p+\Sigma\Delta^p g(n+1)}{c_p+\Sigma\Delta^p g(n)} ~\to ~ 1\qquad\text{as $n\to_{\N}\infty$}.
$$
Let us now show that the sequence
$$
n ~ \mapsto ~ \frac{c+\Delta^{p-1}\Sigma g(n+1)}{c+\Delta^{p-1}\Sigma g(n)}
$$
exists for large values of $n$ and converges to $1$. By minimality of $p$, the function $\Delta^{p-1}\Sigma g$ lies in $\cD^2_{\N}\setminus\cD^1_{\N}$ and hence the sequence $n\mapsto\Delta^{p-1}\Sigma g(n)$ is unbounded. Moreover, we can show as above that this sequence is eventually strictly monotone. Hence, the sequence above eventually exists and, using the Stolz-Ces\`aro theorem (see Lemma~\ref{lemma:6Stolz52}), we have that
$$
\lim_{n\to\infty}\frac{c+\Delta^{p-1}\Sigma g(n+1)}{c+\Delta^{p-1}\Sigma g(n)} ~=~ \lim_{n\to\infty}\frac{\Delta^p\Sigma g(n+1)}{\Delta^p\Sigma g(n)} ~=~ 1.
$$
Iterating this process, we finally see that condition \eqref{eq:EqA62} holds for any $p\in\N$.

We can now easily see that
\begin{equation}\label{eq:EqA62a}
c+\Sigma g(x+a) ~\sim ~ c+\Sigma g(x)\qquad\text{as $x\to\infty$}.
\end{equation}
Indeed, this result clearly holds if both $x$ and $a$ are integers. For instance we have
$$
c+\Sigma g(n+2) ~\sim ~ c+\Sigma g(n+1) ~\sim ~ c+\Sigma g(n)\qquad\text{as $n\to_{\N}\infty$}.
$$
Otherwise, assuming for instance that $\Sigma g$ is eventually increasing and nonnegative, for sufficiently large $x$ we have
$$
\frac{c+\Sigma g(\lfloor x+a\rfloor)}{c+\Sigma g(\lceil x\rceil)} ~\leq ~ \frac{c+\Sigma g(x+a)}{c+\Sigma g(x)} ~\leq ~ \frac{c+\Sigma g(\lceil x+a\rceil)}{c+\Sigma g(\lfloor x\rfloor)}{\,},
$$
and \eqref{eq:EqA62a} then follows by the squeeze theorem.

Finally, assuming again that $\Sigma g$ is eventually increasing and nonnegative, for sufficiently large $x$ we have
$$
1 ~=~ \frac{c+\Sigma g(x)}{c+\Sigma g(x)} ~\leq ~ \frac{c+\int_x^{x+1}\Sigma g(t){\,}dt}{c+\Sigma g(x)} ~\leq ~ \frac{c+\Sigma g(x+1)}{c+\Sigma g(x)}
$$
and, using again the squeeze theorem, we immediately obtain the first claimed asymptotic equivalence.

Now, if $g$ does not lie in $\cD^{-1}_{\N}$, then $\Sigma g(x)$ tends to infinity as $x\to\infty$. Using \eqref{eq:ds68ffds}, we then have
$$
\frac{c+\int_1^x g(t){\,}dt}{\Sigma g(x+a)} ~=~ \frac{c-\sigma[g]}{\Sigma g(x+a)} + \frac{\int_x^{x+1} \Sigma g(t){\,}dt}{\Sigma g(x+a)} ~\to ~ 1\qquad\text{as $x\to\infty$}{\,},
$$
which completes the proof.
\end{proof}

\begin{remark}\label{rem:ToSE5P3}
Let us show that the assumption on the function $c+\Sigma g$ cannot be ignored in Proposition~\ref{prop:conv6v6}. Consider the functions $f\colon\R_+\to\R$ and $g\colon\R_+\to\R$ defined by the equations
$$
f(x) ~=~ \frac{x-1}{2^x}\left(1+\frac{1}{4}\,\sin x\right)\quad\text{and}\quad g(x) ~=~ \Delta f(x)\qquad\text{for $x>0$}.
$$
It is clear that $f$ lies in $\cD^0_{\N}$ and that $g$ lies in $\cD^{-1}_{\N}$. Moreover, it is not difficult to see that the inequalities
$$
-2^{x+2}f'(x) ~\geq ~ x\qquad\text{and}\qquad 2^{x+4}g'(x) ~\geq ~ x
$$
eventually hold, which shows that both $f$ and $g$ lie in $\cK^0$. By the uniqueness theorem it follows that $f=\Sigma g$. However, we can readily see that the sequence
$$
n ~\mapsto ~\frac{\Sigma g(n+1)}{\Sigma g(n)}
$$
does not converge, which shows that \eqref{eq:EqA62} does not hold when $c=0$. It is then possible to show that the equivalence \eqref{eq:Ratio44t2} does not hold either.

Now, to see that the last asymptotic equivalence in Proposition~\ref{prop:conv6v6} need not hold if $g$ lies in $\cD^{-1}_{\N}$, take for instance
$$
g(x) ~=~ \frac{2}{(x+1)(x+2)}\qquad\text{and}\qquad\Sigma g(x) ~=~ \frac{x-1}{x+1}{\,}.
$$
We then have
$$
\lim_{x\to\infty}\frac{c+\int_1^x g(t){\,}dt}{\Sigma g(x+a)} ~=~ c+\ln\frac{9}{4}{\,}.\qedhere
$$
\end{remark}

\section{The Gregory summation formula revisited}
\label{sec:G8S3F5R}

Let $g\in\cC^0$, $q\in\N$, and let $1\leq m\leq n$ be integers. Integrating both sides of identity \eqref{eq:fngsum} on $x\in (0,1)$, we immediately obtain the following identity\label{p:Rqmng}
\begin{equation}\label{eq:GregoryMN}
\int_m^ng(t){\,}dt ~=~ \sum_{k=m}^{n-1}g(k)+\sum_{j=1}^qG_j(\Delta^{j-1}g(n)-\Delta^{j-1}g(m))+R^q_{m,n}[g]{\,},
\end{equation}
where
\begin{equation}\label{eq:GregoryMN6Remai}
R^q_{m,n}[g] ~=~ \int_0^1\sum_{k=m}^{n-1}\rho_k^{q+1}[g](t){\,}dt ~=~ \int_0^1(f^q_m[g](t)-f^q_n[g](t)){\,}dt.
\end{equation}

Identity \eqref{eq:GregoryMN} is nothing other than \emph{Gregory's summation formula}\index{Gregory's summation formula} (see, e.g., \cite{BerShi65,Jor65,Mil51}) with an integral form of the remainder. Note that, just like identity \eqref{eq:GenErrIn}, equation \eqref{eq:GregoryMN} is a pure identity in the sense that it holds without any restriction on the form of $g(x)$, except that here we asked $g$ to be continuous.

Combining \eqref{eq:Binet643780} with \eqref{eq:GregoryMN6Remai} we immediately see that this identity can be simply written in terms of the generalized Binet function\index{Binet's function!generalized} as
\begin{equation}\label{eq:6GregoryMN0046}
\sum_{k=m}^{n-1}J^{q+1}[g](k) + R^q_{m,n}[g] ~=~ 0{\,}.
\end{equation}
Equivalently, if $g$ lies in $\cC^0\cap\mathrm{dom}(\Sigma g)$, using \eqref{eq:Binet64378S04} and \eqref{eq:GregoryMN6Remai} we see that this identity can also take the form
\begin{equation}\label{eq:GregoryMN004}
J^{q+1}[\Sigma g](n)-J^{q+1}[\Sigma g](m) +R^q_{m,n}[g] ~=~ 0{\,}.
\end{equation}

The next lemma, which is yet another straightforward consequence of Lemma \ref{lemma:VarEpsIneq}, provides an upper bound for $|R^q_{m,n}[g]|$ when $g$ is $q$-convex or $q$-concave on $[m,\infty)$. Under this latter assumption, we can then use Gregory's formula \eqref{eq:GregoryMN} as a quadrature method\index{Gregory's summation formula!as a quadrature formula} for the numerical computation of the integral of $g$ over the interval $[m,n)$.

\begin{lemma}\label{lemma:67cyxc}
Let $g$ lie in $\cC^0\cap\cK^q$ for some $q\in\N$ and let $m\in\N^*$ be so that $g$ is $q$-convex or $q$-concave on $[m,\infty)$. Then, for any integer $n\geq m$, we have
\begin{equation}\label{eq:saf65fs}
|R^q_{m,n}[g]| ~ \leq ~ \overline{G}_q{\,}|\Delta^q g(n)-\Delta^q g(m)|.
\end{equation}
\end{lemma}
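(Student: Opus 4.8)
The plan is to read the bound off directly from the integral representation of the remainder, combining Lemma~\ref{lemma:VarEpsIneqSum} with the integral identity \eqref{eq:GintF4}; no genuinely new work is needed, since the difficult analytic content has already been isolated in the earlier lemmas. First I would start from the definition given in \eqref{eq:GregoryMN6Remai},
$$
R^q_{m,n}[g] ~=~ \int_0^1\sum_{k=m}^{n-1}\rho_k^{q+1}[g](t){\,}dt,
$$
and pass the absolute value inside the integral,
$$
|R^q_{m,n}[g]| ~\leq ~ \int_0^1\left|\sum_{k=m}^{n-1}\rho_k^{q+1}[g](t)\right|dt.
$$

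Next I would apply Lemma~\ref{lemma:VarEpsIneqSum} with $p=q$ to the inner sum, which is legitimate precisely because $g$ lies in $\cC^0\cap\cK^q$ and is $q$-convex or $q$-concave on $[m,\infty)$. This yields, for every $t\geq 0$,
$$
\left|\sum_{k=m}^{n-1}\rho_k^{q+1}[g](t)\right| ~\leq ~ \left|\tchoose{t-1}{q}\right|\sum_{j=0}^{\lceil t\rceil -1}|\Delta^qg(n+j)-\Delta^qg(m+j)|.
$$
The key simplification is that the integration variable ranges over $t\in(0,1)$, where $\lceil t\rceil=1$, so that $\lceil t\rceil-1=0$ and the inner sum over $j$ collapses to its single $j=0$ term, leaving
$$
\left|\sum_{k=m}^{n-1}\rho_k^{q+1}[g](t)\right| ~\leq ~ \left|\tchoose{t-1}{q}\right||\Delta^qg(n)-\Delta^qg(m)|,\qquad t\in(0,1).
$$

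Finally I would integrate this pointwise estimate over $(0,1)$, pulling the $t$-independent factor $|\Delta^qg(n)-\Delta^qg(m)|$ out of the integral and invoking \eqref{eq:GintF4}, namely $\int_0^1|\tchoose{t-1}{q}|{\,}dt=\overline{G}_q$, to obtain the claimed inequality
$$
|R^q_{m,n}[g]| ~\leq ~ \overline{G}_q{\,}|\Delta^qg(n)-\Delta^qg(m)|.
$$
I do not expect a real obstacle here: the substantive fact being used—that the summands $k\mapsto\rho^{q+1}_k[g](t)$ do not change sign, which is what lets $\sum|\cdot|$ telescope to the difference $\Delta^qg(n)-\Delta^qg(m)$—was already established in Lemma~\ref{lemma:VarEpsIneq} and packaged into Lemma~\ref{lemma:VarEpsIneqSum}, so it need not be re-derived. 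The only thing to notice is that restricting to the unit interval of integration forces $\lceil t\rceil-1=0$, so that only the endpoints $n$ and $m$ contribute and the constant $\overline{G}_q$ emerges from \eqref{eq:GintF4}.
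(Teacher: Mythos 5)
Your proof is correct and follows essentially the same route as the paper: the paper's own proof applies Lemma~\ref{lemma:VarEpsIneq} directly, using the sign-constancy of $k\mapsto\rho^{q+1}_k[g](t)$ and telescoping $\sum_k\Delta^{q+1}g(k)$ inline before invoking \eqref{eq:GintF4}, whereas you route through Lemma~\ref{lemma:VarEpsIneqSum}, which packages exactly that argument, and then integrate over $t\in(0,1)$ where $\lceil t\rceil-1=0$. The difference is only in citation granularity and in whether the sum over $k$ or the integral over $t$ is treated first.
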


\begin{proof}
This result is an immediate consequence of Lemma~\ref{lemma:VarEpsIneq}. Indeed, we can write
$$
|R^q_{m,n}[g]| ~=~ \left|\sum_{k=m}^{n-1}\int_0^1\rho^{q+1}_k[g](t){\,}dt\right| ~\leq ~ \overline{G}_q{\,}\left|\sum_{k=m}^{n-1}\Delta^{q+1}g(k)\right|,
$$
where the latter sum clearly telescopes to $\Delta^qg(n)-\Delta^qg(m)$.
\end{proof}

\begin{example}
Let us compute numerically the integral
$$
I ~=~ \int_{\pi}^{2\pi}\ln x{\,}dx ~=~ 4.809854526737\ldots
$$
using Gregory's summation formula \eqref{eq:GregoryMN} and the upper bound \eqref{eq:saf65fs} of its remainder. Using an appropriate linear change of variable, we obtain
$$
I ~=~ \int_1^ng(t){\,}dt,\qquad\text{where}\quad g(t) ~=~ \frac{\pi}{n-1}\,\ln\left(\frac{\pi}{n-1}{\,}(t-1)+\pi\right).
$$
Taking $n=20$ and $q=10$ for instance, we obtain
$$
I ~\approx ~ \sum_{k=1}^{19}g(k)+\sum_{j=1}^{10}G_j(\Delta^{j-1}g(20)-\Delta^{j-1}g(1)) ~=~ 4.809854526746\ldots
$$
and \eqref{eq:saf65fs} gives $|R^{10}_{1,20}[g]|\leq 5.9\times 10^{-11}$.
\end{example}

In the following result, we give sufficient conditions on the function $g$ for the sequence $q\mapsto R^q_{m,n}[g]$ to converge to zero. Gregory's formula \eqref{eq:GregoryMN} then takes a special form.

\begin{proposition}\label{prop:a7fsd6f}
Let $g\in\cC^0\cap\cK^{\infty}$, $p\in\N$, and let $1\leq m\leq n$ be integers. Suppose that, for every integer $q\geq p$, the function $g$ is $q$-convex or $q$-concave on $[m,\infty)$. Suppose also that the sequence $q\mapsto\Delta^qg(n)-\Delta^qg(m)$ is bounded. Then we have
$$
R_{m,n}^q[g] ~\to~ 0 \qquad\text{as $q\to_{\N}\infty$},
$$
or equivalently,
$$
\int_m^ng(t){\,}dt ~=~ \sum_{k=m}^{n-1}g(k)+\sum_{j=1}^{\infty}G_j(\Delta^{j-1}g(n)-\Delta^{j-1}g(m)){\,}.
$$
If $g$ lies in $\cC^0\cap\mathrm{dom}(\Sigma g)$, then the latter identity also takes the form
$$
\Sigma g(n)-\Sigma g(m) ~=~ \int_m^ng(t){\,}dt-\sum_{j=1}^{\infty}G_j(\Delta^{j-1}g(n)-\Delta^{j-1}g(m)){\,}.
$$
\end{proposition}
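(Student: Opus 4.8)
The plan is to obtain the result as a direct consequence of the remainder estimate in Lemma~\ref{lemma:67cyxc} together with the pure identity given by Gregory's summation formula~\eqref{eq:GregoryMN}. First I would fix the integers $1\leq m\leq n$ and observe that the hypotheses license Lemma~\ref{lemma:67cyxc} for every large $q$: since $g\in\cK^{\infty}$ we have $g\in\cC^0\cap\cK^q$ for all $q\in\N$, and by assumption $g$ is $q$-convex or $q$-concave on $[m,\infty)$ for every integer $q\geq p$. Hence, for each such $q$, Lemma~\ref{lemma:67cyxc} yields
$$
|R^q_{m,n}[g]| ~\leq~ \overline{G}_q\,|\Delta^q g(n)-\Delta^q g(m)|.
$$

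Next I would combine the two inputs controlling the right-hand side. Recall from the discussion preceding Lemma~\ref{lemma:67cyxc} that the sequence $q\mapsto\overline{G}_q$ decreases to $0$, while by hypothesis the sequence $q\mapsto\Delta^q g(n)-\Delta^q g(m)$ is bounded. Consequently the product above tends to $0$, which gives $R^q_{m,n}[g]\to 0$ as $q\to_{\N}\infty$, the first assertion. To deduce the series identity, I would then let $q\to\infty$ in Gregory's formula~\eqref{eq:GregoryMN}: because $R^q_{m,n}[g]\to 0$ and both $\int_m^n g(t)\,dt$ and $\sum_{k=m}^{n-1}g(k)$ are fixed, the partial sums $\sum_{j=1}^q G_j(\Delta^{j-1}g(n)-\Delta^{j-1}g(m))$ must converge, and passing to the limit gives
$$
\int_m^n g(t)\,dt ~=~ \sum_{k=m}^{n-1}g(k)+\sum_{j=1}^{\infty}G_j(\Delta^{j-1}g(n)-\Delta^{j-1}g(m)).
$$

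Finally, for the reformulation in terms of $\Sigma$, I would assume $g\in\cC^0\cap\mathrm{dom}(\Sigma)$ and invoke identity~\eqref{eq:RestrInt}, namely $\Sigma g(n)=\sum_{k=1}^{n-1}g(k)$; subtracting its instances at $n$ and $m$ gives $\Sigma g(n)-\Sigma g(m)=\sum_{k=m}^{n-1}g(k)$, and substituting this into the identity above and rearranging yields the claimed formula. I do not expect any serious obstacle: the argument is an assembly of Lemma~\ref{lemma:67cyxc}, the monotone decay $\overline{G}_q\downarrow 0$, the boundedness hypothesis, and the pure identity~\eqref{eq:GregoryMN}. The only point deserving care is that the hypotheses are used exactly as needed — $\cK^{\infty}$ together with the eventual $q$-convexity/concavity to apply Lemma~\ref{lemma:67cyxc} for each large $q$, and the boundedness of $q\mapsto\Delta^q g(n)-\Delta^q g(m)$ to overcome the merely decaying factor $\overline{G}_q$ — so that it is the product, and not either factor alone, that forces $R^q_{m,n}[g]\to 0$.
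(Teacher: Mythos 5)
Your proof is correct and follows essentially the same route as the paper: apply Lemma~\ref{lemma:67cyxc} for each $q\geq p$, use that $\overline{G}_q$ decreases to zero together with the boundedness hypothesis to force $R^q_{m,n}[g]\to 0$, pass to the limit in Gregory's formula \eqref{eq:GregoryMN}, and deduce the $\Sigma$-form from \eqref{eq:RestrInt}. The paper's proof is merely a terser statement of exactly these steps, so there is nothing to add.
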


\begin{proof}
Under the assumptions of this proposition, the sequence $q\mapsto R^q_{m,n}[g]$ converges to zero by Lemma~\ref{lemma:67cyxc}. (Recall that the sequence $n\mapsto\overline{G}_n$ converges to zero.) The result then immediately follows from Gregory's formula \eqref{eq:GregoryMN}. The last part then follows from identity \eqref{eq:RestrInt}.
\end{proof}

\begin{example}\label{ex:a7fsd6fw}
Taking $g(x)=\ln x$ and $m=p=1$ in Proposition~\ref{prop:a7fsd6f}, we obtain the following identity
$$
\ln n! ~=~ 1-n+\left(n+\frac{1}{2}\right)\ln n+\frac{1}{12}\ln\left(\frac{n+1}{2n}\right)-\frac{1}{24}\ln\left(\frac{4n(n+2)}{3(n+1)^2}\right)+\cdots
$$
which holds for any $n\in\N^*$.
\end{example}

\parag{A geometric interpretation of Gregory's formula}\index{Gregory's summation formula!geometric interpretation} For any $g\in\cC^0$ and any $q\in\N$, we let $\overline{P}_q[g]\colon [1,\infty)\to\R$ denote the piecewise polynomial function whose restriction to any interval $[k,k+1)$, with $k\in\N^*$, is the interpolating polynomial\index{interpolating polynomial} of $g$ with nodes at $k,k+1,\ldots,k+q$. That is,\label{p:PieceWPol23}
\begin{equation}\label{eq:PieceWPol48}
\overline{P}_q[g](x) ~=~ P_q[g](k,k+1,\ldots,k+q;x),\qquad x\in [k,k+1),
\end{equation}
or equivalently, using \eqref{eq:NewtonInt},
\begin{eqnarray*}
\overline{P}_q[g](x) &=& P_q[g](\lfloor x\rfloor,\lfloor x\rfloor+1,\ldots,\lfloor x\rfloor+q;x)\\
&=& \sum_{j=0}^q\tchoose{\{x\}}{j}\,\Delta^jg(\lfloor x\rfloor){\,},\qquad x\geq 1.
\end{eqnarray*}
In the following proposition, we provide an integral expression for the remainder $R^q_{m,n}[g]$ in terms of the function $\overline{P}_q[g]$.

\begin{proposition}\label{prop:622PieceWPol48R}
For any $g\in\cC^0$, any $q\in\N$, and any integers $1\leq m\leq n$, we have
\begin{equation}\label{eq:PieceWPol48R}
R^q_{m,n}[g] ~=~ \int_m^n(g(t)-\overline{P}_q[g](t)){\,}dt.
\end{equation}
\end{proposition}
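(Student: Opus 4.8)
The plan is to reduce the claimed identity to the integral representation of the remainder already recorded in \eqref{eq:GregoryMN6Remai}, namely
$$
R^q_{m,n}[g] ~=~ \int_0^1\sum_{k=m}^{n-1}\rho_k^{q+1}[g](t){\,}dt,
$$
and then to recognize each summand as the integral over a unit interval of the interpolation error $g-\overline{P}_q[g]$. First I would invoke identity \eqref{eq:LErrIn} with $p=q+1$ and $a=k$ to write
$$
\rho^{q+1}_k[g](t) ~=~ g(k+t)-P_q[g](k,k+1,\ldots,k+q;k+t)
$$
for every $t\geq 0$. For $t\in[0,1)$ the point $x=k+t$ lies in $[k,k+1)$, so that $\lfloor x\rfloor=k$ and $\{x\}=t$; hence by the very definition \eqref{eq:PieceWPol48} of the piecewise polynomial function $\overline{P}_q[g]$, the interpolating polynomial appearing above is exactly $\overline{P}_q[g](k+t)$. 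This yields the pointwise identity $\rho^{q+1}_k[g](t)=g(k+t)-\overline{P}_q[g](k+t)$ on $[0,1)$.

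Next I would perform the change of variable $u=k+t$ in each integral $\int_0^1\rho^{q+1}_k[g](t){\,}dt$, turning it into $\int_k^{k+1}(g(u)-\overline{P}_q[g](u)){\,}du$. Summing over $k$ from $m$ to $n-1$ and using that the half-open intervals $[k,k+1)$, for $k=m,\ldots,n-1$, tile $[m,n)$, the sum of these integrals collapses to $\int_m^n(g(u)-\overline{P}_q[g](u)){\,}du$, which is precisely the desired expression \eqref{eq:PieceWPol48R}. The assumption $g\in\cC^0$ guarantees that all the integrals are well defined, since $\overline{P}_q[g]$ is a polynomial (hence bounded and continuous) on each interval $[k,k+1)$ and its finitely many jumps at the integer nodes are irrelevant to the value of the integral.

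There is essentially no obstacle here: the result is a direct transcription of the interpolation-error reading of $\rho^{q+1}_k[g]$ combined with the definition of $\overline{P}_q[g]$. The only point deserving a moment's care is the bookkeeping of the floor/fractional-part substitution, i.e.\ checking that the nodes $k,k+1,\ldots,k+q$ used to define $\overline{P}_q[g]$ on $[k,k+1)$ are exactly the nodes entering $\rho^{q+1}_k[g]$; this is immediate once one matches the explicit form \eqref{eq:NewtonInt} of the interpolating polynomial against the definition \eqref{eq:deflambdapt} of $\rho^{p}_a[g]$.
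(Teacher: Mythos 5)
Your proof is correct and follows essentially the same route as the paper's: both rest on identifying, via \eqref{eq:LErrIn} and the definition \eqref{eq:PieceWPol48}, the quantity $\int_0^1\rho_k^{q+1}[g](t){\,}dt$ with $\int_k^{k+1}(g(t)-\overline{P}_q[g](t)){\,}dt$, and then summing over $k=m,\ldots,n-1$. The only cosmetic difference is that the paper packages this bookkeeping through the generalized Binet function and identity \eqref{eq:6GregoryMN0046}, whereas you work directly from the definition \eqref{eq:GregoryMN6Remai} of the remainder; since \eqref{eq:6GregoryMN0046} is itself just the combination of \eqref{eq:Binet643780} and \eqref{eq:GregoryMN6Remai}, the two arguments coincide.
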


\begin{proof}
Using \eqref{eq:LErrIn} and \eqref{eq:Binet643780} we then obtain
$$
-J^{q+1}[g](k) ~=~ \int_0^1\rho_k^{q+1}[g](t){\,}dt ~=~ \int_k^{k+1}(g(t)-\overline{P}_q[g](t)){\,}dt.
$$
The result then follows from \eqref{eq:6GregoryMN0046}.
\end{proof}

Proposition~\ref{prop:622PieceWPol48R} immediately provides an interesting interpretation of Gregory's formula as a quadrature method.\index{Gregory's summation formula!as a quadrature formula} It actually shows that Gregory's formula approximates the integral of $g$ over the interval $[m,n)$ by replacing $g$ with the piecewise polynomial function $\overline{P}_q[g]$. In particular, the remainder $R^q_{m,n}[g]$ reduces to zero whenever $g$ is a polynomial of degree less than or equal to $q$.

We also observe that Gregory's formula reduces to the ``left'' rectangle method (left Riemann sum) when $q=0$, and the trapezoidal rule when $q=1$. However, it does not reduce to Simpson's rule when $q=2$. In fact, Gregory's formula does not correspond to a Newton-Cotes quadrature rule when $q\geq 2$.

Now, if $g$ is q-convex or $q$-concave on $[m,\infty)$, then for any $k\in\{m,m+1,\ldots,n-1\}$ and any $t\in [0,1)$, using Lemma~\ref{lemma:VarEpsIneq} and identity \eqref{eq:LErrIn} we obtain
$$
0 ~\leq ~ \pm (-1)^q\,\rho^{q+1}_k[g](t) ~=~  \pm (-1)^q\left(g(k+t)-\overline{P}_q[g](k+t)\right),
$$
where $\pm$ stands for $1$ or $-1$ according to whether $g$ is $q$-convex or $q$-concave on $[m,\infty)$. This observation provides the following additional geometric interpretation. It shows that, on the interval $[k,k+1)$, the graph of $g$ lies over or under that of $\overline{P}_q[g]$ according to whether $\pm (-1)^q$ is $1$ or $-1$. As an immediate consequence, the quantity $|J^{q+1}[g](k)|$ is precisely the surface area between both graphs over the interval $[k,k+1)$ while the remainder $|R^q_{m,n}[g]|$ is the surface area between both graphs over the interval $[m,n)$.

\begin{example}
With the function $g(x)=\ln x$ and the parameter $q=1$ we associate the piecewise linear function
$$
\overline{P}_1[g](x) ~=~ \ln\lfloor x\rfloor + (x-\lfloor x\rfloor)\ln\left(1+\frac{1}{\lfloor x\rfloor}\right).
$$
Since $g$ is concave, for any integer $n\geq 1$ the graph of $g$ on $[1,n)$ lies over (or on) that of $\overline{P}_1[g]$, which is the polygonal line through the points $(k,g(k))$ for $k=1,\ldots,n$. The value (see \eqref{eq:GregoryMN004})
$$
R^1_{1,n}[g] ~=~ J(1)-J(n) ~=~ -\ln\Gamma(n)+\left(n-\frac{1}{2}\right)\ln n -n+1{\,},
$$
where $J(x)$ is Binet's function\index{Binet's function} defined in \eqref{eq:Binet5The}, is then nothing other than the remainder in the trapezoidal rule on $[1,n)$ with the integer nodes $1,\ldots,n$. Geometrically, it measures the surface area between the graph of $g$ and the polygonal line.
\end{example}

\parag{Alternative integral form of the remainder} The following proposition yields an alternative integral form of the remainder $R^q_{m,n}[g]$ when $g$ lies in $\cC^{q+1}$ for some $q\in\N^*$. Consider first the (kernel) function $K^q_{m,n}\colon\R_+\to\R$ defined by the equation
$$
K^q_{m,n}(t) ~=~ \frac{1}{q!}{\,}R^q_{m,n}[(\boldsymbol{\cdot} -t)^q_+]\qquad\text{ for $t\in\R_+$}.
$$
It is not difficult to show that this function lies in $\cC^{q-1}$ and has the compact support $[m,n+q-1]$.

\begin{proposition}
Suppose that $g$ lies in $\cC^{q+1}$ for some $q\in\N^*$ and let $1\leq m\leq n$ be integers. Then we have
$$
R^q_{m,n}[g] ~=~ \int_m^{n+q-1} K^q_{m,n}(t){\,}D^{q+1}g(t){\,}dt.
$$
\end{proposition}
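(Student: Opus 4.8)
The plan is to realize $R^q_{m,n}$ as a Peano-type functional and to feed into it the Taylor expansion of $g$ with integral remainder. The two ingredients are, first, the observation recorded right after Proposition~\ref{prop:622PieceWPol48R} that $R^q_{m,n}$ annihilates every polynomial of degree at most $q$, and second, the integral representation $R^q_{m,n}[h]=\int_m^n(h(t)-\overline{P}_q[h](t))\,dt$ from Proposition~\ref{prop:622PieceWPol48R}, which in particular shows that $R^q_{m,n}[h]$ depends only on the values of $h$ on the compact interval $[m,n+q-1]$ (the nodes used by $\overline{P}_q$ on $[k,k+1)$ being $k,\ldots,k+q$, with $k\le n-1$).

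First I would expand $g$ about $x=m$ by Taylor's formula with integral remainder: for every $x\in[m,n+q-1]$,
$$
g(x) ~=~ \sum_{k=0}^q\frac{D^kg(m)}{k!}(x-m)^k+\frac{1}{q!}\int_m^x(x-t)^qD^{q+1}g(t)\,dt.
$$
Since $(x-t)^q=(x-t)_+^q$ for $t\le x$ and $(x-t)_+^q=0$ for $t>x$, and since every relevant $x$ satisfies $x\le n+q-1$, the remainder term may be rewritten as $\frac{1}{q!}\int_m^{n+q-1}(x-t)_+^qD^{q+1}g(t)\,dt$. Applying $R^q_{m,n}$ to both sides and using that it annihilates the polynomial part (degree $\le q$), I obtain
$$
R^q_{m,n}[g] ~=~ \frac{1}{q!}\,R^q_{m,n}\!\left[x\mapsto\int_m^{n+q-1}(x-t)_+^qD^{q+1}g(t)\,dt\right].
$$

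Next I would interchange the functional $R^q_{m,n}$ with the integration in $t$, which turns the right-hand side into $\frac{1}{q!}\int_m^{n+q-1}R^q_{m,n}[(\boldsymbol{\cdot}-t)_+^q]\,D^{q+1}g(t)\,dt=\int_m^{n+q-1}K^q_{m,n}(t)\,D^{q+1}g(t)\,dt$, which is the desired identity; the stated compact support $[m,n+q-1]$ of $K^q_{m,n}$ confirms that the limits of integration are correct. The main obstacle is precisely justifying this interchange. Using the representation of Proposition~\ref{prop:622PieceWPol48R}, the functional is $R^q_{m,n}[h]=\int_m^n(h(t)-\overline{P}_q[h](t))\,dt$, so the interchange amounts to a Fubini argument for $\int_m^n\!\int_m^{n+q-1}\big((x-t)_+^q-\overline{P}_q[(\boldsymbol{\cdot}-t)_+^q](x)\big)D^{q+1}g(t)\,dt\,dx$. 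Because $q\ge 1$, the map $(x,t)\mapsto(x-t)_+^q$ is jointly continuous, $\overline{P}_q[(\boldsymbol{\cdot}-t)_+^q](x)$ is a finite linear combination (with bounded coefficients) of translates of such truncated powers, and $D^{q+1}g$ is continuous; hence the integrand is bounded and measurable on the compact rectangle $[m,n]\times[m,n+q-1]$, and Fubini's theorem applies.
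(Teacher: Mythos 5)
Your proof is correct and follows essentially the same route as the paper's: Taylor's theorem with the remainder written via truncated powers $(x-t)_+^q$, annihilation of the degree-$\leq q$ polynomial part by $R^q_{m,n}$, and interchange of the functional with the integration in $t$. The only difference is that you spell out the Fubini justification for the interchange (via the representation $R^q_{m,n}[h]=\int_m^n(h-\overline{P}_q[h])$ on a compact rectangle), a step the paper leaves implicit.
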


\begin{proof}
By Taylor's theorem, the following identity
$$
g(x) ~=~ P_q(x) + \int_m^{n+q-1}\frac{(x-t)^q_+}{q!}{\,}D^{q+1}g(t){\,}dt
$$
holds on the interval $[m,n+q-1]$ for some polynomial $P_q$ of degree less than or equal to $q$. The result then follows from the definition of the remainder $R^q_{m,n}[g]$ and the fact that $R^q_{m,n}[P_q]=0$.
\end{proof}

Interestingly, if the function $K^q_{m,n}$ does not change in sign (and we conjecture that $(-1)^q{\,}K^q_{m,n}$ is nonnegative), then by the mean value theorem for definite integrals the remainder also takes the form
$$
R^q_{m,n}[g] ~=~ D^{q+1}g(\xi)\,\int_m^{n+q-1}K^q_{m,n}(t){\,}dt
$$
for some $\xi\in [m,n+q-1]$.

\begin{remark}
We observe that Jordan \cite[p.~285]{Jor65} claimed that
$$
\text{``$~R^q_{m,n}[g] ~=~ G_{q+1}(n-m)\,\Delta^{q+1} g(\xi)~$''}
$$
for some $\xi\in (m,n)$. However, taking for instance $g(x)=x^2$ and $(q,m,n)=(0,1,2)$, we can see that this form of the remainder is not correct. Nevertheless, several examples suggest that Jordan's statement could possibly be corrected by assuming that $\xi\in (m-1,n-1)$. This question thus remains open.
\end{remark}

\parag{General Gregory's formula and Euler-Maclaurin's formula} The following proposition provides Gregory's formula in its general form using our integral expression for the remainder. 

\begin{proposition}[General form of Gregory's formula]\label{prop:GenFoGreg7}\index{Gregory's summation formula!general form}
Let $a\in\R$, $n,q\in\N$, $h>0$, and $f\in\cC^0([a,\infty))$. Then we have
\begin{eqnarray*}
\lefteqn{\frac{1}{h}\int_a^{a+nh}f(t){\,}dt ~=~ \sum_{k=0}^{n-1}f(a+kh)}\\
&& \null +\sum_{j=1}^qG_j\left((\Delta_{[h]}^{j-1}f)(a+nh)-(\Delta_{[h]}^{j-1}f)(a)\right)+R^q_{1,n+1}[f^h_a]{\,},
\end{eqnarray*}
where
$$
R^q_{1,n+1}[f^h_a] ~=~ \int_0^1\sum_{k=1}^n\rho_k^{q+1}[f^h_a](t){\,}dt\quad\text{and}\quad f^h_a(x) ~=~ f(a+(x-1)h).
$$
Moreover, if $f$ is $q$-convex or $q$-concave on $[a,\infty)$, then
$$
|R^q_{1,n+1}[f^h_a]| ~\leq ~ \overline{G}_q\left|(\Delta_{[h]}^{q}f)(a+nh)-(\Delta_{[h]}^{q}f)(a)\right|.
$$
Here, $\Delta_{[h]}$\label{p:DeltaSh} denotes the forward difference operator with step $h>0$.
\end{proposition}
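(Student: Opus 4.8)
The plan is to reduce the statement to the basic Gregory summation formula \eqref{eq:GregoryMN} applied to the function $g=f^h_a$ given by $f^h_a(x)=f(a+(x-1)h)$. First I would record the elementary consequences of the affine substitution $\phi(x)=a+(x-1)h$. The change of variable $u=\phi(t)$ gives $\int_1^{n+1}f^h_a(t)\,dt=\frac1h\int_a^{a+nh}f(u)\,du$; evaluation at integers gives $\sum_{k=1}^{n}f^h_a(k)=\sum_{k=0}^{n-1}f(a+kh)$; and a one-line induction on $j$ yields the key identity $\Delta^{j}f^h_a(x)=(\Delta_{[h]}^{j}f)(a+(x-1)h)$, whence $\Delta^{j-1}f^h_a(1)=(\Delta_{[h]}^{j-1}f)(a)$ and $\Delta^{j-1}f^h_a(n+1)=(\Delta_{[h]}^{j-1}f)(a+nh)$. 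These are exactly the boundary terms appearing in the statement.

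Next I would apply \eqref{eq:GregoryMN} to $g=f^h_a$ with $m=1$ and upper index $n+1$, and substitute the identities above; the definition \eqref{eq:GregoryMN6Remai} of the remainder matches $R^q_{1,n+1}[f^h_a]=\int_0^1\sum_{k=1}^{n}\rho_k^{q+1}[f^h_a](t)\,dt$ verbatim, so the main identity follows immediately. Here lies the one genuine subtlety, and the step I expect to need the most care: since $f$ is only given on $[a,\infty)$, the function $f^h_a$ is naturally defined only on $[1,\infty)$, whereas the derivation of \eqref{eq:GregoryMN} through \eqref{eq:fngsum} and \eqref{eq:fng} formally refers (via $f^q_1[g]$) to values of $g$ on $(0,1)$. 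I would dispose of this by extending $f$ to any continuous function on $(a-h,\infty)$, so that $f^h_a$ becomes continuous on all of $\R_+$, and then observing that every term surviving in the final identity — the integral, the sum $\sum_{k=1}^n f^h_a(k)$, the boundary differences at $1$ and $n+1$, and the $\rho$-form of the remainder — involves only arguments $\ge 1$, hence only the original values of $f$. Thus the arbitrary extension is irrelevant and the identity holds as stated.

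Finally, for the error bound I would invoke Lemma~\ref{lemma:67cyxc} with $m=1$, which requires $f^h_a$ to lie in $\cC^0\cap\cK^q$ and to be $q$-convex or $q$-concave on $[1,\infty)$. Using the explicit expression \eqref{eq:DivDiffPai4Dis82} for divided differences at pairwise distinct nodes together with $\phi(x_k)-\phi(x_j)=h(x_k-x_j)$, exactly as in the proof of Corollary~\ref{cor:Hom4}, one obtains $f^h_a[x_0,\dots,x_q]=h^{q}\,f[\phi(x_0),\dots,\phi(x_q)]$; since $h>0$ and $\phi([1,\infty))=[a,\infty)$, the $q$-convexity (resp.\ $q$-concavity) of $f$ on $[a,\infty)$ transfers to $f^h_a$ on $[1,\infty)$. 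Lemma~\ref{lemma:67cyxc} then yields $|R^q_{1,n+1}[f^h_a]|\le\overline{G}_q\,|\Delta^q f^h_a(n+1)-\Delta^q f^h_a(1)|$, and substituting the iterated-difference identity from the first step turns the right-hand side into $\overline{G}_q\,|(\Delta_{[h]}^{q}f)(a+nh)-(\Delta_{[h]}^{q}f)(a)|$, which is the claimed estimate.
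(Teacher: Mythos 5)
Your proposal is correct and follows essentially the same route as the paper, whose entire proof is the one-line observation that the statement is \eqref{eq:GregoryMN} and \eqref{eq:GregoryMN6Remai} with $n$ replaced by $n+1$, $m=1$, and $g(x)=f(a+(x-1)h)$, the bound then coming from Lemma~\ref{lemma:67cyxc}. The details you supply — the substitution identities $\Delta^{j}f^h_a(x)=(\Delta_{[h]}^{j}f)(a+(x-1)h)$, the continuous extension of $f$ below $a$ to make $f^h_a$ defined on all of $\R_+$ (harmless since every surviving term involves only arguments $\geq 1$), and the transfer of $q$-convexity under the affine substitution via $f^h_a[x_0,\dots,x_q]=h^{q}\,f[\phi(x_0),\dots,\phi(x_q)]$ — are exactly the checks the paper leaves implicit, and you carry them out correctly.
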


\begin{proof}
This formula can be obtained immediately from \eqref{eq:GregoryMN} and \eqref{eq:GregoryMN6Remai} replacing $n$ with $n+1$ and then setting $m=1$ and $g(x)=f(a+(x-1)h)$. The last part follows from Lemma~\ref{lemma:67cyxc}.
\end{proof}

The general Gregory formula is often compared with the corresponding \emph{Euler-Maclau\-rin summation formula}. We will use the latter in Chapter~\ref{chapter:8}, so we now state it in its general form (for background see, e.g., Apostol \cite{Apo99}, Gel'fond \cite{Gel71}, Lampret \cite{Lam01}, Mariconda and Tonolo \cite{MarTon16}, and Srivastava and Choi \cite{SriCho12}).

Recall first that the \emph{Bernoulli numbers}\index{Bernoulli numbers|textbf} $B_0,B_1,B_2,\ldots$\label{p:BernN} are defined implicitly by the single equation (see, e.g., Gel'fond \cite[Chapter 4]{Gel71} and Graham {\em et al.} \cite[p.~284]{GraKnuPat94})
\begin{equation}\label{defBnN3}
\sum_{j=0}^m\tchoose{m+1}{j}B_j ~=~ 0^m,\qquad m\in\N{\,}.
\end{equation}
The first few values of $B_n$ are: $1, -\frac{1}{2}, \frac{1}{6}, 0, -\frac{1}{30}, 0,\ldots$. Recall also that, for any $n\in\N$, the $n$th degree Bernoulli polynomial\index{Bernoulli polynomials|textbf} $B_n(x)$\label{p:BernP} is defined by the equation
$$
B_n(x) ~=~ \sum_{k=0}^n\tchoose{n}{k}{\,}B_{n-k}{\,}x^k\qquad\text{for $x\in\R$}.
$$

\begin{proposition}[Euler-Maclaurin's formula]\label{prop:E3MacLau5}\index{Euler-Maclaurin summation formula}
Let $N\in\N^*$, $f\in\cC^1([a,b])$, and $h=(b-a)/N$, for some real numbers $a<b$. Then we have
\begin{eqnarray*}
h\,\sum_{k=0}^Nf(a+kh) &=& \int_a^b f(x){\,}dx+\frac{h}{2}{\,}(f(a)+f(b))\\
&&\null +h^2\int_0^N B_1(\{t\}){\,}f'(a+th){\,}dt{\,}.
\end{eqnarray*}
If, in addition, $f\in\cC^{2q}([a,b])$ for some $q\in\N^*$, then
\begin{eqnarray*}
h\,\sum_{k=0}^Nf(a+kh) &=& \int_a^b f(x){\,}dx+\frac{h}{2}{\,}(f(a)+f(b))\\
&&\null +\sum_{j=1}^qh^{2j}{\,}\frac{B_{2j}}{(2j)!}{\,}\left(f^{(2j-1)}(b)-f^{(2j-1)}(a)\right) + R{\,},
\end{eqnarray*}
where
$$
R ~=~ - h^{2q+1}\int_0^N\frac{B_{2q}(\{t\})}{(2q)!}{\,}f^{(2q)}(a+th){\,}dt
$$
and
$$
|R| ~\leq ~ h^{2q}{\,}\frac{|B_{2q}|}{(2q)!}\int_a^b|f^{(2q)}(x)|{\,}dx{\,}.
$$
Here $f\in\cC^k([a,b])$ means that $f\in\cC^k(I)$ for some open interval $I$ containing $[a,b]$.
\end{proposition}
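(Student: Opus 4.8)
The plan is to derive both formulas by repeated integration by parts against the periodic Bernoulli functions $t\mapsto B_n(\{t\})$, after first recording the elementary facts about Bernoulli numbers and polynomials that follow from \eqref{defBnN3} and the definition of $B_n(x)$. From $B_n(x)=\sum_{k=0}^n\binom{n}{k}B_{n-k}x^k$ one reads off $B_n'(x)=n\,B_{n-1}(x)$ and $B_n(0)=B_n$; the reflection identity $B_n(1-x)=(-1)^nB_n(x)$ (proved by checking that both sides obey the same derivative recursion and agree at one point) then gives $B_n(1)=B_n$ for $n\neq 1$, together with $B_1(0)=-\tfrac12$, $B_1(1)=\tfrac12$, and $\int_0^1 B_n(x)\,dx=\tfrac{1}{n+1}(B_{n+1}(1)-B_{n+1}(0))=0$ for $n\geq 1$. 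Finally $B_{2j+1}=0$ for $j\geq 1$, and $B_n(\{t\})$ is continuous on $\R$ for every $n\geq 2$ since $B_n(0)=B_n(1)$ in that range, whereas $B_1(\{t\})$ jumps at the integers.

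For the first part, I would work interval by interval: on $[k,k+1]$ one has $B_1(\{t\})=t-k-\tfrac12$, and integrating $\int_k^{k+1}B_1(\{t\})\,f'(a+th)\,dt$ by parts with $v=\tfrac1h f(a+th)$ produces the boundary contribution $\tfrac{1}{2h}(f(a+(k+1)h)+f(a+kh))$ together with $-\tfrac1h\int_k^{k+1}f(a+th)\,dt$. Summing over $k=0,\dots,N-1$, the boundary terms telescope into $\tfrac1h\big(\sum_{k=0}^N f(a+kh)-\tfrac12(f(a)+f(b))\big)$ with $b=a+Nh$, while the substitution $x=a+th$ turns the remaining integral into $-\tfrac{1}{h^2}\int_a^b f$. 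Multiplying through by $h^2$ and rearranging yields the first displayed formula.

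For the second part, starting from the remainder integral $h^2\int_0^N B_1(\{t\})f'(a+th)\,dt$ just obtained, I would integrate by parts $2q-1$ further times, at each step writing $B_m(\{t\})=\tfrac{1}{m+1}\frac{d}{dt}B_{m+1}(\{t\})$ (valid off the integers) and using that $B_{m+1}(\{t\})$ is continuous for $m+1\geq 2$, so the integration is legitimate across the integer nodes with no interior jump contributions. Each step multiplies the surviving integral by a factor $-\tfrac{h}{m+1}$ (the $h$ coming from $\frac{d}{dt}f^{(m)}(a+th)=h\,f^{(m+1)}(a+th)$) and deposits a boundary term $\tfrac{h^{m+1}}{(m+1)!}B_{m+1}(f^{(m)}(b)-f^{(m)}(a))$, since $B_{m+1}(\{t\})=B_{m+1}$ at both $t=0$ and $t=N$. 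Because $B_3=B_5=\cdots=0$, only the terms with $m+1$ even survive, assembling into $\sum_{j=1}^q h^{2j}\tfrac{B_{2j}}{(2j)!}(f^{(2j-1)}(b)-f^{(2j-1)}(a))$, and the telescoping product of step factors leaves $R=-h^{2q+1}\int_0^N\frac{B_{2q}(\{t\})}{(2q)!}f^{(2q)}(a+th)\,dt$, the power $h^{2q+1}=h^2\cdot h^{2q-1}$ and the sign $(-1)^{2q-1}=-1$ both coming out correctly.

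For the estimate I would invoke the classical inequality $|B_{2q}(x)|\leq|B_{2q}|$ for $x\in[0,1]$, which follows from $B_{2q}(x)-B_{2q}$ keeping a constant sign on $[0,1]$ (a consequence of the sign pattern of $B_{2q-1}$ on $(0,1)$); hence $|B_{2q}(\{t\})|\leq|B_{2q}|$ and $|R|\leq h^{2q+1}\tfrac{|B_{2q}|}{(2q)!}\int_0^N|f^{(2q)}(a+th)|\,dt=h^{2q}\tfrac{|B_{2q}|}{(2q)!}\int_a^b|f^{(2q)}(x)|\,dx$ after substitution. The only genuinely delicate point is the bookkeeping of the iterated integration by parts: one must justify passing the derivative across the integer nodes via the continuity of $B_{m+1}(\{t\})$ for $m+1\geq 2$ (the case $m=0$, where $B_1(\{t\})$ jumps, being exactly why the first part is handled separately), and then track the accumulating factorials, powers of $h$, and signs so that the even Bernoulli numbers reassemble with the stated coefficients.
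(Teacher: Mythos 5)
The paper does not prove Proposition~\ref{prop:E3MacLau5} at all: it is stated as classical background, with the proof delegated to the cited references \cite{Apo99,Gel71,Lam01}, so there is no in-paper argument to compare against. Your proof is the standard derivation found in those sources: integration by parts on each $[k,k+1]$ against $B_1(\{t\})$ to produce the trapezoidal identity, then $2q-1$ further integrations by parts using $B_m(\{t\})=\frac{1}{m+1}\frac{d}{dt}B_{m+1}(\{t\})$ off the integers together with the continuity of $B_{m+1}(\{t\})$ for $m+1\geq 2$. The bookkeeping all checks out: the summed boundary terms give $\frac{1}{h}\sum_{k=0}^Nf(a+kh)-\frac{1}{2h}(f(a)+f(b))$, the accumulated prefactor after reaching $I_{2q}=\int_0^NB_{2q}(\{t\})f^{(2q)}(a+th)\,dt$ is $(-1)^{2q-1}h^{2q+1}/(2q)!$, the surviving boundary terms (those with even index, the odd ones vanishing via $B_{2j+1}=0$) carry the sign $+1$, and the substitution $x=a+th$ converts the remainder bound correctly. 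So both displayed formulas and the form of $R$ are properly established.

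Two of your auxiliary justifications are weaker than the rest and should be repaired. First, your preliminary Bernoulli facts are derived in a circular order: you prove the reflection identity $B_n(1-x)=(-1)^nB_n(x)$ by ``same recursion plus agreement at one point,'' but no point of agreement can be verified until one already knows $B_n(1)=B_n(0)$ for $n\geq 2$ and $B_n=0$ for odd $n\geq 3$ --- both of which you derive only \emph{from} reflection. The fix is to start from the defining relation~\eqref{defBnN3}, which gives $B_n(1)=\sum_{j=0}^n\tchoose{n}{j}B_j=B_n+0^{n-1}$, hence $B_n(1)=B_n(0)$ for $n\geq 2$ and $\int_0^1B_n(x)\,dx=0$ for $n\geq 1$; reflection then follows by induction on $n$, pinning the constant of integration at each level with this vanishing integral, and $B_n=(-1)^nB_n$ for odd $n\geq 3$ drops out afterwards. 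Second, and more substantively, the bound $|B_{2q}(x)|\leq|B_{2q}|$ on $[0,1]$ --- which is exactly what converts the integral form of $R$ into the stated estimate --- is not implied by your argument: constant sign of $B_{2q}(x)-B_{2q}$ only shows that $B_{2q}(x)$ lies between $B_{2q}(1/2)$ and $B_{2q}$, so you still need $|B_{2q}(1/2)|\leq|B_{2q}|$. This follows from $B_n(1/2)=(2^{1-n}-1)B_n$, provable by the same recursion-plus-integral induction applied to $B_n(x)+B_n\bigl(x+\frac{1}{2}\bigr)$ and $2^{1-n}B_n(2x)$; alternatively, invoke the inequality explicitly as a classical result rather than half-deriving it.
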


\begin{remark}\label{rem:CompEulGr3}
We observe (to paraphrase Jordan \cite[p.~285]{Jor65}) that Euler-Maclaurin's formula is more advantageous than Gregory's formula if we deal with functions whose derivatives are less complicated than their differences. However, there are functions for which Euler-Maclaurin's formula leads to divergent series while the corresponding Gregory's formula-based series (see Proposition~\ref{prop:a7fsd6f}) are convergent. For instance, this may be due to the fact that, for any $x>0$, the sequence $n\mapsto D^n\frac{1}{x}$ is unbounded while the sequence $n\mapsto\Delta^n\frac{1}{x}$ converges to zero.
\end{remark}

\section{Generalized Euler's constant}
\label{sec:GEc53}
\index{Euler's constant!generalized|(}

In this section, we introduce and discuss an analogue of Euler's constant for any function $g$ lying in $\cC^0\cap\mathrm{dom}(\Sigma)$. We first consider a lemma.

\begin{lemma}\label{lemma:6GEC43sdf}
Let $g$ lie in $\cC^0\cap\cD^p\cap\cK^p$ for some $p\in\N$ and let $m\in\N^*$. Then the sequence $n\mapsto R^p_{m,n}[g]$ for $n\geq m$ converges. Denoting its limit by $R^p_{m,\infty}[g]$, we have
$$
R^p_{m,\infty}[g] ~=~ J^{p+1}[\Sigma g](m).
$$
\end{lemma}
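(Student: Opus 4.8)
The plan is to establish the convergence of the sequence $n\mapsto R^p_{m,n}[g]$ and to identify its limit by passing to the limit in the already-available closed form \eqref{eq:GregoryMN004}. Recall that for $g\in\cC^0\cap\mathrm{dom}(\Sigma)$, identity \eqref{eq:GregoryMN004} gives, for all integers $1\leq m\leq n$,
$$
J^{q+1}[\Sigma g](n)-J^{q+1}[\Sigma g](m) +R^q_{m,n}[g] ~=~ 0{\,}.
$$
Specializing this to $q=p$ and rearranging, we obtain
$$
R^p_{m,n}[g] ~=~ J^{p+1}[\Sigma g](m) - J^{p+1}[\Sigma g](n){\,},\qquad n\geq m{\,}.
$$
Thus the entire statement reduces to controlling the behavior of the single quantity $J^{p+1}[\Sigma g](n)$ as $n\to\infty$.

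The key observation is that $g$ lies in $\cC^0\cap\cD^p\cap\cK^p$, which is precisely the hypothesis of the generalized Stirling formula (Theorem~\ref{thm:dgf7dds}) and of Corollary~\ref{cor:6GenStFo0BaIneqCo}. These results tell us that $J^{p+1}[\Sigma g](x)\to 0$ as $x\to\infty$. In particular, the sequence $n\mapsto J^{p+1}[\Sigma g](n)$ converges to zero as $n\to_{\N}\infty$. Combining this with the displayed identity above, I conclude that
$$
\lim_{n\to\infty}R^p_{m,n}[g] ~=~ J^{p+1}[\Sigma g](m) - \lim_{n\to\infty}J^{p+1}[\Sigma g](n) ~=~ J^{p+1}[\Sigma g](m){\,},
$$
which is exactly the desired value $R^p_{m,\infty}[g]=J^{p+1}[\Sigma g](m)$.

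The proof is therefore essentially a two-line deduction: invoke \eqref{eq:GregoryMN004} to rewrite the remainder, then apply the generalized Stirling formula to kill the tail term. There is no real obstacle here, since all the analytic work (the convergence $J^{p+1}[\Sigma g](x)\to 0$) has already been carried out in Theorem~\ref{thm:dgf7dds}; the only care needed is to ensure that $g\in\cC^0$ so that the remainder $R^p_{m,n}[g]$ and the Binet function $J^{p+1}[\Sigma g]$ are well defined as integrals, and that $g\in\mathrm{dom}(\Sigma)$ so that \eqref{eq:GregoryMN004} applies — both of which are guaranteed by the hypothesis $g\in\cC^0\cap\cD^p\cap\cK^p$. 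One could alternatively argue directly from the first expression in \eqref{eq:GregoryMN6Remai} together with the summability of $k\mapsto J^{p+1}[g](k)$ (via \eqref{eq:6GregoryMN0046} and Lemma~\ref{lemma:67cyxc}), but routing through \eqref{eq:GregoryMN004} and Theorem~\ref{thm:dgf7dds} is the cleanest path.
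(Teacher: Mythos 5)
Your proposal is correct and follows exactly the paper's own proof, which likewise deduces the result immediately from identity \eqref{eq:GregoryMN004} combined with the generalized Stirling formula (Theorem~\ref{thm:dgf7dds}). Your write-up simply makes explicit the rearrangement $R^p_{m,n}[g]=J^{p+1}[\Sigma g](m)-J^{p+1}[\Sigma g](n)$ and the vanishing of the second term, which the paper leaves to the reader.
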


\begin{proof}
The proof is an immediate consequence of \eqref{eq:GregoryMN004} and the generalized Stirling formula (Theorem~\ref{thm:dgf7dds}).
\end{proof}

Under the assumptions of Lemma~\ref{lemma:6GEC43sdf}, using \eqref{eq:GregoryMN6Remai}, \eqref{eq:6GregoryMN0046}, and \eqref{eq:PieceWPol48R} we immediately obtain the following identities
\begin{eqnarray*}
R^p_{m,\infty}[g] &=& \sum_{k=m}^{\infty}\int_0^1\rho^{p+1}_k[g](t){\,}dt ~=~ \int_0^1\sum_{k=m}^{\infty}\rho^{p+1}_k[g](t){\,}dt\\
&=& \int_0^1(f^p_{m}[g](t)-\Sigma g(t)){\,}dt
\end{eqnarray*}
and
\begin{equation}\label{eq:RpmiIneq904}
R^p_{m,\infty}[g] ~=~ -\sum_{k=m}^{\infty}J^{p+1}[g](k) ~=~ \int_{m}^{\infty}(g(t)-\overline{P}_p[g](t)){\,}dt.
\end{equation}
Moreover, if $g$ is $p$-convex or $p$-concave on $[m,\infty)$, the inequality \eqref{eq:saf65fs} reduces to
\begin{equation}\label{eq:RpmiIneq9}
|R^p_{m,\infty}[g]| ~=~  |J^{p+1}[\Sigma g](m)| ~\leq ~ \overline{G}_p{\,}|\Delta^pg(m)|{\,},
\end{equation}
which is also an immediate consequence of Corollary~\ref{cor:6GenStFo0BaIneqCo} (where a tighter inequality is also provided when $p\geq 1$).

Let us now provide a geometric interpretation of the remainder $R^p_{m,\infty}[g]$ when $g$ is $p$-convex or $p$-concave on $[m,\infty)$. Suppose for instance that $g$ is $p$-convex on $[m,\infty)$. The interpretation of Gregory's formula discussed in Section~\ref{sec:G8S3F5R} shows that, on the whole of the interval $[m,\infty)$, the graph of $g$ lies over or under that of $\overline{P}_p[g]$ according to whether $p$ is even or odd, and the remainder $|R^p_{m,\infty}[g]|$ is precisely the surface area between both graphs. Interestingly, the fact that this surface area converges to zero as $m\to_{\N}\infty$ by \eqref{eq:RpmiIneq9} provides a direct interpretation of the restriction of the generalized Stirling formula\index{Stirling's formula!generalized} to integer values.

This interpretation is particularly visual when $p=0$ or $p=1$. Consider for instance the case $p=1$ and suppose that $g$ is concave on $[m,\infty)$ (e.g., $g(x)=\ln x$). Then, the graph of $g$ on $[m,\infty)$ lies over (or on) the polygonal line through the points $(k,g(k))$ for all integers $k\geq m$. The value $|R^p_{m,\infty}[g]|$ is then the surface area between the graph of $g$ and this polygonal line. It is also the absolute value of the remainder in the trapezoidal rule on $[m,\infty)$.

We are now able to introduce an analogue of Euler's constant for any function $g$ lying in $\cC^0\cap\mathrm{dom}(\Sigma)$. We call it the \emph{generalized Euler constant}.

\begin{definition}[Generalized Euler's constant]\label{de:GEC587}\index{Euler's constant!generalized|textbf}
The \emph{generalized Euler constant} associated with a function $g\in\cC^0\cap\mathrm{dom}(\Sigma)$ is the number
$$
\gamma[g] ~=~ -R^p_{1,\infty}[g] ~=~ -J^{p+1}[\Sigma g](1){\,},\label{p:genEC}
$$
where $p=1+\deg g$.
\end{definition}

For instance, if $g$ lies in $\cC^0\cap\cD^0\cap\cK^0$, then using \eqref{eq:GregoryMN} we obtain
\begin{eqnarray}
\gamma[g] &=& \lim_{n\to\infty}\left(\sum_{k=1}^{n-1}g(k)-\int_1^ng(t){\,}dt\right)\label{eq:GEC000}\\
&=& \sum_{k=1}^{\infty}\left(g(k)-\int_k^{k+1}g(t){\,}dt\right),\nonumber
\end{eqnarray}
and this value represents the remainder in the ``left'' rectangle method on $[1,\infty)$ with the integer nodes $k=1,2,\ldots$. Similarly, if $g$ lies in $\cC^0\cap\cD^1\cap\cK^1$ and $\deg g=0$, then we get
\begin{eqnarray}
\gamma[g] &=& \lim_{n\to\infty}\left(\sum_{k=1}^{n-1}g(k)-\int_1^ng(t){\,}dt+\frac{1}{2}{\,}g(n)-\frac{1}{2}{\,}g(1)\right)\label{eq:GEC0001}\\
&=& \sum_{k=1}^{\infty}\left(g(k)-\int_k^{k+1}g(t){\,}dt+\frac{1}{2}{\,}\Delta g(k)\right),\nonumber
\end{eqnarray}
and this value represents the remainder in the trapezoidal rule on $[1,\infty)$ with the integer nodes $k=1,2,\ldots$.

Thus defined, the number $\gamma[g]$ generalizes to any function $g$ lying in $\cC^0\cap\mathrm{dom}(\Sigma)$ not only the classical Euler constant\index{Euler's constant} $\gamma$ (obtained when $g(x)=\frac{1}{x}$) but also the generalized Euler constant $\gamma[g]$ associated with a positive and strictly decreasing function $g$ as defined in \eqref{eq:GEC000} (see, e.g., Apostol \cite{Apo99} and Finch \cite[Section 1.5.3]{Fin03}). Moreover, as we will see in Section~\ref{sec:WF73}, this number plays a central role in the Weierstrassian form of $\Sigma g$ (which also justifies the choice $m=1$ in the definition of $\gamma[g]$).

The definition of $\gamma[g]$ does not require $g$ to be $p$-convex or $p$-concave on $[1,\infty)$. However, if this latter condition holds, then by \eqref{eq:RpmiIneq9} we have the inequality
\begin{equation}\label{eq:RpmiIneq91}
|\gamma[g]| ~\leq ~ \overline{G}_p{\,}|\Delta^pg(1)|
\end{equation}
and by Corollary~\ref{cor:6GenStFo0BaIneqCo} the following tighter inequality also holds when $p\geq 1$
\begin{equation}\label{eq:RpmiIneq91b}
|\gamma[g]| ~\leq ~ \int_0^1\left|\tchoose{t-1}{p}\right|\left|\Delta^{p-1}g(t+1)-\Delta^{p-1}g(1)\right|{\,}dt.
\end{equation}
We also provide and discuss finer bounds for $\gamma[g]$ in Appendix~\ref{chapter:E-GenWeIn5} (see Remark~\ref{rem:EzzBou0gam332}).

\begin{example}
If $g(x)=1/x$, then $\gamma[g]$ reduces to Euler's constant $\gamma$,\index{Euler's constant} as expected. Indeed, in this case we obtain
$$
\gamma[g] ~=~ -J^1[\psi](1) ~=~ \gamma.
$$
Using~\eqref{eq:GEC000}, we then retrieve the well-known formula
$$
\gamma ~=~ \lim_{n\to\infty}\left(\sum_{k=1}^n\frac{1}{k} -\ln n\right)
$$
and its classical geometric interpretation. If $g(x)=\ln x$, then the associated generalized Euler constant is
$$
\gamma[g] ~=~ -J^2[\ln\circ\Gamma](1) ~=~ -J(1) ~=~ -1+\frac{1}{2}\ln(2\pi) ~\approx ~ -0.081
$$
and we can see that it coincides with the associated asymptotic constant\index{asymptotic constant} $\sigma[g]$ (see Example~\ref{ex:Raab286}). Moreover, using \eqref{eq:GEC0001} we obtain the following formula
$$
\gamma[g] ~=~ \lim_{n\to\infty}\left(\ln n!+n-1-\left(n+\textstyle{\frac{1}{2}}\right)\ln n\right).
$$
The value $|\gamma[g]|=-\gamma[g]$ can then be interpreted as the surface area between the graph of $g$ on the unbounded interval $[1,\infty)$ and the polygonal line through the points $(k,g(k))$ for all integers $k\geq 1$. Moreover, Eq.~\eqref{eq:RpmiIneq91b} provides the following inequality
$$
|\gamma[g]| ~\leq ~ \ln 4-\frac{5}{4} ~\approx ~ 0.14.\qedhere
$$
\end{example}

\parag{A conversion formula between $\gamma[g]$ and $\sigma[g]$}\index{Euler's constant!generalized!in terms of the asymptotic constant} The following proposition, which immediately follows from \eqref{eq:Binet64378S} and the identity
$$
\gamma[g] ~=~ -J^{p+1}[\Sigma g](1),
$$
shows how the numbers $\gamma[g]$ and $\sigma[g]$ are related and provides an alternative way to compute the value of $\gamma[g]$.

\begin{proposition}\label{prop:linksSG46}
For any function $g$ lying in $\cC^0\cap\mathrm{dom}(\Sigma)$, we have
$$
\sigma[g] ~=~ \gamma[g]+\sum_{j=1}^pG_j\,\Delta^{j-1}g(1),
$$
where $p=1+\deg g$.
\end{proposition}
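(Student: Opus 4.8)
The plan is to evaluate the generalized Binet identity \eqref{eq:Binet64378S} at the single point $x=1$ with the parameter choice $q=p$, and to read off the desired conversion formula. Since $g$ lies in $\cC^0\cap\mathrm{dom}(\Sigma)$, the identity \eqref{eq:Binet64378S} is available for every nonnegative integer $q$; in particular it applies with $q=p=1+\deg g$, which is exactly the value for which the generalized Euler constant $\gamma[g]$ is defined in Definition~\ref{de:GEC587}.

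Concretely, setting $q=p$ and $x=1$ in \eqref{eq:Binet64378S} yields
$$
J^{p+1}[\Sigma g](1) ~=~ \Sigma g(1)-\sigma[g]-\int_1^1 g(t){\,}dt+\sum_{j=1}^pG_j\,\Delta^{j-1}g(1).
$$
Here two of the terms vanish for elementary reasons: the integral $\int_1^1 g(t){\,}dt$ is zero, and $\Sigma g(1)=0$ because $\Sigma g$ vanishes at $1$ by Proposition~\ref{prp:56GathZZPro8}(a). This collapses the right-hand side to $-\sigma[g]+\sum_{j=1}^pG_j\,\Delta^{j-1}g(1)$.

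It then remains only to invoke the defining relation $\gamma[g]=-J^{p+1}[\Sigma g](1)$ from Definition~\ref{de:GEC587}. Substituting $J^{p+1}[\Sigma g](1)=-\gamma[g]$ into the simplified identity gives $-\gamma[g]=-\sigma[g]+\sum_{j=1}^pG_j\,\Delta^{j-1}g(1)$, and rearranging produces the claimed formula $\sigma[g]=\gamma[g]+\sum_{j=1}^pG_j\,\Delta^{j-1}g(1)$. There is essentially no obstacle here: the entire argument is a one-line specialization of \eqref{eq:Binet64378S} combined with the definition of $\gamma[g]$, and the only points requiring care are the bookkeeping facts $\Sigma g(1)=0$ and the vanishing of the degenerate integral. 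The proof is therefore complete after these substitutions.
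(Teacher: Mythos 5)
Your proof is correct and is exactly the paper's argument: the paper derives the proposition as an immediate consequence of \eqref{eq:Binet64378S} together with the defining identity $\gamma[g]=-J^{p+1}[\Sigma g](1)$, which is precisely your specialization at $q=p$, $x=1$ (using $\Sigma g(1)=0$ and the vanishing of $\int_1^1 g$). Nothing is missing; you have simply written out the bookkeeping the paper leaves implicit.
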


\parag{An integral form of\/ $\gamma[g]$}\index{Euler's constant!generalized!integral form} The following proposition shows that the classical integral representation of the Euler constant\index{Euler's constant}
$$
\gamma ~=~ \int_1^{\infty}\left(\frac{1}{\lfloor t\rfloor}-\frac{1}{t}\right){\,}dt
$$
can be generalized to the constant $\gamma[g]$ for any function $g$ lying in $\cC^0\cap\mathrm{dom}(\Sigma)$.

\begin{proposition}\label{prop:IntFogamma482}
For any $g\in\cC^0\cap\cD^p\cap\cK^p$, where $p=1+\deg g$, we have
$$
\gamma[g] ~=~ \int_1^{\infty}\bigg(\sum_{j=0}^pG_j\Delta^jg(\lfloor t\rfloor)-g(t)\bigg){\,}dt.
$$
In particular, when $\deg g=-1$, we have
$$
\gamma[g] ~=~ \int_1^{\infty}(g(\lfloor t\rfloor)-g(t)){\,}dt.
$$
\end{proposition}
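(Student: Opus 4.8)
The plan is to start from the definition of $\gamma[g]$ together with the already established expression of the remainder $R^p_{1,\infty}[g]$ as an integral of the interpolation error, and then to replace the piecewise interpolating polynomial $\overline{P}_p[g]$ by the combination $\sum_{j=0}^pG_j\Delta^jg(\lfloor t\rfloor)$, exploiting the very definition $G_j=\int_0^1\binom{t}{j}\,dt$ of the Gregory coefficients. By Definition~\ref{de:GEC587} we have $\gamma[g]=-R^p_{1,\infty}[g]$ with $p=1+\deg g$, and identity \eqref{eq:RpmiIneq904} gives
$$
\gamma[g] ~=~ -R^p_{1,\infty}[g] ~=~ \int_1^{\infty}\left(\overline{P}_p[g](t)-g(t)\right)dt,
$$
where, in view of Proposition~\ref{prop:622PieceWPol48R} and Lemma~\ref{lemma:6GEC43sdf}, this improper integral is to be read as the limit $\lim_{n\to_{\N}\infty}\int_1^n(\overline{P}_p[g](t)-g(t))\,dt$, which exists.

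First I would compute the integral of $\overline{P}_p[g]$ over a single unit interval. Using the explicit expression \eqref{eq:PieceWPol48}, namely $\overline{P}_p[g](t)=\sum_{j=0}^p\binom{\{t\}}{j}\Delta^jg(\lfloor t\rfloor)$, and the substitution $u=t-k$ on $[k,k+1)$, for each $k\in\N^*$ and each $j\in\{0,\dots,p\}$ one gets
$$
\int_k^{k+1}\binom{\{t\}}{j}\Delta^jg(\lfloor t\rfloor)\,dt ~=~ \Delta^jg(k)\int_0^1\binom{u}{j}\,du ~=~ G_j\,\Delta^jg(k),
$$
since $\lfloor t\rfloor=k$ and $\{t\}=t-k$ on that interval and $G_j=\int_0^1\binom{u}{j}\,du$ by definition. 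Summing over $j$ and comparing with the constant value $\sum_{j=0}^pG_j\Delta^jg(k)$ taken by $\sum_{j=0}^pG_j\Delta^jg(\lfloor\cdot\rfloor)$ on $[k,k+1)$, I conclude that $\overline{P}_p[g]$ and $\sum_{j=0}^pG_j\Delta^jg(\lfloor\cdot\rfloor)$ have the same integral over every interval $[k,k+1)$. Summing these equalities over $k=1,\dots,n-1$ then yields
$$
\int_1^n\overline{P}_p[g](t)\,dt ~=~ \int_1^n\sum_{j=0}^pG_j\Delta^jg(\lfloor t\rfloor)\,dt,\qquad n\in\N^*.
$$

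Subtracting $\int_1^ng(t)\,dt$ from both sides and letting $n\to_{\N}\infty$ gives the announced formula $\gamma[g]=\int_1^{\infty}\bigl(\sum_{j=0}^pG_j\Delta^jg(\lfloor t\rfloor)-g(t)\bigr)\,dt$. The special case $\deg g=-1$ (so $p=0$) is then immediate: since $G_0=1$ and $\binom{\{t\}}{0}=1$, the two representations already coincide pointwise, $\overline{P}_0[g](t)=g(\lfloor t\rfloor)$, and the integrand reduces to $g(\lfloor t\rfloor)-g(t)$. The one point requiring care, and the only genuine subtlety, is the meaning of the improper integral on the right: when $\deg g=-1$ the integrand decays and the integral converges in the ordinary sense, whereas for functions of polynomial growth (e.g.\ $g(x)=x$, where $p=2$) the contributions of the lower-order differences $\Delta^jg$ with $1\le j\le p-1$, which need not vanish at infinity, make $t\mapsto\int_1^t$ oscillate between consecutive integers. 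The identity is therefore to be understood with the upper limit taken along the integers, exactly as produced by the argument above; this is harmless here because all the quantities $R^p_{1,n}[g]$ are indexed by integers from the outset.
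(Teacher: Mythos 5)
Your proof is correct and follows essentially the same route as the paper's: both rest on $\gamma[g]=-R^p_{1,\infty}[g]$ combined with \eqref{eq:RpmiIneq904}, and your unit-interval computation $\int_k^{k+1}\binom{\{t\}}{j}\Delta^jg(\lfloor t\rfloor)\,dt=G_j\,\Delta^jg(k)$ is exactly the content of \eqref{eq:Binet64378}, which the paper invokes to write $\gamma[g]=\sum_{k\geq 1}J^{p+1}[g](k)$ and conclude. Your closing observation that the improper integral must be read as a limit along integer upper bounds (for instance for $g(x)=x$, where $p=2$, the continuous partial integrals oscillate and do not converge over $\R$) is a genuine point of care that the paper's statement and proof leave implicit.
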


\begin{proof}
Using \eqref{eq:Binet64378} and \eqref{eq:RpmiIneq904}, we obtain
$$
\gamma[g] ~=~ \sum_{k=1}^{\infty}J^{p+1}[g](k) ~=~ \sum_{k=1}^{\infty}\left(\sum_{j=0}^pG_j\,\Delta^jg(k)-\int_k^{k+1}g(t){\,}dt\right),
$$
which immediately provides the claimed formula.
\end{proof}

\parag{The principal indefinite sum of the generalized Binet function}\index{principal indefinite sum!of the generalized Binet function}\index{Binet's function!generalized} If $g$ lies in $\cC^0\cap\cD^p\cap\cK^p$ for some $p\in\N$, then the function $J^{p+1}[\Sigma g]$ lies in $\cD^0_{\R}$ by Theorem~\ref{thm:dgf7dds}, and hence so does
$$
\Delta J^{p+1}[\Sigma g] ~=~ J^{p+1}[g].
$$
If, in addition, $J^{p+1}[\Sigma g]$ lies in $\cK^0$, then by the uniqueness Theorem~\ref{thm:unic} we have that
$$
\Sigma J^{p+1}[g] ~=~ J^{p+1}[\Sigma g] -J^{p+1}[\Sigma g](1){\,}.
$$
Thus, if $p=1+\deg g$, then we obtain the identity
\begin{equation}\label{6FinPr5InBi2}
\Sigma J^{p+1}[g] ~=~ J^{p+1}[\Sigma g] + \gamma[g]{\,}.
\end{equation}

Now, suppose that we wish to show that a given function $f\colon\R_+\to\R$ satisfies the equation $f=J^{p+1}[\Sigma g]$ for some function $g$ lying in $\cC^0\cap\cD^p\cap\cK^p$, with $p=1+\deg g$. Using the uniqueness theorem with identity \eqref{6FinPr5InBi2}, we see that it is then enough to show that $\Delta f=J^{p+1}[g]$, $f(1)=-\gamma[g]$, and $f\in\cK^0$.

\begin{example}
Let $f\colon\R_+\to\R$ be defined by the equation $f(x)=\psi(x)-\ln x$ for $x>0$. To see that $f=J^1[\psi]$, it is enough to observe that $f$ lies in $\cK^0$, that $f(1)=-\gamma$, and that
$$
\Delta f(x) ~=~ \frac{1}{x}-\ln\left(1+\frac{1}{x}\right)
$$
is precisely the function $J^1[g](x)$ when $g(x)=1/x$.
\end{example}

\begin{example}
Binet established the following integral representation (see, e.g., Sasv\'ari \cite{Sas99})
$$
J^2[\ln\circ\Gamma](x) ~=~ J(x) ~=~ \int_0^{\infty}\left(\frac{1}{e^t-1}-\frac{1}{t}+\frac{1}{2}\right)\,\frac{e^{-x t}}{t}{\,}dt.
$$
Eq.~\eqref{6FinPr5InBi2} then provides a possible (though not immediate) proof of this identity.
\end{example}

\index{Euler's constant!generalized|)}

\chapter{Derivatives of multiple $\log\Gamma$-type functions}
\label{chapter:7}

In this chapter, we discuss the higher order differentiability properties of $\Sigma g$ when $g$ lies in $\cC^r\cap\cD^p\cap\cK^{\max\{p,r\}}$ for any $p,r\in\N$. In particular, we show the fundamental fact that $\Sigma g$ also lies in $\cC^r$ and that the sequence $n \mapsto D^rf^p_n[g]$ converges uniformly on any bounded subinterval of $\R_+$ to $D^r\Sigma g$.

We also show that the functions $(\Sigma g)^{(r)}$ and $\Sigma g^{(r)}$ differ by a constant and we investigate some properties of these functions, including asymptotic behaviors and an analogue of Euler's series representation of the constant $\gamma$. We present and discuss a procedure, that we call the ``elevator'' method, to compute $\Sigma g$ by first evaluating $\Sigma g^{(r)}$. Finally, we provide an alternative uniqueness result for higher order differentiable solutions to the equation $\Delta f=g$.

\section{Differentiability of multiple $\log\Gamma$-type functions}
\label{sec:Diff2Mu8}

In this first section we investigate the higher order differentiability of the function $\Sigma g$ when $g$ is of class $\cC^r$ for some $r\in\N$. We start with the following preliminary, but very important result.

\begin{proposition}\label{prop:7Diff5Sig216}
If $g$ lies in $\cC^r\cap\cD^p\cap\cK^{\max\{p,r\}}$ for some $r,p\in\N$, then the function $\Sigma g$ lies in $\cC^r\cap\cD^{p+1}\cap\cK^{\max\{p,r\}}$.
\end{proposition}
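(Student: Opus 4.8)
The plan is to argue by induction on $r$, the base case $r=0$ being exactly Proposition~\ref{prop:intMLGt}(a): when $g\in\cC^0\cap\cD^p\cap\cK^{\max\{p,0\}}=\cC^0\cap\cD^p\cap\cK^p$ it already gives $\Sigma g\in\cC^0\cap\cD^{p+1}\cap\cK^p$. Throughout the induction the membership $\Sigma g\in\cD^{p+1}$ will be supplied directly by Proposition~\ref{prp:56GathZZPro8}(b) (valid since $g\in\cD^p\cap\cK^p$), so the real work is only to produce the $r$ derivatives and to raise the convexity order from $p$ to $\max\{p,r\}$.

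For the inductive step I would assume the statement for $r$ and take $g\in\cC^{r+1}\cap\cD^p\cap\cK^{\max\{p,r+1\}}$. Since $g\in\cC^1$, Proposition~\ref{prop:LMpGpLMp1} transfers both the convexity and the asymptotic condition to $g'$: part~(a) turns $g\in\cK^{\max\{p,r+1\}}$ into $g'\in\cK^{\max\{p,r+1\}-1}=\cK^{\max\{(p-1)_+,r\}}$, while part~(b), applied with order $p$ if $p\geq 1$ and with order $1$ if $p=0$ (using $g\in\cD^0\subset\cD^1$ and $g\in\cK^1$), yields $g'\in\cD^{(p-1)_+}$. Hence $g'\in\cC^{r}\cap\cD^{(p-1)_+}\cap\cK^{\max\{(p-1)_+,r\}}$, which is precisely the hypothesis of the induction hypothesis with parameters $(r,(p-1)_+)$; it therefore applies and delivers $\Sigma g'\in\cC^{r}\cap\cK^{\max\{(p-1)_+,r\}}$.

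The key step is then to recover $\Sigma g$ from $\Sigma g'$ by an antiderivative construction, which bypasses the naive term-by-term differentiation of $f^p_n[g]$ whose binomial tail $\sum_{j=1}^p\tchoose{x}{j}\Delta^{j-1}g(n)$ need not converge summand by summand. I would set $F(x)=\int_1^x\Sigma g'(t)\,dt$ and $c_0=\sigma[g']-g(1)$, and put $\tilde F=F-c_0\,x$. By the trend identity~\eqref{eq:ds68ffds} applied to $g'$,
$$
\Delta F(x)=\int_x^{x+1}\Sigma g'(t)\,dt=\sigma[g']+\int_1^xg'(t)\,dt=g(x)+c_0,
$$
so that $\Delta\tilde F=g$. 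Since $F'=\Sigma g'\in\cC^{r}$, the function $\tilde F$ lies in $\cC^{r+1}$, and $\tilde F'=\Sigma g'-c_0$ has the same convexity order $\max\{(p-1)_+,r\}$ as $\Sigma g'$ (subtracting a constant, which is convex and concave of every nonnegative order). By Lemma~\ref{lemma:PrelKp}(d), and its concave counterpart obtained by negation, applied on the unbounded interval where $\tilde F'$ is convex or concave, it follows that $\tilde F\in\cK^{\max\{(p-1)_+,r\}+1}=\cK^{\max\{p,r+1\}}$.

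Finally, $\tilde F$ solves $\Delta\tilde F=g$, lies in $\cK^{\max\{p,r+1\}}\subset\cK^p$ (by Proposition~\ref{prop:MpDescFiltr}), and $g\in\cD^p$; so the uniqueness Theorem~\ref{thm:unic} forces $\tilde F-\tilde F(1)=\Sigma g$. Thus $\Sigma g\in\cC^{r+1}\cap\cK^{\max\{p,r+1\}}$, and together with $\Sigma g\in\cD^{p+1}$ this closes the induction; one even reads off the byproduct $(\Sigma g)'=\tilde F'=\Sigma g'-\sigma[g']+g(1)$, foreshadowing the constant-difference relation between $(\Sigma g)'$ and $\Sigma g'$. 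I expect the main obstacle to be conceptual rather than computational: recognizing that differentiability cannot be extracted from the defining limit directly and must instead be routed through the antiderivative/trend identity; the remaining difficulty is the careful bookkeeping of the convexity orders across the various $\max$'s, which the $(\cdot)_+$ notation and Proposition~\ref{prop:LMpGpLMp1} keep under control.
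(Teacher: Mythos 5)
Your proof is correct, and while it rests on the same underlying strategy as the paper's proof --- the elevator method: transfer regularity and convexity to a derivative, apply $\Sigma$ there, integrate back, and identify the result via the uniqueness theorem --- the execution is genuinely different. The paper descends all $r$ levels at once: it shows $g^{(r)}\in\cC^0\cap\cD^{(p-r)_+}\cap\cK^{(p-r)_+}$ (which forces a case split, with Proposition~\ref{prop:sa6f575sf}(b) applied repeatedly when $r>p$, since Proposition~\ref{prop:LMpGpLMp1} only allows differentiation orders $j\leq p$), takes an $r$-th antiderivative $F$ of $\Sigma g^{(r)}$, and corrects by an \emph{unspecified} polynomial $P$ of degree at most $r$ satisfying $\Delta(F+P)=g$, with $F+P\in\cK^{\max\{p,r\}}$ supplied by Proposition~\ref{prop:LMpGpLMp1}(a) and Corollary~\ref{cor:convCones48}, before invoking Theorem~\ref{thm:unic}. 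Your induction on $r$ descends one level at a time, and this buys two things: first, the case $r>p$ never arises, because your device of applying Proposition~\ref{prop:LMpGpLMp1}(b) at order $\max\{p,1\}$ (using $\cD^0\subset\cD^1$ and $g\in\cK^1$ when $p=0$) keeps every application within the hypothesis $j\leq p$, so Proposition~\ref{prop:sa6f575sf} is never needed; second, the correction term is explicit, $c_0=\sigma[g']-g(1)$, read off from the trend identity \eqref{eq:ds68ffds}, which replaces the abstract polynomial correction and hands you the relation $(\Sigma g)'=\Sigma g'-\sigma[g']+g(1)$, i.e.\ the $r=1$ case of \eqref{eq:rd0bb} in Proposition~\ref{prop:fdsf6sfd}, as a free byproduct rather than a later theorem. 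The price is the inductive bookkeeping (the hypothesis must be quantified over all $p$, as you implicitly do when invoking it at the parameters $(r,(p-1)_+)$) and one extra ingredient, Lemma~\ref{lemma:PrelKp}(d), to lift the convexity order of $\tilde F$ from that of $\tilde F'$; your order arithmetic $\max\{(p-1)_+,r\}+1=\max\{p,r+1\}$ checks out in both cases $p\geq 1$ and $p=0$. Both routes are sound; yours is somewhat longer but avoids a lemma, eliminates the case distinction, and makes the constants explicit.
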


\begin{proof}
If $g$ lies in $\cC^r\cap\cD^p\cap\cK^{\max\{p,r\}}$ for some $r,p\in\N$, then clearly it also lies in $\cC^r\cap\cD^{\max\{p,r\}}\cap\cK^{\max\{p,r\}}$. By Proposition~\ref{prp:56GathZZPro8}, $\Sigma g$ must lie in $\cD^{p+1}\cap\cK^{\max\{p,r\}}$. Let us now show that it also lies in $\cC^r$.

We first observe that $g^{(r)}$ lies in $\cC^0\cap\cD^{(p-r)_+}\cap\cK^{(p-r)_+}$. This is clear if $r\leq p$ by Propositions~\ref{prop:LMpGpLMp1}. If $r>p$, then we first see that $g^{(p)}$ lies in $\cC^{r-p}\cap\cD^0\cap\cK^{r-p}$, and hence also in $\cK^0\cap\cK^1$. Using Proposition~\ref{prop:sa6f575sf}(b) repeatedly, we then see that $g^{(r)}$ lies in $\cC^0\cap\cD^{-1}\cap\cK^0$.

By Proposition~\ref{prop:intMLGt}, $\Sigma g^{(r)}$ must lie in $\cC^0\cap\cD^{(p-r)_++1}\cap\cK^{(p-r)_+}$. Hence, there exists $F\in\cC^r$ such that $F^{(r)}=\Sigma g^{(r)}$. By Proposition~\ref{prop:LMpGpLMp1}, $F$ must lie in $\cK^{\max\{p,r\}}$. Now, we also have
$$
D^r\Delta F ~=~ \Delta F^{(r)} ~=~ \Delta\Sigma g^{(r)} ~=~ g^{(r)},
$$
which shows that $\Delta (F+P)=g$ for some polynomial $P$ of degree at most $r$. By Corollary~\ref{cor:convCones48} we have that $F+P$ lies in $\cK^{\max\{p,r\}}$. But then, by the uniqueness Theorem~\ref{thm:unic} we must have $F+P=\Sigma g+c$ for some $c\in\R$. Hence $\Sigma g$ lies in $\cC^r$.
\end{proof}

\begin{remark}
If $g$ lies in $\cC^r\cap\cD^p\cap\cK^p$ for some integers $0\leq r < p$, then the function $\Sigma g$ lies in $\cC^r$ by Proposition~\ref{prop:7Diff5Sig216}. Interestingly, this result can also be established very easily using the following argument. Let $n\in\N$ be so that $\Sigma g$ is $p$-convex or $p$-concave on $I_n=(n,\infty)$. By Lemma~\ref{lemma:PrelKp}(a), the function $\Sigma g$ lies in $\cC^{p-1}(I_n)$ and hence also in $\cC^r(I_n)$. Using \eqref{eq:56zzSec32S6}, we immediately obtain that $\Sigma g$ lies in $\cC^r$.
\end{remark}

We now present the following important and very surprising result. It shows that Proposition~\ref{prop:7Diff5Sig216} no longer holds when $r>p$ if we ask $g$ to lie in $\cK^p$ instead of $\cK^{\max\{p,r\}}$. Since the proof is somewhat technical, we defer it to Appendix~\ref{chapter:Diff7SigLog6}.

\begin{proposition}\label{prop:71Surp4Not9Pres}
For every $p\in\N$, there exists a function $g$ lying in $\cC^{p+1}\cap\cD^p\cap\cK^p$ for which $\Sigma g$ does not lie in $\cC^{p+1}$. Thus, the operator $\Sigma$ does not always preserve differentiability when the order of differentiability exceeds that of convexity.
\end{proposition}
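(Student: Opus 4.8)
The plan is to reduce the statement to the base case $p=0$, where an explicit pathological $h$ can be built, and then to lift the example to arbitrary $p$ by repeated antidifferentiation. Concretely, suppose we have produced a smooth function $h\in\cC^{\infty}\cap\cD^0\cap\cK^0_-$ (positive and decreasing to $0$) whose principal sum $\Sigma h$ fails to lie in $\cC^1$. For a given $p\in\N$, let $g$ be any $p$-fold antiderivative of $h$, so that $g\in\cC^{\infty}\subset\cC^{p+1}$ and $g^{(p)}=h$. Applying Proposition~\ref{prop:LMpGpLMp1}(b) with $j=p$, the membership $g^{(p)}=h\in\cD^0\cap\cK^0_-$ is equivalent to $g\in\cD^p\cap\cK^p_-$; hence $g\in\cC^{p+1}\cap\cD^p\cap\cK^p$, exactly as required. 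To see that $\Sigma g\notin\cC^{p+1}$, note that since $g\in\cC^p\cap\cD^p\cap\cK^{\max\{p,p\}}$, Proposition~\ref{prop:7Diff5Sig216} gives $\Sigma g\in\cC^p\cap\cK^p$, and then Proposition~\ref{prop:LMpGpLMp1}(a) with $j=p$ yields $(\Sigma g)^{(p)}\in\cK^0$. Differentiating $\Delta\Sigma g=g$ exactly $p$ times gives $\Delta(\Sigma g)^{(p)}=g^{(p)}=h$, so $(\Sigma g)^{(p)}$ is a solution in $\cK^0$ to $\Delta f=h$. By the uniqueness Theorem~\ref{thm:unic} (case $p=0$), $(\Sigma g)^{(p)}=c+\Sigma h$ with $c=(\Sigma g)^{(p)}(1)$. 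Since $\Sigma h\notin\cC^1$, we get $(\Sigma g)^{(p)}\notin\cC^1$, whence $\Sigma g\notin\cC^{p+1}$. Thus everything reduces to the case $p=0$.

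For the base case I would build $h$ so that $h'$ is integrable at infinity yet is \emph{not} summable along a unit lattice. Fix a smooth bump $\beta\geq 0$ supported in $[-1,1]$ with $\beta(0)=1$, and set
\[
h'(x) ~=~ -\sum_{m\geq 1}\frac{1}{m}\,\beta\!\left(m^{2}\big(x-m-\tfrac12\big)\right),\qquad h(x)=\int_x^{\infty}\big(-h'(t)\big)\,dt.
\]
The bumps are centered at the points $m+\tfrac12$ with widths of order $m^{-2}$, so they are pairwise disjoint (after harmlessly rescaling the first few). Consequently $h\in\cC^{\infty}$, one has $h'\leq 0$ everywhere (so $h$ is decreasing, i.e.\ $h\in\cK^0_-$), and $\int^{\infty}|h'|=\sum_m\frac{1}{m}\,m^{-2}\int\beta<\infty$, which makes $h$ well defined, positive, and such that $h(x)\to 0$ as $x\to\infty$ (so $h\in\cD^0$). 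Hence $h\in\cC^{\infty}\cap\cD^0\cap\cK^0_-\subset\mathrm{dom}(\Sigma)$. The decisive feature is that at the point $x_0=\tfrac12$ the shifts $x_0+m$ land exactly on the bump centers, so $|h'(x_0+m)|=\tfrac1m$ and therefore $\sum_{k\geq 0}|h'(x_0+k)|=+\infty$.

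Finally I would verify non-differentiability by contradiction. Assume $\Sigma h\in\cC^1$. By the existence Theorem~\ref{thm:exist}, since $h$ is $0$-concave (decreasing) on an unbounded interval, $\Sigma h$ is $0$-convex (increasing) there, so $(\Sigma h)'\geq 0$ on some $[N,\infty)$. Differentiating the exact relation $\Sigma h(x+n)=\Sigma h(x)+\sum_{k=0}^{n-1}h(x+k)$ (legitimate because $\Sigma h\in\cC^1$ and the sum is finite) at $x=x_0$ gives
\[
(\Sigma h)'(x_0+n) ~=~ (\Sigma h)'(x_0)+\sum_{k=0}^{n-1}h'(x_0+k) ~=~ (\Sigma h)'(x_0)-\sum_{k=0}^{n-1}|h'(x_0+k)| ~\longrightarrow~ -\infty,
\]
which contradicts $(\Sigma h)'(x_0+n)\geq 0$ for all large $n$. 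Hence $\Sigma h\notin\cC^1$, which finishes the base case and, together with the reduction, the whole proof. The main obstacle is the base-case construction: by Proposition~\ref{prop:sa6f575sf}, keeping $h$ eventually monotone (membership in $\cK^0$) forces $h\notin\cK^1$, so $h'$ must oscillate in magnitude, and one must arrange $h'$ to be integrable yet non-summable on the lattice $x_0+\N$. The disjoint-bump choice with heights $1/m$ and widths $\sim m^{-2}$ is precisely what balances these competing constraints; a secondary point requiring care is the alignment of the bump centers with the lattice, which is what makes the sampled sum $\sum_k|h'(x_0+k)|$ genuinely diverge at the chosen $x_0$.
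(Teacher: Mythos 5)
Your proof is correct, and its skeleton coincides with the paper's: reduce to $p=0$ by antidifferentiation, then build a smooth, monotone function vanishing at infinity whose derivative is integrable but not summable along a unit lattice. The differences are worth recording. In the reduction, the paper descends one level at a time via Proposition~\ref{prop:an4tR8} (namely $D\Sigma G=\Sigma g-\sigma[g]$ for the antiderivative $G$ of $g$) and iterates, whereas you do it in one shot by identifying $(\Sigma g)^{(p)}$ with $c+\Sigma h$ through Propositions~\ref{prop:7Diff5Sig216} and \ref{prop:LMpGpLMp1} and the uniqueness Theorem~\ref{thm:unic}; this is valid, and in fact amounts to re-proving the case $r=p$ of Proposition~\ref{prop:fdsf6sfd}, which you could have cited directly. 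In the base construction, the paper places bumps of \emph{constant} height $\alpha$ at the integers with widths $2^{-m}$, so the sampled derivative does not even tend to zero, while you place bumps of height $\frac{1}{m}$ at the half-integers with widths $m^{-2}$ and get divergence from the harmonic series; both realize the same tension you correctly identify via Proposition~\ref{prop:sa6f575sf} (the function must stay in $\cK^0$ yet leave $\cK^1$). The genuine methodological difference is the non-differentiability step: the paper estimates the difference quotient of $\Sigma g$ at $x=1$ directly, using divided differences and the monotonicity given by Lemma~\ref{lemma:pCInc5}, and concludes that the quotient tends to $-\infty$ at every positive integer; you instead argue by contradiction, differentiating the finite shift identity $\Sigma h(x+n)=\Sigma h(x)+\sum_{k=0}^{n-1}h(x+k)$ at $x=\tfrac12$ and letting the divergent harmonic series clash with the nonnegativity of $(\Sigma h)'$ guaranteed by the existence Theorem~\ref{thm:exist}. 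Your argument is more elementary (no divided-difference estimates) and actually shows non-differentiability of $\Sigma h$ at every point of $\tfrac12+\N$, since differentiability at one such point propagates to all of them through the shift identity; the paper's computation buys the sharper quantitative statement that the one-sided difference quotients diverge to $-\infty$. The only loose end, which you flagged yourself, is that the $m=1$ bump must be shrunk to keep the supports disjoint — and even that is harmless, since overlapping bumps of the same sign would only reinforce both the smoothness (the sum stays locally finite) and the divergence.
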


\begin{proof}
See Appendix~\ref{chapter:Diff7SigLog6}.
\end{proof}

The next theorem is the central result of this section. In this theorem, we recall the fundamental result given in Proposition~\ref{prop:7Diff5Sig216} and we show that, under the same assumptions, the sequence $n \mapsto D^rf^p_n[g]$ converges uniformly on any bounded subinterval of $\R_+$ to $D^r\Sigma g$. We first consider a technical lemma.

\begin{lemma}\label{lemma:7CruL044rtb}
Let $g$ lie in $\cC^r\cap\cD^p\cap\cK^p$ for some integers $0\leq r\leq p$. Then, for any $n\in\N$ the function $\rho^{p+1}_n[\Sigma g]$ lies in $\cC^r$. Moreover, the sequence $n\mapsto D^r\rho^{p+1}_n[\Sigma g]$ converges uniformly on any bounded subset of $\R_+$ to zero.
\end{lemma}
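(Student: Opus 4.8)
The plan is to treat the two assertions separately: the regularity is quick, and the uniform convergence is the real content, which I would reduce to a uniform bound on an interpolation error of $(\Sigma g)^{(r)}$.

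Since $0\le r\le p$ we have $\max\{p,r\}=p$, so Proposition~\ref{prop:7Diff5Sig216} applied to $g\in\cC^r\cap\cD^p\cap\cK^p$ gives $\Sigma g\in\cC^r\cap\cK^p$. As $\rho^{p+1}_n[\Sigma g](x)=\Sigma g(x+n)-\sum_{j=0}^p\tchoose{x}{j}\Delta^j\Sigma g(n)$ is the sum of the $\cC^r$ map $x\mapsto\Sigma g(x+n)$ and a polynomial, it lies in $\cC^r$; this is the first assertion. For the second I would first record a clean description of $\phi:=(\Sigma g)^{(r)}$. Writing $m=p-r$, Proposition~\ref{prop:LMpGpLMp1} gives $g^{(r)}\in\cD^{m}\cap\cK^{m}$ and $(\Sigma g)^{(r)}\in\cK^{m}$; since $\Delta$ and $D$ commute, $(\Sigma g)^{(r)}$ solves $\Delta f=g^{(r)}$, so the uniqueness Theorem~\ref{thm:unic} forces $\phi=c_r+\Sigma g^{(r)}$ for a constant $c_r$. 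In particular $\phi$ is eventually $m$-convex or $m$-concave and, by Proposition~\ref{prp:56GathZZPro8}, $\Sigma g^{(r)}\in\cD^{m+1}\cap\cK^{m}$, whence $\Delta^{m+1}\phi\to 0$.

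The key step is to rewrite $D^r\rho^{p+1}_n[\Sigma g]$ as an interpolation error of $\phi$. By \eqref{eq:LErrIn} we have $\rho^{p+1}_n[\Sigma g](x)=\Sigma g(x+n)-P_p[\Sigma g](n,\dots,n+p;x+n)$. Differentiating $r$ times and iterating Proposition~\ref{prop:2A4DiffInt4Pol} (legitimate since $\Sigma g\in\cC^r$), identity \eqref{eq:AppA3sq2} shows that the $r$-th derivative of the interpolating polynomial is the interpolating polynomial of $\phi$ at $m+1$ points $\zeta_0<\dots<\zeta_m$ lying in $(n,n+p)$. Hence, using \eqref{IntError582},
\[
D^r_x\rho^{p+1}_n[\Sigma g](x)=\phi(x+n)-P_{m}[\phi](\zeta_0,\dots,\zeta_m;x+n)=\phi[\zeta_0,\dots,\zeta_m,x+n]\prod_{i=0}^{m}(x+n-\zeta_i).
\]

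It remains to bound this uniformly for $x$ in a bounded set $E\subset\R_+$. Fix $n$ large enough that $\phi$ is, say, $m$-convex on $[n,\infty)$, and set $s=p+\lceil\sup E\rceil$, so that all $\zeta_i$ and all evaluation points $y=x+n$ lie in the window $[n,n+s]$; then $\prod_i|x+n-\zeta_i|\le s^{m+1}$, uniformly in $n$ and $x\in E$. The apparent danger is that the order-$(m+1)$ divided difference blows up when two of the uncontrolled Rolle nodes nearly coincide. I would neutralise this by applying the divided-difference recurrence \eqref{eq:DivDiffRec7} to the pair $\{\zeta_0,x+n\}$ (valid for any pair of nodes by symmetry), so that the factor $x+n-\zeta_0$ cancels against the matching factor of the product; what survives is a product of $m$ factors of modulus at most $s$ times the one-node increment $\phi[\zeta_1,\dots,\zeta_m,x+n]-\phi[\zeta_1,\dots,\zeta_m,\zeta_0]$ of an order-$m$ divided difference. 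By Lemma~\ref{lemma:pCInc5} the map $t\mapsto\phi[\zeta_1,\dots,\zeta_m,t]$ is increasing, so this increment is bounded in modulus by $\phi[\zeta_1,\dots,\zeta_m,n+s]-\phi[\zeta_1,\dots,\zeta_m,n]$, and thus $|D^r_x\rho^{p+1}_n[\Sigma g](x)|\le s^{m}\bigl(\phi[\zeta_1,\dots,\zeta_m,n+s]-\phi[\zeta_1,\dots,\zeta_m,n]\bigr)$. The main obstacle is precisely the last point: showing that this window-oscillation of the order-$m$ divided differences of $\phi$ tends to $0$ as $n\to\infty$, uniformly over all node configurations in $[n,n+s]$ (so independently of the node spacing). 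This is where $\phi\in\cD^{m+1}\cap\cK^{m}$ is used essentially — through \eqref{eq:DelDD62} and the reduction \eqref{eq:AppA3sq3} it amounts to the monotone order-$m$ divided differences of $\phi$ flattening out over bounded windows, a consequence of $\Delta^{m+1}\phi\to 0$. The routine remaining points are the symmetric treatment of the $m$-concave case and of evaluation points below or inside the node hull (where one peels off $\zeta_m$ instead of $\zeta_0$, or the error simply vanishes at a node), together with the base case $r=0$, which is exactly the uniform convergence recorded in Remark~\ref{rem:ConvCl82}.
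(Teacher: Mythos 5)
Your proposal is correct and follows essentially the same route as the paper's proof: differentiate the interpolation-error form of $\rho^{p+1}_n[\Sigma g]$ via Proposition~\ref{prop:2A4DiffInt4Pol}, peel off one node with the recurrence \eqref{eq:DivDiffRec7}, bound the product factor on a bounded window, and control the surviving increment of divided differences by Lemma~\ref{lemma:pCInc5}. The ``main obstacle'' you flag at the end is resolved exactly as you indicate — and as the paper does — by sandwiching the order-$(p-r)$ divided differences of $(\Sigma g)^{(r)}$ between equally spaced configurations (Lemma~\ref{lemma:pCInc5} together with \eqref{eq:DelDD62}), so that the window oscillation telescopes into $\frac{1}{(p-r)!}\sum_j\left|\Delta^{p-r}g^{(r)}(n+j)\right|$, which tends to zero since $g^{(r)}$ lies in $\cD^{p-r}\cap\cK^{p-r}$ by Proposition~\ref{prop:LMpGpLMp1}.
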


\begin{proof}
By Proposition~\ref{prop:7Diff5Sig216}, we have that $\Sigma g$ lies in $\cC^r$. Using \eqref{eq:deflambdapt} it is then clear that, for any $n\in\N$, the function $\rho^{p+1}_n[\Sigma g]$ lies in $\cC^r$.

Let us now show the second part of the lemma. Negating $g$ if necessary, we may assume that it lies in $\cK^p_-$. In this case, $D^r\Sigma g$ must lie in $\cK^{p-r}_+$ by Proposition~\ref{prop:LMpGpLMp1}. Let $n\geq p$ be an integer so that $g$ is $p$-concave on $[n,\infty)$. Using Proposition~\ref{prop:2A4DiffInt4Pol} repeatedly, we can see that there exist $p-r+1$ pairwise distinct points $\xi_0^n,\ldots,\xi^n_{p-r}\in (0,p)$ such that
$$
D^r_xP_p[\Sigma g](n,\ldots,n+p;n+x) ~=~ P_{p-r}[D^r\Sigma g](n+\xi_0^n,\ldots,n+\xi^n_{p-r};n+x).
$$
Let us now fix $x>0$. Using \eqref{eq:LErrIn} and then \eqref{IntError582} and \eqref{eq:DivDiffRec7}, we obtain
\begin{eqnarray*}
D^r\rho^{p+1}_n[\Sigma g](x) &=& D^r\Sigma g[n+\xi_0^n,\ldots,n+\xi^n_{p-r},n+x]{\,}\prod_{i=0}^{p-r}(x-\xi^n_i)\\
&=& A_n\,\prod_{i=1}^{p-r}(x-\xi^n_i),
\end{eqnarray*}
if $x\neq\xi_i^n$ for $i=0,\ldots,p-r$, and $D^r\rho^{p+1}_n[\Sigma g](x)=0$, otherwise, where
$$
A_n ~=~ D^r\Sigma g[n+\xi_1^n,\ldots,n+\xi^n_{p-r},n+x] -D^r\Sigma g[n+\xi_0^n,\ldots,n+\xi^n_{p-r}].
$$
Now, on the one hand, we clearly have
$$
\prod_{i=1}^{p-r}|x-\xi^n_i| ~\leq ~ c_x^{p-r}.
$$
where $c_x=\max\{p,\lceil x\rceil\}$. On the other hand, using Lemma~\ref{lemma:pCInc5} (with the fact that $D^r\Sigma g$ lies in $\cK^{p-r}_+$) and then \eqref{eq:DelDD62}, we obtain
\begin{eqnarray*}
|A_n| &\leq & \left|D^r\Sigma g[n+c_x,\ldots,n+c_x+p-r]-D^r\Sigma g[n-p+r,\ldots,n]\right|\\
&=& \frac{1}{(p-r)!}\left|\Delta^{p-r}D^r\Sigma g(n+c_x)-\Delta^{p-r}D^r\Sigma g(n-p+r)\right|\\
&=& \frac{1}{(p-r)!}{\,}\sum_{j=-p+r}^{c_x-1}|\Delta^{p-r}D^rg(n+j)|.
\end{eqnarray*}
Thus, for any bounded subinterval $E$ of $\R_+$, we obtain the inequality
$$
\sup_{x\in E}\left|D^r\rho^{p+1}_n[\Sigma g](x)\right| ~\leq ~ \frac{c_{\sup E}^{p-r}}{(p-r)!}{\,}\sum_{j=-p+r}^{c_{\sup E}-1}|\Delta^{p-r}D^rg(n+j)|.
$$
But the latter sum converges to zero as $n\to_{\N}\infty$ since $D^rg$ lies in $\cD^{p-r}\cap\cK^{p-r}$ by Proposition~\ref{prop:LMpGpLMp1}. This completes the proof of the lemma.
\end{proof}

\begin{theorem}[Higher order differentiability of multiple $\log\Gamma$-type functions]\label{thm:TBTDiff}
Let $g$ lie in $\cC^r\cap\cD^p\cap\cK^{\max\{p,r\}}$ for some $r,p\in\N$. The following assertions hold.
\begin{enumerate}
\item[(a)] $\Sigma g$ lies in $\cC^r\cap\cD^{p+1}\cap\cK^{\max\{p,r\}}$.
\item[(b)] The sequence $n\mapsto D^rf^p_n[g]$ converges uniformly on any bounded subset of\/ $\R_+$ to $D^r\Sigma g$.
\end{enumerate}
\end{theorem}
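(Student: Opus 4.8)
The plan is to establish assertion (a) by a direct appeal to Proposition~\ref{prop:7Diff5Sig216}, and then to derive assertion (b) by reducing the general case to the case $r\leq p$ treated in Lemma~\ref{lemma:7CruL044rtb}. For assertion (a), observe that the hypothesis $g\in\cC^r\cap\cD^p\cap\cK^{\max\{p,r\}}$ is exactly the hypothesis of Proposition~\ref{prop:7Diff5Sig216}, which already yields that $\Sigma g$ lies in $\cC^r\cap\cD^{p+1}\cap\cK^{\max\{p,r\}}$. So assertion (a) requires essentially no further work.

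For assertion (b), I would split into the two cases $r\leq p$ and $r>p$. When $r\leq p$, the convexity order $\max\{p,r\}$ equals $p$, so the hypothesis reads $g\in\cC^r\cap\cD^p\cap\cK^p$ with $r\leq p$, which is precisely the setting of Lemma~\ref{lemma:7CruL044rtb}. By identity~\eqref{eq:33ConvSig52} we have
\begin{equation*}
\Sigma g(x) ~=~ f^p_n[g](x) + \rho^{p+1}_n[\Sigma g](x){\,},\qquad n\in\N^*,
\end{equation*}
and since both $\Sigma g$ and $\rho^{p+1}_n[\Sigma g]$ lie in $\cC^r$, so does $f^p_n[g]$; differentiating $r$ times gives
\begin{equation*}
D^r f^p_n[g] - D^r\Sigma g ~=~ -D^r\rho^{p+1}_n[\Sigma g]{\,}.
\end{equation*}
Lemma~\ref{lemma:7CruL044rtb} states that the right-hand side converges to zero uniformly on any bounded subset of $\R_+$, which is exactly assertion (b) in this case.

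For the remaining case $r>p$, the idea is that the higher derivatives beyond order $p$ are governed by the $\cK$-assumption $\cK^{\max\{p,r\}}=\cK^r$ rather than by $\cD^p$, so I would not apply Lemma~\ref{lemma:7CruL044rtb} directly at order $r$. Instead I would reduce to the previous case by raising the summation index: since $g\in\cD^p\subset\cD^r$ and $g\in\cK^r$, the function $g$ also lies in $\cC^r\cap\cD^r\cap\cK^r$, which is the case $r\leq p$ with $p$ replaced by $r$. Applying the already-established case to this larger index shows that $n\mapsto D^r f^r_n[g]$ converges uniformly on bounded subsets to $D^r\Sigma g$. It then remains to compare $f^r_n[g]$ with $f^p_n[g]$: by the identity
\begin{equation*}
f^{r}_n[g](x)-f^p_n[g](x) ~=~ \sum_{j=p+1}^{r}\tchoose{x}{j}\,\Delta^{j-1}g(n){\,},
\end{equation*}
the difference is a polynomial in $x$ of fixed degree whose coefficients $\Delta^{j-1}g(n)$ (for $j\geq p+1$) tend to zero because $g\in\cD^p$; differentiating $r$ times and bounding on a bounded set shows $D^r f^r_n[g]-D^r f^p_n[g]\to 0$ uniformly on bounded subsets, completing the argument.

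The main obstacle I anticipate is the $r>p$ reduction: one must verify that invoking Lemma~\ref{lemma:7CruL044rtb} at the inflated index $r$ is legitimate (it is, since $\cD^p\subset\cD^r$ and $\cK^r=\cK^{\max\{p,r\}}$), and then control the polynomial discrepancy between $f^r_n[g]$ and $f^p_n[g]$. The latter is routine once one notes the coefficients $\Delta^{j-1}g(n)$ for $j\geq p+1$ vanish in the limit by the hypothesis $g\in\cD^p\subset\cD^{j-1}$; care is only needed to ensure the bound is uniform over a bounded set of $x$, which follows since each $\tchoose{x}{j}$ and its derivatives are bounded there.
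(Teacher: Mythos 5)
Your proof is correct, and while assertion (a) and the case $r\leq p$ of assertion (b) coincide exactly with the paper's argument (Proposition~\ref{prop:7Diff5Sig216}, then Lemma~\ref{lemma:7CruL044rtb} combined with identity \eqref{eq:33ConvSig52}), your treatment of the case $r>p$ is genuinely different. The paper differentiates $f^p_n[g]$ directly: since the polynomial part of $f^p_n[g]$ has degree at most $p<r$, it obtains $D^rf^p_n[g](x)=-\sum_{k=0}^{n-1}g^{(r)}(x+k)$, then shows that $g^{(r)}$ lies in $\cC^0\cap\cD^{-1}\cap\cK^0$ (via Proposition~\ref{prop:LMpGpLMp1} and repeated applications of Proposition~\ref{prop:sa6f575sf}(b)) and invokes the summable-case uniqueness Theorem~\ref{thm:uniqzz0} with $f=D^r\Sigma g$. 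You instead inflate the index: $g$ lies in $\cD^r$ because the sets $\cD^p_{\S}$ are increasingly nested, $\cK^{\max\{p,r\}}=\cK^r$, and $\lim_n f^r_n[g]=\Sigma g$ by the well-definedness of the map $\Sigma$ (Proposition~\ref{prop:ThetaFpFq}), so Lemma~\ref{lemma:7CruL044rtb} and identity \eqref{eq:33ConvSig52} apply at index $r$; the residual comparison is the polynomial $f^r_n[g](x)-f^p_n[g](x)=\sum_{j=p+1}^r\tchoose{x}{j}\,\Delta^{j-1}g(n)$, whose $r$-th derivative reduces to the constant $\Delta^{r-1}g(n)\to 0$ (the terms with $j<r$ are killed by $D^r$, and $g\in\cD^p_{\N}\subset\cD^{r-1}_{\N}$), so the discrepancy vanishes uniformly. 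Both routes are sound and rest on previously established results; yours stays entirely within the Newton-expansion machinery of Chapter~\ref{chapter:7} together with Proposition~\ref{prop:ThetaFpFq}, avoiding Theorem~\ref{thm:uniqzz0} and Propositions~\ref{prop:LMpGpLMp1} and \ref{prop:sa6f575sf}, while the paper's argument buys a slightly stronger conclusion — uniform convergence on the whole of $\R_+$ rather than on bounded subsets — together with the explicit series representation $D^r\Sigma g(x)=-\lim_{n\to\infty}\sum_{k=0}^{n-1}g^{(r)}(x+k)+D^r\Sigma g(x+n)$ that identifies the $r$-th derivative as a uniformly convergent series.
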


\begin{proof}
Assertion (a) immediately follows from Proposition~\ref{prop:7Diff5Sig216}. When $r\leq p$, assertion (b) immediately follows from Lemma~\ref{lemma:7CruL044rtb} and identity \eqref{eq:33ConvSig52}. Let us now assume that $r>p$. Using \eqref{eq:33ConvSig52} and then \eqref{eq:deflambdapt} and \eqref{eq:56zzSec32S6} we obtain
$$
D^rf^p_n[g](x) ~=~ D^r\Sigma g(x)-D^r\Sigma g(x+n) ~=~ -\sum_{k=0}^{n-1}g^{(r)}(x+k).
$$
By Proposition~\ref{prop:LMpGpLMp1}, we have that $g^{(p)}$ lies in $\cC^{r-p}\cap\cD^0\cap\cK^{r-p}$, and hence also in $\cK^0\cap\cK^1$. Using Proposition~\ref{prop:sa6f575sf}(b) repeatedly, we then see that $g^{(r)}$ lies in $\cC^0\cap\cD^{-1}\cap\cK^0$. Thus, we can apply Theorem~\ref{thm:uniqzz0} to the function $g^{(r)}$, with $f=D^r\Sigma g$. Since $f$ lies in $\cC^0\cap\cD^0\cap\cK^0$ by assertion (a) and Proposition~\ref{prop:LMpGpLMp1}, it follows from Theorem~\ref{thm:uniqzz0} that the sequence $n\mapsto D^rf^p_n[g]$ converges uniformly on $\R_+$ to $f-f(\infty)=f=D^r\Sigma g$.
\end{proof}

\begin{example}\label{ex:6afdsdf2}
The function $g(x)=\ln x$ clearly lies in $\cC^{\infty}\cap\cD^1\cap\cK^{\infty}$. Using Theorem~\ref{thm:TBTDiff}, we now see that the function $\Sigma g(x)=\ln\Gamma(x)$ lies in $\cC^{\infty}\cap\cD^2\cap\cK^{\infty}$. Moreover, for any $r\in\N^*$, we have
\begin{eqnarray*}
\psi_{r-1}(x) &=& D^r\ln\Gamma(x) ~=~ \lim_{n\to\infty}D^rf^1_n[\ln](x)\\
&=& \lim_{n\to\infty}\left(0^{r-1}\ln n +(-1)^r(r-1)!\,\sum_{k=0}^{n-1}\frac{1}{(x+k)^r}\right).
\end{eqnarray*}
If $r=1$, then we obtain
$$
\psi(x) ~=~ \lim_{n\to\infty}\left(\ln n-\sum_{k=0}^{n-1}\frac{1}{x+k}\right).
$$
If $r\geq 2$, then we get (compare with, e.g., Srivastava and Choi \cite[p.~33]{SriCho12})
$$
\psi_{r-1}(x) ~=~ (-1)^r(r-1)!\,\zeta(r,x),
$$
where $s\mapsto\zeta(s,x)$ is the Hurwitz zeta function\index{Hurwitz zeta function} (see Example~\ref{ex:HuZe82}).
\end{example}

\section{Some properties of the derivatives}

In this section, we investigate the functions $(\Sigma g)^{(r)}$ and $\Sigma g^{(r)}$ and some of their properties. We also show how the asymptotic behaviors of these functions can be analyzed from results of Chapter~\ref{chapter:6}, including the generalized Stirling formula. Finally, we provide a series representation of the asymptotic constant $\sigma[g]$ as an analogue of Euler's series representation of $\gamma$.

In the next proposition, we essentially establish the fact that the functions $(\Sigma g)^{(r)}$ and $\Sigma g^{(r)}$ are equal up to an additive constant. This result will have several important consequences in this and the next chapters.

\begin{proposition}\label{prop:fdsf6sfd}
Let $g$ lie in $\cC^r\cap\cD^p\cap\cK^{\max\{p,r\}}$ for some $p\in\N$ and $r\in\N^*$. Then $g^{(r)}$ lies in $\cC^0\cap\cD^{(p-r)_+}\cap\cK^{(p-r)_+}$. Moreover, for any $x>0$ we have
\begin{equation}\label{eq:rd0bb}
(\Sigma g)^{(r)}(x)-\Sigma g^{(r)}(x) ~=~ (\Sigma g)^{(r)}(1) ~=~ g^{(r-1)}(1) - \sigma[g^{(r)}]{\,}.
\end{equation}
If $r>p$, then
$$
\sigma[g^{(r)}] ~=~ g^{(r-1)}(1) + \sum_{k=1}^{\infty}g^{(r)}(k).
$$
\end{proposition}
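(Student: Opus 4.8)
The plan is to treat the three assertions in turn, leaning on the differentiability result already recorded in Proposition~\ref{prop:7Diff5Sig216}. The membership $g^{(r)}\in\cC^0\cap\cD^{(p-r)_+}\cap\cK^{(p-r)_+}$ is exactly what is established in the course of the proof of Proposition~\ref{prop:7Diff5Sig216}; there it is also shown that when $r>p$ the sharper conclusion $g^{(r)}\in\cC^0\cap\cD^{-1}\cap\cK^0$ holds. In either case $g^{(r)}$ lies in $\cC^0\cap\mathrm{dom}(\Sigma)$, so that $\Sigma g^{(r)}$ and the number $\sigma[g^{(r)}]$ are defined, and I may invoke these facts freely.

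For the first equality in \eqref{eq:rd0bb}, I would show that the two functions $(\Sigma g)^{(r)}$ and $\Sigma g^{(r)}$ differ by a constant. Both are solutions of the difference equation $\Delta f=g^{(r)}$: indeed $\Delta\Sigma g^{(r)}=g^{(r)}$ by definition of $\Sigma$, while differentiating $\Delta\Sigma g=g$ a total of $r$ times (legitimate since $\Sigma g\in\cC^r$ by Proposition~\ref{prop:7Diff5Sig216}) gives $\Delta(\Sigma g)^{(r)}=g^{(r)}$. Moreover both solutions lie in $\cK^{(p-r)_+}$: this is clear for $\Sigma g^{(r)}$ by Proposition~\ref{prp:56GathZZPro8}(a), and for $(\Sigma g)^{(r)}$ it follows from $\Sigma g\in\cC^r\cap\cK^{\max\{p,r\}}$ together with Proposition~\ref{prop:LMpGpLMp1}(a), since $\max\{p,r\}-r=(p-r)_+$. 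As $g^{(r)}\in\cD^{(p-r)_+}\cap\cK^{(p-r)_+}$, the uniqueness Theorem~\ref{thm:unic} (with $p$ replaced by $(p-r)_+$) forces $(\Sigma g)^{(r)}-\Sigma g^{(r)}$ to be constant; evaluating at $x=1$ and using $\Sigma g^{(r)}(1)=0$ identifies that constant as $(\Sigma g)^{(r)}(1)$.

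For the second equality, I would substitute $\Sigma g^{(r)}(x)=(\Sigma g)^{(r)}(x)-(\Sigma g)^{(r)}(1)$ into the defining integral $\sigma[g^{(r)}]=\int_0^1\Sigma g^{(r)}(t+1)\,dt$ from \eqref{eq:sigmagg86}. This yields
\[
\sigma[g^{(r)}] ~=~ \int_1^2(\Sigma g)^{(r)}(s)\,ds-(\Sigma g)^{(r)}(1),
\]
and the remaining integral equals $(\Sigma g)^{(r-1)}(2)-(\Sigma g)^{(r-1)}(1)=g^{(r-1)}(1)$, the last step coming from differentiating $\Delta\Sigma g=g$ exactly $r-1$ times and evaluating at $x=1$. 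Rearranging gives $(\Sigma g)^{(r)}(1)=g^{(r-1)}(1)-\sigma[g^{(r)}]$, as required.

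Finally, for the case $r>p$ I would apply Proposition~\ref{prop:diffzz0} to $g^{(r)}\in\cC^0\cap\cD^{-1}\cap\cK^0$, obtaining $\sigma[g^{(r)}]=\sum_{k=1}^{\infty}g^{(r)}(k)-\int_1^{\infty}g^{(r)}(t)\,dt$. The integral equals $\lim_{x\to\infty}g^{(r-1)}(x)-g^{(r-1)}(1)$, so the formula reduces to the claim provided $g^{(r-1)}$ vanishes at infinity. The main point to check here---and the step I expect to require the most care---is precisely this vanishing: starting from $g^{(p)}\in\cC^{r-p}\cap\cD^0\cap\cK^{r-p}$, one propagates through the intermediate derivatives using Proposition~\ref{prop:LMpGpLMp1}(a) (to track the order of convexity) and Proposition~\ref{prop:sa6f575sf}(b) (to descend from $\cD^0$ to $\cD^{-1}$), which shows $g^{(r-1)}\in\cD^0\cap\cK^0$ and hence, via Proposition~\ref{prop:ClClIntg}, that $g^{(r-1)}(x)\to 0$ as $x\to\infty$. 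With this in hand the last identity follows immediately.
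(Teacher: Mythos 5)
Your proposal is correct and follows essentially the same route as the paper: the membership claim is quoted from the proof of Proposition~\ref{prop:7Diff5Sig216}, the constant difference comes from viewing $(\Sigma g)^{(r)}$ and $\Sigma g^{(r)}$ as $\cK^{(p-r)_+}$-solutions of $\Delta\varphi=g^{(r)}$ and invoking Theorem~\ref{thm:unic}, and the case $r>p$ rests on Propositions~\ref{prop:sa6f575sf}(b) and \ref{prop:diffzz0}. The only cosmetic differences are that you evaluate the identity for $\sigma[g^{(r)}]$ at $x=1$ straight from \eqref{eq:sigmagg86} where the paper runs the same computation for general $x$ via \eqref{eq:ds68ffds}, and that you make explicit the vanishing of $g^{(r-1)}$ at infinity, which the paper leaves implicit in the statement $g^{(r-1)}\in\cC^1\cap\cD^0\cap\cK^1$.
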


\begin{proof}
As already observed in the proof of Proposition~\ref{prop:7Diff5Sig216}, the first claim follows from Propositions~\ref{prop:LMpGpLMp1} and \ref{prop:sa6f575sf}(b). Moreover, we have that $\Sigma g$ lies in $\cC^r\cap\cD^{p+1}\cap\cK^{\max\{p,r\}}$. Let us now prove \eqref{eq:rd0bb}. By Proposition~\ref{prop:LMpGpLMp1}, the function $\varphi_1=(\Sigma g)^{(r)}$ is a solution in $\cK^{(p-r)_+}$ to the equation $\Delta\varphi=g^{(r)}$. By the existence Theorem~\ref{thm:exist}, the function $\varphi_2=\Sigma g^{(r)}$ is also a solution in $\cK^{(p-r)_+}$. Thus, by the uniqueness Theorem~\ref{thm:unic}, we must have $(\Sigma g)^{(r)}-\Sigma g^{(r)}=c$ for some $c\in\R$, and hence we also have $(\Sigma g)^{(r)}(1)=c$.

Now, for any $x>0$, using \eqref{eq:ds68ffds} we then get
\begin{eqnarray*}
g^{(r-1)}(1)-\sigma[g^{(r)}] &=& g^{(r-1)}(x)-\int_x^{x+1}\Sigma g^{(r)}(t){\,}dt\\
&=& c+g^{(r-1)}(x)-\int_x^{x+1}(\Sigma g)^{(r)}(t){\,}dt.
\end{eqnarray*}
Evaluating the latter integral, we then obtain
\begin{eqnarray*}
g^{(r-1)}(1)-\sigma[g^{(r)}] &=& c+g^{(r-1)}(x)-(\Sigma g)^{(r-1)}(x+1)+(\Sigma g)^{(r-1)}(x)\\
&=& c+g^{(r-1)}(x)-\Delta (\Sigma g)^{(r-1)}(x)\\
&=& c+g^{(r-1)}(x)-(\Delta\Sigma g)^{(r-1)}(x)\\
&=& c,
\end{eqnarray*}
which proves \eqref{eq:rd0bb}. Finally, if $r>p$, then we have that $g^{(r-1)}$ lies in $\cC^1\cap\cD^0\cap\cK^1$ and that $g^{(r)}$ lies in $\cC^0\cap\cD^{-1}\cap\cK^0$ by Proposition~\ref{prop:sa6f575sf}(b). The last part of the statement then follows from applying Proposition~\ref{prop:diffzz0} to the function $g^{(r)}$.
\end{proof}

\begin{example}\label{ex:digamm59a}
The function $g(x)=\frac{1}{x}$ lies in $\cC^{\infty}\cap\cD^0\cap\cK^{\infty}$ and all its derivatives lie in $\cK^0$. By Theorem~\ref{thm:TBTDiff}, the function
$$
\Sigma g(x) ~=~ \sum_{k=0}^{\infty}\left(\frac{1}{k+1}-\frac{1}{x+k}\right) ~=~ H_{x-1} ~=~ \psi(x)+\gamma
$$
lies in $\cC^{\infty}\cap\cD^1\cap\cK^{\infty}$. Moreover, the series can be differentiated term by term infinitely many times and hence, for any $r\in\N^*$, we have
$$
(\Sigma g)^{(r)}(x) ~=~ \sum_{k=0}^{\infty}(-1)^{r+1}\frac{r!}{(x+k)^{r+1}} ~=~ \psi_r(x).
$$
By Proposition~\ref{prop:fdsf6sfd}, we also have
\begin{eqnarray*}
\sigma[g^{(r)}] &=& -(-1)^r(r-1)! + (-1)^r{\,}r!\,\sum_{k=1}^{\infty}\frac{1}{k^{r+1}}\\
&=& (-1)^r{\,}(r-1)!\left(r\,\zeta(r+1)-1\right),
\end{eqnarray*}
where $s\mapsto\zeta(s)$ is the Riemann zeta function\index{Riemann zeta function}.
\end{example}

In the next proposition we show the remarkable fact that the asymptotic equivalence \eqref{eq:Ratio44t2} still holds if we differentiate both sides.

\begin{proposition}\label{prop:conv6v6r}
Let $g$ lie in $\cC^r\cap\cD^p\cap\cK^{\max\{p,r\}}$ for some $p\in\N$ and $r\in\N^*$, and let $a\geq 0$. When $D^r\Sigma g$ vanishes at infinity, we also assume that
$$
D^r\Sigma g(n+1) ~\sim ~ D^r\Sigma g(n)\qquad\text{as $n\to_{\N}\infty$}.
$$
Then we have
$$
D^r\Sigma g(x+a) ~\sim ~ D^r_x\int_x^{x+1}\Sigma g(t){\,}dt ~=~ g^{(r-1)}(x)\qquad\text{as $x\to\infty$}.
$$
\end{proposition}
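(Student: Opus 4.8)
The plan is to reduce the statement to Proposition~\ref{prop:conv6v6} applied to the function $g^{(r)}$ in place of $g$ itself. First I would invoke Proposition~\ref{prop:fdsf6sfd}: since $g$ lies in $\cC^r\cap\cD^p\cap\cK^{\max\{p,r\}}$, its derivative $g^{(r)}$ lies in $\cC^0\cap\cD^{(p-r)_+}\cap\cK^{(p-r)_+}$, hence in $\cC^0\cap\mathrm{dom}(\Sigma)$, and moreover
$$
(\Sigma g)^{(r)} ~=~ c+\Sigma g^{(r)},\qquad\text{where}\quad c=(\Sigma g)^{(r)}(1).
$$
In particular $D^r\Sigma g=c+\Sigma g^{(r)}$. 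Note also that $g$ itself lies in $\mathrm{dom}(\Sigma)$ (taking $q=\max\{p,r\}$, since $\cD^p\subset\cD^q$), so that identity \eqref{eq:ds68ffds} is available and $\Sigma g$ lies in $\cC^r$ by Theorem~\ref{thm:TBTDiff}(a).

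Next I would apply Proposition~\ref{prop:conv6v6} directly to $g^{(r)}$ with this very constant $c$ and the given $a\geq 0$. The crucial observation is that the two hypotheses of that proposition translate exactly into the hypotheses we are given: the function ``$c+\Sigma g^{(r)}$'' appearing there is precisely $D^r\Sigma g$, so ``$c+\Sigma g^{(r)}$ vanishes at infinity'' is ``$D^r\Sigma g$ vanishes at infinity'', and the auxiliary assumption $c+\Sigma g^{(r)}(n+1)\sim c+\Sigma g^{(r)}(n)$ is exactly $D^r\Sigma g(n+1)\sim D^r\Sigma g(n)$. The conclusion of Proposition~\ref{prop:conv6v6} then reads
$$
c+\Sigma g^{(r)}(x+a) ~\sim ~ c+\int_x^{x+1}\Sigma g^{(r)}(t){\,}dt\qquad\text{as $x\to\infty$}.
$$

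It then remains to rewrite both sides. On the left, $c+\Sigma g^{(r)}(x+a)=D^r\Sigma g(x+a)$. On the right, using $\Sigma g^{(r)}=D^r\Sigma g-c$ the constant cancels, and the integral becomes $\int_x^{x+1}D^r\Sigma g(t){\,}dt$; since $D$ and $\Delta$ commute and $\Delta\Sigma g=g$, this equals
$$
(\Sigma g)^{(r-1)}(x+1)-(\Sigma g)^{(r-1)}(x) ~=~ (\Delta\Sigma g)^{(r-1)}(x) ~=~ g^{(r-1)}(x),
$$
which also coincides with $D^r_x\int_x^{x+1}\Sigma g(t){\,}dt$ upon differentiating identity \eqref{eq:ds68ffds} $r$ times (legitimate because $\Sigma g\in\cC^r$ and $g\in\cC^r$). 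Combining the two rewritings yields the announced equivalence.

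I do not expect any serious obstacle, since the analytic content is already contained in the scalar case proved in Proposition~\ref{prop:conv6v6}; the argument is essentially a transfer along the relation $(\Sigma g)^{(r)}=c+\Sigma g^{(r)}$. The only points requiring care are verifying that $g^{(r)}$ genuinely lies in $\cC^0\cap\mathrm{dom}(\Sigma)$ so that Proposition~\ref{prop:conv6v6} applies, and checking that the constant $c=(\Sigma g)^{(r)}(1)$ is precisely the one that makes the ``vanishing at infinity'' and ``asymptotic at integers'' hypotheses of the two statements coincide---both of which are supplied by Proposition~\ref{prop:fdsf6sfd}.
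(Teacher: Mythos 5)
Your proposal is correct and follows essentially the same route as the paper's own proof: both rest on Proposition~\ref{prop:fdsf6sfd} to get $g^{(r)}\in\cC^0\cap\cD^{(p-r)_+}\cap\cK^{(p-r)_+}$ and the relation $D^r\Sigma g=c+\Sigma g^{(r)}$, then apply Proposition~\ref{prop:conv6v6} to $g^{(r)}$ and rewrite both sides via identity~\eqref{eq:ds68ffds}. The only difference is cosmetic (the paper identifies the right-hand side as $g^{(r-1)}(x)=\int_x^{x+1}(\Sigma g)^{(r)}(t){\,}dt$ directly, while you cancel the constant and use $\Delta\Sigma g=g$), so there is nothing to add.
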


\begin{proof}
By Proposition~\ref{prop:fdsf6sfd}, we have that $g^{(r)}$ lies in $\cC^0\cap\cD^{(p-r)_+}\cap\cK^{(p-r)_+}$. Moreover, for any $x>0$ we have
$$
D^r\Sigma g(x+a) ~=~ c+\Sigma g^{(r)}(x+a)
$$
and, using \eqref{eq:ds68ffds},
\begin{eqnarray*}
D^r_x\int_x^{x+1}\Sigma g(t){\,}dt &=& g^{(r-1)}(x) ~=~ \int_x^{x+1}(\Sigma g)^{(r)}(t){\,}dt\\
&=& c+\int_x^{x+1}\Sigma g^{(r)}(t){\,}dt,
\end{eqnarray*}
where $c=g^{(r-1)}(1)-\sigma[g^{(r)}]$. The result then immediately follows from applying Proposition~\ref{prop:conv6v6} to the function $g^{(r)}$.
\end{proof}

\begin{example}
Applying Proposition~\ref{prop:conv6v6r} to the function $g(x)=\ln x$, for any $a\geq 0$ we obtain the equivalences
$$
\ln\Gamma(x+a) ~\sim ~ x\ln x{\,},\qquad\psi(x+a) ~\sim ~ \ln x\qquad\text{as $x\to\infty$},
$$
and for any $\nu\in\N$,
$$
\psi_{\nu +1}(x+a) ~\sim ~ (-1)^{\nu}\,\frac{\nu !}{x^{\nu +1}}\qquad\text{as $x\to\infty$}.\qedhere
$$
\end{example}

In the next two propositions, we mainly investigate how the convergence results in \eqref{eq:convRes79} and \eqref{eq:dgf7dds} are modified when the function $g$ is replaced with one of its higher order derivatives. The second proposition can be regarded as the ``integrated'' version of the first one, and hence it naturally  involves the generalized Binet function.\index{Binet's function!generalized}

\begin{proposition}\label{prop:gWiDr5}
Let $g$ lie in $\cC^r\cap\cD^p\cap\cK^{\max\{p,r\}}$ for some $p\in\N$ and $r\in\N^*$, and let $a\geq 0$. The following assertions hold.
\begin{enumerate}
\item[(a)] $g^{(r)}$ lies in $\cR_{\R}^{(p-r)_+}$ and both $\Sigma g^{(r)}$ and $(\Sigma g)^{(r)}$ lie in $\cR_{\R}^{(p-r)_++1}$.
\item[(b)] For any $q\in\N$, the function $x\mapsto\rho_x^{q+1}[\Sigma g](a)$ lies in $\cC^r$ and we have
    $$
    D_x^r\rho_x^{q+1}[\Sigma g](a) ~=~ \rho_x^{q+1}[\Sigma g^{(r)}](a).
    $$
\item[(c)] We have that $\rho_x^{(p-r)_++1}[\Sigma g^{(r)}](a) \to 0$ and $D_x^r\rho_x^{p+1}[\Sigma g](a) \to 0$ as $x\to\infty$.
\end{enumerate}
\end{proposition}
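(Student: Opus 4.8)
The plan is to prove the three assertions in order, leveraging Proposition~\ref{prop:fdsf6sfd} throughout to convert statements about derivatives of $\Sigma g$ into statements about $\Sigma$ applied to derivatives of $g$. The key preliminary fact, already recorded in Proposition~\ref{prop:fdsf6sfd}, is that $g^{(r)}$ lies in $\cC^0\cap\cD^{(p-r)_+}\cap\cK^{(p-r)_+}$; this is what lets me apply the earlier machinery to $g^{(r)}$ in place of $g$.

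For assertion (a), I would argue as follows. Since $g^{(r)}$ lies in $\cD^{(p-r)_+}\cap\cK^{(p-r)_+}$, Theorem~\ref{thm:exist}(a) (or Proposition~\ref{prop:ClClIntg}) gives that $g^{(r)}$ lies in $\cR^{(p-r)_+}_{\R}$. Next, $\Sigma g^{(r)}$ is the solution produced by the existence theorem, so by Proposition~\ref{prp:56GathZZPro8}(b) it lies in $\cR^{(p-r)_++1}_{\R}\cap\cK^{(p-r)_+}$. Finally, by \eqref{eq:rd0bb} the function $(\Sigma g)^{(r)}$ differs from $\Sigma g^{(r)}$ only by the additive constant $(\Sigma g)^{(r)}(1)$; since constants lie in every $\cR^q_{\R}$ and these are linear spaces (Proposition~\ref{prop:4042RsDs5}), $(\Sigma g)^{(r)}$ also lies in $\cR^{(p-r)_++1}_{\R}$.

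For assertion (b), the point is that differentiation in $x$ commutes with the operator $\rho_x^{q+1}[\,\cdot\,]$ in a controlled way. Unwinding the definition \eqref{eq:deflambdapt}, $\rho_x^{q+1}[\Sigma g](a)=\Sigma g(x+a)-\sum_{j=0}^{q}\binom{x}{j}\,\Delta^j\Sigma g(a)$, where the forward differences are taken in the $x$-variable of $\Sigma g$ evaluated at the base point. The cleanest route is to use Theorem~\ref{thm:TBTDiff}, which guarantees $\Sigma g\in\cC^r$ and, more importantly, that $D^r$ commutes with the relevant limits; combined with \eqref{eq:rd0bb} (so that $D^r\Sigma g=\Sigma g^{(r)}+\text{const}$) and the fact that $\Delta$ commutes with $D^r$, differentiating the displayed expression $r$ times in $x$ turns each occurrence of $\Sigma g$ into $D^r\Sigma g=\Sigma g^{(r)}+c$, and the additive constant cancels in the combination defining $\rho$ because $\rho_x^{q+1}[\text{const}](a)=0$. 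This yields $D_x^r\rho_x^{q+1}[\Sigma g](a)=\rho_x^{q+1}[\Sigma g^{(r)}](a)$, and membership in $\cC^r$ follows from $\Sigma g\in\cC^r$.

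Assertion (c) is then immediate: the first limit, $\rho_x^{(p-r)_++1}[\Sigma g^{(r)}](a)\to 0$, is exactly the convergence statement \eqref{eq:convRes79} (equivalently Proposition~\ref{prp:56GathZZPro8}(b) via the squeeze in Corollary~\ref{cor:AsymBehSolCo}) applied to $g^{(r)}\in\cD^{(p-r)_+}\cap\cK^{(p-r)_+}$ with $p$ replaced by $(p-r)_+$. The second limit, $D_x^r\rho_x^{p+1}[\Sigma g](a)\to 0$, follows by taking $q=p$ in assertion (b) to write $D_x^r\rho_x^{p+1}[\Sigma g](a)=\rho_x^{p+1}[\Sigma g^{(r)}](a)$ and then observing that, since $(p-r)_+\le p$ and the spaces $\cR^q_{\R}$ are nested (Proposition~\ref{prop:4042RsDs5}), the first limit already gives $\rho_x^{p+1}[\Sigma g^{(r)}](a)\to 0$ as well. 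The main obstacle I anticipate is assertion (b): justifying that the $r$-fold differentiation may be carried out term by term and that it genuinely commutes with the difference operator requires invoking the uniform convergence of $n\mapsto D^rf^p_n[g]$ from Theorem~\ref{thm:TBTDiff} rather than naive manipulation, and one must be careful that the constant produced by \eqref{eq:rd0bb} washes out in the $\rho$-combination. Once that commutation is established cleanly, (a) and (c) are short consequences of results already proved.
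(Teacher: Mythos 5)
Your route is in substance the same as the paper's: assertion (a) from Proposition~\ref{prop:fdsf6sfd} combined with Propositions~\ref{prop:ClClIntg} and \ref{prp:56GathZZPro8} and identity \eqref{eq:rd0bb} (constants lie in $\cR^{(p-r)_++1}_{\R}$, which is a linear space by Proposition~\ref{prop:4042RsDs5}); assertion (b) by differentiating the finite expression defining $\rho$ and letting the constant from \eqref{eq:rd0bb} cancel; assertion (c) from (a), (b) and the inclusion $\cR_{\R}^{(p-r)_++1}\subset\cR_{\R}^{p+1}$. So the architecture is right, but there is one concrete error to fix.

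Your displayed formula in (b) is mis-transcribed, and read literally the key step fails. In the notation of \eqref{eq:deflambdapt} the subscript of $\rho$ is the base point, so
$$
\rho_x^{q+1}[\Sigma g](a) ~=~ \Sigma g(x+a)-\sum_{j=0}^{q}\tchoose{a}{j}\,\Delta^j\Sigma g(x),
$$
whereas you wrote $\Sigma g(x+a)-\sum_{j=0}^{q}\tchoose{x}{j}\,\Delta^j\Sigma g(a)$, which is the expansion of $\rho_a^{q+1}[\Sigma g](x)$, not of $\rho_x^{q+1}[\Sigma g](a)$. With your version, the only $x$-dependence inside the sum sits in the polynomial coefficients $\tchoose{x}{j}$, so $D_x^r$ would differentiate those polynomials and never produce $\Sigma g^{(r)}$; the identity $D_x^r\rho_x^{q+1}[\Sigma g](a)=\rho_x^{q+1}[\Sigma g^{(r)}](a)$ cannot be extracted from it. With the correct formula your verbal argument goes through exactly as in the paper: $D^r\Delta^j\Sigma g(x)=\Delta^j\bigl(\Sigma g^{(r)}(x)+c\bigr)$ with $c=(\Sigma g)^{(r)}(1)$, the constant survives only in the $j=0$ term and in $D^r\Sigma g(x+a)$, and these cancel — your remark that $\rho_x^{q+1}[\text{const}](a)=0$ is precisely the right reason. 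Your prose (``differences taken at the base point'') shows you intended the correct formula, so this is a transcription slip rather than a conceptual gap, but the display must be corrected for the proof to stand. One further simplification: your anticipated obstacle about term-by-term differentiation is not an issue. Once Proposition~\ref{prop:fdsf6sfd} (equivalently \eqref{eq:rd0bb}, together with $\Sigma g\in\cC^r$ from Theorem~\ref{thm:TBTDiff}(a)) is available, assertion (b) is a finite computation with translates of a $\cC^r$ function; the uniform-convergence work is already buried in the proofs of those earlier results and need not be re-invoked here.
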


\begin{proof}
By Proposition~\ref{prop:fdsf6sfd}, the function $g^{(r)}$ lies in $\cC^0\cap\cD^{(p-r)_+}\cap\cK^{(p-r)_+}$. This immediately proves assertion (a). Now, using \eqref{eq:deflambdapt} and then \eqref{eq:rd0bb} we get
\begin{eqnarray*}
D_x^r\rho_x^{q+1}[\Sigma g](a) &=& \Sigma g^{(r)}(x+a)-\Sigma g^{(r)}(x)-\sum_{j=1}^q\tchoose{a}{j}\,\Delta^{j-1}g^{(r)}(x)\\
&=& \rho_x^{q+1}[\Sigma g^{(r)}](a),
\end{eqnarray*}
which proves assertion (b). Assertion (c) follows from assertions (a) and (b) and the fact that $\cR_{\R}^{(p-r)_++1}\subset\cR_{\R}^{p+1}$.
\end{proof}

\begin{proposition}\label{prop:gSfDr4}
Let $g$ lie in $\cC^r\cap\cD^p\cap\cK^{\max\{p,r\}}$ for some $p\in\N$ and $r\in\N^*$. The following assertions hold.
\begin{enumerate}
\item[(a)] For any $q\in\N$, the function $J^{q+1}[\Sigma g]$ lies in $\cC^r$ and we have
$$
D^r J^{q+1}[\Sigma g] ~=~ J^{q+1}[\Sigma g^{(r)}].
$$
In particular, we have $\sigma[g^{(r)}]=-D^rJ^1[\Sigma g](1)$.
\item[(b)] We have that $J^{(p-r)_++1}[\Sigma g^{(r)}](x) \to 0$ and $D^r J^{p+1}[\Sigma g](x) \to 0$ as $x\to\infty$. In particular, if $r>p$, then $(\Sigma g)^{(r)}\to 0$ as $x\to\infty$.
\item[(c)] We have
$$
D_x^r \int_0^1\rho_x^{p+1}[\Sigma g](t){\,}dt ~=~ \int_0^1D_x^r \rho_x^{p+1}[\Sigma g](t){\,}dt.
$$
\end{enumerate}
\end{proposition}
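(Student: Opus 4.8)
The plan is to reduce all three assertions to a single analytic fact—that one may differentiate $J^{q+1}[\Sigma g]$ under the integral sign—and then read off (a), (b), and (c) using the Binet-function identities together with the generalized Stirling formula. First I would record the two ingredients that make everything run. By Theorem~\ref{thm:TBTDiff}(a) the function $\Sigma g$ lies in $\cC^r$, hence so does each $\Delta^j\Sigma g$; and by Proposition~\ref{prop:gWiDr5}(b), for every $q\in\N$ the map $x\mapsto\rho_x^{q+1}[\Sigma g](t)$ is of class $\cC^r$ with $D_x^r\rho_x^{q+1}[\Sigma g](t)=\rho_x^{q+1}[\Sigma g^{(r)}](t)$. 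Note that $\Sigma g^{(r)}$ is well defined here, since $g^{(r)}\in\cC^0\cap\mathrm{dom}(\Sigma)$ by Proposition~\ref{prop:fdsf6sfd}.

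For (c) and (a), writing out $\rho_x^{q+1}[\Sigma g](t)=\Sigma g(x+t)-\sum_{j=0}^q\tchoose{t}{j}\,\Delta^j\Sigma g(x)$ via \eqref{eq:deflambdapt}, the integrand of $J^{q+1}[\Sigma g](x)=-\int_0^1\rho_x^{q+1}[\Sigma g](t)\,dt$ (see \eqref{eq:Binet643780}) is, together with its $x$-partial derivatives up to order $r$, continuous in $(x,t)$ on any strip $[x_0,x_1]\times[0,1]$ and hence bounded there; this uses $\Sigma g\in\cC^r$ and that $t\mapsto\tchoose{t}{j}$ is a polynomial. The classical theorem on differentiation under the integral sign then applies $r$ times and yields $D_x^r\int_0^1\rho_x^{q+1}[\Sigma g](t)\,dt=\int_0^1 D_x^r\rho_x^{q+1}[\Sigma g](t)\,dt$. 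Taking $q=p$ is exactly assertion (c). Combining this interchange with Proposition~\ref{prop:gWiDr5}(b) gives, for every $q$,
$$
D^r J^{q+1}[\Sigma g] ~=~ -\int_0^1 \rho_x^{q+1}[\Sigma g^{(r)}](t)\,dt ~=~ J^{q+1}[\Sigma g^{(r)}],
$$
which is (a), the membership $J^{q+1}[\Sigma g]\in\cC^r$ being part of the same statement. For the last sentence of (a) I would set $q=0$, evaluate at $x=1$, and invoke the identity $\sigma[h]=-J^1[\Sigma h](1)$ with $h=g^{(r)}$.

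For (b), Proposition~\ref{prop:fdsf6sfd} gives $g^{(r)}\in\cC^0\cap\cD^{(p-r)_+}\cap\cK^{(p-r)_+}$, so the generalized Stirling formula (Theorem~\ref{thm:dgf7dds}) applied to $g^{(r)}$ yields $J^{(p-r)_++1}[\Sigma g^{(r)}](x)\to0$. To pass to $J^{p+1}$, I would telescope \eqref{eq:Binet6141378}: the difference $J^{p+1}[\Sigma g^{(r)}]-J^{(p-r)_++1}[\Sigma g^{(r)}]$ is a finite sum of terms $G_j\Delta^j\Sigma g^{(r)}$ with $j\geq(p-r)_++1\geq1$. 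Since $\Sigma g^{(r)}\in\cR^{(p-r)_++1}_{\R}$ by Proposition~\ref{prp:56GathZZPro8}(b), Proposition~\ref{prop:fpDpj}(a) shows $\Delta^{(p-r)_++1}\Sigma g^{(r)}\in\cR^0_{\R}$ vanishes at infinity, and so do all higher differences; hence each term of the sum tends to $0$. Therefore $J^{p+1}[\Sigma g^{(r)}](x)\to0$, which by (a) is $D^rJ^{p+1}[\Sigma g](x)\to0$. Finally, when $r>p$ one has $(\Sigma g)^{(r)}=\Sigma g^{(r)}+c$ (Proposition~\ref{prop:fdsf6sfd}); since $g^{(r)}\in\cD^{-1}$, Proposition~\ref{prop:diffzz0} gives $\Sigma g^{(r)}(x)\to\sum_{k\geq1}g^{(r)}(k)$, while the constant $c$ equals $-\sum_{k\geq1}g^{(r)}(k)$ by the formula for $\sigma[g^{(r)}]$ in Proposition~\ref{prop:fdsf6sfd}, so $(\Sigma g)^{(r)}\to0$.

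The only genuine analytic point is the interchange underlying (c): justifying $r$-fold differentiation under the integral sign. This rests entirely on the prior deep result that $\Sigma g\in\cC^r$ (Theorem~\ref{thm:TBTDiff}(a)) and on the explicit derivative formula of Proposition~\ref{prop:gWiDr5}(b). Once these are in hand, the continuity-and-boundedness hypotheses of the standard theorem are immediate, and the remainder of the argument is bookkeeping with the Binet identities and the generalized Stirling formula.
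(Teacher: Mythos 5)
Your proof is correct, and all the ingredients you invoke (Theorem~\ref{thm:TBTDiff}, Propositions~\ref{prop:gWiDr5} and \ref{prop:fdsf6sfd}, Theorem~\ref{thm:dgf7dds}, and the Binet identities) are available at this point in the text; but your route through assertions (a) and (c) is genuinely different from the paper's. You take the analytic interchange $D_x^r\int_0^1\rho_x^{q+1}[\Sigma g](t)\,dt=\int_0^1 D_x^r\rho_x^{q+1}[\Sigma g](t)\,dt$ as the primary fact, justify it by the classical Leibniz rule (continuity of the integrand and its $x$-partials on compact strips, which indeed follows from $\Sigma g\in\cC^r$), and then read off (c) and (a) from it together with Proposition~\ref{prop:gWiDr5}(b). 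The paper reverses this logic: it proves (a) purely algebraically, by differentiating the closed-form expression \eqref{eq:Binet64378S} for $J^{q+1}[\Sigma g]$ term by term and using \eqref{eq:rd0bb} to convert $(\Sigma g)^{(r)}$ into $\Sigma g^{(r)}$ up to a constant, so that no differentiation under an integral sign over $t$ is ever needed; assertion (c) then falls out as a formal corollary, since both sides equal $-J^{p+1}[\Sigma g^{(r)}](x)$ by (a), Proposition~\ref{prop:gWiDr5}(b), and the definition \eqref{eq:Binet643780}. In other words, in the paper the interchange is a \emph{consequence}, whereas for you it is an \emph{input}; your version costs the verification of the Leibniz hypotheses but makes (c) self-contained, while the paper's version is slicker and avoids the classical theorem altogether. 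Your treatment of (b) matches the paper's telescoping of \eqref{eq:Binet6141378} combined with the generalized Stirling formula, except that you justify the vanishing of the terms $G_q\Delta^q\Sigma g^{(r)}$ through the $\cR$-spaces (Propositions~\ref{prp:56GathZZPro8}(b) and \ref{prop:fpDpj}(a)) rather than directly from $\Delta^q\Sigma g^{(r)}=\Delta^{q-1}g^{(r)}$ and $g^{(r)}\in\cD^{(p-r)_+}$; both are valid. Finally, for the case $r>p$ you supply an explicit argument (via Proposition~\ref{prop:diffzz0} and the value of the constant $g^{(r-1)}(1)-\sigma[g^{(r)}]=-\sum_{k\geq 1}g^{(r)}(k)$) for a claim the paper states without detail; this is a welcome addition.
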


\begin{proof}
Using \eqref{eq:Binet64378S} and \eqref{eq:rd0bb}, we get
\begin{eqnarray*}
D^r J^{q+1}[\Sigma g](x) &=&
\Sigma g^{(r)}(x)-\sigma[g^{(r)}]-\int_1^xg^{(r)}(t){\,}dt +\sum_{j=1}^qG_j\,\Delta^{j-1}g^{(r)}(x)\\
&=& J^{q+1}[\Sigma g^{(r)}](x),
\end{eqnarray*}
which proves assertion (a). Now, setting $q=p$ in these equations we obtain
$$
D^r J^{p+1}[\Sigma g](x) ~=~ J^{(p-r)_++1}[\Sigma g^{(r)}](x)+\sum_{j=(p-r)_++1}^pG_j\,\Delta^{j-1}g^{(r)}(x).
$$
Since $g^{(r)}$ lies in $\cC^0\cap\cD^{(p-r)_+}\cap\cK^{(p-r)_+}$, this latter expression vanishes at infinity. This proves assertion (b). Finally, using Proposition~\ref{prop:gWiDr5} and assertion (a) we get
\begin{eqnarray*}
\int_0^1D_x^r \rho_x^{p+1}[\Sigma g](t){\,}dt &=& \int_0^1\rho_x^{p+1}[\Sigma g^{(r)}](t){\,}dt ~=~ -J^{p+1}[\Sigma g^{(r)}](x)\\
&=& -D^r J^{p+1}[\Sigma g](x) ~=~ D_x^r \int_0^1\rho_x^{p+1}[\Sigma g](t){\,}dt,
\end{eqnarray*}
which proves assertion (c).
\end{proof}

Assertion (c) of Proposition~\ref{prop:gWiDr5} reveals a very important fact. It shows that the convergence result in \eqref{eq:convRes79} still holds if we replace $g$ with $g^{(r)}$ and $p$ with $(p-r)_+$. But it also says that this new result can also be obtained by differentiating $r$ times both sides of \eqref{eq:convRes79} and then removing the terms that vanish at infinity.

Similarly, assertion (b) of Proposition~\ref{prop:gSfDr4} shows that this property also applies to the generalized Stirling formula \eqref{eq:dgf7dds}.

\begin{example}
The function $g(x)=\ln x$ lies in $\cC^{\infty}\cap\cD^1\cap\cK^{\infty}$ and its derivative $g'(x)=\frac{1}{x}$ lies in $\cC^{\infty}\cap\cD^0\cap\cK^{\infty}$. For any $a\geq 0$, the limit in \eqref{eq:convRes79} reduces to
$$
\ln\Gamma(x+a)-\ln\Gamma(x)-a\ln x ~\to ~ 0\qquad\text{as $x\to\infty$}.
$$
If we replace $g$ with $g'$ and set $p=0$ in \eqref{eq:convRes79}, we get
$$
\psi(x+a)-\psi(x) ~\to ~ 0\qquad\text{as $x\to\infty$}.
$$
However, this latter limit can also be obtained by differentiating both sides of the previous limit and then removing the term ($-\frac{a}{x}$) that vanishes at infinity.

Now, applying the generalized Stirling formula \eqref{eq:dgf7dds} to the function $g(x)=\ln x$, we clearly retrieve the classical Stirling formula
$$
\ln\Gamma(x)-\frac{1}{2}\ln(2\pi)+x-\left(x-\frac{1}{2}\right)\ln x ~\to ~ 0\qquad\text{as $x\to\infty$}.
$$
Proceeding similarly as above, we then obtain
$$
\psi(x)-\ln x ~\to ~ 0\qquad\text{as $x\to\infty$},
$$
which is actually the analogue of Stirling's formula for the digamma function.\index{digamma function}
\end{example}

\begin{remark}
To emphasize the similarities between Propositions~\ref{prop:gWiDr5} and \ref{prop:gSfDr4}, we could for instance extend our formalism a bit further as follows. For any $p\in\N$ and any $\S\in\{\N,\R\}$, let $\mathcal{J}^p_{\S}$ denote the set of continuous functions $g\colon\R_+\to\R$ having the asymptotic property that
$$
J^p[g](t) ~\to ~0\qquad\text{as $t\to_{\S}\infty$}.
$$
This new definition enables one to formalize some results more easily. For instance, using \eqref{eq:Binet6141378} we clearly obtain that
$$
\mathcal{J}^p_{\S}\cap\cD^p_{\S} ~=~ \mathcal{J}^{p+1}_{\S}\cap\cD^p_{\S}
$$
and this identity could be used to establish assertion (b) of Proposition~\ref{prop:gSfDr4} from assertion (a). To give another example, we can see that \eqref{eq:dgf7ddsp1} actually means that
$$
\cC^0\cap\cD^p\cap\cK^p ~\subset ~\mathcal{J}^p_{\R}.
$$
Note also that the generalized Stirling formula simply states that $\Sigma g$ lies in $\mathcal{J}^{p+1}_{\R}$ whenever $g$ lies in $\cC^0\cap\cD^p\cap\cK^p$.
\end{remark}

\parag{Taylor series expansion of $\Sigma g$} Suppose that $g$ lies in $\cC^{\infty}\cap\cD^p\cap\cK^{\infty}$ for some $p\in\N$. We know from Proposition~\ref{prop:gSfDr4} that
$$
\sigma[g^{(k)}] ~=~ -D^kJ^1[\Sigma g](1),\qquad k\in\N.
$$
Thus, the exponential generating function (see, e.g., Graham {\em et al.} \cite[Chapter~7]{GraKnuPat94}) for the sequence $n\mapsto\sigma[g^{(n)}]$ is defined by the equation
\begin{eqnarray}
\sum_{k=1}^{\infty}\sigma[g^{(k)}]\,\frac{x^k}{k!} &=& -J^1[\Sigma g](x+1)\label{eq:egf5590}\\
&=& \sigma[g]+\int_1^{x+1}g(t){\,}dt - \Sigma g(x+1).\nonumber
\end{eqnarray}
Denoting this exponential generating function by $\mathrm{egf}_{\sigma}[g](x)$, the previous equation reduces to
$$
\mathrm{egf}_{\sigma}[g](x) ~=~ -J^1[\Sigma g](x+1){\,}.
$$
If the function $J^1[\Sigma g]$ is real analytic at $1$, then the series in \eqref{eq:egf5590} converges in some neighborhood of $x=0$. Similarly, if the function $\Sigma g$ is real analytic at $1$, then the following Taylor series expansion
\begin{equation}\label{eq:Taylor52}
\Sigma g(x+1) ~=~ \sum_{k=1}^{\infty}(\Sigma g)^{(k)}(1)\,\frac{x^k}{k!}
\end{equation}
holds in some neighborhood of $x=0$, where the numbers $(\Sigma g)^{(k)}(1)$ for $k\in\N^*$ can also be computed through \eqref{eq:rd0bb}.

\begin{example}
Consider again the functions $g(x)=\ln x$ and $\Sigma g(x)=\ln\Gamma(x)$. We know from Example~\ref{ex:6afdsdf2} that
$$
D\ln\Gamma(1) ~=~ \psi(1) ~=~ \lim_{n\to\infty}\left(\ln n-\sum_{k=1}^n\frac{1}{k}\right) ~=~ -\gamma{\,},
$$
and that for any integer $k\geq 2$
$$
D^k\ln\Gamma(1) ~=~ \psi_{k-1}(1) ~=~ (-1)^k{\,}(k-1)!\,\zeta(k).
$$
We then obtain the following Taylor series expansion
$$
\ln\Gamma(x+1) ~=~ -\gamma x+\sum_{k=2}^{\infty}(-1)^k\,\frac{\zeta(k)}{k}{\,}x^k{\,},\qquad |x|<1.
$$
The values of the sequence $n\mapsto\sigma[g^{(n)}]$ can be obtained using \eqref{eq:rd0bb} or \eqref{eq:egf5590}. We get
$$
\sigma[g] ~=~ -1+\frac{1}{2}\ln(2\pi),\qquad\sigma[g'] ~=~ \gamma,
$$
and for any integer $k\geq 2$
$$
\sigma[g^{(k)}] ~=~ (-1)^k(k-2)!{\,}(1-(k-1)\zeta(k)){\,}.\qedhere
$$
\end{example}

\parag{Analogues of Euler's series representation of $\gamma$} Integrating both sides of \eqref{eq:Taylor52} on $(0,1)$ (assuming that the series can be integrated term by term), we obtain the identity\index{Euler's series representation of $\gamma$!analogue}
\begin{equation}\label{eq:EulerAnal5571}
\sigma[g] ~=~ \sum_{k=1}^{\infty}(\Sigma g)^{(k)}(1)\,\frac{1}{(k+1)!}{\,}.
\end{equation}
Similarly, integrating both sides of \eqref{eq:egf5590} on $(0,1)$ (assuming again that the series can be integrated term by term), we obtain the identity
\begin{equation}\label{eq:EulerAnal55}
\sum_{k=0}^{\infty}\sigma[g^{(k)}]\,\frac{1}{(k+1)!} ~=~ \int_1^2(2-t){\,}g(t){\,}dt.
\end{equation}
Taking for instance $g(x)=\frac{1}{x}$ in \eqref{eq:EulerAnal5571}, we immediately retrieve Euler's series representation of $\gamma$\index{Euler's series representation of $\gamma$} (see, e.g., Srivastava and Choi \cite[p.~272]{SriCho12})
$$
\gamma ~=~ \sum_{k=2}^{\infty}(-1)^k\,\frac{\zeta(k)}{k}{\,}.
$$
This formula can also be obtained taking $g(x)=\frac{1}{x}$ in \eqref{eq:EulerAnal55} and using the straightforward identity
$$
\sigma[g^{(k)}] ~=~ (-1)^kk!\left(\zeta(k+1)-\frac{1}{k}\right),\qquad k\in\N^*.
$$

Considering different functions $g(x)$ in \eqref{eq:EulerAnal5571} and \eqref{eq:EulerAnal55} enables one to derive various interesting identities. A few applications are given in the following example.

\begin{example}\label{ex:7sd55fafsd1}
Taking $g(x)=\psi(x)$ in \eqref{eq:EulerAnal55} and using the straightforward identity
$$
\sigma[g^{(k)}] ~=~ \sigma[\psi_k] ~=~ (-1)^{k-1}(k-1)(k-1)!\,\zeta(k)\qquad k\in\N,~k\geq 2,
$$
we obtain
$$
\sum_{k=2}^{\infty}(-1)^k\frac{k-1}{k(k+1)}\,\zeta(k) ~=~ 2-\ln(2\pi){\,}.
$$
Similarly, taking $g(x)=\ln x$ and then $g(x)=\ln\Gamma(x)$ in \eqref{eq:EulerAnal5571} and \eqref{eq:EulerAnal55} we obtain the identities
\begin{eqnarray*}
\sum_{k=2}^{\infty}(-1)^k\frac{1}{k(k+1)}\,\zeta(k) &=& \frac{1}{2}\,\gamma -1+\frac{1}{2}\ln(2\pi){\,},\\
\sum_{k=2}^{\infty}(-1)^k\,\frac{1}{(k+1)(k+2)}{\,}\zeta(k) &=& \frac{1}{2}+\frac{1}{6}{\,}\gamma -2\ln A{\,},\\
\sum_{k=2}^{\infty}(-1)^k\frac{k-1}{k(k+1)(k+2)}{\,}\zeta(k) &=& \frac{5}{4}-\frac{1}{4}\ln(2\pi)-3\ln A{\,},
\end{eqnarray*}
where $A$ is Glaisher-Kinkelin's constant;\index{Glaisher-Kinkelin's constant} see also Srivastava and Choi \cite[Section 3.4]{SriCho12}.
\end{example}

\section{Finding solutions from derivatives}
\label{sec:FSFD63}

Given $r\in\N^*$ and a function $g\in\cC^r$, a solution $f\in\cC^r$ to the equation $\Delta f=g$ can sometimes be found more easily by first searching for an appropriate solution $\varphi\in\cC^0$ to the equation $\Delta\varphi=g^{(r)}$ and then calculating $f$ as an $r$th antiderivative of $\varphi$.

Let us first examine a very simple example to illustrate to which extent this approach can be easily and usefully applied.

\begin{example}\label{ex:FSFD6341}
Let $g\colon\R_+\to\R$ be defined by the equation
$$
g(x) ~=~ \int_1^x\ln t{\,}dt\qquad\text{for $x>0$}.
$$
Suppose that we search for a simple expression for the indefinite sum $\Sigma g$. We can apply Proposition~\ref{prop:fdsf6sfd} and observe that $g'$ lies in $\cC^{\infty}\cap\cD^1\cap\cK^{\infty}$ and hence that $g$ lies in $\cC^{\infty}\cap\cD^2\cap\cK^{\infty}$. Moreover, we have
$$
(\Sigma g)'(x) ~=~ c+\Sigma g'(x) ~=~ c+\ln\Gamma(x)
$$
for some $c\in\R$. Thus, we obtain
$$
\Sigma g(x) ~=~ c(x-1)+\int_1^x\ln\Gamma(t){\,}dt.
$$
To find the value of $c$, we then observe that
$$
0 ~=~ g(1) ~=~\Delta\Sigma g(1) ~=~ c+\int_1^2\ln\Gamma(t){\,}dt
$$
and hence $c=1-\frac{1}{2}\ln(2\pi)$ (see Example~\ref{ex:Raab286}). Alternatively, this value can also be obtained directly from \eqref{eq:rd0bb}; we have
$$
c ~=~ g(1)-\sigma[g'] ~=~ -\sigma[g'] ~=~ 1-\frac{1}{2}\ln(2\pi){\,}.
$$
Thus, this approach amounts to first searching for a simple expression for $\Sigma g'$, and then computing $\Sigma g$ using an antiderivative of $\Sigma g'$.

Finally, we get
$$
\Sigma g(x) ~=~ -1+\left(1-\frac{1}{2}\ln(2\pi)\right)x+\psi_{-2}(x),
$$
where $\psi_{-2}$ is the polygamma function\index{polygamma functions} $\psi_{-2}(x)=\int_0^x\ln\Gamma(t){\,}dt$.
\end{example}

The approach described in Example~\ref{ex:FSFD6341} is rather simple and can sometimes be very efficient. We will refer to this technique as \emph{the elevator method}.\index{elevator method|textbf} In very basic terms, to find $\Sigma g$ one proceeds as follows.
\begin{quote}
\begin{enumerate}
\item[Step 1.] \emph{We take the elevator, go down from the ground floor to the $r$th basement level, and get the function $\Sigma g^{(r)}$ easily.}
\item[Step 2.] \emph{We go back to the ground floor by converting the latter function into the function sought $\Sigma g$ using an $r$th antiderivative.}
\end{enumerate}
$$
\begin{array}{ccc}
\Delta f ~=~ g & & f ~=~ \Sigma g \\
\downarrow & & \uparrow \\
\Delta\varphi ~=~ g^{(r)} & \quad\to\quad & \varphi ~=~ \Sigma g^{(r)}
\end{array}
$$
\end{quote}

To our knowledge, this trick was investigated thoroughly by Krull~\cite{Kru49} and then by Dufresnoy and Pisot~\cite{DufPis63}.

In the next theorem we provide a general result based on this idea. This result is actually very general: it applies to any function $g\in\cC^r$, even if $\Sigma g$ is not defined (e.g., $g(x)=2^x$).

We first observe that if $\varphi\in\cC^0$ is a solution to the equation $\Delta\varphi=g^{(r)}$, then the map
$$
x ~ \mapsto ~ \int_x^{x+1}\varphi(t){\,}dt-g^{(r-1)}(x)
$$
has a zero derivative and hence it is constant on $\R_+$. In particular, it has a finite right limit at $x=0$.

\begin{theorem}[The elevator method]\label{thm:lift}\index{elevator method}
Let $r\in\N^*$, $a>0$, $g\in\cC^r$, and let $\varphi\colon\R_+\to\R$ be a continuous solution to the equation $\Delta\varphi=g^{(r)}$. Then there exists a solution $f\in\cC^r$ to the equation $\Delta f=g$ such that $f^{(r)}=\varphi$ if and only if
\begin{equation}\label{eq:COND11sd56}
\int_a^{a+1}\varphi(t){\,}dt ~=~ g^{(r-1)}(a).
\end{equation}
If any of these equivalent conditions holds, then $f$ is uniquely determined (up to an additive constant) by
\begin{equation}\label{eq:Taylor11sd56}
f(x) ~=~ f(a) + \sum_{k=1}^{r-1}c_k{\,}\frac{(x-a)^k}{k!} + \int_a^x \frac{(x-t)^{r-1}}{(r-1)!}{\,}\varphi(t){\,}dt,
\end{equation}
where, for $k=1,\ldots,r-1$,
\begin{equation}\label{eq:Taylor11sd562}
c_k ~=~ 
\sum_{j=0}^{r-k-1}\frac{B_j}{j!}{\,}\left(g^{(j+k-1)}(a)-\int_a^{a+1} \frac{(a+1-t)^{r-j-k}}{(r-j-k)!}{\,}\varphi(t){\,}dt\right).
\end{equation}
\end{theorem}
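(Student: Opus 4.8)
The plan is to reduce the existence question to solving a single difference equation for a polynomial, and then to read off the coefficients $c_k$ from an explicit inversion of $\Delta$ on polynomials. First I would record that, by the observation made just before the statement, the map $x\mapsto\int_x^{x+1}\varphi(t)\,dt-g^{(r-1)}(x)$ is constant on $\R_+$; hence condition \eqref{eq:COND11sd56} at the single point $a$ is equivalent to the identity $\int_x^{x+1}\varphi(t)\,dt=g^{(r-1)}(x)$ holding for all $x>0$. For the necessity of \eqref{eq:COND11sd56}, suppose $f\in\cC^r$ satisfies $\Delta f=g$ and $f^{(r)}=\varphi$. Differentiating $\Delta f=g$ a total of $r-1$ times and using that $\Delta$ commutes with $D$ gives $\Delta f^{(r-1)}=g^{(r-1)}$; on the other hand $\Delta f^{(r-1)}(x)=\int_x^{x+1}f^{(r)}(t)\,dt=\int_x^{x+1}\varphi(t)\,dt$, and evaluating at $x=a$ yields \eqref{eq:COND11sd56}.

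For sufficiency, I would assume \eqref{eq:COND11sd56}, hence $\int_x^{x+1}\varphi=g^{(r-1)}$ for all $x$, and set $F(x)=\int_a^x\frac{(x-t)^{r-1}}{(r-1)!}\varphi(t)\,dt$, the $r$th antiderivative of $\varphi$ with $F^{(k)}(a)=0$ for $0\le k\le r-1$ and $F^{(r)}=\varphi$ (note $F\in\cC^r$ since $\varphi$ is continuous). I then seek $f$ in the form $f=F+P$ with $P$ a polynomial of degree at most $r-1$, so that automatically $f\in\cC^r$ and $f^{(r)}=\varphi$. Writing $Q=g-\Delta F$, the requirement $\Delta f=g$ becomes $\Delta P=Q$. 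The point is that $Q$ is a polynomial of degree at most $r-2$: indeed $Q^{(r-1)}=g^{(r-1)}-\Delta(F^{(r-1)})$, and since $F^{(r-1)}(x)=\int_a^x\varphi$ we get $\Delta F^{(r-1)}(x)=\int_x^{x+1}\varphi=g^{(r-1)}(x)$, so $Q^{(r-1)}\equiv 0$. Since $\Delta$ carries the space of polynomials of degree $\le r-1$ onto the space of polynomials of degree $\le r-2$ with kernel the constants, there is a polynomial $P$, unique up to an additive constant, with $\Delta P=Q$; then $f=F+P$ solves the problem and is unique up to an additive constant. This also yields formula \eqref{eq:Taylor11sd56}, which is just Taylor's theorem for $f$ about $a$ with integral remainder together with $f^{(k)}(a)=P^{(k)}(a)=c_k$ for $1\le k\le r-1$ and $f^{(r)}=\varphi$.

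It remains to compute $c_k=P^{(k)}(a)$ explicitly, and this is where the Bernoulli numbers enter; I expect this bookkeeping to be the main obstacle. On polynomials one has $\Delta=e^D-1=D\,S$ with $S=\sum_{j\ge0}D^j/(j+1)!$ invertible, whence the finite, exact inversion $P'=\sum_{j\ge0}\frac{B_j}{j!}Q^{(j)}$, the coefficients being the Bernoulli numbers of \eqref{defBnN3} through the generating identity $D/(e^D-1)=\sum_{j\ge0}\frac{B_j}{j!}D^j$. Differentiating $k-1$ further times gives $c_k=P^{(k)}(a)=\sum_{j\ge0}\frac{B_j}{j!}Q^{(j+k-1)}(a)$, and since $\deg Q\le r-2$ the sum truncates at $j=r-k-1$. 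Finally I would substitute $Q^{(m)}(a)=g^{(m)}(a)-\Delta(F^{(m)})(a)=g^{(m)}(a)-F^{(m)}(a+1)$ (using $F^{(m)}(a)=0$ for $m\le r-1$) together with the repeated-integration formula $F^{(m)}(x)=\int_a^x\frac{(x-t)^{r-1-m}}{(r-1-m)!}\varphi(t)\,dt$ with $m=j+k-1$, which turns $F^{(j+k-1)}(a+1)$ into $\int_a^{a+1}\frac{(a+1-t)^{r-j-k}}{(r-j-k)!}\varphi(t)\,dt$ and produces exactly \eqref{eq:Taylor11sd562}.
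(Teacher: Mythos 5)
Your proposal is correct, but it follows a genuinely different route from the paper's proof. The paper keeps $f$ abstract (any $\cC^r$ function with $f^{(r)}=\varphi$, written via Taylor's formula with unknown values $c_k=f^{(k)}(a)$), proves by decreasing induction the claim that $\Delta f=g$ is equivalent to $\Delta f^{(r)}=g^{(r)}$ together with the $r$ point conditions $\Delta f^{(j)}(a)=g^{(j)}(a)$, $j=0,\ldots,r-1$, and then observes that these conditions form a consistent triangular linear system $\sum_{k=1}^{r-j-1}\frac{1}{k!}{\,}c_{j+k}=d_j$; formula \eqref{eq:Taylor11sd562} is finally \emph{verified} to solve that system by a computation resting on the recurrence \eqref{defBnN3}. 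You instead use the constancy of $x\mapsto\int_x^{x+1}\varphi(t){\,}dt-g^{(r-1)}(x)$ to upgrade \eqref{eq:COND11sd56} to the identity $\int_x^{x+1}\varphi(t){\,}dt=g^{(r-1)}(x)$ for all $x>0$, split $f=F+P$ with $F$ the Cauchy $r$-fold integral of $\varphi$ and $P$ polynomial of degree at most $r-1$, reduce everything to the polynomial difference equation $\Delta P=Q$ with $\deg Q\leq r-2$, and then \emph{derive} (rather than verify) the coefficient formula by inverting $\Delta=D{\,}S$ on polynomials, $S^{-1}=\sum_{j\geq 0}\frac{B_j}{j!}{\,}D^j$. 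Your two supporting facts are sound: every solution with $f^{(r)}=\varphi$ differs from $F$ by a polynomial of degree at most $r-1$ (so the ansatz loses no generality and uniqueness up to a constant is immediate from $\ker\Delta=$ constants on polynomials), and the identity $\bigl(\sum_{j}\frac{B_j}{j!}D^j\bigr)(e^D-1)=D$ on polynomials is, coefficient by coefficient, exactly the defining recurrence \eqref{defBnN3}, so no generating-function analysis beyond the paper's definition is needed. The trade-off: the paper's argument is self-contained and purely computational (Taylor plus a recurrence check), while yours is more structural --- it explains why the Bernoulli numbers appear (as the inverse of $\Delta/D$ on polynomials) and makes both existence and uniqueness transparent --- at the cost of invoking the operator inversion, which a careful write-up should explicitly reduce to \eqref{defBnN3} as you indicate.
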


\begin{proof}
Condition \eqref{eq:COND11sd56} is clearly necessary. Indeed, we have
$$
\int_a^{a+1}\varphi(t){\,}dt ~=~ f^{(r-1)}(a+1)-f^{(r-1)}(a) ~=~ g^{(r-1)}(a).
$$
Let us show that it is sufficient. Since $\varphi$ is continuous, there exists $f\in\cC^r$ such that $f^{(r)}=\varphi$. Taylor's theorem then provides the expansion formula \eqref{eq:Taylor11sd56} with arbitrary parameters $c_k = f^{(k)}(a)$ for $k=1,\ldots,r-1$. Now we need to determine the parameters $c_1,\ldots,c_k$ for $f$ to be a solution to the equation $\Delta f=g$. To this extent, we need the following claim.

\begin{claim}
The function $f$ satisfies the equation $\Delta f=g$ if and only if $f^{(r)}$ satisfies the equation $\Delta f^{(r)}=g^{(r)}$ and $\Delta f^{(j)}(a)=g^{(j)}(a)$ for $j=0,\ldots,r-1$.
\end{claim}

\begin{proof}[Proof of the claim]
The condition is clearly necessary. To see that it is sufficient, we simply show by decreasing induction on $j$ that $\Delta f^{(j)}=g^{(j)}$. Clearly, this is true for $j=r$. Suppose that it is true for some integer $j$ satisfying $1\leq j\leq r$. For any $x>0$ we have
\begin{multline*}
\Delta f^{(j-1)}(x)-\Delta f^{(j-1)}(a) ~=~ \int_a^x \Delta f^{(j)}(t){\,}dt ~=~ \int_a^x g^{(j)}(t){\,}dt\\
=~ g^{(j-1)}(x)-g^{(j-1)}(a) ~=~ g^{(j-1)}(x)-\Delta f^{(j-1)}(a),
\end{multline*}
which shows that the result still holds for $j-1$.
\end{proof}

\noindent By the claim, $f$ satisfies the equation $\Delta f=g$ if and only if $\Delta f^{(j)}(a)=g^{(j)}(a)$ for $j=0,\ldots,r-1$. When $j=r-1$, the latter condition is nothing other than condition \eqref{eq:COND11sd56} and hence it is satisfied. Applying Taylor's theorem to $f^{(j)}$, we obtain
$$
f^{(j)}(a+1) - f^{(j)}(a) ~=~ \sum_{k=1}^{r-j-1}\frac{1}{k!}{\,}f^{(j+k)}(a) + \int_a^{a+1} \frac{(a+1-t)^{r-j-1}}{(r-j-1)!}{\,}\varphi(t){\,}dt{\,},
$$
and hence we see that the remaining $r-1$ conditions are
$$
\sum_{k=1}^{r-j-1}\frac{1}{k!}{\,}c_{j+k} ~=~ d_j,\qquad j=0,\ldots,r-2,
$$
where
\begin{eqnarray*}
d_j &=& g^{(j)}(a)-\int_a^{a+1} \frac{(a+1-t)^{r-j-1}}{(r-j-1)!}{\,}\varphi(t){\,}dt,\qquad j=0,\ldots,r-2,\\
c_k &=& f^{(k)}(a),\qquad k=1,\ldots,r-1.
\end{eqnarray*}
It is not difficult to see that these $r-1$ conditions form a consistent triangular system of $r-1$ linear equations in the $r-1$ unknowns $c_1,\ldots,c_{r-1}$. This establishes the uniqueness of $f$ up to an additive constant.

Let us now show that formula \eqref{eq:Taylor11sd562} holds. For $k=1,\ldots,r-1$, we have
$$
\sum_{j=0}^{r-k-1}\frac{B_j}{j!}{\,}d_{j+k-1} ~=~ \sum_{j=0}^{r-k-1}\frac{B_j}{j!}{\,}\sum_{i=1}^{r-j-k}\frac{1}{i!}{\,}c_{i+j+k-1}.
$$
Replacing $i$ with $i-j-k+1$ and then permuting the resulting sums, the latter expression reduces to
$$
\sum_{j=0}^{r-k-1}\frac{B_j}{j!}{\,}\sum_{i=j+k}^{r-1}\frac{1}{(i-j-k+1)!}{\,}c_i ~=~ \sum_{i=k}^{r-1}\frac{c_i}{(i-k+1)!}{\,}\sum_{j=0}^{i-k}\tchoose{i-k+1}{j}{\,}B_j{\,},
$$
that is, using \eqref{defBnN3},
$$
\sum_{i=k}^{r-1}\frac{c_i}{(i-k+1)!}{\,}0^{i-k} ~=~ c_k{\,}.
$$
This completes the proof of the theorem.
\end{proof}

Adding an appropriate constant to $\varphi$ if necessary in Theorem~\ref{thm:lift}, we can always assume that condition \eqref{eq:COND11sd56} holds. More precisely, the function $\varphi^{\star}=\varphi +C$, where
$$
C ~=~ g^{(r-1)}(a)-\int_a^{a+1}\varphi(t){\,}dt,
$$
satisfies
$$
\int_a^{a+1}\varphi^{\star}(t){\,}dt ~=~ g^{(r-1)}(a).
$$

\begin{example}\label{ex:antiD31}
Let us see how we can apply Theorem~\ref{thm:lift} to somewhat generalize Example~\ref{ex:FSFD6341}. Let $g\in\cC^0$, let $G\in\cC^1$ be defined by the equation
$$
G(x)~=~ \int_1^xg(t){\,}dt\qquad\text{for $x>0$},
$$
and let $f\in\cC^0$ be any solution to the equation $\Delta f=g$. To find a solution $F$ to the equation $\Delta F=G$ such that $F'=f$, we just need to apply Theorem~\ref{thm:lift} to the function $G$ with $r=1$ and $a=1$. Defining the function
$$
f^{\star} ~=~ f-\int_1^2f(t){\,}dt{\,},
$$
we then obtain that the function $F\in\cC^1$ defined by the equation
$$
F(x) ~=~ \int_1^xf^{\star}(t){\,}dt ~=~ \int_1^xf(t){\,}dt-(x-1)\int_1^2f(t){\,}dt\qquad\text{for $x>0$},
$$
is the unique (up to an additive constant) solution to the equation $\Delta F=G$ such that $F'=f$. For similar results, see Krull \cite[p.\ 254]{Kru49} and Kuczma \cite[Section~2]{Kuc64}.
\end{example}

The next corollary particularizes the elevator method when the function $g$ lies in $\cC^r\cap\cD^p\cap\cK^{\max\{p,r\}}$ for some $p\in\N$ and $r\in\N^*$. We omit the proof, since it immediately follows from Theorem~\ref{thm:TBTDiff}, Proposition~\ref{prop:fdsf6sfd}, and Theorem~\ref{thm:lift}.

\begin{corollary}[The elevator method]\label{cor:saf6f}\index{elevator method}
Let $g$ lie in $\cC^r\cap\cD^p\cap\cK^{\max\{p,r\}}$ for some $p\in\N$ and $r\in\N^*$. Then $\Sigma g$ lies in $\cC^r\cap\cD^{p+1}\cap\cK^{\max\{p,r\}}$ and we have
$$
(\Sigma g)^{(r)}-\Sigma g^{(r)} ~=~ g^{(r-1)}(1)-\sigma[g^{(r)}].
$$
(This latter value reduces to $-\sum_{k=1}^{\infty}g^{(r)}(k)$ if $r>p$.) Moreover, for any $a>0$, we have
$$
\Sigma g ~=~ f_a-f_a(1),
$$
where $f_a\in\cC^r$ is defined by
$$
f_a(x) ~=~ \sum_{k=1}^{r-1}c_k(a){\,}\frac{(x-a)^k}{k!} + \int_a^x \frac{(x-t)^{r-1}}{(r-1)!}{\,}(\Sigma g)^{(r)}(t){\,}dt
$$
and, for $k=1,\ldots,r-1$,
$$
c_k(a) ~=~
\sum_{j=0}^{r-k-1}\frac{B_j}{j!}{\,}\left(g^{(j+k-1)}(a)-\int_a^{a+1} \frac{(a+1-t)^{r-j-k}}{(r-j-k)!}{\,}(\Sigma g)^{(r)}(t){\,}dt\right).
$$
\end{corollary}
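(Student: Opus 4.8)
The plan is to obtain Corollary~\ref{cor:saf6f} as a direct specialization of the elevator method (Theorem~\ref{thm:lift}) once the relevant regularity and summability facts are in place, so that essentially no new computation is required. First I would verify the opening claim that $\Sigma g$ lies in $\cC^r\cap\cD^{p+1}\cap\cK^{\max\{p,r\}}$; this is exactly assertion (a) of Theorem~\ref{thm:TBTDiff}, applied to the hypothesis $g\in\cC^r\cap\cD^p\cap\cK^{\max\{p,r\}}$. In particular $\Sigma g$ is $r$ times continuously differentiable, so that the expression $(\Sigma g)^{(r)}$ in the statement makes sense and the Taylor-based formulas below are legitimate.

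Next I would establish the identity $(\Sigma g)^{(r)}-\Sigma g^{(r)}=g^{(r-1)}(1)-\sigma[g^{(r)}]$. This is precisely the content of \eqref{eq:rd0bb} in Proposition~\ref{prop:fdsf6sfd}, whose hypotheses ($g\in\cC^r\cap\cD^p\cap\cK^{\max\{p,r\}}$ with $p\in\N$, $r\in\N^*$) coincide with ours; that proposition also supplies the parenthetical reduction, namely that when $r>p$ the constant $\sigma[g^{(r)}]$ equals $g^{(r-1)}(1)+\sum_{k=1}^\infty g^{(r)}(k)$, so that $g^{(r-1)}(1)-\sigma[g^{(r)}]=-\sum_{k=1}^\infty g^{(r)}(k)$. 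Nothing beyond quoting Proposition~\ref{prop:fdsf6sfd} is needed here.

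Then I would produce the explicit representation of $\Sigma g$ by invoking Theorem~\ref{thm:lift} with the particular continuous solution $\varphi=(\Sigma g)^{(r)}=\Delta\Sigma g^{(r)}$... more carefully, I take $\varphi$ to be the function $(\Sigma g)^{(r)}$ itself, which by the preceding step differs from $\Sigma g^{(r)}$ by a constant and hence is a continuous solution to $\Delta\varphi=g^{(r)}$ (since $\Delta(\Sigma g)^{(r)}=(\Delta\Sigma g)^{(r)}=g^{(r)}$). The key point to check is that this $\varphi$ satisfies the compatibility condition \eqref{eq:COND11sd56}, i.e.\ $\int_a^{a+1}(\Sigma g)^{(r)}(t)\,dt=g^{(r-1)}(a)$ for the given $a>0$; but the left side telescopes to $(\Sigma g)^{(r-1)}(a+1)-(\Sigma g)^{(r-1)}(a)=(\Delta\Sigma g)^{(r-1)}(a)=g^{(r-1)}(a)$, so the condition holds automatically. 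Theorem~\ref{thm:lift} then yields a unique (up to a constant) solution $f_a\in\cC^r$ to $\Delta f_a=g$ with $f_a^{(r)}=(\Sigma g)^{(r)}$, given by formulas \eqref{eq:Taylor11sd56} and \eqref{eq:Taylor11sd562} with $\varphi$ replaced by $(\Sigma g)^{(r)}$; these are exactly the displayed formulas for $f_a(x)$ and $c_k(a)$ in the corollary.

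Finally I would pin down the normalization: both $\Sigma g$ and $f_a$ are $\cC^r$ solutions of $\Delta f=g$ with $r$th derivative equal to $(\Sigma g)^{(r)}$, so by the uniqueness-up-to-a-constant clause of Theorem~\ref{thm:lift} they differ by an additive constant; since $\Sigma g$ vanishes at $1$ by Proposition~\ref{prp:56GathZZPro8}(a), subtracting $f_a(1)$ gives $\Sigma g=f_a-f_a(1)$, as claimed. The only genuinely substantive verification is the automatic satisfaction of \eqref{eq:COND11sd56} via the telescoping argument above; everything else is bookkeeping that amounts to transcribing Theorems~\ref{thm:TBTDiff} and~\ref{thm:lift} and Proposition~\ref{prop:fdsf6sfd} into the present notation, which is why (as the paper notes) the proof can be omitted.
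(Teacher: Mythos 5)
Your proposal is correct and follows exactly the route the paper intends: the paper omits the proof precisely because it "immediately follows from Theorem~\ref{thm:TBTDiff}, Proposition~\ref{prop:fdsf6sfd}, and Theorem~\ref{thm:lift}," which are the three results you combine, with the choice $\varphi=(\Sigma g)^{(r)}$ and the telescoping/fundamental-theorem verification of condition \eqref{eq:COND11sd56} being the only (correctly handled) point of substance. Nothing is missing.
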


Corollary~\ref{cor:saf6f} has an important practical value. It provides an explicit integral expression for $\Sigma g$ from an explicit expression for $\Sigma g^{(r)}$. Setting $a=1$ in this result, we simply obtain
$$
\Sigma g(x) ~=~ \sum_{k=1}^{r-1}c_k{\,}\frac{(x-1)^k}{k!} + \int_1^x \frac{(x-t)^{r-1}}{(r-1)!}{\,}(\Sigma g)^{(r)}(t){\,}dt,
$$
with, for $k=1,\ldots,r-1$,
$$
c_k ~=~
\sum_{j=0}^{r-k-1}\frac{B_j}{j!}{\,}\left(g^{(j+k-1)}(1)-\int_1^2 \frac{(2-t)^{r-j-k}}{(r-j-k)!}{\,}(\Sigma g)^{(r)}(t){\,}dt\right).
$$
The following three examples illustrate the use of Corollary~\ref{cor:saf6f}. In the  first one, we revisit Example~\ref{ex:FSFD6341}.

\begin{example}
The function
$$
g(x) ~=~ \int_1^x\ln t{\,}dt
$$
lies in $\cC^{\infty}\cap\cD^2\cap\cK^{\infty}$. Choosing $r=1$ and $a=1$ in Corollary~\ref{cor:saf6f}, we get
\begin{eqnarray*}
g'(x) &=& \ln x{\,},\\
\Sigma g'(x) &=& \ln\Gamma(x){\,},\\
(\Sigma g)'(x) &=& \textstyle{\ln\Gamma(x)+1-\frac{1}{2}\ln(2\pi)},
\end{eqnarray*}
and
$$
\Sigma g(x) ~=~ \left(1-\frac{1}{2}\ln(2\pi)\right)(x-1)+\int_1^x\ln\Gamma(t){\,}dt.\qedhere
$$
\end{example}

\begin{example}
The function
$$
g(x) ~=~ \int_0^x(x-t)\ln t{\,}dt
$$
lies in $\cC^{\infty}\cap\cD^3\cap\cK^{\infty}$. Choosing $r=2$ and $a=0$ (as a limiting value) in Corollary~\ref{cor:saf6f}, we get
\begin{eqnarray*}
g''(x) &=& \ln x{\,},\\
\Sigma g''(x) &=& \ln\Gamma(x){\,},\\
(\Sigma g)''(x) &=& \textstyle{\ln\Gamma(x)-\frac{1}{2}\ln(2\pi)},
\end{eqnarray*}
and
$$
\Sigma g(x) ~=~ -(\ln A){\,}x-\frac{1}{4}\ln(2\pi){\,}x^2+\int_0^x(x-t)\ln\Gamma(t){\,}dt,
$$
where $A$ is Glaisher-Kinkelin's constant\index{Glaisher-Kinkelin's constant} and the integral is the polygamma function\index{polygamma functions} $\psi_{-3}(x)$. (Here we use the identity $\psi_{-3}(1)=\ln A+\frac{1}{4}\ln(2\pi)$.)

We can also investigate the asymptotic properties of $\Sigma g$ using our results. For instance, using the generalized Stirling formula \eqref{eq:dgf7dds}, we also obtain the following asymptotic behavior of $\Sigma g$
\begin{multline*}
\Sigma g(x)+\frac{1}{72}{\,}(22x^3-27x^2+9x)-\frac{1}{48}{\,}x^2(8x-15)\ln x\\
\null -\frac{1}{12}(x+1)^2\ln(x+1)+\frac{1}{48}(x+2)^2\ln(x+2) ~\to ~ \frac{\zeta(3)}{8\pi^2}\qquad\text{as $x\to\infty$}.\qedhere
\end{multline*}
\end{example}

\begin{example}
The function $g(x)=\arctan(x)$ lies in $\cC^{\infty}\cap\cD^1\cap\cK^{\infty}$. Choosing $r=1$ and $a=0$ (as a limiting value) in Corollary~\ref{cor:saf6f}, we get (see also Example~\ref{ex:5aRat7Fct8})
\begin{eqnarray*}
g'(x) &=& (x^2+1)^{-1} ~=~ -\Im (x+i)^{-1},\\
\Sigma g'(x) &=& \Im\psi(1+i)-\Im\psi(x+i),\\
(\Sigma g)'(x) &=& c-\Im\psi(x+i),
\end{eqnarray*}
for some $c\in\R$, and hence
$$
\Sigma g(x) ~=~ c{\,}(x-1)+\Im\ln\Gamma(1+i)-\Im\ln\Gamma(x+i).
$$
Applying the operator $\Delta$ to both sides of this identity and then setting $x=1$, we obtain $c=\frac{\pi}{2}$. Thus, we have
$$
\Sigma g(x) ~=~ \frac{\pi}{2}{\,}(x-1)+\Im\ln\Gamma(1+i)-\Im\ln\Gamma(x+i).
$$
Some properties of $\Sigma g$ can be investigated. For instance, using Corollary~\ref{cor:6GenStFo0BaIneqCo} together with the identity
$$
\int_1^x\arctan(t){\,}dt ~=~ x\arctan(x)-\frac{1}{2}\ln(x^2+1)-\frac{\pi}{4}+\frac{1}{2}\ln 2{\,},
$$
we obtain the inequality
\begin{multline*}
\left|\Sigma g(x)-\left(x-\frac{1}{2}\right)\arctan(x)+\frac{1}{2}\ln(x^2+1)-1+\frac{\pi}{4}-\Im\ln\Gamma(1+i)\right|\\
\leq ~ \frac{1}{2}\arctan\frac{1}{x^2+x+1}
\end{multline*}
and hence the left side approaches zero as $x\to\infty$, which provides the asymptotic behavior of the function $\Sigma g$ for large values of its argument.
\end{example}

\section{An alternative uniqueness result}

The following theorem provides a uniqueness result for higher order differentiable solutions to the equation $\Delta f=g$. These solutions can be computed from their derivatives using Theorem~\ref{thm:lift}. We first state a surprising and useful fact.

\begin{fact}\label{fact:rlo3.4}
A periodic function $\omega\colon\R_+\to\R$ is constant if and only if it lies in $\cK^0$. In particular, if $\varphi_1,\varphi_2\colon\R_+\to\R$ are two solutions to the equation $\Delta\varphi=g$ such that $\varphi_1-\varphi_2$ lies in $\cK^0$, then $\varphi_1-\varphi_2$ is constant.
\end{fact}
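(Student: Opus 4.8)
The plan is to establish the equivalence first and then read off the ``in particular'' clause. The backward implication is immediate: a constant function is both (weakly) increasing and (weakly) decreasing on all of $\R_+$, hence lies in $\cK^0_+\cap\cK^0_-\subset\cK^0$. So all the content sits in the forward implication, and I would spend the proof on that.

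For the forward direction, suppose $\omega$ is periodic with some period $T>0$ and lies in $\cK^0$. Negating $\omega$ if necessary (this preserves periodicity and, by Corollary~\ref{cor:convCones48}, exchanges $\cK^0_+$ and $\cK^0_-$), I may assume $\omega\in\cK^0_+$, i.e.\ $\omega$ is increasing on some half-line $[a,\infty)$ with $a>0$. The crucial step is to transport this eventual monotonicity back to the whole of $\R_+$ by means of periodicity. Given any $x<y$ in $\R_+$, I choose $n\in\N$ large enough that $a\le x+nT$; since $y>x$ this also gives $a\le x+nT<y+nT$, so both shifted points lie in the region where $\omega$ is increasing. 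Monotonicity then yields $\omega(x+nT)\le\omega(y+nT)$, and $T$-periodicity turns this into $\omega(x)\le\omega(y)$. As $x<y$ were arbitrary, $\omega$ is increasing on all of $\R_+$.

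It remains to observe that a function which is simultaneously increasing and $T$-periodic on $\R_+$ must be constant: for any $x>0$ and any $t\in[x,x+T]$ we have $\omega(x)\le\omega(t)\le\omega(x+T)=\omega(x)$, so $\omega(t)=\omega(x)$; since every point of $\R_+$ lies in an interval of the form $[x,x+T]$, the function $\omega$ is constant. This closes the equivalence. For the ``in particular'' statement, if $\varphi_1$ and $\varphi_2$ both solve $\Delta\varphi=g$, then $\Delta(\varphi_1-\varphi_2)=g-g=0$, which is precisely the assertion that $\omega=\varphi_1-\varphi_2$ is $1$-periodic; if in addition $\omega$ lies in $\cK^0$, the equivalence just proved forces $\omega$ to be constant.

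I do not expect a genuine obstacle here. The only point requiring a little care is the transport step, where one must ensure that the single shift $nT$ carries \emph{both} $x$ and $y$ into the monotonicity region $[a,\infty)$; this is automatic once $x+nT\ge a$, because $y>x$. The reduction to $\cK^0_+$ via negation, and the elementary remark that an increasing periodic function is constant, are routine.
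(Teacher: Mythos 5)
Your proof is correct. Note that the paper itself offers no proof of this statement: it is labelled a \emph{Fact} and invoked later (in Theorem~\ref{thm:unicDer8}) as an evident elementary remark, so there is nothing to compare against. Your argument --- reducing to $\cK^0_+$ by negation via Corollary~\ref{cor:convCones48}, using periodicity to transport the eventual monotonicity from $[a,\infty)$ back to all of $\R_+$, and then observing that an increasing periodic function is constant --- is exactly the kind of elementary reasoning the authors presumably had in mind, and every step of it (including the point you flag, that a single shift $nT$ places both $x$ and $y$ in the monotonicity region) is sound.
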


\begin{theorem}[Uniqueness]\label{thm:unicDer8}\index{uniqueness theorem!alternative forms}
Let $r\in\N^*$ and $g\in\cC^r$, and assume that there exists $\varphi\in\cC^r$ such that $\Delta\varphi=g$ and $\varphi^{(r)}\in\cR^0_{\N}${\,}. Then, the following assertions hold.
\begin{enumerate}
\item[(a)] For each $x>0$, the series $\sum_{k=0}^{\infty}g^{(r)}(x+k)$ converges and we have
$$
\varphi^{(r)}(x) ~=~ -\sum_{k=0}^{\infty}g^{(r)}(x+k){\,}.
$$
\item[(b)] For any $f\in\cC^r\cap\cK^{r-1}$ such that $\Delta f=g$, we have $f=c+\varphi$ for some $c\in\R$.
\end{enumerate}
\end{theorem}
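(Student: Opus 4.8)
The plan is to prove assertion (a) first, since it provides the crucial formula for $\varphi^{(r)}$, and then derive assertion (b) as a uniqueness consequence. For assertion (a), the key observation is that $\varphi^{(r)}$ is a solution to the equation $\Delta\psi = g^{(r)}$; indeed, differentiating $\Delta\varphi = g$ exactly $r$ times and using the commutativity of $\Delta$ and $D$ gives $\Delta\varphi^{(r)} = (\Delta\varphi)^{(r)} = g^{(r)}$. By hypothesis $\varphi^{(r)}$ lies in $\cR^0_{\N}$, and the characterization of $\cR^0_{\N}$ recorded after Proposition~\ref{prop:LamIntPol} states precisely that a function $\psi\in\cR^0_{\N}$ satisfies $\psi(x) = -\sum_{k=0}^{\infty}\Delta\psi(x+k)$ for all $x>0$. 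Applying this with $\psi = \varphi^{(r)}$ and $\Delta\psi = g^{(r)}$ yields both the convergence of the series $\sum_{k=0}^{\infty}g^{(r)}(x+k)$ and the stated formula
$$
\varphi^{(r)}(x) ~=~ -\sum_{k=0}^{\infty}g^{(r)}(x+k),\qquad x>0.
$$

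For assertion (b), I would take any $f\in\cC^r\cap\cK^{r-1}$ with $\Delta f = g$ and set $\omega = f - \varphi$. The goal is to show $\omega$ is constant. Since both $f$ and $\varphi$ solve $\Delta(\,\cdot\,) = g$, their difference $\omega$ satisfies $\Delta\omega = 0$, i.e.\ $\omega$ is $1$-periodic. In light of Fact~\ref{fact:rlo3.4}, it suffices to show that $\omega$ lies in $\cK^0$. The natural route is to pass to the $r$th derivative: $\omega^{(r)} = f^{(r)} - \varphi^{(r)}$ is also $1$-periodic (being the $r$th derivative of a $1$-periodic function). I would argue that $\omega^{(r)}$ is constant, and then integrate back up to conclude that $\omega$ itself is constant.

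To see that $\omega^{(r)}$ is constant, note that $f^{(r)}$ and $\varphi^{(r)}$ both solve $\Delta\psi = g^{(r)}$, so by Fact~\ref{fact:rlo3.4} it is enough to verify that $\omega^{(r)} = f^{(r)} - \varphi^{(r)}$ lies in $\cK^0$. Here is where the hypotheses are used: $\varphi^{(r)}\in\cR^0_{\N}\subset\cD^0_{\N}$ by Proposition~\ref{prop:4042RsDs5}, so $\varphi^{(r)}$ vanishes at infinity along the integers, and being a continuous solution that equals the tail series $-\sum_{k\geq 0}g^{(r)}(x+k)$ it is well controlled; meanwhile $f\in\cK^{r-1}$ together with $f\in\cC^r$ gives, via Lemma~\ref{lemma:PrelKp} and Proposition~\ref{prop:LMpGpLMp1}, information about $f^{(r-1)}$ and hence about the monotonicity behavior of $f^{(r)}$. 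The cleanest way is to observe that $\omega^{(r)}$ is $1$-periodic, and a $1$-periodic function lies in $\cK^0$ if and only if it is constant (Fact~\ref{fact:rlo3.4}); so I must instead establish directly that $\omega^{(r)}$ is constant by an asymptotic argument, comparing the two solutions $f^{(r)}$ and $\varphi^{(r)}$ of $\Delta\psi=g^{(r)}$ whose difference is periodic and, by the decay of $\varphi^{(r)}$ at infinity combined with the eventual monotonicity forced by $f\in\cK^{r-1}$, cannot oscillate.

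I expect the main obstacle to be precisely this last step: cleanly extracting from the single convexity hypothesis $f\in\cK^{r-1}$ (rather than $f\in\cK^r$) enough regularity on $f^{(r)}$ to pin down that the periodic difference $\omega^{(r)}$ must be constant. The assumption $f\in\cK^{r-1}\cap\cC^r$ is deliberately weak — it gives $f^{(r-1)}\in\cK^0$ by Proposition~\ref{prop:LMpGpLMp1}, i.e.\ $f^{(r-1)}$ is eventually monotone, but it does not directly give a sign on $f^{(r)}$. The resolution should be that $\Delta\omega^{(r-1)} = 0$ makes $\omega^{(r-1)}$ periodic while $f^{(r-1)}$ eventually monotone and $\varphi^{(r-1)}$ (an antiderivative of the decaying $\varphi^{(r)}$) have controlled growth, forcing $\omega^{(r-1)}$ to be constant via Fact~\ref{fact:rlo3.4}; differentiating gives $\omega^{(r)} = 0$, and then successive integration together with the periodicity of each $\omega^{(j)}$ and the constancy just obtained propagates down to show $\omega = f - \varphi$ is constant. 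Once $\omega$ is shown constant, we set $c = \omega$ and obtain $f = c + \varphi$, completing the proof.
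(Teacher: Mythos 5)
Your treatment of assertion (a) is correct and coincides with the paper's: the characterization of $\cR^0_{\N}$ you invoke is nothing but identity \eqref{eq:fxnEfxSum} applied to $\varphi^{(r)}$, whose difference is $g^{(r)}$.

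Assertion (b) is where your proposal has a genuine gap, and it sits exactly at the point you flagged as the main obstacle. You assert that the hypothesis $f\in\cC^r\cap\cK^{r-1}$ ``does not directly give a sign on $f^{(r)}$''. It does: Lemma~\ref{lemma:PrelKp}(c) allows $j\in\{0,\ldots,p+1\}$, so taking $p=r-1$ and $j=r=p+1$ it gives $f^{(r)}\in\cK^{-1}$, i.e.\ $f^{(r)}$ is eventually of constant sign. This is the missing key. With it, the paper's argument runs as follows. Set $\omega=f-\varphi$; applying \eqref{eq:fxnEfxSum} to $f^{(r)}$ (which solves $\Delta f^{(r)}=g^{(r)}$) and using the series formula of assertion (a), one gets
$$
\omega^{(r)}(x) ~=~ f^{(r)}(x)-\varphi^{(r)}(x) ~=~ \lim_{n\to\infty}f^{(r)}(x+n),
$$
so $\omega^{(r)}$ inherits the eventual sign of $f^{(r)}$ at every point, i.e.\ $\omega^{(r)}\in\cK^{-1}$. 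Note that one does not need $\omega^{(r)}$ to be constant at this stage, only its sign: Lemma~\ref{lemma:PrelKp}(d), applied $r$ times, then gives $\omega^{(r-1)}\in\cK^0$, then $\omega^{(r-2)}\in\cK^1$, and so on up to $\omega\in\cK^{r-1}\subset\cK^0$, after which Fact~\ref{fact:rlo3.4} is invoked a single time, on the $1$-periodic function $\omega$ itself. This limit identity is the bridge between the decay hypothesis on $\varphi^{(r)}$ and the convexity hypothesis on $f$; your proposal never writes it down.

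Your fallback does not close the gap. You want to apply Fact~\ref{fact:rlo3.4} to $\omega^{(r-1)}$, but that requires $\omega^{(r-1)}\in\cK^0$, i.e.\ eventual monotonicity, and you justify it only by saying that $f^{(r-1)}$ is eventually monotone while $\varphi^{(r-1)}$ has ``controlled growth''. That inference is invalid: the difference of an eventually monotone function and a function of controlled growth need not be eventually monotone (compare $x$ and $x-\sin(2\pi x)$). The only natural way to obtain monotonicity of $\omega^{(r-1)}$ is through a sign on its derivative $\omega^{(r)}$ --- precisely the step you declared unavailable.
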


\begin{proof}
Assertion (a) follows immediately from \eqref{eq:fxnEfxSum}. Now, let $f\in\cC^r\cap\cK^{r-1}$ be such that $\Delta f=g$. By Lemma~\ref{lemma:PrelKp}(c), $f^{(r)}$ must lie in $\cK^{-1}$. Setting $\omega=f-\varphi$ and using \eqref{eq:fxnEfxSum} again, we then obtain
$$
\omega^{(r)}(x) ~=~ f^{(r)}(x)-\varphi^{(r)}(x) ~=~ \lim_{n\to\infty} f^{(r)}(x+n),
$$
which shows that $\omega^{(r)}$ also lies in $\cK^{-1}$. By Lemma~\ref{lemma:PrelKp}(d), $\omega$ lies in $\cK^{r-1}\subset\cK^0$ and, since it is $1$-periodic, it must be constant by Fact~\ref{fact:rlo3.4}. This proves assertion (b).
\end{proof}

\begin{example}
The assumptions of Theorem~\ref{thm:unicDer8} hold if $g(x)=\ln x$, $\varphi(x)=\ln\Gamma(x)$, and $r=2$. It then follows that all solutions to the equation $\Delta f=g$ that lie in $\cC^2\cap\cK^1$ are of the form $f(x)=c+\ln\Gamma(x)$, where $c\in\R$. We thus easily retrieve Bohr-Mollerup's theorem\index{Bohr-Mollerup theorem} with the additional assumption that $f$ lies in $\cC^2$. It is remarkable that this latter result can be obtained here from a very elementary theorem that relies only on Lemma~\ref{lemma:PrelKp} and Fact~\ref{fact:rlo3.4}.
\end{example}
%

\chapter{Further results}
\label{chapter:8}

As discussed in the first chapter, the main objective of our work is to generalize Krull-Webster's theory to multiple $\log\Gamma$-type functions and explore the properties of these functions that are analogues of classical properties of the gamma function.

In the previous chapters, we have presented and discussed several results related to these functions, including their differentiation and integration properties as well as important results on their asymptotic behaviors.

We are now in a position to explore further properties of multiple $\log\Gamma$-type functions. More precisely, in this chapter we provide for these functions analogues of \emph{Euler's infinite product}, \emph{Euler's reflection formula}, \emph{Gauss' multiplication formula}, \emph{Gautschi's inequality}, \emph{Raabe's formula}, \emph{Wallis's product formula}, \emph{Webster's functional equation}, and \emph{Weierstrass' infinite product} for the gamma function. We also discuss analogues of \emph{Fontana-Mascheroni's series} and \emph{Gauss' digam{\-}ma theorem} and provide a Gregory's formula-based series representation, a general asymptotic expansion formula, and a few related results.

\section{Eulerian form}
\label{sec:Eul4For631}
\index{Eulerian form|(}

Let $g$ lie in $\cD^p\cap\cK^p$ for some $p\in\N$. As we already observed in Chapter~\ref{chapter:1}, the representation of $\Sigma g$ as the pointwise limit of the sequence $n\mapsto f^p_n[g]$ is the analogue of Gauss' limit\index{Gauss' limit!analogue} for the gamma function. Using identity \eqref{eq:fngsum}, we immediately see that this form of $\Sigma g$ can be translated into a series, namely
\begin{equation}\label{eq:EulG4100}
\Sigma g(x) ~=~ f^p_1[g](x) -\sum_{k=1}^{\infty}\rho^{p+1}_k[g](x),\qquad x>0.
\end{equation}
It is a simple exercise to see that, when $g(x)=\ln x$ and $p=1$, this latter formula reduces to the following series representation of the log-gamma function
\begin{equation}\label{eq:EulG41}
\ln\Gamma(x) ~=~ -\ln x-\sum_{k=1}^{\infty}\textstyle{\left(\ln(x+k)-\ln k-x\ln\left(1+\frac{1}{k}\right)\right)}.
\end{equation}
Its multiplicative version is nothing other than the classical Eulerian form (or Euler's product form) of the gamma function (see, e.g., Srivastava and Choi \cite[p.~3]{SriCho12}). We recall this form in the following proposition.

\begin{proposition}[Eulerian form of the gamma function]\index{Eulerian form!of the gamma function}
The following identity holds
$$
\Gamma(x) ~=~ \frac{1}{x}\,\prod_{k=1}^{\infty}\frac{(1+1/k)^x}{1+x/k}{\,},\qquad x>0.
$$
\end{proposition}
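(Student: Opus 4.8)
The plan is to obtain the Eulerian product directly from the additive series representation \eqref{eq:EulG41}, which the paper has just established as the specialization of the general formula \eqref{eq:EulG4100} to $g(x)=\ln x$ and $p=1$. Since $\Gamma$ is positive on $\R_+$, we have $\Gamma(x)=\exp(\ln\Gamma(x))$, so it suffices to exponentiate \eqref{eq:EulG41} and recognize the resulting infinite series of logarithms as an infinite product of the corresponding exponentials.

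First I would recall \eqref{eq:EulG41} in the form
$$
\ln\Gamma(x) ~=~ -\ln x-\sum_{k=1}^{\infty}\left(\ln(x+k)-\ln k-x\ln\left(1+\tfrac{1}{k}\right)\right).
$$
Exponentiating both sides and using that $\exp$ converts the convergent series into the corresponding infinite product (the convergence of the series, hence of the product, being guaranteed by the existence theorem underlying \eqref{eq:EulG4100}), I would write
$$
\Gamma(x) ~=~ \frac{1}{x}\,\prod_{k=1}^{\infty}\exp\!\left(x\ln\left(1+\tfrac{1}{k}\right)-\ln(x+k)+\ln k\right).
$$
Then the routine step is to simplify each factor: $\exp(x\ln(1+1/k))=(1+1/k)^x$, while $\exp(-\ln(x+k)+\ln k)=k/(x+k)=1/(1+x/k)$. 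This yields exactly
$$
\Gamma(x) ~=~ \frac{1}{x}\,\prod_{k=1}^{\infty}\frac{(1+1/k)^x}{1+x/k}{\,},
$$
as claimed.

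The only point requiring genuine care — the main obstacle, such as it is — is justifying the passage from the convergent series of logarithms to the convergent infinite product, i.e. that the partial products $\exp$ of the partial sums converge to the exponential of the limit. This is immediate from the continuity of the exponential function together with the convergence of the series in \eqref{eq:EulG41}, which is precisely the content of the existence result (Theorem~\ref{thm:exist}) applied with $g(x)=\ln x$ and $p=1$; there the sequence $n\mapsto f^1_n[\ln]$ is shown to converge pointwise (and the associated series $\sum_k\rho^2_k[\ln](x)$ to converge) to $\ln\Gamma(x)-\ln\Gamma(1)=\ln\Gamma(x)$. Thus no separate convergence analysis is needed beyond invoking the earlier theorem, and the proof reduces to the purely algebraic simplification of the general factor described above.
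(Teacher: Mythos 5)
Your proposal is correct and follows essentially the same route as the paper: the paper obtains the proposition by specializing the general Eulerian series \eqref{eq:EulG4100} to $g(x)=\ln x$ and $p=1$, which gives \eqref{eq:EulG41}, and then observes that the stated product is simply its multiplicative version. Your write-up merely makes explicit the exponentiation step and the (routine) justification via continuity of $\exp$ and the convergence guaranteed by Theorem~\ref{thm:exist}, which is exactly what the paper leaves implicit.
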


We thus see that, for any multiple $\log\Gamma$-type function, the series representation \eqref{eq:EulG4100} is the analogue of the Eulerian form of the gamma function in the additive notation. Moreover, we have shown in Theorem~\ref{thm:TBTDiff} that this series can be differentiated term by term on $\R_+$. We have also shown in Proposition~\ref{prop:intMLGt} that this series can be integrated term by term on any bounded interval of $[0,\infty)$. Let us state these important facts in the following theorem.

\begin{theorem}[Eulerian form]\label{thm:SerProdReprS}\index{Eulerian form|textbf}
Let $g$ lie in $\cD^p\cap\cK^p$ for some $p\in\N$. The following assertions hold.
\begin{enumerate}
\item[(a)] For any $x>0$ we have
$$
\Sigma g(x) ~=~ -g(x)+\sum_{j=1}^p\tchoose{x}{j}{\,}\Delta^{j-1}g(1) - \sum_{k=1}^{\infty}\left(g(x+k)-\sum_{j=0}^p\tchoose{x}{j}{\,}\Delta^jg(k)\right)
$$
and the series converges uniformly on any bounded subset of $[0,\infty)$.
\item[(b)] If $g$ lies in $\cC^0$, then $\Sigma g$ lies in $\cC^0$ and the series above can be (repeatedly) integrated term by term on any bounded interval of $[0,\infty)$.
\item[(c)] If $g$ lies in $\cC^r\cap\cK^{\max\{p,r\}}$ for some $r\in\N$, then $\Sigma g$ lies in $\cC^r$ and the series above can be differentiated term by term up to $r$ times.
\end{enumerate}
\end{theorem}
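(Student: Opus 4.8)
The plan is to read off all three assertions from the series form \eqref{eq:EulG4100} of $\Sigma g$, observing that the partial sums of the series appearing there are, up to a fixed leading term, exactly the approximants $f^p_n[g]$; the convergence, integration, and differentiation statements then transfer directly from Theorem~\ref{thm:exist}, Proposition~\ref{prop:intMLGt}, and Theorem~\ref{thm:TBTDiff}. First I would establish the closed form in (a) by substitution into \eqref{eq:EulG4100}. Evaluating \eqref{eq:defFpn} at $n=1$ makes the middle sum empty and gives $f^p_1[g](x) = -g(x) + \sum_{j=1}^p\tchoose{x}{j}\Delta^{j-1}g(1)$, while \eqref{eq:deflambdapt} identifies the $k$-th summand of the series with $\rho^{p+1}_k[g](x) = g(x+k) - \sum_{j=0}^p\tchoose{x}{j}\Delta^j g(k)$. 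Inserting both expressions into \eqref{eq:EulG4100} reproduces verbatim the displayed formula of assertion (a).

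For the uniform convergence claim in (a), the most direct route is Lemma~\ref{lemma:VarEpsIneqSum}, which bounds the tail $\left|\sum_{k=m}^{n-1}\rho^{p+1}_k[g](x)\right|$ by $\left|\tchoose{x-1}{p}\right|\sum_{j=0}^{\lceil x\rceil-1}|\Delta^p g(n+j)-\Delta^p g(m+j)|$. Since $g\in\cD^p$ forces $\Delta^p g(n)\to 0$, this bound tends to zero uniformly as $m,n\to\infty$ for $x$ ranging over any bounded subset of $[0,\infty)$ (at $x=0$ the inner sum is empty), so the Cauchy criterion yields the asserted uniform convergence. Equivalently, one may combine Theorem~\ref{thm:exist}(c) with \eqref{eq:fngsum} taken at $m=1$, which exhibits the $(n-1)$-th partial sum as $f^p_1[g]-f^p_n[g]$; compare Remark~\ref{rem:ConvCl82}.

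Assertions (b) and (c) are then obtained by transferring known properties of these partial sums. For (b), the continuity of $\Sigma g$ is supplied by Proposition~\ref{prop:intMLGt}(a); since $g\in\cC^0$ each term $\rho^{p+1}_k[g]$ is continuous, and uniform convergence on compact subintervals legitimizes integration term by term, the repeated version being furnished by Remark~\ref{rem:5repeated0Int53} (or by iterating Proposition~\ref{prop:intMLGt}(c3) with $m=1$). For (c), we have $\Sigma g\in\cC^r$ by Theorem~\ref{thm:TBTDiff}(a); since $g\in\cC^r$ both $f^p_1[g]$ and every $\rho^{p+1}_k[g]$ lie in $\cC^r$, and Theorem~\ref{thm:TBTDiff}(b) states precisely that $n\mapsto D^rf^p_n[g]$ converges uniformly on bounded subsets to $D^r\Sigma g$, which is exactly the statement that the series may be differentiated term by term up to $r$ times.

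The only genuinely non-mechanical point, and where I would take the most care, is the bookkeeping that identifies the partial sums of the series with the functions $f^p_n[g]$ through \eqref{eq:fngsum}, together with the verification that the convergence facts stated for the sequence $n\mapsto f^p_n[g]$ on $\R_+$ indeed deliver convergence of the series on all of $[0,\infty)$, including the boundary point $x=0$ where every summand vanishes. Once this correspondence is pinned down, there is no remaining analytic obstacle: each of the three assertions is merely a restatement of an already-established uniform-convergence result.
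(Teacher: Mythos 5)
Your proposal is correct and follows essentially the same route as the paper, which proves (a) from identity \eqref{eq:fngsum} together with the existence Theorem~\ref{thm:exist} (and Remark~\ref{rem:ConvCl82}), (b) from Proposition~\ref{prop:intMLGt} and Remark~\ref{rem:5repeated0Int53}, and (c) from Theorem~\ref{thm:TBTDiff}. Your alternative Cauchy-criterion argument via Lemma~\ref{lemma:VarEpsIneqSum} is just an unwinding of the proof of Theorem~\ref{thm:exist} itself, so it adds detail rather than a genuinely different method.
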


\begin{proof}
Assertion (a) follows from identity \eqref{eq:fngsum} and the existence Theorem~\ref{thm:exist} (see also Remark~\ref{rem:ConvCl82}). Assertion (b) follows from Proposition~\ref{prop:intMLGt}, especially its assertion (c2), and Remark~\ref{rem:5repeated0Int53}. Assertion (c) follows from Theorem~\ref{thm:TBTDiff}.
\end{proof}

\begin{example}\label{ex:GepeA58}
Let us apply Theorem~\ref{thm:SerProdReprS} to $g(x)=\ln x$ and $p=1$. We immediately retrieve identity \eqref{eq:EulG41}. Upon differentiation, we also obtain
$$
\psi(x) ~=~ -\frac{1}{x}-\sum_{k=1}^{\infty}\left(\frac{1}{x+k}-\ln\left(1+\frac{1}{k}\right)\right)
$$
and, for any $r\in\N^*$,
$$
\psi_r(x) ~=~ (-1)^{r+1}{\,}r!\,\sum_{k=0}^{\infty}\frac{1}{(x+k)^{r+1}} ~=~ (-1)^{r+1}{\,}r!\,\zeta(r+1,x).
$$
Integrating on $(0,x)$, we obtain
$$
\psi_{-2}(x) ~=~ x-x\ln x-\sum_{k=1}^{\infty}\left((x+k)\ln\left(1+\frac{x}{k}\right)
-x-\frac{x^2}{2}\ln\left(1+\frac{1}{k}\right)\right).
$$
Integrating once more on $(0,x)$, we obtain
\begin{eqnarray*}
\lefteqn{\psi_{-3}(x) ~=~ \frac{1}{4}{\,}x^2(3-2\ln x)}\\
&& \null -\sum_{k=1}^{\infty}\left(\frac{1}{2}(x+k)^2\ln\left(1+\frac{x}{k}\right)
-\frac{k}{2}{\,}x-\frac{3}{4}{\,}x^2-\frac{1}{6}{\,}x^3\ln\left(1+\frac{1}{k}\right)\right).
\end{eqnarray*}
We can actually integrate both sides on $(0,x)$ repeatedly as we wish.
\end{example}

\index{Eulerian form|)}

\section{Weierstrassian form}
\label{sec:WF73}
\index{Weierstrassian form|(}

In the following proposition, we recall an alternative infinite product representation of the gamma function, which was proposed by Weierstrass. This representation is usually called the \emph{Weierstrass factorization} of the gamma function or the \emph{Weierstrass canonical product form} of the gamma function (see Artin \cite[pp.~15--16]{Art15} and Srivastava and Choi \cite[p.~1]{SriCho12}).

\begin{proposition}[Weierstrassian form of the gamma function]\index{Weierstrassian form!of the gamma function}
The following identity holds
\begin{equation}\label{eq:We5G129}
\Gamma(x) ~=~ \frac{e^{-\gamma x}}{x}\,\prod_{k=1}^{\infty}\frac{e^{\frac{x}{k}}}{1+\frac{x}{k}}{\,},\qquad x>0.
\end{equation}
\end{proposition}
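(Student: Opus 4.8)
The plan is to derive the Weierstrassian form \eqref{eq:We5G129} directly from the Eulerian form of the gamma function, which has already been recalled in the previous section, rather than reinvoking the whole machinery of the map $\Sigma$. Recall that the Eulerian form states
$$
\Gamma(x) ~=~ \frac{1}{x}\,\prod_{k=1}^{\infty}\frac{(1+1/k)^x}{1+x/k}{\,},\qquad x>0.
$$
First I would pass to the logarithm and work with the partial products, writing for each $N\in\N^*$
$$
\ln\prod_{k=1}^{N}\frac{(1+1/k)^x}{1+x/k} ~=~ x\sum_{k=1}^{N}\ln\left(1+\frac{1}{k}\right)-\sum_{k=1}^{N}\ln\left(1+\frac{x}{k}\right).
$$
The first sum telescopes, since $\ln(1+1/k)=\ln(k+1)-\ln k$, giving $x\ln(N+1)$.

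Next I would introduce the factor $e^{-\gamma x}$ by hand, exploiting the definition of Euler's constant as $\gamma=\lim_{N\to\infty}\bigl(\sum_{k=1}^{N}\frac{1}{k}-\ln N\bigr)$, which has been recorded in the excerpt (e.g.\ through the relation $H_{x-1}=\psi(x)+\gamma$ and the classical formula retrieved in the examples of Chapter~\ref{chapter:6}). The idea is to rewrite $x\ln(N+1)$ as $x\sum_{k=1}^{N}\frac{1}{k}-x\bigl(\sum_{k=1}^{N}\frac{1}{k}-\ln(N+1)\bigr)$, where the parenthesized quantity converges to $\gamma$ as $N\to\infty$. Thus the partial product becomes
$$
\frac{1}{x}\exp\!\left(x\sum_{k=1}^{N}\frac{1}{k}-\sum_{k=1}^{N}\ln\left(1+\frac{x}{k}\right)\right)\exp\!\left(-x\Bigl(\sum_{k=1}^{N}\frac{1}{k}-\ln(N+1)\Bigr)\right),
$$
and I would regroup the first exponential termwise as $\prod_{k=1}^{N} e^{x/k}/(1+x/k)$, so that the whole expression matches the right-hand side of \eqref{eq:We5G129} up to the trailing factor tending to $e^{-\gamma x}$.

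Finally I would take the limit $N\to\infty$. The trailing factor converges to $e^{-\gamma x}$ by the definition of $\gamma$, and the regrouped product $\prod_{k=1}^{N} e^{x/k}/(1+x/k)$ converges because it is a rewriting of the convergent Eulerian product; its individual factors are $1+O(1/k^2)$, which guarantees absolute convergence of the associated series $\sum_k\bigl(\frac{x}{k}-\ln(1+\frac{x}{k})\bigr)$. Matching the two limits yields \eqref{eq:We5G129}. The only genuine subtlety—the main obstacle—is bookkeeping the telescoping term $x\ln(N+1)$ against the harmonic sum so that exactly the constant $\gamma$ (and not $\gamma$ shifted by a spurious $x\ln\frac{N+1}{N}\to 0$ correction) is extracted in the limit; once the $\ln(N+1)$ versus $\ln N$ discrepancy is seen to be harmless, everything else is routine convergence of an absolutely convergent product.
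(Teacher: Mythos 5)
Your proof is correct, and it takes a genuinely more elementary route than the paper. The paper never proves this proposition by direct manipulation: it recalls \eqref{eq:We5G129} as a classical fact (citing Artin and Srivastava--Choi) and then, in Example~\ref{ex:GepeA58W}, retrieves it by applying the general Weierstrassian-form Theorem~\ref{thm:Weierst} to $g(x)=\ln x$ with $p=1$, whose proof runs through the Eulerian form of $\Sigma g$ (Theorem~\ref{thm:SerProdReprS}), the generalized Euler constant $\gamma[g^{(p)}]$, and Lemma~\ref{lemma:AsymBehTh}. Your route is the classical self-contained one: telescope $\sum_{k\leq N}\ln(1+1/k)=\ln(N+1)$ in the partial Eulerian products, trade $x\ln(N+1)$ for $xH_N$ minus a term converging to $\gamma x$, and regroup into $\prod_{k\leq N}e^{x/k}/(1+x/k)$. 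The mathematical core is the same in both cases---passing from the factors $(1+1/k)^x$ to $e^{x/k}$ costs exactly the convergent correction $\sum_k\left(\frac{1}{k}-\ln\left(1+\frac{1}{k}\right)\right)=\gamma$---but your argument needs nothing beyond the Eulerian form and the limit definition of $\gamma$, whereas the paper's derivation buys generality: its theorem produces Weierstrassian forms for every multiple $\log\Gamma$-type function of class $\cC^p$, with the gamma function dropping out as the special case $p=1$, $g=\ln$. You also correctly dispatch the two points where care is needed: the harmless discrepancy between $\ln(N+1)$ and $\ln N$ in extracting $\gamma$, and the absolute convergence of the regrouped product via $\frac{x}{k}-\ln\left(1+\frac{x}{k}\right)=O(1/k^2)$.
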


We now show that this factorization can be generalized to any $\log\Gamma_p$-type function that is of class $\cC^p$. This new result is presented in the following two theorems, which deal with the cases $p=0$ and $p\geq 1$ separately. We observe that the special case when $p=1$ was previously established by John~\cite[Theorem B']{Joh39} and in the multiplicative notation by Webster~\cite[Theorem 7.1]{Web97b}.

It is important to note that, just as in Theorem~\ref{thm:SerProdReprS}, the partial sums that define the series of the theorems below are nothing other than the sequence $n\mapsto f^p_n[g](x)$. Thus, these series can be integrated and differentiated term by term.

\begin{theorem}[Weierstrassian form when $\deg g=-1$]\label{thm:Weierst1}\index{Weierstrassian form|textbf}
Let $g$ lie in $\cC^0\cap\cD^0\cap\cK^0$. The following assertions hold.
\begin{enumerate}
\item[(a)] We have $\gamma[g]=\sigma[g]$.
\item[(b)] For any $x>0$ we have
$$
\Sigma g(x) ~=~ \sigma[g]-g(x)-\sum_{k=1}^{\infty}\left(g(x+k)-\int_k^{k+1}g(t){\,}dt\right)
$$
and the series converges uniformly on any bounded subset of $[0,\infty)$.
\item[(c)] The function $\Sigma g$ lies in $\cC^0$ and the series above can be (repeatedly) integrated term by term on any bounded interval of $[0,\infty)$.
\item[(d)] If $g$ lies in $\cC^r\cap\cK^r$ for some $r\in\N$, then $\Sigma g$ lies in $\cC^r$ and the series above can be differentiated term by term up to $r$ times.
\end{enumerate}
\end{theorem}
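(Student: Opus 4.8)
The plan is to derive all four assertions by specializing the general results of this and the previous chapters to the case $p=0$. The first observation is that the hypothesis $g\in\cD^0\cap\cK^0$ already forces $\deg g=-1$: since $g\in\cK^0$, Proposition~\ref{prop:ClClIntg} gives $g\in\cD^0_{\R}$, so the minimum in Definition~\ref{de:de4g3f5} equals $0$ and hence $\deg g=-1$. Consequently $p=1+\deg g=0$, which is precisely what makes every $p$-indexed sum below collapse to an empty (or single) term.

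For assertion (a) I would simply invoke Proposition~\ref{prop:linksSG46}, which reads $\sigma[g]=\gamma[g]+\sum_{j=1}^pG_j\,\Delta^{j-1}g(1)$ with $p=1+\deg g$. Since $p=0$ the sum is empty, and $\gamma[g]=\sigma[g]$ follows at once. Assertion (b) is the heart of the proof. I would start from the Eulerian form, Theorem~\ref{thm:SerProdReprS}(a), which at $p=0$ reads
$$
\Sigma g(x) ~=~ -g(x)-\sum_{k=1}^{\infty}\bigl(g(x+k)-g(k)\bigr),
$$
the series converging uniformly on any bounded subset of $[0,\infty)$. Writing each summand as $\bigl(g(x+k)-\int_k^{k+1}g(t)\,dt\bigr)+\bigl(\int_k^{k+1}g(t)\,dt-g(k)\bigr)$ and recalling from \eqref{eq:GEC000} that $\sum_{k=1}^{\infty}\bigl(g(k)-\int_k^{k+1}g(t)\,dt\bigr)=\gamma[g]$, the second family of brackets sums to the $x$-independent constant $-\gamma[g]=-\sigma[g]$ by (a). Both resulting series converge (the second because it represents $-\gamma[g]$, the first as a difference of convergent series), so the splitting is legitimate; rearranging then yields the claimed identity. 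Uniform convergence on bounded subsets of the new series follows because it is the difference of the Eulerian series (uniformly convergent there) and a convergent series of constants.

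For assertion (c), continuity of $\Sigma g$ is immediate from Proposition~\ref{prop:intMLGt}(a), equivalently from the uniform convergence of $n\mapsto f^0_n[g]$. For term-by-term integration I would note that the $n$th partial sum of the series in (b) equals $f^0_n[g]$ plus a constant independent of $x$; hence integrability and the repeated term-by-term integration follow directly from Proposition~\ref{prop:intMLGt} and Remark~\ref{rem:5repeated0Int53}. For assertion (d), the hypothesis $g\in\cC^r\cap\cK^r$ together with $g\in\cD^0$ places $g$ in $\cC^r\cap\cD^0\cap\cK^{\max\{0,r\}}$, so Theorem~\ref{thm:TBTDiff} applies: $\Sigma g\in\cC^r$ and $D^jf^0_n[g]\to D^j\Sigma g$ uniformly on bounded subsets for each $j\le r$ (using $\cK^r\subset\cK^j$ from Proposition~\ref{prop:MpDescFiltr}). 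Since the partial sums of the series in (b) differ from $f^0_n[g]$ only by an additive constant, their $j$th derivatives coincide with those of $f^0_n[g]$, giving term-by-term differentiation up to $r$ times.

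I expect assertion (b) to be the only step needing genuine care, namely the justification that the Eulerian series decomposes into the Weierstrassian series plus the constant series $-\gamma[g]$ and that this decomposition preserves uniform convergence on bounded subsets; everything else is a direct appeal to the cited results.
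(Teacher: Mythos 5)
Your proof is correct and follows essentially the same route as the paper: assertion (a) via Proposition~\ref{prop:linksSG46} with $p=0$, assertion (b) by combining the Eulerian form of Theorem~\ref{thm:SerProdReprS} with identity \eqref{eq:GEC000}, and assertions (c) and (d) via the integration and differentiation results underlying Theorem~\ref{thm:SerProdReprS} (Proposition~\ref{prop:intMLGt}, Remark~\ref{rem:5repeated0Int53}, and Theorem~\ref{thm:TBTDiff}). You merely spell out the splitting and uniform-convergence details that the paper leaves implicit.
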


\begin{proof}
Assertion (a) follows from Proposition~\ref{prop:linksSG46}. Assertion (b) follows from Theorem~\ref{thm:SerProdReprS} and identity \eqref{eq:GEC000}. Assertions (c) and (d) follow from Theorem~\ref{thm:SerProdReprS}.
\end{proof}

To establish the second theorem (the case when $\deg g\geq 0$), we need the following technical lemma.

\begin{lemma}\label{lemma:AsymBehTh}
Let $g$ lie in $\cC^1\cap\cD^p\cap\cK^p$ for some $p\in\N^*$. Then
$$
\Delta g(x) -\sum_{j=0}^{p-2}G_j\Delta^jg'(x) ~\to ~ 0\qquad\text{as $x\to\infty$}.
$$
If, in addition, $g\in\cC^{p-1}$, then
$$
\Delta^{p-1}g(x)-g^{(p-1)}(x) ~\to ~0\qquad\text{as $x\to\infty$}.
$$
\end{lemma}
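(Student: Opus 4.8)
Both limits in Lemma~\ref{lemma:AsymBehTh} are statements about the asymptotic behavior of forward differences of $g$, and the natural tool is the generalized Stirling formula (Theorem~\ref{thm:dgf7dds}), which asserts precisely that the function $J^p[g]$ vanishes at infinity whenever $g$ lies in $\cC^0\cap\cD^p\cap\cK^p$. The strategy is to apply this vanishing result not to $g$ directly but to $g'$, after first checking that $g'$ satisfies the hypotheses with the parameter dropped by one.

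\medskip

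First I would establish the hypotheses for $g'$. Since $g$ lies in $\cC^1\cap\cD^p\cap\cK^p$ with $p\geq 1$, Proposition~\ref{prop:LMpGpLMp1}(b) (with $j=1$) tells us that $g'$ lies in $\cD^{p-1}\cap\cK^{p-1}$, and $g'$ is of course continuous, so $g'\in\cC^0\cap\cD^{p-1}\cap\cK^{p-1}$. The generalized Stirling formula, in the form \eqref{eq:dgf7ddsp1} applied to $g'$ with parameter $p-1$, then gives
$$
\int_x^{x+1}g'(t){\,}dt -\sum_{j=0}^{p-2}G_j\Delta^jg'(x) ~\to ~ 0\qquad\text{as $x\to\infty$}.
$$
The integral $\int_x^{x+1}g'(t){\,}dt$ equals $g(x+1)-g(x)=\Delta g(x)$ by the fundamental theorem of calculus, which is exactly the first claimed limit. (When $p=1$ the sum is empty and the statement reads $\Delta g(x)-g'(x)\to 0$, which is the same conclusion with a degenerate sum.)

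\medskip

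For the second limit, assume in addition that $g\in\cC^{p-1}$. The idea is to iterate the first part. Applying the operator $\Delta^{p-2}$ to the first limit, or equivalently running the same Stirling-type argument on successive derivatives, is slightly delicate because $\Delta^{p-2}$ of a vanishing quantity need not vanish in general; so instead I would argue directly that each derivative $g^{(j)}$ (for $0\le j\le p-1$) lies in $\cC^0\cap\cD^{p-j}\cap\cK^{p-j}$ by repeated use of Proposition~\ref{prop:LMpGpLMp1}(b), and then telescope. The cleanest route is to show by induction on $k$ that $\Delta^{k}g(x)-g^{(k)}(x)\to 0$ as $x\to\infty$ for $k=0,1,\ldots,p-1$: the base case $k=0$ is trivial, and the inductive step writes
$$
\Delta^{k+1}g(x)-g^{(k+1)}(x) ~=~ \Delta(\Delta^{k}g)(x)-\Delta g^{(k)}(x) + \bigl(\Delta g^{(k)}(x)-g^{(k+1)}(x)\bigr),
$$
where the last parenthesized term tends to zero by the first part of the lemma applied to $g^{(k)}$ (which lies in $\cC^1\cap\cD^{p-k}\cap\cK^{p-k}$ with $p-k\geq 1$), and the remaining difference $\Delta(\Delta^{k}g-g^{(k)})$ is the forward difference of a function that tends to zero, hence itself tends to zero.

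\medskip

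The main obstacle is this last telescoping step: one must justify that $\Delta$ of a quantity vanishing at infinity also vanishes at infinity. This is not automatic for arbitrary functions, but here $\Delta^{k}g-g^{(k)}$ tends to a limit (namely $0$), so $\Delta(\Delta^{k}g-g^{(k)})(x)=(\Delta^{k}g-g^{(k)})(x+1)-(\Delta^{k}g-g^{(k)})(x)$ is the difference of two copies of a convergent function and therefore does tend to zero. I would make sure the bookkeeping on the parameters $p-k\geq 1$ is consistent throughout the induction so that the first part of the lemma is always legitimately applicable.
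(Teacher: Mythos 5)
Your treatment of the first limit is correct and is precisely the paper's argument: place $g'$ in $\cC^0\cap\cD^{p-1}\cap\cK^{p-1}$ via Proposition~\ref{prop:LMpGpLMp1}, apply \eqref{eq:dgf7ddsp1} to $g'$ with parameter $p-1$, and identify $\int_x^{x+1}g'(t){\,}dt$ with $\Delta g(x)$. (A minor slip: when $p=1$ the empty sum makes the statement read $\Delta g(x)\to 0$, not $\Delta g(x)-g'(x)\to 0$.)

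The second part, however, contains a genuine error, not merely a gap. Your induction rests on the claim that $\Delta g^{(k)}(x)-g^{(k+1)}(x)\to 0$, justified ``by the first part of the lemma applied to $g^{(k)}$.'' But $g^{(k)}$ lies in $\cD^{p-k}\cap\cK^{p-k}$, and the first part applied to it with its parameter $p-k$ yields
$$
\Delta g^{(k)}(x)-\sum_{j=0}^{p-k-2}G_j\,\Delta^jg^{(k+1)}(x) ~\to~ 0 ,
$$
which coincides with your claim only when $p-k=2$. Note that $\cD^{p-k}\subset\cD^2$ fails for $p-k>2$, since the sets $\cD^q$ are \emph{increasingly} nested, so you cannot simply rerun the argument with parameter $2$. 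In fact your intermediate statements are false. Take $g(x)=x^2$ and $p=3$: then $g\in\cC^{\infty}\cap\cD^3\cap\cK^{\infty}$ and $g\in\cC^{p-1}$, so the lemma applies, yet at your very first inductive step ($k=0$) one has $\Delta g(x)-g'(x)=(2x+1)-2x=1$, which does not tend to zero. More generally, for $g(x)=x^n$ with $p=n+1$, the quantity $\Delta^kg(x)-g^{(k)}(x)$ is a polynomial of degree exactly $n-k-1$ with positive leading coefficient for $1\leq k\leq n-1$; thus every intermediate statement $0<k<p-1$ of your induction fails, and only the endpoint $k=p-1$ (the lemma's actual claim) is true. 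Your telescoping observation (that $\Delta$ of a function converging to $0$ also converges to $0$) is fine, but it is applied to hypotheses that do not hold.

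The paper avoids this trap by telescoping over the \emph{mixed} functions $g_i=\Delta^ig^{(p-2-i)}$, $i=0,\ldots,p-2$. By Propositions~\ref{prop:LMpGpLMp1D7} and \ref{prop:LMpGpLMp1}, $g^{(p-2-i)}$ lies in $\cC^1\cap\cD^{i+2}\cap\cK^{i+2}$, and applying $\Delta^i$ lowers the parameter to exactly $2$, so each $g_i$ lies in $\cC^1\cap\cD^2\cap\cK^2$. For parameter $2$ the first part legitimately degenerates to $\Delta g_i(x)-g_i'(x)\to 0$, i.e.\ $\Delta^{i+1}g^{(p-2-i)}(x)-\Delta^ig^{(p-1-i)}(x)\to 0$, and summing these $p-1$ limits telescopes to $\Delta^{p-1}g(x)-g^{(p-1)}(x)\to 0$. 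The ladder of pure derivatives $g,g',\ldots,g^{(p-1)}$ cannot be repaired into such an argument; interleaving the difference operator with differentiation is essential.
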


\begin{proof}
By Proposition~\ref{prop:LMpGpLMp1}, we have that $g'$ lies in $\cC^0\cap\cD^{p-1}\cap\cK^{p-1}$. The first convergence result then follows immediately from the application of \eqref{eq:dgf7ddsp1} to $g'$. That is,
$$
J^{p-1}[g'](x) ~\to ~0\qquad\text{as $x\to\infty$}.
$$
Let us now assume that $g\in\cC^{p-1}$. By Propositions~\ref{prop:LMpGpLMp1D7} and \ref{prop:LMpGpLMp1}, for every $i\in\{0,\ldots,p-2\}$ the function
$$
g_i ~=~ \Delta^ig^{(p-2-i)}
$$
lies in $\cC^1\cap\cD^2\cap\cK^2$ and hence, applying the first result to $g_i$, we obtain that
$$
\Delta g_i(x)-g_i'(x) ~\to ~0\qquad\text{as $x\to\infty$}.
$$
Summing these limits for $i=0,\ldots,p-2$, we obtain the claimed limit.
\end{proof}

\begin{theorem}[Weierstrassian form when $\deg g\geq 0$]\label{thm:Weierst}\index{Weierstrassian form|textbf}
Let $g$ lie in $\cC^p\cap\cD^p\cap\cK^p$ with $\deg g=p-1$ for some $p\in\N^*$. The following assertions hold.
\begin{enumerate}
\item[(a)] We have $\gamma[g^{(p)}] = \sigma[g^{(p)}] = g^{(p-1)}(1)-(\Sigma g)^{(p)}(1)$.
\item[(b)] For any $x>0$ we have
\begin{eqnarray*}
\Sigma g(x) &=& \sum_{j=1}^{p-1}\tchoose{x}{j}{\,}\Delta^{j-1}g(1)+\tchoose{x}{p}(\Sigma g)^{(p)}(1)\\
&& -g(x)- \sum_{k=1}^{\infty}\left(g(x+k)-\sum_{j=0}^{p-1}\tchoose{x}{j}{\,}\Delta^jg(k)-\tchoose{x}{p}g^{(p)}(k)\right)
\end{eqnarray*}
and the series converges uniformly on any bounded subset of $[0,\infty)$.
\item[(c)] The function $\Sigma g$ lies in $\cC^p$ and the series above can be (repeatedly) integrated term by term on any bounded interval of $[0,\infty)$.
\item[(d)] If $g$ lies in $\cC^{\max\{p,r\}}\cap\cK^{\max\{p,r\}}$ for some $r\in\N$, then $\Sigma g$ lies in $\cC^{\max\{p,r\}}$ and the series above can be differentiated term by term up to $\max\{p,r\}$ times.
\end{enumerate}
\end{theorem}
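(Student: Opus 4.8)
The plan is to parallel the structure of Theorem~\ref{thm:Weierst1}, deriving the series representation from the Eulerian form of Theorem~\ref{thm:SerProdReprS} by rewriting the divided-difference terms $\Delta^jg(k)$ in a way that brings in the integral-free, derivative-based expansion. First I would establish assertion (a). Since $\deg g=p-1$, the function $g^{(p)}$ lies in $\cC^0\cap\cD^0\cap\cK^0$ by Proposition~\ref{prop:fdsf6sfd} (indeed $g\in\cC^p\cap\cD^p\cap\cK^p$ gives $g^{(p)}\in\cC^0\cap\cD^{(p-p)_+}\cap\cK^{(p-p)_+}=\cC^0\cap\cD^0\cap\cK^0$). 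Applying Theorem~\ref{thm:Weierst1}(a) to $g^{(p)}$ yields $\gamma[g^{(p)}]=\sigma[g^{(p)}]$. The identity $\sigma[g^{(p)}]=g^{(p-1)}(1)-(\Sigma g)^{(p)}(1)$ is then exactly the case $r=p$ of formula~\eqref{eq:rd0bb} in Proposition~\ref{prop:fdsf6sfd}, evaluated at $x=1$: we have $(\Sigma g)^{(p)}(1)=g^{(p-1)}(1)-\sigma[g^{(p)}]$.

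For assertion (b), I would start from the Eulerian form in Theorem~\ref{thm:SerProdReprS}(a), which reads
$$
\Sigma g(x) ~=~ -g(x)+\sum_{j=1}^p\tchoose{x}{j}{\,}\Delta^{j-1}g(1) - \sum_{k=1}^{\infty}\left(g(x+k)-\sum_{j=0}^p\tchoose{x}{j}{\,}\Delta^jg(k)\right).
$$
The target series differs only in its highest-order correction term: where the Eulerian form subtracts $\tchoose{x}{p}\Delta^pg(k)$ inside the sum, the Weierstrassian form subtracts $\tchoose{x}{p}g^{(p)}(k)$, and correspondingly the polynomial prefactor carries $\tchoose{x}{p}(\Sigma g)^{(p)}(1)$ rather than $\tchoose{x}{p}\Delta^{p-1}g(1)$. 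The key step is therefore to control the difference series $\sum_{k=1}^{\infty}\tchoose{x}{p}\left(\Delta^pg(k)-g^{(p)}(k)\right)$, together with adjusting the constant term. Since $g^{(p)}\in\cD^0\cap\cK^0$, the sequence $k\mapsto\Delta^pg(k)-g^{(p)}(k)$ is summable: by Lemma~\ref{lemma:AsymBehTh} (applied after noting $g\in\cC^{p-1}$ in its hypothesis, or more directly via $g^{(p)}\in\cD^{-1}$-type summability from Proposition~\ref{prop:sa6f575sf}) we have $\Delta^pg(x)-g^{(p)}(x)\to0$, and indeed summability follows because $g^{(p)}$ lies in $\cD^{-1}\cap\cK^0$ when integrated, so $\sum_k(\Delta^pg(k)-g^{(p)}(k))$ converges. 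I would then identify this sum's value using identity~\eqref{eq:GEC000} applied to $g^{(p)}$ (which gives $\gamma[g^{(p)}]=\sum_{k}(g^{(p)}(k)-\int_k^{k+1}g^{(p)})$) together with the telescoping $\sum_k\Delta^pg(k)$, and absorb everything into the constant, where assertion (a) supplies the precise replacement of $\Delta^{p-1}g(1)$ by $(\Sigma g)^{(p)}(1)$. Uniform convergence on bounded subsets of $[0,\infty)$ transfers from the Eulerian form because the modifying series is $x$-independent up to the bounded factor $\tchoose{x}{p}$.

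Assertions (c) and (d) I would then obtain directly from Theorem~\ref{thm:SerProdReprS}(b)--(c) and Theorem~\ref{thm:TBTDiff}: since $g\in\cC^p\cap\cK^p$ we get $\Sigma g\in\cC^p$ with termwise integrability, and under the stronger hypothesis $g\in\cC^{\max\{p,r\}}\cap\cK^{\max\{p,r\}}$ the hypotheses of Theorem~\ref{thm:TBTDiff} hold with order $\max\{p,r\}$, so $\Sigma g\in\cC^{\max\{p,r\}}$ and the defining partial sums $f^p_n[g]$ may be differentiated termwise up to $\max\{p,r\}$ times. I expect the main obstacle to be the bookkeeping in assertion (b): verifying that the reindexing of the constant term and the convergent swap of $\Delta^pg(k)$ for $g^{(p)}(k)$ produce exactly the stated coefficient $(\Sigma g)^{(p)}(1)$ rather than some shifted constant. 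The cleanest route is likely to differentiate both the claimed identity and the Eulerian identity $p$ times, check they agree (using Theorem~\ref{thm:TBTDiff} to justify termwise differentiation and Example-style computations of $(\Sigma g)^{(p)}$), and then pin down the single remaining integration constant by evaluating at a convenient point such as $x=1$, where $\Sigma g(1)=0$.
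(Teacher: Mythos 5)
Your proposal is correct and follows essentially the same route as the paper's proof: assertion (a) via \eqref{eq:rd0bb} of Proposition~\ref{prop:fdsf6sfd} together with $\gamma[g^{(p)}]=\sigma[g^{(p)}]$ (Theorem~\ref{thm:Weierst1}(a), which is Proposition~\ref{prop:linksSG46} in disguise); assertion (b) by starting from the Eulerian form of Theorem~\ref{thm:SerProdReprS} and trading $\tchoose{x}{p}\Delta^pg(k)$ for $\tchoose{x}{p}g^{(p)}(k)$, with the correction identified through \eqref{eq:GEC000} applied to $g^{(p)}$, telescoping, and Lemma~\ref{lemma:AsymBehTh}; and assertions (c)--(d) from Theorems~\ref{thm:SerProdReprS} and \ref{thm:TBTDiff}.

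One auxiliary justification in your treatment of (b) is wrong, though your main argument does not rely on it. You assert that the summability of $k\mapsto\Delta^pg(k)-g^{(p)}(k)$ follows ``via $g^{(p)}\in\cD^{-1}$-type summability from Proposition~\ref{prop:sa6f575sf}.'' This fails: since $\deg g=p-1$, the function $g^{(p-1)}$ lies in $\cD^1_{\N}$ but not in $\cD^0_{\N}$ in general, so Proposition~\ref{prop:sa6f575sf}(b) does not apply, and $g^{(p)}$ need not lie in $\cD^{-1}_{\N}$ at all (take $g(x)=\ln x$, $p=1$: then $g^{(p)}(k)=1/k$ is not summable). Nor does the mere limit $\Delta^pg(x)-g^{(p)}(x)\to 0$ give summability. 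The summability comes precisely from the decomposition you describe next: writing $\Delta^pg(k)-g^{(p)}(k)=\bigl(\Delta g^{(p-1)}(k)-g^{(p)}(k)\bigr)+\bigl(\Delta^pg(k)-\Delta g^{(p-1)}(k)\bigr)$, the first summand is summable by \eqref{eq:GEC000} (its sum being $-\gamma[g^{(p)}]$), while the second has telescoping partial sums $\bigl(\Delta^{p-1}g(n)-g^{(p-1)}(n)\bigr)-\bigl(\Delta^{p-1}g(1)-g^{(p-1)}(1)\bigr)$, which converge by Lemma~\ref{lemma:AsymBehTh}; this is exactly the paper's computation and pins the polynomial coefficient to $g^{(p-1)}(1)-\gamma[g^{(p)}]=(\Sigma g)^{(p)}(1)$ via assertion (a). Finally, your fallback of differentiating the claimed identity $p$ times and fixing the constant at $x=1$ is circular as stated (termwise differentiation presupposes the convergence you are trying to establish), but it is unnecessary once the direct route above is carried out.
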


\begin{proof}
By Proposition~\ref{prop:LMpGpLMp1}, we have that $g^{(p)}$ lies in $\cC^0\cap\cD^0\cap\cK^0$. Assertion (a) then follows from Propositions~\ref{prop:linksSG46} and \ref{prop:fdsf6sfd}. Now, using \eqref{eq:GEC000} we get
$$
\gamma[g^{(p)}] ~=~ \sum_{k=1}^{\infty}(g^{(p)}(k)-\Delta g^{(p-1)}(k)).
$$
Using Theorem~\ref{thm:SerProdReprS}, we then obtain
\begin{eqnarray*}
\Sigma g(x) &=& \sum_{j=1}^{p-1}\tchoose{x}{j}{\,}\Delta^{j-1}g(1)+\tchoose{x}{p}\left(g^{(p-1)}(1)-\gamma[g^{(p)}]\right)\\
&& -g(x)- \lim_{n\to\infty}\sum_{k=1}^{n-1}\left(g(x+k)-\sum_{j=0}^{p-1}\tchoose{x}{j}{\,}\Delta^jg(k)-\tchoose{x}{p}{\,}g^{(p)}(k)\right)\\
&& + \lim_{n\to\infty}\tchoose{x}{p}\left(\Delta^{p-1}g(n)-g^{(p-1)}(n)\right),
\end{eqnarray*}
where the latter limit is zero by Lemma~\ref{lemma:AsymBehTh}. This proves assertion (b). Assertions (c) and (d) follow from Theorem~\ref{thm:SerProdReprS}.
\end{proof}

\begin{example}\label{ex:GepeA58W}
Let us apply Theorem~\ref{thm:Weierst} to $g(x)=\ln x$ and $p=1$. We immediately get
$$
\ln\Gamma(x) ~=~ -\gamma x-\ln x-\sum_{k=1}^{\infty}\left(\ln(x+k)-\ln k-\frac{x}{k}\right),
$$
which is the additive version of the Weierstrassian form \eqref{eq:We5G129} of the gamma function. It is remarkable that we can now retrieve this formula in an effortless way. Upon differentiation, we also obtain (see, e.g., Srivastava and Choi \cite[p.~24]{SriCho12})
$$
\psi(x) ~=~ -\gamma-\frac{1}{x}-\sum_{k=1}^{\infty}\left(\frac{1}{x+k}-\frac{1}{k}\right).
$$
Integrating on $(0,x)$, we obtain
$$
\psi_{-2}(x) ~=~ -\gamma\,\frac{x^2}{2}+x-x\ln x -\sum_{k=1}^{\infty}\left((x+k)\ln\left(1+\frac{x}{k}\right)-x-\frac{x^2}{2k}\right).
$$
Integrating once more on $(0,x)$, we obtain
\begin{eqnarray*}
\lefteqn{\psi_{-3}(x) ~=~ \frac{1}{12}{\,}x^2(9-2\gamma x-6\ln x)}\\
&& \null -\sum_{k=1}^{\infty}\left(\frac{1}{2}(x+k)^2\ln\left(1+\frac{x}{k}\right)
-\frac{k}{2}{\,}x-\frac{3}{4}{\,}x^2-\frac{x^3}{6k}\right).
\end{eqnarray*}
Just as in Example~\ref{ex:GepeA58}, we can integrate both sides on $(0,x)$ repeatedly as we wish.
\end{example}

Let us end this section with an aside about some potential consequences of the technical Lemma~\ref{lemma:AsymBehTh}.

\begin{remark}
If $g$ lies in $\cC^1\cap\cD^p\cap\cK^p$ for some $p\in\N^*$, then by Propositions~\ref{prop:ClClIntg} and \ref{prop:LMpGpLMp1} we have $g'\in\cR^{p-1}_{\R}$. That is, for any $a\geq 0$
$$
g'(x+a)-\sum_{j=0}^{p-2}\tchoose{a}{j}\Delta^jg'(x) ~\to ~ 0\qquad\text{as $x\to\infty$}.
$$
Combining this result with the first part of Lemma~\ref{lemma:AsymBehTh}, we can derive surprising limits. For instance, we obtain for any $p\in\{1,2,3\}$
$$
\textstyle{\Delta g(x)-g'\left(x+\frac{1}{2}\right)} ~\to ~0\qquad\text{as $x\to\infty$}.
$$
This latter limit has the following interpretation. The mean value theorem tells us that $\Delta g(x)=g'(x+\xi_x)$ for some $\xi_x\in (0,1)$. The limit above then says that
$$
\textstyle{g'(x+\xi_x)-g'(x+\frac{1}{2})} ~\to ~ 0\quad\text{as $x\to\infty$}.
$$
In particular, if $g$ lies in $\cC^2$ and for instance eventually satisfies $g''(x)\geq c$ for some $c>0$, then
\begin{eqnarray*}
c\left|\xi_x-\frac{1}{2}\right| &\leq & \left|\int_{\frac{1}{2}}^{\xi_x}g''(x+t){\,}dt\right|\\
&=& |\textstyle{g'(x+\xi_x)-g'(x+\frac{1}{2})}| ~\to ~ 0\quad\text{as $x\to\infty$},
\end{eqnarray*}
which shows that $\xi_x\to\frac{1}{2}$ as $x\to\infty$.
\end{remark}

\index{Weierstrassian form|)}

\section{Gregory's formula-based series representation}

The following proposition provides series expressions for $\Sigma g$ and $\sigma[g]$ in terms of Gregory's coefficients (see also Proposition~\ref{prop:ddozj40} in Appendix~\ref{chapter:C-StBr49}). This proposition follows from the next lemma, which in turn immediately follows from Corollary~\ref{cor:6GenStFo0BaIneqCo}. 

\begin{lemma}\label{lemma:6699rem}
Let $g$ lie in $\cC^0\cap\cD^p\cap\cK^q$ for some $p,q\in\N$ such that $p\leq q$. Let $x>0$ be so that for $k=p,\ldots,q$ the function $g$ is $k$-convex or $k$-concave on $[x,\infty)$. Then we have
$$
|J^{k+1}[\Sigma g](x)| ~\leq ~ \overline{G}_k{\,}|\Delta^k g(x)|{\,},\qquad k=p,\ldots,q.
$$
\end{lemma}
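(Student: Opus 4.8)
The plan is to reduce Lemma~\ref{lemma:6699rem} directly to the second inequality already established in Corollary~\ref{cor:6GenStFo0BaIneqCo}, applying it once for each value of $k$ in the range $\{p,\ldots,q\}$. The essential observation is that the hypothesis is tailored so that for each such $k$ the function $g$ satisfies exactly the assumptions of that corollary with $p$ replaced by $k$: namely $g\in\cC^0\cap\cD^k\cap\cK^k$, together with the fact that $g$ is $k$-convex or $k$-concave on $[x,\infty)$.

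First I would verify the membership $g\in\cD^k\cap\cK^k$ for each $k\in\{p,\ldots,q\}$. Since $g$ lies in $\cK^q$ and is assumed $k$-convex or $k$-concave on $[x,\infty)$, we have $g\in\cK^k$ for each such $k$ by definition of the eventual $k$-convexity sets $\cK^k_\pm$. For the asymptotic condition, the hypothesis gives $g\in\cD^p$, and by Proposition~\ref{prop:4042RsDs5} the sets $\cD^p_{\S}$ form an increasing filtration, so $g\in\cD^p\subset\cD^k$ for every $k\geq p$. Combining these, $g\in\cC^0\cap\cD^k\cap\cK^k$ and $g$ is $k$-convex or $k$-concave on $[x,\infty)$, which is precisely what Corollary~\ref{cor:6GenStFo0BaIneqCo} requires with parameter $k$.

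Then I would simply invoke Corollary~\ref{cor:6GenStFo0BaIneqCo} with $p$ replaced by $k$. Its conclusion includes the chain of inequalities whose final term reads $\overline{G}_k\,|\Delta^k g(x)|$, giving
$$
|J^{k+1}[\Sigma g](x)| ~\leq ~ \overline{G}_k\,|\Delta^k g(x)|.
$$
Since $k$ ranges over $\{p,\ldots,q\}$, this is exactly the asserted family of bounds, and the proof is complete.

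I do not anticipate a genuine obstacle here, as the statement is an immediate specialization of the preceding corollary; the only point requiring care is the bookkeeping that the single hypothesis ``$g\in\cD^p$'' propagates upward to every $\cD^k$ via the filtration property, so that Corollary~\ref{cor:6GenStFo0BaIneqCo} is legitimately applicable at each index $k$ rather than only at $k=p$. The mild subtlety worth flagging in the write-up is that the convexity/concavity direction may vary with $k$ (on the same interval $[x,\infty)$ the function $g$ may be $k$-convex for some values of $k$ and $k$-concave for others), but this is harmless since each application of the corollary is independent and its symmetrized conclusion already absorbs the sign via the absolute values.
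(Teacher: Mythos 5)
Your proposal is correct and matches the paper's own treatment: the paper states that Lemma~\ref{lemma:6699rem} ``immediately follows from Corollary~\ref{cor:6GenStFo0BaIneqCo},'' i.e., precisely the application of that corollary at each index $k\in\{p,\ldots,q\}$ that you carry out. Your extra bookkeeping (the filtration $\cD^p\subset\cD^k$ and the remark that the convexity direction may alternate with $k$ but is absorbed by the absolute values) just makes explicit what the paper leaves implicit.
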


\begin{proposition}\label{prop:6699rem}
Let $g$ lie in $\cC^0\cap\cD^p\cap\cK^{\infty}$ for some $p\in\N$. Let $x>0$ be so that for every integer $q\geq p$ the function $g$ is $q$-convex or $q$-concave on $[x,\infty)$. Suppose also that the sequence $q\mapsto\Delta^qg(x)$ is bounded. Then we have
$$
J^{q+1}[\Sigma g](x) ~\to ~ 0\qquad\text{as $q\to_{\N}\infty$},
$$
that is,
\begin{equation}\label{eq:fontana490}
\Sigma g(x) ~=~ \sigma[g]+\int_1^xg(t){\,}dt-\sum_{n=1}^{\infty}G_n\,\Delta^{n-1}g(x).
\end{equation}
In particular, if the assumptions above are satisfied for $x=1$, then we have
\begin{equation}\label{eq:fontana49}
\sigma[g] ~=~ \sum_{n=1}^{\infty}G_n\,\Delta^{n-1}g(1).
\end{equation}
\end{proposition}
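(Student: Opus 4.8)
The plan is to prove Proposition~\ref{prop:6699rem} as an immediate consequence of Lemma~\ref{lemma:6699rem}, Theorem~\ref{thm:dgf7dds}, and the established properties of the generalized Binet function. First I would invoke Lemma~\ref{lemma:6699rem} with $q$ ranging over all integers $\geq p$: under the stated hypotheses (that $g$ is $q$-convex or $q$-concave on $[x,\infty)$ for every such $q$), the lemma gives
$$
|J^{q+1}[\Sigma g](x)| ~\leq ~ \overline{G}_q{\,}|\Delta^q g(x)|\qquad\text{for every integer $q\geq p$}.
$$
Since the sequence $q\mapsto\Delta^q g(x)$ is assumed bounded, say by some constant $M$, and since the sequence $n\mapsto\overline{G}_n$ decreases to zero (as recorded just after the definition of $\overline{G}_n$), the right-hand side is bounded above by $M{\,}\overline{G}_q\to 0$. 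By the squeeze theorem, $J^{q+1}[\Sigma g](x)\to 0$ as $q\to_{\N}\infty$, which is the first assertion.

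Next I would translate this vanishing into the series identity \eqref{eq:fontana490}. The natural tool is identity \eqref{eq:Binet64378S}, which expresses $J^{q+1}[\Sigma g](x)$ explicitly as
$$
J^{q+1}[\Sigma g](x) ~=~ \Sigma g(x)-\sigma[g]-\int_1^xg(t){\,}dt + \sum_{j=1}^qG_j\Delta^{j-1} g(x).
$$
Letting $q\to\infty$ and using that the left side tends to zero, the partial sums $\sum_{j=1}^qG_j\Delta^{j-1}g(x)$ must converge, and rearranging yields precisely \eqref{eq:fontana490} (after reindexing $j\mapsto n$, so that $\sum_{n=1}^\infty G_n\Delta^{n-1}g(x)$ is the convergent series obtained as the limit). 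The convergence of the series is thus a byproduct rather than an assumption, which is worth stating explicitly in the write-up.

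Finally, specializing to $x=1$ gives \eqref{eq:fontana49}. Here I would note that $\int_1^1 g(t){\,}dt=0$, so \eqref{eq:fontana490} at $x=1$ reduces to $\Sigma g(1)=\sigma[g]-\sum_{n=1}^\infty G_n\Delta^{n-1}g(1)$; since $\Sigma g(1)=0$ by Proposition~\ref{prp:56GathZZPro8}(a) (or directly from \eqref{eq:RestrInt} with $n=1$), this rearranges to $\sigma[g]=\sum_{n=1}^\infty G_n\Delta^{n-1}g(1)$, as claimed. I do not anticipate any genuine obstacle: the entire argument is a squeeze estimate feeding into a limit of the closed-form expression \eqref{eq:Binet64378S}. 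The only point requiring a little care is confirming that the hypotheses of Lemma~\ref{lemma:6699rem} are met for \emph{all} $q\geq p$ simultaneously, which is exactly what the proposition assumes via $g\in\cK^{\infty}$ together with the per-$q$ convexity/concavity condition on $[x,\infty)$; the boundedness of $q\mapsto\Delta^q g(x)$ then does the remaining work.
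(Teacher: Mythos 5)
Your proposal is correct and follows essentially the same route as the paper: Lemma~\ref{lemma:6699rem} combined with the boundedness of $q\mapsto\Delta^qg(x)$ and the fact that $\overline{G}_n$ decreases to zero gives the vanishing of $J^{q+1}[\Sigma g](x)$, and identity \eqref{eq:Binet64378S} then yields \eqref{eq:fontana490}. The paper states this very tersely; your write-up simply makes the squeeze argument, the convergence of the partial sums, and the $x=1$ specialization (via $\Sigma g(1)=0$) explicit, all of which is accurate.
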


\begin{proof}
This result is an immediate consequence of Lemma~\ref{lemma:6699rem} and the fact that the sequence $n\mapsto\overline{G}_n$ decreases to zero. Identity \eqref{eq:fontana490} then follows from \eqref{eq:Binet64378S}.
\end{proof}

\begin{example}\label{ex:SeriesGS7}
Applying Proposition~\ref{prop:6699rem} to the function $g(x)=\ln x$ with $p=1$, we obtain the following series representation of the log-gamma function for $x>0$
\begin{eqnarray}
\ln\Gamma(x) &=& \frac{1}{2}\ln(2\pi)-x+x\ln x-\sum_{n=0}^{\infty}G_{n+1}\,\Delta^n\ln x\label{eq:Gr62F0}\\
&=& \frac{1}{2}\ln(2\pi)-x+x\ln x-\sum_{n=0}^{\infty}|G_{n+1}|\,\sum_{k=0}^n(-1)^k\tchoose{n}{k}{\,}\ln(x+k),\nonumber
\end{eqnarray}
where we have used the classical identity (see, e.g., Graham {\em et al.} \cite[p.~188]{GraKnuPat94})
$$
\Delta^n f(x) ~=~ \sum_{k=0}^n(-1)^{n-k}\,\tchoose{n}{k}{\,}f(x+k).
$$
Equivalently, using the Binet function\index{Binet's function} $J(x)$, identity \eqref{eq:Gr62F0} can take the form
$$
J(x) ~=~ -\sum_{n=1}^{\infty}|G_{n+1}|\,\sum_{k=0}^n(-1)^k\tchoose{n}{k}{\,}\ln(x+k),\qquad x>0,
$$
where, for any $n\in\N^*$, the inner sum also reduces to the following integral (see, e.g., \cite[p.\ 192]{GraKnuPat94})
$$
(-1)^n\Delta^n\ln x ~=~ -\int_0^{\infty}\frac{e^{-xt}}{t}\left(1-e^{-t}\right)^n dt{\,},\qquad n\in\N^*.
$$
In particular,
$$
|\Delta^n\ln x| ~\leq ~ \int_0^{\infty}\frac{e^{-xt}}{t}\left(1-e^{-t}\right) dt ~=~ \Delta\ln x ~=~ \ln\left(1+\frac{1}{x}\right).
$$
In the multiplicative notation, identity \eqref{eq:Gr62F0} takes the following form
\begin{eqnarray*}
\Gamma(x) &=& \sqrt{2\pi}{\,}e^{-x}{\,}x^{x-\frac{1}{2}}\left(\frac{x+1}{x}\right)^{\frac{1}{12}}
\left(\frac{(x+2)x}{(x+1)^2}\right)^{-\frac{1}{24}}\\
&& \null\times \left(\frac{(x+3)(x+1)^3}{(x+2)^3x}\right)^{\frac{19}{720}}\ldots.
\end{eqnarray*}
Further infinite product representations and approximations of the gamma function can be found for instance in Feng and Wang~\cite{FenWan13}.
\end{example}

\section{Analogue of Fontana-Mascheroni's series}
\label{sec:8AnzzFont2Mas}
\index{Fontana-Mascheroni's series!analogue|(}

Interestingly, when $g(x)=\frac{1}{x}$ and $p=0$, identity \eqref{eq:fontana49} reduces to the well-known formula
$$
\gamma ~=~ \sum_{n=1}^{\infty}\frac{|G_n|}{n}{\,},
$$
where $\gamma$ is Euler's constant\index{Euler's constant} and the series is called \emph{Fontana-Mascheroni's series}\index{Fontana-Mascheroni's series} (see, e.g., Blagouchine \cite[p.~379]{Bla16}). Thus, the series representation of the asymptotic constant\index{asymptotic constant} $\sigma[g]$ given in \eqref{eq:fontana49} provides the analogue of Fontana-Mascheroni's series for any function $g$ satisfying the assumptions of Proposition~\ref{prop:6699rem}.

\begin{example}
The analogue of Fontana-Mascheroni's series for the function $g(x)=\ln x$ can be obtained by setting $x=1$ in \eqref{eq:Gr62F0}. We obtain
$$
\sum_{n=0}^{\infty}|G_{n+1}|\,\sum_{k=0}^n(-1)^k\tchoose{n}{k}{\,}\ln(k+1) ~=~ -1+\frac{1}{2}\ln(2\pi),
$$
or equivalently (see Example~\ref{ex:SeriesGS7}),
$$
\sum_{n=0}^{\infty}|G_{n+1}|\,\int_0^{\infty}\frac{e^{-t}}{t}\left(1-e^{-t}\right)^n dt ~=~ 1-\frac{1}{2}\ln(2\pi).\qedhere
$$
\end{example}

The following proposition provides a way to construct a function $g(x)$ that has a prescribed associated asymptotic constant\index{asymptotic constant} $\sigma[g]$ given in the form \eqref{eq:fontana49}.

\begin{proposition}\label{prop:Fontana23468}
Suppose that the series
$$
S ~=~ \sum_{n=1}^{\infty}G_n{\,}s_n
$$
converges for a given real sequence $n\mapsto s_n$ and let $g\colon\R_+\to\R$ be such that
\begin{equation}\label{eq:6DefG623}
g(n) ~=~ \sum_{k=1}^n\tchoose{n-1}{k-1}{\,} s_k{\,},\qquad n\in\N^*.
\end{equation}
If $g$ satisfies the assumptions of Proposition~\ref{prop:6699rem} with $x=1$, then the following assertions hold.
\begin{enumerate}
\item[(a)] $S=\sigma[g]$.
\item[(b)] $\Sigma g(n) = \sum_{k=1}^{n-1}{n-1\choose k}{\,} s_k$ for any $n\in\N^*$.
\item[(c)] $s_n=\Delta^{n-1}g(1)=\Delta^n\Sigma g(1)$ for any $n\in\N^*$.
\end{enumerate}
\end{proposition}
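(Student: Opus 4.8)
The plan is to prove the three assertions in order, relying on the hypothesis that $g$ satisfies the assumptions of Proposition~\ref{prop:6699rem} with $x=1$, together with the explicit values of $g$ on the integers prescribed by \eqref{eq:6DefG623}. The key observation is that \eqref{eq:6DefG623} is exactly a binomial transform, so I expect the forward and inverse Newton-type identities to convert between the sequence $n\mapsto s_n$ and the finite differences $\Delta^{n-1}g(1)$.

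First I would establish assertion (c), since (a) and (b) will follow from it almost immediately. Applying the standard finite difference formula $\Delta^{m}g(1)=\sum_{j=0}^{m}(-1)^{m-j}\tchoose{m}{j}g(1+j)$ and substituting the values $g(1+j)=\sum_{k=1}^{j+1}\tchoose{j}{k-1}s_k$ from \eqref{eq:6DefG623}, I would interchange the order of summation and use the binomial inversion identity $\sum_{j}(-1)^{m-j}\tchoose{m}{j}\tchoose{j}{k-1}=0^{m-k+1}$ (a consequence of the orthogonality of binomial coefficients) to collapse the double sum to $s_{n}$ when $m=n-1$. This gives $s_n=\Delta^{n-1}g(1)$. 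The second equality $\Delta^{n-1}g(1)=\Delta^n\Sigma g(1)$ then follows from the relation $\Delta\Sigma g=g$ (Proposition~\ref{prp:56GathZZPro8}(a)), which yields $\Delta^n\Sigma g=\Delta^{n-1}\Delta\Sigma g=\Delta^{n-1}g$, evaluated at $1$.

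Assertion (a) is then a direct substitution: by Proposition~\ref{prop:6699rem}, identity \eqref{eq:fontana49} gives $\sigma[g]=\sum_{n=1}^{\infty}G_n\,\Delta^{n-1}g(1)$, and replacing $\Delta^{n-1}g(1)$ with $s_n$ using part (c) produces $\sigma[g]=\sum_{n=1}^{\infty}G_n s_n=S$. For assertion (b), I would start from identity \eqref{eq:RestrInt}, namely $\Sigma g(n)=\sum_{k=1}^{n-1}g(k)$, substitute the values $g(k)=\sum_{j=1}^{k}\tchoose{k-1}{j-1}s_j$ from \eqref{eq:6DefG623}, interchange the order of summation, and evaluate the inner sum $\sum_{k=j}^{n-1}\tchoose{k-1}{j-1}$ via the hockey-stick identity $\sum_{k=j}^{n-1}\tchoose{k-1}{j-1}=\tchoose{n-1}{j}$. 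This gives $\Sigma g(n)=\sum_{j=1}^{n-1}\tchoose{n-1}{j}s_j$, as claimed.

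The main obstacle I anticipate is purely bookkeeping: making sure the binomial inversion in part (c) is applied with the correct index ranges, since \eqref{eq:6DefG623} only prescribes $g$ on the positive integers and the difference operator $\Delta$ here acts on the integer lattice. One must be careful that $\Delta^{n-1}g(1)$ depends only on the values $g(1),\dots,g(n)$, all of which are determined by \eqref{eq:6DefG623}, so the computation is self-contained and does not require knowing $g$ off the integers. The convergence of the series $S$ and the applicability of \eqref{eq:fontana49} are guaranteed by hypothesis, so no analytic estimates are needed beyond invoking Proposition~\ref{prop:6699rem}; the entire argument reduces to two classical combinatorial identities (binomial inversion and the hockey-stick identity).
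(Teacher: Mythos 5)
Your proof is correct and follows essentially the same route as the paper: assertion (c) via binomial inversion of \eqref{eq:6DefG623} (the paper cites the classical inversion formula from Graham et al., while you rederive it from the orthogonality relation), then (a) by substituting into \eqref{eq:fontana49}, and (b) from \eqref{eq:RestrInt}. The only difference is level of detail—the paper leaves (b) as "straightforward," whereas you carry out the interchange of summation and the hockey-stick identity explicitly, which is a fine elaboration of the same argument.
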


\begin{proof}
Identity \eqref{eq:6DefG623} can take the following alternative form
$$
g(n+1) ~=~ \sum_{k=0}^n\tchoose{n}{k}{\,}s_{k+1}{\,},\qquad n\in\N.
$$
Using the classical inversion formula (Graham {\em et al.} \cite[p.\ 192]{GraKnuPat94}), we then obtain
$$
s_{n+1} ~=~ \sum_{k=0}^n(-1)^{n-k}\,\tchoose{n}{k}{\,}g(k+1) ~=~ \Delta^ng(1){\,},\qquad n\in\N.
$$
This establishes assertion (c) and then assertion (a) by Proposition~\ref{prop:6699rem}. Assertion (b) is straightforward using \eqref{eq:RestrInt}.
\end{proof}

\begin{example}\label{ex:ds5a8}
Let us apply Proposition~\ref{prop:Fontana23468} to the series
$$
S ~=~ \sum_{n=1}^{\infty}\frac{|G_n|}{n^2}{\,},
$$
that is,
$$
S ~=~ \sum_{n=1}^{\infty}G_n{\,}s_n\qquad\text{with}\quad s_n ~=~ (-1)^{n-1}\frac{1}{n^2}{\,}.
$$
Let $g\colon\R_+\to\R$ be a function such that
$$
g(n) ~=~ \sum_{k=1}^n(-1)^{k-1}\tchoose{n-1}{k-1}\,\frac{1}{k^2}{\,},\qquad n\in\N^*,
$$
or equivalently (see Graham {\em et al.} \cite[p.~281]{GraKnuPat94} or Merlini {\em et al.} \cite[Lemma~4.1]{MerSprVer06}),
$$
g(n) ~=~ \frac{1}{n}{\,}H_n{\,},\qquad n\in\N^*.
$$
We naturally take $g(x)=\frac{1}{x}{\,}H_x${\,}, from which we can derive (see, e.g., Graham {\em et al.} \cite[p.~280]{GraKnuPat94})
$$
\Sigma g(x) ~=~ \frac{\pi^2}{12}-\frac{1}{2}\,\psi_1(x)+\frac{1}{2}{\,}H_{x-1}^2{\,}.
$$
Thus, we have $S=\sigma[g]$. Combining this result with the definition of $\sigma[g]$, we derive the surprising identity (compare with Blagouchine and Coppo \cite[pp.~469--470]{BlaCop18})
$$
\sum_{n=1}^{\infty}\frac{|G_n|}{n^2} ~=~ \frac{\pi^2}{12}-\frac{1}{2}+\frac{1}{2}\int_0^1 H_t^2{\,}dt{\,}.
$$
Proceeding similarly, with a bit of computation one also finds
$$
\sum_{n=1}^{\infty}\frac{|G_n|}{n^3} ~=~ \frac{1}{3}\,\zeta(3)+\frac{\pi^2}{12}\,\gamma-\frac{5}{12}+\frac{1}{6}\int_0^1 H_t^3{\,}dt{\,}.
$$
Those formulas are worth comparing with the well-known identities (see Section~\ref{sec:dig27amma1})
$$
\sum_{n=1}^{\infty}\frac{|G_n|}{n} ~=~ \gamma ~=~ \int_0^1 H_t{\,}dt{\,}.
$$
For similar formulas, see also Blagouchine and Coppo \cite{BlaCop18}.
\end{example}

\begin{example}\label{ex:ds5a82}
Let us apply Proposition~\ref{prop:Fontana23468} to the series
$$
S ~=~ \sum_{n=1}^{\infty}\frac{|G_n|}{n+a}{\,},
$$
where $a>0$. For this series, we can take
$$
g(x) ~=~ \mathrm{B}(x,a+1)\qquad\text{and}\qquad\Sigma g(x) ~=~ \frac{1}{a}-\mathrm{B}(x,a),
$$
where $(x,y)\mapsto\mathrm{B}(x,y)$ is the beta function. We then derive the identity
$$
\sum_{n=1}^{\infty}\frac{|G_n|}{n+a} ~=~ \frac{1}{a}-\int_0^1\mathrm{B}(x+1,a){\,}dx.
$$
Using the definition of the beta function as an integral, this identity also reads
$$
\sum_{n=1}^{\infty}\frac{|G_n|}{n+a} ~=~ \frac{1}{a}+\int_0^1\frac{x^a}{\ln(1-x)}{\,}dx.
$$
Setting $a=\frac{1}{2}$ for instance, we obtain
$$
\sum_{n=1}^{\infty}\frac{|G_n|}{2n+1} ~=~ 1+\frac{1}{2}\,\int_0^1\frac{\sqrt{x}}{\ln(1-x)}{\,}dx.
$$
We also observe that the decimal expansion of the latter integral is the sequence A094691 in the OEIS \cite{Slo}.
\end{example}

\index{Fontana-Mascheroni's series!analogue|)}

\section{Analogue of Raabe's formula}
\label{sec:Raabe448}
\index{Raabe's formula!analogue|(}

Recall that Raabe's formula yields, for any $x>0$, a simple explicit expression for the integral of the log-gamma function over the interval $(x,x+1)$. We state this result in the following proposition (see Example~\ref{ex:Raab286}). For recent references on Raabe's formula, see, e.g., Cohen and Friedman \cite[p.~366]{CohFri08} and Srivastava and Choi \cite[p.~29]{SriCho12}.

\begin{proposition}[Raabe's formula]\index{Raabe's formula}
The following identity holds
\begin{equation}\label{eq:RaabeLnG5}
\int_x^{x+1}\ln\Gamma(t){\,}dt ~=~ \frac{1}{2}\,\ln(2\pi)+x\ln x-x{\,},\qquad x>0.
\end{equation}
\end{proposition}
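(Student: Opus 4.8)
The plan is to derive Raabe's formula \eqref{eq:RaabeLnG5} as an immediate special case of the general machinery already built in the paper, specifically the identity \eqref{eq:ds68ffds} governing the integral of $\Sigma g$ over a unit interval, together with the Raabe computation already recorded in Example~\ref{ex:Raab286}. Indeed, the log-gamma function is exactly $\Sigma g$ for $g(x)=\ln x$ (see Example~\ref{ex:PIS43LOGG}), and this function lies in $\cC^0\cap\mathrm{dom}(\Sigma)$, so every hypothesis needed to invoke \eqref{eq:ds68ffds} is met.

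First I would recall that for $g(x)=\ln x$ the asymptotic constant has already been computed in Example~\ref{ex:Raab286}, namely
$$
\sigma[\ln] ~=~ \int_0^1\ln\Gamma(t+1){\,}dt ~=~ -1+\frac{1}{2}\,\ln(2\pi){\,}.
$$
(This value itself follows from Definition~\ref{de:AsyConst632} applied to $g(x)=\ln x$, using $\Sigma\ln x=\ln\Gamma(x)$.) Next I would specialize the general trend identity \eqref{eq:ds68ffds}, which reads
$$
\int_x^{x+1}\Sigma g(t){\,}dt ~=~ \sigma[g]+\int_1^xg(t){\,}dt,\qquad x>0,
$$
to the case $g(x)=\ln x$. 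Substituting $\Sigma g=\ln\Gamma$ and the value of $\sigma[\ln]$ just recalled gives
$$
\int_x^{x+1}\ln\Gamma(t){\,}dt ~=~ -1+\frac{1}{2}\,\ln(2\pi)+\int_1^x\ln t{\,}dt{\,}.
$$

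Finally I would evaluate the elementary integral on the right, $\int_1^x\ln t\,dt = x\ln x - x + 1$, so that the constant $-1$ and the $+1$ cancel, leaving exactly
$$
\int_x^{x+1}\ln\Gamma(t){\,}dt ~=~ \frac{1}{2}\,\ln(2\pi)+x\ln x-x{\,},
$$
which is \eqref{eq:RaabeLnG5}. There is essentially no obstacle here: the only nontrivial input is the value of $\sigma[\ln]$, and that is precisely what Example~\ref{ex:Raab286} supplies; the remainder is the single textbook antiderivative of $\ln t$. In fact, since Example~\ref{ex:Raab286} already states this exact identity as a consequence of \eqref{eq:ds68ffds}, the proposition is really a restatement for emphasis, and the proof amounts to pointing back to that example and performing the one elementary integration.
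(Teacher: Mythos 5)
Your proof is correct and follows exactly the route the paper itself takes: the proposition in Section~\ref{sec:Raabe448} is stated with a pointer back to Example~\ref{ex:Raab286}, which likewise combines the value $\sigma[\ln]=-1+\frac{1}{2}\ln(2\pi)$ from \eqref{eq:sigmagg86} with the trend identity \eqref{eq:ds68ffds} and the elementary integral $\int_1^x\ln t{\,}dt=x\ln x-x+1$. Nothing is missing, so no further comparison is needed.
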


Clearly, identities \eqref{eq:sigmagg86} and \eqref{eq:ds68ffds} provide the analogue of Raabe's formula\index{Raabe's formula!analogue|textbf} for any continuous multiple $\log\Gamma$-type function $\Sigma g$. We recall this important and useful formula in the next proposition.

\begin{proposition}[Analogue of Raabe's formula]\label{prop:8Raab0036}
For any function $g$ lying in $\cC^0\cap\mathrm{dom}(\Sigma)$, we have
\begin{equation}\label{eq:ds68ffdsbis}
\int_x^{x+1}\Sigma g(t){\,}dt ~=~ \sigma[g]+\int_1^xg(t){\,}dt,\qquad x>0,
\end{equation}
where $\sigma[g]$ is the asymptotic constant\index{asymptotic constant} associated with $g$ and defined by the equation
\begin{equation}\label{eq:sigmagg86bis}
\sigma[g] ~=~ \int_0^1\Sigma g(t+1){\,}dt{\,}.
\end{equation}
\end{proposition}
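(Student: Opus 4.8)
The plan is to recognize that this proposition simply repackages identities \eqref{eq:sigmagg86} and \eqref{eq:ds68ffds}, both already established in Chapter~\ref{chapter:6}, so the task reduces to verifying the two displayed equations directly from the definition of $\Sigma g$ together with its continuity. The cleanest route is the standard ``same derivative, same value at one point'' argument. I would introduce the two auxiliary functions $F(x)=\int_x^{x+1}\Sigma g(t){\,}dt$ and $R(x)=\sigma[g]+\int_1^x g(t){\,}dt$ on $\R_+$ and aim to prove $F=R$. The key preliminary fact is that $\Sigma g$ is continuous: since $g$ lies in $\cC^0\cap\mathrm{dom}(\Sigma)$, Proposition~\ref{prop:intMLGt}(a) guarantees $\Sigma g\in\cC^0$, which makes both integrals well defined and $F$ differentiable.

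Next I would check that $F$ and $R$ agree at $x=1$. By the change of variable $t\mapsto t+1$ we have $F(1)=\int_1^2\Sigma g(t){\,}dt=\int_0^1\Sigma g(t+1){\,}dt$, which is exactly $\sigma[g]$ by the definition \eqref{eq:sigmagg86bis} of the asymptotic constant (itself just the restatement of \eqref{eq:sigmagg86}), while $R(1)=\sigma[g]$ trivially. Then I would differentiate both sides. Since $\Sigma g$ is continuous, writing $F(x)=G(x+1)-G(x)$ for an antiderivative $G$ of $\Sigma g$ gives $F'(x)=\Sigma g(x+1)-\Sigma g(x)=\Delta\Sigma g(x)=g(x)$, where the last equality is the defining functional equation of $\Sigma g$ recalled in Proposition~\ref{prp:56GathZZPro8}(a); meanwhile $R'(x)=g(x)$ by the fundamental theorem of calculus. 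Hence $F-R$ has zero derivative on the interval $\R_+$ and vanishes at $x=1$, so $F\equiv R$, which is precisely the claimed identity \eqref{eq:ds68ffdsbis}.

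There is no genuine obstacle here: the only point requiring care is the differentiation of $F$, which rests entirely on the continuity of $\Sigma g$ supplied by Proposition~\ref{prop:intMLGt}(a); everything else is the fundamental theorem of calculus combined with the functional equation $\Delta\Sigma g=g$. Equation \eqref{eq:sigmagg86bis} needs no separate proof, being merely the definition of $\sigma[g]$ transcribed.
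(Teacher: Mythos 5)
Your proposal is correct and is essentially the paper's own argument: the proposition is stated there as a direct repackaging of \eqref{eq:sigmagg86} and \eqref{eq:ds68ffds}, and the latter identity is proved in Chapter~\ref{chapter:6} by precisely your observation that both sides have the same derivative (namely $g$, via $\Delta\Sigma g=g$ and the fundamental theorem of calculus) and the same value at $x=1$. Your explicit appeal to Proposition~\ref{prop:intMLGt}(a) for the continuity of $\Sigma g$ just makes visible a point the paper leaves implicit.
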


The challenging part in this context is to find a nice expression for $\sigma[g]$. For instance, setting $x=1$ in Raabe's formula \eqref{eq:RaabeLnG5}, we obtain the identity
$$
\sigma[\ln] ~=~ \int_0^1\ln\Gamma(t+1){\,}dt ~=~ -1+\frac{1}{2}\,\ln(2\pi){\,}.
$$
However, in general such a closed-form expression for $\sigma[g]$ is not easy to derive.

An expression for $\sigma[g]$ as a limit can be obtained using Proposition~\ref{prop:intMLGt}(c2). Specifically, if $g$ lies in $\cC^0\cap\cD^p\cap\cK^p$ for some $p\in\N$, then we have
\begin{eqnarray}
\sigma[g] &=& \lim_{n\to\infty}\int_0^1(f^p_n[g](t)+g(t)){\,}dt\nonumber\\
&=& \lim_{n\to\infty}\left(\sum_{k=1}^{n-1}g(k)-\int_1^ng(t){\,}dt+\sum_{j=1}^pG_j\Delta^{j-1}g(n)\right),\label{eq:SgSt7FrI}
\end{eqnarray}
which is nothing other than the restriction of the generalized Stirling formula \eqref{eq:dgf7dds}\index{Stirling's formula!generalized} to the natural integers.

Series expressions for $\sigma[g]$ can also be obtained by integrating on the interval $(0,1)$ the series representations of $\Sigma g+g$ given in Theorems~\ref{thm:SerProdReprS} and \ref{thm:Weierst}. For instance, we have
\begin{equation}\label{eq:serS72}
\sigma[g] ~=~ \sum_{j=1}^pG_j{\,}\Delta^{j-1}g(1) - \sum_{k=1}^{\infty}\left(\int_{k}^{k+1}g(t){\,}dt-\sum_{j=0}^pG_j{\,}\Delta^jg(k)\right).
\end{equation}
Note also that, under certain assumptions, the latter series converges to zero as ${p\to_{\N}\infty}$. In this case, \eqref{eq:serS72} reduces to the analogue of Fontana-Mascheroni's series; see Proposition~\ref{prop:6699rem}.

\begin{example}\label{ex:Dig5Int9}
Applying \eqref{eq:SgSt7FrI} and \eqref{eq:serS72} to $g(x)=\frac{1}{x}$ and $p=0$, we obtain
$$
\sigma[g] ~=~ \lim_{n\to\infty}\left(\sum_{k=1}^n\frac{1}{k}-\ln n\right) ~=~ \sum_{k=1}^{\infty}\left(\frac{1}{k}-\ln\left(1+\frac{1}{k}\right)\right),
$$
which is Euler's constant $\gamma$.\index{Euler's constant} Identity \eqref{eq:ds68ffdsbis} then immediately provides the following analogue of Raabe's formula
$$
\int_x^{x+1}\psi(t){\,}dt ~=~ \ln x{\,},\qquad x>0.\qedhere
$$
\end{example}

The following proposition provides interesting identities that involve the antiderivative of $\Sigma g$, where $g$ is any function lying in $\cC^0\cap\mathrm{dom}(\Sigma)$. It also yields a formula for $\Sigma G$, where $G$ is the antiderivative of $g$. This result is worth comparing with Example~\ref{ex:antiD31}.

\begin{proposition}\label{prop:an4tR8}
Let $g$ lie in $\cC^0\cap\cD^p\cap\cK^p$ for some $p\in\N$ and define the function $G\colon\R_+\to\R$ by the equation
$$
G(x) ~=~ \int_1^x g(t){\,}dt\qquad\text{for $x>0$}.
$$
Then $G$ lies in $\cC^1\cap\cD^{p+1}\cap\cK^{p+1}$. Moreover, for any $x>0$ we have
$$
\Sigma G(x) ~=~ \int_1^x\Sigma g(t){\,}dt-\sigma[g]{\,}(x-1)
$$
and
$$
\Sigma_x\int_x^{x+1}\Sigma g(t){\,}dt ~=~ \int_1^x\Sigma g(t){\,}dt{\,}.
$$
\end{proposition}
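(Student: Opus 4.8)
The plan is to verify each claim in turn, exploiting the structural results already established for the map $\Sigma$ and the asymptotic constant. First I would establish that $G$ lies in $\cC^1\cap\cD^{p+1}\cap\cK^{p+1}$. Since $g\in\cC^0$ and $G$ is its antiderivative, clearly $G\in\cC^1$ with $G'=g$. Because $g$ lies in $\cC^0\cap\cD^p\cap\cK^p$, I would invoke Proposition~\ref{prop:LMpGpLMp1}(b), applied in reverse: with $G\in\cC^1$ and $G'=g\in\cD^p\cap\cK^p$, the equivalence in that proposition (reading $j=1$, with $G$ in the role of the antiderivative) gives $G\in\cD^{p+1}\cap\cK^{p+1}$. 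This is the cleanest route, since Proposition~\ref{prop:LMpGpLMp1} precisely relates the $\cD^q\cap\cK^q_+$ membership of a $\cC^j$ function to that of its derivatives.

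Next I would prove the formula for $\Sigma G$. Define the candidate function
\begin{equation*}
f(x) ~=~ \int_1^x\Sigma g(t){\,}dt-\sigma[g]{\,}(x-1),\qquad x>0.
\end{equation*}
The strategy is to verify the three hypotheses that characterize $\Sigma G$ via the uniqueness/existence theorems, namely that $f$ solves $\Delta f=G$, that $f$ lies in the appropriate $\cK$-class, and that $f(1)=0$. The vanishing at $1$ is immediate since both terms vanish there. For the difference equation, I would compute
\begin{equation*}
\Delta f(x) ~=~ \int_x^{x+1}\Sigma g(t){\,}dt-\sigma[g],
\end{equation*}
and then apply the analogue of Raabe's formula \eqref{eq:ds68ffds} (equivalently \eqref{eq:ds68ffdsbis}), which states exactly that $\int_x^{x+1}\Sigma g(t){\,}dt=\sigma[g]+\int_1^x g(t){\,}dt=\sigma[g]+G(x)$; substituting yields $\Delta f(x)=G(x)$, as required. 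For the convexity class, since $\Sigma g\in\cK^p$ by Proposition~\ref{prp:56GathZZPro8}(b), its antiderivative $f$ (which differs from $\int_1^x\Sigma g$ only by the affine term $-\sigma[g](x-1)$, harmless by Proposition~\ref{prop:convCones48}) lies in $\cK^{p+1}$ by Lemma~\ref{lemma:PrelKp}(d). With $G\in\cD^{p+1}\cap\cK^{p+1}$ established above, the uniqueness Theorem~\ref{thm:unic} forces $f=\Sigma G$, giving the first identity.

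For the second identity, I would simply combine the analogue of Raabe's formula with the first identity. By \eqref{eq:ds68ffds} the integrand on the left is $\int_x^{x+1}\Sigma g(t){\,}dt = \sigma[g]+G(x)$, so
\begin{equation*}
\Sigma_x\int_x^{x+1}\Sigma g(t){\,}dt ~=~ \Sigma_x\bigl(\sigma[g]+G(x)\bigr) ~=~ \sigma[g]{\,}\Sigma\boldsymbol{1}(x)+\Sigma G(x),
\end{equation*}
using the linearity of $\Sigma$ from Proposition~\ref{prop:gStHa} (valid since the constant function $\sigma[g]\boldsymbol{1}$ and $G$ both lie in the relevant domain, and so does their sum). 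Since $\Sigma\boldsymbol{1}(x)=x-1$, this equals $\sigma[g](x-1)+\Sigma G(x)$, and inserting the first identity $\Sigma G(x)=\int_1^x\Sigma g(t){\,}dt-\sigma[g](x-1)$ makes the two $\sigma[g](x-1)$ terms cancel, leaving exactly $\int_1^x\Sigma g(t){\,}dt$.

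The main obstacle I anticipate is purely bookkeeping rather than conceptual: I must be careful that every function to which I apply $\Sigma$ genuinely lies in $\mathrm{dom}(\Sigma)$ with the correct convexity order, so that the linearity of Proposition~\ref{prop:gStHa} and the uniqueness of Theorem~\ref{thm:unic} are legitimately invoked. In particular the application of Proposition~\ref{prop:gStHa} to the sum $\sigma[g]\boldsymbol{1}+G$ requires all three of $\boldsymbol{1}$, $G$, and their combination to share a common $\cD^q\cap\cK^q$; here $\boldsymbol{1}\in\cD^0\cap\cK^0\subset\cD^{p+1}\cap\cK^{p+1}$ and $G\in\cD^{p+1}\cap\cK^{p+1}$, with the sum remaining in that class since adding a constant preserves both the asymptotic and the convexity properties. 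Once these membership checks are laid out cleanly, the computation collapses immediately via Raabe's analogue.
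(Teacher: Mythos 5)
Your proposal is correct, but it reaches the first identity by a genuinely different route than the paper. The paper's proof is a two-step affair: it invokes Proposition~\ref{prop:fdsf6sfd} (with $r=1$) to get the derivative relation $(\Sigma G)'=\Sigma g-\sigma[g]$, and then simply integrates from $1$ to $x$ using $\Sigma G(1)=0$; membership of $G$ in $\cC^1\cap\cD^{p+1}\cap\cK^{p+1}$ comes from Proposition~\ref{prop:LMpGpLMp1}, exactly as in your argument, and the second identity is obtained, again as you do, by combining the first with \eqref{eq:ds68ffdsbis} and the (implicit) linearity of $\Sigma$. You instead bypass Proposition~\ref{prop:fdsf6sfd} entirely: you write down the candidate $f(x)=\int_1^x\Sigma g(t){\,}dt-\sigma[g]{\,}(x-1)$, check $f(1)=0$, verify $\Delta f=G$ via Raabe's analogue \eqref{eq:ds68ffds}, place $f$ in $\cK^{p+1}$ via Lemma~\ref{lemma:PrelKp}(d) and Proposition~\ref{prop:convCones48}, and conclude by the uniqueness Theorem~\ref{thm:unic} (equivalently Proposition~\ref{prp:56GathZZPro8}(a)). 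What each approach buys: the paper's proof is shorter but leans on the Chapter~\ref{chapter:7} differentiability theory behind Proposition~\ref{prop:fdsf6sfd}, whereas yours is more self-contained --- it needs only the Chapter~\ref{chapter:3} uniqueness theorem, the elementary identity \eqref{eq:ds68ffds}, and basic convexity facts --- at the cost of the explicit membership bookkeeping you rightly flag. Your careful justification of the linearity step (checking that $\sigma[g]\boldsymbol{1}$, $G$, and their sum all lie in $\cD^{p+1}\cap\cK^{p+1}$ before applying Proposition~\ref{prop:gStHa}) is a point the paper glosses over.
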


\begin{proof}
We have that $G$ lies in $\cC^1\cap\cD^{p+1}\cap\cK^{p+1}$ by Proposition~\ref{prop:LMpGpLMp1}. We then obtain
$$
(\Sigma G)' ~=~ \Sigma g-\sigma[g]
$$
by Proposition~\ref{prop:fdsf6sfd}. This establishes the first formula. Combining it with \eqref{eq:ds68ffdsbis}, we obtain
$$
\Sigma_x\int_x^{x+1}\Sigma g(t){\,}dt ~=~ \sigma[g]{\,}(x-1)+\Sigma G(x) ~=~ \int_1^x\Sigma g(t){\,}dt,
$$
that is, the second formula.
\end{proof}

\begin{example}\label{ex:xlnx52}
Apply Proposition~\ref{prop:an4tR8} to the function $g(x)=\ln x$ with $p=1$, we obtain
$$
\Sigma_x\int_x^{x+1}\ln\Gamma(t){\,}dt ~=~ \int_1^x\ln\Gamma(t){\,}dt ~=~ \psi_{-2}(x)-\psi_{-2}(1).
$$
Using Raabe's formula \eqref{eq:RaabeLnG5} in the left-hand side, we finally obtain
$$
\frac{1}{2}\,\ln(2\pi)(x-1)+\Sigma_x(x\ln x) -\tchoose{x}{2} ~=~ \psi_{-2}(x)-\psi_{-2}(1),
$$
from which we immediately derive a closed-form expression for $\Sigma_x(x\ln x)$; see also Section~\ref{sec:hypFacF4}.
\end{example}

We now present a proposition, immediately followed by a corollary that provides interesting characterizations of multiple $\Gamma$-type functions based on the analogue of Raabe's formula. Example~\ref{ex:Char0Raa4} below illustrates this characterization in the special case of the log-gamma function.

\begin{proposition}\label{prop:InvRaa4}
Let $h$ lie in $\cC^1\cap\cD^{p+1}\cap\cK^{p+1}$ for some $p\in\N$ and let $f\colon\R_+\to\R$ be a function. Then $f$ lies in $\cC^0\cap\cK^p$ and satisfies the equation
\begin{equation}\label{eq:8Char55Raa023}
\int_x^{x+1}f(t){\,}dt ~=~ h(x){\,},\qquad x>0,
\end{equation}
if and only if $f=(\Sigma h)'$.
\end{proposition}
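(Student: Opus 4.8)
The plan is to establish both implications by reducing the integral equation \eqref{eq:8Char55Raa023} to a statement about the function $g=\Delta h$ and then invoking the existence and uniqueness machinery together with the analogue of Raabe's formula \eqref{eq:ds68ffdsbis}. First I would observe that, since $h$ lies in $\cC^1\cap\cD^{p+1}\cap\cK^{p+1}$, its derivative $g=h'$ lies in $\cC^0\cap\cD^p\cap\cK^p$ by Proposition~\ref{prop:LMpGpLMp1}(b). This is the natural candidate function whose principal indefinite sum will be differentiated: the key structural fact I want to exploit is that, by the analogue of Raabe's formula (Proposition~\ref{prop:8Raab0036}), the function $x\mapsto\int_x^{x+1}\Sigma g(t)\,dt$ equals $\sigma[g]+\int_1^x g(t)\,dt$, and hence its derivative is exactly $g(x)=h'(x)$.

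For the sufficiency direction, I would assume $f=(\Sigma h)'$ and verify the two asserted properties. Since $h$ lies in $\cC^1\cap\cK^{p+1}$, Proposition~\ref{prop:intMLGt}(a) (or Theorem~\ref{thm:TBTDiff}) gives that $\Sigma h$ lies in $\cC^1$, so $f=(\Sigma h)'$ lies in $\cC^0$; moreover Proposition~\ref{prop:fdsf6sfd} shows $(\Sigma h)'$ and $\Sigma h'$ differ by a constant, whence $f$ lies in $\cK^p$ because $\Sigma h'=\Sigma g$ lies in $\cK^{\max\{p,1\}}\subset\cK^p$-type membership as provided by Proposition~\ref{prp:56GathZZPro8}(b). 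To check the integral equation, I would integrate $f=(\Sigma h)'$ over $(x,x+1)$ and use the fundamental theorem of calculus together with the functional equation $\Delta\Sigma h=h$:
$$
\int_x^{x+1}f(t)\,dt ~=~ \Sigma h(x+1)-\Sigma h(x) ~=~ \Delta\Sigma h(x) ~=~ h(x),
$$
which is precisely \eqref{eq:8Char55Raa023}.

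For the necessity direction, I would assume $f$ lies in $\cC^0\cap\cK^p$ and satisfies \eqref{eq:8Char55Raa023}, and aim to show $f=(\Sigma h)'$. Differentiating the integral equation with respect to $x$ gives $f(x+1)-f(x)=h'(x)=g(x)$, so $f$ is a solution lying in $\cC^0\cap\cK^p$ to the equation $\Delta f=g$ with $g\in\cD^p\cap\cK^p$. By the uniqueness Theorem~\ref{thm:unic} (via Proposition~\ref{prp:56GathZZPro8}(a)), $f$ must equal $c+\Sigma g$ for some constant $c\in\R$. Since $\Sigma g=\Sigma h'$ and $(\Sigma h)'=\Sigma h'+\text{const}$ by Proposition~\ref{prop:fdsf6sfd}, it follows that $f=(\Sigma h)'+c'$ for some constant $c'$. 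The remaining task is to pin down $c'=0$, and this is the step I expect to be the main obstacle. The cleanest way is to integrate both $f$ and $(\Sigma h)'$ over a unit interval: by hypothesis $\int_x^{x+1}f(t)\,dt=h(x)$, while the sufficiency computation already shows $\int_x^{x+1}(\Sigma h)'(t)\,dt=h(x)$ as well; subtracting gives $\int_x^{x+1}c'\,dt=c'=0$. Thus $f=(\Sigma h)'$ exactly, completing the proof. The only delicate point to handle carefully is confirming that $f$ inherits the correct convexity order so that Theorem~\ref{thm:unic} genuinely applies, but this is guaranteed by the assumption $f\in\cK^p$ matched against $g\in\cD^p\cap\cK^p$.
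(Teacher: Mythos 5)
Your proof is correct and follows essentially the same route as the paper's: differentiate \eqref{eq:8Char55Raa023} to get $\Delta f=h'$, invoke the uniqueness/existence machinery together with Proposition~\ref{prop:fdsf6sfd} to conclude $f=c+(\Sigma h)'$, and use \eqref{eq:8Char55Raa023} once more to force $c=0$ (the paper dismisses sufficiency as trivial, which is exactly your fundamental-theorem-of-calculus computation). The only blemish is the garbled phrase about $\cK^{\max\{p,1\}}$ in the sufficiency step; the clean statement is simply that $\Sigma h'=\Sigma g$ lies in $\cK^p$ by Proposition~\ref{prp:56GathZZPro8}(b), and adding a constant preserves this.
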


\begin{proof}
The sufficiency is trivial. Let us prove the necessity. Differentiating both sides of \eqref{eq:8Char55Raa023}, we obtain $\Delta f=h'$. Using the existence Theorem~\ref{thm:exist} and then Proposition~\ref{prop:fdsf6sfd}, we then see that $f=c+(\Sigma h)'$ for some $c\in\R$. Using \eqref{eq:8Char55Raa023} again, we then see that $c$ must be $0$.
\end{proof}

\begin{corollary}[A characterization result]\label{cor:InvRaa4c}
Let $g$ lie in $\cC^0\cap\cD^p\cap\cK^p$ and let $f\colon\R_+\to\R$ be a function. Then $f$ lies in $\cC^0\cap\cK^p$ and satisfies the equation
$$
\int_x^{x+1}f(t){\,}dt ~=~ \sigma[g]+\int_1^xg(t){\,}dt{\,},\qquad x>0,
$$
if and only if $f=\Sigma g$.
\end{corollary}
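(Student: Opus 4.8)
The plan is to derive Corollary~\ref{cor:InvRaa4c} as an immediate specialization of Proposition~\ref{prop:InvRaa4}. The key observation is that the right-hand side of the integral equation in the corollary is exactly the function $h(x)=\sigma[g]+\int_1^xg(t)\,dt$, so I first need to verify that this $h$ satisfies the hypotheses of Proposition~\ref{prop:InvRaa4}, namely that $h$ lies in $\cC^1\cap\cD^{p+1}\cap\cK^{p+1}$. Since $g$ lies in $\cC^0\cap\cD^p\cap\cK^p$, the function $G(x)=\int_1^xg(t)\,dt$ lies in $\cC^1\cap\cD^{p+1}\cap\cK^{p+1}$ by Proposition~\ref{prop:LMpGpLMp1} (this is precisely the content recalled at the start of the proof of Proposition~\ref{prop:an4tR8}). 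Adding the constant $\sigma[g]$ does not affect membership in any of these classes, so $h=\sigma[g]+G$ also lies in $\cC^1\cap\cD^{p+1}\cap\cK^{p+1}$.

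Next I would apply Proposition~\ref{prop:InvRaa4} with this choice of $h$. That proposition tells us that a function $f\colon\R_+\to\R$ lies in $\cC^0\cap\cK^p$ and satisfies $\int_x^{x+1}f(t)\,dt=h(x)$ for all $x>0$ if and only if $f=(\Sigma h)'$. Thus it only remains to identify $(\Sigma h)'$ with $\Sigma g$. For this I would invoke Proposition~\ref{prop:an4tR8}, whose proof establishes the identity $(\Sigma G)'=\Sigma g-\sigma[g]$. Since $\Sigma$ is linear on the relevant cone and $\sigma[g]$ is a constant whose principal indefinite sum is $\sigma[g]\,(x-1)$, we have $\Sigma h=\Sigma(\sigma[g]+G)=\sigma[g]\,(x-1)+\Sigma G$, and differentiating gives $(\Sigma h)'=\sigma[g]+(\Sigma G)'=\sigma[g]+(\Sigma g-\sigma[g])=\Sigma g$. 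Hence $f=(\Sigma h)'$ is equivalent to $f=\Sigma g$, which is exactly the claimed equivalence.

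The proof is therefore essentially a two-line reduction: recognize the right-hand side as $h$ with $h=\sigma[g]+\int_1^xg$, check the class membership, and then combine Propositions~\ref{prop:InvRaa4} and \ref{prop:an4tR8}. The only mild subtlety I anticipate is the bookkeeping around the additive constant $\sigma[g]$ when passing between $\Sigma h$ and $\Sigma G$; one must be careful that $\Sigma h$ is computed correctly using the linearity of $\Sigma$ (Proposition~\ref{prop:gStHa}) together with $\Sigma\mathbf{1}=x-1$, rather than naively assuming $\Sigma(\sigma[g]+G)=\Sigma G$. Because the constant washes out under differentiation, the final identification $(\Sigma h)'=\Sigma g$ is clean regardless, so there is no real obstacle here—the result follows directly from the machinery already in place, and I expect the formal write-up to be only a few sentences long.
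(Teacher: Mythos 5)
Your proposal is correct and follows essentially the same route as the paper: the paper's proof also defines $h(x)=\sigma[g]+\int_1^xg(t)\,dt$, notes that $h$ lies in $\cC^1\cap\cD^{p+1}\cap\cK^{p+1}$, and then combines Proposition~\ref{prop:InvRaa4} with Proposition~\ref{prop:an4tR8} to conclude $f=(\Sigma h)'=\Sigma g$. The only difference is that you spell out the constant-bookkeeping via Proposition~\ref{prop:gStHa} and $\Sigma\boldsymbol{1}=x-1$, which the paper leaves implicit.
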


\begin{proof}
The sufficiency is trivial by \eqref{eq:ds68ffdsbis}. Let us prove the necessity. Define the function $h\colon\R_+\to\R$ by the equation
$$
h(x) ~=~ \sigma[g]+\int_1^xg(t){\,}dt\qquad\text{for $x>0$}.
$$
Then, $h$ clearly lies in $\cC^1\cap\cD^{p+1}\cap\cK^{p+1}$. Using Proposition~\ref{prop:InvRaa4} and then Proposition~\ref{prop:an4tR8}, we immediately obtain that $f=(\Sigma h)'=\Sigma g$.
\end{proof}

\begin{example}\label{ex:Char0Raa4}
Applying Corollary~\ref{cor:InvRaa4c} to the function $g(x)=\ln x$ with $p=1$, we obtain the following alternative characterization of the gamma function. A function $f\colon\R_+\to\R$ lies in $\cC^0\cap\cK^1$ and satisfies the equation
$$
\int_x^{x+1}f(t){\,}dt ~=~ \frac{1}{2}\,\ln(2\pi)+x\ln x-x{\,}, \qquad x>0,
$$
if and only if $f(x)=\ln\Gamma(x)$.
\end{example}

\index{Raabe's formula!analogue|)}

\section{Analogue of Gauss' multiplication formula}
\label{sec:GaussMultF51}
\index{Gauss' multiplication formula!analogue|(}

In the following proposition, we recall the \emph{Gauss multiplication formula} for the gamma function, also called \emph{Gauss' multiplication theorem} (see Artin \cite[p.~24]{Art15}).

\begin{proposition}[Gauss' multiplication formula]\index{Gauss' multiplication formula}
For any integer $m\geq 1$, we have the following identity
\begin{equation}\label{eq:GAU45SS}
\prod_{j=0}^{m-1}\Gamma\left(\frac{x+j}{m}\right) ~=~ \frac{\Gamma(x)}{m^{x-\frac{1}{2}}}{\,}(2\pi)^{\frac{m-1}{2}},\qquad x>0.
\end{equation}
\end{proposition}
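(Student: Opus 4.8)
The plan is to derive Gauss' multiplication formula \eqref{eq:GAU45SS} in its additive form by applying the framework of this chapter, most notably the analogue of Raabe's formula \eqref{eq:ds68ffdsbis} together with the uniqueness Theorem~\ref{thm:unic}. Taking logarithms in \eqref{eq:GAU45SS}, the identity to prove becomes
$$
\sum_{j=0}^{m-1}\ln\Gamma\left(\frac{x+j}{m}\right) ~=~ \ln\Gamma(x)-\left(x-\tfrac{1}{2}\right)\ln m+\frac{m-1}{2}\ln(2\pi),\qquad x>0.
$$
First I would introduce the function $\varphi\colon\R_+\to\R$ defined by $\varphi(x)=\sum_{j=0}^{m-1}\ln\Gamma(\frac{x+j}{m})$ and show that it is, up to an explicit affine term, the log-gamma function. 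The natural route is to observe that $\varphi$ is a sum of compositions of the concave function $\ln\Gamma$ with the increasing affine maps $x\mapsto\frac{x+j}{m}$; by Corollary~\ref{cor:Hom4} each summand lies in $\cD^1\cap\cK^1_-$, hence so does $\varphi$, and $\varphi$ is convex after negation in the sense required. Computing $\Delta\varphi$ telescopes: since $\ln\Gamma(\frac{x+j}{m}+1)-\ln\Gamma(\frac{x+j}{m})=\ln\frac{x+j}{m}$, the difference $\Delta\varphi(x)=\varphi(x+1)-\varphi(x)$ reduces to $\ln\Gamma(\frac{x+m}{m})-\ln\Gamma(\frac{x}{m})=\ln\frac{x}{m}$, which is $\ln x-\ln m$.

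Next I would exploit the uniqueness theorem. The function $f(x)=\varphi(x)$ satisfies $\Delta f(x)=\ln x-\ln m=\Delta\ln\Gamma(x)-\Delta(x\ln m)$, so $\Delta(\varphi(x)+x\ln m)=\ln x$. Since $\varphi$ lies in $\cK^1$ and the affine term $x\ln m$ is a polynomial of degree $1$ (hence both $1$-convex and $1$-concave by Proposition~\ref{prop:convCones48}), the function $x\mapsto\varphi(x)+x\ln m$ also lies in $\cK^1$ and solves $\Delta f=\ln x$. By Theorem~\ref{thm:unic} applied with $g(x)=\ln x$ and $p=1$ (exactly as in Example~\ref{ex:unicGa4}), this solution must equal $c+\ln\Gamma(x)$ for some constant $c\in\R$. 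Thus
$$
\varphi(x) ~=~ c+\ln\Gamma(x)-x\ln m,\qquad x>0,
$$
and it only remains to pin down $c$.

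The main obstacle, and the only genuinely computational step, is the evaluation of the constant $c$, which is where the factor $(2\pi)^{(m-1)/2}$ enters. The cleanest way is to integrate the identity over the unit interval and invoke the analogue of Raabe's formula. Integrating $\varphi$ on $(0,1)$ and using the linear change of variable $t\mapsto\frac{t+j}{m}$ in each summand gives
$$
\int_0^1\varphi(t){\,}dt ~=~ \sum_{j=0}^{m-1}m\int_{j/m}^{(j+1)/m}\ln\Gamma(u){\,}du ~=~ m\int_0^1\ln\Gamma(u){\,}du,
$$
and $\int_0^1\ln\Gamma(u){\,}du=\frac{1}{2}\ln(2\pi)$ is the classical Raabe value (obtainable from \eqref{eq:RaabeLnG5} by letting $x\to 0^+$, or from $\sigma[\ln]=-1+\frac{1}{2}\ln(2\pi)$ together with $\int_0^1\ln u\,du=-1$). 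On the other hand, integrating the right-hand side $c+\ln\Gamma(t)-t\ln m$ on $(0,1)$ yields $c+\frac{1}{2}\ln(2\pi)-\frac{1}{2}\ln m$. Equating the two expressions gives $c=\frac{m-1}{2}\ln(2\pi)+(\frac{1}{2}-1)\ln m\cdot(\text{check})$; carrying out this short arithmetic isolates $c=\frac{m}{2}\ln(2\pi)-\frac{1}{2}\ln(2\pi)+\frac{1}{2}\ln m$, which after substitution back recovers precisely the affine term $-(x-\frac{1}{2})\ln m+\frac{m-1}{2}\ln(2\pi)$ in the target identity. Exponentiating then produces \eqref{eq:GAU45SS}. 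I expect the bookkeeping of this constant to be the part most prone to sign and coefficient errors, so I would verify it independently by checking the identity at a convenient value such as $x=1$, where $\prod_{j=0}^{m-1}\Gamma(\frac{1+j}{m})$ can be compared directly against the right-hand side.
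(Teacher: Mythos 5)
Your proposal is correct and follows essentially the same route as the paper: the telescoping computation $\Delta\varphi(x)=\ln x-\ln m$, the appeal to the uniqueness Theorem~\ref{thm:unic}, and the determination of the additive constant by integrating over a unit interval against Raabe's value $\int_0^1\ln\Gamma(t){\,}dt=\frac{1}{2}\ln(2\pi)$ are precisely the ingredients the paper uses in Theorem~\ref{thm:MultThmGen}, Proposition~\ref{prop:CalcSum662}, and Example~\ref{ex:GaMFLnx5}, merely merged into one direct argument for $g(x)=\ln x$. One harmless slip: each summand $\ln\Gamma\left(\frac{x+j}{m}\right)$ lies in $\cK^1_+\cap\cD^2$, not in $\cD^1\cap\cK^1_-$ as you claim via Corollary~\ref{cor:Hom4}, but this does not affect the proof, since all you need is that $\varphi(x)+x\ln m$ is convex (hence in $\cK^1$), which is immediate from the convexity of $\ln\Gamma$.
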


When $m=2$, identity \eqref{eq:GAU45SS} reduces to \emph{Legendre's duplication formula}\index{Legendre's duplication formula|textbf}
$$
\Gamma\left(\frac{x}{2}\right)\Gamma\left(\frac{x+1}{2}\right) ~=~ \frac{\Gamma(x)}{2^{x-1}}{\,}\sqrt{\pi}{\,},\qquad x>0.
$$

\begin{remark}
For any fixed $m\geq 2$, the Gauss multiplication formula \eqref{eq:GAU45SS} enables one to retrieve easily the value of the asymptotic constant\index{asymptotic constant} associated with the function $g(x)=\ln x$. In particular, this value can be retrieved from Legendre's duplication formula. Indeed, taking the logarithm of both sides of \eqref{eq:GAU45SS} and then integrating on $x\in (0,1)$, we obtain
$$
\sum_{j=0}^{m-1}\int_0^1\ln\Gamma\left(\frac{x+j}{m}\right)dx ~=~ \frac{m-1}{2}\ln(2\pi)+\int_0^1\ln\Gamma(x){\,}dx.
$$
Using the change of variable $t=\frac{x+j}{m}$ in the left-hand integral, we then obtain almost immediately the following identity
$$
\int_0^1\ln\Gamma(t){\,}dt ~=~ \frac{1}{2}\ln(2\pi).
$$
Combining this result with \eqref{eq:ds68ffdsbis}, we retrieve $\sigma[\ln]=-1+\frac{1}{2}\ln(2\pi)$.
\end{remark}

Webster \cite[Theorem 5.2]{Web97b} showed how an analogue of Gauss' multiplication formula can be partially constructed for any $\Gamma$-type function. His proof is very short and essentially relies on the uniqueness and existence theorems in the special case when $p=1$. We now show how Webster's approach can be further extended to all multiple $\Gamma$-type functions. As usual, we use the additive notation.

\begin{theorem}[Analogue of Gauss' multiplication formula]\label{thm:MultThmGen}\index{Gauss' multiplication formula!analogue|textbf}
Let $g$ lie in $\mathrm{dom}(\Sigma)$ and let $m\in\N^*$. Define also the function $g_m\colon\R_+\to\R$ by the equation
$$
g_m(x) ~=~ g\left(\frac{x}{m}\right)\qquad\text{for $x>0$}.
$$
Then we have
\begin{equation}\label{eq:MultThmGeneq}
\sum_{j=0}^{m-1}\Sigma g\left(\frac{x+j}{m}\right) ~=~ \sum_{j=1}^m\Sigma g\left(\frac{j}{m}\right)+\Sigma g_m(x),\qquad x>0,
\end{equation}
and
$$
\Sigma g_m(m) ~=~ \sum_{j=1}^{m-1}g\left(\frac{j}{m}\right).
$$
\end{theorem}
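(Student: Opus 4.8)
The statement asserts two facts: the multiplication identity \eqref{eq:MultThmGeneq} and the boundary value $\Sigma g_m(m) = \sum_{j=1}^{m-1} g(j/m)$. The natural strategy is to follow Webster's uniqueness-based approach, exactly as advertised in the text preceding the theorem. The key idea is to introduce the function
$$
f(x) ~=~ \sum_{j=0}^{m-1}\Sigma g\left(\frac{x+j}{m}\right)
$$
and show it equals $\Sigma g_m$ up to an additive constant, using the uniqueness Theorem~\ref{thm:unic} (equivalently, Proposition~\ref{prp:56GathZZPro8}(a)). First I would record that since $g$ lies in $\mathrm{dom}(\Sigma)$, we have $g\in\cD^p\cap\cK^p$ for $p=1+\deg g$, and then verify that $g_m$ also lies in $\mathrm{dom}(\Sigma)$: by Corollary~\ref{cor:Hom4} (with $a=1/m$, $b=0$), the dilation $x\mapsto g(x/m)$ preserves membership in $\cD^p\cap\cK^p_{\pm}$, so $\Sigma g_m$ is well-defined and lies in $\cK^p$.

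Next I would check the difference equation. A direct computation gives
$$
\Delta f(x) ~=~ \sum_{j=0}^{m-1}\left(\Sigma g\left(\frac{x+1+j}{m}\right)-\Sigma g\left(\frac{x+j}{m}\right)\right),
$$
which telescopes: all interior terms cancel, leaving $\Sigma g\left(\frac{x+m}{m}\right)-\Sigma g\left(\frac{x}{m}\right)$, i.e.\ $\Sigma g\left(\frac{x}{m}+1\right)-\Sigma g\left(\frac{x}{m}\right) = g(x/m) = g_m(x)$, using the defining relation $\Delta\Sigma g=g$. Hence $\Delta f = g_m = \Delta\Sigma g_m$. To invoke uniqueness I also need $f\in\cK^p$: each summand $x\mapsto\Sigma g\left(\frac{x+j}{m}\right)$ is a composition of $\Sigma g\in\cK^p$ with an affine map, so it lies in $\cK^p_{\pm}$ (same sign for all $j$) by Corollary~\ref{cor:Hom4}, and a finite sum of functions in the same cone $\cK^p_{\pm}$ stays in $\cK^p$ by Corollary~\ref{cor:convCones48}. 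By Proposition~\ref{prp:56GathZZPro8}(a), $\Sigma g_m$ is the \emph{unique} solution in $\cK^p$ to $\Delta\varphi=g_m$ vanishing at $1$; since $f$ solves the same equation and lies in $\cK^p$, we conclude $f(x)=\Sigma g_m(x)+c$ where $c=f(1)=\sum_{j=0}^{m-1}\Sigma g\left(\frac{1+j}{m}\right)=\sum_{j=1}^m\Sigma g\left(\frac{j}{m}\right)$. This is precisely \eqref{eq:MultThmGeneq}.

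For the boundary value, I would use the general identity \eqref{eq:RestrInt}, which states $\Sigma g_m(n)=\sum_{k=1}^{n-1}g_m(k)$ for $n\in\N^*$. Setting $n=m$ gives
$$
\Sigma g_m(m) ~=~ \sum_{k=1}^{m-1}g_m(k) ~=~ \sum_{k=1}^{m-1}g\left(\frac{k}{m}\right),
$$
which is exactly the claimed formula. The main obstacle, and the only genuinely non-routine point, is confirming the convexity bookkeeping: one must be careful that all $m$ summands defining $f$ lie in the \emph{same} cone ($\cK^p_+$ or $\cK^p_-$), so that their sum remains in $\cK^p$ rather than merely in the non-convex union $\cK^p_+\cup\cK^p_-$. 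This is guaranteed by Corollary~\ref{cor:Hom4}(a), which preserves the sign of convexity under affine reparametrization, but it deserves explicit mention since $\cK^p$ itself is not a linear space. Everything else is a telescoping sum and an appeal to the uniqueness theorem, so the proof should be short.
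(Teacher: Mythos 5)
Your proposal is correct and follows essentially the same route as the paper: define the candidate sum, verify via telescoping that it solves $\Delta f=g_m$, check membership in $\cK^p$ (using Corollary~\ref{cor:Hom4} and the cone structure), and invoke the uniqueness theorem, with the boundary value read off from \eqref{eq:RestrInt}. The only difference is that you spell out the convexity bookkeeping and the telescoping that the paper compresses into ``we can readily check,'' which is a faithful filling-in of the same argument rather than a new one.
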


\begin{proof}
Let $g$ lie in $\cD^p\cap\cK^p$ for some $p\in\N$. Then $g_m$ also lies in $\cD^p\cap\cK^p$ by Corollary~\ref{cor:Hom4}. Now, we can readily check that the function $f\colon\R_+\to\R$ defined by
$$
f(x) ~=~ \sum_{j=0}^{m-1}\Sigma g\left(\frac{x+j}{m}\right)-\sum_{j=1}^m\Sigma g\left(\frac{j}{m}\right)
$$
is a solution to the equation $\Delta f=g_m$ that lies in $\cK^p$ and such that $f(1)=0$. By the uniqueness Theorem~\ref{thm:unic}, it follows that $f=\Sigma g_m$. This establishes \eqref{eq:MultThmGeneq}. The last identity follows immediately.
\end{proof}

Theorem~\ref{thm:MultThmGen} actually provides a partial solution to the problem of finding the analogue of Gauss' multiplication formula. A more complete result would also provide a closed-form expression for the right-hand side of identity \eqref{eq:MultThmGeneq}.

Unfortunately, no general method to provide simple or compact expressions for $\Sigma g_m$ seems to be known. However, such expressions can sometimes be found.

For instance, when $g(x)=\ln x$, we obtain
$$
g_m(x) ~=~ \ln x-\ln m\qquad\text{and}\qquad\Sigma g_m(x) ~=~ \ln\Gamma(x)-(x-1)\ln m.
$$
Substituting this latter expression in identity \eqref{eq:MultThmGeneq}, we immediately obtain the formula
\begin{equation}\label{eq:GauAddGam43}
\sum_{j=0}^{m-1}\ln\Gamma\left(\frac{x+j}{m}\right) ~=~ \sum_{j=1}^m\ln\Gamma\left(\frac{j}{m}\right)+\ln\Gamma(x)-(x-1)\ln m{\,},
\end{equation}
that is, in the multiplicative notation,
$$
\prod_{j=0}^{m-1}\Gamma\left(\frac{x+j}{m}\right) ~=~ \frac{\Gamma(x)}{m^{x-1}}\,\prod_{j=1}^m\Gamma\left(\frac{j}{m}\right),\qquad x>0.
$$

It remains to find a nice expression for the latter product, and more generally for the right-hand sum of identity \eqref{eq:MultThmGeneq}. On this issue, we have the following useful result.

\begin{proposition}\label{prop:CalcSum662}
Let $g$ lie in $\cC^0\cap\mathrm{dom}(\Sigma)$ and let $m\in\N^*$. Define also the function $g_m\colon\R_+\to\R$ by the equation $g_m(x)=g(\frac{x}{m})$ for $x>0$. Then we have
\begin{eqnarray*}
\sum_{j=1}^m\Sigma g\left(\frac{j}{m}\right) &=& m{\,}\sigma[g]-\int_m^{m+1}\Sigma g_m(t){\,}dt\\
&=& m{\,}\sigma[g]-\sigma[g_m]-m\,\int_{1/m}^1g(t){\,}dt.
\end{eqnarray*}
\end{proposition}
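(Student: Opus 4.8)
The plan is to evaluate the sum $\sum_{j=1}^m\Sigma g(j/m)$ by relating it to an integral of $\Sigma g_m$ over a unit interval, and then to apply the analogue of Raabe's formula (Proposition~\ref{prop:8Raab0036}) together with the change-of-variable machinery provided by Corollary~\ref{cor:Hom4} and Proposition~\ref{prop:gStHa223}. The first equality and the second equality are naturally established separately: the first is essentially a discretization identity, and the second is obtained by feeding the first into identity~\eqref{eq:ds68ffds}.

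First I would establish the first equality. The key observation is that the multiplication formula~\eqref{eq:MultThmGeneq} of Theorem~\ref{thm:MultThmGen} can be read as an identity expressing $\sum_{j=1}^m\Sigma g(j/m)$ in terms of $\sum_{j=0}^{m-1}\Sigma g((x+j)/m)$ and $\Sigma g_m(x)$. The idea is to integrate~\eqref{eq:MultThmGeneq} over $x\in(0,1)$. On the left-hand side, the change of variable $t=(x+j)/m$ turns $\int_0^1\Sigma g\bigl(\frac{x+j}{m}\bigr)\,dx$ into $m\int_{j/m}^{(j+1)/m}\Sigma g(t)\,dt$, and summing over $j=0,\ldots,m-1$ telescopes the intervals into $m\int_0^1\Sigma g(t)\,dt$. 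By Definition~\ref{de:GSC556} (or simply by~\eqref{eq:ds68ffds} evaluated appropriately), $\int_0^1\Sigma g(t)\,dt=\overline{\sigma}[g]$, so the left integral contributes $m\,\overline{\sigma}[g]$. Meanwhile the constant $\sum_{j=1}^m\Sigma g(j/m)$ integrates trivially, and $\int_0^1\Sigma g_m(x)\,dx=\overline{\sigma}[g_m]$. Rather than routing through $\overline{\sigma}$ (which need not exist when $g$ is not integrable at $0$), the cleaner route is to integrate~\eqref{eq:MultThmGeneq} over $x\in(m,m+1)$ instead: the left side becomes $m\int_1^{(m+1)/m}\Sigma g(t)\,dt$ via the same substitution, which together with the periodic-trend identity~\eqref{eq:ds68ffds} applied to $\Sigma g$ collapses to $m\,\sigma[g]$, while the constant term integrates to itself and the last term yields $\int_m^{m+1}\Sigma g_m(t)\,dt$. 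Rearranging gives precisely
$$
\sum_{j=1}^m\Sigma g\left(\frac{j}{m}\right) ~=~ m{\,}\sigma[g]-\int_m^{m+1}\Sigma g_m(t){\,}dt,
$$
the first claimed equality.

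Next I would deduce the second equality from the first by computing $\int_m^{m+1}\Sigma g_m(t)\,dt$. Here I apply the analogue of Raabe's formula~\eqref{eq:ds68ffdsbis} to the function $g_m$ (which lies in $\cC^0\cap\mathrm{dom}(\Sigma)$ by Corollary~\ref{cor:Hom4}, since $g_m(x)=g(x/m)$ with $a=1/m$, $b=0$), giving
$$
\int_x^{x+1}\Sigma g_m(t){\,}dt ~=~ \sigma[g_m]+\int_1^x g_m(t){\,}dt.
$$
Setting $x=m$ and using the change of variable $t\mapsto t/m$ in $\int_1^m g_m(t)\,dt=\int_1^m g(t/m)\,dt=m\int_{1/m}^1 g(s)\,ds$, I obtain $\int_m^{m+1}\Sigma g_m(t)\,dt=\sigma[g_m]+m\int_{1/m}^1 g(s)\,ds$. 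Substituting this into the first equality yields
$$
\sum_{j=1}^m\Sigma g\left(\frac{j}{m}\right) ~=~ m{\,}\sigma[g]-\sigma[g_m]-m\int_{1/m}^1g(t){\,}dt,
$$
which is the second claimed equality.

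The main obstacle I anticipate is the bookkeeping in the change of variable when integrating~\eqref{eq:MultThmGeneq}, specifically ensuring that the telescoping of the shifted unit intervals is handled over the correct range so that the trend-integral identity~\eqref{eq:ds68ffds} applies and produces $\sigma[g]$ rather than $\overline{\sigma}[g]$ (the latter may fail to exist). Choosing the integration window $(m,m+1)$ for $x$ rather than $(0,1)$ is what sidesteps this integrability issue, and verifying that this choice is legitimate — i.e.\ that all the integrals involved are finite and that the substitution bounds match up cleanly — is the only genuinely delicate point; the remainder is routine application of~\eqref{eq:ds68ffds} and the definition of the asymptotic constant.
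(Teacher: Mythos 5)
Your proposal is correct and takes essentially the same route as the paper's proof: integrate the multiplication formula \eqref{eq:MultThmGeneq} over $x\in(m,m+1)$, use the change of variable $t=\frac{x+j}{m}$ so that the shifted intervals telescope and the left-hand side collapses to $m\int_1^2\Sigma g(t)\,dt=m\,\sigma[g]$ by \eqref{eq:sigmagg86bis}, and then derive the second identity by applying the analogue of Raabe's formula \eqref{eq:ds68ffdsbis} to $g_m$ at $x=m$. (One cosmetic slip: after summing over $j$ the transformed left-hand side is $m\int_1^2\Sigma g(t)\,dt$, not $m\int_1^{(m+1)/m}\Sigma g(t)\,dt$; this does not affect your conclusion.)
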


\begin{proof}
The first identity can be proved simply by integrating both sides of \eqref{eq:MultThmGeneq} on $x\in (m,m+1)$. Indeed, using the change of variable $t=\frac{x+j}{m}$ and identity \eqref{eq:sigmagg86bis}, the left-hand side reduces to
$$
m\,\sum_{j=0}^{m-1}\int_{1+\frac{j}{m}}^{1+\frac{j+1}{m}}\Sigma g(t){\,}dt ~=~ m\int_1^2\Sigma g(t){\,}dt ~=~ m\,\sigma[g].
$$
The second identity then follows from a simple application of \eqref{eq:ds68ffdsbis}.
\end{proof}

\begin{example}\label{ex:GaMFLnx5}
Let us apply Proposition~\ref{prop:CalcSum662} to the function $g(x)=\ln x$. We obtain
$$
\sum_{j=1}^m\ln\Gamma\left(\frac{j}{m}\right) ~=~  -\frac{1}{2}\ln m+\frac{1}{2}{\,}(m-1)\ln(2\pi).
$$
Substituting this expression in \eqref{eq:GauAddGam43} and then translating the resulting formula into the multiplicative notation, we retrieve Gauss' multiplication formula \eqref{eq:GAU45SS}.
\end{example}

In the following proposition, we provide a convergence result for the function defined in the left-hand side of \eqref{eq:MultThmGeneq}, which does not require the computation of $\Sigma g_m$. This result simply reduces to the generalized Stirling formula when $m=1$.

\begin{proposition}\label{prop:8Stir44Gau7Mult}
Let $g$ lie in $\cC^0\cap\cD^p\cap\cK^p$ for some $p\in\N$ and let $m\in\N^*$. Define also the function $g_m\colon\R_+\to\R$ by the equation $g_m(x)=g(\frac{x}{m})$ for $x>0$. Then we have
$$
\sum_{j=0}^{m-1}\Sigma g\left(\frac{x+j}{m}\right) -\int_1^xg_m(t){\,}dt+\sum_{j=1}^pG_j\,\Delta^{j-1}g_m(x) ~\to ~ m\,\sigma_m[g]
$$
as $x\to\infty$, where
$$
\sigma_m[g] ~=~ \sigma[g]-\int_{1/m}^1g(t){\,}dt.
$$
\end{proposition}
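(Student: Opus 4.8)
The plan is to derive this convergence result directly from the generalized Stirling formula (Theorem~\ref{thm:dgf7dds}) applied to the function $g_m$, combined with the multiplication identity \eqref{eq:MultThmGeneq} and the value of the summed term computed in Proposition~\ref{prop:CalcSum662}. First I would observe that since $g$ lies in $\cC^0\cap\cD^p\cap\cK^p$, Corollary~\ref{cor:Hom4} guarantees that $g_m(x)=g(x/m)$ also lies in $\cC^0\cap\cD^p\cap\cK^p$, so that $\Sigma g_m$ is well defined and all the hypotheses needed to invoke the generalized Stirling formula for $g_m$ are satisfied.

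The key step is then to apply the generalized Stirling formula \eqref{eq:dgf7dds} to $g_m$ in place of $g$, which yields
$$
\Sigma g_m(x) -\int_1^x g_m(t){\,}dt +\sum_{j=1}^pG_j\Delta^{j-1}g_m(x) ~\to ~ \sigma[g_m]\qquad\text{as $x\to\infty$}.
$$
Next I would rewrite $\Sigma g_m(x)$ using the multiplication identity \eqref{eq:MultThmGeneq}, namely
$$
\Sigma g_m(x) ~=~ \sum_{j=0}^{m-1}\Sigma g\left(\frac{x+j}{m}\right)-\sum_{j=1}^m\Sigma g\left(\frac{j}{m}\right).
$$
Substituting this into the limit above moves the constant $\sum_{j=1}^m\Sigma g(j/m)$ to the right-hand side, so the left-hand expression of the proposition converges to $\sigma[g_m]+\sum_{j=1}^m\Sigma g(j/m)$.

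It then remains to identify this limiting constant with $m\,\sigma_m[g]$. Here I would invoke the second identity of Proposition~\ref{prop:CalcSum662}, which states that
$$
\sum_{j=1}^m\Sigma g\left(\frac{j}{m}\right) ~=~ m\,\sigma[g]-\sigma[g_m]-m\int_{1/m}^1 g(t){\,}dt.
$$
Adding $\sigma[g_m]$ to both sides cancels that term, giving
$$
\sigma[g_m]+\sum_{j=1}^m\Sigma g\left(\frac{j}{m}\right) ~=~ m\,\sigma[g]-m\int_{1/m}^1 g(t){\,}dt ~=~ m\left(\sigma[g]-\int_{1/m}^1 g(t){\,}dt\right) ~=~ m\,\sigma_m[g],
$$
which is exactly the claimed limit. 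The proof is almost entirely a matter of assembling three already-established results, so there is no serious analytic obstacle; the one point requiring a little care is the bookkeeping of the constant terms, in particular making sure the $\sigma[g_m]$ contributions combine correctly and that the specialization to $m=1$ (where $g_1=g$, $\sigma_1[g]=\sigma[g]$, and the sum degenerates to a single term) indeed recovers the generalized Stirling formula as asserted.
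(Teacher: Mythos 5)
Your proposal is correct and follows essentially the same route as the paper: the paper likewise combines the multiplication identity of Theorem~\ref{thm:MultThmGen} with Proposition~\ref{prop:CalcSum662} to get the identity $\Sigma g_m(x)-\sigma[g_m]=\sum_{j=0}^{m-1}\Sigma g\bigl(\frac{x+j}{m}\bigr)-m\,\sigma_m[g]$, and then applies the generalized Stirling formula (Theorem~\ref{thm:dgf7dds}) to $g_m$, which lies in $\cC^0\cap\cD^p\cap\cK^p$ by Corollary~\ref{cor:Hom4}. The only difference is the order of assembly (you apply Stirling first and substitute afterwards), which is immaterial; your bookkeeping of the constant terms is exactly right.
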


\begin{proof}
Theorem~\ref{thm:MultThmGen} and Proposition~\ref{prop:CalcSum662} provide the following identity
$$
\Sigma g_m(x)-\sigma[g_m] ~=~ \sum_{j=0}^{m-1}\Sigma g\left(\frac{x+j}{m}\right)-m\,\sigma_m[g]\qquad x>0.
$$
The result is then an immediate application of the generalized Stirling formula (Theorem~\ref{thm:dgf7dds}) to the function $\Sigma g_m$ (recall that $g_m$ lies in $\cC^0\cap\cD^p\cap\cK^p$).
\end{proof}

We end this section with three corollaries. Corollaries~\ref{cor:GMT6Cc1} and \ref{cor:MultG41S} yield properties of the derivatives and antiderivatives of the function $g$ in the context of the analogue of Gauss' multiplication formula. Corollary~\ref{cor:Riem482} shows how the antiderivative of $g$ can be expressed as a limit involving the function $\Sigma g_m$.

\begin{corollary}\label{cor:GMT6Cc1}
Let $g$ lie in $\cC^r\cap\cD^p\cap\cK^{\max\{p,r\}}$ for some $p\in\N$ and $r\in\N^*$. Let also $m\in\N^*$ and define the function $g_m\colon\R_+\to\R$ by the $g_m(x)=g(\frac{x}{m})$. Then the equation obtained by replacing $g$ with $g^{(r)}$ in \eqref{eq:MultThmGeneq} can also be obtained by differentiating $r$ times both sides of \eqref{eq:MultThmGeneq}.
\end{corollary}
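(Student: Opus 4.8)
The plan is to establish the identity for $g^{(r)}$ by two independent routes and observe that they coincide. Let me denote by $(g^{(r)})_m$ the function $x\mapsto g^{(r)}(x/m)$, and by $(g_m)$ the function $x\mapsto g(x/m)$. The first route is a direct application of Theorem~\ref{thm:MultThmGen} to the function $g^{(r)}$: since $g$ lies in $\cC^r\cap\cD^p\cap\cK^{\max\{p,r\}}$, Proposition~\ref{prop:fdsf6sfd} tells us that $g^{(r)}$ lies in $\cC^0\cap\cD^{(p-r)_+}\cap\cK^{(p-r)_+}\subset\mathrm{dom}(\Sigma)$, so the hypotheses of Theorem~\ref{thm:MultThmGen} are met and the multiplication formula \eqref{eq:MultThmGeneq} holds verbatim with $g$ replaced by $g^{(r)}$. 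This yields the ``replacement'' equation.

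The second route is to differentiate \eqref{eq:MultThmGeneq} itself $r$ times in $x$. For this I need two differentiation facts, both already available. First, by Theorem~\ref{thm:TBTDiff}(a) (or Proposition~\ref{prop:7Diff5Sig216}), $\Sigma g$ lies in $\cC^r$, so each summand $x\mapsto\Sigma g\big(\frac{x+j}{m}\big)$ on the left of \eqref{eq:MultThmGeneq} is $r$ times differentiable, and by the chain rule
$$
D_x^r\,\Sigma g\left(\frac{x+j}{m}\right) ~=~ \frac{1}{m^r}{\,}(\Sigma g)^{(r)}\left(\frac{x+j}{m}\right).
$$
Second, on the right-hand side the constant term $\sum_{j=1}^m\Sigma g(j/m)$ differentiates away, leaving $D_x^r\,\Sigma g_m(x)$, and since $g_m(x)=g(x/m)$ lies in $\cC^r\cap\cD^p\cap\cK^{\max\{p,r\}}$ by Corollary~\ref{cor:Hom4}, Theorem~\ref{thm:TBTDiff}(a) again gives $\Sigma g_m\in\cC^r$ together with $D_x^r\,g_m(x)=\frac{1}{m^r}g^{(r)}(x/m)=\frac{1}{m^r}(g^{(r)})_m(x)$.

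The remaining step is purely bookkeeping: I must check that differentiating the left and right sides produces exactly the equation obtained in the first route, after multiplying through by $m^r$. The key bridge is Proposition~\ref{prop:fdsf6sfd}, specifically identity \eqref{eq:rd0bb}, which asserts $(\Sigma g)^{(r)}-\Sigma g^{(r)}$ is a constant. Writing $(\Sigma g)^{(r)}=\Sigma g^{(r)}+c$ with $c=(\Sigma g)^{(r)}(1)$, the differentiated left-hand side becomes
$$
\frac{1}{m^r}\sum_{j=0}^{m-1}(\Sigma g)^{(r)}\left(\frac{x+j}{m}\right)
~=~ \frac{1}{m^r}\sum_{j=0}^{m-1}\Sigma g^{(r)}\left(\frac{x+j}{m}\right) + \frac{mc}{m^r},
$$
and similarly $D_x^r\Sigma g_m=(\Sigma g_m)^{(r)}=\Sigma (g_m)^{(r)}+c_m$ for the constant $c_m=(\Sigma g_m)^{(r)}(1)$. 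The first route, applied to $g^{(r)}$, produces the same shape with $\Sigma$ acting directly on $g^{(r)}$ and on $(g^{(r)})_m=(g_m)^{(r)}$. The main obstacle—though it is a mild one—is verifying that the two additive constants match, i.e.\ that $mc$ on the left balances $mc_m$ (together with the contribution of the $\sum_{j=1}^m\Sigma g(j/m)$ term in the first route) correctly; this is forced by evaluating both differentiated identities and using that $\Sigma g^{(r)}$ and $\Sigma(g^{(r)})_m$ each vanish at $x=1$, so the constants are pinned down by the normalization built into $\Sigma$. Once this normalization check is carried out, both routes deliver the identical equation and the corollary follows. Since the proof is this routine verification, I would simply state that it follows from Theorem~\ref{thm:TBTDiff}, Proposition~\ref{prop:fdsf6sfd}, and Corollary~\ref{cor:Hom4}, and omit the detailed computation, exactly as the authors signal by saying ``we omit the proof'' for the closely related Corollary~\ref{cor:saf6f}.
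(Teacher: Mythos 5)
Your proposal is correct and takes essentially the same route as the paper's own proof, which likewise differentiates \eqref{eq:MultThmGeneq} $r$ times, multiplies through by $m^r$, uses \eqref{eq:rd0bb} to trade $(\Sigma g)^{(r)}$ and $(\Sigma g_m)^{(r)}$ for $\Sigma g^{(r)}$ and $\Sigma g_m^{(r)}$ plus constants, and then eliminates those constants by setting $x=1$ (where every $\Sigma$-image vanishes) and subtracting---exactly the normalization check you describe. The one slip is your line ``$(g^{(r)})_m=(g_m)^{(r)}$'': by your own chain-rule identity it should read $(g^{(r)})_m=m^r\,(g_m)^{(r)}$, and the identification of your two routes then uses the homogeneity $\Sigma\big(m^r (g_m)^{(r)}\big)=m^r\,\Sigma (g_m)^{(r)}$ from Proposition~\ref{prop:gStHa}, after which the constants balance as $m\,(\Sigma g)^{(r)}(1)=m^r(\Sigma g_m)^{(r)}(1)-\sum_{j=1}^m\Sigma g^{(r)}\left(\frac{j}{m}\right)$, which is precisely the paper's equation obtained at $x=1$.
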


\begin{proof}
Differentiating $r$ times both sides of \eqref{eq:MultThmGeneq}, multiplying through by $m^r$, and then using \eqref{eq:rd0bb}, we obtain
$$
\sum_{j=0}^{m-1}\Sigma g^{(r)}\left(\frac{x+j}{m}\right) + m(\Sigma g)^{(r)}(1) ~=~ m^r\,\Sigma g_m^{(r)}(x) + m^r(\Sigma g_m)^{(r)}(1).
$$
Setting $x=1$, we then get
$$
\sum_{j=1}^m\Sigma g^{(r)}\left(\frac{j}{m}\right) + m(\Sigma g)^{(r)}(1) ~=~ m^r(\Sigma g_m)^{(r)}(1).
$$
Subtracting this latter equation from the former one, we finally get
$$
\sum_{j=0}^{m-1}\Sigma g^{(r)}\left(\frac{x+j}{m}\right) ~=~ \sum_{j=1}^m\Sigma g^{(r)}\left(\frac{j}{m}\right)+m^r\,\Sigma g_m^{(r)}(x),
$$
which is precisely the equation obtained by replacing $g$ with $g^{(r)}$ in \eqref{eq:MultThmGeneq}.
\end{proof}

\begin{corollary}\label{cor:MultG41S}
Let $p\in\N$, $m\in\N^*$, $c\in\R$, and $g\in\cC^0\cap\cD^p\cap\cK^p$. Define also the functions $G,g_m,G_m\colon\R_+\to\R$ by the equations
$$
G(x) ~=~ c+\int_1^xg(t){\,}dt,\quad g_m(x)=g\left(\frac{x}{m}\right),\quad G_m(x) ~=~ G\left(\frac{x}{m}\right)\quad\text{for $x>0$}.
$$
Then both functions $G$ and $G_m$ lie in $\cC^1\cap\cD^{p+1}\cap\cK^{p+1}$. Moreover, for any $x>0$ we have
$$
\Sigma G_m(x) ~=~ \frac{1}{m}\int_1^x\Sigma g_m(t){\,}dt+(x-1)\left(c-\frac{1}{m}\int_m^{m+1}\Sigma g_m(t){\,}dt\right).
$$
\end{corollary}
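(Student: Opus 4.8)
The plan is to follow the same recipe that proved the analogue of Raabe's formula together with the multiplication formula, reducing everything to results already established for $G$ and $G_m$. First I would verify the claimed regularity. Since $g$ lies in $\cC^0\cap\cD^p\cap\cK^p$, Proposition~\ref{prop:LMpGpLMp1} (the integration-raises-order statement, used in Proposition~\ref{prop:an4tR8}) shows that the antiderivative $G(x)=c+\int_1^x g(t)\,dt$ lies in $\cC^1\cap\cD^{p+1}\cap\cK^{p+1}$; the additive constant $c$ does not affect membership in these classes. For $G_m(x)=G(x/m)$, I would invoke Corollary~\ref{cor:Hom4} (homothety preserves $\cD^q\cap\cK^q$) with $a=1/m$, $b=0$, $q=p+1$, to conclude that $G_m$ also lies in $\cC^1\cap\cD^{p+1}\cap\cK^{p+1}$. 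This disposes of the first assertion.

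For the main identity, the key is that both $g_m(x)=g(x/m)$ and $G_m(x)=G(x/m)$ arise by the same rescaling, and $G_m'(x)=\frac1m g_m(x)$. The natural approach is to apply the elevator-type relation from Proposition~\ref{prop:an4tR8} to $G_m$, but adapted to the rescaled function. Concretely, I would first apply Theorem~\ref{thm:MultThmGen} (the analogue of Gauss' multiplication formula) to the antiderivative $G$ rather than to $g$: writing $(G)_m(x)=G(x/m)=G_m(x)$, the theorem gives
\begin{equation*}
\sum_{j=0}^{m-1}\Sigma G\Bigl(\frac{x+j}{m}\Bigr) ~=~ \sum_{j=1}^m\Sigma G\Bigl(\frac{j}{m}\Bigr)+\Sigma G_m(x).
\end{equation*}
This already isolates $\Sigma G_m(x)$, but it is expressed through $\Sigma G$, which by Proposition~\ref{prop:an4tR8} equals $\int_1^x\Sigma g(t)\,dt-\sigma[g](x-1)$ plus the effect of the constant $c$. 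I expect a cleaner route, however, to be the direct one: differentiate the target formula and check the value at a single point. Set $F(x)$ equal to the right-hand side of the claimed identity. Using the fundamental theorem of calculus, $F'(x)=\frac1m\Sigma g_m(x)+\bigl(c-\frac1m\int_m^{m+1}\Sigma g_m(t)\,dt\bigr)$. By the analogue of Raabe's formula \eqref{eq:ds68ffdsbis} applied to $g_m$, the integral $\int_m^{m+1}\Sigma g_m(t)\,dt$ equals $\sigma[g_m]+\int_1^m g_m(t)\,dt$, and since $\Delta\Sigma g_m=g_m$ I can compute $\Delta F$ and compare it with $g_m=m\,G_m'$, so that $\Delta(\Sigma G_m)=G_m$ is matched.

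The precise verification I would carry out is therefore: show $\Delta F=G_m$ and $F$ lies in $\cK^{p+1}$ and $F(1)=0$, then invoke the uniqueness Theorem~\ref{thm:unic} to conclude $F=\Sigma G_m$. Computing $\Delta F(x)=F(x+1)-F(x)$ reduces, after the telescoping of $\int_1^{x+1}-\int_1^x$, to $\frac1m\int_x^{x+1}\Sigma g_m(t)\,dt+\bigl(c-\frac1m\int_m^{m+1}\Sigma g_m(t)\,dt\bigr)$; applying \eqref{eq:ds68ffdsbis} to $g_m$ turns $\frac1m\int_x^{x+1}\Sigma g_m$ into $\frac1m\sigma[g_m]+\frac1m\int_1^x g_m(t)\,dt$, and the constant terms must be shown to collapse to $G_m(x)=c+\frac1m\int_1^x g_m$, which is where the explicit value of $\int_m^{m+1}\Sigma g_m$ via Raabe is essential. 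The condition $F(1)=0$ is immediate since both the integral and the $(x-1)$ factor vanish at $x=1$, and $F\in\cK^{p+1}$ follows because $\int_1^x\Sigma g_m(t)\,dt$ lies in a suitable $\cK$ class by Proposition~\ref{prop:an4tR8} applied to $g_m$ while the linear term lies in $\cK^{p+1}_+\cap\cK^{p+1}_-$ by Corollary~\ref{cor:convCones48}. The main obstacle I anticipate is the bookkeeping of the additive constant $c$ and the $\sigma[g_m]$ terms: getting the constant in $\Delta F$ to reduce exactly to $c$ (rather than an off-by-$\sigma$ discrepancy) requires using Raabe's formula in precisely the right form, and a sign or normalization slip there would be the likely source of error.
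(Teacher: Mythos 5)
Your overall strategy---define $F$ as the right-hand side, check that $\Delta F=G_m$, $F(1)=0$, and $F\in\cK^{p+1}$, then invoke the uniqueness theorem---is sound, and it is a legitimate alternative to the paper's proof, which instead decomposes $G_m$ as a constant plus $\frac{1}{m}\int_1^x g_m(t){\,}dt$ and computes $\Sigma G_m$ directly from Proposition~\ref{prop:an4tR8} (together with linearity of $\Sigma$, Proposition~\ref{prop:gStHa}) before applying Raabe's formula \eqref{eq:ds68ffdsbis} once, at $x=m$, to rewrite the constant. Your regularity and normalization checks are fine, as are your two applications of Raabe's formula.

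The gap sits exactly in the step you flagged as the likely source of error: your expression for the target function $G_m$ is wrong. You assert that the constants must collapse to $G_m(x)=c+\frac{1}{m}\int_1^x g_m(t){\,}dt$, but this identity is false for $m\geq 2$: the change of variables $t\mapsto t/m$ in $G_m(x)=G(x/m)=c+\int_1^{x/m}g(t){\,}dt$ gives
$$
G_m(x) ~=~ c+\frac{1}{m}\int_m^x g_m(t){\,}dt ~=~ c+\frac{1}{m}\int_1^x g_m(t){\,}dt-\frac{1}{m}\int_1^m g_m(t){\,}dt{\,},
$$
that is, $G_m(1)=G(1/m)\neq c$ in general (this is the first displayed identity in the paper's own proof). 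Your computation of $\Delta F$, after the two applications of \eqref{eq:ds68ffdsbis} (at $x$ and at $m$), yields precisely
$$
\Delta F(x) ~=~ c+\frac{1}{m}\int_1^x g_m(t){\,}dt-\frac{1}{m}\int_1^m g_m(t){\,}dt{\,},
$$
so against your stated formula for $G_m$ the verification fails by the constant $\frac{1}{m}\int_1^m g_m(t){\,}dt$, and you would be left either doubting the corollary or hunting for a phantom sign error; against the correct formula it closes exactly. Once this change-of-variables identity is corrected, the rest of your argument (including the $\cK^{p+1}$ membership of $F$ via Proposition~\ref{prop:an4tR8} and Corollary~\ref{cor:convCones48}, and the appeal to Theorem~\ref{thm:unic}) is complete.
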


\begin{proof}
The first part follows immediately from Proposition~\ref{prop:an4tR8} and Corollary~\ref{cor:Hom4}. Now, by definition of $G_m$ we have
$$
G_m(x) ~=~ c+\frac{1}{m}\int_m^xg_m(t){\,}dt ~=~ c+\frac{1}{m}\left(\int_1^xg_m(t){\,}dt-\int_1^mg_m(t){\,}dt\right).
$$
The claimed identity can then be established easily using Proposition~\ref{prop:an4tR8} and then applying identity \eqref{eq:ds68ffdsbis}.
\end{proof}

\begin{corollary}\label{cor:Riem482}
Let $g$ lie in $\cC^0\cap\mathrm{dom}(\Sigma)$. Define also the functions $g_m\colon\R_+\to\R$ $(m\in\N^*)$ by the equation $g_m(x)=g(\frac{x}{m})$ for $x>0$. Then we have
$$
\lim_{m\to\infty}\frac{\Sigma g_m(mx)-\Sigma g_m(m)}{m} ~=~ \int_1^x g(t){\,}dt{\,},\qquad x>0.
$$
Moreover, if $g$ is integrable at $0$, then
$$
\lim_{m\to\infty}\frac{1}{m}\,\Sigma g_m(mx) ~=~ \int_0^x g(t){\,}dt{\,},\qquad x>0.
$$
\end{corollary}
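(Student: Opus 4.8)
The plan is to reduce everything to the analogue of Gauss' multiplication formula (Theorem~\ref{thm:MultThmGen}) and then to recognize the resulting averages as Riemann sums. I first record that, since $g\in\cC^0\cap\mathrm{dom}(\Sigma)$, the function $\Sigma g$ is continuous on $\R_+$ by Proposition~\ref{prop:intMLGt}(a), and that $g_m\in\mathrm{dom}(\Sigma)$ by Corollary~\ref{cor:Hom4}. Applying identity \eqref{eq:MultThmGeneq} once with $y=mx$ and once with $y=m$ and subtracting, the two ``diagonal'' sums $\sum_{j=1}^m\Sigma g(j/m)$ cancel, and I obtain the key identity
$$
\Sigma g_m(mx)-\Sigma g_m(m) ~=~ \sum_{j=0}^{m-1}\left(\Sigma g\Big(x+\tfrac{j}{m}\Big)-\Sigma g\Big(1+\tfrac{j}{m}\Big)\right).
$$
Dividing by $m$, the right-hand side is precisely the left Riemann sum of mesh $1/m$ of the continuous function $F(u)=\Sigma g(x+u)-\Sigma g(1+u)$ on $[0,1]$, and hence converges to $\int_0^1 F(u){\,}du=\int_x^{x+1}\Sigma g(t){\,}dt-\int_1^2\Sigma g(t){\,}dt$.

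Next I would evaluate these two integrals through the analogue of Raabe's formula \eqref{eq:ds68ffds} (equivalently Proposition~\ref{prop:8Raab0036}): one has $\int_x^{x+1}\Sigma g=\sigma[g]+\int_1^x g$ and $\int_1^2\Sigma g=\sigma[g]$, so their difference is exactly $\int_1^x g(t){\,}dt$. This establishes the first limit. For the second limit, I would split $\frac1m\Sigma g_m(mx)=\frac1m\big(\Sigma g_m(mx)-\Sigma g_m(m)\big)+\frac1m\Sigma g_m(m)$; the first summand tends to $\int_1^x g$ by the part just proved, so it remains to show $\frac1m\Sigma g_m(m)\to\int_0^1 g$. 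Using \eqref{eq:RestrInt} I have $\Sigma g_m(m)=\sum_{k=1}^{m-1}g_m(k)=\sum_{k=1}^{m-1}g(k/m)$, so that $\frac1m\Sigma g_m(m)=\frac1m\sum_{k=1}^{m-1}g(k/m)$ is again a Riemann sum, now for $\int_0^1 g$; combining the two limits gives $\int_1^x g+\int_0^1 g=\int_0^x g$, as claimed.

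The main obstacle lies entirely in this last Riemann sum. The nodes $k/m$ avoid the endpoint $0$, but under the hypothesis that $g$ is merely \emph{integrable at} $0$ the function may be unbounded there, so the convergence $\frac1m\sum_{k=1}^{m-1}g(k/m)\to\int_0^1 g$ concerns an improper integral rather than a proper one. I would handle it by splitting $[0,1]=[0,\delta]\cup[\delta,1]$: on $[\delta,1]$ the function $g$ is continuous, hence properly Riemann integrable, and the partial Riemann sum over the nodes in $[\delta,1]$ converges to $\int_\delta^1 g$; the delicate point is then to bound, uniformly in $m$, the contribution of the nodes lying in $(0,\delta)$ and to make it small as $\delta\to 0$, which is exactly where the integrability of $g$ at $0$ must be exploited. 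By contrast, the first limit is completely routine, since there $F$ is continuous on the \emph{closed} interval $[0,1]$ and no improper behaviour intervenes.
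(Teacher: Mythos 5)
Your proof of the first limit is correct and is in substance the paper's own argument: the paper likewise substitutes $mx$ into \eqref{eq:MultThmGeneq}, divides by $m$, recognizes Riemann sums, and evaluates them through \eqref{eq:ds68ffdsbis}. Your preliminary subtraction of the two instances of the multiplication formula, which cancels the sum $\sum_{j=1}^m\Sigma g(j/m)$ before any limit is taken, is exactly the (implicit) reason why the first limit needs no hypothesis at $0$; making this explicit is a small expository improvement. For the second limit your reduction differs mildly from the paper's: via \eqref{eq:RestrInt} you aim at the Riemann sums $\frac{1}{m}\sum_{k=1}^{m-1}g(k/m)$ of $g$ itself, whereas the paper keeps the Riemann sums $\frac{1}{m}\sum_{j=1}^{m}\Sigma g(j/m)$ of $\Sigma g$ and evaluates $\int_0^1\Sigma g(t){\,}dt=\sigma[g]-\int_0^1g(t){\,}dt$. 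The two reductions are equivalent, because on $(0,1]$ one has $\Sigma g(t)=\Sigma g(t+1)-g(t)$ with $t\mapsto\Sigma g(t+1)$ continuous on the closed interval $[0,1]$.

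The step you flagged and left open is, however, a genuine gap, and the splitting strategy you outline cannot close it under the stated hypotheses. Continuity on $(0,1]$ together with integrability at $0$ gives no control on the values $g(k/m)$ at the grid points: gluing to $\ln x$ on $[1,\infty)$ a nonnegative continuous function on $(0,1)$ made of spikes of height $m^3$ and width $4^{-m}$ centred at the points $1/m$ yields a function lying in $\cC^0\cap\cD^1\cap\cK^1$ and integrable at $0$, yet with $\frac{1}{m}\sum_{k=1}^{m-1}g(k/m)\geq\frac{1}{m}{\,}g(1/m)=m^2\to\infty$; so no uniform-in-$m$ bound on the contribution of the nodes in $(0,\delta)$ can be extracted from integrability alone. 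What does rescue the step is an additional regularity of $g$ near $0$ (for instance monotonicity, or boundedness, on a neighbourhood of $0$, which holds in every example treated in the paper), via the comparison of $\frac{1}{m}{\,}g(k/m)$ with $\int_{(k-1)/m}^{k/m}g(t){\,}dt$. You should be aware that the paper's own proof is no more complete at this point: it simply asserts that the second Riemann sum converges to $\int_0^1\Sigma g(t){\,}dt$ when $g$ is integrable at $0$, with the same unaddressed difficulty. So your proposal matches the paper on the first limit; on the second it is incomplete, but the point it leaves open is precisely the point the paper glosses over, and filling it requires either an extra hypothesis near $0$ or an argument not available from the stated assumptions.
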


\begin{proof}
Replacing $x$ with $mx$ in \eqref{eq:MultThmGeneq} and dividing through by $m$, we obtain
$$
\frac{1}{m}\,\Sigma g_m(mx) ~=~ \frac{1}{m}\,\sum_{j=0}^{m-1}\Sigma g\left(x+\frac{j}{m}\right)-\frac{1}{m}\,\sum_{j=1}^m\Sigma g\left(\frac{j}{m}\right).
$$
Letting $m\to_{\N}\infty$ in this identity and using \eqref{eq:ds68ffdsbis}, we see that the first Riemann sum on the right side converges to
$$
\int_0^1\Sigma g(x+t){\,}dt ~=~ \sigma[g]+\int_1^xg(t){\,}dt
$$
while the second one converges (if $g$ is integrable at $0$) to
$$
\int_0^1\Sigma g(t){\,}dt ~=~ \sigma[g]-\int_0^1g(t){\,}dt.
$$
This establishes the corollary.
\end{proof}

\index{Gauss' multiplication formula!analogue|)}

\section{Asymptotic expansions and related results}
\label{sec:8AsyzzExp3Rel2}
\index{asymptotic expansion|(}

In this section, we provide and investigate asymptotic expansions of (higher order differentiable) multiple $\log\Gamma$-type functions. We also establish and discuss some important consequences of these expansions, including a variant of the generalized Stirling formula and an extension of the so-called Liu formula to multiple $\log\Gamma$-type functions.

To begin with, let us first recall the asymptotic expansion of the log-gamma function (see, e.g., Gel'fond \cite[p.~342]{Gel71} and Srivastava and Choi \cite[p.~7]{SriCho12}).

\begin{proposition}
For any $q\in\N^*$, we have the following asymptotic expansion as $x\to\infty$
\begin{equation}\label{eq:AsExLog21}
\ln\Gamma(x) ~=~ \frac{1}{2}\ln(2\pi)-x+\left(x-\frac{1}{2}\right)\ln x+\sum_{k=1}^q\frac{B_{k+1}}{k(k+1){\,}x^k}+O\left(x^{-q-1}\right).
\end{equation}
\end{proposition}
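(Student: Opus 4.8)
The statement to be proved is the classical asymptotic expansion \eqref{eq:AsExLog21} of $\ln\Gamma(x)$. The natural strategy, fully in the spirit of this chapter, is to recognize that this expansion is an instance of the Euler--Maclaurin summation formula (Proposition~\ref{prop:E3MacLau5}) applied to the summand that defines $\ln\Gamma$. More precisely, I would start from the generalized Stirling formula (Theorem~\ref{thm:dgf7dds}) for $g(x)=\ln x$ and $p=1$, which gives
$$
\ln\Gamma(x)-\int_1^x\ln t\,dt+\tfrac12\ln x ~\to~ \sigma[\ln] ~=~ -1+\tfrac12\ln(2\pi)\qquad\text{as }x\to\infty.
$$
Evaluating $\int_1^x\ln t\,dt = x\ln x - x + 1$, this already recovers the leading terms
$\tfrac12\ln(2\pi)-x+\bigl(x-\tfrac12\bigr)\ln x$ of \eqref{eq:AsExLog21}, so the entire content of the proposition is to produce the polynomial-in-$1/x$ correction $\sum_{k=1}^q \frac{B_{k+1}}{k(k+1)x^k}$ together with the error bound $O(x^{-q-1})$.

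**Key steps.** To extract those correction terms I would apply the Euler--Maclaurin formula to a suitable representation of $\ln\Gamma$. The cleanest route is to work with the difference $\psi(x)=D\ln\Gamma(x)$, for which Example~\ref{ex:6afdsdf2} gives the series $\psi(x)=\lim_{n\to\infty}\bigl(\ln n-\sum_{k=0}^{n-1}\frac{1}{x+k}\bigr)$, and to apply Proposition~\ref{prop:E3MacLau5} to the function $t\mapsto \frac{1}{x+t}$ (equivalently, to estimate the tail $\sum_{k\ge 0}\frac{1}{x+k}$). Euler--Maclaurin then yields
$$
\psi(x) ~=~ \ln x - \frac{1}{2x} - \sum_{j=1}^{q}\frac{B_{2j}}{2j\,x^{2j}} + O\!\left(x^{-2q-1}\right)\qquad\text{as }x\to\infty,
$$
after identifying $B_{2j}=B_{2j}(0)$ and using that the odd-indexed Bernoulli numbers $B_{2j+1}$ vanish for $j\ge 1$. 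I would then integrate this asymptotic expansion from a fixed base point to $x$: since $\psi$ admits a genuine asymptotic expansion with controlled remainder, term-by-term integration is legitimate and produces $\ln\Gamma(x)$ up to an additive constant. The integration sends $\ln x\mapsto x\ln x - x$, $-\frac{1}{2x}\mapsto -\frac12\ln x$, and $-\frac{B_{2j}}{2j\,x^{2j}}\mapsto \frac{B_{2j}}{2j(2j-1)\,x^{2j-1}}$; reindexing $k=2j-1$ and using $B_{2j}=B_{k+1}$ with $k(k+1)=(2j-1)2j$ gives exactly the summand $\frac{B_{k+1}}{k(k+1)x^k}$ (the even values of $k$ contribute nothing, consistent with $B_{k+1}=0$ for even $k\ge 2$). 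Finally, the additive constant of integration is fixed to $\tfrac12\ln(2\pi)$ by matching against the generalized Stirling formula recalled above, which pins down the constant without any separate computation.

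**Main obstacle.** The delicate point is the bookkeeping of the remainder term and the justification that integrating the asymptotic expansion of $\psi$ preserves the $O(x^{-q-1})$ order. Concretely, I must verify that the Euler--Maclaurin remainder $R$ controlling $\psi$ is itself $O(x^{-2q-1})$ uniformly, and that $\int_x^\infty O(t^{-2q-1})\,dt = O(x^{-2q})$ integrates to an error of the correct order after the shift in indexing; one has to be careful that the remainder after including terms up to $B_{k+1}/(k(k+1)x^k)$ is genuinely $O(x^{-q-1})$ rather than merely $o(x^{-q})$. This amounts to applying the explicit remainder bound in Proposition~\ref{prop:E3MacLau5} to $f(t)=(x+t)^{-1}$ and tracking powers of $x$ carefully. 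Everything else is routine: the identification of Bernoulli numbers, the vanishing of odd ones, and the constant-matching are all standard once the expansion for $\psi$ is in hand.
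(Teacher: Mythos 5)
Your proposal is correct, but it does not follow the route the paper itself takes. In the paper, \eqref{eq:AsExLog21} is \emph{recalled} as a classical result with references (Gel'fond; Srivastava and Choi), and the in-house derivation appears only afterwards, as an immediate specialization of the paper's own general expansion: taking $g(x)=\ln x$, $p=1$, and $m=1$ in Proposition~\ref{prop:Richardson9c}(b) (formula \eqref{eq:VvarStir78}), which is Euler--Maclaurin applied \emph{directly} to $f=\Sigma g=\ln\Gamma$ on $[x,x+1]$. In that route the correction terms come out at once as $\sum_k\frac{B_{2k}}{(2k)!}\,g^{(2k-1)}(x)=\sum_k\frac{B_{2k}}{2k(2k-1)x^{2k-1}}$, the additive constant $\sigma[\ln]=-1+\frac{1}{2}\ln(2\pi)$ is built in through the analogue of Raabe's formula, and the remainder bound $|R_1^q(x)|\leq\frac{|B_{2q}|}{(2q)!}|g^{(2q-1)}(x)|$ is supplied by the proposition itself; no integration of an asymptotic series and no separate constant-matching are needed. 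Your route instead applies Euler--Maclaurin to $t\mapsto(x+t)^{-1}$ to expand $\psi$, integrates term by term, and pins the constant via the generalized Stirling formula (Theorem~\ref{thm:dgf7dds}); this is a perfectly sound classical argument, but it carries two extra obligations that the paper's route avoids: justifying that integration of the expansion of $\psi$ preserves the error order, and fixing the constant of integration. You correctly identify both, along with the parity bookkeeping (vanishing of odd Bernoulli numbers) needed to state the error as $O(x^{-q-1})$ for every $q$, so the gaps you flag are genuinely the only delicate points; within the paper's framework, however, the shorter path is simply to quote Proposition~\ref{prop:Richardson9c}.
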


For instance, setting $q=4$ in equation \eqref{eq:AsExLog21}, we obtain
$$
\ln\Gamma(x) ~=~ \frac{1}{2}\ln(2\pi)-x+\left(x-\frac{1}{2}\right)\ln x+\frac{1}{12x}-\frac{1}{360x^3}+O\left(x^{-5}\right).
$$

We now provide a generalization of this result to multiple $\log\Gamma$-type functions. Even more generally, in the next proposition we provide for any integer $m\in\N^*$ an asymptotic expansion of the function
\begin{equation}\label{eq:Riem481}
x ~\mapsto ~\frac{1}{m}\sum_{j=0}^{m-1}\Sigma g\left(x+\frac{j}{m}\right).
\end{equation}

\begin{proposition}\label{prop:Richardson9}
\begin{enumerate}
\item[(a)] Let $g$ lie in $\cC^1\cap\cD^p\cap\cK^{\max\{p,1\}}$ for some $p\in\N$. Then, for any $m\in\N^*$ and any $x>0$, we have
$$
\frac{1}{m}\sum_{j=0}^{m-1}\Sigma g\left(x+\frac{j}{m}\right) ~=~ \int_x^{x+1}\Sigma g(t){\,}dt -\frac{1}{2m}{\,}g(x) + R_m(x){\,},\label{eq:Rich4201}
$$
with
$$
R_m(x) ~=~ \frac{1}{m}\,\int_0^1B_1(\{mt\}){\,}(\Sigma g)'(x+t){\,}dt
$$
and
$$
|R_m(x)| ~\leq ~ \frac{1}{2m}\,\int_0^1|(\Sigma g)'(x+t)|{\,}dt{\,}.
$$
For large $x$ the latter integral reduces to $|g(x)|$.
\item[(b)] If $g$ lie in $\cC^{2q}\cap\cD^p\cap\cK^{\max\{p,2q\}}$ for some $p\in\N$ and some $q\in\N^*$. Then, for any $m\in\N^*$ and any $x>0$, we have
\begin{eqnarray*}
\lefteqn{\frac{1}{m}\sum_{j=0}^{m-1}\Sigma g\left(x+\frac{j}{m}\right) ~=~ \int_x^{x+1}\Sigma g(t){\,}dt -\frac{1}{2m}{\,}g(x)}\nonumber \\
&& \null\hspace{10ex} +\sum_{k=1}^q\frac{1}{m^{2k}}\,\frac{B_{2k}}{(2k)!}{\,}g^{(2k-1)}(x) + R^q_m(x){\,},\label{eq:Rich42}
\end{eqnarray*}
with
$$
R^q_m(x) ~=~ -\frac{1}{m^{2q}}\,\int_0^1\frac{B_{2q}(\{mt\})}{(2q)!}{\,}(\Sigma g)^{(2q)}(x+t){\,}dt
$$
and
$$
|R^q_m(x)| ~\leq ~ \frac{1}{m^{2q}}{\,}\frac{|B_{2q}|}{(2q)!}\,\int_0^1|(\Sigma g)^{(2q)}(x+t)|{\,}dt{\,}.
$$
For large $x$ the latter integral reduces to $|g^{(2q-1)}(x)|$.
\end{enumerate}
\end{proposition}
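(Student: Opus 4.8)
The plan is to derive both expansions as direct applications of the Euler--Maclaurin summation formula (Proposition~\ref{prop:E3MacLau5}) to the function $f=\Sigma g$ on the interval $[x,x+1]$, subdivided into $N=m$ equal subintervals of length $h=1/m$, and then to rewrite the resulting boundary and remainder terms by means of the fundamental identity $\Delta\Sigma g=g$. First I would record that the differentiability hypotheses are met: by Theorem~\ref{thm:TBTDiff}, the assumption $g\in\cC^r\cap\cD^p\cap\cK^{\max\{p,r\}}$ forces $\Sigma g\in\cC^r\cap\cD^{p+1}\cap\cK^{\max\{p,r\}}$, so $\Sigma g\in\cC^1$ in case~(a) (with $r=1$) and $\Sigma g\in\cC^{2q}$ in case~(b) (with $r=2q$). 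This is precisely the regularity required to invoke the two forms of Euler--Maclaurin's formula.

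For case~(a), applying the first form of Proposition~\ref{prop:E3MacLau5} with $a=x$, $b=x+1$, $N=m$, $h=1/m$, and $f=\Sigma g$ gives
$$
\frac{1}{m}\sum_{k=0}^m\Sigma g\Big(x+\tfrac{k}{m}\Big)
~=~ \int_x^{x+1}\Sigma g(t)\,dt+\frac{1}{2m}\big(\Sigma g(x)+\Sigma g(x+1)\big)+\frac{1}{m^2}\int_0^m B_1(\{t\})(\Sigma g)'\Big(x+\tfrac{t}{m}\Big)\,dt.
$$
The key bookkeeping step is to pass from the sum $\sum_{k=0}^m$ to the sum $\sum_{j=0}^{m-1}$ appearing in the statement: the two differ only by the endpoint term $\frac1m\Sigma g(x+1)$, and subtracting it merges the trapezoidal boundary contribution into $\frac{1}{2m}\Sigma g(x)-\frac{1}{2m}\Sigma g(x+1)=-\frac{1}{2m}\Delta\Sigma g(x)=-\frac{1}{2m}g(x)$. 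A change of variable $t\mapsto mt$ turns the remainder integral into the claimed $R_m(x)=\frac1m\int_0^1 B_1(\{mt\})(\Sigma g)'(x+t)\,dt$, and the stated bound follows at once from $|B_1(\{mt\})|\le\tfrac12$.

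Case~(b) runs in exactly the same way, but starting from the higher-order form of Proposition~\ref{prop:E3MacLau5}, whose correction terms involve the differences $(\Sigma g)^{(2k-1)}(x+1)-(\Sigma g)^{(2k-1)}(x)$ for $k=1,\dots,q$. Here I would simplify each such boundary difference by commuting $\Delta$ with $D$ and using $\Delta\Sigma g=g$, so that $(\Sigma g)^{(2k-1)}(x+1)-(\Sigma g)^{(2k-1)}(x)=\Delta(\Sigma g)^{(2k-1)}(x)=g^{(2k-1)}(x)$; the same endpoint subtraction again produces the $-\frac{1}{2m}g(x)$ term, the same substitution $t\mapsto mt$ rewrites the remainder in the desired form $R^q_m(x)$, and the bound on $|R^q_m(x)|$ is transcribed directly from the one in Proposition~\ref{prop:E3MacLau5}.

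The final assertions, that for large $x$ the bounding integrals reduce to $|g(x)|$ and $|g^{(2q-1)}(x)|$ respectively, require showing that the relevant derivative of $\Sigma g$ keeps a constant sign on $[x,x+1]$ once $x$ is large. Since $\cK^{\max\{p,1\}}\subset\cK^0$ and $\cK^{\max\{p,2q\}}\subset\cK^{2q}$ by the descending filtration (Proposition~\ref{prop:MpDescFiltr}), Proposition~\ref{prop:LMpGpLMp1} yields $\Sigma g\in\cK^0$ in case~(a) and $(\Sigma g)^{(2q)}\in\cK^0$ in case~(b); in either case the function in question is eventually monotone, hence of constant sign on any interval $[x,x+1]$ with $x$ large, so that $\int_0^1|(\Sigma g)'(x+t)|\,dt=\big|\int_0^1(\Sigma g)'(x+t)\,dt\big|=|g(x)|$, and likewise for the $2q$-th derivative. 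This last step is the only genuinely delicate point: it is exactly where the convexity order built into the hypothesis $\cK^{\max\{p,r\}}$ is indispensable, since without enough convexity the top derivative of $\Sigma g$ need not lie in $\cK^0$ and the passage from the integral of the absolute value to the absolute value of the integral could fail.
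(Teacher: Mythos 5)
Your proof is correct and takes essentially the same route as the paper: apply Euler--Maclaurin (Proposition~\ref{prop:E3MacLau5}) to $f=\Sigma g$ on $[x,x+1]$ with $N=m$, collapse the endpoint and boundary terms via $\Delta\Sigma g=g$, rescale the remainder by $t\mapsto mt$, and obtain the large-$x$ reduction of the error integral from the eventual constant sign of the relevant derivative of $\Sigma g$. The only cosmetic differences are that the paper derives the constant sign directly from $(\Sigma g)^{(r)}\in\cK^{(p-r)_+}\subset\cK^{-1}$ (Propositions~\ref{prop:LMpGpLMp1} and~\ref{prop:MpDescFiltr}) rather than through monotonicity in $\cK^0$, and it cites Proposition~\ref{prop:fdsf6sfd} where you use the (equally valid, more elementary) commutation of $\Delta$ and $D$.
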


\begin{proof}
Let us prove assertion (b) first. The first part follows from a straightforward application of Euler-Mac{\-}laurin's formula (Proposition~\ref{prop:E3MacLau5}) to $f=\Sigma g$, with $a=x$, $b=x+1$, and $N=m$. Now, we see that the function $(\Sigma g)^{(2q)}$ lies in $\cK^{(p-2q)_+}$ by Proposition~\ref{prop:LMpGpLMp1}, and hence also in $\cK^{-1}$ by Proposition~\ref{prop:MpDescFiltr}. Thus, for sufficiently large $x$ we obtain
\begin{eqnarray*}
\int_0^1|(\Sigma g)^{(2q)}(x+t)|{\,}dt &=& \left|\int_0^1(\Sigma g)^{(2q)}(x+t){\,}dt\right|\\
&=& \left|(\Sigma g)^{(2q-1)}(x+1)-(\Sigma g)^{(2q-1)}(x)\right|.
\end{eqnarray*}
By Proposition~\ref{prop:fdsf6sfd}, the latter expression reduces to
$$
\left|\Sigma g^{(2q-1)}(x+1)-\Sigma g^{(2q-1)}(x)\right| ~=~ |g^{(2q-1)}(x)|{\,}.
$$
Assertion (a) can be proved similarly. Here we observe that $(\Sigma g)'$ lies in $\cK^{(p-1)_+}$ and hence also in $\cK^{-1}$. Thus, for sufficiently large $x$ we obtain
$$
\int_0^1|(\Sigma g)'(x+t)|{\,}dt ~=~ \left|\int_0^1(\Sigma g)'(x+t){\,}dt\right| ~=~ |g(x)|.
$$
This completes the proof.
\end{proof}

Setting $m=1$ in Proposition~\ref{prop:Richardson9}, we derive immediately an asymptotic expansion of the function $\Sigma g$ in terms of its trend and the higher order derivatives of $g$. As this special case is very important for the applications, we state it in the next proposition (in which we also use \eqref{eq:ds68ffdsbis} to evaluate the integral of $\Sigma g$ on $(x,x+1)$).

\begin{proposition}\label{prop:Richardson9c}
The following assertions hold.
\begin{enumerate}
\item[(a)] Let $g$ lie in $\cC^1\cap\cD^p\cap\cK^{\max\{p,1\}}$ for some $p\in\N$. Then, for any $x>0$ we have
$$
\Sigma g(x) ~=~ \sigma[g]+\int_1^xg(t){\,}dt -\frac{1}{2}{\,}g(x) + R_1(x){\,},\label{eq:Rich4201c}
$$
with
$$
R_1(x) ~=~ \int_0^1B_1(t){\,}(\Sigma g)'(x+t){\,}dt
$$
and
$$
|R_1(x)| ~\leq ~ \frac{1}{2}\,\int_0^1|(\Sigma g)'(x+t)|{\,}dt{\,}.
$$
For large $x$ the latter integral reduces to $|g(x)|$.
\item[(b)] If $g$ lie in $\cC^{2q}\cap\cD^p\cap\cK^{\max\{p,2q\}}$ for some $p\in\N$ and some $q\in\N^*$. Then, for any $x>0$ we have
\begin{equation}\label{eq:VvarStir78}
\Sigma g(x) ~=~ \sigma[g]+\int_1^xg(t){\,}dt -\frac{1}{2}{\,}g(x)+\sum_{k=1}^q\frac{B_{2k}}{(2k)!}{\,}g^{(2k-1)}(x) + R^q_1(x){\,},
\end{equation}
with
$$
R^q_1(x) ~=~ -\int_0^1\frac{B_{2q}(t)}{(2q)!}{\,}(\Sigma g)^{(2q)}(x+t){\,}dt
$$
and
$$
|R^q_1(x)| ~\leq ~ \frac{|B_{2q}|}{(2q)!}\,\int_0^1|(\Sigma g)^{(2q)}(x+t)|{\,}dt{\,}.
$$
For large $x$ the latter integral reduces to $|g^{(2q-1)}(x)|$.
\end{enumerate}
\end{proposition}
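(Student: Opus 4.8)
The plan is to derive both assertions as direct specializations of Proposition~\ref{prop:Richardson9}, setting $m=1$ throughout. Since $m=1$ the summation $\frac{1}{m}\sum_{j=0}^{m-1}\Sigma g(x+\frac{j}{m})$ collapses to the single term $\Sigma g(x)$, and the factors $\frac{1}{m^{2k}}$ and $\frac{1}{m^{2q}}$ all become $1$; the fractional part $\{mt\}=\{t\}=t$ on the interval of integration $(0,1)$, so the Bernoulli kernels $B_1(\{mt\})$ and $B_{2q}(\{mt\})$ simplify to $B_1(t)$ and $B_{2q}(t)$. Thus the formulas, remainder expressions, and remainder bounds of Proposition~\ref{prop:Richardson9} transcribe verbatim into the stated forms, and the only genuine content to verify is the replacement of $\int_x^{x+1}\Sigma g(t)\,dt$ by $\sigma[g]+\int_1^x g(t)\,dt$.

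That replacement is exactly the analogue of Raabe's formula recorded in identity~\eqref{eq:ds68ffds} (equivalently \eqref{eq:ds68ffdsbis}), which holds for every $g$ lying in $\cC^0\cap\mathrm{dom}(\Sigma)$. First I would note that under the hypotheses of either assertion $g$ lies in $\cC^0\cap\mathrm{dom}(\Sigma)$: in assertion (a) we have $g\in\cC^1\cap\cD^p\cap\cK^{\max\{p,1\}}\subset\cC^0\cap\mathrm{dom}(\Sigma)$, and similarly in assertion (b) with $\cC^{2q}$ in place of $\cC^1$. So \eqref{eq:ds68ffds} applies and licenses the substitution of the trend integral. Combining this with the $m=1$ specialization of Proposition~\ref{prop:Richardson9}(a) yields assertion (a), and with the $m=1$ specialization of Proposition~\ref{prop:Richardson9}(b) yields assertion (b).

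There is, strictly speaking, no hard part: the entire proof is a one-line appeal to Proposition~\ref{prop:Richardson9} followed by one application of the analogue of Raabe's formula. The only point warranting a moment of care is confirming that the hypotheses of Proposition~\ref{prop:Richardson9} are met for $m=1$ — they are, since $m=1\in\N^*$ and the convexity/differentiability assumptions on $g$ are identical in the two propositions — and that the stated simplifications of the Bernoulli kernels are correct when $m=1$. Accordingly, I would write the proof as follows.

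\begin{proof}
Both assertions follow by setting $m=1$ in Proposition~\ref{prop:Richardson9}. Indeed, when $m=1$ the left-hand sum $\frac{1}{m}\sum_{j=0}^{m-1}\Sigma g(x+\frac{j}{m})$ reduces to $\Sigma g(x)$, the powers $m^{2k}$ and $m^{2q}$ are all equal to $1$, and $\{mt\}=\{t\}=t$ for $t\in(0,1)$, so that $B_1(\{mt\})=B_1(t)$ and $B_{2q}(\{mt\})=B_{2q}(t)$. This directly yields the claimed formulas for $\Sigma g(x)$, the remainders $R_1(x)$ and $R^q_1(x)$, and their bounds, with $\int_x^{x+1}\Sigma g(t)\,dt$ in place of $\sigma[g]+\int_1^x g(t)\,dt$. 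Since $g$ lies in $\cC^0\cap\mathrm{dom}(\Sigma)$ under the hypotheses of either assertion, the analogue of Raabe's formula \eqref{eq:ds68ffds} gives
$$
\int_x^{x+1}\Sigma g(t)\,dt ~=~ \sigma[g]+\int_1^x g(t)\,dt,\qquad x>0,
$$
which completes the proof.
\end{proof}
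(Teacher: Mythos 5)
Your proof is correct and follows exactly the paper's own route: Proposition~\ref{prop:Richardson9c} is obtained in the text precisely by setting $m=1$ in Proposition~\ref{prop:Richardson9} and invoking the analogue of Raabe's formula \eqref{eq:ds68ffdsbis} to rewrite $\int_x^{x+1}\Sigma g(t)\,dt$ as $\sigma[g]+\int_1^x g(t)\,dt$. Your verification of the kernel simplification $B_{2q}(\{t\})=B_{2q}(t)$ on $(0,1)$ and of the membership $g\in\cC^0\cap\mathrm{dom}(\Sigma)$ covers the only points requiring care.
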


\begin{example}
Taking $g(x)=\ln x$ and $p=1$ in \eqref{eq:VvarStir78}, we retrieve immediately the asymptotic expansion given in \eqref{eq:AsExLog21}. The following equivalent, but more concise, formulation of this expansion is given in terms of Binet's function.\index{Binet's function} For any $q\in\N^*$, we have
$$
J(x) ~=~ \sum_{k=1}^q\frac{B_{k+1}}{k(k+1){\,}x^k}+O\left(x^{-q-1}\right)\qquad\text{as $x\to\infty$}.\qedhere
$$
\end{example}

\begin{remark}\label{rem:GreEul4cv0}
The following alternative asymptotic expansion of the Riemann sum \eqref{eq:Riem481} can be immediately obtained using the general form of Gregory's formula (Proposition~\ref{prop:GenFoGreg7}). If $g$ lies in $\cC^0\cap\cD^p\cap\cK^p$ for some $p\in\N$ and if it is $q$-convex or $q$-concave on $[x,\infty)$ for every integer $q\geq p$, then we have
\begin{eqnarray*}
\int_x^{x+1}\Sigma g(t){\,}dt &=& \frac{1}{m}\sum_{j=0}^{m-1}\Sigma g\left(x+\frac{j}{m}\right) + \frac{1}{m}\,\sum_{k=1}^q G_k\,\Delta^{k-1}g_m(mx) + R,
\end{eqnarray*}
where
$$
|R| ~\leq ~ \frac{1}{m}\,\overline{G}_q\left|\Delta^qg_m(mx)\right|\qquad\text{and}\qquad g_m(x) ~=~ g\left(\frac{x}{m}\right).
$$
(Compare with Proposition~\ref{prop:8Stir44Gau7Mult}.) If we set $m=1$ in this latter expansion, then we immediately retrieve the inequality of Lemma~\ref{lemma:6699rem} as well as the Gregory formula-based series expression for $\Sigma g$ given in \eqref{eq:fontana490}. It is then important to note that the asymptotic expansion \eqref{eq:VvarStir78} often leads to divergent series, contrary to its ``cousin'' formula \eqref{eq:fontana490}, as already observed in Remark~\ref{rem:CompEulGr3}. For instance, setting $x=1$ in \eqref{eq:AsExLog21} leads to a divergent series whereas setting $x=1$ in the ``cousin'' formula \eqref{eq:Gr62F0} leads to an analogue of Fontana-Mascheroni's series. In this regard, we observe that the Gregory coefficients\index{Gregory coefficients} have the asymptotic behavior
$$
|G_n| ~\sim ~ \frac{1}{n(\ln n)^2}\qquad\text{as $n\to\infty$}{\,},
$$
while the Bernoulli numbers\index{Bernoulli numbers} satisfy
$$
|B_{2n}| ~=~ \frac{2(2n)!}{(2\pi)^{2n}}{\,}\zeta(2n) ~\sim ~ 4\sqrt{\pi n}\left(\frac{n}{\pi e}\right)^{2n}\qquad\text{as $n\to\infty$}{\,};
$$
see, e.g., Graham {\em et al.} \cite[p.~286]{GraKnuPat94}.
\end{remark}

\parag{A variant of the generalized Stirling formula} Interestingly, from Proposition~\ref{prop:Richardson9} we can easily derive the following variant of the generalized Stirling formula.

\begin{proposition}[A variant of the generalized Stirling formula]\label{prop:VarStir6}\index{Stirling's formula!generalized!a variant}
Let $g$ lie in $\cC^{2q}\cap\cD^p\cap\cK^{2q}$ for some $q\in\N^*\cup\{\frac{1}{2}\}$ and some $p\in\N$ satisfying $p\leq 2q-1$. For any $m\in\N^*$ we have
$$
\frac{1}{m}\sum_{j=0}^{m-1}\Sigma g\left(x+\frac{j}{m}\right) -\int_1^x g(t){\,}dt -\sum_{k=1}^p\frac{B_k}{m^k k!}{\,}g^{(k-1)}(x) ~\to ~ \sigma[g]\qquad\text{as $x\to\infty$.}
$$
In particular,
\begin{equation}\label{eq:VvarStir7}
\Sigma g(x)-\int_1^x g(t){\,}dt -\sum_{k=1}^p\frac{B_k}{k!}{\,}g^{(k-1)}(x) ~\to ~ \sigma[g]\qquad\text{as $x\to\infty$.}
\end{equation}
\end{proposition}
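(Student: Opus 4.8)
The plan is to derive the limit directly from the asymptotic expansions of $\frac{1}{m}\sum_{j=0}^{m-1}\Sigma g(x+\frac{j}{m})$ furnished by Proposition~\ref{prop:Richardson9}, after matching the Bernoulli-number terms, and I would split into the two cases $q\in\N^*$ and $q=\frac12$, corresponding to parts (b) and (a) of that proposition. For $q\in\N^*$, since $p\le 2q-1<2q$ we have $\cK^{2q}=\cK^{\max\{p,2q\}}$, so the hypotheses of Proposition~\ref{prop:Richardson9}(b) are met.

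The first step is a purely arithmetic rewriting of the finite sum in the statement. Using $B_1=-\frac12$ together with $B_{2j+1}=0$ for $j\ge1$, I would write
\[
\sum_{k=1}^p\frac{B_k}{m^k k!}\,g^{(k-1)}(x) ~=~ -\frac{1}{2m}\,g(x)+\sum_{k=1}^{\lfloor p/2\rfloor}\frac{B_{2k}}{m^{2k}(2k)!}\,g^{(2k-1)}(x),
\]
and then check that $p\le 2q-1$ forces $\lfloor p/2\rfloor\le q-1$, so every term on the right appears among the terms $k=1,\dots,q$ in the expansion of Proposition~\ref{prop:Richardson9}(b). Subtracting, the $-\frac{1}{2m}g(x)$ contributions cancel, the even terms up to $k=\lfloor p/2\rfloor$ cancel, and applying the analogue of Raabe's formula \eqref{eq:ds68ffds} to replace $\int_x^{x+1}\Sigma g(t)\,dt-\int_1^x g(t)\,dt$ by $\sigma[g]$, I am left with
\[
\frac{1}{m}\sum_{j=0}^{m-1}\Sigma g\Big(x+\tfrac{j}{m}\Big)-\int_1^xg(t)\,dt-\sum_{k=1}^p\frac{B_k}{m^k k!}g^{(k-1)}(x) ~=~ \sigma[g]+\sum_{k=\lfloor p/2\rfloor+1}^{q}\frac{B_{2k}}{m^{2k}(2k)!}g^{(2k-1)}(x)+R^q_m(x).
\]

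It then remains to show that the residual terms vanish at infinity, and here the crucial input is Remark~\ref{rem:I5mpPConv63}: from $g\in\cC^{2q}\cap\cD^p\cap\cK^{2q}$ with $p\le 2q$, all the derivatives $g^{(p)},g^{(p+1)},\dots,g^{(2q)}$ tend to $0$. For each $k$ in the residual sum one checks $2k-1\ge p$ (and $2k-1\le 2q-1$), so $g^{(2k-1)}(x)\to0$; and the bound $|R^q_m(x)|\le \frac{1}{m^{2q}}\frac{|B_{2q}|}{(2q)!}\,|g^{(2q-1)}(x)|$ valid for large $x$ (Proposition~\ref{prop:Richardson9}(b)) gives $R^q_m(x)\to0$ as well. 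This yields the stated limit. For the remaining case $q=\frac12$, whence $p=0$, I would instead invoke Proposition~\ref{prop:Richardson9}(a): the finite sum is empty, and the same cancellation via \eqref{eq:ds68ffds} shows the left-hand side equals $\sigma[g]-\frac{1}{2m}g(x)+R_m(x)$ with $|R_m(x)|\le\frac{1}{2m}|g(x)|$ for large $x$; since $g\in\cD^0$ means $g(x)\to0$, both error terms vanish.

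Finally, the displayed ``in particular'' identity \eqref{eq:VvarStir7} is just the specialization $m=1$, where the left sum collapses to $\Sigma g(x)$ and the coefficients $\frac{B_k}{m^k k!}$ reduce to $\frac{B_k}{k!}$. The only genuine subtlety is the bookkeeping in the first step, namely verifying that the Bernoulli sum in the statement is exactly the even-indexed part of the Euler–Maclaurin expansion and that the inequality $p\le 2q-1$ leaves no uncancelled low-order term, so that the residue consists only of derivatives of order $\ge p$—precisely the ones the convexity-plus-decay hypothesis $\cD^p\cap\cK^{2q}$ forces to vanish via Remark~\ref{rem:I5mpPConv63}.
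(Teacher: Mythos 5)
Your proof is correct and takes essentially the same route as the paper's: the paper likewise deduces the statement from Proposition~\ref{prop:Richardson9} together with the observation that $g$ lies in $\cD^k\cap\cK^k$, hence $g^{(k)}$ vanishes at infinity, for every $k\in\{p,\ldots,2q\}$ (Theorem~\ref{thm:intCpTpKp}(b), which is exactly what Remark~\ref{rem:I5mpPConv63} records), and then sets $m=1$ for the particular case. The only slip is that your displayed rewriting of $\sum_{k=1}^p\frac{B_k}{m^k k!}\,g^{(k-1)}(x)$ presupposes $p\geq 1$, so when $p=0$ with $q\in\N^*$ the term $-\frac{1}{2m}\,g(x)$ from Proposition~\ref{prop:Richardson9}(b) is left uncancelled; this is harmless, since $g=g^{(0)}$ vanishes at infinity in that case, exactly as in your treatment of the case $q=\frac{1}{2}$.
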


\begin{proof}
For every $k\in\{p,\ldots,2q\}$ we clearly have that $g$ lies in $\cD^k\cap\cK^k$ and hence $g^{(k)}$ vanishes at infinity by Theorem~\ref{thm:intCpTpKp}(b). The result then follows from Proposition~\ref{prop:Richardson9}. The particular case is obtained by setting $m=1$.
\end{proof}

It is clear that the convergence result \eqref{eq:VvarStir7} coincides with the generalized Stirling formula \eqref{eq:dgf7dds} whenever $p=0$ or $p=1$. Thus, it does not bring anything new in these cases.

Now, we observe that if $g$ lies in $\cC^{\max\{2q,r\}}\cap\cD^p\cap\cK^{\max\{2q,r\}}$ for some $q\in\N^*\cup\{\frac{1}{2}\}$ and some $p\in\N$ satisfying $p\leq 2q-1$, then the convergence result in \eqref{eq:VvarStir7} still holds if we replace $g$ with $g^{(r)}$ and $p$ with $(p-r)_+$. Moreover, this modified result can also be obtained by differentiating $r$ times both sides of \eqref{eq:VvarStir7} and then removing the terms that vanish at infinity. This important fact can be easily proved similarly as for the generalized Stirling formula (see Proposition~\ref{prop:gSfDr4} and the comment that follows it).

\begin{remark}
We now see that the generalized Stirling formula \eqref{eq:dgf7dds} could also be established similarly as its variant \eqref{eq:VvarStir7}, i.e., using the Gregory formula-based asymptotic expansion of $\Sigma g$ as discussed in Remark~\ref{rem:GreEul4cv0}. However, formula \eqref{eq:dgf7dds} is a very elementary consequence of Lemma~\ref{lemma:VarEpsIneq}, as commented in Remark~\ref{rem:WenSti41}. Its proof is elementary, elegant, and leads to the whole Theorem~\ref{thm:6GenStFo0BaIneq}, which is a strong result that also provides inequalities.
\end{remark}

The restriction of the limit \eqref{eq:VvarStir7} to the natural integers provides the following alternative formula to compute the asymptotic constant\index{asymptotic constant} $\sigma[g]$. Under the assumptions of Proposition~\ref{prop:VarStir6}, we have
\begin{equation}\label{eq:sig51saz}
\sigma[g] ~=~ \lim_{n\to\infty}\left(\sum_{k=1}^{n-1} g(k)-\int_1^n g(t){\,}dt -\sum_{k=1}^p\frac{B_k}{k!}{\,}g^{(k-1)}(n)\right).
\end{equation}

\parag{Analogue of Liu's formula} Liu~\cite{Liu07} (see also Mortici \cite{Mor10}) established the following formula.\index{Liu's formula} For any $n\in\N^*$ we have
$$
n! ~=~ \Gamma(n+1) ~=~ \sqrt{2\pi n}\,\left(\frac{n}{e}\right)^n \exp\left(\int_n^{\infty}\frac{\frac{1}{2}-\{t\}}{t}{\,}dt\right).
$$
This formula provides an exact (as opposed to asymptotic) expression for the gamma function with an integer argument.

We now propose a generalization of this identity to multiple $\log\Gamma$-type functions with real arguments. We call it the \emph{generalized Liu formula}. Recall first the following Dirichlet test\index{Dirichlet test for convergence of improper integrals} for convergence of improper integrals (see, e.g., Titchmarsh \cite[p.~21]{Tit39})

\begin{lemma}[Dirichlet's test]\label{lemma:8Dirichlet4Test}
Let $a\geq 0$ and let $f\colon\R_+\to\R$ be so that the function $x\mapsto\int_a^x f(t){\,}dt$ is bounded on $[a,\infty)$. Let also $g$ lie in $\cC^1\cap\cD^0\cap\cK^0$. Then the improper integral
$$
\int_a^{\infty}f(t)g(t){\,}dt
$$
converges.
\end{lemma}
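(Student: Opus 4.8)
The plan is to prove Dirichlet's test by the standard technique of integration by parts combined with the hypotheses on the two factors. The key observation is that the factor $g$ lies in $\cC^1\cap\cD^0\cap\cK^0$, so by Theorem~\ref{thm:intCpTpKp}(b) (with $p=0$) the function $g$ eventually increases or decreases to zero; in either case $g$ is eventually monotone, vanishes at infinity, and has an integrable derivative $g'$ of constant sign in a neighborhood of infinity. This is exactly the configuration that makes the oscillation of the primitive of $f$ controllable.

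First I would introduce the primitive $F(x)=\int_a^x f(t)\,dt$, which by hypothesis is bounded on $[a,\infty)$, say $|F(x)|\le M$ for all $x\ge a$. Integrating by parts on any compact interval $[a,X]$ gives
$$
\int_a^X f(t)g(t)\,dt ~=~ F(X)g(X)-\int_a^X F(t)g'(t)\,dt,
$$
where the boundary term at $a$ vanishes since $F(a)=0$. Since $g$ vanishes at infinity and $F$ is bounded, the term $F(X)g(X)$ tends to $0$ as $X\to\infty$. It therefore remains to show that the integral $\int_a^\infty F(t)g'(t)\,dt$ converges. Here I would use that $|F(t)g'(t)|\le M\,|g'(t)|$, so it suffices to prove that $\int_a^\infty |g'(t)|\,dt$ converges, i.e.\ that $g'$ is absolutely integrable at infinity.

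The heart of the argument is this absolute integrability of $g'$, and I expect it to be the main (though mild) obstacle, since it is where the convexity/asymptotic hypothesis on $g$ is genuinely used. Pick $b\ge a$ beyond which $g$ is monotone and $g'$ has constant sign (possible because $g\in\cC^1\cap\cD^0\cap\cK^0$ forces $g$ to be eventually monotone to $0$, so $g'$ does not change sign eventually). On $[b,\infty)$ the integral telescopes: for $X\ge b$,
$$
\int_b^X |g'(t)|\,dt ~=~ \left|\int_b^X g'(t)\,dt\right| ~=~ |g(X)-g(b)| ~\to ~ |g(b)|\qquad\text{as $X\to\infty$},
$$
since $g(X)\to 0$. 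Thus $\int_b^\infty|g'(t)|\,dt$ is finite, and adding the integral of the continuous $|g'|$ over the compact piece $[a,b]$ shows $\int_a^\infty|g'(t)|\,dt<\infty$.

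With absolute integrability in hand, $\int_a^\infty F(t)g'(t)\,dt$ converges absolutely by comparison with $M\int_a^\infty|g'(t)|\,dt$. Feeding this back into the integration-by-parts identity and letting $X\to\infty$, both terms on the right-hand side have finite limits, so $\lim_{X\to\infty}\int_a^X f(t)g(t)\,dt$ exists; that is, the improper integral $\int_a^\infty f(t)g(t)\,dt$ converges, which completes the proof. The only care needed is to isolate the eventual monotonicity region $[b,\infty)$ before telescoping, and to handle the possibly non-monotone initial segment $[a,b]$ by ordinary continuity.
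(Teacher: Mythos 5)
Your proof is correct. One thing to be aware of: the paper does not actually prove this lemma itself --- it is recalled as a classical fact with a pointer to Titchmarsh \cite[p.~21]{Tit39}, where the standard argument uses the second mean value theorem (Bonnet's form) for a monotone factor tending to zero, an argument that requires no differentiability of $g$. Your route is therefore genuinely different and, under the hypotheses as stated, simpler: since $g$ is assumed to lie in $\cC^1\cap\cD^0\cap\cK^0$, integration by parts is available, and the only substantive step --- absolute integrability of $g'$ at infinity --- follows exactly as you argue from Theorem~\ref{thm:intCpTpKp}(b) with $p=0$: eventual monotone convergence of $g$ to zero forces $g'$ to have constant (weak) sign on some $[b,\infty)$, whence $\int_b^X|g'(t)|\,dt=|g(X)-g(b)|\to|g(b)|$, and the piece over $[a,b]$ is finite by continuity of $g'$. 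The boundary term $F(X)g(X)$ vanishes in the limit because $F$ is bounded and $g$ vanishes at infinity, and the remaining integral converges absolutely by comparison with $M|g'|$, so the Cauchy limit of $\int_a^X f(t)g(t)\,dt$ exists. What the classical proof buys is generality (monotone $g$ not assumed $\cC^1$); what yours buys is self-containedness within the paper's framework, using only integration by parts plus the paper's own characterization of $\cD^0\cap\cK^0$. A single pedantic remark: the hypothesis that $x\mapsto\int_a^x f(t)\,dt$ is bounded tacitly presupposes that $f$ is locally integrable on $[a,\infty)$; this is what legitimizes both the existence of $\int_a^X f(t)g(t)\,dt$ for finite $X$ and the integration-by-parts identity (with $F$ absolutely continuous and $F'=f$ almost everywhere), so it is worth stating explicitly at the start of the proof.
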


\begin{proposition}[Generalized Liu's formula]\label{prop:Liu471}\index{Liu's formula!generalized}
\begin{enumerate}
\item[(a)] If $g$ lies in $\cC^2\cap\cD^1\cap\cK^2$, then for any $x>0$ we have
$$
\Sigma g(x) ~=~ \sigma[g]+\int_1^x g(t){\,}dt -\frac{1}{2}{\,}g(x)+\int_0^{\infty}\textstyle{\left(\frac{1}{2}-\{t\}\right)g'(x+t){\,}dt}.
$$
\item[(b)] If $g$ lies in $\cC^{2q+1}\cap\cD^{2q}\cap\cK^{2q+1}$ for some $q\in\N^*$, then for any $x>0$ we have
\begin{eqnarray*}
\Sigma g(x) &=& \sigma[g]+\int_1^x g(t){\,}dt -\frac{1}{2}{\,}g(x)+\sum_{k=1}^q\frac{B_{2k}}{(2k)!}{\,}g^{(2k-1)}(x)\\
&& \null + \int_0^{\infty}\frac{B_{2q}(\{t\})}{(2q)!}{\,}g^{(2q)}(x+t){\,}dt.
\end{eqnarray*}
\end{enumerate}
\end{proposition}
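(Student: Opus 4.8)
The plan is to derive the generalized Liu formula directly from the asymptotic expansion obtained in Proposition~\ref{prop:Richardson9c}, the key point being that the error terms appearing as finite integrals over $(0,1)$ in Proposition~\ref{prop:Richardson9c} can be replaced with convergent improper integrals over $(0,\infty)$. First I would observe that both assertions are concerned with establishing an \emph{exact} identity, so the strategy is to start from the remainder term of Proposition~\ref{prop:Richardson9c} and iterate the periodicity of the Bernoulli polynomials. Specifically, for assertion (b), setting $m=1$ in Proposition~\ref{prop:Richardson9c}(b) gives
$$
\Sigma g(x) ~=~ \sigma[g]+\int_1^x g(t){\,}dt -\frac{1}{2}{\,}g(x)+\sum_{k=1}^q\frac{B_{2k}}{(2k)!}{\,}g^{(2k-1)}(x) + R^q_1(x),
$$
with $R^q_1(x)=-\int_0^1\frac{B_{2q}(t)}{(2q)!}(\Sigma g)^{(2q)}(x+t){\,}dt$. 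The idea is to write $R^q_1(x)$ itself as a telescoping sum over the shifts $x+j$, $j\in\N$, using the difference equation $(\Sigma g)^{(2q)}(x+1)-(\Sigma g)^{(2q)}(x)=g^{(2q)}(x)$ from Proposition~\ref{prop:fdsf6sfd}, which converts the integrand into $g^{(2q)}$ and the periodic Bernoulli function $B_{2q}(\{t\})$.

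The main technical step is to justify convergence of the resulting improper integral $\int_0^{\infty}\frac{B_{2q}(\{t\})}{(2q)!}g^{(2q)}(x+t){\,}dt$. Here I would invoke Lemma~\ref{lemma:8Dirichlet4Test} (Dirichlet's test): the function $t\mapsto\int_0^t B_{2q}(\{s\}){\,}ds$ is bounded on $[0,\infty)$ because $B_{2q}(\{\cdot\})$ is $1$-periodic with zero mean (its integral over each period vanishes, since $\int_0^1 B_{2q}(t){\,}dt=0$), and by Proposition~\ref{prop:fdsf6sfd} combined with Proposition~\ref{prop:LMpGpLMp1} the function $g^{(2q)}$ lies in $\cC^1\cap\cD^0\cap\cK^0$ under the hypothesis $g\in\cC^{2q+1}\cap\cD^{2q}\cap\cK^{2q+1}$. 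Thus Dirichlet's test applies and the improper integral converges. The telescoping argument then shows that $R^q_1(x)$ equals exactly this improper integral: writing $R^q_1(x)=-\sum_{j=0}^{\infty}\int_0^1\frac{B_{2q}(t)}{(2q)!}\big((\Sigma g)^{(2q)}(x+j+t)-(\Sigma g)^{(2q)}(x+j+t)\big)$ is not quite the right bookkeeping, so more carefully I would express $\int_0^{N}\frac{B_{2q}(\{t\})}{(2q)!}g^{(2q)}(x+t){\,}dt$ as a partial sum, integrate by parts or telescope using $\Delta(\Sigma g)^{(2q-1)}=g^{(2q-1)}$, and let $N\to\infty$, using that $g^{(2q-1)}(x+N)\to 0$ (which holds since $g^{(2q-1)}\in\cD^1\cap\cK^1$ vanishes at infinity by Theorem~\ref{thm:intCpTpKp}(b)).

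For assertion (a) I would follow the same route with $q=\frac{1}{2}$ in spirit, starting instead from Proposition~\ref{prop:Richardson9c}(a) with $m=1$, where the remainder is $R_1(x)=\int_0^1 B_1(t)(\Sigma g)'(x+t){\,}dt$ and $B_1(t)=t-\frac{1}{2}$, so that $B_1(\{t\})=\{t\}-\frac{1}{2}=-(\frac{1}{2}-\{t\})$. Under the hypothesis $g\in\cC^2\cap\cD^1\cap\cK^2$, Proposition~\ref{prop:fdsf6sfd} and Proposition~\ref{prop:LMpGpLMp1} give that $g'$ lies in $\cC^1\cap\cD^0\cap\cK^0$, so Dirichlet's test again yields convergence of $\int_0^{\infty}(\frac{1}{2}-\{t\})g'(x+t){\,}dt$, and the telescoping identity identifies $R_1(x)$ with this improper integral (with the sign matching $(\frac{1}{2}-\{t\})$).

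The hard part will be the bookkeeping of the telescoping/partial-summation step that upgrades the finite-interval remainder of Proposition~\ref{prop:Richardson9c} into the convergent improper integral, and in particular verifying that the boundary contributions vanish in the limit $N\to\infty$. This rests on the asymptotic vanishing of the appropriate derivative of $g$ at infinity (via Theorem~\ref{thm:intCpTpKp}(b)) together with the boundedness supplied by the zero-mean periodicity of the Bernoulli functions. Once convergence and the telescoping identity are both in hand, the two formulas follow by direct substitution into the expansions of Proposition~\ref{prop:Richardson9c}.
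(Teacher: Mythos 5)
Your overall route is the paper's own: start from Proposition~\ref{prop:Richardson9c} with $m=1$, convert the remainder into an integral of $g^{(2q)}$ against the periodic Bernoulli function by telescoping with $\Delta(\Sigma g)^{(2q)}=g^{(2q)}$ (Proposition~\ref{prop:fdsf6sfd}), and obtain convergence of the improper integral from Dirichlet's test (Lemma~\ref{lemma:8Dirichlet4Test}); this is exactly what the paper does. The gap is in the boundary term, the very step you flag as the hard part. First, the vanishing statement you invoke is false: $g^{(2q-1)}\in\cD^1\cap\cK^1$ does \emph{not} imply $g^{(2q-1)}(x+N)\to 0$ (the function $\ln x$ lies in $\cD^1\cap\cK^1$), and Theorem~\ref{thm:intCpTpKp}(b) says only that the \emph{derivative} $g^{(2q)}$ of such a function tends to zero. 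Correspondingly, the telescoping relation you need is $\Delta(\Sigma g)^{(2q)}=g^{(2q)}$, not $\Delta(\Sigma g)^{(2q-1)}=g^{(2q-1)}$, and the quantity that actually controls the tail is $g^{(2q)}(x+N)$.

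Second, and more seriously, even with the correct vanishing statement the tail
$$
T^q_N(x) ~=~ -\int_0^1\frac{B_{2q}(t)}{(2q)!}{\,}(\Sigma g)^{(2q)}(x+N+t){\,}dt
$$
does not tend to zero merely from ``zero-mean boundedness plus vanishing of a derivative of $g$'': the function $(\Sigma g)^{(2q)}$ is in general \emph{unbounded} (by Proposition~\ref{prop:fdsf6sfd} it equals $\Sigma g^{(2q)}$ up to an additive constant, and the principal indefinite sum of a $\cD^0\cap\cK^0$ function can grow like $\psi$). The zero mean of $B_{2q}$ only reduces the problem to bounding the oscillation of $(\Sigma g)^{(2q)}$ over $[x+N,x+N+1]$. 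The paper bridges this by an integration by parts that replaces $B_{2q}(t)/(2q)!$ with $B_{2q+1}(t)/(2q+1)!$ and $(\Sigma g)^{(2q)}$ with $(\Sigma g)^{(2q+1)}$ (the boundary terms vanish since $B_{2q+1}(0)=B_{2q+1}(1)=0$ for $q\geq 1$; in assertion (a) they contribute the harmless term $\frac{B_2}{2}{\,}g'(x+N)$), and then uses that $(\Sigma g)^{(2q+1)}$ lies in $\cK^{-1}$, i.e.\ eventually has constant sign, to get
$$
\int_0^1\big|(\Sigma g)^{(2q+1)}(x+N+t)\big|{\,}dt ~=~ \left|\int_0^1(\Sigma g)^{(2q+1)}(x+N+t){\,}dt\right| ~=~ \big|g^{(2q)}(x+N)\big| ~\to ~ 0.
$$
This integration-by-parts/sign-constancy step is the missing ingredient; it is precisely where the hypotheses $\cC^{2q+1}$ and $\cK^{2q+1}$ (one order higher than Proposition~\ref{prop:Richardson9c} itself requires) enter, and without it your argument stalls at the boundary term.
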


\begin{proof}
Let us prove assertion (b) first. We apply assertion (b) of Proposition~\ref{prop:Richardson9c} to the function $g$ with $p=2q$. Thus, for any $x>0$ and any $n\in\N$ we have
\begin{eqnarray*}
R^q_1(x) &=& \int_{x+1}^{x+n+1}\frac{B_{2q}(\{t-x\})}{(2q)!}{\,}(\Sigma g)^{(2q)}(t){\,}dt\\
&& \null -\int_x^{x+n+1}\frac{B_{2q}(\{t-x\})}{(2q)!}{\,}(\Sigma g)^{(2q)}(t){\,}dt.
\end{eqnarray*}
By Proposition~\ref{prop:fdsf6sfd}, we have
$$
(\Sigma g)^{(2q)}(t+1)-(\Sigma g)^{(2q)}(t) ~=~ g^{(2q)}(t)
$$
and hence we obtain
$$
R^q_1(x) ~=~ S^q_n(x) + T^q_n(x),
$$
where
\begin{eqnarray*}
S^q_n(x) &=& \int_x^{x+n}\frac{B_{2q}(\{t-x\})}{(2q)!}{\,} g^{(2q)}(t){\,}dt{\,},\\
T^q_n(x) &=& -\int_{x+n}^{x+n+1}\frac{B_{2q}(\{t-x\})}{(2q)!}{\,}(\Sigma g)^{(2q)}(t){\,}dt{\,}.
\end{eqnarray*}

Now, we observe that the sequence $n\mapsto S^q_n(x)$ converges by Dirichlet's test (see Lemma~\ref{lemma:8Dirichlet4Test}). Indeed, $g^{(2q)}$ lies in $\cC^1\cap\cD^0\cap\cK^0$ by Proposition~\ref{prop:LMpGpLMp1}, and for every $u\geq x$ we have that
\begin{eqnarray*}
\left|\int_x^u\frac{B_{2q}(\{t-x\})}{(2q)!}{\,}dt\right| &=& \left|\int_0^{u-x}\frac{B_{2q}(\{t\})}{(2q)!}{\,}dt\right|\\
&=& \left|\int_{\lfloor u-x\rfloor}^{u-x}\frac{B_{2q}(\{t\})}{(2q)!}{\,}dt\right| ~\leq ~ \frac{|B_{2q}|}{(2q)!}{\,},
\end{eqnarray*}
where we have used the well-known fact that the integral on $(0,1)$ of the Bernoulli polynomial\index{Bernoulli polynomials} $B_{2q}$ is zero.

Let us now show that the sequence $n\mapsto T^q_n(x)$ approaches zero as $n\to\infty$. Using integration by parts, we obtain
\begin{eqnarray*}
T^q_n(x) &=& -\int_0^1\frac{B_{2q}(t)}{(2q)!}{\,}(\Sigma g)^{(2q)}(x+n+t){\,}dt\\
&=& \int_0^1\frac{B_{2q+1}(t)}{(2q+1)!}{\,}(\Sigma g)^{(2q+1)}(x+n+t){\,}dt.
\end{eqnarray*}
Since $(\Sigma g)^{(2q+1)}$ lies in $\cK^{-1}$, for large $n$ we obtain
\begin{eqnarray*}
|T^q_n(x)| &\leq & \frac{|B_{2q+1}|}{(2q+1)!}{\,}\left|\int_0^1(\Sigma g)^{(2q+1)}(x+n+t){\,}dt\right|\\
&=& \frac{|B_{2q+1}|}{(2q+1)!}{\,}\left|g^{(2q)}(x+n)\right|,
\end{eqnarray*}
which approaches zero as $n\to\infty$ by Theorem~\ref{thm:intCpTpKp}(b). This proves assertion (b).

Assertion (a) can be proved similarly by applying assertion (a) of Proposition~\ref{prop:Richardson9c} to function $g$ with $p=1$. For any $x>0$ and any $n\in\N$ we have
$$
R_1(x) ~=~ S_n(x) + T_n(x),
$$
where
\begin{eqnarray*}
S_n(x) &=& -\int_x^{x+n}B_1(\{t-x\}){\,} g'(t){\,}dt{\,},\\
T_n(x) &=& \int_{x+n}^{x+n+1}B_1(\{t-x\}){\,}(\Sigma g)'(t){\,}dt{\,}.
\end{eqnarray*}

We now see that the sequence $n\mapsto S_n(x)$ converges by Dirichlet's test. Moreover, the sequence $n\mapsto T_n(x)$ approaches zero as $n\to\infty$. Indeed, using integration by parts we obtain
\begin{eqnarray*}
T_n(x) &=& \int_0^1B_1(t){\,}(\Sigma g)'(x+n+t){\,}dt\\
&=& \frac{B_2}{2}{\,}g'(x+n)-\int_0^1\frac{B_2(t)}{2}{\,}(\Sigma g)''(x+n+t){\,}dt,
\end{eqnarray*}
and we conclude the proof as in assertion (b) since $g'$ lies in $\cC^1\cap\cD^0\cap\cK^0$.
\end{proof}

\begin{example}
Let us apply assertion (a) of Proposition~\ref{prop:Liu471} to $g(x)=\ln x$. We obtain
$$
\ln\Gamma(x) ~=~ \frac{1}{2}\ln(2\pi)-x+\left(x-\frac{1}{2}\right)\ln x+\int_0^{\infty}\frac{\frac{1}{2}-\{t\}}{t+x}{\,}dt,
$$
or equivalently,
$$
J(x) ~=~ J^2[\ln\circ\Gamma](x) ~=~ \int_0^{\infty}\frac{\frac{1}{2}-\{t\}}{t+x}{\,}dt,
$$
which extends the original Liu formula to a real argument.
\end{example}

\begin{example}
Applying assertion (a) of Proposition~\ref{prop:Liu471} to $g(x)=\frac{1}{x}$, we obtain the following integral expression for the digamma function\index{digamma function}
$$
\psi(x) ~=~ \ln x-\frac{1}{2x}+\int_0^{\infty}\frac{\{t\}-\frac{1}{2}}{(t+x)^2}{\,}dt.
$$
This expression seems to be previously unknown.
\end{example}

Setting $x=1$ in Proposition~\ref{prop:Liu471}, we immediately derive an integral representation of the asymptotic constant\index{asymptotic constant} $\sigma[g]$. We state this observation in the following corollary.

\begin{corollary}\label{cor:Liu471S5}
\begin{enumerate}
\item[(a)] If $g$ lies in $\cC^2\cap\cD^1\cap\cK^2$, then we have
$$
\sigma[g] ~=~ \frac{1}{2}{\,}g(1)+\int_1^{\infty}\textstyle{\left(\{t\}-\frac{1}{2}\right)g'(t){\,}dt}.
$$
\item[(b)] If $g$ lies in $\cC^{2q+1}\cap\cD^{2q}\cap\cK^{2q+1}$ for some $q\in\N^*$, then we have
$$
\sigma[g] ~=~ \frac{1}{2}{\,}g(1)-\sum_{k=1}^q\frac{B_{2k}}{(2k)!}{\,}g^{(2k-1)}(1)  - \int_1^{\infty}\frac{B_{2q}(\{t\})}{(2q)!}{\,}g^{(2q)}(t){\,}dt.
$$
\end{enumerate}
\end{corollary}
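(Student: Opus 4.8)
The plan is to obtain Corollary~\ref{cor:Liu471S5} as a direct evaluation of the generalized Liu formula (Proposition~\ref{prop:Liu471}) at the single point $x=1$, together with a clean-up of the constant and integral terms. The key observation is that setting $x=1$ collapses the ``running'' pieces of the Liu formula into boundary values, so the whole task reduces to bookkeeping.

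For assertion (a), I would start from the identity in Proposition~\ref{prop:Liu471}(a),
$$
\Sigma g(x) ~=~ \sigma[g]+\int_1^x g(t){\,}dt -\frac{1}{2}{\,}g(x)+\int_0^{\infty}\textstyle{\left(\frac{1}{2}-\{t\}\right)g'(x+t){\,}dt},
$$
and evaluate at $x=1$. On the left we have $\Sigma g(1)=0$ by Proposition~\ref{prp:56GathZZPro8}(a) (since $\Sigma g$ vanishes at $1$). The integral $\int_1^1 g(t){\,}dt$ is zero, and the last integral becomes $\int_0^{\infty}(\frac{1}{2}-\{t\})g'(1+t){\,}dt$. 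Solving for $\sigma[g]$ gives
$$
\sigma[g] ~=~ \frac{1}{2}{\,}g(1)-\int_0^{\infty}\textstyle{\left(\frac{1}{2}-\{t\}\right)g'(1+t){\,}dt}.
$$
It then remains only to rewrite this in the stated form: performing the substitution $t\mapsto t-1$ in the improper integral sends $g'(1+t)$ to $g'(t)$ over $(1,\infty)$, while $\{t\}$ is $1$-periodic so $\frac{1}{2}-\{1+t\}=\frac{1}{2}-\{t\}$ is unaffected, and flipping the sign turns $-(\frac{1}{2}-\{t\})$ into $(\{t\}-\frac{1}{2})$, producing exactly $\int_1^{\infty}(\{t\}-\frac{1}{2})g'(t){\,}dt$. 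This matches the claimed formula.

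Assertion (b) follows by the same evaluation applied to Proposition~\ref{prop:Liu471}(b). Setting $x=1$ and using $\Sigma g(1)=0$ and $\int_1^1 g=0$, the sum $\sum_{k=1}^q\frac{B_{2k}}{(2k)!}g^{(2k-1)}(x)$ becomes $\sum_{k=1}^q\frac{B_{2k}}{(2k)!}g^{(2k-1)}(1)$, which moves to the other side with a sign change, and the remainder integral $\int_0^{\infty}\frac{B_{2q}(\{t\})}{(2q)!}g^{(2q)}(1+t){\,}dt$ is converted by the substitution $t\mapsto t-1$ into $\int_1^{\infty}\frac{B_{2q}(\{t\})}{(2q)!}g^{(2q)}(t){\,}dt$, again using the $1$-periodicity of $B_{2q}(\{\cdot\})$. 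Collecting terms yields precisely the stated expression for $\sigma[g]$.

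There is essentially no hard analytic obstacle here, since the convergence of the improper integrals is already guaranteed by the hypotheses of Proposition~\ref{prop:Liu471} (which are identical to those imposed in each part of the corollary). The only point requiring a moment of care — and the place where a slip would most easily occur — is the sign and periodicity bookkeeping in the change of variable $t\mapsto t-1$: one must verify that $B_1(\{1+t\})=B_1(\{t\})$ and $B_{2q}(\{1+t\})=B_{2q}(\{t\})$, which holds because $\{1+t\}=\{t\}$, and track the sign flip in part (a) coming from $\frac{1}{2}-\{t\}$ versus $\{t\}-\frac{1}{2}$. Since the proof is this mechanical, I would simply state that the corollary is obtained by setting $x=1$ in Proposition~\ref{prop:Liu471} and using that $\Sigma g$ vanishes at $1$, leaving the routine substitution to the reader.
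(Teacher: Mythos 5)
Your proposal is correct and is exactly the paper's own argument: the paper derives Corollary~\ref{cor:Liu471S5} by setting $x=1$ in Proposition~\ref{prop:Liu471} and using $\Sigma g(1)=0$, with the shift $t\mapsto t+1$ and the $1$-periodicity of $\{t\}$ and $B_{2q}(\{t\})$ handling the rest. Your sign and change-of-variable bookkeeping checks out, so there is nothing to add.
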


\begin{remark}
Proposition~\ref{prop:Liu471} and Corollary~\ref{cor:Liu471S5} enable one to evaluate certain improper integrals involving polynomial functions of the fractional part of the integration variable. For example, to establish the identity
$$
\int_1^{\infty}\frac{\{x\}-\frac{1}{2}}{2x+1}{\,}dx ~=~ -\frac{3}{4}+\frac{1}{4}\ln 2+\frac{1}{2}\ln 3
$$
(Srivastava and Choi \cite[p.~600, Problem 11]{SriCho12}), we simply use assertion (a) of Corollary~\ref{cor:Liu471S5} with $g(x)=\frac{1}{2}\ln(2x+1)$. In this case, we have
$$
\Sigma g(x) ~=~ \frac{1}{2}\ln 2{\,}(x-1)+\frac{1}{2}\ln\Gamma\left(x+\frac{1}{2}\right)-\frac{1}{2}\ln\Gamma\left(\frac{3}{2}\right)
$$
and the integral is simply equal to $\sigma[g]-\frac{1}{2} g(1)$.
\end{remark}

\begin{remark}\label{rem:EM5LiU3}
In Proposition~\ref{prop:Liu471}, we could substitute $\sigma[g]$ from its expression given in Corollary~\ref{cor:Liu471S5}. But then, the restriction to the natural integers of the resulting formulas will simply reduce to the application of Euler-Maclaurin's formula (Proposition~\ref{prop:E3MacLau5}) to $g$, with $a=1$, $b=n$, $h=1$, and $N=n-1$.
\end{remark}

\index{asymptotic expansion|)}

\section{Analogue of Wallis's product formula}
\label{sec:Wallis582}
\index{Wallis's product formula!analogue|(}

In the following proposition, we recall one of the different versions of Wallis's product formula (see, e.g., Finch \cite[p.~21]{Fin03}).

\begin{proposition}[Wallis's product formula]\index{Wallis's product formula}
The following limit holds
\begin{equation}\label{eq:Wallis1140}
\lim_{n\to\infty}\frac{1\cdot 3{\,}\cdots{\,}(2n-1)}{2\cdot 4{\,}\cdots{\,}(2n)}{\,}\sqrt{n} ~=~ \frac{1}{\sqrt{\pi}}{\,}.
\end{equation}
\end{proposition}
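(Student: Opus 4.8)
The plan is to prove Wallis's formula \eqref{eq:Wallis1140} by recognizing the product $\frac{1\cdot 3\,\cdots\,(2n-1)}{2\cdot 4\,\cdots\,(2n)}$ as a ratio of gamma function values and then applying Stirling's formula \eqref{eq:StirAA5}, which we have already established in Example~\ref{ex:StriCons}. First I would rewrite the numerator and denominator in closed form. Using the identity $2\cdot 4\,\cdots\,(2n)=2^n{\,}n!$ and $1\cdot 3\,\cdots\,(2n-1)=\frac{(2n)!}{2^n{\,}n!}$, the ratio becomes
$$
\frac{1\cdot 3{\,}\cdots{\,}(2n-1)}{2\cdot 4{\,}\cdots{\,}(2n)} ~=~ \frac{(2n)!}{2^{2n}{\,}(n!)^2} ~=~ \frac{1}{2^{2n}}\tchoose{2n}{n}{\,}.
$$
Thus the limit in \eqref{eq:Wallis1140} reduces to evaluating $\lim_{n\to\infty}2^{-2n}\tchoose{2n}{n}\sqrt{n}$, which is a purely asymptotic question about the central binomial coefficient.

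Next I would substitute the gamma function throughout, writing $(2n)!=\Gamma(2n+1)$ and $n!=\Gamma(n+1)$, and invoke Stirling's formula in the form \eqref{eq:StirAA521}, namely $\Gamma(x+1)\sim\sqrt{2\pi x}{\,}e^{-x}x^x$ as $x\to\infty$. Applying this to $\Gamma(2n+1)$ with $x=2n$ gives $\sqrt{4\pi n}{\,}e^{-2n}(2n)^{2n}$, and applying it to $\Gamma(n+1)$ with $x=n$ gives $\sqrt{2\pi n}{\,}e^{-n}n^n$. Forming the quotient
$$
\frac{(2n)!}{2^{2n}(n!)^2} ~\sim ~ \frac{\sqrt{4\pi n}{\,}e^{-2n}(2n)^{2n}}{2^{2n}{\,}2\pi n{\,}e^{-2n}n^{2n}} ~=~ \frac{\sqrt{4\pi n}}{2\pi n} ~=~ \frac{1}{\sqrt{\pi n}}{\,},
$$
where the factors $e^{-2n}$ cancel and the powers combine as $(2n)^{2n}/(2^{2n}n^{2n})=1$. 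Multiplying through by $\sqrt{n}$ then yields the claimed limit $1/\sqrt{\pi}$ directly.

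The main obstacle here is not conceptual but lies in justifying the manipulation of asymptotic equivalences: one must check that the relation $\sim$ is preserved under the products and quotients being formed, which is legitimate since asymptotic equivalence (as defined in the excerpt via $f(x)/g(x)\to 1$) is multiplicative and the denominator terms are eventually nonzero. Alternatively, and perhaps more cleanly within the spirit of this monograph, I would derive the result from the double inequality in \eqref{eq:6QR46Wen22}, which bounds $\Gamma(x)/(\sqrt{2\pi}{\,}e^{-x}x^{x-\frac12})$ between $1$ and $(1+\frac1x)^{1/2}$; evaluating these bounds at $x=2n$ and $x=n$ and combining them would give an explicit two-sided estimate for $2^{-2n}\tchoose{2n}{n}\sqrt{n}$ whose bounds both converge to $1/\sqrt{\pi}$, so that the squeeze theorem finishes the proof without any appeal to unproven asymptotic bookkeeping. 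This approach has the advantage of being entirely self-contained within the elementary framework developed so far.
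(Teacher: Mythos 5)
Your proposal is correct, but it takes a genuinely different route from the paper. The paper treats \eqref{eq:Wallis1140} as a special case of its general machinery: it first establishes Proposition~\ref{prop:Wallis92}, the analogue of Wallis's formula for an arbitrary function $g\in\cC^0\cap\cD^p\cap\cK^p$, whose proof rests on the scaling property of $\Sigma$ (Corollary~\ref{cor:Hom4}), the identity \eqref{eq:RestrInt}, and the discrete generalized Stirling formula \eqref{eq:SgSt7FrI}; it then applies this to $g(x)=\ln x$ with $p=1$, computes $h(n)=\frac{1}{2}\ln(\pi n)+O(n^{-2})$, and retrieves \eqref{eq:Wallis1140} from \eqref{eq:Wallis93}. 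You instead use the classical argument: writing the odd-over-even product as $2^{-2n}\tchoose{2n}{n}$ and invoking Stirling's formula \eqref{eq:StirAA521} (already available through Example~\ref{ex:StriCons}), or, in your second variant, squeezing via the explicit two-sided bounds \eqref{eq:6QR46Wen22}. Both of your computations are sound, including the identities $2\cdot 4\cdots(2n)=2^n n!$ and $1\cdot 3\cdots(2n-1)=(2n)!/(2^n n!)$ and the cancellation $(2n)^{2n}/(2^{2n}n^{2n})=1$, and your remark about the multiplicativity of $\sim$ is the right justification (the squeeze version removes even that bookkeeping). What each approach buys: yours is shorter, more elementary, and entirely self-contained given the Stirling estimates already proved; the paper's is heavier for this single instance but yields, with no extra work, Wallis-type limits for every multiple $\log\Gamma$-type function (e.g., for $H_x$, $H_x^{(2)}$, the $q$-gamma and Barnes $G$-functions in later sections), which is precisely why the monograph routes the proof through Proposition~\ref{prop:Wallis92}.
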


In the additive notation, identity \eqref{eq:Wallis1140} becomes
$$
\lim_{n\to\infty}\left(\frac{1}{2}\,\ln(\pi n)+\sum_{k=1}^{2n}(-1)^{k-1}\,\ln k\right) ~=~ 0.
$$

The following proposition gives an analogue of this latter formula for any function $g$ lying in $\cC^0\cap\mathrm{dom}(\Sigma)$.

\begin{proposition}\label{prop:Wallis92}\index{Wallis's product formula!analogue|textbf}
Let $g$ lie in $\cC^0\cap\cD^p\cap\cK^p$ for some $p\in\N$. Let $\tilde{g}\colon\R_+\to\R$ be the function defined by the equation $\tilde{g}(x)=2{\,}g(2x)$ for $x>0$. Let also $h\colon\N^*\to\R$ be the sequence defined by the equation
\begin{eqnarray*}
h(n) &=& \sigma[\tilde{g}]-\sigma[g]+\int_1^2(g(2n+t)-g(t)){\,}dt\\
&& \null +\sum_{j=1}^pG_j\left(\Delta^{j-1}g(2n+1)-\Delta^{j-1}\tilde{g}(n+1)\right)\qquad\text{for $n\in\N^*$}.
\end{eqnarray*}
Then we have
\begin{equation}\label{eq:Wallis93}
\lim_{n\to\infty}\left(h(n) + \sum_{k=1}^{2n}(-1)^{k-1}g(k)\right) ~=~ 0.
\end{equation}
\end{proposition}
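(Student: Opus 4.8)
The plan is to reduce the alternating sum to a combination of ordinary (non-alternating) sums by the standard device of splitting into even and odd indices, and then to feed each piece into the generalized Stirling formula (Theorem~\ref{thm:dgf7dds}) and the analogue of Raabe's formula (Proposition~\ref{prop:8Raab0036}). Concretely, for the alternating sum one writes
$$
\sum_{k=1}^{2n}(-1)^{k-1}g(k) ~=~ \sum_{k=1}^{2n}g(k) - 2\sum_{k=1}^{n}g(2k),
$$
so that the second term is essentially a non-alternating sum of the dilated function $\tilde{g}(x)=2{\,}g(2x)$ evaluated at integers. Since $g$ lies in $\cC^0\cap\cD^p\cap\cK^p$, Corollary~\ref{cor:Hom4} guarantees that $\tilde{g}$ lies in $\cC^0\cap\cD^p\cap\cK^p$ as well, so the map $\Sigma$ is defined on $\tilde{g}$ and the generalized Stirling formula applies to both $g$ and $\tilde{g}$.

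First I would express each of the two non-alternating partial sums using identity \eqref{eq:RestrInt}, namely $\sum_{k=1}^{m-1}g(k)=\Sigma g(m)$ and $\sum_{k=1}^{n}g(2k)=\frac{1}{2}\sum_{k=1}^{n}\tilde{g}(k)=\frac{1}{2}\,\Sigma\tilde{g}(n+1)$. This rewrites $\sum_{k=1}^{2n}(-1)^{k-1}g(k)$ as $g(2n)+\Sigma g(2n)-\Sigma\tilde{g}(n+1)$, up to a routine bookkeeping of the endpoint term $k=2n$. Next I would apply the generalized Stirling formula \eqref{eq:dgf7dds} separately to $\Sigma g(2n)$ and to $\Sigma\tilde{g}(n+1)$: the former contributes $\sigma[g]+\int_1^{2n}g-\sum_{j=1}^pG_j\Delta^{j-1}g(2n)$ plus a term vanishing as $n\to\infty$, and the latter contributes $\sigma[\tilde g]+\int_1^{n+1}\tilde g-\sum_{j=1}^pG_j\Delta^{j-1}\tilde g(n+1)$ plus a vanishing term. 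Using the change of variable $t\mapsto 2t$ to rewrite $\int_1^{n+1}\tilde{g}(t){\,}dt=2\int_1^{n+1}g(2t){\,}dt=\int_2^{2n+2}g$, the two integrals combine into $\int_1^{2n}g-\int_2^{2n+2}g$, which telescopes into the short integrals $\int_1^2 g(t){\,}dt$ and $\int_{2n}^{2n+2}g(t){\,}dt=\int_1^2 g(2n+t){\,}dt+\int_{2n+1}^{2n+2}g$. Matching the resulting difference of asymptotic constants $\sigma[\tilde g]-\sigma[g]$, the finite-difference terms, and the surviving integrals against the definition of $h(n)$ should yield \eqref{eq:Wallis93} exactly, once one checks that the leftover Stirling remainders and the boundary contributions all vanish in the limit.

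The main obstacle will be the careful alignment of the integral and finite-difference terms so that what remains is precisely the prescribed $h(n)$ and not merely something asymptotically equivalent to it: the definition of $h(n)$ already contains the exact combination $\sum_{j=1}^p G_j(\Delta^{j-1}g(2n+1)-\Delta^{j-1}\tilde g(n+1))$ and the exact integral $\int_1^2(g(2n+t)-g(t)){\,}dt$, so the bookkeeping of index shifts (for instance whether the Stirling correction is evaluated at $2n$ or $2n+1$, and at $n+1$ versus $n$) must be done with the right off-by-one conventions coming from \eqref{eq:RestrInt} and \eqref{eq:dgf7dds}. I would therefore keep all terms exact through the computation, invoking the generalized Stirling formula only at the very end in the form ``$J^{p+1}[\Sigma g](2n)\to 0$ and $J^{p+1}[\Sigma\tilde g](n+1)\to 0$ as $n\to\infty$,'' so that the vanishing terms are clearly identified and everything else cancels into $h(n)$. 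A final consistency check with $g(x)=\ln x$, $p=1$, recovering the classical Wallis formula \eqref{eq:Wallis1140}, will confirm that the constants and shifts have been handled correctly.
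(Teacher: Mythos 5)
Your proposal is correct and follows essentially the same route as the paper's proof: the even--odd splitting $\sum_{k=1}^{2n}(-1)^{k-1}g(k)=\sum_{k=1}^{2n}g(k)-\sum_{k=1}^n\tilde{g}(k)=\Sigma g(2n+1)-\Sigma\tilde{g}(n+1)$ (via Corollary~\ref{cor:Hom4} and \eqref{eq:RestrInt}), followed by the discrete generalized Stirling formula \eqref{eq:SgSt7FrI} applied to both $g$ and $\tilde{g}$ and the change of variables $\int_1^{n+1}\tilde{g}=\int_2^{2n+2}g$. Your off-by-one variant (expanding at $2n$ rather than $2n+1$) only introduces the extra term $J^{p+1}[g](2n)$, which vanishes by the same Stirling theorem, so the bookkeeping you flag does close up exactly to $h(n)$.
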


\begin{proof}
The function $\tilde{g}$ lies in $\cC^0\cap\cD^p\cap\cK^p$ by Corollary~\ref{cor:Hom4}. By \eqref{eq:RestrInt}, for any $n\in\N^*$ we thus have
$$
\sum_{k=1}^{2n}(-1)^{k-1}g(k) ~=~ \sum_{k=1}^{2n}g(k)-\sum_{k=1}^n\tilde{g}(k) ~=~ \Sigma g(2n+1)-\Sigma\tilde{g}(n+1).
$$
Using the discrete version of the generalized Stirling formula \eqref{eq:SgSt7FrI}, we get
$$
\sigma[g] ~=~ \lim_{n\to\infty}\left(\sum_{k=1}^{2n}g(k)-\int_1^{2n+1}g(t){\,}dt+\sum_{j=1}^pG_j\,\Delta^{j-1}g(2n+1)\right)
$$
and
$$
\sigma[\tilde{g}] ~=~ \lim_{n\to\infty}\left(\sum_{k=1}^{n}\tilde{g}(k)-\int_1^{n+1}\tilde{g}(t){\,}dt
+\sum_{j=1}^pG_j\,\Delta^{j-1}\tilde{g}(n+1)\right).
$$
This establishes the claimed formula.
\end{proof}

Formula \eqref{eq:Wallis93} actually holds for infinitely many sequences $n\mapsto h(n)$. Indeed, if it holds for a sequence $h(n)$, then it also holds for instance for the sequence $h(n)+n^{-q}$ for any $q\in\N^*$. Thus, to obtain an elegant analogue of Wallis's product formula, it is advisable to choose $h$ among the simplest functions. For instance, we could consider the sequence obtained from the series expansion for $h(n)$ about infinity after removing all the summands that vanish at infinity.

\begin{example}
Let us apply Proposition~\ref{prop:Wallis92} to $g(x)=\ln x$ with $p=1$. We obtain
\begin{eqnarray*}
h(n) &=& 2n\ln(2n+2)-\left(2n+\frac{1}{2}\right)\ln(2n+1)+\ln(n+1)-1+\frac{1}{2}\ln(2\pi)\\
&=& \frac{1}{2}\ln(\pi n)+O\left(n^{-2}\right).
\end{eqnarray*}
Replacing $h(n)$ with $\frac{1}{2}\ln(\pi n)$ in \eqref{eq:Wallis93} as recommended above, we retrieve the original Wallis product formula \eqref{eq:Wallis1140}.
\end{example}

\begin{example}
Let us apply Proposition~\ref{prop:Wallis92} to the harmonic number function\index{harmonic number function} $g(x)=H_x$ with $p=1$. After a bit of calculus we get
\begin{eqnarray*}
h(n) &=& \frac{1}{2}{\,}H_{2n+1}+\frac{1}{2}\,\ln 2+\ln(n+1)-\psi(2n+3)\\
&=& \frac{1}{2}(\gamma +\ln n)+O\left(n^{-1}\right).
\end{eqnarray*}
We then obtain the following analogue of Wallis's product formula
$$
\lim_{n\to\infty}\left(-\ln n+2\,\sum_{k=1}^{2n}(-1)^kH_k\right) ~=~ \gamma{\,},
$$
which provides an alternative definition of Euler's constant\index{Euler's constant} $\gamma$.
\end{example}

\begin{example}
Let us apply Proposition~\ref{prop:Wallis92} to the harmonic number function of order $2$\index{harmonic number function!of order $2$}
$$
g(x) ~=~ H_x^{(2)} ~=~ \zeta(2)-\zeta(2,x+1)
$$
with $p=1$. After some algebra we obtain the following analogue of Wallis's product formula
$$
\lim_{n\to\infty}\sum_{k=1}^{2n}(-1)^kH_k^{(2)} ~=~ \frac{\pi^2}{24}{\,}.\qedhere
$$
\end{example}

\begin{remark}\label{rem:Wall38}
Alternative sequences for $h(n)$ may be considered in Proposition~\ref{prop:Wallis92}. For instance, if $g$ lies in $\cC^0\cap\cD^p\cap\cK^p$ for some $p\in\N$, then it is easy to see that
$$
\sum_{k=1}^{2n}(-1)^{k-1}g(k) ~=~ -\Sigma\tilde{g}(n+1),\qquad n\in\N^*,
$$
where $\tilde{g}\colon\R_+\to\R$ is the function defined by the equation $\tilde{g}(x)=\Delta g(2x-1)$ for $x>0$. Thus, assuming that $\tilde{g}$ lies in $\cK^0$, identity \eqref{eq:Wallis93} also holds for
$$
h(n) ~=~ \sigma[\tilde{g}]+\int_1^{n+1}\tilde{g}(t){\,}dt-\sum_{j=1}^{(p-1)_+}G_j\,\Delta^{j-1}\tilde{g}(n+1).
$$
Similarly, we can easily see that
$$
\sum_{k=1}^{2n}(-1)^{k-1}g(k) ~=~ g(1)-g(2n)+\Sigma\tilde{g}(n),\qquad n\in\N^*,
$$
where $\tilde{g}\colon\R_+\to\R$ is the function defined by the equation $\tilde{g}(x)=\Delta g(2x)$ for $x>0$. Thus, assuming again that $\tilde{g}$ lies in $\cK^0$, identity \eqref{eq:Wallis93} also holds for
$$
h(n) ~=~ g(2n)-g(1)-\sigma[\tilde{g}]-\int_1^n\tilde{g}(t){\,}dt+\sum_{j=1}^{(p-1)_+}G_j\,\Delta^{j-1}\tilde{g}(n).
$$
It is clear that the most appropriate function $h$ among these possibilities strongly depends on the form of the function $g$.
\end{remark}

\begin{remark}
Using summation by parts with the classical indefinite sum operator (see, e.g., Graham {\em et al.} \cite[p.~55]{GraKnuPat94}), it is not difficult to show that
\begin{equation}\label{SbP4S3}
\Sigma_x g(2x) ~=~ x{\,}g(2x)-g(2)-\Sigma_x\left((x+1)(\Delta g(2x)+\Delta g(2x+1)\right)
\end{equation}
(provided both sides exist). More generally, for any $m\in\N^*$, we can show that
$$
\Sigma_x g(mx) ~=~ x{\,}g(mx)-g(m)-\sum_{j=0}^{m-1}\Sigma_x\left((x+1)\,\Delta g(mx+j)\right).
$$
For instance, using \eqref{SbP4S3} we obtain
\begin{eqnarray*}
\Sigma_x\psi(2x) &=& x\,\psi(2x)-\psi(2)-\Sigma_x\left(1+\frac{1}{2x}+\frac{1}{4(x+\frac{1}{2})}\right)\\
&=& x\,\psi(2x)-\psi(1)-x-\frac{1}{2}(\psi(x)+\gamma)
-\frac{1}{4}\left(\psi\left(x+\frac{1}{2}\right)-\psi\left(\frac{3}{2}\right)\right)\\
&=& x\,\psi(2x)-\frac{1}{2}\,\psi(x)-x-\frac{1}{4}\,\psi\left(x+\frac{1}{2}\right)+\frac{1}{4}\left(2-2\ln 2+\gamma\right).
\end{eqnarray*}
As this example demonstrates, formula \eqref{SbP4S3} can sometimes be very useful in Proposition~\ref{prop:Wallis92} for the computation of $\sigma[\tilde{g}]$.
\end{remark}

\index{Wallis's product formula!analogue|)}

\section{Analogue of Euler's reflection formula}
\label{sec:ReflFor62}
\index{Euler's reflection formula!analogue|(}

Recall that the identity
\begin{equation}\label{eq:ReflDig500MuO}
\Gamma(z)\Gamma(1-z) ~=~ \pi\csc(\pi z)
\end{equation}
holds for any $z\in\mathbb{C}\setminus\Z$. This identity, known by the name \emph{Euler's reflection formula}\index{Euler's reflection formula} (see, e.g., Artin \cite[p.~26]{Art15} and Srivastava and Choi \cite[p.~3]{SriCho12}), can be proved for instance using the Weierstrassian form of the gamma function.

Motivated by this and similar examples, it is then natural to wonder if an analogue of Euler's reflection formula holds for any multiple $\log\Gamma$-type function, at least on $\R\setminus\Z$, or even on the interval $(0,1)$. However, this question seems rather difficult and reflection formulas as beautiful as \eqref{eq:ReflDig500MuO} are relatively exceptional.

Now, if we logarithmically differentiate both sides of \eqref{eq:ReflDig500MuO}, we obtain the following reflection formula for the digamma function\index{digamma function} (see \cite[p.~25]{SriCho12})
\begin{equation}\label{eq:ReflDig5}
\psi(x)-\psi(1-x) ~=~ -\pi\cot(\pi x){\,}.
\end{equation}
Using an appropriate integration, we also obtain the following reflection formula for the Barnes $G$-function\index{Barnes's $G$-function} (see \cite[p.~45]{SriCho12})
\begin{equation}\label{eq:ReflBGF5}
\ln G(1+x)-\ln G(1-x) ~=~ x\ln(2\pi)-\int_0^x\pi t\cot(\pi t){\,}dt{\,}.
\end{equation}

These and other examples show that the reflection formulas usually share a common pattern. Their right sides typically include $1$-periodic functions or integrals of $1$-periodic functions while their left sides are of one the following forms
$$
\Sigma g(x)\pm \Sigma g(1-x)\qquad\text{or}\qquad\Sigma g(1+x)\pm \Sigma g(1-x)
$$
for some appropriate functions $g$.

In this section, we investigate this important topic in the light of our theory. To get straight to the point, we have not found an analogue of Euler's reflection formula that is systematically applicable to any multiple $\log\Gamma$-type function. We nevertheless present a few interesting results that could hopefully be the starting point of a larger theory.

First of all, due to the presence of the arguments $x$ and $1-x$ in most of the reflection formulas, it is important to see how the domain of the functions considered in this work can be extended to a larger set. Since many functions $g$ involved in the difference equation $\Delta f=g$ have singularities at $0$ (e.g., $g(x)=\frac{1}{x}$), we suggest extending the domain of all these functions to the set $\R\setminus\{0\}$. Due to the nature of the difference operator $\Delta$, any solution $f$ is then required to be defined on $\R\setminus(-\N)$. The domains of many other associated functions and identities of this theory can be extended likewise. For instance, for any $p\in\N$ and any $n\in\N^*$, the domain of the function $f_n^p[g]$ defined in \eqref{eq:defFpn} can be extended to $\R\setminus(-\N)$. Similarly, for any $p\in\N$ and any $a\in\R\setminus\{0\}$, the domain of the function $\rho_a^p[g]$ defined in \eqref{eq:deflambdapt} can be extended to $\R\setminus\{-a\}$.

We now have the following important result.

%
%

\begin{lemma}\label{lemma:8EulRef4Fo9p}
Let $g\colon\R\setminus\{0\}\to\R$ be a function whose restriction $g|_{\R_+}$ to $\R_+$ lies in $\cD^p\cap\cK^p$ for some $p\in\N$. Then, there exists a unique function $f\colon\R\setminus (-\N)\to\R$ such that $\Delta f=g$ and $f|_{\R_+}=\Sigma (g|_{\R_+})$. Moreover,
$$
f(x) ~=~ \lim_{n\to\infty}f_n^p[g](x){\,},\qquad x\in \R\setminus (-\N).
$$
\end{lemma}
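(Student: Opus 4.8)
The plan is to extend $\Sigma(g|_{\R_+})$ to $\R\setminus(-\N)$ by forcing the difference equation to hold, and then to identify this extension with the pointwise limit of $f_n^p[g]$. I would first settle uniqueness and existence simultaneously through an explicit formula. Suppose $f$ is any solution with $f|_{\R_+}=\Sigma(g|_{\R_+})$. For $x\in\R\setminus(-\N)$, choose $m\in\N$ with $x+m>0$; since $x\notin-\N$ we have $x+k\notin-\N$ for each $k\in\{0,\dots,m-1\}$, so the values $g(x+k)$ are defined, and the telescoping identity \eqref{eq:fxnEfxSum} (which extends verbatim to $\R\setminus(-\N)$, being purely algebraic) gives
$$
f(x) ~=~ f(x+m)-\sum_{k=0}^{m-1}g(x+k) ~=~ \Sigma g(x+m)-\sum_{k=0}^{m-1}g(x+k).
$$
This forces $f$, proving uniqueness. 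Conversely, using \eqref{eq:56zzSec32S6} one checks that the right-hand side is independent of the choice of $m$ and defines a function on $\R\setminus(-\N)$ that restricts to $\Sigma(g|_{\R_+})$ on $\R_+$ and satisfies $\Delta f=g$; this settles existence.

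The heart of the argument is the limit formula, whose key intermediate step is the following claim: $\rho_n^p[g](y)\to 0$ as $n\to\infty$ for every $y\in\R\setminus(-\N)$. For $y\ge 0$ this is immediate, since $\rho_a^p[g](0)=0$ and, for $y>0$, it is exactly the statement $g|_{\R_+}\in\cR^p_{\N}$, which holds by Theorem~\ref{thm:exist}(a). For $y<0$ I would proceed by a double induction: the outer induction on $p$, and the inner one on the number of unit steps needed to push $y$ into $\R_+$. The engine is the purely algebraic recurrence \eqref{eq:DeltaXLam}, written as $\rho_n^p[g](y)=\rho_n^p[g](y+1)-\rho_n^{p-1}[\Delta g](y)$: here $\rho_n^p[g](y+1)\to 0$ by the inner hypothesis (as $y+1$ is one step closer to $\R_+$), while $\rho_n^{p-1}[\Delta g](y)\to 0$ by the outer hypothesis applied to $\Delta g$, which restricts on $\R_+$ to a function of $\cD^{p-1}\cap\cK^{p-1}$ by Propositions~\ref{prop:fpDpj}(a) and \ref{prop:LMpGpLMp1D7}(a). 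The base case $p=0$ reads $\rho_n^0[g](y)=g(n+y)$, which tends to $0$ because $g|_{\R_+}\in\cD^0\cap\cK^0$ forces $g$ to vanish at $+\infty$ over $\R$ (Proposition~\ref{prop:ClClIntg}).

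With the claim in hand, the limit formula follows by telescoping. Using the pure identity \eqref{eq:fng0}, namely $f_n^p[g](x+1)-f_n^p[g](x)=g(x)-\rho_n^p[g](x)$, and summing over the block that carries $x$ into $\R_+$, I obtain
$$
f_n^p[g](x) ~=~ f_n^p[g](x+m)-\sum_{k=0}^{m-1}g(x+k)+\sum_{k=0}^{m-1}\rho_n^p[g](x+k).
$$
Letting $n\to\infty$, the first term converges to $\Sigma g(x+m)$ by the very definition of $\Sigma$ (since $x+m>0$), the middle sum is constant in $n$, and each $\rho_n^p[g](x+k)\to 0$ by the claim. Hence $f_n^p[g](x)$ converges to $\Sigma g(x+m)-\sum_{k=0}^{m-1}g(x+k)$, which is precisely the value $f(x)$ obtained in the first paragraph.

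The hard part is the claim for negative $y$: the individual correction terms in $\rho_n^p[g](y)$ need not stay bounded (e.g.\ $\Delta^jg(n)$ may diverge for $g(x)=\ln x$), so one cannot argue term by term, and a direct shift of the argument $y$ into $\R_+$ must be routed through the recurrence \eqref{eq:DeltaXLam} rather than through the definition of $\rho$. The only points requiring care are the well-definedness of all evaluations (guaranteed by $x\notin-\N$) and the verification that \eqref{eq:fng0} and \eqref{eq:DeltaXLam} remain valid off $\R_+$, which is immediate since both are algebraic identities in the defining expressions.
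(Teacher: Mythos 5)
Your proposal is correct, and while the first part (forcing the extension via the telescoped difference equation) coincides with the paper's argument, your treatment of the two remaining steps follows a genuinely different route. For the key identity relating $f_n^p[g]$ at $x$ and at $x+m$, the paper proves its analogue \eqref{eq:8TescoE3zz} by a direct computation with binomial-coefficient manipulations, whereas you obtain it more economically by summing the pure identity \eqref{eq:fng0} over the block $x,x+1,\ldots,x+m-1$; the two identities are equivalent, but your derivation is cleaner. The real divergence is in the central claim that $\rho_n^p[g](y)\to 0$ for negative non-integer $y$. The paper argues at fixed order $p$: it rewrites $\rho_n^p[g](x-k)$ via \eqref{eq:LDDiv} as a multiple of the divided difference $g[n,\ldots,n+p-1,n+x-k]$ and sandwiches this, using the monotonicity of divided differences from Lemma~\ref{lemma:pCInc5}, between two shifted divided-difference sequences that both tend to zero because $g\in\cR^p_{\N}$. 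You instead run a double induction, trading the order $p$ against the distance of $y$ from $\R_+$ through the recurrence \eqref{eq:DeltaXLam}, $\rho_n^p[g](y)=\rho_n^p[g](y+1)-\rho_n^{p-1}[\Delta g](y)$, with the base case $p=0$ reduced to vanishing at infinity via Proposition~\ref{prop:ClClIntg}; the outer induction hypothesis applies to $\Delta g$ because $\cD^p\cap\cK^p$ is stable under $\Delta$ (Proposition~\ref{prop:LMpGpLMp1D7}). The paper's sandwich is a one-shot argument exploiting $p$-convexity directly, and is shorter once Lemma~\ref{lemma:pCInc5} is available; your induction avoids divided differences entirely and uses only algebraic recurrences plus the closure properties of the class, at the cost of slightly heavier bookkeeping. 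Both correctly exploit that, for fixed $y$ and large $n$, all evaluations in $\rho_n^p[g](y)$ land in $\R_+$, so the claim is genuinely a statement about $g|_{\R_+}$.
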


\begin{proof}
For any $m\in\N$ and any solution $f\colon\R\setminus (-\N)\to\R$ to the equation $\Delta f=g$, we must have
\begin{equation}\label{eq:8Tel7scoE}
f(x-m) ~=~f(x) - \sum_{k=1}^mg(x-k),\qquad x\in\R_+\setminus\N.
\end{equation}
This clearly establishes the first part of the lemma.

Let us now prove that for any $x\in\R_+\setminus\N$ and any integers $0\leq m\leq n$ we have
\begin{equation}\label{eq:8TescoE3zz}
f_n^p[g](x)-\sum_{k=1}^mg(x-k) ~=~ f_n^p[g](x-m)-\sum_{k=1}^m\rho_n^p[g](x-k).
\end{equation}
On the one hand, for $j=1,\ldots,p$, we have
$$
\sum_{k=1}^m\tchoose{x-k}{j-1} ~=~ \sum_{k=0}^{m-1}\tchoose{k+x-m}{j-1} ~=~ \sum_{k=0}^m\Delta_k\tchoose{k+x-m}{j} ~=~ \tchoose{x}{j}-\tchoose{x-m}{j}
$$
and hence using \eqref{eq:deflambdapt} we obtain
$$
\sum_{k=1}^m\rho_n^p[g](x-k) ~=~ \sum_{k=1}^mg(x+n-k)-\sum_{j=1}^p\left(\tchoose{x}{j}-\tchoose{x-m}{j}\right)\Delta^{j-1}g(n){\,}.
$$
On the other hand, using this latter identity and subtracting the right side of \eqref{eq:8TescoE3zz} from the left side, using \eqref{eq:defFpn} we obtain
$$
\sum_{k=0}^{n-1}(g(x-m+k)-g(x+k))-\sum_{k=1}^mg(x-k)+\sum_{k=1}^mg(x+n-k),
$$
which is identically zero. This establishes \eqref{eq:8TescoE3zz}.

Let us now show that the sequence $n\mapsto\rho^p_n[g](x-k)$ converges to zero for any $x\in\R_+\setminus\N$ and any $k\in\N$. By \eqref{eq:LDDiv} it is actually enough to show that the sequence
$$
n ~\mapsto ~ g[n,n+1,\ldots,n+p-1,n+x-k]
$$
converges to zero. However, by Lemma~\ref{lemma:pCInc5} this latter sequence can be sandwiched between the sequences
$$
n ~\mapsto ~ g[n-k,n+1-k,\ldots,n+p-1-k,n+x-k]
$$
and
$$
n ~\mapsto ~ g[n,n+1,\ldots,n+p-1,n+x],
$$
which both converge to zero by \eqref{eq:LDDiv}.

Finally, let $f\colon\R\setminus (-\N)\to\R$ be the unique function defined in the first part of this lemma. Using \eqref{eq:8Tel7scoE} and \eqref{eq:8TescoE3zz}, since $g$ lies in $\cD^p\cap\cK^p$ we obtain
\begin{eqnarray*}
f(x-m) &=& \Sigma g(x)- \sum_{k=1}^mg(x-k) ~=~ \lim_{n\to\infty}f_n^p[g](x)-\sum_{k=1}^mg(x-k)\\
&=& \lim_{n\to\infty}f_n^p[g](x-m),
\end{eqnarray*}
which establishes the second part of the lemma.
\end{proof}

Lemma~\ref{lemma:8EulRef4Fo9p} shows that the domain of the function $\Sigma g$ can be extended to $\R\setminus (-\N)$ whenever $g$ is defined on $\R\setminus\{0\}$. We then use the same symbol $\Sigma g$ for this extended function. Moreover, in this case we have
$$
\Sigma g(x) ~=~ \lim_{n\to\infty}f_n^p[g](x){\,},\qquad x\in \R\setminus (-\N)
$$
and the Eulerian form \eqref{eq:EulG4100} of $\Sigma g$ extends similarly. Actually, when $g$ is a function of a complex variable, Lemma~\ref{lemma:8EulRef4Fo9p} can be easily adapted to extend the function $\Sigma g$ to an appropriate complex domain.

Let us now establish reflection formulas on $\R\setminus\Z$ for functions $\Sigma g$ when the restriction of $g$ to $\R_+$ lies in $\cD^0\cap\cK^0$. The result is presented in the following two propositions, which deal separately with the cases when $g|_{\R\setminus\Z}$ is odd or even. The proofs of these propositions are similar and we therefore omit the second one.

\begin{proposition}\label{prop:8Eu4Ref0PO}
Let $g\colon\R\setminus\{0\}\to\R$ be such that $g|_{\R_+}$ lies in $\cD^0\cap\cK^0$ and let $\omega\colon\R\setminus\Z\to\R$ be the function defined by the equation
$$
\omega(x) ~=~ \Sigma g(x)-\Sigma g(1-x)\qquad\text{for $x\in\R\setminus\Z$}.
$$
Then the following assertions are equivalent.
\begin{enumerate}
  \item [(i)] The function $g|_{\R\setminus\Z}$ is odd.
  \item [(ii)] The function $\omega$ is $1$-periodic.
  \item [(iii)] We have that $g|_{\R\setminus\Z}$ vanishes at $-\infty$ and
  $$
  \omega(x) ~=~ \lim_{N\to\infty}\sum_{|k|\leq N}g(x+k),\qquad x\in\R\setminus\Z.
  $$
\end{enumerate}
\end{proposition}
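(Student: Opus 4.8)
The plan is to reduce all three equivalences to the single elementary identity obtained by applying the difference operator to $\omega$. Since $g|_{\R_+}$ lies in $\cD^0\cap\cK^0$, the function $\Sigma g$ is defined on all of $\R\setminus(-\N)$ by Lemma~\ref{lemma:8EulRef4Fo9p} and satisfies $\Delta\Sigma g=g$ there. Using $\Sigma g(y+1)-\Sigma g(y)=g(y)$ at $y=x$ and at $y=-x$, a one-line computation gives
$$
\Delta\omega(x) ~=~ \omega(x+1)-\omega(x) ~=~ g(x)+g(-x),\qquad x\in\R\setminus\Z.
$$
Because $\omega$ is $1$-periodic precisely when $\Delta\omega\equiv 0$, this immediately yields (i)$\Leftrightarrow$(ii): the function $\omega$ is periodic iff $g(x)+g(-x)=0$ on $\R\setminus\Z$, i.e.\ iff $g|_{\R\setminus\Z}$ is odd. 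I would also record here that oddness forces $g$ to vanish at $-\infty$, since $g(x)=-g(-x)\to 0$ as $x\to-\infty$ by the hypothesis that $g|_{\R_+}\in\cD^0$.

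Next I would handle the symmetric partial sums. Writing $S_N(x)=\sum_{|k|\le N}g(x+k)$ and telescoping with \eqref{eq:56zzSec32S6} for the nonnegative indices and with its negative-index counterpart \eqref{eq:8Tel7scoE} for the remaining ones, one obtains the pure identity
$$
S_N(x) ~=~ \Sigma g(x+N+1)-\Sigma g(x-N).
$$
The key move is to reflect the second term through $\omega$: evaluating the definition of $\omega$ at the argument $N+1-x$ gives $\Sigma g(x-N)=\Sigma g(N+1-x)-\omega(N+1-x)$, whence
$$
S_N(x) ~=~ \big(\Sigma g(x+N+1)-\Sigma g(N+1-x)\big)+\omega(N+1-x).
$$
Both arguments inside the parenthesis tend to $+\infty$ and differ by the fixed amount $2x$, so by \eqref{eq:convRes79} applied with $p=0$ (legitimate since $g|_{\R_+}\in\cD^0\cap\cK^0$) the parenthesis tends to $0$; for $x<0$ one applies the same estimate after exchanging the two arguments. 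Consequently $S_N(x)$ converges iff $\omega(N+1-x)$ does, and the two limits coincide. Assuming (ii), the periodicity of $\omega$ collapses $\omega(N+1-x)$ to a value independent of $N$, which furnishes the closed form for the limit; together with the vanishing at $-\infty$ already supplied by oddness, this gives (iii).

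For the converse (iii)$\Rightarrow$(ii) I would use a shift trick that avoids any delicate analysis. By the previous paragraph, $S_N(y)-\omega(N+1-y)\to 0$, so assertion (iii) amounts to saying that $\lim_N\omega(N+1-y)$ exists and is pinned down by $\omega(y)$ for every non-integer $y$. Applying this at $y=x$ and at $y=x-1$, and reindexing the second limit by $M=N+1$, shows that the single sequence $\omega(M+1-x)$ has a limit equal to the prescribed value at both $x$ and $x-1$; hence $\omega(x)=\omega(x-1)$, i.e.\ $\omega$ is $1$-periodic, which is (ii). This closes the cycle (i)$\Leftrightarrow$(ii), (ii)$\Rightarrow$(iii)$\Rightarrow$(ii). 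The main obstacle is analytic rather than formal: the two-sided series $S_N$ is only conditionally convergent (for $g(x)=1/x$ one has $\Sigma g=H_{x-1}$ and $\omega$ a cotangent), so it cannot be split into two separately convergent halves, and the whole argument hinges on controlling $\Sigma g$ near $-\infty$. This is exactly where the hypothesis that $g$ vanishes at $-\infty$ is used, both to make the reflected tail $g(x-N)$ negligible and, via $\Delta\omega(N-x)=g(N-x)+g(x-N)\to 0$, to guarantee that the reflecting sequence $\omega(N+1-x)$ is asymptotically constant.
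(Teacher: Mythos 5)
Your proof of (i) $\Leftrightarrow$ (ii) coincides with the paper's, but your treatment of (iii) takes a genuinely different route. The paper proves (i) $\Rightarrow$ (iii) directly from the extended Eulerian form of Lemma~\ref{lemma:8EulRef4Fo9p}, pairing $g(1-x+k)=-g(x-k-1)$ with $g(x+k)$, and proves (iii) $\Rightarrow$ (ii) by telescoping the difference under the symmetric sum, $\Delta\omega(x)=\lim_N\bigl(g(x+N+1)-g(x-N)\bigr)=0$, which is where the hypothesis that $g$ vanishes at $-\infty$ enters. You instead close the cycle as (ii) $\Rightarrow$ (iii) $\Rightarrow$ (ii), resting everything on the telescoped identity $S_N(x)=\sum_{|k|\leq N}g(x+k)=\Sigma g(x+N+1)-\Sigma g(x-N)$, the reflection $\Sigma g(x-N)=\Sigma g(N+1-x)-\omega(N+1-x)$, and the Wendel-type limit \eqref{eq:convRes79}. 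Your shift trick for (iii) $\Rightarrow$ (ii) is correct, and in fact it never uses the vanishing of $g$ at $-\infty$ (so your closing commentary about where that hypothesis is needed does not describe your own argument; it describes the paper's).

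The genuine gap is in the last step of (ii) $\Rightarrow$ (iii). When $\omega$ is $1$-periodic, the reflecting sequence collapses to $\omega(N+1-x)=\omega(1-x)$, and the definition of $\omega$ forces $\omega(1-x)=\Sigma g(1-x)-\Sigma g(x)=-\omega(x)$. So what your argument actually proves is $\lim_N S_N(x)=-\omega(x)$, not the displayed formula in (iii); the sentence ``this gives (iii)'' therefore fails whenever $\omega(x)\neq 0$. What your computation has really detected is a sign slip in the proposition itself: for $g(x)=1/x$ one has $\omega(x)=\psi(x)-\psi(1-x)=-\pi\cot(\pi x)$ by \eqref{eq:ReflDig5}, whereas $\lim_N\sum_{|k|\leq N}(x+k)^{-1}=+\pi\cot(\pi x)$, so assertion (iii) and Example~\ref{ex:8Ref9For3Dig} can only be correct with a minus sign in front of the limit (the paper's own proof of (i) $\Rightarrow$ (iii) loses this sign when substituting $g(1-x+k)=-g(x-k-1)$ into the Eulerian form). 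Your mechanism is sound and, carried through carefully, proves the corrected identity $\omega(x)=-\lim_N\sum_{|k|\leq N}g(x+k)$; the defect in the writeup is that you silently identified the collapsed value $\omega(1-x)$ with $\omega(x)$ instead of flagging the antisymmetry and the resulting discrepancy with the stated proposition.
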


\begin{proof}
The equivalence (i) $\Leftrightarrow$ (ii) is trivial since $\Delta\omega(x)=g(x)+g(-x)$. Let us prove the implication (iii) $\Rightarrow$ (ii). We have
\begin{eqnarray*}
\Delta\omega(x) &=& \lim_{N\to\infty} \sum_{|k|\leq N} (g(x+k+1)-g(x+k))\\
&=& \lim_{N\to\infty} (g(x+N+1)-g(x-N)) ~=~ 0.
\end{eqnarray*}
Finally, let us prove the implication (i) $\Rightarrow$ (iii). Using Lemma~\ref{lemma:8EulRef4Fo9p} we obtain
\begin{eqnarray*}
\omega(x) &=& \sum_{k=0}^{\infty}(g(x+k)+g(x-k-1))\\
&=& \lim_{N\to\infty} \left(-g(x-N-1)+\sum_{|k|\leq N} g(x+k)\right).
\end{eqnarray*}
This completes the proof.
\end{proof}

\begin{proposition}\label{prop:8Eu4Ref0PE}
Let $g\colon\R\setminus\{0\}\to\R$ be such that $g|_{\R_+}$ lies in $\cD^0\cap\cK^0$ and let $\omega\colon\R\setminus\Z\to\R$ be the function defined by the equation
$$
\omega(x) ~=~ \Sigma g(x)+\Sigma g(1-x)\qquad\text{for $x\in\R\setminus\Z$}.
$$
Then the following assertions are equivalent.
\begin{enumerate}
  \item [(i)] The function $g|_{\R\setminus\Z}$ is even.
  \item [(ii)] The function $\omega$ is $1$-periodic.
  \item [(iii)] We have that $g|_{\R\setminus\Z}$ vanishes at $-\infty$ and
  $$
  \omega(x) ~=~ -g(x)+\lim_{N\to\infty}\sum_{1\leq |k|\leq N}(g(k)-g(x+k)),\qquad x\in\R\setminus\Z.
  $$
\end{enumerate}
\end{proposition}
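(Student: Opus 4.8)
The plan is to establish the cycle of implications (iii) $\Rightarrow$ (ii) $\Rightarrow$ (i) $\Rightarrow$ (iii), paralleling the proof of Proposition~\ref{prop:8Eu4Ref0PO} but with the sign changes forced by the plus sign in the definition of $\omega$. The starting point is the computation of $\Delta\omega$. Since $\Delta\Sigma g(-x)=g(-x)$ gives $\Sigma g(-x)-\Sigma g(1-x)=-g(-x)$, I find
$$
\Delta\omega(x) ~=~ (\Sigma g(x+1)-\Sigma g(x))+(\Sigma g(-x)-\Sigma g(1-x)) ~=~ g(x)-g(-x).
$$
Note the minus sign here, in contrast to the $g(x)+g(-x)$ obtained in the odd case. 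Because a $1$-periodic function has vanishing difference, this immediately yields the equivalence (i) $\Leftrightarrow$ (ii): $\omega$ is $1$-periodic if and only if $\Delta\omega\equiv 0$, i.e.\ if and only if $g(x)=g(-x)$ on $\R\setminus\Z$, which is exactly the evenness of $g|_{\R\setminus\Z}$.

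For (iii) $\Rightarrow$ (ii) I would substitute the proposed series into $\omega(x+1)-\omega(x)$ and telescope. Writing $S_N(x)=\sum_{1\le|k|\le N}(g(k)-g(x+k))$, the positive and negative halves of $S_N(x+1)-S_N(x)$ telescope to $g(x+1)-g(x+N+1)$ and $g(x-N)-g(x)$ respectively, so that $\Delta\omega(x)=\lim_{N\to\infty}(g(x-N)-g(x+N+1))=0$, the two boundary terms vanishing because $g|_{\R_+}\in\cD^0$ forces $g\to 0$ at $+\infty$ while hypothesis (iii) supplies vanishing at $-\infty$. For (i) $\Rightarrow$ (iii) I would first observe that evenness together with $g|_{\R_+}\in\cD^0$ gives $g(-x)=g(x)\to 0$ as $x\to+\infty$, hence $g$ vanishes at $-\infty$. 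Then, using the extended representation $\Sigma g(y)=\lim_{n}f^0_n[g](y)$ from Lemma~\ref{lemma:8EulRef4Fo9p}, with $f^0_n[g](y)=-g(y)+\sum_{k=1}^{n-1}(g(k)-g(y+k))$ as in \eqref{eq:defFpn}, I set $A(y)=\Sigma g(y)+g(y)=\lim_n\sum_{k=1}^{n-1}(g(k)-g(y+k))$. Splitting $S_N(x)$ into its positive and negative halves and applying evenness (so that $g(-k)=g(k)$ and $g(1-x+k)=g(x-1-k)$), the positive half converges to $A(x)$, while the negative half converges to $\Sigma g(1-x)$ once it is matched with $A(1-x)$ and the mismatched endpoint $g(x-N-1)$ is discarded. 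Adding these gives $\lim_N S_N(x)=A(x)+\Sigma g(1-x)=\omega(x)+g(x)$, which is precisely the formula asserted in (iii).

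The main obstacle is the convergence bookkeeping in (i) $\Rightarrow$ (iii). Since $g$ is only assumed to lie in $\cD^0\cap\cK^0$ and not in $\cD^{-1}$, the one-sided partial sums $\sum_{k=1}^N g(k)$ and $\sum_{k=1}^N g(x\pm k)$ may each diverge; only their symmetric combination converges, so the argument must keep the two halves paired, letting the potentially divergent $\sum g(k)$ contributions cancel against the $\sum g(x+k)$ and $\sum g(x-k)$ terms in a controlled way. The delicate point is to verify that the endpoint discrepancies produced by the slightly differing summation ranges (terms of the form $g(x-N-1)$ or $g(x+N)$) tend to zero, which they do thanks to the vanishing of $g$ at both infinities. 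Everything else is a routine sign-sensitive adaptation of the odd case, so I would present in detail only the computation of $\Delta\omega$ above and the symmetric-sum identification, and otherwise refer to the proof of Proposition~\ref{prop:8Eu4Ref0PO}.
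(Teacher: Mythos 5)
Your proposal is correct and takes essentially the approach the paper intends: the paper omits this proof, stating it is similar to that of Proposition~\ref{prop:8Eu4Ref0PO}, and your argument is precisely that sign-adapted version — (i) $\Leftrightarrow$ (ii) via $\Delta\omega(x)=g(x)-g(-x)$, (iii) $\Rightarrow$ (ii) by telescoping with boundary terms killed by the vanishing of $g$ at $\pm\infty$, and (i) $\Rightarrow$ (iii) via the extended representation of Lemma~\ref{lemma:8EulRef4Fo9p}. Your bookkeeping in (i) $\Rightarrow$ (iii) is sound: the endpoint term $g(x-N-1)\to 0$, and the constant offset $g(x-1)=g(1-x)$ produced by re-indexing the negative half is exactly what converts $A(1-x)=\Sigma g(1-x)+g(1-x)$ into $\Sigma g(1-x)$, giving $\lim_N S_N(x)=\omega(x)+g(x)$ as required.
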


\begin{example}[The digamma function]\label{ex:8Ref9For3Dig}\index{digamma function}
Consider the odd function $g(x)=1/x$ on $\R\setminus\{0\}$ for which we have the identity $\Sigma g(x)=\psi(x)+\gamma$ (see Section~\ref{sec:dig27amma1}). This identity actually holds not only on $\R_+$ but also on $\R\setminus (-\N)$ since by Lemma~\ref{lemma:8EulRef4Fo9p} the digamma function $\psi$ extends to this larger domain through the following Eulerian form (see also Srivastava and Choi \cite[p.~24]{SriCho12})
$$
\psi(x) ~=~ -\gamma-\frac{1}{x}+\sum_{k=1}^{\infty}\left(\frac{1}{k}-\frac{1}{x+k}\right),\qquad x\in\R\setminus(-\N).
$$
Now, using Proposition~\ref{prop:8Eu4Ref0PO} we immediately obtain the identity
$$
\psi(x)-\psi(1-x) ~=~ \lim_{N\to\infty}\sum_{|k|\leq N}\frac{1}{x+k}{\,},\qquad x\in\R\setminus\Z,
$$
where the right-hand function is $1$-periodic. Finally, it can be proved (see, e.g., Aigner and Ziegler \cite[Chapter~26]{AigZie18}, Berndt \cite[p.~4]{Ber83}, and Graham {\em et al.} \cite[Eq.\ (6.88)]{GraKnuPat94}) that this function reduces to $-\pi\cot(\pi x)$. We then retrieve the reflection formula \eqref{eq:ReflDig5} for the digamma function.
\end{example}

\begin{example}[A variant of the digamma function]\index{digamma function}
Consider the even function $g(x)=1/{|x|}$ on $\R\setminus\{0\}$. Using Lemma~\ref{lemma:8EulRef4Fo9p}, we then obtain the following expression for $\Sigma g$ on $\R\setminus(-\N)$
$$
\Sigma g(x) ~=~ \sum_{k=0}^{\infty}\left(\frac{1}{k+1}-\frac{1}{|x+k|}\right),
$$
or equivalently,
$$
\Sigma g(x) ~=~ \sum_{k=0}^{\infty}\left(\frac{1}{k+1}-\frac{1}{x+k}\right)+\sum_{k=0}^{\infty}\left(\frac{1}{x+k}-\frac{1}{|x+k|}\right),
$$
where the first series reduces to $\psi(x)+\gamma$. If $x>0$, then the second series is zero. If $x<0$, it reduces to
\begin{eqnarray*}
\sum_{k=0}^{\infty}\min\left\{\frac{2}{x+k}{\,},0\right\} &=& \sum_{k=0}^{\lfloor -x\rfloor}\frac{2}{x+k} ~=~ 2\sum_{k=0}^{\lfloor -x\rfloor}\Delta_k\psi(x+k)\\
&=& 2{\,}(\psi(1-\{-x\})-\psi(x)).
\end{eqnarray*}
Using Proposition~\ref{prop:8Eu4Ref0PE}, we then obtain that the function
$$
\Sigma g(x)+\Sigma g(1-x) ~=~ -\frac{1}{|x|}
+\lim_{N\to\infty}\sum_{1\leq |k|\leq N}\left(\frac{1}{|k|}-\frac{1}{|x+k|}\right){\,},\qquad x\in\R\setminus\Z,
$$
is $1$-periodic. Using the reflection formula for $\psi$, we also obtain
\begin{eqnarray*}
\Sigma g(x)+\Sigma g(1-x) &=& \psi(\{x\})+\psi(1-\{x\})+2\gamma\\
&=& 2\,\psi(\{x\}) +\pi\cot(\pi x)+2\gamma{\,},\qquad x\in\R\setminus\Z,
\end{eqnarray*}
which provides a closed expression for this periodic function.
\end{example}

\begin{example}
Consider the function $g\colon\R\to\R$ defined by the equation
$$
g(x) ~=~ \frac{x+1}{x^2+1}\qquad\text{for $x\in\R$}.
$$
We observe that both functions $g(x)$ and $\tilde{g}(x)=g(-x)$ have restrictions to $\R_+$ that lie in $\cD^0\cap\cK^0$. However, the function $g$ is neither even nor odd. Denoting its even and odd parts by $g_+$ and $g_-$, respectively, we have
\begin{eqnarray*}
g_+(x) &=& \frac{g(x)+g(-x)}{2} ~=~ \frac{1}{x^2+1}{\,};\\
g_-(x) &=& \frac{g(x)-g(-x)}{2} ~=~ \frac{x}{x^2+1}{\,}.
\end{eqnarray*}
and we can derive a reflection formula for each of these functions.

Now, it is not difficult to see that (see Example~\ref{ex:5aRat7Fct8})
\begin{eqnarray*}
\Sigma g_+(x) &=& \Im (\psi(1+i)-\psi(x+i)){\,};\\
\Sigma g_-(x) &=& \Re (-\psi(1+i)+\psi(x+i)){\,}.
\end{eqnarray*}
Using Propositions~\ref{prop:8Eu4Ref0PO} and \ref{prop:8Eu4Ref0PE}, we then see that both functions
$$
\Sigma g_+(x)+\Sigma g_+(1-x)\qquad\text{and}\qquad\Sigma g_-(x)-\Sigma g_-(1-x)
$$
are $1$-periodic. Moreover, their sum $\Sigma g(x)+\Sigma\tilde{g}(1-x)$ is also $1$-periodic. Equivalently, the function
$$
\Re(\psi(x+i)-\psi(1-x+i))-\Im(\psi(x+i)+\psi(1-x+i))
$$
is $1$-periodic. However, we do not have a reflection formula for $\Sigma g$ or $\Sigma\tilde{g}$.
\end{example}

Although Propositions~\ref{prop:8Eu4Ref0PO} and \ref{prop:8Eu4Ref0PE} constitute major steps in the investigation of reflection formulas, they do not provide closed-form expressions for the $1$-periodic functions involved in these formulas. For instance, considering the reflection formula for the digamma function\index{digamma function} (see Example~\ref{ex:8Ref9For3Dig}), we see that Proposition~\ref{prop:8Eu4Ref0PO} does not yield the right-hand side of identity \eqref{eq:ReflDig5}. Moreover, it seems that such an expression, obtained for example using Herglotz's trick (see Aigner and Ziegler \cite[Chapter~26]{AigZie18}), is very specific to the case when $g(x)=1/x$. Now, finding a closed-form expression in the general case remains a very interesting open problem: such a result would provide an analogue of Euler's reflection formula for a wide class of functions. In this regard, we observe that Herglotz's trick uses an analogue of Legendre's duplication formula\index{Legendre's duplication formula} in the additive notation. Thus, a suitable adaptation of this trick could be helpful to tackle this problem.

Let us now investigate the more general case when the function $g|_{\R_+}$ lies in $\cD^p\cap\cK^p$ for some $p\in\N$. We observe that some reflection formulas can be obtained by integrating or differentiating both sides of a given reflection formula. Thus, if $g|_{\R_+}$ lies in $\cC^1\cap\cD^1\cap\cK^1$ for instance, we know from Proposition~\ref{prop:LMpGpLMp1} that $g'|_{\R_+}$ lies in $\cC^0\cap\cD^0\cap\cK^0$ and we may try to find a reflection formula for $\Sigma g'$ using Propositions~\ref{prop:8Eu4Ref0PO} and \ref{prop:8Eu4Ref0PE}. Since $\Sigma g'$ and $(\Sigma g)'$ differ by a constant by Proposition~\ref{prop:fdsf6sfd}, a reflection formula for $\Sigma g$ can then be obtained by integrating both sides of the reflection formula for $\Sigma g'$. This approach is inspired from the elevator method\index{elevator method} (as discussed in Section~\ref{sec:FSFD63}).

For instance, integrating both sides of \eqref{eq:ReflDig5} on $(\frac{1}{2},x)$, where $\frac{1}{2}<x<1$, we get the identity
$$
\ln\Gamma(x)+\ln\Gamma(1-x) ~=~ \ln(\pi\csc(\pi x)).
$$
Thus, we retrieve Euler's reflection formula on the interval $(\frac{1}{2},x)$ and this formula can be extended to the complex domain $\mathbb{C}\setminus\Z$ by analytic continuation. The identity \eqref{eq:ReflBGF5} can be obtained similarly, observing that
$$
\ln G(x+1) ~=~ \ln\Gamma(x)+\ln G(x).
$$

Now, let $g\colon\R\setminus\{0\}\to\R$ be a function such that $g|_{\R_+}$ lies in $\cD^p\cap\cK^p$ for some $p\in\N$. Let also $\omega_+[g]\colon\R\setminus\Z\to\R$ and $\omega_-[g]\colon\R\setminus\Z\to\R$ be the functions defined by the equation
$$
\omega_{\pm}[g](x) ~=~ \Sigma g(x)\pm\Sigma g(1-x)\qquad\text{for $x\in\R\setminus\Z$}.
$$
We then observe that
$$
\Delta\omega_{\pm}[g](x) ~=~ g(x) \mp g(-x),\qquad x\in\R\setminus\Z.
$$
It follows that $\omega_+$ (resp.\ $\omega_-$) is $1$-periodic if and only if $g|_{\R\setminus\Z}$ is even (resp.\ odd).

The following proposition provides an explicit expression for the function $\omega_{\pm}[g]$ whenever it is $1$-periodic. This expression is constructed from the very definition of $\Sigma g$.

\begin{proposition}\label{prop:8Per3Omeg6211}
Let $g\colon\R\setminus\{0\}\to\R$ be such that $g|_{\R_+}$ lies in $\cD^p\cap\cK^p$ for some $p\in\N$. Then the following assertions hold.
\begin{enumerate}
\item[(a)] If $g|_{\R\setminus\Z}$ is odd, then the function $\omega_-[g]$ is $1$-periodic and is equal to
$$
\lim_{n\to\infty}\bigg(-\sum_{|k|\leq n-1}g(x+k)-g(x-n)
+\sum_{j=1}^p\left(\tchoose{x}{j}-\tchoose{1-x}{j}\right)\Delta^{j-1}g(n)\bigg).
$$
\item[(b)] If $g|_{\R\setminus\Z}$ is even, then the function $\omega_+[g]$ is $1$-periodic and is equal to
\begin{multline*}
\lim_{n\to\infty}\bigg(-g(x)+\sum_{1\leq |k|\leq n-1}(g(k)-g(x+k))\\
\null -g(x-n)+\sum_{j=1}^p\left(\tchoose{x}{j}+\tchoose{1-x}{j}\right)\Delta^{j-1}g(n)\bigg).
\end{multline*}
\end{enumerate}
\end{proposition}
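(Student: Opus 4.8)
The plan is to exploit the extended Eulerian form provided by Lemma~\ref{lemma:8EulRef4Fo9p}. Since $x\in\R\setminus\Z$ forces both $x$ and $1-x$ to lie in $\R\setminus(-\N)$, that lemma guarantees that
$$
\Sigma g(x) ~=~ \lim_{n\to\infty}f_n^p[g](x)\qquad\text{and}\qquad\Sigma g(1-x) ~=~ \lim_{n\to\infty}f_n^p[g](1-x),
$$
so that $\omega_{\pm}[g](x)$ is the limit of $f_n^p[g](x)\pm f_n^p[g](1-x)$. The $1$-periodicity of $\omega_-[g]$ (resp.\ $\omega_+[g]$) under the oddness (resp.\ evenness) hypothesis has already been recorded just before the statement, since $\Delta\omega_{\pm}[g](x)=g(x)\mp g(-x)$; hence the only genuine task is to identify the stated closed forms for these limits, and the whole proof reduces to a direct manipulation of the finite expressions $f_n^p[g](x)\pm f_n^p[g](1-x)$.

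For assertion (a), I would first write out $f_n^p[g](x)-f_n^p[g](1-x)$ explicitly from the definition \eqref{eq:defFpn}. The two ``$\sum_{k=1}^{n-1}g(k)$'' terms cancel, leaving
$$
-g(x)+g(1-x)+\sum_{k=1}^{n-1}\bigl(g(1-x+k)-g(x+k)\bigr)+\sum_{j=1}^p\Bigl(\tchoose{x}{j}-\tchoose{1-x}{j}\Bigr)\Delta^{j-1}g(n).
$$
I would then invoke the oddness of $g|_{\R\setminus\Z}$ through the identity $g(1-x+k)=-g(x-1-k)$ (valid because $x\notin\Z$ keeps every argument off the integers), and reindex the shifted sum by $m=k+1$. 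This collapses $g(1-x)+\sum_{k=1}^{n-1}g(1-x+k)$ into $-\sum_{m=1}^{n}g(x-m)$, which is precisely $-\sum_{k=1}^{n-1}g(x-k)-g(x-n)$. Recombining with $-g(x)-\sum_{k=1}^{n-1}g(x+k)$ and using $\sum_{|k|\le n-1}g(x+k)=g(x)+\sum_{k=1}^{n-1}(g(x+k)+g(x-k))$ reproduces exactly the summand of the claimed limit; letting $n\to\infty$ finishes (a).

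Assertion (b) follows the same template with the plus sign and the evenness identities $g(1-x)=g(x-1)$ and $g(1-x+k)=g(x-1-k)$. The difference is that now the two ``$\sum_{k=1}^{n-1}g(k)$'' terms add rather than cancel, producing a factor $2\sum_{k=1}^{n-1}g(k)$, which after the index shift regroups, via evenness through $2\sum_{k=1}^{n-1}g(k)=\sum_{1\le|k|\le n-1}g(k)$, into the symmetric double sum $\sum_{1\le|k|\le n-1}(g(k)-g(x+k))$ of the statement, again with the tail term $g(x-n)$ peeled off. I expect the only delicate points to be purely bookkeeping: tracking the lone boundary term $g(x-n)$ created by the shift $k\mapsto k+1$, and verifying that the binomial terms $\tchoose{x}{j}\pm\tchoose{1-x}{j}$ are carried through unchanged. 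Since all convergence is already supplied by Lemma~\ref{lemma:8EulRef4Fo9p}, there is no analytic obstacle, and the proof is essentially a careful reindexing argument.
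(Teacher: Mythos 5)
Your proposal is correct and follows essentially the same route as the paper's proof: both invoke Lemma~\ref{lemma:8EulRef4Fo9p} to write $\omega_{\pm}[g](x)$ as $\lim_{n\to\infty}\bigl(f_n^p[g](x)\pm f_n^p[g](1-x)\bigr)$, take the $1$-periodicity from the discussion preceding the statement, and then reduce everything to reindexing the finite sums using the parity of $g|_{\R\setminus\Z}$. If anything, your write-up is more complete, since the paper stops at the intermediate display $\sum_{k=0}^{n-1}(g(1-x+k)-g(x+k))+\sum_{j=1}^p\bigl(\tchoose{x}{j}-\tchoose{1-x}{j}\bigr)\Delta^{j-1}g(n)$ and leaves the final parity reindexing (and all of assertion (b)) implicit, whereas you carry these bookkeeping steps out explicitly.
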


\begin{proof}
Let us prove assertion (a). That $\omega_-[g]$ is $1$-periodic is clear from the discussion above. Now, using Lemma~\ref{lemma:8EulRef4Fo9p} we obtain
\begin{multline*}
\omega_-[g](x) ~=~ \lim_{n\to\infty}(f_n^p[g](x)+f_n^p[g](1-x))\\
=~ \lim_{n\to\infty}\left(\sum_{k=0}^{n-1}(g(1-x+k)-g(x+k))
+\sum_{j=1}^p\left(\tchoose{x}{j}-\tchoose{1-x}{j}\right)\Delta^{j-1}g(n)\right).
\end{multline*}
This proves assertion (a). Assertion (b) can be established similarly.
\end{proof}

\begin{example}
Consider the odd function $g\colon\R\to\R$ defined by the equation
$$
g(x)=x-\frac{x}{x^2+1}\qquad\text{for $x\in\R$}.
$$
The function $g|_{\R_+}$ clearly lies in $\cD^2\cap\cK^2$ and we have (see Example~\ref{ex:5aRat7Fct8})
$$
\Sigma g(x)=\tchoose{x}{2}-\Re(\psi(x+i)).
$$
By Proposition~\ref{prop:8Per3Omeg6211}, the function
$$
\Sigma g(x)-\Sigma g(1-x) ~=~ \Re(\psi(1-x+i)-\psi(x+i))
$$
is $1$-periodic and is equal to the limit
$$
\lim_{n\to\infty}\left(-\sum_{|k|\leq n-1}h(x+k)-h(x-n)+(2x-1)h(n)\right),
$$
where $h(x)=g(x)-x$.
\end{example}

\begin{example}[Euler's reflection formula]\label{ex:8zzEu5g2s8o0}
Consider the even function $g\colon\R\setminus\{0\}$ defined by the equation $g(x) = \ln|x|$ for $x\in\R\setminus\{0\}$. The function $g|_{\R_+}$ clearly lies in $\cD^1\cap\cK^1$ and, since $\Delta_x\ln|\Gamma(x)|=\ln|x|$ on $\R\setminus (-\N)$, we must have
$$
\Sigma g(x) ~=~ \ln|\Gamma(x)|{\,},\qquad x\in\R\setminus (-\N).
$$
By Proposition~\ref{prop:8Per3Omeg6211}, the function $|\Gamma(x)\Gamma(1-x)|$ on $\R\setminus\Z$ is $1$-periodic and is equal to
$$
\lim_{n\to\infty}\left|\frac{1}{x}\prod_{1\leq |k|\leq n}\frac{k}{x+k}\right|.
$$
Euler's reflection formula then shows that this limit is also $|\pi\csc(\pi x)|$, as expected (see Artin \cite[p.~27]{Art15}).
\end{example}

\begin{remark}
We observe the following interesting link between the analogue of Euler's reflection formula and the logarithm of the generalized Stirling constant\index{Stirling's constant!generalized} (see Definition~\ref{de:GSC556}). Let $g\colon\R\setminus\{0\}\to\R$ be an even function such that $g|_{\R_+}$ lies in $\cC^0\cap\mathrm{dom}(\Sigma)$. Assume also that $g$ is integrable at $0$. Then, we have
$$
\overline{\sigma}[g|_{\R_+}] ~=~ \int_0^1\Sigma g(t){\,}dt ~=~ \frac{1}{2}\int_0^1(\Sigma g(t)+\Sigma g(1-t))dt{\,},
$$
that is,
$$
\overline{\sigma}[g|_{\R_+}] ~=~ \frac{1}{2}\int_0^1\omega_+[g](t){\,}dt.
$$
For instance, for the function $g(x) = \ln|x|$ (see Example~\ref{ex:8zzEu5g2s8o0}), we obtain
$$
\overline{\sigma}[g|_{\R_+}] ~=~ \frac{1}{2}\int_0^1\ln(\pi\csc(\pi t)){\,}dt
$$
and it is not difficult to see that this expression reduces to $\frac{1}{2}\ln(2\pi)$.
\end{remark}

\index{Euler's reflection formula!analogue|)}

\section{Analogue of Gauss' digamma theorem}
\label{sec:8GauDigTh7}
\index{digamma function!Gauss' digamma theorem!analogue|(}

The following formula, due to Gauss, enables one to compute the values of the digamma function\index{digamma function!Gauss' digamma theorem} $\psi$ for rational arguments. If $a,b\in\N^*$ with $a<b$, then we have
\begin{equation}\label{eq:GauDigThm3}
\psi\left(\frac{a}{b}\right) ~=~ -\gamma-\ln(2b)-\frac{\pi}{2}\,\cot\frac{a\pi}{b}
+2\sum_{j=1}^{\lfloor(b-1)/2\rfloor}\cos\left(2j\pi\,\frac{a}{b}\right)\ln\left(\sin\frac{j\pi}{b}\right)
\end{equation}
(see, e.g., Knuth \cite[p.~95]{Knu97} and Srivastava and Choi \cite[p.~30]{SriCho12}). This formula can be extended to all integers $a,b\in\N^*$ by means of the difference equation $\psi(x+1)-\psi(x)=1/x$.

For instance, we have
$$
\psi\left(\frac{3}{4}\right) ~=~ -\gamma +\frac{\pi}{2}-3\ln 2.
$$

It is natural to wonder if an analogue of formula \eqref{eq:GauDigThm3} holds for any multiple $\log\Gamma$-type function. Finding an analogue as beautiful as this formula seems to be hard. However, we have the following partial result.

\begin{proposition}\label{prop:GauProv5}
Let $g\in\cD^0\cap\cK^0$ and let $a,b\in\N^*$ with $a<b$. Then
$$
\Sigma g\left(\frac{a}{b}\right) ~=~ \frac{1}{b}\,\sum_{j=0}^{b-1}\left(1-\omega_b^{-aj}\right)S^b_j[g],
$$
where
$$
\omega_b ~=~ e^{\frac{2\pi i}{b}}\qquad\text{and}\qquad S^b_j[g] ~=~ \sum_{k=1}^{\infty}\omega_b^{jk}{\,}g\left(\frac{k}{b}\right).
$$
\end{proposition}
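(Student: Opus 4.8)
The plan is to start from the Eulerian form of $\Sigma g$ and then isolate the arithmetic progression $m\equiv a\pmod{b}$ by a discrete Fourier (root-of-unity) filter. Since $g\in\cD^0\cap\cK^0\subset\mathrm{dom}(\Sigma)$, Theorem~\ref{thm:SerProdReprS} applied with $p=0$ gives, for every $x>0$,
$$
\Sigma g(x) ~=~ -g(x)-\sum_{k=1}^{\infty}(g(x+k)-g(k)).
$$
Setting $x=a/b$, writing $a/b+k=(a+kb)/b$ and $k=(kb)/b$, and collapsing the convergent series of differences back into a single limit of partial sums of the two (individually possibly divergent) sums, I would record
$$
\Sigma g\left(\frac{a}{b}\right) ~=~ \lim_{n\to\infty}\left(-\sum_{k=0}^{n-1}g\left(\frac{a+kb}{b}\right)+\sum_{k=1}^{n-1}g\left(\frac{kb}{b}\right)\right).
$$
Because $0<a<b$, the integers $a+kb$ with $k\geq 0$ are exactly the positive integers congruent to $a$ modulo $b$, and the $kb$ with $k\geq 1$ are exactly those congruent to $0$; this is the combinatorial identity that lets the filter act on the two sums.

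Next I would set $g_b(m)=g(m/b)$, introduce the partial sums $P^{(M)}_j=\sum_{m=1}^{M}\omega_b^{jm}g_b(m)$, and apply the residue-selecting identity $\frac{1}{b}\sum_{j=0}^{b-1}\omega_b^{j(m-r)}$, which equals $1$ when $m\equiv r\pmod{b}$ and $0$ otherwise. Taking $r=a$ in the first sum (upper index $M=a+(n-1)b$) and $r=0$ in the second (upper index $M=b(n-1)$) turns the displayed bracket into $\frac{1}{b}\sum_{j=0}^{b-1}\bigl(P^{(b(n-1))}_j-\omega_b^{-aj}P^{(a+(n-1)b)}_j\bigr)$, so that
$$
\Sigma g\left(\frac{a}{b}\right) ~=~ \lim_{n\to\infty}\frac{1}{b}\sum_{j=0}^{b-1}\left(P^{(b(n-1))}_j-\omega_b^{-aj}\,P^{(a+(n-1)b)}_j\right).
$$

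The crux is the term-by-term passage to the limit. For $j\not\equiv 0\pmod{b}$ the partial sums $\sum_{m=1}^{M}\omega_b^{jm}$ are bounded (a truncated geometric series with ratio $\omega_b^j\neq 1$), while $g\in\cD^0\cap\cK^0$ forces $m\mapsto g_b(m)$ to be eventually monotone and to tend to $0$; Dirichlet's test for series then gives $P^{(M)}_j\to S^b_j[g]$ as $M\to\infty$, independently of which of the two sequences of upper limits is used, so the bracket converges to $(1-\omega_b^{-aj})S^b_j[g]$. For $j\equiv 0\pmod{b}$ one has $\omega_b^{jm}=1$ and $P^{(M)}_0=\sum_{m=1}^{M}g_b(m)$ is genuinely divergent; here, however, the bracket collapses to $P^{(b(n-1))}_0-P^{(a+(n-1)b)}_0=-\sum_{m=b(n-1)+1}^{b(n-1)+a}g(m/b)$, a fixed count of $a$ terms each tending to $0$, so its contribution vanishes in the limit. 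Summing over $j$ yields $\Sigma g(a/b)=\frac{1}{b}\sum_{j=1}^{b-1}(1-\omega_b^{-aj})S^b_j[g]$, and since the $j=0$ summand equals $(1-\omega_b^{0})S^b_0[g]=0\cdot S^b_0[g]$ it may be adjoined harmlessly, producing the claimed identity.

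The main obstacle is analytic rather than algebraic: the two sums in the Eulerian form need not converge individually, so every manipulation must be performed at the level of partial sums with carefully tracked, slightly mismatched upper limits, and the heart of the argument is the isolation of the $j\equiv 0$ contribution — showing that its divergent partial sums cancel up to $a$ vanishing tail terms. A secondary point to verify is that the eventual monotonicity of $g$ supplied by $\cK^0$, together with $g\to 0$ from $\cD^0$, is precisely what licenses Dirichlet's test for the oscillatory series $S^b_j[g]$; this is where the full strength of the hypothesis $g\in\cD^0\cap\cK^0$ enters.
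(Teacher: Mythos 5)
Your proof is correct and follows essentially the same route as the paper's: both start from the defining (Gauss) limit of $\Sigma g$ at $a/b$ and apply the root-of-unity filter to isolate the residues $0$ and $a$ modulo $b$. The only notable difference is presentational: the paper merges the two partial sums into a single sum $\sum_{k=1}^{bn-1}(u_b(k)-u_b(k-a))\,g(k/b)$ with a common upper limit, so the $j=0$ term is killed identically by the factor $1-\omega_b^{0}=0$ and no mismatched-limit bookkeeping arises, whereas your explicit Dirichlet-test argument for the convergence of $S^b_j[g]$ ($j\neq 0$) supplies a step the paper leaves implicit.
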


\begin{proof}
By definition of the map $\Sigma$, we have
\begin{eqnarray*}
\Sigma g\left(\frac{a}{b}\right) &=& \lim_{n\to\infty}\left(\sum_{k=1}^{n-1}g\left(\frac{bk}{b}\right)-\sum_{k=0}^{n-1}g\left(\frac{bk+a}{b}\right)\right)\\
&=& \lim_{n\to\infty}\sum_{k=1}^{bn-1}(u_b(k)-u_b(k-a)){\,}g\left(\frac{k}{b}\right),
\end{eqnarray*}
where $u_b(k)=1$, if $b$ divides $k$, and $u_b(k)=0$, otherwise; that is,
$$
u_b(k) ~=~ \frac{1}{b}\sum_{j=0}^{b-1}\omega_b^{jk}.
$$
This completes the proof.
\end{proof}

Proposition~\ref{prop:GauProv5} provides a first step in the search for an explicit expression for $\Sigma g(\frac{a}{b})$. Depending upon the function $g$, more computations may be necessary to obtain a useful expression. In this respect, the derivation of formula \eqref{eq:GauDigThm3} by means of Proposition~\ref{prop:GauProv5} can be found in Marichal \cite[p.~13]{Mar19}.

\begin{example}
Let us apply Proposition~\ref{prop:GauProv5} to the function $g_s(x)=-x^{-s}$, where $s>1$. This function lies in $\cD^0\cap\cK^0$ and we have $\Sigma g_s(x)=\zeta(s,x)-\zeta(s)$; see Example~\ref{ex:HuZe82}. Let $a,b\in\N^*$ with $a<b$. For $j=0,\ldots,b-1$, we then have
$$
S_j^b[g_s] ~=~ -b^s\,\mathrm{Li}_s(\omega_b^j),
$$
where
$$
\mathrm{Li}_s(z) ~=~ \sum_{k=1}^{\infty}\frac{z^k}{k^s}
$$
is the polylogarithm function.\index{polylogarithm function|textbf} Using Proposition~\ref{prop:GauProv5}, we then obtain
\begin{eqnarray*}
\zeta\left(s,\frac{a}{b}\right) &=& \zeta(s)-b^{s-1}\,\sum_{j=0}^{b-1}\left(1-\omega_b^{-aj}\right)\mathrm{Li}_s(\omega_b^j)\\
&=& b^{s-1}\,\sum_{j=0}^{b-1}\omega_b^{-aj}\,\mathrm{Li}_s(\omega_b^j).
\end{eqnarray*}
The inverse conversion formula is simply given by
$$
\mathrm{Li}_s(\omega_b^j) ~=~ b^{-s}\,\sum_{k=1}^b\omega_b^{jk}\,\zeta\left(s,\frac{k}{b}\right),\qquad j=1,\ldots,b-1.\qedhere
$$
\end{example}

\index{digamma function!Gauss' digamma theorem!analogue|)}

\section{Generalized Gautschi's inequality}
\label{sec:8Gautschi334}
\index{Gautschi's inequality!generalized|(}

Gautschi~\cite{Gau59} showed that the following double inequality holds for any $0\leq a\leq 1$
$$
e^{(a-1)\,\psi(x+1)} ~\leq ~ \frac{\Gamma(x+a)}{\Gamma(x+1)} ~\leq ~ x^{a-1},\qquad x>0.
$$
As a consequence, since $\psi(x)<\ln x$ for any $x>0$, he also obtained that
$$
(x+1)^{a-1} ~\leq ~ \frac{\Gamma(x+a)}{\Gamma(x+1)} ~\leq ~ x^{a-1},\qquad x>0,
$$
which is also a straightforward consequence of the Wendel inequality \eqref{eq:Orig9Wendel2}. We refer to these inequalities as the \emph{Gautschi inequality}.\index{Gautschi's inequality}

We now provide an analogue of Gautschi's inequality for certain multiple $\log\Gamma$-type functions and for any $a\geq 0$. We call it the \emph{generalized Gautschi's inequality}. As usual, we use the additive notation.

\begin{proposition}[Generalized Gautschi's inequality]\label{prop:Gautschi56}
Suppose that $g$ lie in $\cC^2\cap\cD^p\cap\cK^{\max\{p,2\}}$ for some $p\in\N$ and let $a\geq 0$ and $x>0$ be so that $\Sigma g$ is convex on $[x+\lfloor a\rfloor,\infty)$. Then we have
\begin{eqnarray*}
(a-\lceil a\rceil){\,}g(x+\lceil a\rceil) &\leq & (a-\lceil a\rceil){\,}(\Sigma g)'(x+\lceil a\rceil)\\
&\leq & \Sigma g(x+a)-\Sigma g(x+\lceil a\rceil) ~\leq ~ (a-\lceil a\rceil){\,}g(x+\lfloor a\rfloor){\,}.
\end{eqnarray*}
(The inequalities are to be reversed if $\Sigma g$ is concave on $[x+\lfloor a\rfloor,\infty)$.)
\end{proposition}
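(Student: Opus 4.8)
The plan is to reduce the statement to a short chain of elementary convexity inequalities for the function $\phi=\Sigma g$ and then clear the (nonpositive) factor $a-\lceil a\rceil$. First I would dispose of the trivial case: if $a\in\N$ then $a=\lceil a\rceil=\lfloor a\rfloor$, the factor $a-\lceil a\rceil$ vanishes, and every term of the asserted chain is $0$. So I assume $a\notin\N$ and write $m=\lfloor a\rfloor$, so that $\lceil a\rceil=m+1$ and $a\in(m,m+1)$; setting $c=\lceil a\rceil-a=m+1-a\in(0,1)$ gives $a-\lceil a\rceil=-c<0$. Since $g$ lies in $\cC^2\cap\cD^p\cap\cK^{\max\{p,2\}}$, Proposition~\ref{prop:7Diff5Sig216} guarantees that $\phi=\Sigma g$ lies in $\cC^2$, so $\phi'$ exists and is continuous; moreover $\Delta\phi=g$ by definition of $\Sigma$. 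Negating $g$ if necessary (which reverses convexity and matches the parenthetical remark), I may assume throughout that $\phi$ is convex on $[x+m,\infty)$; note that the four points $x+m<x+a<x+m+1<x+m+2$ all lie in this interval.

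Next, dividing the desired chain through by the negative number $a-\lceil a\rceil=-c$, which reverses every inequality, shows that the proposition is equivalent to the chain
\begin{equation*}
g(x+m) ~\leq ~ \frac{\phi(x+m+1)-\phi(x+a)}{c} ~\leq ~ \phi'(x+m+1) ~\leq ~ g(x+m+1).
\end{equation*}
I would then verify its three links separately, each being a standard three-chord property of the convex function $\phi$.

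For the rightmost link I would use that the derivative of a convex function at the left endpoint of an interval does not exceed the secant slope: taking the unit interval $[x+m+1,x+m+2]$ gives $\phi'(x+m+1)\leq\phi(x+m+2)-\phi(x+m+1)=\Delta\phi(x+m+1)=g(x+m+1)$. For the middle link I would use the dual bound that the secant slope does not exceed the derivative at the right endpoint: with endpoints $x+a<x+m+1$ the slope satisfies $(\phi(x+m+1)-\phi(x+a))/c\leq\phi'(x+m+1)$. For the leftmost link I would invoke the monotonicity of the map $u\mapsto(\phi(v)-\phi(u))/(v-u)$ for fixed $v=x+m+1$: comparing $u=x+m$ with $u=x+a$ and using $\Delta\phi=g$ over the unit interval yields $g(x+m)=\phi(x+m+1)-\phi(x+m)\leq(\phi(x+m+1)-\phi(x+a))/c$. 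Multiplying the resulting chain back through by $a-\lceil a\rceil=-c$ and rewriting $-(\phi(x+m+1)-\phi(x+a))$ as $\Sigma g(x+a)-\Sigma g(x+\lceil a\rceil)$ then recovers the stated inequalities verbatim. The convexity inputs are routine; the only point I would double-check carefully is the bookkeeping of signs when multiplying by the negative factor $a-\lceil a\rceil$, together with the correct matching of $\lfloor a\rfloor$ versus $\lceil a\rceil$ in the two extreme terms, which is where an error could easily creep in.
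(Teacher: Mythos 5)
Your proof is correct, but it takes a genuinely different route from the paper's. The paper deliberately mirrors Gautschi's original argument: it fixes $k=\lfloor a\rfloor$, introduces the difference quotient $f(a)=\frac{1}{k+1-a}\bigl(\Sigma g(x+a)-\Sigma g(x+k+1)\bigr)$ and the auxiliary function $\varphi(a)=(k+1-a)^2f'(a)$, computes $\varphi'(a)=(k+1-a)(\Sigma g)''(x+a)\geq 0$ and $\varphi(k)\leq 0$, $\varphi(k+1)=0$, concludes that $\varphi\leq 0$ and hence that $f$ is decreasing on $[k,k+1)$, and finally extracts the stated chain using L'Hospital's rule at the endpoint. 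You instead divide the chain by the negative factor $a-\lceil a\rceil$ and reduce it to three classical slope inequalities for the convex function $\phi=\Sigma g$ at the points $x+\lfloor a\rfloor<x+a<x+\lceil a\rceil<x+\lceil a\rceil+1$: the three-chord monotonicity (leftmost link), secant slope below derivative at the right endpoint (middle link), and derivative below the forward secant slope (rightmost link), with $\Delta\phi=g$ converting unit secants into values of $g$. Your version is shorter and needs strictly less regularity: you only use differentiability of $\Sigma g$ at the single point $x+\lceil a\rceil$, whereas the paper's computation of $\varphi'$ invokes $(\Sigma g)''$, i.e., the full $\cC^2$ conclusion of Proposition~\ref{prop:7Diff5Sig216}. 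What the paper's heavier machinery buys is a slightly stronger intermediate fact — the monotonicity of the difference quotient $a\mapsto f(a)$ on all of $[\lfloor a\rfloor,\lceil a\rceil)$, exactly as in Gautschi's original proof — but for the proposition as stated both arguments are complete, and your sign bookkeeping (the trivial case $a\in\N$, the reduction via $c=\lceil a\rceil-a$, and the negation handling the concave case) is accurate.
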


\begin{proof}
We follow the same steps as in Gautschi's proof. We can assume that $k\leq a<k+1$ for some fixed $k\in\N$. Let $x>0$ be fixed so that $\Sigma g$ is convex on $[x+k,\infty)$. Let also $f\colon [k,k+1)\to\R$ and $\varphi\colon [k,k+1)\to\R$ be the functions defined by the equations
$$
f(a) ~=~ \frac{1}{k+1-a}{\,}(\Sigma g(x+a)-\Sigma g(x+k+1))
$$
and
$$
\varphi(a) ~=~ (k+1-a)^2f'(a)
$$
for $k\leq a< k+1$. We then observe that
$$
(k+1-a){\,}f'(a) ~=~ f(a)+D_a{\,}((k+1-a){\,}f(a)) ~=~ f(a)+(\Sigma g)'(x+a).
$$
It then follows that
$$
\varphi(a) ~=~ (k+1-a){\,}(f(a)+(\Sigma g)'(x+a))
$$
and
$$
\varphi'(a) ~=~ (k+1-a){\,}(\Sigma g)''(x+a).
$$
We also have
\begin{eqnarray*}
\varphi(k) &=& \Sigma g(x+k)-\Sigma g(x+k+1)+(\Sigma g)'(x+k)\\
&=& (\Sigma g)'(x+k)-g(x+k),
\end{eqnarray*}
where
$$
g(x+k) ~=~ \int_0^1(\Sigma g)'(x+k+t){\,}dt.
$$
Since $\Sigma g$ is convex on $[x+k,\infty)$, its derivative is increasing on $[x+k,\infty)$, and hence we must have $\varphi(k)\leq 0$ and $\varphi'(a)\geq 0$. Since $\varphi(k+1)=0$, it follows that the function $\varphi$ is nonpositive and hence that the function $f$ is decreasing. Using L'Hospital's rule and the fact that $\varphi(k)\leq 0$, we then obtain the following chain of inequalities
\begin{eqnarray*}
-g(x+k+1) &\leq & -(\Sigma g)'(x+k+1)\\
&\leq & \lim_{a\to k+1}f(a) ~\leq ~ f(a) ~\leq ~ f(k) ~=~ -g(x+k).
\end{eqnarray*}
This proves the result.
\end{proof}

\begin{example}
Applying Proposition~\ref{prop:Gautschi56} to $g(x)=\ln x$ and $p=1$, we obtain for any $a\geq 0$ and any $x>0$
$$
(x+\lceil a\rceil)^{a-\lceil a\rceil} ~\leq ~ e^{(a-\lceil a\rceil)\,\psi(x+\lceil a\rceil)} ~\leq ~ \frac{\Gamma(x+a)}{\Gamma(x+\lceil a\rceil)} ~\leq ~ (x+\lfloor a\rfloor)^{a-\lceil a\rceil}{\,}.
$$
If we assume that $0\leq a\leq 1$, then we retrieve the original Gautschi inequality.
\end{example}

\begin{remark}\label{rem:Gautschi44zz}
If we wish to bracket the function $\Sigma g(x+a)-\Sigma g(x+1)$ in Proposition~\ref{prop:Gautschi56}, we can use the identity
$$
\Sigma g(x+\lceil a\rceil) ~=~ \Sigma g(x+1)+\sum_{k=1}^{\lceil a\rceil-1}g(x+k),
$$
which immediately follows from \eqref{eq:56zzSec32S6}. For instance, for $g(x)=\ln x$ we obtain the double inequality
\begin{eqnarray*}
e^{(a-\lceil a\rceil)\,\psi(x+\lceil a\rceil)}(x+\lceil a\rceil -1)^{\underline{\lceil a\rceil -1}} &\leq & \frac{\Gamma(x+a)}{\Gamma(x+1)}\\
& \leq & (x+\lfloor a\rfloor)^{a-\lceil a\rceil}(x+\lceil a\rceil -1)^{\underline{\lceil a\rceil -1}}{\,}.
\end{eqnarray*}
which holds for any $a\geq 0$ and any $x>0$.
\end{remark}

We end this section with the following corollary, which is obtained by integrating on $a\in (0,1)$ the expressions in the generalized Gautschi inequality (Proposition~\ref{prop:Gautschi56}).

\begin{corollary}
Suppose that $g$ lie in $\cC^2\cap\cD^p\cap\cK^{\max\{p,2\}}$ and let $x>0$ be so that $\Sigma g$ is convex on $[x,\infty)$. Then we have
\begin{eqnarray*}
-\frac{1}{2}{\,}g(x+1) &\leq & -\frac{1}{2}{\,}(\Sigma g)'(x+1)\\
&\leq & \int_x^{x+1}\Sigma g(t){\,}dt -\Sigma g(x+1) ~\leq ~ -\frac{1}{2}{\,}g(x){\,}.
\end{eqnarray*}
(The inequalities are to be reversed if $\Sigma g$ is concave on $[x,\infty)$.) In particular, the following assertions hold.
\begin{enumerate}
\item[(a)] If $\Sigma g$ is not eventually identically zero and if
\begin{equation}\label{eq:8GautLim55}
\lim_{x\to\infty}\frac{g(x)}{\Sigma g(x)} ~=~ 0,
\end{equation}
then
$$
\lim_{x\to\infty}\frac{(\Sigma g)'(x)}{\Sigma g(x)} ~=~ 0\qquad\text{and}\qquad \Sigma g(x) ~\sim ~ \int_x^{x+1}\Sigma g(t){\,}dt\quad\text{as $x\to\infty$}.
$$

\item[(b)] If $g$ is not eventually identically zero and if
$$
\lim_{x\to\infty}\frac{g(x+1)}{g(x)} ~=~ 1,
$$
then
$$
\lim_{x\to\infty}\frac{(\Sigma g)'(x)}{g(x)} ~=~ 1\qquad\text{and}\qquad \lim_{x\to\infty}\frac{\int_x^{x+1}\Sigma g(t){\,}dt -\Sigma g(x)}{g(x)} ~=~ \frac{1}{2}{\,}.
$$
\end{enumerate}
\end{corollary}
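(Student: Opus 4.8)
The plan is to obtain the central double inequality by integrating the generalized Gautschi inequality (Proposition~\ref{prop:Gautschi56}) over $a\in(0,1)$, and then to deduce both limit assertions by elementary squeezing. First I would specialize Proposition~\ref{prop:Gautschi56} to $0<a<1$, where $\lfloor a\rfloor=0$ and $\lceil a\rceil=1$, so that the convexity hypothesis there (on $[x+\lfloor a\rfloor,\infty)=[x,\infty)$) is exactly the one assumed here. The inequality then reads $(a-1)\,g(x+1)\leq (a-1)\,(\Sigma g)'(x+1)\leq \Sigma g(x+a)-\Sigma g(x+1)\leq (a-1)\,g(x)$. Integrating each member over $a\in(0,1)$ and using $\int_0^1(a-1)\,da=-\tfrac12$ together with the substitution $t=x+a$, which gives $\int_0^1\Sigma g(x+a)\,da=\int_x^{x+1}\Sigma g(t)\,dt$, produces precisely the stated chain; the concave case follows by reversing every inequality, as in Proposition~\ref{prop:Gautschi56}. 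From this chain I would extract the pointwise bracketing $g(x)\leq (\Sigma g)'(x+1)\leq g(x+1)$ by combining the outer relations with the middle ones and cancelling the factor $-\tfrac12$, equivalently $g(x-1)\leq(\Sigma g)'(x)\leq g(x)$ after a shift; this bracketing is the workhorse for both (a) and (b).

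For assertion (a), I would first note that convexity of $\Sigma g$ on $[x,\infty)$ makes $(\Sigma g)'$ increasing, hence $\Sigma g$ eventually monotone; since it is not eventually identically zero it has at most one zero in the monotone range, so $\Sigma g(x)\neq0$ for large $x$ and the quotients below are defined. Writing $\Sigma g(x)=\Sigma g(x-1)+g(x-1)$ and using $g/\Sigma g\to0$ gives $\Sigma g(x-1)/\Sigma g(x)\to1$, whence $g(x-1)/\Sigma g(x)=\bigl(g(x-1)/\Sigma g(x-1)\bigr)\bigl(\Sigma g(x-1)/\Sigma g(x)\bigr)\to0$. Dividing $g(x-1)\leq(\Sigma g)'(x)\leq g(x)$ by $\Sigma g(x)$ (with the inequalities reversed if $\Sigma g(x)<0$) and squeezing then yields $(\Sigma g)'(x)/\Sigma g(x)\to0$. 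For the asymptotic equivalence I would divide $\int_x^{x+1}\Sigma g(t)\,dt-\Sigma g(x+1)$, which by the main chain lies between $-\tfrac12 g(x+1)$ and $-\tfrac12 g(x)$, by $\Sigma g(x+1)$; both bounds tend to $0$ after writing $g(x)/\Sigma g(x+1)=\bigl(g(x)/\Sigma g(x)\bigr)\bigl(\Sigma g(x)/\Sigma g(x+1)\bigr)$ and using $\Sigma g(x)/\Sigma g(x+1)\to1$. This gives $\int_x^{x+1}\Sigma g(t)\,dt\sim\Sigma g(x+1)$, and since $\Sigma g(x+1)/\Sigma g(x)=1+g(x)/\Sigma g(x)\to1$, also $\int_x^{x+1}\Sigma g(t)\,dt\sim\Sigma g(x)$.

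For assertion (b), since $g(x+1)/g(x)\to1$ the function $g$ is nonzero and of constant sign for large $x$; assuming for instance $g>0$ eventually, I would divide $g(x-1)\leq(\Sigma g)'(x)\leq g(x)$ by $g(x)$ and use $g(x-1)/g(x)\to1$ to get $(\Sigma g)'(x)/g(x)\to1$ by squeezing. For the last limit I would rewrite the middle member of the main chain using $\Sigma g(x+1)=\Sigma g(x)+g(x)$, so that $\int_x^{x+1}\Sigma g(t)\,dt-\Sigma g(x)$ lies between $g(x)-\tfrac12 g(x+1)$ and $\tfrac12 g(x)$; dividing by $g(x)$ gives the lower bound $1-\tfrac12\bigl(g(x+1)/g(x)\bigr)\to\tfrac12$ and the upper bound $\tfrac12$, and squeezing yields the claimed limit $\tfrac12$. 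The case $g<0$ eventually is identical, the sign reversals cancelling in the squeeze.

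I expect the only real subtlety — rather than a genuine obstacle — to be the bookkeeping of signs: guaranteeing that the denominators $\Sigma g(x)$ and $g(x)$ are eventually nonzero (handled via the convexity/monotonicity argument and via the ratio hypotheses, respectively) and keeping track of the direction of the inequalities when dividing, which however does not affect the squeeze conclusions.
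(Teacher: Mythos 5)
Your proposal is correct and follows essentially the same route as the paper: the central chain is obtained by integrating the generalized Gautschi inequality (Proposition~\ref{prop:Gautschi56}) over $a\in(0,1)$, the eventual nonvanishing of $\Sigma g$ in (a) comes from its eventual monotonicity (membership in $\cK^0$), and both limit assertions follow by dividing the inequalities by $\Sigma g$ (resp.\ $g$) and squeezing, exactly as in the paper's (terser) proof. The only cosmetic slip is the phrase ``at most one zero in the monotone range''---an eventually monotone function can vanish on a whole interval---but your conclusion that $\Sigma g$ is eventually nonzero is still valid and is all that is needed.
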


\begin{proof}
The inequalities are obtained by integrating on $a\in (0,1)$ the expressions in the generalized Gautschi inequality. Let us now prove assertion (a); the second one can be established similarly. If $\Sigma g$ is not eventually identically zero, then it eventually never vanishes since it lies in $\cK^0$. If condition \eqref{eq:8GautLim55} holds, then we must have
$$
\lim_{x\to\infty}\frac{\Sigma g(x+1)}{\Sigma g(x)} ~=~ \lim_{x\to\infty}\left(1+\frac{g(x)}{\Sigma g(x)}\right) ~=~ 1\qquad\text{and}\qquad\lim_{x\to\infty}\frac{g(x)}{\Sigma g(x+1)} ~=~ 0.
$$
We then complete the proof by dividing all the expressions in the inequalities by $\Sigma g(x+1)$ and letting $x\to\infty$.
\end{proof}

\index{Gautschi's inequality!generalized|)}

\section{Generalized Webster's functional equation}
\label{sec:8GenWEb3Fuc7E}
\index{Webster's functional equation!generalized|(}

In the framework of $\Gamma$-type functions, Webster \cite[Section~8]{Web97b} investigated the multiplicative version of the functional equation\index{Webster's functional equation}
$$
\textstyle{f(x)+f(x+\frac{1}{2})} ~=~ h(x),\qquad x>0,
$$
and, more generally, of the functional equation
$$
\sum_{j=0}^{m-1}f\left(x+\frac{j}{m}\right) ~=~ h(x),\qquad x>0,
$$
for any $m\in\N^*$, where $h\colon\R_+\to\R$ is a given function satisfying certain conditions.

In this section, we extend Webster's result by considering and solving the more general equation
\begin{equation}\label{eq:AFuncEq}
\sum_{j=0}^{m-1}f(x+a{\,}j) ~=~ h(x),\qquad x>0,
\end{equation}
where $a>0$ is also a given parameter. We call it the \emph{generalized Webster functional equation}. For instance, we can prove that the unique monotone solution $f\colon\R_+\to\R$ to the equation
$$
f(x)+f(x+a) ~=~ \frac{1}{x}
$$
is given by
$$
f(x) ~=~ \frac{1}{2a}\,\psi\left(\frac{x+a}{2a}\right)-\frac{1}{2a}\,\psi\left(\frac{x}{2a}\right).
$$

Our general result is stated in the following theorem, a variant of which was established by Webster \cite[Theorem 8.1]{Web97b} in the special case when $p=1$ and $a=\frac{1}{m}$.

\begin{theorem}[Generalized Webster's functional equation]\label{thm:FunctEq69}
Let $p\in\N$, $m\in\N^*$, $a>0$, and $h\in\cD^q\cap\cK^q$ for some integer $q\geq p$. Define also the function $h_a\colon\R_+\to\R$ by the equation
$$
h_a(x) ~=~ h(ax)\qquad\text{for $x>0$}.
$$
If $\Delta h_a$ lies in $\cD^p\cap\cK^p_+\cap\cK^q$ (resp.\ $\cD^p\cap\cK^p_-\cap\cK^q$), then there is a unique solution to equation \eqref{eq:AFuncEq} lying in $\cK^p$, namely
$$
f(x) ~=~ \Sigma h_{am}\left(\frac{x+a}{am}\right)-\Sigma h_{am}\left(\frac{x}{am}\right).
$$
Moreover, this solution lies in $\cK^p_-$ (resp.\ $\cK^p_+$).
\end{theorem}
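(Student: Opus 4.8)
The plan is to verify that the displayed $f$ is a genuine solution by a telescoping computation, to obtain uniqueness by reducing the functional equation to an ordinary difference equation, and finally to pin down the convexity class and sign of $f$. First I would check that $f$ solves \eqref{eq:AFuncEq}. Since $h\in\cD^q\cap\cK^q$, Corollary~\ref{cor:Hom4} gives $h_{am}\in\cD^q\cap\cK^q\subset\mathrm{dom}(\Sigma)$, so $\Sigma h_{am}$ is defined and satisfies $\Delta\Sigma h_{am}=h_{am}$. Writing each summand as $f(x+aj)=\Sigma h_{am}\big(\frac{x+a(j+1)}{am}\big)-\Sigma h_{am}\big(\frac{x+aj}{am}\big)$ and summing over $j=0,\dots,m-1$, the sum telescopes to $\Sigma h_{am}\big(\frac{x}{am}+1\big)-\Sigma h_{am}\big(\frac{x}{am}\big)=h_{am}\big(\frac{x}{am}\big)=h(x)$, so $f$ indeed solves \eqref{eq:AFuncEq}.

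Next, for both uniqueness and convexity I would pass to the reparametrized function $\tilde f(y)=f(amy)$. Applying the step-$a$ difference to $\sum_{j=0}^{m-1}f(\cdot+aj)=h$ shows that any solution of \eqref{eq:AFuncEq} satisfies $f(x+am)-f(x)=h(x+a)-h(x)$, that is $\Delta\tilde f(y)=h_a(my+1)-h_a(my)=(\Delta h_a)(my)=:g(y)$, and by Corollary~\ref{cor:Hom4} one has $g\in\cD^p\cap\cK^p_+$ (resp.\ $\cK^p_-$). For uniqueness, if $f_1,f_2\in\cK^p$ both solve \eqref{eq:AFuncEq}, then $\tilde f_1,\tilde f_2\in\cK^p$ both solve $\Delta\tilde f=g$, so by the uniqueness Theorem~\ref{thm:unic} they differ by a constant $c$; hence $f_1-f_2\equiv c$, and summing this constant over $j=0,\dots,m-1$ in \eqref{eq:AFuncEq} forces $mc=0$, so $c=0$. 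For the candidate itself, $\tilde f(y)=\Sigma h_{am}\big(y+\frac1m\big)-\Sigma h_{am}(y)=\Delta_{[1/m]}\Sigma h_{am}(y)$, and since $\Delta$ and $\Delta_{[1/m]}$ commute one checks directly that $\Delta\tilde f=\Delta_{[1/m]}h_{am}=g$.

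It then remains to show $\tilde f\in\cK^p$, for then Theorem~\ref{thm:unic} gives $\tilde f=c+\Sigma g$, and since $\Sigma g\in\cK^p_-$ (resp.\ $\cK^p_+$) by the existence Theorem~\ref{thm:exist}, the sign of $f$ follows after transporting back through $f(x)=\tilde f(x/(am))$ (again Corollary~\ref{cor:Hom4}). Here $\Sigma h_{am}\in\cK^q$ by Theorem~\ref{thm:exist}, and the positive step difference $\Delta_{[1/m]}$ lowers the order of convexity by one: from the divided-difference monotonicity of Lemma~\ref{lemma:pCInc5} one gets $(\Delta_{[1/m]}\Sigma h_{am})[z_0,\dots,z_q]=\Sigma h_{am}[z_0+\frac1m,\dots,z_q+\frac1m]-\Sigma h_{am}[z_0,\dots,z_q]$, whose sign is fixed because the order-$q$ divided difference is monotone in each argument. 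Thus $\tilde f\in\cK^{q-1}$. When $q\geq p+1$ this already yields $\tilde f\in\cK^{q-1}\subset\cK^p$ by the descending filtration of Proposition~\ref{prop:MpDescFiltr}, completing the argument in that range.

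The hard part is the borderline case $q=p$, where the above only places $\tilde f$ in $\cK^{p-1}$. There I would exploit the extra structure: $\Delta\tilde f=g$ lies in $\cD^p\cap\cK^p_+$ with $\Delta^pg$ increasing to $0$ (Theorem~\ref{thm:intCpTpKp}), while the alternating-sign mechanism of Proposition~\ref{prop:5alternKpm} and Corollary~\ref{cor:dfmm7s} severely constrain the $1$-periodic discrepancy $\omega=\tilde f-\Sigma g$; combined with Fact~\ref{fact:rlo3.4} (a periodic function lying in $\cK^0$ is constant) this should force $\omega$ to be constant and hence $\tilde f\in\cK^p$ with the correct sign. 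Ruling out a nonconstant $1$-periodic component of $\tilde f$ precisely when the orders of convexity and of asymptotic vanishing coincide is the delicate heart of the proof, and is where I expect the real work to lie.
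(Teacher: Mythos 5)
Your verification that the displayed $f$ solves \eqref{eq:AFuncEq}, and your uniqueness argument (reduce to $\Delta\tilde f=g$ with $\tilde f(y)=f(amy)$ and $g(y)=\Delta h_a(my)$, invoke Theorem~\ref{thm:unic}, then kill the constant by summing over $j$), are both correct and essentially match the paper's reasoning. Where you diverge is in proving that the explicit candidate $\tilde f=\Delta_{[1/m]}\Sigma h_{am}$ lies in $\cK^p$: you take $\Sigma h_{am}\in\cK^q$ from Theorem~\ref{thm:exist} and use the fact that a positive-step difference lowers the order of convexity by one, so $\tilde f\in\cK^{q-1}$. Since the classes $\cK^r$ are decreasingly nested (Proposition~\ref{prop:MpDescFiltr}), this suffices exactly when $q\geq p+1$, and in that range your argument is a legitimate alternative to the paper's.

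The borderline case $q=p$ is a genuine gap, and it is not a marginal one: the section's motivating example $f(x)+f(x+a)=\frac{1}{x}$ has $p=q=0$. Your proposed fallback (constrain the $1$-periodic discrepancy $\omega=\tilde f-\Sigma g$ and invoke Fact~\ref{fact:rlo3.4}) cannot be run from the information available: Fact~\ref{fact:rlo3.4} requires $\omega\in\cK^0$, but all you know is $\tilde f\in\cK^{p-1}$ and $\Sigma g\in\cK^p$, and $\cK^{p-1}$ is a union of two convex cones, not a linear space, so the difference of two of its members need not lie in any convexity class at all. The paper sidesteps the problem entirely with the linearity of $\Sigma$ (Proposition~\ref{prop:gStHa}) combined with the shift rule (Proposition~\ref{prop:gStHa223}): writing $g=u-v$ with $u(y)=h_{am}\left(y+\frac{1}{m}\right)$ and $v(y)=h_{am}(y)$, all three of $u$, $v$, $g$ lie in $\cD^q\cap\cK^q$ --- this is precisely what the hypothesis $\Delta h_a\in\cK^q$ is there for --- so $\Sigma g=\Sigma u-\Sigma v=\Delta_{[1/m]}\Sigma h_{am}+\mathrm{const}$. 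The candidate is therefore equal to $\Sigma g$ up to an additive constant and inherits membership in $\cK^p_{\mp}$ directly from Theorem~\ref{thm:exist}, with no case distinction on $q$. That linearity step is the missing ingredient in your write-up; the difference-operator route cannot recover it when the order of convexity and the order of the asymptotic condition coincide.
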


\begin{proof}
Suppose for instance that $\Delta h_a$ lies in $\cD^p\cap\cK^p_+\cap\cK^q$ and let $g_a^m\colon\R_+\to\R$ be defined by the equation $g_a^m(x)=\Delta h_a(mx)$ for $x>0$. By Corollary~\ref{cor:Hom4}, the function $g_a^m$ lies in $\cD^p\cap\cK^p_+\cap\cK^q$. Suppose that $f\colon\R_+\to\R$ is a solution to equation \eqref{eq:AFuncEq}.  Then necessarily
$$
g_a^m(x) ~=~ h(amx+a)-h(amx) ~=~ \sum_{j=0}^{m-1}\Delta_jf(amx+aj) ~=~ \Delta_x f(amx).
$$
If $f$ lies in $\cK^p$, then by the uniqueness and existence theorems we have that
$$
f(amx) ~=~ f(am)+\Sigma g_a^m(x)
$$
and $f$ must lie in $\cK^p_-$. Since both $g_a^m$ and $h$ lie in $\cD^q\cap\cK^q$, by Propositions~\ref{prop:gStHa} and \ref{prop:gStHa223} we then have
\begin{eqnarray*}
f(amx) &=& f(am)+\Sigma_x h(amx+a)-\Sigma h(amx)\\
&=& f(am)+\Sigma_x h_{am}\left(x+\frac{1}{m}\right)-\Sigma h_{am}(x)\\
&=& c+\Sigma h_{am}\left(x+\frac{1}{m}\right)-\Sigma h_{am}(x),
\end{eqnarray*}
or equivalently,
\begin{equation}\label{eq:AFuncEq44}
f(x) ~=~ c+\Sigma h_{am}\left(\frac{x+a}{am}\right)-\Sigma h_{am}\left(\frac{x}{am}\right)
\end{equation}
for some $c\in\R$. But the function $f$ specified by \eqref{eq:AFuncEq44} satisfies \eqref{eq:AFuncEq} if and only if $c=0$; indeed, we then have
\begin{eqnarray*}
\sum_{j=0}^{m-1}f(x+aj) &=& mc+\sum_{j=0}^{m-1}\Delta_j\,\Sigma h_{am}\left(\frac{x+aj}{am}\right)\\
&=& mc+\Delta\Sigma h_{am}\left(\frac{x}{am}\right) ~=~ mc+h(x).
\end{eqnarray*}
This completes the proof.
\end{proof}

\begin{example}
Theorem~\ref{thm:FunctEq69} shows that the unique eventually monotone or eventually log-convex solution to the functional equation
$$
f(x)f(x+a){\,}x^p ~=~ 1,\qquad x>0,{\,}a>0,{\,}p>0,
$$
is the function
$$
f(x) ~=~ \left(\frac{\Gamma(\frac{x}{2a})}{\sqrt{2a}{\,}\Gamma(\frac{x+a}{2a})}\right)^p.
$$
This result was established by Thielman~\cite{Thi41} (see also Anastassiadis \cite{Ana60}). The special case when $p=1$ was previously shown by Mayer~\cite{May39}.
\end{example}

Combining both Theorems~\ref{thm:MultThmGen} and \ref{thm:FunctEq69}, we can derive immediately the following corollary, which in a sense provides yet another characterization of multiple $\Gamma$-type functions. For a similar result on the gamma function, see Artin \cite[p.~35]{Art15}.

\begin{corollary}
Let $p\in\N$, $m\in\N^*$, and $g\in\cD^p\cap\cK^{p+1}$. Define also the function $g_m\colon\R_+\to\R$ by the equation $g_m(x)=g(\frac{x}{m})$ for $x>0$. Then the function $f=\Sigma g$ is the unique solution lying in $\cK^p$ to the equation
$$
\sum_{j=0}^{m-1}f\left(\frac{x+j}{m}\right) ~=~ \sum_{j=1}^m\Sigma g\left(\frac{j}{m}\right)+\Sigma g_m(x),\qquad x>0.
$$
\end{corollary}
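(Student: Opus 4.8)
The plan is to obtain existence directly from the multiplication formula and uniqueness from the generalized Webster functional equation, so that the two theorems dovetail with no further computation. I would treat $\Sigma g$ as the candidate solution and argue its existence and uniqueness separately.

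First, I would note that $\Sigma g$ is a solution. Since $g\in\cD^p\cap\cK^{p+1}\subset\cD^p\cap\cK^p$, it lies in $\mathrm{dom}(\Sigma)$, so Theorem~\ref{thm:MultThmGen} applies and its identity \eqref{eq:MultThmGeneq} is precisely the displayed equation with $f=\Sigma g$. Moreover $\Sigma g\in\cK^p$ by Proposition~\ref{prp:56GathZZPro8}(b). Thus it remains only to prove uniqueness among solutions lying in $\cK^p$.

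For uniqueness I would recast the equation as an instance of \eqref{eq:AFuncEq} and invoke Theorem~\ref{thm:FunctEq69} with $a=1/m$. Replacing $x$ by $mx$ is a bijection of $\R_+$, so the solution sets coincide, and it turns the displayed equation into $\sum_{j=0}^{m-1}f(x+aj)=h(x)$ with $a=1/m$ and $h(x)=\sum_{j=1}^m\Sigma g(j/m)+\Sigma g_m(mx)$. Here $h_a(x)=h(x/m)$ equals a constant plus $\Sigma g_m(x)$, so $\Delta h_a=g_m$. The hypotheses of Theorem~\ref{thm:FunctEq69} then follow from the order bookkeeping of Corollary~\ref{cor:Hom4} and Proposition~\ref{prp:56GathZZPro8}: by Corollary~\ref{cor:Hom4} the function $g_m(x)=g(x/m)$ still lies in $\cD^p\cap\cK^{p+1}$, hence in $\cD^{p+1}\cap\cK^{p+1}$, so Proposition~\ref{prp:56GathZZPro8}(b) applied at both orders $p$ and $p+1$ gives $\Sigma g_m\in\cD^{p+1}\cap\cK^{p+1}$ and therefore $h_a\in\cD^{p+1}\cap\cK^{p+1}$; a second use of Corollary~\ref{cor:Hom4} on $h(x)=h_a(mx)$ yields $h\in\cD^{p+1}\cap\cK^{p+1}$. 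Taking $q=p+1\geq p$, we thus have $h\in\cD^q\cap\cK^q$ and $\Delta h_a=g_m\in\cD^p\cap\cK^p_\pm\cap\cK^q$, which are exactly the required assumptions, so Theorem~\ref{thm:FunctEq69} supplies a unique solution lying in $\cK^p$.

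Combining the two halves, the unique solution furnished by Theorem~\ref{thm:FunctEq69} must coincide with the solution $\Sigma g$ produced by Theorem~\ref{thm:MultThmGen}, establishing the corollary. The only delicate point is this convexity-order accounting: it is essential that $g$ is assumed to lie in $\cK^{p+1}$ rather than merely $\cK^p$, since this is what forces $\Sigma g_m$, and hence $h$, into $\cK^{p+1}$ and thereby provides the index $q=p+1$ needed to meet the hypotheses of Theorem~\ref{thm:FunctEq69}. I do not expect to use the explicit formula for the solution given in that theorem; existence is supplied separately by the multiplication formula, and the uniqueness clause alone then closes the argument.
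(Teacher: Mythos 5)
Your proof is correct and takes exactly the route the paper intends: Theorem~\ref{thm:MultThmGen} supplies $f=\Sigma g$ as a solution lying in $\cK^p$, and Theorem~\ref{thm:FunctEq69} (applied with $a=1/m$ after the harmless substitution $x\mapsto mx$) supplies uniqueness in $\cK^p$. Your bookkeeping showing $g_m\in\cD^p\cap\cK^p_{\pm}\cap\cK^{p+1}$ and $h\in\cD^{p+1}\cap\cK^{p+1}$, together with the observation that the hypothesis $g\in\cK^{p+1}$ is precisely what makes the index $q=p+1$ available, simply makes explicit the verification that the paper leaves to the reader.
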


\begin{example}
For any $m\in\N^*$ the gamma function is the unique log-convex solution $f\colon\R_+\to\R_+$ to the equation
$$
\prod_{j=0}^{m-1}f\left(\frac{x+j}{m}\right) ~=~ \frac{\Gamma(x)}{m^{x-\frac{1}{2}}}{\,}(2\pi)^{\frac{m-1}{2}},\qquad x>0.
$$
Equivalently, for any $m\in\N^*$ the gamma function is the unique log-convex solution $f\colon\R_+\to\R_+$ to the equation
$$
\prod_{j=0}^{m-1}f\left(\frac{x+j}{m}\right) ~=~ \prod_{j=0}^{m-1}\Gamma\left(\frac{x+j}{m}\right),\qquad x>0.\qedhere
$$
\end{example}

\index{Webster's functional equation!generalized|)}

\chapter{Summary of the main results}
\label{chapter:9}

Now that we have collected a number of relevant results on multiple $\log\Gamma$-type functions, we naturally look forward to applying them on various examples, including not only special functions related to the gamma function but also many other useful functions of mathematical analysis. Such applications will be discussed in the next three chapters. But first and foremost, it is time to take stock of the new theory we have developed and summarize what we have found and learned thus far.

This chapter is devoted to a review of the most interesting and useful results that we have established in the previous chapters. These results are presented here as a step-by-step plan in order to perform a systematic and efficient investigation of the multiple $\log\Gamma$-type functions. We have tried to be as self-contained as possible, so that the reader can skip Chapters~\ref{chapter:2} to \ref{chapter:8} and make direct use of the summary given in this chapter.

\begin{remark}\label{rem:IntVSCont3}
At many places in this book (e.g., in Proposition~\ref{prop:intMLGt}), we have made the assumption that the function $g$ (resp.\ $g^{(r)}$ for some $r\in\N^*$) is continuous to ensure the existence of certain integrals. Although we can often relax this condition by simply requiring that $g$ (resp.\ $g^{(r)}$) is locally integrable, we have kept this continuity assumption for simplicity and consistency with similar results where higher order differentiability is assumed.
\end{remark}

\section{Basic definitions}

Let us recall a few useful concepts introduced in the previous chapters. For any $p\in\N$ and any $\S\in\{\N,\R\}$, we let $\cD^p_{\S}$ denote the set of functions $g\colon\R_+\to\R$ having the asymptotic property that
$$
\Delta^p g(x) ~\to ~0 \qquad\text{as $x\to_{\S}\infty$.}
$$
For any $p\in\N$, we also let $\cC^p$ denote the set of $p$ times continuously differentiable functions from $\R_+$ to $\R$ and we let $\cK^p$ denote the set of functions from $\R_+$ to $\R$ that are eventually $p$-convex or eventually $p$-concave, that is, $p$-convex\index{$p$-convexity} or $p$-concave\index{$p$-concavity} (see Definition~\ref{de:pconcconv}) in a neighborhood of infinity. Recall also that the sets $\cD^p_{\S}$'s are increasingly nested while the sets $\cC^p$'s and $\cK^p$'s are decreasingly nested, that is,
$$
\cD^p_{\S}\subset\cD^{p+1}_{\S},\qquad\cK^{p+1}\subset\cK^p,\qquad\text{and}\quad\cC^{p+1}\subset\cC^p\qquad\text{for any $p\in\N$.}
$$
We have also proved in Proposition~\ref{prop:ClClIntg} that
$$
\cD^p_{\N}\cap\cK^p ~=~ \cD^p_{\R}\cap\cK^p
$$
and we denote this common intersection simply by $\cD^p\cap\cK^p$.

%

In Chapter~\ref{chapter:5}, we have introduced the map $\Sigma$ that carries any function $g\colon\R_+\to\R$ lying in the set
$$
\mathrm{dom}(\Sigma) ~=~ \bigcup_{p\geq 0}(\cD^p\cap\cK^p)
$$
into the unique solution $f\colon\R_+\to\R$ that arises from Theorem~\ref{thm:int2} and satisfies $f(1)=0$. That is,
$$
\Sigma g(x) ~=~ \lim_{n\to\infty}f^p_n[g](x),\qquad x>0.
$$
The class of functions that are equal (up to an additive constant) to $\Sigma g$ is called the \emph{principal indefinite sum}\index{principal indefinite sum} of $g$ (see Definition~\ref{de:PIS43} and Example~\ref{ex:PIS43LOGG}). A function $f$ lying in the range of the map $\Sigma$ is also called a \emph{multiple $\log\Gamma$-type function}.\index{multiple $\log\Gamma$-type function}

In the previous chapters, we have established and discussed several properties of the multiple $\log\Gamma$-type functions, many of which are counterparts of classical properties of the gamma function. For instance, we have proved that every multiple $\log\Gamma$-type function satisfies an analogue of Gauss' multiplication formula for the gamma function. In the rest of this chapter, we provide a summary of these properties. The reader can use them for a systematic investigation of any multiple $\log\Gamma$-type function.

\section{ID card and main characterization}

The first step in this investigation is to choose a function $g\in\cD^p\cap\cK^p$ (for some $p\in\N$) for which we wish to study its principal indefinite sum $\Sigma g$. For instance, if we consider the function $g(x)=x\ln x$, which lies in $\cD^2\cap\cK^2$, then the function $\Sigma g$ is the logarithm of the hyperfactorial function\index{hyperfactorial function} $K(x)$ (see Section~\ref{sec:hypFacF4}), that is
$$
\Sigma g(x) ~=~ \ln K(x) ~=~ (x-1)\ln\Gamma(x)-\ln G(x),
$$
where $G$ is the Barnes $G$-function.\index{Barnes's $G$-function} Our results will then enable us to study this function through several of its properties.

Alternatively, we can start from a given function $f\in\cK^p$ (for some $p\in\N$) that we wish to investigate and whose difference $g=\Delta f$ is a function that lies in $\cD^p\cap\cK^p$. For instance, we may want to investigate the $n$th degree Bernoulli polynomial\index{Bernoulli polynomials} $f(x)=B_n(x)$ by first observing that the function
$$
g(x) ~=~ \Delta f(x) ~=~ n{\,}x^{n-1}
$$
lies in $\cD^n\cap\cK^n$. We then have
$$
\Sigma g(x) ~=~ B_n(x)-B_1(1).
$$

\begin{remark}
To investigate a function $f\colon\R_+\to\R$ through our results, it is not enough to check that the difference $g=\Delta f$ lies in $\cD^p\cap\cK^p$ for some $p\in\N$. We also need to make sure that $f$ also lies in $\cK^p$. For instance, both functions
$$
f_1(x) ~=~ x+\sin(2\pi x)\qquad\text{and}\qquad f_2(x) ~=~ x+\theta_3(\pi x,1/2),
$$
where $\theta_3(u,q)$ is the Jacobi theta function\index{Jacobi theta function} defined by the equation
$$
\theta_3(u,q) ~=~ 1+2\sum_{n=1}^{\infty}q^{n^2}\cos(2nu),
$$
have the same difference $g=\Delta f_1=\Delta f_2=1$ in $\cD^1\cap\cK^1$ (and we have $\Sigma g(x)=x-1$). However, neither $f_1$ nor $f_2$ lies in $\cK^1$.
\end{remark}

\parag{ID card} It is convenient to start our investigation of the function $\Sigma g$ by collecting some basic properties of the function $g$, thus establishing a kind of ID card for that function.

Thus, we first consider a function $g\colon\R_+\to\R$. We then determine its asymptotic degree\index{asymptotic degree}
\begin{eqnarray*}
\deg g &=& -1+\min\{q\in\N: g\in\cD^q_{\R}\}\\
&=& -1+\min\{q\in\N: \Delta^q g(x)\to 0~\text{as}~x\to\infty\}.
\end{eqnarray*}
If $\deg g=\infty$ (e.g., when $g(x)=2^x$) or if $g\notin\cK^p$ for all $p\geq 1+\deg g$ (e.g., $g(x)=x+\frac{1}{x}\sin x$), then the function $\Sigma g$ does not exist and the investigation stops here. Otherwise, the functions $g$ and $\Sigma g$ lie in $\cD^p\cap\cK^p$ and $\cD^{p+1}\cap\cK^p$, respectively, where $p=1+\deg g$.

If $\deg g=-1$, it is important to check whether $g$ also lies in the set $\cD^{-1}_{\N}$ of functions $g\colon\R_+\to\R$ for which the sequence $n\mapsto g(n)$ is summable. In this case, by Proposition~\ref{prop:diffzz0} we have that
$$
\lim_{x\to\infty} \Sigma g(x) ~=~ \sum_{k=1}^{\infty}g(k).
$$

It is also useful to determine the integer $r\in\N$, if any, for which $g$ lies in $\cC^r\cap\cK^{\max\{p,r\}}$. In this case, we know from Theorem~\ref{thm:TBTDiff} that $\Sigma g$ lies also in this set. Moreover, many functions of mathematical analysis lie in both
$$
\cC^{\infty} ~=~ \bigcap_{r\geq 0}\cC^r\qquad\text{and}\qquad\cK^{\infty} ~=~ \bigcap_{p\geq 0}\cK^p.
$$
If $g$ lies in these sets, then we can write $g\in\cC^{\infty}\cap\cD^p\cap\cK^{\infty}$.

It may be also useful to determine the domain on which $g$ is $p$-convex or $p$-concave. For instance, the function $g(x)=\frac{1}{x}\ln x$ is $0$-concave on $[e,\infty)$, $1$-convex on $[e^{3/2},\infty)$, etc. (see Example~\ref{ex:5Alt88Stiel}).

Note that, at this stage, we may not yet have any simple expression for $\Sigma g$. Limit and series representations will later emerge anyway from our investigation.

\parag{Analogue of Bohr-Mollerup's theorem} The following characterization result constitutes the analogue of Bohr-Mollerup's theorem for the function $\Sigma g$ and follows immediately from the uniqueness Theorem~\ref{thm:unic}.\index{Bohr-Mollerup theorem!analogue}
\begin{quote}
\emph{If $f\colon\R_+\to\R$ is a solution to the equation $\Delta f=g$, then it lies in $\cK^p$ if and only if $f=c+\Sigma g$ for some $c\in\R$.}
\end{quote}
This characterization sometimes enables one to establish alternative expressions for the function $\Sigma g$. For instance, if $g(x)=\frac{1}{x}$, then we have
$$
\Sigma g(x) ~=~ \psi(x)+\gamma.
$$
Using the characterization above, we can easily establish the following Gauss representation (see, e.g., Srivastava and Choi \cite[p.~26]{SriCho12})
$$
\psi(x)+\gamma ~=~ \int_0^{\infty}\frac{e^{-t}-e^{-xt}}{1-e^{-t}}{\,}dt{\,},\qquad x>0.
$$
Indeed, both sides of this identity vanish at $x=1$ and are eventually increasing solutions to the equation $\Delta f=g$. Hence, by uniqueness they must coincide on $\R_+$.

Note also that, in addition to the analogue of Bohr-Mollerup's theorem above, we also have an alternative characterization of $\Sigma g$ given in Proposition~\ref{prop:90unic41}.

\section{Extended ID card}

We now complement the ID card of the function $g$ by considering some additional related constants and mappings. From now on, we assume that $g$ is at least continuous on $\R_+$. More precisely, we assume that
$$
g\in\cC^r\cap\cD^p\cap\cK^{\max\{p,r\}}
$$
for $p=1+\deg g$ and some $r\in\N$.

Recall also that, for any $n\in\N$, the symbols $G_n$ and $B_n$ denote the $n$th Gregory coefficient and the $n$th Bernoulli number, respectively. We also let
$$
\overline{G}_n ~=~ 1-\sum_{j=1}^n|G_j|
$$
and we let $B_n(x)$ denote the $n$th degree Bernoulli polynomial (see Sections~\ref{sec:6Asym4Cons6Bine}, \ref{sec:6Gen4St2Fo0}, and \ref{sec:G8S3F5R}).

\parag{Asymptotic constant}\index{asymptotic constant} Recall that the asymptotic constant associated with $g$ (see \eqref{eq:sigmagg86}) is the number
$$
\sigma[g] ~=~ \int_0^1\Sigma g(t+1){\,}dt ~=~ \int_1^2\Sigma g(t){\,}dt.
$$
If $g$ is integrable at $0$, we also define the generalized Stirling constant\index{Stirling's constant!generalized} (see Definition~\ref{de:GSC556}) as the number $\exp(\overline{\sigma}[g])$, where
$$
\overline{\sigma}[g] ~=~ \sigma[g] -\int_0^1g(t){\,}dt ~=~ \int_0^1\Sigma g(t){\,}dt.
$$
Since this latter constant does not always exist (e.g., when $g(x)=\frac{1}{x}$), we do not use it much in our investigation.

The asymptotic constant\index{asymptotic constant} $\sigma[g]$ has the following limit, series, and integral representations (see identities \eqref{eq:SgSt7FrI}, \eqref{eq:serS72}, \eqref{eq:sig51saz}, and Corollary~\ref{cor:Liu471S5}).

\begin{enumerate}
\item[(a)] If $g$ lies in $\cC^0\cap\cD^p\cap\cK^p$, then we have
$$
\sigma[g] ~=~ \sum_{j=1}^pG_j{\,}\Delta^{j-1}g(1) - \sum_{k=1}^{\infty}\left(\int_{k}^{k+1}g(t){\,}dt-\sum_{j=0}^pG_j{\,}\Delta^jg(k)\right)
$$
and
$$
\sigma[g] ~=~ \lim_{n\to\infty}\left(\sum_{k=1}^{n-1}g(k)-\int_1^ng(t){\,}dt+\sum_{j=1}^pG_j\Delta^{j-1}g(n)\right).
$$
\item[(b)] If $g$ lies in $\cC^{2q}\cap\cD^p\cap\cK^{2q}$, where $q\in\N^*\cup\{\frac{1}{2}\}$ and $0\leq p\leq 2q-1$, then we have
$$
\sigma[g] ~=~ \lim_{n\to\infty}\left(\sum_{k=1}^{n-1} g(k)-\int_1^n g(t){\,}dt -\sum_{k=1}^p\frac{B_k}{k!}{\,}g^{(k-1)}(n)\right).
$$
\item[(c)] If $g$ lies in $\cC^2\cap\cD^1\cap\cK^2$, then we have
$$
\sigma[g] ~=~ \frac{1}{2}{\,}g(1)+\int_1^{\infty}\textstyle{\left(\{t\}-\frac{1}{2}\right)g'(t){\,}dt}.
$$
\item[(d)] If $g$ lies in $\cC^{2q+1}\cap\cD^p\cap\cK^{2q+1}$, then we have
$$
\sigma[g] ~=~ \frac{1}{2}{\,}g(1)-\sum_{k=1}^q\frac{B_{2k}}{(2k)!}{\,}g^{(2k-1)}(1)  - \int_1^{\infty}\frac{B_{2q}(\{t\})}{(2q)!}{\,}g^{(2q)}(t){\,}dt.
$$
\end{enumerate}
We also know from Proposition~\ref{prop:diffzz0} that if $g$ lies in $\cC^0\cap\cD^{-1}\cap\cK^0$ (here $\cD^{-1}$ stands for $\cD^{-1}_{\N}$), then $g$ is integrable at infinity and
$$
\sigma[g] ~=~ \sum_{k=1}^{\infty}g(k)-\int_1^{\infty}g(t){\,}dt.
$$

\parag{Analogue of Raabe's formula}\index{Raabe's formula!analogue} The analogue of Raabe's formula is simply the identity (see \eqref{eq:ds68ffdsbis})
$$
\int_x^{x+1}\Sigma g(t){\,}dt ~=~ \sigma[g]+\int_1^xg(t){\,}dt
$$
and we know by Proposition~\ref{prop:an4tR8} that any of these integrals lies in $\cC^0\cap\cD^{p+1}\cap\cK^{p+1}$.

Recall also from Corollary~\ref{cor:InvRaa4c} that a function $f\colon\R_+\to\R$ lies in $\cC^0\cap\cK^p$ and satisfies the equation
$$
\int_x^{x+1}f(t){\,}dt ~=~ \sigma[g]+\int_1^xg(t){\,}dt{\,},\qquad x>0,
$$
if and only if $f=\Sigma g$. This provides an alternative characterization of $\Sigma g$.

\parag{Generalized Binet's function}\index{Binet's function!generalized} For any $q\in\N$, the generalized Binet function associated with $g$ and $q$ is the function $J^q[g]\colon\R_+\to\R$ defined by the equation (see \eqref{eq:Binet64378})
$$
J^q[g](x) ~=~ \sum_{j=0}^{q-1}G_j\Delta^jg(x)-\int_x^{x+1}g(t){\,}dt\qquad\text{for $x>0$}.
$$
In particular, we also have (see \eqref{eq:Binet64378S})
$$
J^{q+1}[\Sigma g](x) ~=~ \Sigma g(x)-\sigma[g]-\int_1^xg(t){\,}dt + \sum_{j=1}^qG_j\Delta^{j-1} g(x){\,}.
$$
Note that several objects and formulas of our theory can be usefully expressed in terms of this latter function.

\parag{Generalized Euler's constant}\index{Euler's constant!generalized} Recall that the generalized Euler constant associated with the function $g$ is the number
$$
\gamma[g] ~=~ -J^{p+1}[\Sigma g](1),
$$
where $p=1+\deg g$ (see Definition~\ref{de:GEC587}).

Note that, contrary to the asymptotic constant $\sigma[g]$, the generalized Euler constant $\gamma[g]$ is not invariant if we replace $p$ with a higher value. Besides, by definition of $\gamma[g]$ both quantities are related through the following identity\index{Euler's constant!generalized!in terms of the asymptotic constant}
$$
\sigma[g] ~=~ \gamma[g]+\sum_{j=1}^pG_j\,\Delta^{j-1}g(1),
$$
where $p=1+\deg g$ (see Proposition~\ref{prop:linksSG46}). In particular, we have $\gamma[g]=\sigma[g]$ whenever $\deg g=-1$.

We also have the following integral representations\index{Euler's constant!generalized!integral form}
$$
\gamma[g] ~=~ \int_1^{\infty}\bigg(\sum_{j=0}^pG_j\Delta^jg(\lfloor t\rfloor)-g(t)\bigg){\,}dt
$$
and
$$
\gamma[g] ~=~ \int_1^{\infty}\left(\overline{P}_p[g](t)-g(t)\right)dt,
$$
where
$$
\overline{P}_p[g](x) ~=~ \sum_{j=0}^p\tchoose{\{x\}}{j}\,\Delta^jg(\lfloor x\rfloor),\qquad x\geq 1,
$$
is the piecewise polynomial function whose restriction to any interval $(k,k+1)$, with $k\in\N^*$, is the interpolating polynomial\index{interpolating polynomial} of $g$ with nodes at $k,k+1,\ldots,k+p$ (see Proposition~\ref{prop:IntFogamma482} and Eqs.\ \eqref{eq:PieceWPol48} and \eqref{eq:RpmiIneq904}).

If $g$ is $p$-convex or $p$-concave on $[1,\infty)$, then the graph of $g$ is always over or always under that of $\overline{P}_p[g]$ on $[1,\infty)$ and $|\gamma[g]|$ is the surface area between both graphs. In this case, we also have (see \eqref{eq:RpmiIneq91} and \eqref{eq:RpmiIneq91b})
$$
|\gamma[g]| ~\leq ~ \overline{G}_p{\,}|\Delta^pg(1)|
$$
and, if $p\geq 1$,
$$
|\gamma[g]| ~\leq ~ \int_0^1\left|\tchoose{t-1}{p}\right|\left|\Delta^{p-1}g(t+1)-\Delta^{p-1}g(1)\right|{\,}dt.
$$

\section{Inequalities}

Recall that, for any $a>0$, the function $\rho^p_a[g]\colon [0,\infty)\to\R$ is defined by the equation (see \eqref{eq:deflambdapt})
$$
\rho^p_a[g](x) ~=~ g(x+a)-\sum_{j=0}^{p-1}\tchoose{x}{j}\,\Delta^jg(a)\qquad\text{for $x>0$}.
$$
In particular, we have
$$
\rho^{p+1}_a[\Sigma g](x) ~=~ \Sigma g(x+a)-\Sigma g(a)-\sum_{j=1}^p\tchoose{x}{j}\,\Delta^{j-1}g(a){\,}.
$$

\parag{Generalized Wendel's inequality (symmetrized version)}\index{Wendel's inequality!generalized} Let $a\geq 0$ and let $x>0$ be so that $g$ is $p$-convex or $p$-concave on $[x,\infty)$. Then we have (see Corollary~\ref{cor:AsymBehSolCo})
$$
\left|\rho^{p+1}_x[\Sigma g](a)\right| ~\leq ~ \lceil a\rceil\left|\tchoose{a-1}{p}\right|\left|\Delta^pg(x)\right|{\,}.
$$
If $p\geq 1$, we also have the following tighter inequality
$$
\left|\rho^{p+1}_x[\Sigma g](a)\right| ~\leq ~ \left|\tchoose{a-1}{p}\right|\left|\Delta^{p-1}g(x+a)-\Delta^{p-1}g(x)\right|{\,}.
$$
This latter inequality is referred to as the symmetrized version of the generalized Wendel inequality (see Corollary~\ref{cor:AsymBehSolCo}). Both inequalities reduce to equalities when $a\in\{0,1,\ldots,p\}$.

Now, for any $n\in\N^*$ we have (see \eqref{eq:33ConvSig52})
$$
\rho^{p+1}_n[\Sigma g](x) ~=~ \Sigma g(x)-f_n^p[g](x),\qquad x>0.
$$
Using this identity, we immediately derive the following discrete version of the inequalities above. If $g$ is $p$-convex or $p$-concave on $[n,\infty)$, then
$$
\left|\Sigma g(x)-f_n^p[g](x)\right| ~\leq ~ \lceil x\rceil\left|\tchoose{x-1}{p}\right|\left|\Delta^pg(n)\right|{\,},\qquad x>0,
$$
and if $p\geq 1$,
$$
\left|\Sigma g(x)-f_n^p[g](x)\right| ~\leq ~ \left|\tchoose{x-1}{p}\right|\left|\Delta^{p-1}g(n+x)-\Delta^{p-1}g(n)\right|{\,},\qquad x>0.
$$
If $g$ lies in $\cD^{-1}_{\N}$, then (see Proposition~\ref{prop:diffzz0})
$$
\Sigma g(x) ~\to ~ \Sigma g(\infty) ~=~ \sum_{k=1}^{\infty}g(k)\qquad\text{as $x\to\infty$}.
$$
We then have the following additional inequality (see Theorem~\ref{thm:existzz0}). If $g$ is increasing or decreasing on $[n,\infty)$, then
$$
\left|\sum_{k=n}^{\infty}g(x+k)\right| ~=~ |\Sigma g(x+n)-\Sigma g(\infty)| ~\leq ~ \left|\Sigma g(n)-\Sigma g(\infty)\right|,\qquad x>0.
$$

\parag{Generalized Stirling's formula-based inequality (symmetrized version)} If $x>0$ is so that $g$ is $p$-convex or $p$-concave on $[x,\infty)$, then we have the inequality (see Corollary~\ref{cor:6GenStFo0BaIneqCo})
$$
\left|J^{p+1}[\Sigma g](x)\right| ~\leq ~ \overline{G}_p{\,}|\Delta^p g(x)|.
$$
If $p\geq 1$, we also have the following tighter inequality
$$
\left|J^{p+1}[\Sigma g](x)\right| ~\leq ~ \left|\int_0^1\tchoose{t-1}{p}(\Delta^{p-1}g(x+t)-\Delta^{p-1}g(x)){\,}dt\right|.
$$
Moreover, if $p=0$ or $p=1$, then (see Proposition~\ref{prop:Burnside0})
$$
\left|\Sigma g\left(x+\frac{1}{2}\right)-\sigma[g]-\int_1^x g(t){\,}dt\right| ~\leq ~ \left|J^{p+1}[\Sigma g](x)\right|.
$$

\parag{Generalized Gautschi's inequality}\index{Gautschi's inequality!generalized} Suppose that $g$ lies in $\cC^2\cap\cK^2$. Let $a\geq 0$ and let $x>0$ be so that $\Sigma g$ is convex on $[x+\lfloor a\rfloor,\infty)$. Then we have (see Proposition~\ref{prop:Gautschi56})
\begin{eqnarray*}
(a-\lceil a\rceil){\,}g(x+\lceil a\rceil) &\leq & (a-\lceil a\rceil){\,}(\Sigma g)'(x+\lceil a\rceil)\\
&\leq & \Sigma g(x+a)-\Sigma g(x+\lceil a\rceil) ~\leq ~ (a-\lceil a\rceil){\,}g(x+\lfloor a\rfloor).
\end{eqnarray*}
(The inequalities are to be reversed if $\Sigma g$ is concave on $[x+\lfloor a\rfloor,\infty)$.)

\section{Asymptotic analysis}

In this section, we gather the main results related to the asymptotic behaviors of multiple $\log\Gamma$-type functions, including the generalized Stirling formula.

\parag{Generalized Wendel's inequality-based limit} The following convergence result immediately follows from the generalized Wendel inequality (see Theorem~\ref{thm:AsymBehSol}). For any $a\geq 0$, we have
$$
\rho^{p+1}_x[\Sigma g](a) ~\to ~0\qquad\text{as $x\to\infty$}{\,},
$$
or equivalently,
$$
\Sigma g(x+a)-\Sigma g(x)-\sum_{j=1}^p\tchoose{a}{j}\,\Delta^{j-1}g(x) ~\to ~0\qquad\text{as $x\to\infty$}{\,}.
$$
This convergence result still holds if we differentiate $r$ times the left-hand side.

\parag{Generalized Stirling's formula}\index{Stirling's formula!generalized} We have (see Theorem~\ref{thm:dgf7dds})
$$
J^{p+1}[\Sigma g](x) ~\to ~ 0\qquad\text{as $x\to\infty$}{\,},
$$
or equivalently,
$$
\Sigma g(x) -\int_1^x g(t){\,}dt +\sum_{j=1}^pG_j\Delta^{j-1}g(x) ~\to ~ \sigma[g]\qquad\text{as $x\to\infty$}{\,}.
$$
If $g$ lies in $\cC^{2q}\cap\cD^p\cap\cK^{2q}$, where $q\in\N^*\cup\{\frac{1}{2}\}$ and $0\leq p\leq 2q-1$, then we also have (see Proposition~\ref{prop:VarStir6})
$$
\Sigma g(x)-\int_1^x g(t){\,}dt -\sum_{k=1}^p\frac{B_k}{k!}{\,}g^{(k-1)}(x) ~\to ~ \sigma[g]\qquad\text{as $x\to\infty$.}
$$
If $p=0$ or $p=1$, we also have the following analogue of Burnside's formula,\index{Burnside's formula!analogue} which provides a better approximation than the generalized Stirling formula (see Proposition~\ref{prop:Burnside0})
$$
\Sigma g(x) -\int_1^{x-\frac{1}{2}}g(t){\,}dt ~\to ~ \sigma[g] \qquad \text{as $x\to\infty$}{\,}.
$$
All the convergence results above still hold if we differentiate $r$ times both sides. In particular, the function $D^rJ^{p+1}[\Sigma g]$ vanishes at infinity.

\parag{Asymptotic equivalences} For any $a\geq 0$ and any $c\in\R$, we have (see Proposition~\ref{prop:conv6v6})
$$
c+\Sigma g(x+a) ~\sim ~ c+\int_x^{x+1}\Sigma g(t){\,}dt\qquad\text{as $x\to\infty$}
$$
(under the assumption that $c+\Sigma g(n+1)\sim c+\Sigma g(n)$ as $n\to_{\N}\infty$ whenever $c+\Sigma g$ vanishes at infinity). If $g$ does not lie in $\cD^{-1}_{\N}$, then we also have
$$
\Sigma g(x+a) ~\sim ~ c+\int_1^x g(t){\,}dt\qquad\text{as $x\to\infty$}.
$$
These equivalences still hold if we differentiate $r$ times both sides; that is,
$$
D^r\Sigma g(x+a) ~\sim ~ g^{(r-1)}(x)\qquad\text{as $x\to\infty$}
$$
(under the assumption that $D^r\Sigma g(n+1)\sim D^r\Sigma g(n)$ as $n\to_{\N}\infty$ whenever $D^r\Sigma g$ vanishes at infinity).

\parag{Asymptotic expansions}\index{asymptotic expansion} We have the following asymptotic expansions (see Proposition~\ref{prop:Richardson9c}).
\begin{enumerate}
\item[(a)] If $g$ lies in $\cC^1\cap\cD^p\cap\cK^{\max\{p,1\}}$, then for large $x$ we have
$$
\Sigma g(x) ~=~ \sigma[g]+\int_1^xg(t){\,}dt -\frac{1}{2}{\,}g(x) + R_1(x){\,},
$$
where
$$
|R_1(x)| ~\leq ~\frac{1}{2}|g(x)|.
$$
\item[(b)] If $g$ lies in $\cC^{2q}\cap\cD^p\cap\cK^{\max\{p,2q\}}$ for some $q\in\N^*$, then for large $x$ we have
$$
\Sigma g(x) ~=~ \sigma[g]+\int_1^xg(t){\,}dt -\frac{1}{2}{\,}g(x)+\sum_{k=1}^q\frac{B_{2k}}{(2k)!}{\,}g^{(2k-1)}(x) + R^q_1(x){\,},
$$
where
$$
|R^q_1(x)| ~\leq ~ \frac{|B_{2q}|}{(2q)!}{\,}|g^{(2q-1)}(x)|{\,}.
$$
\end{enumerate}
Asymptotic expansions of the more general function
$$
x ~\mapsto ~\frac{1}{m}\sum_{j=0}^{m-1}\Sigma g\left(x+\frac{j}{m}\right),
$$
for any $m\in\N^*$, are also provided in Proposition~\ref{prop:Richardson9}.

\parag{Generalized Liu's formula}\index{Liu's formula!generalized} The following assertions hold (see Proposition~\ref{prop:Liu471}).
\begin{enumerate}
\item[(a)] If $g$ lies in $\cC^2\cap\cD^1\cap\cK^2$, then we have
$$
\Sigma g(x) ~=~ \sigma[g]+\int_1^x g(t){\,}dt -\frac{1}{2}{\,}g(x)-\int_0^{\infty}\textstyle{\left(\{t\}-\frac{1}{2}\right)g'(x+t){\,}dt}.
$$
\item[(b)] If $g$ lies in $\cC^{2q+1}\cap\cD^{2q}\cap\cK^{2q+1}$ for some $q\in\N^*$, then we have
\begin{eqnarray*}
\Sigma g(x) &=& \sigma[g]+\int_1^x g(t){\,}dt -\frac{1}{2}{\,}g(x)+\sum_{k=1}^q\frac{B_{2k}}{(2k)!}{\,}g^{(2k-1)}(x)\\
&& \null + \int_0^{\infty}\frac{B_{2q}(\{t\})}{(2q)!}{\,}g^{(2q)}(x+t){\,}dt.
\end{eqnarray*}
\end{enumerate}

\section{Limit, series, and integral representations}

We now recall the different representations of multiple $\log\Gamma$-type functions that we established in this work as well as the way we can generate further identities by integration and differentiation.

Note that, in the special case when $g$ lies in $\cD^{-1}_{\N}$, both the Eulerian\index{Eulerian form} and Weierstrassian forms\index{Weierstrassian form} coincide with the analogue of Gauss' limit, i.e., we have
$$
\Sigma g(x) ~=~ \sum_{k=1}^{\infty}g(k)-\sum_{k=0}^{\infty}g(x+k),
$$
and the second series converges uniformly on $\R_+$ (and tends to zero as $x\to\infty$).

\parag{Analogue of Gauss' limit}\index{Gauss' limit!analogue} By definition of $\Sigma g$, we have
$$
\Sigma g(x) ~=~ \lim_{n\to\infty} f^p_n[g](x),\qquad x>0.
$$
This is precisely the analogue of Gauss' limit for the gamma function. We have also established that the sequence $n\mapsto f^p_n[g]$ converges uniformly on any bounded subset of $\R_+$ to $\Sigma g$ (see our existence Theorem~\ref{thm:exist}).

More generally, we have shown that the sequence $n\mapsto D^rf^p_n[g]$ converges uniformly on any bounded subset of $\R_+$ to $D^r\Sigma g$ (see Theorem~\ref{thm:TBTDiff}). In particular, both sides of the identity above can be differentiated $r$ times (i.e., the limit and the derivative operator commute).

Moreover, the function $f_n^p[g](x)-\Sigma g(x)$ can be (repeatedly) integrated on any bounded interval of $[0,\infty)$ and the integral converges to zero as $n\to\infty$ (see Proposition~\ref{prop:intMLGt} and Remark~\ref{rem:5repeated0Int53}).

\parag{Eulerian and Weierstrassian forms}\index{Eulerian form}\index{Weierstrassian form} We have the following Eulerian form (see Theorem~\ref{thm:SerProdReprS})
$$
\Sigma g(x) ~=~ -g(x)+\sum_{j=1}^p\tchoose{x}{j}{\,}\Delta^{j-1}g(1) - \sum_{k=1}^{\infty}\left(g(x+k)-\sum_{j=0}^p\tchoose{x}{j}{\,}\Delta^jg(k)\right).
$$
We also have the following Weierstrassian forms if $g\in\cC^p$ (see Theorems~\ref{thm:Weierst1} and \ref{thm:Weierst}).
\begin{enumerate}
\item[(a)] If $p=1+\deg g=0$, then
$$
\Sigma g(x) ~=~ \sigma[g]-g(x)-\sum_{k=1}^{\infty}\left(g(x+k)-\int_k^{k+1}g(t){\,}dt\right).
$$
\item[(b)] If $p=1+\deg g\geq 1$, then
\begin{eqnarray*}
\Sigma g(x) &=& \sum_{j=1}^{p-1}\tchoose{x}{j}{\,}\Delta^{j-1}g(1)+\tchoose{x}{p}(\Sigma g)^{(p)}(1)\\
&& -g(x)- \sum_{k=1}^{\infty}\left(g(x+k)-\sum_{j=0}^{p-1}\tchoose{x}{j}{\,}\Delta^jg(k)-\tchoose{x}{p}g^{(p)}(k)\right),
\end{eqnarray*}
where $(\Sigma g)^{(p)}(1) = g^{(p-1)}(1)-\sigma[g^{(p)}]$.
\end{enumerate}
Each of the series above converges uniformly on any bounded subset of $[0,\infty)$ and can be repeatedly integrated term by term on any bounded interval of $[0,\infty)$. It can also be differentiated term by term up to $r$ times.

\parag{Gregory's formula-based series representation} We also have the following series representation (see Proposition~\ref{prop:6699rem}). Suppose that $g$ lies in $\cK^{\infty}$ and let $x>0$ be so that for every integer $q\geq p$ the function $g$ is $q$-convex or $q$-concave on $[x,\infty)$. Suppose also that the sequence $q\mapsto\Delta^qg(x)$ is bounded. Then we have
$$
\Sigma g(x) ~=~ \sigma[g]+\int_1^xg(t){\,}dt-\sum_{n=1}^{\infty}G_n\,\Delta^{n-1}g(x).
$$
Moreover, if these latter assumptions are satisfied for $x=1$, then we also have the following analogue of Fontana-Mascheroni's series representation of $\gamma$\index{Fontana-Mascheroni's series!analogue}
$$
\sigma[g] ~=~ \sum_{n=1}^{\infty}G_n\,\Delta^{n-1}g(1).
$$

\parag{Integral representation} We have seen that an integral expression for $\Sigma g$ can sometimes be obtained by first finding an expression for $\Sigma g^{(r)}$ when $r>1$. This is the elevator method\index{elevator method} (see Corollary~\ref{cor:saf6f}).

We have
$$
(\Sigma g)^{(r)}-\Sigma g^{(r)} ~=~ g^{(r-1)}(1)-\sigma[g^{(r)}]
$$
and, if $r>p$,
$$
\sigma[g^{(r)}] ~=~ g^{(r-1)}(1) +\sum_{k=1}^{\infty}g^{(r)}(k).
$$
Moreover, for any $a>0$, we have
$$
\Sigma g ~=~ f_a-f_a(1),
$$
where $f_a\in\cC^r$ is defined by
$$
f_a(x) ~=~ \sum_{k=1}^{r-1}c_k(a){\,}\frac{(x-a)^k}{k!} + \int_a^x \frac{(x-t)^{r-1}}{(r-1)!}{\,}(\Sigma g)^{(r)}(t){\,}dt
$$
and, for $k=1,\ldots,r-1$,
$$
c_k(a) ~=~
\sum_{j=0}^{r-k-1}\frac{B_j}{j!}{\,}\left(g^{(j+k-1)}(a)-\int_a^{a+1} \frac{(a+1-t)^{r-j-k}}{(r-j-k)!}{\,}(\Sigma g)^{(r)}(t){\,}dt\right).
$$

\section{Further identities and results}
\label{sec:F3I1aR4}

In this section, we collect the remaining identities and results that may be relevant in our investigation of multiple $\log\Gamma$-type functions.

\parag{Analogue of Gauss' multiplication formula}\index{Gauss' multiplication formula!analogue} Let $m\in\N^*$ and define the function $g_m\colon\R_+\to\R$ by the equation $g_m(x)=g(\frac{x}{m})$ for $x>0$. Then we have the following analogue of Gauss' multiplication formula (see Section~\ref{sec:GaussMultF51})
$$
\sum_{j=0}^{m-1}\Sigma g\left(x+\frac{j}{m}\right) ~=~ \sum_{j=1}^m\Sigma g\left(\frac{j}{m}\right)+\Sigma g_m(mx){\,},\qquad x>0,
$$
where
$$
\sum_{j=1}^m\Sigma g\left(\frac{j}{m}\right) ~=~ m{\,}\sigma[g]-\sigma[g_m]-m\,\int_{1/m}^1g(t){\,}dt.
$$
We also have
$$
\lim_{m\to\infty}\frac{\Sigma g_m(mx)-\Sigma g_m(m)}{m} ~=~ \int_1^x g(t){\,}dt{\,},\qquad x>0,
$$
and, if $g$ is integrable at $0$,
$$
\lim_{m\to\infty}\frac{1}{m}\,\Sigma g_m(mx) ~=~ \int_0^x g(t){\,}dt{\,},\qquad x>0.
$$
A related asymptotic result is also given in Proposition~\ref{prop:8Stir44Gau7Mult}.

\parag{Analogue of Wallis's product formula}\index{Wallis's product formula!analogue} We present here in a single statement the analogue of Wallis's product formula as given in Proposition~\ref{prop:Wallis92} and Remark~\ref{rem:Wall38}.

Let $\tilde{g}_1,\tilde{g}_2,\tilde{g}_3\colon\R_+\to\R$ be the functions defined respectively by the equations
$$
\tilde{g}_1(x) ~=~ \Delta g(2x-1),\quad \tilde{g}_2(x) ~=~ \Delta g(2x), \quad \tilde{g}_3(x) ~=~ 2{\,}g(2x),\quad\text{for $x>0$}.
$$
We assume that $\tilde{g}_{\ell}$ lies in $\cK^0$ for some $\ell\in\{1,2,3\}$.

Let also $\theta_1,\theta_2,\theta_3\colon\N^*\to\R$ be the sequences defined respectively by the equations
\begin{eqnarray*}
\theta_1(n) &=& \sigma[\tilde{g}_1]+\int_1^{n+1}\tilde{g}_1(t){\,}dt - \sum_{j=1}^{(p-1)_+}G_j\,\Delta^{j-1}\tilde{g}_1(n+1){\,},\\
\theta_2(n) &=& g(2n)-g(1)-\sigma[\tilde{g}_2]-\int_1^n\tilde{g}_2(t){\,}dt + \sum_{j=1}^{(p-1)_+}G_j\,\Delta^{j-1}\tilde{g}_2(n){\,},\\
\theta_3(n) &=& \sigma[\tilde{g}_3]-\sigma[g]+\int_1^2(g(2n+t)-g(t)){\,}dt\\
&& \null +\sum_{j=1}^pG_j\left(\Delta^{j-1}g(2n+1)-\Delta^{j-1}\tilde{g}_3(n+1)\right),
\end{eqnarray*}
for $n\in\N^*$. Then we have
$$
\lim_{n\to\infty}\left(h(n) + \sum_{k=1}^{2n}(-1)^{k-1}g(k)\right) ~=~ 0,
$$
where $h(n)$ is the function obtained from the series expansion for $\theta_{\ell}(n)$ about infinity after removing all the summands that vanish at infinity.

\parag{Restriction to the natural integers} The restriction of $\Sigma g$ to $\N^*$ is the sum \eqref{eq:RestrInt}. This sum can be estimated, e.g., by means of an integral through Gregory's summation formula \eqref{eq:GregoryMN} with a bounded remainder \eqref{eq:saf65fs}. The representations of $\Sigma g$ given above can also lead to interesting identities when restricted to the natural integers.

\parag{Analogue of Euler's series representation of $\gamma$}\index{Euler's series representation of $\gamma$!analogue} When $g$ lies in $\cC^{\infty}\cap\cK^{\infty}$, the following series (see \eqref{eq:EulerAnal5571})
$$
\sigma[g] ~=~ \sum_{k=1}^{\infty}(\Sigma g)^{(k)}(1)\,\frac{1}{(k+1)!}{\,},
$$
when it converges, provides an analogue of Euler's series representation of $\gamma$. It is obtained by integrating term by term the Taylor series expansion of $\Sigma g(x+1)$ about $x=0$.

\parag{Generalized Webster's functional equation} This result can be found in Theorem~\ref{thm:FunctEq69}.

\parag{Analogues of Euler's reflection formula and Gauss' digamma theorem} These topics are discussed in Sections~\ref{sec:ReflFor62} and \ref{sec:8GauDigTh7}.

\chapter{Applications to some standard special functions}
\label{chapter:10}

We now apply our results to certain multiple $\Gamma$-type functions and multiple $\log\Gamma$-type functions that are known to be well-studied special functions, namely: the gamma function, the digamma function, the polygamma functions, the $q$-gamma function, the Barnes $G$-function, the Hurwitz zeta function and its higher order derivatives, the generalized Stieltjes constants, and the Catalan number function. For recent background on some of these functions, see, e.g., Srivastava and Choi~\cite{SriCho12}.

Each of these examples is examined and studied systematically by following the steps and results given in the previous chapter. When algebraic computations become tedious, a computer algebra system can be of great assistance in executing the details. Further examples will be discussed in the next two chapters.

In this chapter and the next, we occasionally address and solve some secondary but interesting issues. They are then presented and numbered in a \emph{Project} environment.

Most of the applications we consider in this work illustrate how powerful is our theory to produce formulas and identities methodically. Although many of these formulas and identities are already known, to our knowledge they had never been derived from such a general and unified setting.

\section{The gamma function}
\index{gamma function|(}

Since the Euler gamma function was the starting point of this theory and therefore also Webster's motivating example in his introduction of the $\Gamma$-type functions, it is natural to test our results on this function first.

The following investigation of the gamma function does not reveal quite new formulas. However, it can be regarded as a tutorial that clearly demonstrates how our results can be used to carry out this investigation in a systematic way.

In addition to the remarkable book by Artin~\cite{Art15}, the interested reader can also find a very good expository tour of the gamma function in Srinivasan's paper~\cite{Sri07}.

\parag{ID card} The following table summarizes the ID card corresponding to the log and log-gamma functions.
$$
\begin{array}{|c|c|c|c|}
\hline g(x) & \text{Membership} & \deg g & \Sigma g(x) \\
\hline \emptybox \ln x & \cC^{\infty}\cap\cD^1\cap\cK^{\infty} & 0 & \ln\Gamma(x)\\
\hline
\end{array}
$$

\parag{Bohr-Mollerup's theorem} A characterization of the gamma function is given in Bohr-Mollerup's theorem\index{Bohr-Mollerup theorem} (see Theorem~\ref{thm:BM538Thm9} and Example~\ref{ex:unicGa4}). In the additive notation, we have the following statement.
\begin{quote}
\emph{All eventually convex or concave solutions $f\colon\R_+\to\R$ to the equation
$$
f(x+1)-f(x) ~=~ \ln x
$$
are of the form $f(x)=c+\ln\Gamma(x)$, where $c\in\R$.}
\end{quote}
Using Proposition~\ref{prop:90unic41}, we can also derive the following alternative characterization of the gamma function (see Example~\ref{ex:90unic41g}).
\begin{quote}
\emph{All solutions $f\colon\R_+\to\R$ to the equation
$$
f(x+1)-f(x) ~=~ \ln x
$$
that satisfy the asymptotic condition that, for each $x>0$,
$$
f(x+n)-f(n)-x\ln n ~\to ~0\qquad\text{as $n\to_{\N}\infty$}
$$
are of the form $f(x)=c+\ln\Gamma(x)$, where $c\in\R$.}
\end{quote}

\parag{Extended ID card} The value of $\sigma[g]$ has been discussed in Example~\ref{ex:Raab286}. More precisely, we also have the following values:
$$
\begin{array}{|c|c|c|}
\hline \overline{\sigma}[g] & \sigma[g] & \gamma[g] \\
\hline \emptybox \frac{1}{2}\ln(2\pi) & -1+\frac{1}{2}\ln(2\pi) & \gamma[g]=\sigma[g] \\
\hline
\end{array}
$$

\begin{itemize}
\item \emph{Inequality}
$$
|\sigma[g]| ~\leq ~ \ln 4-\frac{5}{4} ~\approx ~ 0.14.
$$

\item \emph{Alternative representations of $\sigma[g]=\gamma[g]$}
\begin{eqnarray*}
\sigma[g] &=& \int_1^{\infty}\left(\{t\}\ln\frac{1+\lfloor t\rfloor}{t}+(1-\{t\})\ln\frac{\lfloor t\rfloor}{t}\right)dt{\,},\\
\sigma[g] &=& \lim_{n\to\infty}\left(\ln n! +n-1-\left(n+\frac{1}{2}\right)\ln n\right),\\
\sigma[g] &=& \sum_{k=1}^{\infty}\left(1-\left(k+\frac{1}{2}\right)\ln\left(1+\frac{1}{k}\right)\right),\\
\sigma[g] &=& \int_1^{\infty}\left(\frac{1}{2}\ln(\lfloor t\rfloor^2 +\lfloor t\rfloor)-\ln t\right)dt{\,},\\
\sigma[g] &=& \int_1^{\infty}\frac{\{t\}-\frac{1}{2}}{t}{\,}dt{\,},\\
\sigma[g] &=& \int_0^1\ln\Gamma(t+1){\,}dt.
\end{eqnarray*}

\item \emph{Binet's function}
$$
J^2[\ln\circ\Gamma](x) ~=~ J(x) ~=~ \ln\Gamma(x)-\frac{1}{2}\ln(2\pi)+x-\left(x-\frac{1}{2}\right)\ln x{\,},\qquad x>0.
$$

\item \emph{Raabe's formula}
$$
\int_x^{x+1}\ln\Gamma(t){\,}dt ~=~ \frac{1}{2}\,\ln(2\pi)+x\ln x-x{\,},\qquad x>0.
$$

\item \emph{Alternative characterization}. The function $f(x)=\ln\Gamma(x)$ is the unique solution lying in $\cC^0\cap\cK^1$ to the equation
$$
\int_x^{x+1}f(t){\,}dt ~=~ \frac{1}{2}\,\ln(2\pi)+x\ln x-x{\,}, \qquad x>0.
$$
\end{itemize}

\parag{Inequalities} The following inequalities hold for any $x>0$, any $a\geq 0$, and any $n\in\N^*$.

\begin{itemize}
\item \emph{Symmetrized generalized Wendel's inequality} (equality if $a\in\{0,1\}$)
$$
\big|\ln\Gamma(x+a)-\ln\Gamma(x)-a\ln x\big| ~\leq ~ |a-1|\,\ln\left(1+\frac{a}{x}\right),
$$
$$
\left(1+\frac{a}{x}\right)^{-\left|a-1\right|} \leq ~ \frac{\Gamma(x+a)}{\Gamma(x){\,}x^a} ~\leq ~ \left(1+\frac{a}{x}\right)^{\left|a-1\right|}.
$$

\item \emph{Symmetrized generalized Wendel's inequality} (discrete version)
$$
\left|\ln\Gamma(x)-\sum_{k=1}^{n-1}\ln k+\sum_{k=0}^{n-1}\ln(x+k)-x\ln n\right| ~\leq ~ |x-1|\,\ln\left(1+\frac{x}{n}\right).
$$
$$
\left(1+\frac{x}{n}\right)^{-\left|x-1\right|} \leq ~ \Gamma(x)\,\frac{x(x+1)\cdots (x+n-1)}{(n-1)!{\,}n^x} ~ \leq ~ \left(1+\frac{x}{n}\right)^{\left|x-1\right|}.
$$

\item \emph{Symmetrized Stirling's formula-based inequality}
$$
\left|J(x)\right| ~\leq ~ \frac{(x+1)^2}{2}\ln\left(1+\frac{1}{x}\right)-\frac{x}{2}-\frac{3}{4} ~\leq ~ \frac{1}{2}\,\ln\left(1+\frac{1}{x}\right),
$$
$$
\left(1+\frac{1}{x}\right)^{-\frac{1}{2}} \leq ~ \frac{\Gamma(x)}{\sqrt{2\pi}{\,}e^{-x}{\,}x^{x-\frac{1}{2}}} ~\leq ~ \left(1+\frac{1}{x}\right)^{\frac{1}{2}}.
$$

\item \emph{Burnside's formula-based inequality}
$$
\left|\ln\Gamma\left(x+\frac{1}{2}\right)-\frac{1}{2}\ln(2\pi)+x-x\ln x\right| ~\leq ~ \left|J(x)\right|.
$$

\item \emph{Generalized Gautschi's inequality}
$$
(x+\lceil a\rceil)^{a-\lceil a\rceil} ~\leq ~ e^{(a-\lceil a\rceil)\,\psi(x+\lceil a\rceil)} ~\leq ~ \frac{\Gamma(x+a)}{\Gamma(x+\lceil a\rceil)} ~\leq ~ (x+\lfloor a\rfloor)^{a-\lceil a\rceil}{\,}.
$$
\end{itemize}

\parag{Stirling's and related formulas} For any $a\geq 0$, we have the following limits and asymptotic equivalences as $x\to\infty$,
$$
\ln\Gamma(x+a)-\ln\Gamma(x)-a\ln x ~\to ~0,
$$
$$
\ln\Gamma(x)-\frac{1}{2}\ln(2\pi)+x-\left(x-\frac{1}{2}\right)\ln x ~\to ~0,
$$
$$
\ln\Gamma\left(x+\frac{1}{2}\right)-\frac{1}{2}\ln(2\pi)+x-x\ln x ~\to ~0,
$$
$$
\Gamma(x+a) ~\sim ~ x^a\,\Gamma(x),\qquad\ln\Gamma(x+a) ~\sim ~ x\ln x,
$$
$$
\Gamma(x) ~\sim ~ \sqrt{2\pi}{\,}e^{-x}x^{x-\frac{1}{2}},\qquad\Gamma(x+1) ~\sim ~ \sqrt{2\pi x}{\,}e^{-x}x^x{\,}.
$$
\emph{Burnside's approximation} (better than Stirling's approximation)
$$
\Gamma(x) ~\sim ~ \sqrt{2\pi}\left(\frac{x-\frac{1}{2}}{e}\right)^{x-\frac{1}{2}}.
$$
\emph{Further results} (obtained by differentiation)
$$
\psi(x+a)-\psi(x) ~\to ~0,\qquad \psi(x)-\ln x ~\to ~ 0,\qquad \psi(x+a) ~\sim ~ \ln x{\,},
$$
$$
\psi_k(x+a) ~\sim ~ (-1)^{k-1}\,\frac{(k-1)!}{x^k}{\,},\qquad\psi_k(x) ~\to ~0,\qquad k\in\N^*.
$$

\parag{Asymptotic expansions} For any $m,q\in\N^*$ we have the following expansion as $x\to\infty$
\begin{eqnarray*}
\frac{1}{m}\sum_{j=0}^{m-1}\ln\Gamma\left(x+\frac{j}{m}\right) &=& \frac{1}{2}\ln(2\pi)+x\ln x-x-\frac{1}{2m}\ln x\\
&& \null +\sum_{k=1}^q\frac{B_{k+1}}{k(k+1){\,}x^k{\,}m^{k+1}}+O\left(x^{-q-1}\right){\,}.
\end{eqnarray*}
Setting $m=1$ in this formula, we retrieve the known asymptotic expansion of the log-gamma function $\ln\Gamma(x)$ as $x\to\infty$ (see, e.g., \cite[p.~7]{SriCho12})
\begin{equation}\label{eq:AsExpLoG4}
\ln\Gamma(x) ~=~ \frac{1}{2}\ln(2\pi)-x+\left(x-\frac{1}{2}\right)\ln x+\sum_{k=1}^q\frac{B_{k+1}}{k(k+1){\,}x^k}+O\left(x^{-q-1}\right),
\end{equation}
or equivalently,
$$
J(x) ~=~ \sum_{k=1}^q\frac{B_{k+1}}{k(k+1){\,}x^k}+O\left(x^{-q-1}\right).
$$
For instance, setting $q=4$ in \eqref{eq:AsExpLoG4} we get
$$
\ln\Gamma(x) ~=~ \frac{1}{2}\ln(2\pi)-x+\left(x-\frac{1}{2}\right)\ln x+\frac{1}{12x}-\frac{1}{360x^3}+O\left(x^{-5}\right).
$$

\parag{Generalized Liu's formula} For any $x>0$ we have
$$
\ln\Gamma(x) ~=~ \frac{1}{2}\ln(2\pi)-x+\left(x-\frac{1}{2}\right)\ln x+\int_0^{\infty}\frac{\frac{1}{2}-\{t\}}{t+x}{\,}dt,
$$
or equivalently,
$$
J(x) ~=~ \int_0^{\infty}\frac{\frac{1}{2}-\{t\}}{t+x}{\,}dt.
$$

\parag{Limit, series, and integral representations} We now consider various representations of $\ln\Gamma(x)$, including the Eulerian and Weierstrassian forms.
\begin{itemize}
\item \emph{Eulerian form and related identities}. We have
$$
\ln\Gamma(x) ~=~ -\ln x-\sum_{k=1}^{\infty}\left(\ln(x+k)-\ln k-x\ln\left(1+\frac{1}{k}\right)\right),
$$
$$
\Gamma(x) ~=~ \frac{1}{x}\,\prod_{k=1}^{\infty}\frac{(1+\frac{1}{k})^x}{1+\frac{x}{k}}{\,}.
$$
Upon differentiation and integration, we obtain (cf.~Example~\ref{ex:GepeA58})
$$
\psi(x) ~=~ -\frac{1}{x}-\sum_{k=1}^{\infty}\left(\frac{1}{x+k}-\ln\left(1+\frac{1}{k}\right)\right),
$$
$$
\psi_k(x) ~=~ (-1)^{k-1}{\,}k!\,\zeta(k+1,x),\qquad k\in\N^*,
$$
$$
\psi_{-2}(x) ~=~ x-x\ln x-\sum_{k=1}^{\infty}\left((x+k)\ln\left(1+\frac{x}{k}\right)-x-\frac{x^2}{2}\ln\left(1+\frac{1}{k}\right)\right).
$$

\item \emph{Weierstrassian form and related identities}. We have
$$
\ln\Gamma(x) ~=~ -\gamma x-\ln x-\sum_{k=1}^{\infty}\left(\ln(x+k)-\ln k-\frac{x}{k}\right),
$$
$$
\Gamma(x) ~=~ \frac{e^{-\gamma x}}{x}\,\prod_{k=1}^{\infty}\frac{e^{\frac{x}{k}}}{1+\frac{x}{k}}{\,}.
$$
Upon differentiation and integration, we obtain (cf.~Example~\ref{ex:GepeA58W})
$$
\psi(x) ~=~ -\gamma-\frac{1}{x}-\sum_{k=1}^{\infty}\left(\frac{1}{x+k}-\frac{1}{k}\right),
$$
$$
\psi_{-2}(x) ~=~ -\gamma\,\frac{x^2}{2}+x-x\ln x -\sum_{k=1}^{\infty}\left((x+k)\ln\left(1+\frac{x}{k}\right)-x-\frac{x^2}{2k}\right).
$$

\item \emph{Gauss' limit and related identities}. The Gauss limit is
$$
\ln\Gamma(x) ~=~ \lim_{n\to\infty}\left(\ln(n-1)! -\sum_{k=0}^{n-1}\ln(x+k)+x\ln n\right).
$$
Upon differentiation and integration, we obtain
$$
\psi(x) ~=~ \lim_{n\to\infty}\left(\ln n-\sum_{k=0}^{n-1}\frac{1}{x+k}\right),
$$
$$
\psi_k(x) ~=~ (-1)^{k+1}{\,}k!\,\zeta(k+1,x),\qquad k\in\N^*,
$$
\begin{equation}\label{eq:IntPsi2m65}
\psi_{-2}(x) ~=~ \lim_{n\to\infty}\left(nx-x\ln x+(\ln n)\frac{x^2}{2}-\sum_{k=1}^{n-1}(x+k)\ln\left(1+\frac{x}{k}\right)\right).
\end{equation}

The multiplicative version of Gauss' limit reduces to the following formula (just replace $n$ with $n+1$ and note that $(n+1)^x\sim n^x$ as $n\to\infty$)
$$
\Gamma(x) ~=~ \lim_{n\to\infty}\frac{n!{\,}n^x}{x(x+1){\,}\cdots{\,}(x+n)}
$$
as stated in \eqref{eq:GaussLimit42}. We also have the following alternative form of Gauss' limit, which immediately follows from the Weierstrassian form
$$
\Gamma(x) ~=~ \frac{e^{-\gamma x}}{x}\,\lim_{n\to\infty}\prod_{k=1}^n\frac{e^{\frac{x}{k}}}{1+\frac{x}{k}} ~=~ \lim_{n\to\infty}\frac{n!{\,}e^{x\psi(n)}}{x(x+1){\,}\cdots{\,}(x+n)}{\,}.
$$
This latter limit can also be derived immediately from Gauss' limit and the well-known fact that $\psi(x)-\ln x\to 0$ as $x\to\infty$.

\item \emph{Integral representation}. Considering the antiderivative of the digamma function $\varphi =\psi$ as the solution to the equation $\Delta\varphi=g'$ (using the elevator method),\index{elevator method} we obtain
$$
\ln\Gamma(x) ~=~ \psi_{-1}(x) ~=~ \int_1^x\psi(t){\,}dt.
$$

\item \emph{Gregory's formula-based series representation}. For any $x>0$ we have the series representation (see Example~\ref{ex:SeriesGS7})
\begin{eqnarray}
\ln\Gamma(x) &=& \frac{1}{2}\ln(2\pi)-x+x\ln x-\sum_{n=0}^{\infty}G_{n+1}\Delta^n\ln(x)\label{eq:67f7afd8fg6}\\
&=& \frac{1}{2}\ln(2\pi)-x+x\ln x-\sum_{n=0}^{\infty}|G_{n+1}|\,\sum_{k=0}^n(-1)^k\tchoose{n}{k}{\,}\ln(x+k).\nonumber
\end{eqnarray}
Setting $x=1$ in this identity yields the following analogue of Fontana-Mascheroni series
$$
\sum_{n=0}^{\infty}|G_{n+1}|\,\sum_{k=0}^n(-1)^k\tchoose{n}{k}{\,}\ln(k+1) ~=~ -1+\frac{1}{2}\ln(2\pi).
$$
\end{itemize}

\parag{Gauss' multiplication formula} For any $m\in\N^*$ and any $x>0$, we have
$$
\prod_{j=0}^{m-1}\Gamma\left(\frac{x+j}{m}\right) ~=~ (2\pi)^{\frac{m-1}{2}}m^{\frac{1}{2}-x}\,\Gamma(x).
$$
Corollary~\ref{cor:Riem482} provides the following asymptotic equivalence for any $x>0$
$$
\Gamma(mx)^{\frac{1}{m}} ~\sim ~ e^{-x}x^xm^x\qquad\text{as $m\to_{\N}\infty$},
$$
which also follows from Stirling's formula.

\parag{Wallis's product formula} We have the following limits
$$
\lim_{n\to\infty}\frac{1\cdot 3{\,}\cdots{\,}(2n-1)}{2\cdot 4{\,}\cdots{\,}(2n)}{\,}\sqrt{n} ~=~ \frac{1}{\sqrt{\pi}}{\,},
$$
$$
\lim_{n\to\infty}\left(\frac{1}{2}\,\ln(\pi n)+\sum_{k=1}^{2n}(-1)^{k-1}\,\ln k\right) ~=~ 0.
$$

\parag{Restriction to the natural integers} We have the well-known identity
$$
\Gamma(n+1) ~=~ n!{\,},\qquad n\in\N.
$$
Gregory's formula states that for any $n\in\N^*$ and any $q\in\N$ we have
$$
\ln n! ~=~ 1-n+(n+1)\ln n-\sum_{j=1}^qG_j\left((\Delta^{j-1}\ln)(n)-(\Delta^{j-1}\ln)(1)\right)-R^q_n{\,},
$$
with
$$
|R^q_n| ~\leq ~ \overline{G}_q{\,}|(\Delta^q\ln)(n)-(\Delta^q\ln)(1)|.
$$
Moreover, Eq.~\eqref{eq:AsExpLoG4} yields the following asymptotic expansion as $x\to\infty$. For any $q\in\N^*$, we have
$$
\ln n! ~=~ \frac{1}{2}\ln(2\pi n)-n+n\ln n+\sum_{k=1}^q\frac{B_{k+1}}{k(k+1){\,}n^k}+O\left(n^{-q-1}\right).
$$
Similarly, Eq.~\eqref{eq:67f7afd8fg6} yields the following series representation
$$
\ln n! ~=~ \frac{1}{2}\ln(2\pi)-n+(n+1)\ln n-\sum_{k=0}^{\infty}G_{k+1}\Delta^kg(n){\,},\quad n\in\N^*.
$$
We also have Liu's formula
$$
\ln n! ~=~ \frac{1}{2}\ln(2\pi n)-n+n\ln n+\int_n^{\infty}\frac{\frac{1}{2}-\{t\}}{t}{\,}dt{\,}.
$$
Many other representations of $\ln n!$ can be derived from, e.g., the limit and series representations of the log-gamma function described above.

\parag{Generalized Webster's functional equation} For any $m\in\N^*$ and any $a>0$, there is a unique solution $f\colon\R_+\to\R_+$ to the equation
$$
\prod_{j=0}^{m-1}f(x+a j) ~=~ x
$$
such that $\ln f$ lies in $\cK^0$ (or in $\cK^1$), namely
$$
f(x) ~=~ (am)^{\frac{1}{m}}\frac{\Gamma(\frac{x+a}{am})}{\Gamma(\frac{x}{am})}{\,}.
$$

\parag{Analogue of Euler's series representation of $\gamma$} The Taylor series expansion of $\ln\Gamma(x+1)$ about $x=0$ is
$$
\ln\Gamma(x+1) ~=~ -\gamma x+\sum_{k=2}^{\infty}\frac{\zeta(k)}{k}{\,}(-x)^k{\,},\qquad |x|<1.
$$
Integrating both sides of this equation on $(0,1)$, we obtain (see Example~\ref{ex:7sd55fafsd1})
$$
\sum_{k=2}^{\infty}(-1)^k\frac{1}{k(k+1)}\,\zeta(k) ~=~ \frac{1}{2}\,\gamma -1+\frac{1}{2}\ln(2\pi){\,}.
$$

\parag{Reflection formula} For any $x\in\R\setminus\Z$, we have $\Gamma(x)\Gamma(1-x)=\pi\csc(\pi x)$.

\index{gamma function|)}

\section{The digamma and harmonic number functions}
\label{sec:dig27amma1}
\index{digamma function|(}
\index{harmonic number function|(}

Let us now see what we get if we apply our results to both the digamma function $x\mapsto\psi(x)$ and the harmonic number function $x\mapsto H_x$. Recall first that the identity
$$
H_{x-1} ~=~ \psi(x)+\gamma
$$
holds for any $x>0$.

\parag{ID card} We have the following data about the functions $1/x$ and $\psi(x)$:
$$
\begin{array}{|c|c|c|c|}
\hline g(x) & \text{Membership} & \deg g & \Sigma g(x) \\
\hline \emptybox 1/x & \cC^{\infty}\cap\cD^0\cap\cK^{\infty} & -1 & H_{x-1}=\psi(x)+\gamma\\
\hline
\end{array}
$$

\parag{Analogue of Bohr-Mollerup's theorem} The digamma function can be characterized as follows.
\begin{quote}
\emph{All eventually monotone solutions $f\colon\R_+\to\R$ to the equation
$$
f(x+1)-f(x) ~=~ \frac{1}{x}
$$
are of the form $f(x)=c+\psi(x)$, where $c\in\R$.}
\end{quote}

It is noteworthy that this characterization immediately follows from the basic version when $p=0$ of our Theorem~\ref{thm:int2}, which was established by John~\cite{Joh39}.

Interestingly, this characterization enables us to establish almost instantly the following identities for every $x>0$,
$$
H_{x-1} ~=~ \psi(x)+\gamma ~=~ \int_0^1\frac{1-t^{x-1}}{1-t}{\,}dt{\,}.
$$
Indeed, each of the three expressions above vanishes at $x=1$ and is an eventually increasing solution to the equation $f(x+1)-f(x)=1/x$. Hence, they must coincide on $\R_+$. We can actually prove many other representations similarly; for instance, the following Gauss and Dirichlet integral representations\index{digamma function!Gauss representation}\index{digamma function!Dirichlet representation} (see, e.g., \cite[p.~26]{SriCho12})
$$
\psi(x) ~=~ \int_0^{\infty}\left(\frac{e^{-t}}{t}-\frac{e^{-xt}}{1-e^{-t}}\right)dt{\,},\qquad x>0,
$$
$$
\psi(x) ~=~ \int_0^{\infty}\left(e^{-t}-\frac{1}{(t+1)^x}\right)\frac{dt}{t}{\,},\qquad x>0.
$$

Kairies~\cite{Kai70} obtained a variant of the characterization of the digamma function above by replacing the eventual monotonicity with the convexity property. This variant is also immediate from our results since $g$ also lies in $\cD^1\cap\cK^1$.

Using Proposition~\ref{prop:90unic41}, we can also derive the following alternative characterization of the digamma function.
\begin{quote}
\emph{All solutions $f\colon\R_+\to\R$ to the equation
$$
f(x+1)-f(x) ~=~ \frac{1}{x}
$$
that satisfy the asymptotic condition that, for each $x>0$,
$$
f(x+n)-f(n) ~\to ~0\qquad\text{as $n\to_{\N}\infty$}
$$
are of the form $f(x)=c+\psi(x)$, where $c\in\R$.}
\end{quote}

\parag{Extended ID card} We already know that $\sigma[g]=\gamma$ (see Example~\ref{ex:Dig5Int9}). Hence we have the following table:
$$
\begin{array}{|c|c|c|}
\hline \overline{\sigma}[g] & \sigma[g] & \gamma[g] \\
\hline \emptybox \infty & \gamma & \gamma \\
\hline
\end{array}
$$

\begin{itemize}
\item \emph{Alternative representations of $\sigma[g]=\gamma[g]=\gamma$}
\begin{eqnarray*}
\gamma &=& \lim_{n\to\infty}\left(\sum_{k=1}^n\frac{1}{k}-\ln n\right) ~=~ \sum_{k=1}^{\infty}\left(\frac{1}{k}-\ln\left(1+\frac{1}{k}\right)\right),\\
\gamma &=& \int_1^{\infty}\left(\frac{1}{\lfloor t\rfloor}-\frac{1}{t}\right){\,}dt ~=~ \frac{1}{2}-\int_1^{\infty}\frac{\{t\}-\frac{1}{2}}{t^2}{\,}dt{\,},\\
\gamma &=& \int_0^1H_t{\,}dt{\,}.
\end{eqnarray*}

\item \emph{Generalized Binet's function}. For any $q\in\N$ and any $x>0$
$$
J^{q+1}[\psi](x) ~=~ \psi(x)-\ln x+\sum_{j=1}^q|G_j|{\,}\mathrm{B}(x,j),
$$
where $(x,y)\mapsto\mathrm{B}(x,y)$ is the beta function.

\item \emph{Analogue of Raabe's formula} (see Example~\ref{ex:Dig5Int9})
$$
\int_x^{x+1}\psi(t){\,}dt ~=~ \ln x{\,}, \qquad x>0.
$$

\item \emph{Alternative characterization}. The function $f=\psi$ is the unique solution lying in $\cC^0\cap\cK^0$ to the equation
$$
\int_x^{x+1}f(t){\,}dt ~=~ \ln x{\,}, \qquad x>0.
$$
\end{itemize}

\parag{Inequalities} The following inequalities hold for any $x>0$, any $a\geq 0$, and any $n\in\N^*$.

\begin{itemize}
\item \emph{Symmetrized generalized Wendel's inequality} (equality if $a\in\{0,1\}$)
$$
|\psi(x+a)-\psi(x)| ~\leq ~ \lceil a\rceil\,\frac{1}{x}{\,}.
$$

\item \emph{Symmetrized generalized Wendel's inequality} (discrete version)
$$
\left|\psi(x)+\gamma-\sum_{k=1}^{n-1}\frac{1}{k}+\sum_{k=0}^{n-1}\frac{1}{x+k}\right| ~\leq ~ \lceil x\rceil\,\frac{1}{n}{\,}.
$$

\item \emph{Symmetrized Stirling's and Burnside's formulas-based inequalities}
$$
\left|\psi\left(x+\frac{1}{2}\right)-\ln x\right| ~\leq ~ |\psi(x)-\ln x| ~\leq ~ \frac{1}{x}{\,}.
$$
Considering for instance the value $p=1$ in Corollary~\ref{cor:6GenStFo0BaIneqCo}, we see that the latter inequality can be refined into
$$
\frac{1}{2(x+1)}-\frac{1}{x} ~\leq ~ \psi(x)-\ln x ~\leq ~ -\frac{1}{2(x+1)}{\,}.
$$

\item \emph{Generalized Gautschi's inequality}
$$
\frac{a-\lceil a\rceil}{x+\lfloor a\rfloor} ~\leq ~ \psi(x+a)-\psi(x+\lceil a\rceil) ~\leq ~ (a-\lceil a\rceil)\,\psi_1(x+\lceil a\rceil) ~\leq ~ \frac{a-\lceil a\rceil}{x+\lceil a\rceil} {\,}.
$$
\end{itemize}

\parag{Generalized Stirling's and related formulas} For any $a\geq 0$, we have the following limits and asymptotic equivalence as $x\to\infty$,
$$
\psi(x+a)-\psi(x) ~\to ~0,\qquad \psi(x)-\ln x ~\to ~ 0,\qquad \psi(x+a) ~\sim ~ \ln x.
$$
\emph{Burnside-like approximation} (better than Stirling-like approximation)
$$
\psi(x)-\ln\left(x-\frac{1}{2}\right) ~\to ~ 0.
$$
\emph{Further results} (obtained by differentiation)
$$
\psi_k(x+a) ~\sim ~ (-1)^{k-1}\,\frac{(k-1)!}{x^k}{\,},\qquad\psi_k(x) ~\to ~0,\qquad k\in\N^*.
$$

\parag{Asymptotic expansions} For any $m,q\in\N^*$ we have the following expansion as $x\to\infty$
\begin{equation}\label{eq:ExpAs0psi}
\frac{1}{m}\sum_{j=0}^{m-1}\psi\left(x+\frac{j}{m}\right) ~=~ \ln x + \sum_{k=1}^q\frac{(-1)^{k-1}{\,}B_k}{k{\,}(mx)^k}+O\left(x^{-q-1}\right){\,}.
\end{equation}
Setting $m=1$ in this formula, we retrieve the known asymptotic expansion of $\psi(x)$ as $x\to\infty$ (see, e.g., \cite[p.~36]{SriCho12})
$$
\psi(x) ~=~ \ln x + \sum_{k=1}^q\frac{(-1)^{k-1}{\,}B_k}{k{\,}x^k}+O\left(x^{-q-1}\right),
$$
or equivalently,
$$
J^1[\psi](x) ~=~ \sum_{k=1}^q\frac{(-1)^{k-1}{\,}B_k}{k{\,}x^k}+O\left(x^{-q-1}\right).
$$
For instance, setting $q=5$ we get
$$
\psi(x) ~=~ \ln x -\frac{1}{2x}-\frac{1}{12x^2}+\frac{1}{120x^4}+O\left(x^{-6}\right).
$$

\parag{Generalized Liu's formula} For any $x>0$ we have
$$
\psi(x) ~=~ \ln x-\frac{1}{2x}+\int_0^{\infty}\frac{\{t\}-\frac{1}{2}}{(t+x)^2}{\,}dt.
$$

\parag{Limit and series representations} Let us now examine the main limit and series representations of the digamma function that we obtain from our results.
\begin{itemize}
\item \emph{Eulerian and Weierstrassian forms}. We have
$$
\psi(x) ~=~ -\gamma-\frac{1}{x}+\sum_{k=1}^{\infty}\left(\frac{1}{k}-\frac{1}{x+k}\right),
$$
$$
\psi(x) ~=~ -\frac{1}{x}+\sum_{k=1}^{\infty}\left(\ln\left(1+\frac{1}{k}\right)-\frac{1}{x+k}\right).
$$
Upon differentiation, we obtain
$$
\psi_k(x) ~=~ (-1)^{k-1}{\,}k!\,\zeta(k+1,x),\qquad k\in\N^*.
$$
Moreover, integrating the Eulerian (resp.\ Weierstrassian) form of the digamma function on $(0,x)$, we retrieve the Weierstrassian (resp.\ Eulerian) form of the log-gamma function.

\item The analogue of Gauss' limit coincides with the Eulerian form.

\item \emph{Gregory's formula-based series representation}. For any $x>0$ we have the series representation
$$
\psi(x) ~=~ \ln x-\sum_{n=1}^{\infty}|G_n|{\,}\mathrm{B}(x,n) ~=~ \ln x-\sum_{n=1}^{\infty}\frac{|G_n|}{n{x+n-1\choose n}}{\,}.
$$
Setting $x=1$ in this identity, we retrieve the Fontana-Mascheroni series\index{Fontana-Mascheroni's series} (see, e.g., Blagouchine \cite[p.~379]{Bla16})
$$
\gamma ~=~ \sum_{n=1}^{\infty}\frac{|G_n|}{n}{\,}.
$$
Setting $x=2$, we get
$$
1-\ln 2 ~=~ \sum_{n=1}^{\infty}\frac{|G_n|}{n+1}{\,},
$$
which is consistent with the identities given in Example~\ref{ex:ds5a82}.
\end{itemize}

\parag{Analogue of Gauss' multiplication formula} For any $m\in\N^*$ and any $x>0$, we have (see, e.g., Berndt \cite[p.~5]{Ber83})
\begin{equation}\label{eq:GMF3PSd}
\sum_{j=0}^{m-1}\psi\left(x+\frac{j}{m}\right) ~=~ m(\psi(mx)-\ln m)
\end{equation}
and
$$
\sum_{j=0}^{m-1}H_{x+j/m} ~=~ m(H_{mx+m-1}-\ln m){\,}.
$$
Corollary~\ref{cor:Riem482} provides the following formula for any $x>0$
$$
\lim_{m\to\infty}(H_{mx-1}-H_{m-1}) ~=~ \ln x.
$$

\parag{Analogue of Wallis's product formula} The analogue of Wallis's formula reduces to the classical identity
$$
\sum_{k=1}^{\infty}(-1)^{k-1}\frac{1}{k} ~=~ \ln 2{\,}.
$$

\begin{project}\label{app:WallPsi5}
\textsl{Find the analogue of Wallis's formula for the function $g(x)=\psi(x)$.} We apply our method (see Section~\ref{sec:F3I1aR4}) to the function
$$
\tilde{g}(x) ~=~ \Delta g(2x) ~=~ \frac{1}{2x}{\,}.
$$
Thus, we get
$$
h(x) ~=~ \psi(2n)-\psi(1)-\frac{1}{2}\,\gamma-\frac{1}{2}\ln n ~=~ \frac{1}{2}(\gamma +\ln(4n))+O\left(n^{-1}\right),
$$
and the analogue of Wallis's formula for $g(x)=\psi(x)$ is
$$
\lim_{n\to\infty}\left(-\ln(4n)+2\sum_{k=1}^{2n}(-1)^k\psi(k)\right) ~=~ \gamma{\,}.
$$
This provides yet another formula to define Euler's constant $\gamma$.\index{Euler's constant}
\end{project}

\parag{Restriction to the natural integers} For any $n\in\N$ we have
$$
H_n ~=~ \sum_{k=1}^n\frac{1}{k}{\,}.
$$
Gregory's formula states that for any $n\in\N^*$ and any $q\in\N$ we have
$$
H_{n-1} ~=~ \ln n-\sum_{j=1}^q|G_j|\left(\mathrm{B}(n,j)-\frac{1}{j}\right)-R^q_n{\,},
$$
with
$$
|R^q_n| ~\leq ~\overline{G}_q \left|\mathrm{B}(n,q+1)-\frac{1}{q}\right|.
$$
Many representations of $H_n$ can be derived from, e.g., the limit and series representations of the digamma function described above. For instance, using the generalized Liu formula, we get (see also Remark~\ref{rem:EM5LiU3})
$$
H_n ~=~ \ln n+\gamma +\frac{1}{2n}+\int_n^{\infty}\frac{\{t\}-\frac{1}{2}}{t^2}{\,}dt ~=~ \ln n+\frac{1}{2}+\frac{1}{2n}-\int_1^n\frac{\{t\}-\frac{1}{2}}{t^2}{\,}dt{\,}.
$$

\parag{Generalized Webster's functional equation} For any $m\in\N^*$ and any $a>0$, there is a unique eventually monotone solution $f\colon\R_+\to\R$ to the equation
$$
\sum_{j=0}^{m-1}f(x+a j) ~=~ \frac{1}{x}{\,},
$$
namely
$$
f(x) ~=~ \frac{1}{am}\,\psi\left(\frac{x+a}{am}\right)-\frac{1}{am}\,\psi\left(\frac{x}{am}\right){\,}.
$$

\parag{Analogue of Euler's series representation of $\gamma$} We have $\psi(1)=-\gamma$ and
$$
\psi_k(1) ~=~ (-1)^{k-1} k!\,\zeta(k+1){\,},\qquad k\in\N^*.
$$
Thus, the Taylor series expansion of $\psi(x+1)$ about $x=0$ is
$$
H_x ~=~ \psi(x+1) + \gamma ~=~ \sum_{k=1}^{\infty}(-1)^{k-1}\zeta(k+1){\,}x^k{\,},\qquad |x|< 1.
$$
Integrating both sides of this equation on $(0,1)$, we retrieve Euler's series representation of $\gamma$
$$
\gamma ~=~ \sum_{k=2}^{\infty}(-1)^k\,\frac{\zeta(k)}{k}{\,}.
$$

\parag{Analogue of the reflection formula} For any $x\in\R\setminus\Z$, we have
$$
\psi(x)-\psi(1-x) ~=~ -\pi\cot(\pi x).
$$

\index{harmonic number function|)}
\index{digamma function|)}

\section{The polygamma functions}
\label{sec:Polyg&82}\index{polygamma functions|(}

We now investigate the polygamma functions $\psi_{\nu}$ for any $\nu\in\Z$. In this context, our results will prove to be particularly interesting when $\nu\leq -2$, that is, when the function $\psi_{\nu}$ has a strictly positive asymptotic degree.

For any $\nu\in\Z$, we set $g_{\nu}=\Delta\psi_{\nu}$; hence we have $g'_{\nu}=g_{\nu +1}$ and $\psi'_{\nu}=\psi_{\nu +1}$. It follows immediately that
$$
\Sigma g_{\nu}(x) ~=~ \psi_{\nu}(x)-\psi_{\nu}(1).
$$
(The cases $\nu=0$ and $\nu=-1$ correspond to the functions $\psi(x)$ and $\ln\Gamma(x)$, respectively, and have been already considered in the previous sections.) We will often deal with the cases $\nu\geq 1$ and $\nu\leq -1$ separately. In the latter case, we will often consider the value $\nu =-2$ for simplicity and brevity.

\parag{ID card when $\nu\geq 1$} Here we clearly have
$$
g_{\nu}(x) ~=~ D^{\nu}_x\frac{1}{x} ~=~ (-1)^{\nu}\,\frac{\nu!}{x^{\nu+1}}
$$
and (see Example~\ref{ex:6afdsdf2})
$$
\psi_{\nu}(1) ~=~ (-1)^{\nu+1}\nu!{\,}\zeta(\nu+1).
$$
Hence we have the following table.
$$
\begin{array}{|c|c|c|c|}
\hline g_{\nu}(x) & \text{Membership} & \deg g_{\nu} & \Sigma g_{\nu}(x) \\
\hline \emptybox (-1)^{\nu}\nu!{\,}x^{-\nu-1} & \cC^{\infty}\cap\cD^{-1}\cap\cK^{\infty} & -1 & \psi_{\nu}(x)-\psi_{\nu}(1) \\
\hline
\end{array}
$$

\parag{ID card when $\nu\leq -1$} Using \eqref{eq:ds68ffdsbis}, we obtain the following recurrence to compute the functions $g_{\nu}$. For any integer $\nu\leq -1$, we have
\begin{eqnarray*}
g_{\nu-1}(x) &=& \int_x^{x+1}\psi_{\nu}(t){\,}dt ~=~ \int_0^x g_{\nu}(t){\,}dt + \int_0^1\psi_{\nu}(t){\,}dt\\
&=& \int_0^x g_{\nu}(t){\,}dt + \psi_{\nu -1}(1).
\end{eqnarray*}
In particular,
$$
\lim_{x\to 0}g_{\nu -1}(x) ~=~ \psi_{\nu -1}(1) ~=~ \int_0^1\psi_{\nu}(t){\,}dt.
$$
Unfolding this recurrence, we obtain $g_{-1}(x)=\ln x$ and, for any integer $\nu\leq -1$,
\begin{equation}\label{eq:gvZN4}
g_{\nu -1}(x) ~=~ \int_0^x\frac{(x-t)^{-\nu -1}}{(-\nu -1)!}{\,}\ln t{\,}dt+\sum_{j=0}^{-\nu -1}
\psi_{\nu +j-1}(1)\,\frac{x^j}{j!}{\,},
\end{equation}
which is precisely the $(-\nu -1)$th order Taylor expansion of $g_{\nu -1}(x)$.

Thus, we have
\begin{eqnarray*}
g_{-1}(x) &=& \ln x{\,},\\
g_{-2}(x) &=& x\ln x-x+\frac{1}{2}\ln(2\pi){\,},\\
g_{-3}(x) &=& \frac{1}{2}{\,}x^2\ln x-\frac{3}{4}{\,}x^2+\left(\frac{1}{2}{\,}x+\frac{1}{4}\right)\ln(2\pi)+\ln A.
\end{eqnarray*}
Hence the following ID card
$$
\begin{array}{|c|c|c|c|}
\hline g_{\nu}(x) & \text{Membership} & \deg g_{\nu} & \Sigma g_{\nu}(x)\\
\hline \emptybox \text{Eq.~\eqref{eq:gvZN4}} & \cC^{\infty}\cap\cD^{-\nu}\cap\cK^{\infty} & -\nu -1 & \psi_{\nu}(x)-\psi_{\nu}(1)\\
\hline
\end{array}
$$

\parag{Analogue of Bohr-Mollerup's theorem} The function $\psi_{\nu}$ can be characterized as follows.
\begin{quote}
\emph{All solutions $f\colon\R_+\to\R$ to the equation $f(x+1)-f(x)=g_{\nu}(x)$ that lie in $\cK^{(-\nu)_+}$ are of the form $f(x)=c_{\nu}+\psi_{\nu}(x)$, where $c_{\nu}\in\R$}.
\end{quote}
When $\nu\geq 1$, this characterization enables us to prove easily the following integral representation of $\psi_{\nu}$
$$
\psi_{\nu}(x) ~=~ (-1)^{\nu -1}\int_0^{\infty}\frac{t^{\nu}{\,}e^{-xt}}{1-e^{-t}}{\,}dt{\,},\qquad x>0.
$$
Indeed, both sides of this identity coincide at $x=1$ and are eventually monotone solutions to the equation $\Delta f=g_{\nu}$. Hence they must coincide on $\R_+$.

\parag{Extended ID card} The asymptotic constant $\sigma[g_{\nu}]$ satisfies the following identity
$$
\sigma[g_{\nu}] ~=~ \int_0^1\psi_{\nu}(t+1){\,}dt -\psi_{\nu}(1) ~=~ g_{\nu-1}(1)-\psi_{\nu}(1).
$$
Moreover, if $\nu\geq 1$ we also have
\begin{eqnarray*}
\sigma[g_{\nu}] ~=~ \gamma[g_{\nu}] &=& \sum_{k=1}^{\infty}g_{\nu}(k)-\int_1^{\infty}g_{\nu}(t){\,}dt\\
&=& (-1)^{\nu}\,\Gamma(\nu){\,}(\nu\,\zeta(\nu+1)-1)
\end{eqnarray*}
and hence the following values
$$
\begin{array}{|c|c|c|}
\hline \overline{\sigma}[g_{\nu}] & \sigma[g_{\nu}] & \gamma[g_{\nu}] \\
\hline \emptybox \infty & (-1)^{\nu}\,\Gamma(\nu){\,}(\nu\,\zeta(\nu+1)-1) & \gamma[g_{\nu}]=\sigma[g_{\nu}] \\
\hline
\end{array}
$$
For $\nu\leq -1$ we have the values
$$
\begin{array}{|c|c|c|}
\hline \overline{\sigma}[g_{\nu}] & \sigma[g_{\nu}] & \gamma[g_{\nu}] \\
\hline \emptybox \psi_{\nu -1}(1)-\psi_{\nu}(1) & g_{\nu -1}(1)-\psi_{\nu}(1) & \sigma[g_{\nu}]-\sum_{j=1}^{-\nu}G_j\Delta^{j-1}g_{\nu}(1) \\
\hline
\end{array}
$$
For instance we have
$$
\overline{\sigma}[g_{-2}] ~=~ \ln A-\frac{1}{4}\ln(2\pi),\qquad \sigma[g_{-2}] ~=~ \ln A+\frac{1}{4}\ln(2\pi)-\frac{3}{4}{\,},
$$
and
$$
\gamma[g_{-2}] ~=~ \ln A+\frac{1}{6}\ln 2-\frac{1}{3}{\,}.
$$
We also have the following identities.
\begin{itemize}
\item \emph{Alternative representations of $\sigma[g]$}
\begin{eqnarray*}
\sigma[g_{\nu}] &=& \sum_{j=1}^{(-\nu)_+}G_j\,\Delta^{j-1}g_{\nu}(1) -\sum_{k=1}^{\infty}\left(\Delta g_{\nu -1}(k)-\sum_{j=0}^{(-\nu)_+}G_j\,\Delta^jg_{\nu}(k)\right),\\
\sigma[g_{\nu}] &=& \lim_{n\to\infty}\left(\sum_{k=1}^{n-1}g_{\nu}(k)+g_{\nu -1}(1)-g_{\nu -1}(n)+\sum_{j=1}^{(-\nu)_+}G_j\,\Delta^{j-1}g_{\nu}(n)\right), \\
\sigma[g_{\nu}] &=& \lim_{n\to\infty}\left(\sum_{k=1}^{n-1}g_{\nu}(k)+g_{\nu -1}(1)-g_{\nu -1}(n)-\sum_{j=1}^{(-\nu)_+}\frac{B_j}{j!}{\,}g_{\nu +j-1}(n)\right).
\end{eqnarray*}
If $\nu\geq 1$, then
$$
\sigma[g_{\nu}] ~=~ (-1)^{\nu}{\,}\nu !\left(\frac{1}{2}-(\nu +1)\int_1^{\infty}\frac{\{t\}-\frac{1}{2}}{t^{\nu +2}}{\,}dt\right).
$$
If $\nu\leq -1$, then for any integer $q\geq\lceil -\nu/2\rceil$,
$$
\sigma[g_{\nu}] ~=~ \frac{1}{2}{\,}g_{\nu}(1)-\sum_{k=1}^q\frac{B_{2k}}{(2k)!}{\,}g_{\nu +2k-1}(1)  - \int_1^{\infty}\frac{B_{2q}(\{t\})}{(2q)!}{\,}g_{\nu + 2q}(t){\,}dt.
$$
\item \emph{Representations of $\gamma[g]$}
\begin{eqnarray*}
\gamma[g_{\nu}] &=& \sigma[g_{\nu}]-\sum_{j=1}^{(-\nu)_+}G_j\Delta^{j-1}g_{\nu}(1){\,},\\
\gamma[g_{\nu}] &=& \int_1^{\infty}\left(\sum_{j=0}^{(-\nu)_+}G_j\,\Delta^jg_{\nu}(\lfloor t\rfloor)-g_{\nu}(t)\right)dt{\,},\\
\gamma[g_{\nu}] &=& \int_1^{\infty}\left(\sum_{j=0}^{(-\nu)_+}{\{t\}\choose j}\,\Delta^jg_{\nu}(\lfloor t\rfloor)-g_{\nu}(t)\right)dt{\,}.
\end{eqnarray*}
\item \emph{Generalized Binet's function}. For any $q\in\N$ and any $x>0$
$$
J^{q+1}[\psi_{\nu}](x) ~=~ \psi_{\nu}(x)-g_{\nu -1}(x)+\sum_{j=1}^qG_j\,\Delta^{j-1}g_{\nu}(x).
$$
For instance,
\begin{eqnarray*}
J^3[\psi_{-2}](x) &=& \psi_{-2}(x)-\frac{1}{12}{\,}(x+1)\ln(x+1)+\frac{1}{12}{\,}(3x-1)^2\\
&& \null -\frac{1}{12}{\,}x(6x-7)\ln x-\frac{1}{2}{\,}x\ln(2\pi)-\ln A.
\end{eqnarray*}
\item \emph{Analogue of Raabe's formula}
$$
\int_x^{x+1}\psi_{\nu}(t){\,}dt ~=~ g_{\nu -1}(x){\,},\qquad x>0.
$$
\item \emph{Alternative characterization}. The function $f=\psi_{\nu}$ is the unique solution lying in $\cC^0\cap\cK^{(-\nu)_+}$ to the equation
$$
\int_x^{x+1}f(t){\,}dt ~=~ g_{\nu -1}(x){\,}, \qquad x>0.
$$
\end{itemize}

\parag{Inequalities when $\nu\geq 1$} The following inequalities hold for any $x>0$, any $a\geq 0$, and any $n\in\N^*$.
\begin{itemize}
\item \emph{Symmetrized generalized Wendel's inequality} (equality if $a\in\{0,1\}$)
$$
\left|\psi_{\nu}(x+a)-\psi_{\nu}(x)\right| ~\leq ~ \lceil a\rceil{\,}\frac{\nu !}{x^{\nu +1}}{\,}.
$$
\item \emph{Symmetrized generalized Wendel's inequality} (discrete version)
$$
\left|\psi_{\nu}(x)-\psi_{\nu}(1)-\sum_{k=1}^{n-1}g_{\nu}(k)+\sum_{k=0}^{n-1}g_{\nu}(x+k)\right| ~\leq ~ \lceil x\rceil{\,}\frac{\nu !}{n^{\nu +1}}{\,}.
$$
\item \emph{Symmetrized Stirling's and Burnside's formulas-based inequalities}
$$
\textstyle{\left|\psi_{\nu}\left(x+\frac{1}{2}\right)-g_{\nu -1}(x)\right| ~\leq ~ \left|\psi_{\nu}(x)-g_{\nu -1}(x)\right| ~\leq ~ |g_{\nu}(x)|}{\,}.
$$
Considering for instance the value $p=1$ in Corollary~\ref{cor:6GenStFo0BaIneqCo}, we see that the latter inequality can be refined into
$$
\left|\psi_{\nu}(x)-g_{\nu -1}(x)+\frac{1}{2}{\,}g_{\nu}(x)\right| ~\leq ~ \frac{1}{2}{\,}|\Delta g_{\nu}(x)|.
$$
\item \emph{Additional inequality}
$$
|\psi_{\nu}(x+n)| ~=~ \left|\sum_{k=n}^{\infty}g_{\nu}(x+k)\right| ~\leq ~ \left|\psi_{\nu}(n)\right|.
$$

\item \emph{Generalized Gautschi's inequality}
\begin{eqnarray*}
(-1)^{\nu -1}(a-\lceil a\rceil)\,\psi_{\nu +1}(x+\lceil a\rceil) &\leq & (-1)^{\nu -1}(\psi_{\nu}(x+a)-\psi_{\nu}(x+\lceil a\rceil))\\
&\leq & (-1)^{\nu -1}(a-\lceil a\rceil){\,}g_{\nu}(x+\lfloor a\rfloor){\,}.
\end{eqnarray*}
\end{itemize}

\parag{Inequalities when $\nu\leq -1$} The following inequalities hold for any $x>0$, any $a\geq 0$, and any $n\in\N^*$.
\begin{itemize}
\item \emph{Symmetrized generalized Wendel's inequality} (equality if $a\in\{0,1,\ldots,-\nu\}$)
\begin{eqnarray*}
\lefteqn{\left|\psi_{\nu}(x+a)-\psi_{\nu}(x)-\sum_{j=1}^{-\nu}\tchoose{a}{j}\,\Delta^{j-1}g_{\nu}(x)\right|}\\
& \leq & \left|\tchoose{a-1}{-\nu}\right|\left|\Delta^{-\nu -1}g_{\nu}(x+a)-\Delta^{-\nu -1}g_{\nu}(x)\right|\\
& \leq & \lceil a\rceil\left|\tchoose{a-1}{-\nu}\right|\left|\Delta^{-\nu}g_{\nu}(x)\right|.
\end{eqnarray*}
\item \emph{Symmetrized generalized Wendel's inequality} (discrete version)
\begin{eqnarray*}
\left|\psi_{\nu}(x)-\psi_{\nu}(1)-f_n^{-\nu}[g_{\nu}](x)\right|
& \leq & \left|\tchoose{x-1}{-\nu}\right|\left|\Delta^{-\nu -1}g_{\nu}(x+n)-\Delta^{-\nu -1}g_{\nu}(n)\right|\\
& \leq & \lceil x\rceil\left|\tchoose{x-1}{-\nu}\right|\left|\Delta^{-\nu}g_{\nu}(n)\right|,
\end{eqnarray*}
where
$$
f_n^{-\nu}[g_{\nu}](x) ~=~ \sum_{k=1}^{n-1}g_{\nu}(k)-\sum_{k=0}^{n-1} g_{\nu}(x+k)+\sum_{j=1}^{-\nu}\tchoose{x}{j}\,\Delta^{j-1}g_{\nu}(n){\,}.
$$
\item \emph{Symmetrized Stirling's formula-based inequality}
\begin{eqnarray*}
\lefteqn{\left|\psi_{\nu}(x)-g_{\nu -1}(x)+\sum_{j=1}^{-\nu}G_j\Delta^{j-1}g_{\nu}(x)\right|}\\
& \leq & \int_0^1\left|\tchoose{t-1}{-\nu}\right|\left|\Delta^{-\nu -1}g_{\nu}(x+t)-\Delta^{-\nu -1}g_{\nu}(x)\right|dt\\
& \leq & \overline{G}_{-\nu}\left|\Delta^{-\nu}g_{\nu}(x)\right|
\end{eqnarray*}

\item \emph{Generalized Gautschi's inequality}

Considering the function $\psi_{-2}$, we obtain
\begin{eqnarray*}
(a-\lceil a\rceil)\,\psi_{-1}(x+\lceil a\rceil) &\leq & \psi_{-2}(x+a)-\psi_{-2}(x+\lceil a\rceil)\\
&\leq & (a-\lceil a\rceil){\,}g_{-2}(x+\lfloor a\rfloor),
\end{eqnarray*}
for any $x+\lfloor a\rfloor\geq x_0$, where $x_0=1.461\ldots$ is the unique positive zero of the digamma function.\index{digamma function}
\end{itemize}

\parag{Generalized Stirling's and related formulas when $\nu\geq 1$} For any $a\geq 0$, we have the following limit and
asymptotic equivalence as $x\to\infty$,
$$
\psi_{\nu}(x+a) ~\sim ~ g_{\nu -1}(x) ~=~ (-1)^{\nu -1}\,\frac{(\nu -1)!}{x^{\nu}}{\,},\qquad\psi_{\nu}(x)~\to ~ 0.
$$
\emph{Burnside-like approximation} (better than Stirling-like approximation)
$$
\textstyle{\psi_{\nu}(x)-g_{\nu -1}(x-\frac{1}{2})} ~\to ~ 0{\,}.
$$

\parag{Generalized Stirling's and related formulas when $\nu\leq -1$} For any $a\geq 0$, we have the following limits and
asymptotic equivalence as $x\to\infty$,
$$
\psi_{\nu}(x+a)-\psi_{\nu}(x)-\sum_{j=1}^{-\nu}\tchoose{a}{j}\,\Delta^{j-1}g_{\nu}(x) ~ \to ~ 0,
$$
$$
\psi_{\nu}(x)-g_{\nu -1}(x)+\sum_{j=1}^{-\nu}G_j\Delta^{j-1}g_{\nu}(x) ~\to ~ 0,
$$
$$
\psi_{\nu}(x)-\sum_{k=0}^{-\nu}\frac{B_k}{k!}{\,}g_{\nu +k-1}(x) ~\to ~ 0,
$$
$$
\psi_{\nu}(x+a) ~\sim ~ g_{\nu -1}(x) ~\sim ~ \frac{1}{(-\nu)!}{\,}x^{-\nu}\ln x.
$$
When $\nu=-2$ for instance, these limits reduce to
$$
\int_x^{x+a}\ln\Gamma(t){\,}dt -a \ln\left(\sqrt{2\pi}\,\frac{x^x}{e^x}\right)-\tchoose{a}{2}\ln\left(\frac{(x+1)^{x+1}}{e{\,}x^x}\right) ~\to ~0 {\,},
$$
\begin{eqnarray*}
\lefteqn{\psi_{-2}(x)-\frac{1}{12}{\,}(x+1)\ln(x+1)+\frac{1}{12}{\,}(3x-1)^2}\\
&& \null -\frac{1}{12}{\,}x(6x-7)\ln x-\frac{1}{2}{\,}x\ln(2\pi) ~\to ~ \ln A{\,},
\end{eqnarray*}
$$
\psi_{-2}(x) - \frac{1}{12}(6x^2-6x+1)\ln x +\frac{1}{4}(3x-2)x-\frac{1}{2}{\,}x\ln(2\pi) ~\to ~ \ln A{\,},
$$
$$
\psi_{-2}(x+a) ~\sim ~ \frac{1}{2}{\,}x^2\ln x{\,}.
$$

\parag{Asymptotic expansions} For any $m,q\in\N^*$ we have the following expansion as $x\to\infty$
$$
\frac{1}{m}\sum_{j=0}^{m-1}\psi_{\nu}\left(x+\frac{j}{m}\right) ~=~ \sum_{k=0}^q\frac{B_k}{m^k{\,}k!}{\,}g_{\nu +k-1}(x)+O(g_{\nu +q}(x)){\,}.
$$
Setting $m=1$ in this formula, we obtain
$$
\psi_{\nu}(x) ~=~ \sum_{k=0}^q\frac{B_k}{k!}{\,}g_{\nu +k-1}(x)+O(g_{\nu +q}(x)){\,}.
$$
For instance the asymptotic expansion of $\psi_{-2}$ is
\begin{eqnarray*}
\psi_{-2}(x) &=& \frac{1}{12}(6x^2-6x+1)\ln x -\frac{1}{4}(3x-2)x+\frac{1}{2}{\,}x\ln(2\pi)+\ln A\\
&& \null +\frac{1}{720x^2}+O\left(x^{-4}\right).
\end{eqnarray*}

\parag{Generalized Liu's formula} For any $\nu\geq 1$ and any $x>0$ we have
$$
\psi_{\nu}(x) ~=~ (-1)^{\nu -1}\,\Gamma(\nu)\left(\frac{2x+\nu}{2x^{\nu +1}}+\nu (\nu +1)\int_0^{\infty}\frac{\frac{1}{2}-\{t\}}{(t+x)^{\nu +2}}{\,}dt\right).
$$
For $\nu =-2$ and any $x>0$ we have
\begin{eqnarray*}
\psi_{-2}(x) &=& \frac{1}{12}(6x^2-6x+1)\ln x -\frac{1}{4}(3x-2)x+\frac{1}{2}{\,}x\ln(2\pi)+\ln A\\
&& \null +\int_0^{\infty}\frac{B_2(\{t\})}{2(x+t)}{\,}dt.
\end{eqnarray*}

\parag{Limit and series representations when $\nu\geq 1$} The Eulerian and Weierstrassian forms of $\psi_{\nu}$ reduce to
$$
\psi_{\nu}(x) ~=~ -\sum_{k=0}^{\infty}g_{\nu}(x+k) ~=~ (-1)^{\nu -1}{\,}\nu !\,\zeta(\nu +1,x)
$$
and this series converges uniformly on $\R_+$.

\parag{Limit and series representations when $\nu\leq -1$} The analogue of Gauss' limit is
$$
\psi_{\nu}(x) ~=~ \psi_{\nu}(1)+\lim_{n\to\infty}f^{-\nu}_n[g_{\nu}](x)
$$
and both sides can be integrated on any bounded subset of $[0,\infty)$ (the limit and the integral commute). They can also be differentiated infinitely many times (the limit and the derivative operator commute).

For instance, when $\nu=-2$ we obtain
\begin{eqnarray*}
\psi_{-2}(x) &=& \lim_{n\to\infty}\Bigg(\sum_{k=1}^{n-1}k\ln k-\sum_{k=0}^{n-1}(x+k)\ln(x+k)+x\left(n\ln n+\frac{1}{2}\ln(2\pi)\right) \\
&& \null  + {x\choose 2}\left((n+1)\ln\left(1+\frac{1}{n}\right)+\ln n-1\right)\Bigg).
\end{eqnarray*}
Comparing this formula with that of \eqref{eq:IntPsi2m65}, we see that the latter is less complicated, since it was produced from less terms in its polynomial part. Now, differentiating the formula above, we obtain a limit representation for $\ln\Gamma(x)$, but the Gauss limit is less complicated. In this context, finding the simplest limit representations seems to be an interesting problem.

The Eulerian and Weistrassian representations of $\psi_{\nu}$ take the following forms
\begin{eqnarray*}
\psi_{\nu}(x)-\psi_{\nu}(1) &=& -g_{\nu}(x)+\sum_{j=1}^{-\nu}\tchoose{x}{j}\Delta^{j-1}g_{\nu}(1)\\
&& \null + \sum_{k=1}^{\infty}\left(-g_{\nu}(x+k)+\sum_{j=0}^{-\nu}\tchoose{x}{j}\,\Delta^j g_{\nu}(k)\right)
\end{eqnarray*}
and
\begin{eqnarray*}
\psi_{\nu}(x)-\psi_{\nu}(1) &=& -g_{\nu}(x)+\sum_{j=1}^{-\nu-1}\tchoose{x}{j}\Delta^{j-1}g_{\nu}(1)-\gamma\tchoose{x}{-\nu}\\
&& \null + \sum_{k=1}^{\infty}\left(-g_{\nu}(x+k)+\sum_{j=0}^{-\nu -1}\tchoose{x}{j}\,\Delta^j g_{\nu}(k)+\tchoose{x}{-\nu}\frac{1}{k}\right),
\end{eqnarray*}
respectively. These series can be integrated term by term on any bounded subset of $[0,\infty)$. They can also be differentiated term by term infinitely many times.

For instance, when $\nu=-2$, both identities above reduce to
$$
\psi_{-2}(x) ~=~ \ln\left(\frac{(2\pi)^{\frac{1}{2}x}(\frac{4}{e})^{{x\choose 2}}}{x^x}{\,}
\prod_{k=1}^{\infty}\frac{(1+2/k)^{(k+2){x\choose 2}}}{(1+x/k)^{x+k}{\,}(1+1/k)^{(k+1)x(x-2)}}\right)
$$
and
$$
\psi_{-2}(x) ~=~ \ln\left(\frac{(2\pi)^{\frac{1}{2}x}e^{-\gamma{x\choose 2}}}{x^x}{\,}
\prod_{k=1}^{\infty}\frac{e^{\frac{1}{k}{x\choose 2}}{\,}(1+1/k)^{(k+1)x}}{(1+x/k)^{x+k}}\right).
$$
Integrating both the Eulerian and Weierstrassian forms of $\ln\Gamma(x)$, we obtain the following representations (which are simpler than the previous ones since less terms are involved; see also Examples~\ref{ex:GepeA58} and \ref{ex:GepeA58W})
$$
\psi_{-2}(x) ~=~ \ln\left(\frac{e^x}{x^x}\,\prod_{k=1}^{\infty}\frac{e^x(1+1/k)^{x^2/2}}{(1+x/k)^{x+k}}\right)
$$
and
$$
\psi_{-2}(x) ~=~ \ln\left(e^{-\gamma x^2/2}{\,}\frac{e^x}{x^x}{\,}\prod_{k=1}^{\infty}\frac{e^{x+x^2/(2k)}}{(1+x/k)^{x+k}}\right).
$$
Here again, finding the simplest Eulerian and Weierstrassian forms remains an interesting problem.

\parag{Integral representation} For any $\nu\in\Z$, we have
$$
\psi_{\nu}(x) ~=~ \psi_{\nu}(1)+\int_1^x\psi_{\nu +1}(t){\,}dt.
$$
If $\nu\geq 1$, then $\psi_{\nu}$ is not integrable at $x=0$ (since $g_{\nu}$ is not). If $\nu\leq -1$, then $\psi_{\nu}$ is integrable at $0$ by definition and we have
$$
\psi_{\nu-1}(x) ~=~ \int_0^x\psi_{\nu}(t){\,}dt ~=~ \int_0^x\frac{(x-t)^{-\nu-1}}{(-\nu-1)!}{\,}\ln\Gamma(t){\,}dt.
$$

\parag{Gregory's formula-based series representation} Proposition~\ref{prop:6699rem} gives the following series representation: for any $x>0$ we have
\begin{eqnarray*}
\psi_{\nu}(x) &=& g_{\nu -1}(x)-\sum_{n=0}^{\infty}G_{n+1}\,\Delta^ng_{\nu}(x)\\
&=& g_{\nu -1}(x)-\sum_{n=0}^{\infty}|G_{n+1}|\,\sum_{k=0}^n(-1)^k\tchoose{n}{k}g_{\nu}(x+k){\,}.
\end{eqnarray*}
Setting $x=1$ in this identity yields the analogue of Fontana-Mascheroni series. For instance, taking $\nu=1$, we derive the identity (see, e.g., Merlini {\em et al.} \cite[p.~1920]{MerSprVer06})
$$
\sum_{n=1}^{\infty}|G_n|\,\frac{H_n}{n} ~=~ \frac{\pi^2}{6}-1{\,}.
$$
Taking $\nu =2$, we obtain
$$
\sum_{n=1}^{\infty}|G_n|\,\frac{\psi_1(n+1)-H_n^2}{n} ~=~ 1-2\,\zeta(3)+\gamma\,\frac{\pi^2}{6}{\,}.
$$

\parag{Analogue of Gauss' multiplication formula} Assume first that $\nu\geq 1$. Differentiating repeatedly both sides of the multiplication formula \eqref{eq:GMF3PSd} for the digamma function $\psi$,\index{digamma function} we obtain the following formula. For any $m\in\N^*$ and any $x>0$, we have
$$
\sum_{j=0}^{m-1}\psi_{\nu}\left(\frac{x+j}{m}\right) ~=~ m^{\nu +1}\,\psi_{\nu}(x).
$$
Moreover, Corollary~\ref{cor:Riem482} provides the following limit
$$
\lim_{m\to\infty}m^{\nu}\psi_{\nu}(mx) ~=~ g_{\nu -1}(x),\qquad x>0.
$$
Assume now that $\nu\leq -1$. Applying Theorem~\ref{thm:MultThmGen} to the function $g_{\nu}$, we obtain that for any $m\in\N^*$ and any $x>0$
$$
\sum_{j=0}^{m-1}\psi_{\nu}\left(\frac{x+j}{m}\right) ~=~ \sum_{j=1}^{m-1}\psi_{\nu}\left(\frac{j}{m}\right)+\psi_{\nu}(1)+\Sigma_x{\,}g_{\nu}\left(\frac{x}{m}\right){\,}.
$$
Let us expand this formula in the special case when $\nu=-2$. First, we have
$$
g_{-2}\left(\frac{x}{m}\right) ~=~ \frac{1}{m}{\,}g_{-2}(x)-x\frac{\ln m}{m}+\frac{m-1}{m}\,\psi_{-2}(1)
$$
and hence
$$
\Sigma_x{\,}g_{-2}\left(\frac{x}{m}\right) ~=~ \frac{1}{m}{\,}\psi_{-2}(x)-{x\choose 2}\,\frac{\ln m}{m}+\left(\frac{m-1}{m}{\,}x-1\right)\,\psi_{-2}(1).
$$
Using Proposition~\ref{prop:CalcSum662}, after some algebra we also obtain
$$
\sum_{j=1}^{m-1}\psi_{-2}\left(\frac{j}{m}\right) ~=~ \left(1-\frac{1}{m}\right)\ln A-\frac{\ln m}{12{\,}m}+(m-1)\ln((2\pi)^{\frac{1}{4}}A).
$$
Now, collecting terms, we finally get the following multiplication formula for $\psi_{-2}$
\begin{eqnarray*}
\sum_{j=0}^{m-1}\psi_{-2}\left(\frac{x+j}{m}\right) &=& \frac{1}{m}\,\psi_{-2}(x)-\frac{1}{12m}{\,}(6x^2-6x+1)\ln m \\
&& \null + (m-1)\ln(2\pi)\left(\frac{x}{2m}+\frac{1}{4}\right)+\left(m-\frac{1}{m}\right)\ln A.
\end{eqnarray*}
Setting $m=2$ in the formula above, we obtain the following analogue of Legendre's duplication formula
\begin{eqnarray*}
\psi_{-2}\left(\frac{x}{2}\right)+\psi_{-2}\left(\frac{x+1}{2}\right) &=& \frac{1}{2}\,\psi_{-2}(x)-\frac{1}{24}{\,}(6x^2-6x+1)\ln 2\\
&& \null +\frac{1}{4}\ln(2\pi){\,}(x+1)+\frac{3}{2}\ln A .
\end{eqnarray*}
Taking $x=0$ in this latter identity, we obtain
$$
\psi_{-2}\left(\frac{1}{2}\right) ~=~ \frac{5}{24}\ln 2+\frac{3}{2}\ln A+\frac{1}{4}\ln\pi{\,}.
$$
Moreover, Corollary~\ref{cor:Riem482} provides the following limit
$$
\lim_{m\to\infty}\left(\frac{1}{m^2}\,\psi_{-2}(mx)-\frac{x^2}{2}\ln m\right) ~=~ \frac{1}{2}{\,}x^2\ln x-\frac{3}{4}{\,}x^2{\,},\qquad x>0.
$$

\parag{Analogue of Wallis's product formula}  If $\nu\geq 1$, then the analogue of Wallis's formula is simply
$$
\sum_{k=1}^{\infty}(-1)^{k-1}g_{\nu}(k) ~=~ (-1)^{\nu}(1-2^{-\nu}){\,}\nu!{\,}\zeta(\nu +1),
$$
or equivalently,
$$
\sum_{k=1}^{\infty}(-1)^{k-1}g_{\nu}(k) ~=~ (-1)^{\nu}{\,}\nu!{\,}\eta(\nu +1),
$$
where $\eta$ is Dirichlet's eta function.\index{Dirichlet's eta function} In the case when $\nu =-2$, after a bit of calculus we obtain the following analogue of Wallis's formula
$$
\lim_{n\to\infty}\left(h(n)+\sum_{k=1}^{2n}(-1)^{k-1}g_{-2}(k)\right) ~=~ \frac{1}{12}\ln 2-3\ln A.
$$
where
$$
h(n) ~=~ \left(n+\frac{1}{4}\right)\ln n - n(1-\ln 2).
$$

\begin{project}
\textsl{Find the analogue of Wallis's formula for the function $g(x)=\psi_{-2}(x)$.} After some algebra, we obtain
$$
\lim_{n\to\infty}\left(h(n)+\sum_{k=1}^{2n}(-1)^{k-1}\psi_{-2}(k)\right) ~=~ \ln A-\frac{1}{12}\,\ln 2{\,},
$$
where
$$
h(n) ~=~ n^2\ln(2n)-\frac{3}{2}{\,}n^2+\frac{1}{2}{\,}n\,\ln(2\pi)-\frac{1}{12}\,\ln n.
$$
This formula is a little harder to obtain than the former one; it requires the computation of both functions $\Sigma\psi_{-2}(x)$ and $2{\,}\Sigma_x\psi_{-2}(2x)$ using the elevator method\index{elevator method} (Corollary~\ref{cor:saf6f}) with $r=2$. That is,
\begin{eqnarray}
\Sigma\psi_{-2}(x) &=& -\frac{1}{12}{\,}x(x-1)(2x-1)+\frac{1}{4}{\,}x(x+1)\ln(2\pi)\nonumber\\
&& \null +2x\ln A+(x-1)\,\psi_{-2}(x)-2\,\psi_{-3}(x)\label{eq:S2psi2}
\end{eqnarray}
and
\begin{eqnarray*}
2{\,}\Sigma_x\psi_{-2}(2x) &=& -\frac{1}{6}{\,}x(2x-1)(4x-1)+(4x+3)\ln A\\
&& \null +\frac{1}{12}{\,}(-24x^2+48x+5)\ln 2-4\,\psi_{-2}(x)\\
&& \null +2x\,\psi_{-2}(2x)-2\,\psi_{-2}\left(x+\frac{1}{2}\right)-2\,\psi_{-3}(2x).
\end{eqnarray*}
These formulas can also be verified using the difference operator.
\end{project}

\parag{Restriction to the natural integers when $\nu\geq 1$} For any $n\in\N^*$, we have
$$
\psi_{\nu}(n)-\psi_{\nu}(1) ~=~ \sum_{k=1}^{n-1}g_{\nu}(k) ~=~ (-1)^{\nu}\nu!{\,}\sum_{k=1}^{n-1}\frac{1}{k^{\nu +1}}{\,}.
$$
In particular,
$$
\psi_{\nu}(1) ~=~ -\sum_{k=1}^{\infty}g_{\nu}(k).
$$
Gregory's formula states that for any $n\in\N^*$ and any $q\in\N$ we have
\begin{eqnarray*}
\sum_{k=1}^{n-1}g_{\nu}(k) &=& g_{\nu -1}(n)-g_{\nu -1}(1)\\
&& \null -\sum_{j=1}^qG_j\left(\Delta^{j-1}g_{\nu}(n)-\Delta^{j-1}g_{\nu}(1)\right)-R^q_n{\,},
\end{eqnarray*}
with
$$
|R^q_n| ~\leq ~\overline{G}_q\left|\Delta^qg_{\nu}(n)-\Delta^qg_{\nu}(1)\right|.
$$

\parag{Generalized Webster's functional equation} For any $m\in\N^*$, there is a unique solution $f\colon\R_+\to\R$ to the equation
$$
\sum_{j=0}^{m-1}f\left(x+\frac{j}{m}\right) ~=~ g_{\nu}(x)
$$
that lies in $\cK^{(-\nu)_+}$, namely
$$
f(x) ~=~ \psi_{\nu}\left(x+\frac{1}{m}\right)-\psi_{\nu}(x){\,}.
$$

\parag{Analogue of Euler's series representation of $\gamma$} Assume first that $\nu\geq 1$. In this case, for any $k\in\N$ we have
$$
\psi_{\nu}^{(k)}(1) ~=~ \psi_{\nu +k}(1) ~=~ (-1)^{\nu +k-1} (\nu +k)!\,\zeta(\nu +k+1).
$$
Thus, the Taylor series expansion of $\psi_{\nu}(x+1)$ about $x=0$ is
$$
\psi_{\nu}(x+1) ~=~ \sum_{k=0}^{\infty}(-1)^{\nu +k-1}\frac{(\nu +k)!}{k!}\,\zeta(\nu +k+1){\,}x^k,\qquad |x|<1.
$$
Integrating both sides of this equation on $(0,1)$, we obtain the identity
$$
g_{\nu -1}(1) ~=~ \sum_{k=0}^{\infty}(-1)^{\nu +k-1}\frac{(\nu +k)!}{(k+1)!}\,\zeta(\nu +k+1){\,}.
$$
We proceed similarly when $\nu\leq -1$. To keep the computations simple, let us assume that $\nu =-2$. We then have
$$
\psi_{-2}(1) ~=~ \frac{1}{2}\ln(2\pi),\quad\psi'_{-2}(1) ~=~ \psi_{-1}(1) ~=~ 0,\quad\psi''_{-2}(1) ~=~ \psi_0(1) ~=~ -\gamma,
$$
and for any integer $k\geq 3$,
$$
\psi_{-2}^{(k)}(1) ~=~ \psi_{k-2}(1) ~=~ (-1)^{k-1}(k-2)!\,\zeta(k-1).
$$
Thus, the Taylor series expansion of $\psi_{-2}(x+1)$ about $x=0$ is
$$
\psi_{-2}(x+1) ~=~ \frac{1}{2}\ln(2\pi)-\gamma\,\frac{x^2}{2}+\sum_{k=3}^{\infty}(-1)^{k-1}\frac{\zeta(k-1)}{(k-1)k}{\,}x^k,\qquad |x|<1.
$$
Integrating both sides of this equation on $(0,1)$, we obtain
$$
\sum_{k=2}^{\infty}(-1)^k\,\frac{\zeta(k)}{k(k+1)(k+2)} ~=~ \frac{1}{6}\,\gamma-\frac{3}{4}+\frac{1}{4}\ln(2\pi)+\ln A{\,}.
$$

\parag{Analogue of the reflection formula} Assume first that $\nu\geq 1$. Differentiating the reflection formula for $\psi$ repeatedly, we obtain the following formula. For any $x\in\R\setminus\Z$, we have
$$
\psi_{\nu}(x)-(-1)^{\nu}\psi_{\nu}(1-x) ~=~ -\pi{\,}D^{\nu}\cot(\pi x).
$$
When $\nu\leq -1$, a reflection formula on $(0,1)$ can be obtained by integrating both sides of the identity
$$
\ln\Gamma(x)+\ln\Gamma(1-x) ~=~ \ln\pi-\ln\sin(\pi x).
$$
For example, for any $x\in (0,1)$ we have
$$
\psi_{-2}(x)-\psi_{-2}(1-x) ~=~ x\,\ln\pi-\frac{1}{2}\ln(2\pi)-\int_0^x\ln\sin(\pi t){\,}dt.
$$
As a byproduct, we obtain
$$
\int_0^{\frac{1}{2}}\ln\sin(\pi t){\,}dt ~=~ -\frac{1}{2}\ln 2.
$$

\index{polygamma functions|)}

\section{The $q$-gamma function}
\index{$q$-gamma function|(}

For any $0<q<1$, the $q$-gamma function $\Gamma_q\colon\R_+\to\R_+$ is defined by the equation\label{p:qG53} (see, e.g., \cite[p.~490]{SriCho12})
\begin{equation}\label{eq:q1GS460}
\Gamma_q(x) ~=~ (1-q)^{1-x}\,\prod_{k=0}^{\infty}\frac{1-q^{k+1}}{1-q^{x+k}} ~=~ (1-q)^{1-x}\,\frac{(q;q)_{\infty}}{(q^x;q)_{\infty}}\qquad\text{for $x>0$}.
\end{equation}
Here we use the standard notation
$$
(a;q)_{\infty} ~=~ \prod_{k=0}^{\infty}\left(1-aq^k\right).
$$
Note that these functions should not to be confused with the multiple gamma functions discussed in Section~\ref{subsec:MLG-t} (although the same symbols are used).

The function $f_q(x)=\ln\Gamma_q(x)$ is a convex solution satisfying $f_q(1)=0$ to the equation $\Delta f_q=g_q$ on $\R_+$, where $g_q\colon\R_+\to\R$ is the function defined by the equation
$$
g_q(x) ~=~ \ln\frac{1-q^x}{1-q}\qquad\text{for $x>0$}.
$$
Since $g_q$ lies in $\cD^1\cap\cK^1$ (and $\deg g_q=0$), by the uniqueness theorem we must have
\begin{equation}\label{eq:q1GS46}
\ln\Gamma_q(x) ~=~ \Sigma g_q(x),\qquad x>0.
\end{equation}
Askey~\cite{Ask78} proved an analogue of the Bohr-Mollerup theorem for $\Gamma_q$. However, as Webster~\cite[p.~615]{Web97b} already observed, this is actually an immediate consequence of the uniqueness Theorem~\ref{thm:unic} in the special case when $p=1$.

Let us now investigate this function in the light of our results.

\begin{remark}
When $q>1$, the $q$-gamma function $\Gamma_q\colon\R_+\to\R_+$ is also defined by Eq.~\eqref{eq:q1GS46}. In this case, using L'Hospital's rule we can readily see that $\Delta g_q(x)\to\ln q$ as $x\to\infty$, and hence $\deg g_q=1$. An analogue of the Bohr-Mollerup characterization for $\Gamma_q$ was established by Moak~\cite{Moa80}. We can see now that this characterization is a trivial consequence of our uniqueness Theorem~\ref{thm:unic} in the special case when $p=2$. The complete analysis of $\Gamma_q$ through our results is similar to the case when $0<q<1$ and is left to the reader.
\end{remark}

\parag{ID card} As discussed above, the function $\Gamma_q$ is a $\Gamma$-type function and we immediately derive the following basic information.
$$
\begin{array}{|c|c|c|c|}
\hline g_q(x) & \text{Membership} & \deg g_q & \Sigma g_q(x) \\
\hline \emptybox \ln\frac{1-q^x}{1-q_{\mathstrut}} & \cC^{\infty}\cap\cD^1\cap\cK^{\infty} & 0 & \ln\Gamma_q(x) \\
\hline
\end{array}
$$

\parag{Analogue of Bohr-Mollerup's theorem} The $q$-gamma function can be characterized as follows.
\begin{quote}
\emph{All eventually convex or concave solutions $f_q\colon\R_+\to\R$ to the equation
$$
f_q(x+1)-f_q(x) ~=~ \ln\frac{1-q^x}{1-q}
$$
are of the form $f_q(x)=c_q+\ln\Gamma_q(x)$, where $c_q\in\R$.}
\end{quote}
Using Proposition~\ref{prop:90unic41}, we can also derive the following alternative characterization of the $q$-gamma function.
\begin{quote}
\emph{All solutions $f_q\colon\R_+\to\R$ to the equation
$$
f_q(x+1)-f_q(x) ~=~ \ln\frac{1-q^x}{1-q}
$$
that satisfy the asymptotic condition that, for each $x>0$,
$$
f_q(x+n)-f_q(n)-x\ln\frac{1-q^n}{1-q} ~\to ~0\qquad\text{as $n\to_{\N}\infty$}
$$
are of the form $f_q(x)=c_q+\ln\Gamma_q(x)$, where $c_q\in\R$.}
\end{quote}

\parag{Extended ID card} Interestingly, El Bachraoui~\cite{ElB17} recently established the following analogue of Raabe's formula
$$
\int_x^{x+1}\ln\Gamma_q(t){\,}dt ~=~ \left(\frac{1}{2}-x\right)\ln(1-q)-\frac{1}{\ln q}\,\mathrm{Li}_2(q^x)+\ln(q;q)_{\infty},\qquad x\geq 0,
$$
where
$$
\mathrm{Li}_s(z) ~=~ \sum_{k=1}^{\infty}\frac{z^k}{k^s}
$$
is the polylogarithm function.\index{polylogarithm function} This formula provides immediately the following values
\begin{eqnarray}
\overline{\sigma}[g_q] &=& \frac{1}{2}\ln(1-q)-\frac{\zeta(2)}{\ln q}+\ln(q;q)_{\infty}{\,},\label{eq:qGSb46}\\
\sigma[g_q] &=& -\frac{1}{2}\ln(1-q)-\frac{1}{\ln q}\,\mathrm{Li}_2(q)+\ln(q;q)_{\infty}{\,},\label{eq:qGS46}
\end{eqnarray}
and the integral
$$
\int_1^x g_q(t){\,}dt ~=~ (1-x)\ln(1-q)-\frac{1}{\ln q}\left(\mathrm{Li}_2(q^x)-\mathrm{Li}_2(q)\right).
$$
We then have the following values
$$
\begin{array}{|c|c|c|}
\hline \overline{\sigma}[g_q] & \sigma[g_q] & \gamma[g_q] \\
\hline \emptybox Eq.~\eqref{eq:qGSb46} & Eq.~\eqref{eq:qGS46} & \gamma[g_q]=\sigma[g_q] \\
\hline
\end{array}
$$

\begin{itemize}
\item \emph{Alternative representations of $\sigma[g_q]=\gamma[g_q]$}
\begin{eqnarray*}
\sigma[g_q] &=& \int_0^1\ln\Gamma_q(t+1){\,}dt{\,},\\
\sigma[g_q] &=& \log[q]\,\int_1^{\infty}\left(\frac{1}{2}-\{t\}\right)\frac{q^t}{1-q^t}{\,}dt{\,},\\
\sigma[g_q] &=& \int_1^{\infty}\ln\frac{(1-q^{\lfloor t\rfloor})^{1/2}(1-q^{\lfloor t+1\rfloor})^{1/2}}{1-q^t}{\,}dt{\,},\\
\sigma[g_q] &=& \frac{1}{2}\sum_{k=1}^{\infty}\ln\left((1-q^k)(1-q^{k+1})\right)-\frac{1}{\ln q}\,\mathrm{Li}_2(q){\,}.\\
\end{eqnarray*}

\item \emph{Generalized Binet's function}
\begin{eqnarray*}
J^2[\ln\circ\Gamma_q](x) &=& \ln\Gamma_q(x)+(x-1)\ln(1-q)+\frac{1}{\ln q}\,\mathrm{Li}_2(q^x)+\frac{1}{2}\ln(1-q^x)\\
&& \null -\ln(q;q)_{\infty}{\,}.
\end{eqnarray*}

\item \emph{Alternative characterization}. The function $f_q(x) = \ln\Gamma_q(x)$ is the unique solution lying in $\cC^0\cap\cK^1$ to the equation
$$
\int_x^{x+1}f_q(t){\,}dt ~=~ \left(\frac{1}{2}-x\right)\ln(1-q)-\frac{1}{\ln q}\,\mathrm{Li}_2(q^x)+\ln(q;q)_{\infty},\qquad x>0.
$$
\end{itemize}

\parag{Inequalities} The following inequalities hold for any $x>0$ and any $a\geq 0$.
\begin{itemize}
\item \emph{Symmetrized generalized Wendel's inequality} (equality if $a\in\{0,1\}$)
\begin{eqnarray*}
\left|\ln\Gamma_q(x+a)-\ln\Gamma_q(x)-a{\,}g_q(x)\right| &\leq & |a-1|{\,}\left|g_q(x+a)-g_q(x)\right|\\
&\leq & \lceil a\rceil{\,}|a-1|{\,}|\Delta g_q(x)|{\,},
\end{eqnarray*}
$$
\left(\frac{1-q^{x+a}}{1-q^x}\right)^{-|a-1|} \leq ~\frac{\Gamma_q(x+a)}{\Gamma_q(x)\left(\frac{1-q^x}{1-q}\right)^a} ~\leq ~ \left(\frac{1-q^{x+a}}{1-q^x}\right)^{|a-1|}.
$$

\item \emph{Symmetrized Stirling's formula-based inequality}
$$
|J^2[\ln\circ\Gamma_q](x)| ~\leq ~ \frac{1}{2}\left(g_q(x+1)-g_q(x)\right),
$$
$$
\left(\frac{1-q^{x+1}}{1-q^x}\right)^{-\frac{1}{2}} \leq ~ \frac{\Gamma_q(x){\,}(1-q)^{x-1}(1-q^x)^{\frac{1}{2}}}{(q;q)_{\infty}{\,}\exp\left(-\frac{1}{\ln q}\mathrm{Li}_2(q^x)\right)} ~\leq ~ \left(\frac{1-q^{x+1}}{1-q^x}\right)^{\frac{1}{2}}.
$$

\item \emph{Burnside's formula-based inequality}
\begin{multline*}
\left|\ln\Gamma_q\left(x+\frac{1}{2}\right)+\left(x-\frac{1}{2}\right)\ln(1-q)+\frac{1}{\ln q}\,\mathrm{Li}_2(q^x)-\ln(q;q)_{\infty}\right|\\
\leq ~ |J^2[\ln\circ\Gamma_q](x)|.
\end{multline*}

\item \emph{Generalized Gautschi's inequality}
$$
e^{(a-\lceil a\rceil)\,\psi_{q,0}(x+\lceil a\rceil)} ~\leq ~ \frac{\Gamma_q(x+a)}{\Gamma_q(x+\lceil a\rceil)} ~\leq ~ \left(\frac{1-q^{x+\lfloor a\rfloor}}{1-q}\right)^{a-\lceil a\rceil},
$$
where $\psi_{q,0}(x)=D\ln\Gamma_q(x)$.
\end{itemize}

\parag{Generalized Stirling's and related formulas} For any $a\geq 0$, we have the following limits and
asymptotic equivalences as $x\to\infty$,
$$
\ln\Gamma_q(x+a)-\ln\Gamma_q(x) ~\to ~ -a\ln(1-q),
$$
$$
\frac{\Gamma_q(x)}{\Gamma_q(x+a)} ~\sim ~ (1-q)^a,\qquad \ln\Gamma_q(x+a) ~\sim ~ -x\ln(1-q){\,},
$$
$$
\ln\Gamma_q(x)+(x-1)\ln(1-q)-\ln(q;q)_{\infty} ~\to ~ 0{\,},
$$
$$
\Gamma_q(x) ~\sim ~ (q;q)_{\infty}{\,}(1-q)^{1-x}{\,}.
$$
The generalized Stirling formula simply shows that $\ln\Gamma_q(x)$ has the oblique asymptote
$$
y ~=~ (1-x)\ln(1-q)+\ln(q;q)_{\infty}.
$$
\emph{Burnside-like approximation} (better than Stirling-like approximation)
$$
\Gamma_q(x) ~\sim ~ (q;q)_{\infty}{\,}(1-q)^{1-x}\,\exp\left(-\frac{1}{\ln q}\mathrm{Li}_2(q^{x-\frac{1}{2}})\right).
$$
\emph{Further results} (obtained by differentiation). For any $0<q<1$ and any $\nu\in\N$, let the function $\psi_{q,\nu}\colon\R_+\to\R$ denote the $q$-polygamma function defined by the equation
$$
\psi_{q,\nu}(x) ~=~ D^{\nu +1}\ln\Gamma_q(x)\qquad\text{for $x>0$}.
$$
We then have the following limits and asymptotic equivalences as $x\to\infty$,
$$
\psi_{q,0}(x+a)-\psi_{q,0}(x) ~\to ~ 0,\qquad \psi_{q,0}(x) ~\to ~ -\ln(1-q){\,},
$$
$$
\psi_{q,0}(x+a) ~\sim ~ -\ln(1-q),\qquad\psi_{q,\nu}(x) ~\to ~ 0,\qquad\nu\in\N^*.
$$

\begin{project}
\textsl{Find the generalized Stirling formula when $q>1$}. In the case when $q>1$, we have $\deg g_q=1$ and hence the generalized Stirling formula is
$$
\ln\Gamma_q(x)-\int_x^{x+1}\ln\Gamma_q(t){\,}dt+\frac{1}{2}{\,}g_q(x)-\frac{1}{12}\Delta g_q(x) ~\to ~ 0\qquad\text{as $x\to\infty$},
$$
where $\Delta g_q(x)\to\ln q$ as $x\to\infty$. However, here the integral takes the following more complicated form (see El Bachraoui \cite{ElB17} and the references therein)
\begin{eqnarray*}
\int_x^{x+1}\ln\Gamma_q(t){\,}dt &=& \ln C_q-\frac{1}{2q^x\ln q}\Bigg(\frac{1-q^x}{1-q^{-x}}{\,}(2\,\mathrm{Li}_2(q^{-x})+(\ln(1-q^{-x}))^2)\\
&& \null - 2\,\frac{1-q^x}{1-q^{-x}}\ln\frac{1-q^x}{1-q}\ln(1-q^{-x})-q^x\left(\ln\frac{1-q^x}{1-q}\right)^2\Bigg)
\end{eqnarray*}
where
$$
C_q ~=~ q^{-\frac{1}{12}}(q-1)^{\frac{1}{2}-\frac{\ln(q-1)}{2\ln q}}(q^{-1};q^{-1})_{\infty}{\,}.
$$
This is the analogue of Raabe's formula for $\ln\Gamma_q(x)$ when $q>1$.
\end{project}

\parag{Asymptotic expansions} For any $m,r\in\N^*$ we have the following expansion as $x\to\infty$
\begin{eqnarray*}
\frac{1}{m}\sum_{j=0}^{m-1}\ln\Gamma_q\left(x+\frac{j}{m}\right) &=& \left(\frac{1}{2}-x\right)\ln(1-q)-\frac{1}{\ln q}\,\mathrm{Li}_2(q^x)+\ln(q;q)_{\infty}\\
&& \null + \sum_{k=1}^r\frac{B_k}{m^k{\,}k!}{\,}g_q^{(k-1)}(x)+O\left(g_q^{(r)}(x)\right).
\end{eqnarray*}
Setting $m=1$ in this formula, we obtain the expansion of the log-$q$-gamma function
\begin{eqnarray*}
\ln\Gamma_q(x) &=& \left(\frac{1}{2}-x\right)\ln(1-q)-\frac{1}{\ln q}\,\mathrm{Li}_2(q^x)+\ln(q;q)_{\infty}\\
&& \null + \sum_{k=1}^r\frac{B_k}{k!}{\,}g_q^{(k-1)}(x)+O\left(g_q^{(r)}(x)\right).
\end{eqnarray*}

\parag{Generalized Liu's formula} For any $x>0$, we have
\begin{eqnarray*}
\ln\Gamma_q(x) &=& \left(\frac{1}{2}-x\right)\ln(1-q)-\frac{1}{\ln q}\,\mathrm{Li}_2(q^x)+\ln(q;q)_{\infty}\\
&& \null -\frac{1}{2}\ln\frac{1-q^x}{1-q}+(\ln q)\int_0^{\infty}\left(\{t\}-\frac{1}{2}\right)\,\frac{q^{x+t}}{1-q^{x+t}}{\,}dt.
\end{eqnarray*}

\parag{Limit and series representations} It is not difficult to see that both the Eulerian form of $\Sigma g_q(x)$ and the analogue of Gauss's limit reduce to the definition of the $q$-gamma function given in Eq.~\eqref{eq:q1GS460}. Let us now examine the other series representations.
\begin{itemize}
\item \emph{Weierstrassian form}. For any $x>0$, we have
$$
\ln\Gamma_q(x) ~=~ -\ln\frac{1-q^x}{1-q}+\psi_{q,0}(1){\,}x-\sum_{k=1}^{\infty}\left(\ln\frac{1-q^{x+k}}{1-q^k}+(\ln q)\,\frac{q^k}{1-q^k}{\,}x\right).
$$
Differentiating this series term by term, we obtain
$$
\psi_{q,0}(x) ~=~ (\ln q)\,\frac{q^x}{1-q^x}+\psi_{q,0}(1)+(\ln q)\sum_{k=1}^{\infty}\left(\frac{1}{1-q^{x+k}}-\frac{1}{1-q^k}\right).
$$

\item \emph{Gregory's formula-based series representation}. For any $x>0$ we have the series representation
\begin{eqnarray*}
\ln\Gamma_q(x) &=& \left(\frac{1}{2}-x\right)\ln(1-q)-\frac{1}{\ln q}\,\mathrm{Li}_2(q^x)+\ln(q;q)_{\infty}\\
&& \null -\sum_{n=0}^{\infty}|G_{n+1}|\,\sum_{k=0}^n(-1)^k\tchoose{n}{k}g_q(x+k).
\end{eqnarray*}
Setting $x=1$ in this identity yields the following analogue of Fontana-Mascheroni series
$$
\sum_{n=0}^{\infty}|G_{n+1}|\,\sum_{k=0}^n(-1)^k\tchoose{n}{k}g_q(k+1) ~=~ -\frac{1}{2}\ln(1-q)-\frac{1}{\ln q}\,\mathrm{Li}_2(q)+\ln(q;q)_{\infty}.
$$
\end{itemize}

\parag{Analogue of Gauss' multiplication formula} After first noting that
$$
g_q\left(\frac{x}{m}\right) ~=~ g_{q^{\frac{1}{m}}}(x)+g_q\left(\frac{1}{m}\right),\qquad x>0,
$$
we immediately obtain the following identity
$$
\sum_{j=0}^{m-1}\ln\Gamma_q\left(x+\frac{j}{m}\right) ~=~ \sum_{j=1}^m\ln\Gamma_q\left(\frac{j}{m}\right)+\ln\Gamma_{q^{\frac{1}{m}}}(mx)+(mx-1){\,}g_q\left(\frac{1}{m}\right).
$$
Now, using Proposition~\ref{prop:CalcSum662}, we also obtain
$$
\sum_{j=1}^m\ln\Gamma_q\left(\frac{j}{m}\right) ~=~ \frac{m-1}{2}\ln(1-q)+m\ln(q;q)_{\infty}-\ln\left(q^{\frac{1}{m}};q^{\frac{1}{m}}\right)_{\infty}.
$$
Thus, we get the following multiplication formula
$$
\prod_{j=0}^{m-1}\Gamma_q\left(x+\frac{j}{m}\right) ~=~ (1-q)^{\frac{m-1}{2}}\frac{(q;q)^m_{\infty}}{\left(q^{\frac{1}{m}};q^{\frac{1}{m}}\right)_{\infty}}
~\Gamma_{q^{\frac{1}{m}}}(mx)\left(\frac{1-q^{\frac{1}{m}}}{1-q}\right)^{mx-1}{\,},
$$
or equivalently, replacing $q$ with $q^m$,
$$
\prod_{j=0}^{m-1}\Gamma_{q^m}\left(x+\frac{j}{m}\right) ~=~ (1-q^m)^{\frac{m-1}{2}}\frac{\left(q^m;q^m\right)^m_{\infty}}{(q;q)_{\infty}}
~\Gamma_q(mx)\left(\frac{1-q}{1-q^m}\right)^{mx-1}{\,}.
$$
(See also, e.g., Srivastava and Choi \cite[p.~494]{SriCho12} and Webster \cite[p.~617]{Web97b}.) For instance, when $m=2$, we obtain the following analogue of Legendre's duplication formula
$$
\Gamma_{q^2}(x)\,\Gamma_{q^2}\left(x+\frac{1}{2}\right) ~=~ (1-q^2)^{\frac{1}{2}}\frac{\left(q^2;q^2\right)^2_{\infty}}{(q;q)_{\infty}}~\frac{\Gamma_q(2x)}{(1+q)^{2x-1}}{\,}.
$$

\parag{Analogue of Wallis's product formula} Using Proposition~\ref{prop:Wallis92} with
$$
\tilde{g}_q(x) ~=~ 2g_q(2x) ~=~ 2(g_{q^2}(x)+g_q(2)),
$$
we obtain
\begin{eqnarray*}
h(n) &=& \Sigma\tilde{g}_q(n+1)-\Sigma g_q(2n+1)\\
&=& 2\ln\Gamma_{q^2}(n+1)+2g_2(2)n-\ln\Gamma_q(2n+1).
\end{eqnarray*}
Using the generalized Stirling formula, we then have
$$
\lim_{n\to\infty}h(n) ~=~ 2\ln(q^2;q^2)_{\infty}-\ln(q;q)_{\infty}.
$$
Finally, we obtain the following analogue of Wallis's formula
$$
\lim_{n\to\infty}\sum_{k=1}^{2n}(-1)^{k-1}\ln\frac{1-q^k}{1-q} ~=~ \ln\frac{(q;q)_{\infty}}{(q^2;q^2)^2_{\infty}}{\,}.
$$

\parag{Generalized Webster's functional equation} For any $m\in\N^*$ and any $a>0$, there is a unique solution $f\colon\R_+\to\R_+$ to the equation
$$
\prod_{j=0}^{m-1}f(x+a j) ~=~ \frac{1-q^x}{1-q}
$$
such that $\ln f$ lies in $\cK^0$ (or in $\cK^1$), namely
$$
f(x) ~=~ \frac{\Gamma_{q^{am}}(\frac{x+a}{am})}{\Gamma_{q^{am}}(\frac{x}{am})}\left(\frac{1-q^{am}}{1-q}\right)^{\frac{1}{m}}.
$$

\index{$q$-gamma function|)}

\section{The Barnes $G$-function}
\label{sec:Barnes558}\index{Barnes's $G$-function|(}

The Barnes function $G\colon\R_+\to\R_+$ is the function $G=1/\Gamma_2$ as defined in Section~\ref{subsec:MLG-t}. Hence, it can be defined by the equations
$$
\ln G(x) ~=~ \Sigma\ln\Gamma(x) ~=~ \Sigma\psi_{-1}(x)\qquad\text{for $x>0$}.
$$

\parag{ID card} We have the following basic information about the Barnes $G$-function:
$$
\begin{array}{|c|c|c|c|}
\hline g(x) & \text{Membership} & \deg g & \Sigma g(x) \\
\hline \emptybox \ln\Gamma(x) & \cC^{\infty}\cap\cD^2\cap\cK^{\infty} & 1 & \ln G(x)\\
\hline
\end{array}
$$

\parag{Analogue of Bohr-Mollerup's theorem} The function $G$ can be characterized in the multiplicative notation as follows.
\begin{quote}
\emph{All solutions $f\colon\R_+\to\R_+$ to the equation $f(x+1)=\Gamma(x) f(x)$ for which $\ln f$ lies in $\cK^2$ are of the form $f(x)=c{\,}G(x)$, where $c>0$}.
\end{quote}
Interestingly, this characterization enables one to establish the following identity
\begin{equation}\label{eq:convGPSI}
\ln G(x) ~=~ -\tchoose{x}{2}+(x-1)\ln\Gamma(x)+\textstyle{\frac{1}{2}}\ln(2\pi){\,}x-\psi_{-2}(x).
\end{equation}
Indeed, both sides vanish at $x=1$ and are eventually $2$-convex solutions to the equation
$$
f(x+1)-f(x) ~=~ \ln\Gamma(x).
$$
Hence, they must coincide on $\R_+$.

Using Proposition~\ref{prop:90unic41}, we can also derive the following alternative characterization of the Barnes $G$-function.
\begin{quote}
\emph{All solutions $f\colon\R_+\to\R_+$ to the equation $f(x+1)=\Gamma(x) f(x)$ that satisfy the asymptotic condition that, for each $x>0$,
$$
f(x+n) ~\sim ~ \Gamma(n)^x{\,}n^{{x\choose 2}} f(n)\qquad\text{as $n\to_{\N}\infty$}
$$
are of the form $f(x)=c{\,}G(x)$, where $c>0$}.
\end{quote}

\parag{Extended ID card} The value of the asymptotic constant $\sigma[g]$ can be derived for instance from identity \eqref{eq:convGPSI}. One can show that (see, e.g., \cite[p.~53]{SriCho12})
$$
\sigma[g] ~=~ \int_0^1\ln G(t+1){\,}dt ~=~ \frac{1}{12}+\frac{1}{4}\ln(2\pi)-2\ln A ~\approx ~ 0.045.
$$
We then have the following values:
$$
\begin{array}{|c|c|c|}
\hline \overline{\sigma}[g] & \sigma[g] & \gamma[g] \\
\hline \emptybox \frac{1}{12}-\frac{1}{4}\ln(2\pi)-2\ln A & \frac{1}{12}+\frac{1}{4}\ln(2\pi)-2\ln A & \gamma[g]=\sigma[g] \\
\hline
\end{array}
$$

\begin{itemize}
\item \emph{Inequality}
$$
|\sigma[g]| ~\leq ~ \frac{7}{3}\ln 2-\frac{109}{72} ~\approx ~ 0.10{\,}.
$$

\item \emph{Alternative representations of $\sigma[g]=\gamma[g]$}
\begin{eqnarray*}
\sigma[g] &=& \frac{1}{2}\ln(2\pi)+\lim_{n\to\infty}\left(\sum_{k=1}^n\ln\Gamma(k)
-\psi_{-2}(n)-\frac{1}{2}\ln\Gamma(n)-\frac{1}{12}\ln n\right), \\
\sigma[g] &=& \frac{1}{2}\ln(2\pi)+\lim_{n\to\infty}\left(\sum_{k=1}^n\ln\Gamma(k)
-\psi_{-2}(n)-\frac{1}{2}\ln\Gamma(n)-\frac{1}{12}\,\psi(n)\right), \\
\sigma[g] &=& \int_1^{\infty}\left(\ln\frac{\Gamma(\lfloor t\rfloor)}{\Gamma(t)}+\{t\}\ln\lfloor t\rfloor+{\{t\}\choose 2}\ln\left(1+\frac{1}{\lfloor t\rfloor}\right)\right)dt{\,},\\
\sigma[g] &=& \int_1^{\infty}\left(\ln\frac{\Gamma(\lfloor t\rfloor)}{\Gamma(t)}+\ln\frac{\lfloor t\rfloor^{7/12}}{\lfloor t+1\rfloor^{1/12}}\right)dt{\,},\\
\sigma[g] &=& \frac{1}{12}\,\gamma -\frac{1}{2}\,\int_1^{\infty}B_2(\{t\})\,\psi_1(t){\,}dt{\,},\\
\sigma[g] &=& \ln\left(\prod_{k=1}^{\infty}\frac{\Gamma(k){\,}e^k{\,}\sqrt{k}}{\left(1+\frac{1}{k}\right)^{\frac{1}{12}}k^k{\,}\sqrt{2\pi}}\right).
\end{eqnarray*}

\item \emph{Generalized Binet's function}. For any $q\in\N$ and any $x>0$
$$
J^{q+1}[\ln\circ G](x) ~=~ \ln G(x)-\psi_{-2}(x)-\overline{\sigma}[g]+\sum_{j=1}^qG_j\,\Delta^{j-1}\ln\Gamma(x).
$$
For instance,
$$
J^3[\ln\circ G](x) ~=~ \ln G(x)-\psi_{-2}(x)-\overline{\sigma}[g]+\frac{1}{2}\ln\Gamma(x)-\frac{1}{12}\ln x.
$$

\item \emph{Analogue of Raabe's formula}
\begin{equation}\label{eq:RG723}
\int_x^{x+1}\ln G(t){\,}dt ~=~ \overline{\sigma}[g]+\psi_{-2}(x){\,},\qquad x>0.
\end{equation}

\item \emph{Alternative characterization}. The function $f(x)=\ln G(x)$ is the unique solution lying in $\cC^0\cap\cK^2$ to the equation
$$
\int_x^{x+1}f(t){\,}dt ~=~ \overline{\sigma}[g]+\psi_{-2}(x){\,}, \qquad x>0.
$$
\end{itemize}

\begin{project}\label{appl:AntiDLogG4}
\textsl{Find a closed-form expression for the integral}
$$
\int_1^x\ln G(t){\,}dt.
$$
We apply Proposition~\ref{prop:an4tR8}. Using \eqref{eq:RG723} and then \eqref{eq:S2psi2} we obtain
\begin{eqnarray*}
\int_1^x\ln G(t){\,}dt &=& \Sigma_x\int_x^{x+1}\ln G(t){\,}dt ~=~ \overline{\sigma}[g]{\,}(x-1)+\Sigma\psi_{-2}(x)\\
&=& 2\ln A+\frac{1}{4}{\,}(x^2+1)\ln(2\pi)-\frac{1}{12}{\,}(2x+1)(x-1)^2\\
&& \null +(x-1)\,\psi_{-2}(x)-2\,\psi_{-3}(x){\,}.
\end{eqnarray*}
This expression could have been obtained also by integrating both sides of \eqref{eq:convGPSI}.
\end{project}

\parag{Inequalities} The following inequalities hold for any $x>0$, any $a\geq 0$, and any $n\in\N^*$.
\begin{itemize}
\item \emph{Symmetrized generalized Wendel's inequality} (equality if $a\in\{0,1,2\}$)
$$
\left|\ln G(x+a)-\ln G(x)-a\ln\Gamma(x)-\tchoose{a}{2}\ln x\right| ~\leq ~ \left|\tchoose{a-1}{2}\right|\,\ln\left(1+\frac{a}{x}\right),
$$
$$
\left(1+\frac{a}{x}\right)^{-\left|{a-1\choose 2}\right|} ~\leq ~ \frac{G(x+a)}{G(x)\,\Gamma(x)^a{\,}x^{a\choose 2}} ~\leq ~ \left(1+\frac{a}{x}\right)^{\left|{a-1\choose 2}\right|}.
$$

\item \emph{Symmetrized generalized Wendel's inequality} (discrete version)
\begin{multline*}
\left|\ln G(x)-\sum_{k=1}^{n-1}\ln\Gamma(k)+\sum_{k=0}^{n-1}\ln\Gamma(x+k)-x\ln\Gamma(n)-\tchoose{x}{2}\ln n\right|\\
\leq ~ \left|\tchoose{x-1}{2}\right|\,\ln\left(1+\frac{x}{n}\right),
\end{multline*}
$$
\left(1+\frac{x}{n}\right)^{-\left|{x-1\choose 2}\right|} ~\leq ~ G(x){\,}\frac{\Gamma(x)\Gamma(x+1)\cdots\Gamma(x+n-1)}{\Gamma(1)\Gamma(2)\cdots\Gamma(n-1)\Gamma(n)^x n^{{x\choose 2}}}~\leq ~ \left(1+\frac{x}{n}\right)^{\left|{x-1\choose 2}\right|}.
$$

\item \emph{Symmetrized Stirling's formula-based inequality}
\begin{eqnarray*}
\left|J^3[\ln\circ G](x)\right|
&\leq & \frac{1}{12}(x+1)^2(2x+5)\ln\left(1+\frac{1}{x}\right)-\frac{1}{72}(12x^2+48x+49)\\
&\leq & \frac{5}{12}\ln\left(1+\frac{1}{x}\right),
\end{eqnarray*}
$$
\left(1+\frac{1}{x}\right)^{-5/12} \leq ~ \frac{G(x)\,\Gamma(x)^{1/2}}{x^{1/12}{\,}e^{\psi_{-2}(x)+\overline{\sigma}[g]}} ~\leq ~ \left(1+\frac{1}{x}\right)^{5/12}.
$$

\item \emph{Generalized Gautschi's inequality}
$$
\Gamma(x+\lceil a\rceil)^{a-\lceil a\rceil} ~\leq ~ e^{(a-\lceil a\rceil)D\ln G(x+\lceil a\rceil)} ~\leq ~\frac{G(x+a)}{G(x+\lceil a\rceil)} ~\leq ~ \Gamma(x+\lceil a\rceil)^{a-\lfloor a\rfloor}.
$$
(These inequalities are valid only if $x+\lfloor a\rfloor\geq x_0$, where $x_0=1.92\ldots$ is the unique positive zero of the function $D^2\ln G(x)$.)
\end{itemize}

\begin{remark}
It is not difficult to see that the first inequality in Proposition~\ref{prop:Burnside0} does not hold for large values of $x$ when $g(x)=\ln\Gamma(x)$. This shows that the analogue of Burnside's formula does not hold in general when $\deg g\geq 1$.
\end{remark}

\parag{Generalized Stirling's and related formulas} For any $a\geq 0$, we have the following limits and asymptotic equivalences as $x\to\infty$,
$$
\ln G(x+a)-\ln G(x)-a\ln\Gamma(x)-\tchoose{a}{2}\ln x ~\to ~0,
$$
$$
\ln G(x)-\psi_{-2}(x)+\frac{1}{2}\ln\Gamma(x)-\frac{1}{12}\ln x ~\to ~ \overline{\sigma}[g],
$$
$$
\ln G(x)-\psi_{-2}(x)+\frac{1}{2}\ln\Gamma(x)-\frac{1}{12}\,\psi(x) ~\to ~ \overline{\sigma}[g],
$$
$$
G(x+a) ~\sim ~ G(x)\,\Gamma(x)^a{\,}x^{{a\choose 2}},\qquad \ln G(x+a) ~\sim ~ \psi_{-2}(x),
$$
$$
G(x) ~\sim ~ \exp(\psi_{-2}(x)+\overline{\sigma}[g]){\,}\Gamma(x)^{-\frac{1}{2}}{\,}x^{\frac{1}{12}}.
$$
\emph{Further results} (obtained by differentiation)
$$
x\,\psi(x+a)-x\,\psi(x) ~\to ~a,\quad x{\,}\psi_1(x) ~\to ~1,\quad x\,\psi(x+a) ~\sim ~ \ln\Gamma(x),
$$
$$
\ln\Gamma(x)-\left(x-\frac{1}{2}\right)\psi(x)+x ~\to ~ \frac{1}{2}{\,}(1+\ln(2\pi)).
$$

\begin{remark}
Using one of the asymptotic equivalences above, we get
$$
G(x+1) ~\sim ~ \exp(\psi_{-2}(x)+\overline{\sigma}[g]){\,}\Gamma(x)^{\frac{1}{2}}{\,}x^{\frac{1}{12}}\qquad\text{as $x\to\infty$}.
$$
Combining this latter equivalence with identity \eqref{eq:convGPSI} and the Stirling formula for the gamma function, we also obtain the following simpler form
$$
G(x+1) ~\sim ~ A^{-1}{\,}x^{\frac{1}{2}x^2-\frac{1}{12}}(2\pi)^{\frac{x}{2}}{\,}e^{-\frac{3}{4}x^2+\frac{1}{12}}\qquad\text{as $x\to\infty$}.\qedhere
$$
\end{remark}

\parag{Asymptotic expansions} For any $m,q\in\N^*$ we have the following expansion as $x\to\infty$
\begin{equation}\label{eq:10Asym54GBa2}
\frac{1}{m}\sum_{j=0}^{m-1}\ln G\left(x+\frac{j}{m}\right) ~=~ \overline{\sigma}[g]+ \sum_{k=0}^q\frac{B_k}{m^k{\,}k!}{\,}\psi_{k-2}(x)+O(\psi_{q-1}(x)){\,}.
\end{equation}
Setting $m=1$ in this formula, we obtain
$$
\ln G(x) ~=~ \overline{\sigma}[g]+ \sum_{k=0}^q\frac{B_k}{k!}{\,}\psi_{k-2}(x)+O(\psi_{q-1}(x)){\,},
$$
or equivalently, if $q\geq 2$,
$$
J^3[\ln\circ G](x) ~=~ \frac{1}{12}\left(\psi(x)-\ln x\right)+ \sum_{k=3}^q\frac{B_k}{k!}{\,}\psi_{k-2}(x)+O(\psi_{q-1}(x)){\,}.
$$
Setting $q=4$ for instance, we obtain the following expansion
$$
\ln G(x) ~=~ \overline{\sigma}[g]+\psi_{-2}(x)-\frac{1}{2}\,\psi_{-1}(x)+\frac{1}{12}\,\psi(x)
-\frac{1}{720}\,\psi_2(x)+O\left(x^{-4}\right).
$$

\parag{Generalized Liu's formula} For any $x>0$ we have
$$
\ln G(x) ~=~ \overline{\sigma}[g]+\psi_{-2}(x)-\frac{1}{2}\,\psi_{-1}(x)
+\frac{1}{12}\,\psi(x)+\frac{1}{2}\,\int_0^{\infty}B_2(\{t\})\,\psi_1(x+t){\,}dt
$$
or equivalently,
$$
J^3[\ln\circ G](x) ~=~ \frac{1}{12}\left(\psi(x)-\ln x\right)+\frac{1}{2}\,\int_0^{\infty}B_2(\{t\})\,\psi_1(x+t){\,}dt.
$$

\parag{Limit, series, and integral representations} Let us now determine the main representations of the function $\ln G(x)$.
\begin{itemize}
\item \emph{Eulerian form and related identities}. We have
$$
\ln G(x) ~=~ -\ln\Gamma(x)-\sum_{k=1}^{\infty}\left(\ln\Gamma(x+k)-\ln\Gamma(k)-x\ln k-\tchoose{x}{2}\ln\left(1+\frac{1}{k}\right)\right),
$$
$$
G(x) ~=~ \frac{1}{\Gamma(x)}{\,}
\prod_{k=1}^{\infty}\frac{\Gamma(k)}{\Gamma(x+k)}{\,}k^x(1+1/k)^{{x\choose 2}}.
$$
Upon differentiation, we obtain
$$
x\,\psi(x) ~=~ x-\frac{1}{2}{\,}(1+\ln(2\pi))-\sum_{k=1}^{\infty}\left(\psi(x+k)-\ln k-\left(x-\frac{1}{2}\right)\ln\left(1+\frac{1}{k}\right)\right),
$$
$$
\psi(x)+x\,\psi_1(x) ~=~ 1-\sum_{k=1}^{\infty}\left(\psi_1(x+k)-\ln\left(1+\frac{1}{k}\right)\right),
$$
$$
(r+1)\,\psi_r(x)+x\,\psi_{r+1}(x) ~=~ -\sum_{k=1}^{\infty}\psi_{r+1}(x+k),\qquad r\in\N^*.
$$

\item \emph{Weierstrassian form and related identities}. We have
\begin{multline*}
\ln G(x) ~=~ (-1-\gamma)\tchoose{x}{2}-\ln\Gamma(x)\\
\null - \sum_{k=1}^{\infty}\left(\ln\Gamma(x+k)-\ln\Gamma(k)-x\ln k-\tchoose{x}{2}\,\psi_1(k)\right),
\end{multline*}
$$
G(x) ~=~ \frac{e^{(-\gamma -1){x\choose 2}}}{\Gamma(x)}{\,}
\prod_{k=1}^{\infty}\frac{\Gamma(k)}{\Gamma(x+k)}{\,}k^xe^{\psi_{1}(k){\,}{x\choose 2}}{\,},
$$
Upon differentiation, we obtain
$$
x\,\psi(x) +\left(x-\frac{1}{2}\right)\gamma +\frac{1}{2}\ln(2\pi) ~=~ -\sum_{k=1}^{\infty}\left(\psi(x+k)-\left(x-\frac{1}{2}\right)\psi_1(k)-\ln k\right),
$$
$$
\psi(x)+x\,\psi_1(x)+\gamma ~=~ -\sum_{k=1}^{\infty}\left(\psi_1(x+k)-\psi_1(k)\right).
$$

\item \emph{Analogue of Gauss' limit and related identities}. The analogue of Gauss' limit is
$$
\ln G(x) ~=~ \lim_{n\to\infty}\left(\sum_{k=1}^{n-1}\ln\Gamma(k)-\sum_{k=0}^{n-1}\ln\Gamma(x+k)+x\ln\Gamma(n)+\tchoose{x}{2}\ln n\right),
$$
$$
G(x) ~=~ \lim_{n\to\infty}\frac{\Gamma(1)\Gamma(2){\,}\cdots{\,}\Gamma(n)}{\Gamma(x)\Gamma(x+1){\,}\cdots{\,}\Gamma(x+n)}{\,}
n!^x{\,}n^{{x\choose 2}}{\,}.
$$
Upon differentiation, we obtain
\begin{multline*}
(x-1)\,\psi(x)-x+\frac{1}{2}(1+\ln(2\pi))\\
=~ \lim_{n\to\infty}\left(-\sum_{k=0}^{n-1}\psi(x+k)+\ln\Gamma(n)+\left(x-\frac{1}{2}\right)\ln n\right),
\end{multline*}
$$
(x-1)\,\psi_1(x)+\psi(x)-1 ~=~ \lim_{n\to\infty}\left(\ln n-\sum_{k=0}^{n-1}\psi_1(x+t)\right).
$$

\item \emph{Integral representations}. Using the elevator method\index{elevator method} on one and two levels, we obtain the following representations
$$
\ln G(x) ~=~ -\frac{1}{2}{\,}(x-1)(x-\ln(2\pi))+\int_1^x(t-1)\,\psi(t){\,}dt
$$
and
$$
\ln G(x) ~=~ -\frac{1}{2}{\,}(x-1)(x-\ln(2\pi))+\int_1^x(x-t)(\psi(t)+(t-1)\,\psi_1(t)){\,}dt.
$$
Each of these representations actually leads to identity \eqref{eq:convGPSI}.

\item \emph{Gregory's formula-based series representation}. For any $x>0$ we have the series
representation
\begin{eqnarray*}
\ln G(x) &=& \psi_{-2}(x)+\overline{\sigma}[g]-\frac{1}{2}\ln\Gamma(x)-\sum_{n=0}^{\infty}G_{n+2}\Delta^{n+1}g(x)\\
\\
&=& \psi_{-2}(x)+\overline{\sigma}[g]-\frac{1}{2}\ln\Gamma(x)
-\sum_{n=0}^{\infty}|G_{n+2}|\,\sum_{k=0}^n(-1)^k\tchoose{n}{k}{\,}\ln(x+k).
\end{eqnarray*}
Setting $x=1$ in this identity yields the analogue of Fontana-Mascheroni series
$$
\overline{\sigma}[g] ~=~ -\frac{1}{2}\ln(2\pi)+\sum_{n=0}^{\infty}|G_{n+2}|\,\sum_{k=0}^n(-1)^k\tchoose{n}{k}{\,}\ln(k+1).
$$
\end{itemize}

Note that the Eulerian and Weierstrassian forms above can also be integrated term by term on any bounded interval of $[0,\infty)$. For instance, integrating on $(1,x)$ provides series representations for the integral of $\ln G(x)$ as defined in Project~\ref{appl:AntiDLogG4}.

\parag{Analogue of Gauss' multiplication formula} For any $m\in\N^*$ and any $x>0$, we have
$$
\sum_{j=0}^{m-1}\ln G\left(\frac{x+j}{m}\right) ~=~ \sum_{j=1}^m\ln G\left(\frac{j}{m}\right)+\Sigma_x\ln\Gamma\left(\frac{x}{m}\right).
$$
For instance, setting $m=2$ in this identity, we obtain
$$
\ln G\left(\frac{x+1}{2}\right)+\ln G\left(\frac{x}{2}\right) ~=~ \ln G\left(\frac{1}{2}\right) + \Sigma_x\ln\Gamma\left(\frac{x}{2}\right).
$$
However, to make this multiplication formula interesting and usable, we need to find a simple expression for its right side. In particular, we need a closed-form expression for the function $\Sigma_x\ln\Gamma(\frac{x}{m})$. Such a result would be most welcome.

We can nevertheless investigate the asymptotic behavior of the function
$$
x ~\mapsto ~ \sum_{j=0}^{m-1}\ln G\left(\frac{x+j}{m}\right).
$$
In addition to the asymptotic expansion given in \eqref{eq:10Asym54GBa2}, Proposition~\ref{prop:8Stir44Gau7Mult} yields the following convergence result. We have
\begin{multline*}
\sum_{j=0}^{m-1}\ln G\left(\frac{x+j}{m}\right)-m\,\psi_{-2}\left(\frac{x}{m}\right)+\frac{1}{2}\,\ln\Gamma\left(\frac{x}{m}\right)\\
-\frac{1}{12}\left(\ln\Gamma\left(\frac{x+1}{m}\right)-\ln\Gamma\left(\frac{x}{m}\right)\right) ~\to ~ m\,\overline{\sigma}[g]\qquad\text{as $x\to\infty$}.
\end{multline*}

\parag{Analogue of Wallis's product formula} Using Legendre's duplication formula for the gamma function, we obtain
\begin{eqnarray*}
\Sigma_x\ln\Gamma(2x) &=& \textstyle{\ln G(x)+\ln G(x+\frac{1}{2})-\ln G(\frac{1}{2})}\\
&& \null +\textstyle{(x^2+1)\ln 2-\frac{x}{2}\ln(16\pi)}.
\end{eqnarray*}
Using this identity with Proposition~\ref{prop:Wallis92}, we can derive the surprising analogue of Wallis's formula
$$
\lim_{n\to\infty}\frac{\Gamma(1)\Gamma(3){\,}\cdots{\,}\Gamma(2n-1)}{\Gamma(2)\Gamma(4){\,}\cdots{\,}\Gamma(2n)}\left(\frac{2n}{e}\right)^n ~=~ \frac{1}{\sqrt{2}}{\,}.
$$
Note that a shorter proof of this formula can be obtained using the second sequence described in Remark~\ref{rem:Wall38}.

\begin{project}
\textsl{Find the analogue of Wallis's formula for the function $g(x)=\ln G(x)$.} After some algebra, we obtain
$$
\lim_{n\to\infty}\frac{G(1)G(3){\,}\cdots{\,}G(2n-1)}{G(2)G(4){\,}\cdots{\,}G(2n)}\,\frac{n^{n^2-\frac{1}{2}n
-\frac{1}{24}}{\,}2^{n^2-\frac{7}{24}}\,\pi^{\frac{1}{2}n}}{e^{\frac{3}{2}n^2-\frac{1}{2}n-\frac{1}{24}}} ~=~ A^{\frac{1}{2}}{\,}.
$$
This latter formula is a little harder to obtain than the former one. Using Proposition~\ref{prop:Wallis92} requires the computation of both functions $\Sigma\ln G(x)$ and $2\,\Sigma_x\ln G(2x)$ using the elevator method\index{elevator method} (Corollary~\ref{cor:saf6f}) with $r=1$. That is,
\begin{eqnarray*}
\Sigma\ln G(x) &=& -\frac{1}{8}{\,}x(x-1)(2x-5)+\frac{1}{4}{\,}x(x-3)\ln(2\pi)-x\ln A\\
&& \null + \frac{1}{2}{\,}(x-1)(x-2)\ln\Gamma(x)-\frac{1}{2}{\,}(2x-3)\,\psi_{-2}(x)+\psi_{-3}(x)
\end{eqnarray*}
and
\begin{eqnarray*}
2\,\Sigma_x\ln G(2x) &=& -\frac{1}{4}{\,}x(2x-1)(4x-7)-2x\ln A\\
&& \null +\frac{1}{2}{\,}(2x^2-3x-1)\ln 2 +x(x-2)\ln\pi\\
&& \null +\frac{1}{2}\ln\Gamma(x)+\frac{1}{2}(2x-1)(2x-3)\ln\Gamma(2x)\\
&& \null -2(x-1)\,\psi_{-2}(2x)+\psi_{-3}(2x).
\end{eqnarray*}
Here again, a shorter proof of the limit above can be obtained using the second sequence described in Remark~\ref{rem:Wall38}.
\end{project}

\parag{Restriction to the natural integers} For any $n\in\N^*$ we have
$$
G(n) ~=~ \prod_{k=0}^{n-2}k!{\,}.
$$

\parag{Generalized Webster's functional equation} For any $m\in\N^*$, there is a unique solution $f\colon\R_+\to\R_+$ to the equation
$$
\prod_{j=0}^{m-1}f\left(x+\frac{j}{m}\right) ~=~ \Gamma(x)
$$
such that $\ln f$ lies in $\cK^1$, namely
$$
f(x) ~=~ \frac{G(x+\frac{1}{m})}{G(x)}{\,}.
$$

\parag{Analogue of Euler's series representation of $\gamma$}
The Taylor series expansion of $\ln G(x+1)$ about $x=0$ is (see, e.g., \cite[p.~311]{SriCho12})
$$
\ln G(x+1) ~=~\frac{1}{2}\left(\ln(2\pi)-1\right)x-\frac{\gamma +1}{2}{\,}x^2 -\sum_{k=2}^{\infty}\,\frac{\zeta(k)}{k+1}{\,}(-x)^{k+1}{\,},\qquad |x|<1.
$$
Integrating both sides of this equation on $(0,1)$, we obtain the identity
$$
\sum_{k=2}^{\infty}(-1)^k\,\frac{\zeta(k)}{(k+1)(k+2)} ~=~ \frac{1}{2}+\frac{1}{6}{\,}\gamma -2\ln A.
$$
Also, the exponential generating function for the sequence $n\mapsto\sigma[g^{(n)}]$ is
$$
\mathrm{egf}_{\sigma}[g](x) ~=~ \ln G(x+1)-\psi_{-2}(x+1)+\frac{1}{4}\ln(2\pi)-\frac{1}{12}+2\ln A
$$
Integrating both sides of this equation on $(0,1)$ (i.e., we use \eqref{eq:EulerAnal55}), after some algebra we obtain
$$
\sum_{k=2}^{\infty}(-1)^k\,\frac{k-1}{k(k+1)(k+2)}\,\zeta(k) ~=~ \frac{5}{4}-3\ln A-\frac{1}{4}\ln(2\pi){\,}.
$$

\parag{Analogue of the reflection formula} A reflection formula for the Barnes $G$-function is given in \eqref{eq:ReflBGF5}; see, e.g., \cite[p.~45]{SriCho12}.

\index{Barnes's $G$-function|)}

\section{The Hurwitz zeta function}
\label{sec:Hurw49}\index{Hurwitz zeta function|(}

For any $x>0$, the Hurwitz zeta function $s\mapsto\zeta(s,x)$ is defined as an analytic continuation to $\mathbb{C}\setminus\{1\}$ of the series (see, e.g., \cite[p.~155]{SriCho12})
$$
\sum_{k=0}^{\infty}(x+k)^{-s} ~=~ \frac{1}{\Gamma(s)}\,\int_0^{\infty}\frac{t^{s-1}e^{-xt}}{1-e^{-t}}{\,}dt,\qquad\Re(s)>1.
$$
It is known (see, e.g., \cite[p.\ 159--160]{SriCho12}) that this function satisfies the identity
$$
D^k_x\zeta(s,x) ~=~ (-s)^{\underline{k}}\,\zeta(s+k,x){\,},\qquad k\in\N,
$$
and the difference equation
\begin{equation}\label{eq:HuZeRec5}
\zeta(s,x+1)-\zeta(s,x) ~=~ -x^{-s}.
\end{equation}
For any fixed $s\in\R\setminus\{1\}$, define the function $g_s\colon\R_+\to\R$ by the equation
$$
g_s(x) ~=~ -x^{-s}\qquad\text{for $x>0$}.
$$
We then have $g_s\in\cC^{\infty}\cap\cK^{\infty}$. If $s>0$ and $s\neq 1$, then $g_s\in\cD^0_{\N}$. If $s>1$, then $g_s\in\cD^{-1}_{\N}$. If $-p<s<1$ for some $p\in\N$, then $g_s\in\cD^p_{\N}$, and hence we can consider
$$
p ~=~ 1+\deg g_s ~=~ \lfloor 1-s\rfloor.
$$
In all cases, we have
$$
\Sigma g_s(x) ~=~ \zeta(s,x)-\zeta(s),
$$
where $s\mapsto\zeta(s)=\zeta(s,1)$ is the Riemann zeta function.\index{Riemann zeta function}

\parag{ID card} The basic information about the Hurwitz zeta function is summarized in the following table.
$$
\begin{array}{|c|c|c|c|}
\hline g_s(x) & \text{Membership} & \deg g_s & \Sigma g_s(x) \\
\hline \emptybox -x^{-s} & \begin{array}{rl}\emptybox\cC^{\infty}\cap\cD^{-1}\cap\cK^{\infty},&\text{if $s>1$},\\ \cC^{\infty}\cap\cD^{\lfloor 1-s\rfloor}\cap\cK^{\infty},&\text{if $s<1$}.\end{array} & -1+\lfloor 1-s\rfloor_+ & \zeta(s,x)-\zeta(s) \\
\hline
\end{array}
$$

\begin{project}
\textsl{Find a closed-form expression for $\Sigma g$, where}
$$
g(x) ~=~ \frac{x^2}{\sqrt{x+1}}{\,}.
$$
Expanding $x^2=(x+1-1)^2$, we obtain
$$
g(x) ~=~ (x+1)^{\frac{3}{2}}-2(x+1)^{\frac{1}{2}}+(x+1)^{-\frac{1}{2}}
$$
and hence
$$
\Sigma g(x) ~=~ \textstyle{c-\zeta(-\frac{3}{2},x+1)+2\zeta(-\frac{1}{2},x+1)-\zeta(\frac{1}{2},x+1)}
$$
for some $c\in\R$.
\end{project}

\parag{Analogue of Bohr-Mollerup's theorem} The function $\zeta(s,x)$ can be characterized as follows.
\begin{quote}
\emph{All solutions $f_s\colon\R_+\to\R$ to the equation
$$
f_s(x+1)-f_s(x) ~=~ -x^{-s}
$$
that lie in $\cK^{\lfloor 1-s\rfloor_+}$ are of the form $f_s(x)=c_s+\zeta(s,x)$, where $c_s\in\R$.}
\end{quote}

\parag{Extended ID card} The asymptotic constant $\sigma[g_s]$ satisfies the following identity
$$
\sigma[g_s] ~=~ \int_0^1\zeta(s,t+1){\,}dt-\zeta(s) ~=~ \frac{1}{s-1}-\zeta(s).
$$
Hence we have the following values
$$
\begin{array}{|c|c|c|}
\hline \overline{\sigma}[g_s] & \sigma[g_s] & \gamma[g_s] \\
\hline \emptybox \begin{array}{rl}\infty,&\text{if $s>1$},\\ -\zeta(s),&\text{if $s<1$}.\end{array} & \frac{1}{s-1}-\zeta(s) & \sigma[g_s]-\sum_{j=1}^{\lfloor 1-s\rfloor_+}G_j\,\Delta^{j-1}g_s(1) \\
\hline
\end{array}
$$
We also have the following identities.
\begin{itemize}
\item \emph{Alternative representations of $\sigma[g_s]$}
\begin{eqnarray*}
\sigma[g_s] &=& \lim_{n\to\infty}\left(\frac{1-n^{1-s}}{s-1}-\sum_{k=1}^{n-1}k^{-s}+\sum_{j=1}^{\lfloor 1-s\rfloor_+}G_j\,\Delta^{j-1}g_s(n)\right), \\
\sigma[g_s] &=& \lim_{n\to\infty}\left(\frac{1}{s-1}-\sum_{k=1}^{n-1}k^{-s}+\frac{1}{1-s}\,\sum_{j=0}^{\lfloor 1-s\rfloor_+}\tchoose{1-s}{j}{\,}\frac{B_j}{n^{s+j-1}}\right), \\
\sigma[g_s] &=& \sum_{j=1}^{\lfloor 1-s\rfloor_+}G_j\,\Delta^{j-1}g_s(1)\\
&& \null +\sum_{k=1}^{\infty}\left(\frac{k^{1-s}-(k+1)^{1-s}}{s-1}+\sum_{j=0}^{\lfloor 1-s\rfloor_+}G_j\,\Delta^jg_s(k)\right).
\end{eqnarray*}
If $s>-1$, then
$$
\sigma[g_s] ~=~ -\frac{1}{2}+s\int_1^{\infty}\frac{\{t\}-\frac{1}{2}}{t^{s+1}}{\,}dt.
$$
If $s\leq -1$, then for any integer $q\geq\lceil(1-s)/2\rceil$,
$$
\sigma[g_s] ~=~ -\frac{1}{2}+\sum_{k=1}^q\frac{B_{2k}}{(2k)!}{\,}(-s)^{\underline{2k-1}}
+\frac{(-s)^{\underline{2q}}}{(2q)!}\,\int_1^{\infty}\frac{B_{2q}(\{t\})}{t^{s+2q}}{\,}dt.
$$

\item \emph{Representations of $\gamma[g_s]$}
\begin{eqnarray*}
\gamma[g_s] &=& \sigma[g_s]-\sum_{j=1}^{\lfloor 1-s\rfloor_+}G_j\,\Delta^{j-1}g_s(1){\,},\\
\gamma[g_s] &=& \int_1^{\infty}\bigg(\sum_{j=0}^{\lfloor 1-s\rfloor_+}G_j\,\Delta^jg_s(\lfloor t\rfloor)-g_s(t)\bigg){\,}dt{\,},\\
\gamma[g_s] &=& \int_1^{\infty}\bigg(\sum_{j=0}^{\lfloor 1-s\rfloor_+}{\{t\}\choose j}\,\Delta^jg_s(\lfloor t\rfloor)-g_s(t)\bigg){\,}dt{\,}.
\end{eqnarray*}

\item \emph{Generalized Binet's function}. For any $q\in\N$ and any $x>0$
$$
J^{q+1}[\Sigma g_s](x) ~=~ \zeta(s,x)-\frac{x^{1-s}}{s-1}+\sum_{j=1}^qG_j\,\Delta^{j-1}g_s(x).
$$

\item \emph{Analogue of Raabe's formula}
$$
\int_x^{x+1}\zeta(s,t){\,}dt ~=~ \frac{x^{1-s}}{s-1}{\,},\qquad x>0.
$$

\item \emph{Alternative characterization}. The function $f_s(x)=\zeta(s,x)$ is the unique solution lying in $\cC^0\cap\cK^{\lfloor 1-s\rfloor_+}$ to the equation
$$
\int_x^{x+1}f_s(t){\,}dt ~=~ \frac{x^{1-s}}{s-1}{\,}, \qquad x>0.
$$
\end{itemize}

\parag{Inequalities} The following inequalities hold for any $x>0$, any $a>0$, and any $n\in\N^*$.
\begin{itemize}
\item \emph{Symmetrized generalized Wendel's inequality} (equality if $a\in\{0,1,\ldots,\lfloor 1-s\rfloor_+\}$)
$$
\left|\zeta(s,x+a)-\zeta(s,x)-\sum_{j=1}^{\lfloor 1-s\rfloor_+}\tchoose{a}{j}\,\Delta^{j-1}g_s(x)\right| ~\leq ~ \lceil a\rceil\left|\tchoose{a-1}{\lfloor 1-s\rfloor_+}\right|\left|\Delta^{\lfloor 1-s\rfloor_+}g_s(x)\right|.
$$
If $s\leq 0$, then
\begin{eqnarray*}
\lefteqn{\left|\zeta(s,x+a)-\zeta(s,x)-\sum_{j=1}^{\lfloor 1-s\rfloor}\tchoose{a}{j}\,\Delta^{j-1}g_s(x)\right|}\\
&\leq &  \left|\tchoose{a-1}{\lfloor 1-s\rfloor}\right|\left|\Delta^{\lfloor -s\rfloor}g_s(x+a)-\Delta^{\lfloor -s\rfloor}g_s(x)\right|.
\end{eqnarray*}

\item \emph{Symmetrized generalized Wendel's inequality} (discrete version)
$$
\left|\zeta(s,x)-\zeta(s)-f_n^{\lfloor 1-s\rfloor_+}[g_s](x)\right| ~\leq ~ \lceil x\rceil\left|\tchoose{x-1}{\lfloor 1-s\rfloor_+}\right|\left|\Delta^{\lfloor 1-s\rfloor_+}g_s(n)\right|.
$$
If $s\leq 0$, then
$$
\left|\zeta(s,x)-\zeta(s)-f_n^{\lfloor 1-s\rfloor}[g_s](x)\right| ~\leq ~ \left|\tchoose{x-1}{\lfloor 1-s\rfloor}\right|\left|\Delta^{\lfloor -s\rfloor}g_s(x+n)-\Delta^{\lfloor -s\rfloor}g_s(n)\right|.
$$
Here
$$
f^{\lfloor 1-s\rfloor_+}_n[g_s](x) ~=~ \sum_{k=0}^{n-1}(x+k)^{-s}-\sum_{k=1}^{n-1}k^{-s}
-\sum_{j=1}^{\lfloor 1-s\rfloor_+}\tchoose{x}{j}\,\Delta_n^{j-1}n^{-s}.
$$

\item \emph{Symmetrized Stirling's formula-based inequality}
$$
\left|J^{\lfloor 1-s\rfloor_++1}[\Sigma g_s](x)\right| ~\leq ~ \overline{G}_{\lfloor 1-s\rfloor_+}\left|\Delta^{\lfloor 1-s\rfloor_+}g_s(x)\right|.
$$
If $s\leq 0$, then
$$
\left|J^{\lfloor 2-s\rfloor}[\Sigma g_s](x)\right| ~\leq ~ \int_0^1\left|\tchoose{t-1}{\lfloor 1-s\rfloor}\right|\left|\Delta^{\lfloor -s\rfloor}g_s(x+t)-\Delta^{\lfloor -s\rfloor}g_s(x)\right|dt.
$$

\item \emph{Burnside's formula-based inequality if $s> -1$}
$$
\left|\zeta\left(s,x+\frac{1}{2}\right)-\frac{x^{1-s}}{s-1}\right| ~\leq ~ \left|J^{\lfloor 1-s\rfloor_++1}[\Sigma g_s](x)\right|.
$$

\item \emph{Additional inequality if $s>1$}.
$$
0 ~\leq ~ \zeta(s,x+n) ~=~ \sum_{k=n}^{\infty}(x+k)^{-s} ~\leq ~ \zeta(s,n).
$$

\item \emph{Generalized Gautschi's inequality}

If $s\geq 0$, $s\neq 1$,
\begin{eqnarray*}
(\lceil a\rceil -a)(x+\lceil a\rceil)^{-s} &\leq & s(\lceil a\rceil -a)\,\zeta(s+1,x+\lceil a\rceil)\\
&\leq & \zeta(s,x+a)-\zeta(s,x+\lceil a\rceil) ~\leq ~ (\lceil a\rceil -a)(x+\lfloor a\rfloor)^{-s}.
\end{eqnarray*}
If $s\leq 0$, then these inequalities must be reversed and they are valid only if the Hurwitz zeta function is concave on $[x+\lfloor a\rfloor,\infty)$.
\end{itemize}

\parag{Generalized Stirling's and related formulas} For any $a\geq 0$, we have the following limits and asymptotic equivalences as $x\to\infty$,
$$
\zeta(s,x+a)-\zeta(s,x)-\sum_{j=1}^{\lfloor 1-s\rfloor_+}\tchoose{a}{j}\,\Delta^{j-1}g_s(x) ~\to ~0,
$$
$$
\zeta(s,x)-\frac{x^{1-s}}{s-1}+\sum_{j=1}^{\lfloor 1-s\rfloor_+}G_j\,\Delta^{j-1}g_s(x) ~\to ~0,
$$
$$
\zeta(s,x)+\frac{1}{1-s}\sum_{j=0}^{\lfloor 1-s\rfloor_+}\tchoose{1-s}{j}\,\frac{B_j}{x^{s+j-1}} ~\to ~0,
$$
$$
\zeta(s,x+a) ~\sim ~ \frac{x^{1-s}}{s-1}{\,}.
$$
In particular, if $s>1$, then $\zeta(s,x)\to 0$ as $x\to\infty$.

For instance, setting $s=-\frac{3}{2}$ in these latter two asymptotic formulas, we obtain
\begin{eqnarray*}
\textstyle{\zeta\left(-\frac{3}{2},x\right)+\frac{2}{5}{\,}x^{5/2}-\frac{7}{12}{\,}x^{3/2}
+\frac{1}{12}{\,}(x+1)^{3/2}} &\to & 0{\,},\\
\textstyle{\zeta\left(-\frac{3}{2},x\right)+\frac{2}{5}{\,}x^{5/2}-\frac{1}{2}{\,}x^{3/2}+\frac{1}{8}{\,}x^{1/2}} &\to & 0{\,}.
\end{eqnarray*}

If $s>-1$, then we have the analogue of Burnside's formula
$$
\textstyle{\zeta(s,x)-\frac{1}{s-1}{\,}(x-\frac{1}{2})^{1-s}} ~\to ~ 0{\,},\qquad\text{as $x\to\infty$},
$$
which provides a better approximation of $\zeta(s,x)$ than the generalized Stirling formula.

\parag{Asymptotic expansions} For any $m, q\in\N^*$ we have the following expansion as $x\to\infty$
$$
\frac{1}{m}\sum_{j=0}^{m-1}\zeta\left(s,x+\frac{j}{m}\right) ~=~ \frac{1}{s-1}\,\sum_{k=0}^q\tchoose{1-s}{k}\frac{B_k}{m^k{\,}x^{s+k-1}}+O\left(x^{-q-s}\right).
$$
Setting $m=1$ in this formula, we obtain
$$
\zeta(s,x) ~=~ \frac{1}{s-1}\,\sum_{k=0}^q\tchoose{1-s}{k}\frac{B_k}{x^{s+k-1}}+O\left(x^{-q-s}\right).
$$
In particular, this clearly shows that $\zeta(s,x)$ is a $(1-s)$-degree polynomial whenever $1-s$ is a positive integer. More precisely, we have
$$
\zeta(1-n,x) ~=~ -\frac{1}{n}{\,}\sum_{k=0}^n\tchoose{n}{k}{\,}B_k{\,}x^{n-k},\qquad n\in\N^*,
$$
that is,
\begin{equation}\label{eq:HurwBern43}
\zeta(1-n,x)  ~=~ -\frac{1}{n}{\,}B_n(x),\qquad n\in\N^*.
\end{equation}

\parag{Generalized Liu's formula} We have the following formulas for $x>0$.
\begin{itemize}
\item If $s>-1$, then
$$
\zeta(s,x) ~=~ \frac{x^{1-s}}{s-1}+\frac{1}{2}{\,}x^{-s}-s{\,}\int_0^{\infty}\frac{\{t\}-\frac{1}{2}}{(x+t)^{s+1}}{\,}dt.
$$
\item If $s\leq -1$, then for any integer $q\geq\lceil(1-s)/2\rceil$,
$$
\zeta(s,x) ~=~ \frac{x^{1-s}}{s-1}+\frac{1}{2}{\,}x^{-s}-\sum_{k=1}^q\frac{B_{2k}}{(2k)!}{\,}\frac{(-s)^{\underline{2k-1}}}{x^{s+2k-1}}
-\frac{(-s)^{\underline{2q}}}{(2q)!}\,\int_0^{\infty}\frac{B_{2q}(\{t\})}{(x+t)^{s+2q}}{\,}dt.
$$
\end{itemize}

\parag{Limit and series representations when $s>1$} We simply have
$$
\zeta(s,x) ~=~ \sum_{k=0}^{\infty}(x+k)^{-s}
$$
and this series converges uniformly on $\R_+$. In particular, we retrieve the identity
$$
\psi_{\nu}(x) ~=~ (-1)^{\nu +1}\nu!\,\zeta(\nu +1,x){\,},\qquad\nu\in\N^*.
$$

\parag{Limit and series representations when $s<1$} We have the following Eulerian form
\begin{eqnarray*}
\zeta(s,x)-\zeta(s) &=& -g_s(x)+\sum_{j=0}^{\lfloor -s\rfloor}\tchoose{x}{j+1}\Delta^jg_s(1)\\
&& \null + \sum_{k=1}^{\infty}\left(-g_s(x+k)+\sum_{j=0}^{\lfloor 1-s\rfloor}\tchoose{x}{j}\,\Delta^j g_s(k)\right),
\end{eqnarray*}
and the Weierstrassian form can be obtained similarly. The associated series converge uniformly on any bounded
subset of $[0,\infty)$.

For instance, we have
\begin{eqnarray*}
\lefteqn{\textstyle{\zeta\left(-\frac{3}{2},x\right)-\zeta\left(-\frac{3}{2}\right)} ~=~ \displaystyle{x^{\frac{3}{2}}+\lim_{n\to\infty}\left(\sum_{k=1}^{n-1}\left((x+k)^{\frac{3}{2}}-k^{\frac{3}{2}}\right)
-x{\,}n^{\frac{3}{2}}-\tchoose{x}{2}\Delta_nn^{\frac{3}{2}}\right)}}\\
&=& x^{\frac{3}{2}}-x-(2\sqrt{2}-1)\tchoose{x}{2} + \sum_{k=1}^{\infty}\left((x+k)^{\frac{3}{2}}-k^{\frac{3}{2}}
-x\Delta_kk^{\frac{3}{2}}-\tchoose{x}{2}\Delta_k^2k^{\frac{3}{2}}\right)\\
&=& x^{\frac{3}{2}}-x+\textstyle{\frac{3}{4}{\,}\zeta\left(\frac{1}{2}\right)}\tchoose{x}{2} + \displaystyle{\sum_{k=1}^{\infty}\left((x+k)^{\frac{3}{2}}-k^{\frac{3}{2}}
-x\Delta_kk^{\frac{3}{2}}-\textstyle{\frac{3}{4}}\tchoose{x}{2}k^{-\frac{1}{2}}\right)}.
\end{eqnarray*}

The analogue of Gauss' limit is
$$
\zeta(s,x) ~=~ \zeta(s)+\lim_{n\to\infty}f^{\lfloor 1-s\rfloor}_n[g_s](x),\qquad x>0.
$$
where
$$
f^{\lfloor 1-s\rfloor}_n[g_s](x) ~=~ \sum_{k=0}^{n-1}(x+k)^{-s}-\sum_{k=1}^{n-1}k^{-s}
-\sum_{j=1}^{\lfloor 1-s\rfloor}\tchoose{x}{j}\,\Delta_n^{j-1}n^{-s}.
$$

\parag{Gregory's formula-based series representation} For any $x>0$ we have
\begin{eqnarray*}
\zeta(s,x) &=& \frac{x^{1-s}}{s-1}-\sum_{n=0}^{\infty}G_{n+1}\Delta^ng_s(x)\\
&=& \frac{x^{1-s}}{s-1}+\sum_{n=0}^{\infty}|G_{n+1}|\,\sum_{k=0}^n(-1)^k\tchoose{n}{k}(x+k)^{-s}{\,}.
\end{eqnarray*}
Setting $x=1$ in this identity yields a known series expression for $\zeta(s)$ that is the analogue of Fontana-Mascheroni series
$$
\zeta(s) ~=~ \frac{1}{s-1}+\sum_{n=0}^{\infty}|G_{n+1}|\,\sum_{k=0}^n(-1)^k\tchoose{n}{k}(k+1)^{-s}{\,}.
$$

\parag{Analogue of Gauss' multiplication formula} For any $m\in\N^*$ and any $x>0$, we have
$$
\sum_{j=0}^{m-1}\zeta\left(s,\frac{x+j}{m}\right) ~=~ m^s\,\zeta(s,x).
$$
Corollary~\ref{cor:Riem482} provides the following limits for any $x>0$
\begin{eqnarray*}
\lim_{m\to\infty}m^{s-1}\zeta(s,mx) &=& \frac{x^{1-s}}{s-1}{\,},\qquad s<1,\\
\lim_{m\to\infty}m^{s-1}(\zeta(s,mx)-\zeta(s,m)) &=& \frac{x^{1-s}-1}{s-1}{\,},\qquad s\neq 1.
\end{eqnarray*}

\parag{Analogue of Wallis's product formula} If $s>1$, then we have
\begin{equation}\label{eq:Dirich663let}
\sum_{k=1}^{\infty}\frac{(-1)^{k-1}}{k^s} ~=~ (1-2^{1-s})\,\zeta(s) ~=~ \eta(s),
\end{equation}
where $s\mapsto\eta(s)$ is Dirichlet's eta function.\index{Dirichlet's eta function|textbf} When $s<1$, the form of the formula strongly depends upon the value of $s$. When $s=-\frac{3}{2}$ for instance, we obtain
$$
\lim_{n\to\infty}\left(h(n)+\sum_{k=1}^{2n}(-1)^kk^{\frac{3}{2}}\right) ~=~ \textstyle{(4\sqrt{2}-1)\,\zeta(-\frac{3}{2})}.
$$
where $h(n)=-\frac{8n+3}{4}\,\sqrt{\frac{n}{2}}$.

\parag{Restriction to the natural integers} For any $n\in\N^*$ we have
$$
\zeta(s,n)-\zeta(s) ~=~ -\sum_{k=1}^{n-1}k^{-s}\qquad\text{and}\qquad\zeta(s,n) ~=~ \sum_{k=n}^{\infty}k^{-s}.
$$
Gregory's formula states that for any $n\in\N^*$ and any $q\in\N$ we have
$$
\sum_{k=1}^{n-1}k^{-s} ~=~ \frac{1-n^{1-s}}{s-1}+\sum_{j=1}^qG_j\left(\Delta^{j-1}g_s(n)-\Delta^{j-1}g_s(1)\right)+R^q_{s,n}{\,},
$$
with
$$
|R^q_{s,n}| ~\leq ~\overline{G}_q{\,}|\Delta^qg_s(n)-\Delta^qg_s(1)|.
$$
Many other representations of this sum can be derived from, e.g., the limit and series representations of the Hurwitz zeta function.

\parag{Generalized Webster's functional equation} For any $m\in\N^*$ and any $a>0$, there is a unique solution $f_s\colon\R_+\to\R$ to the equation
$$
\sum_{j=0}^{m-1}f_s\left(x+a{\,}j\right) ~=~ -x^{-s}
$$
that lies in $\cK^{\lfloor -s\rfloor_+}$, namely
$$
f_s(x) ~=~ \frac{1}{(am)^s}\,\zeta\left(s,\frac{x+a}{am}\right)-\frac{1}{(am)^s}\,\zeta\left(s,\frac{x}{am}\right){\,}.
$$

\parag{Analogue of Euler's series representation of $\gamma$} We have
$$
(\Sigma g_s)^{(k)}(1) ~=~ (-s)^{\underline{k}}\,\zeta(s+k),\qquad k\in\N^*.
$$
Thus, the Taylor series expansion of $\zeta(s,x+1)$ about $x=0$ is
$$
\zeta(s,x+1) ~=~ \sum_{k=0}^{\infty}\tchoose{-s}{k}\,\zeta(s+k){\,}x^k{\,},\qquad |x|<1.
$$
Integrating both sides of this equation on $(0,1)$, we obtain the identity
$$
\sum_{k=1}^{\infty}\tchoose{1-s}{k}\,\zeta(s+k-1) ~=~ -1{\,},\qquad s<2,~s\notin\Z{\,}.
$$
(When $s>2$, the summand in the series above does not approach zero as $k$ increases.)

\parag{Analogue of the reflection formula} A reflection formula can be derived when $s$ is an integer. Recall that we have the following special values for any $n\in\N^*$
$$
\zeta(1+n,x) ~=~ (-1)^{n-1}\frac{1}{n!}\,\psi_n(x)
$$
and
$$
\zeta(1-n,x) ~=~ -\frac{1}{n}{\,}B_n(x).
$$
It follows that for any $x\in\R\setminus\Z$, we have
$$
\zeta(s,x)+(-1)^s\,\zeta(s,1-x) ~=~
\begin{cases}
\frac{(-1)^{s-1}}{(s-1)!}\,\pi{\,}D^{s-1}\cot(\pi x), & \text{if $s-1\in\N^*$},\\
0, & \text{if $-s\in\N$}.
\end{cases}
$$
\index{Hurwitz zeta function|)}

\section{The generalized Stieltjes constants}
\label{sec:Stie62}
\index{Stieltjes constants!generalized Stieltjes constants|(}

Recall that the \emph{generalized Stieltjes constants} are the numbers $\gamma_n(x)$\label{p:gSc63} that occur in the Laurent series expansion of the Hurwitz zeta function
\begin{equation}\label{eq:7sfds}
\zeta(s,x) ~=~ \frac{1}{s-1}+\sum_{n=0}^{\infty}\frac{(-1)^n}{n!}\,\gamma_n(x)(s-1)^n.
\end{equation}
Recall also that the numbers $\gamma_n=\gamma_n(1)$, where $n\in\N$, are called the \emph{Stieltjes constants}.\index{Stieltjes constants}\label{p:Sc630} The Stieltjes constants and generalized Stieltjes constants are known to satisfy the relations
$$
\gamma_0(x) ~=~ -\psi(x)\qquad\text{and}\qquad\gamma_0 ~=~ \gamma
$$
as well as the following identities for every $q\in\N$
\begin{eqnarray*}
\gamma_q &=& \lim_{n\to\infty}\left(\sum_{k=1}^n\frac{(\ln k)^q}{k}-\frac{(\ln n)^{q+1}}{q+1}\right),\\
\gamma_q(x) &=& \lim_{n\to\infty}\left(\sum_{k=0}^n\frac{(\ln(x+k))^q}{x+k}-\frac{(\ln(x+n))^{q+1}}{q+1}\right).
\end{eqnarray*}
For recent background on these constants, see, e.g., Blagouchine~\cite{Bla15,Bla16} and Blagouchine and Coppo~\cite{BlaCop18} (see also Nan-Yue and Williams \cite{NanWil94}).

Here we naturally restrict the values of $x$ to the set $\R_+$. Interestingly, the generalized Stieltjes constants also satisfy the difference equation
$$
\gamma_q(x+1)-\gamma_q(x) ~=~ g_q(x),
$$
where $g_q\colon\R_+\to\R$ is the function defined by the equation
$$
g_q(x) ~=~ -\frac{1}{x}(\ln x)^q\qquad\text{for $x>0$}.
$$
Thus, our theory is particularly suitable for the investigation of these constants. For any $q\in\N$, the function $g_q$ lies in $\cC^{\infty}\cap\cD^0\cap\cK^{\infty}$ and is increasing on $[e^q,\infty)$. By uniqueness of $\Sigma g_q$, it follows that
$$
\Sigma g_q(x) ~=~ \gamma_q(x)-\gamma_q.
$$

\parag{ID card} The introduction above enables us to provide the following basic information about the generalized Stieltjes constants.
$$
\begin{array}{|c|c|c|c|}
\hline g_q(x) & \text{Membership} & \deg g_q & \Sigma g_q(x) \\
\hline \emptybox -\frac{1}{x}(\ln x)^q & \cC^{\infty}\cap\cD^0\cap\cK^{\infty} & -1 & \gamma_q(x)-\gamma_q\\
\hline
\end{array}
$$

\parag{Analogue of Bohr-Mollerup's theorem} The function $\gamma_q$ can be characterized as follows.
\begin{quote}
\emph{All eventually monotone solutions $f_q\colon\R_+\to\R$ to the equation
$$
f_q(x+1)-f_q(x) ~=~ -\frac{1}{x}(\ln x)^q
$$
are of the form $f_q(x)=c_q+\gamma_q(x)$, where $c_q\in\R$}.
\end{quote}
Using Proposition~\ref{prop:90unic41}, we can also derive the following alternative characterization of the function $\gamma_q$.
\begin{quote}
\emph{All solutions $f_q\colon\R_+\to\R$ to the equation
$$
f_q(x+1)-f_q(x) ~=~ -\frac{1}{x}(\ln x)^q
$$
that satisfy the asymptotic condition that, for each $x>0$,
$$
f_q(x+n)-f_q(n) ~\to ~0\qquad\text{as $n\to_{\N}\infty$}
$$
are of the form $f_q(x)=c_q+\gamma_q(x)$, where $c_q\in\R$}.
\end{quote}

\parag{Extended ID card} Using identity \eqref{eq:SgSt7FrI}, we can immediately make the remarkable observation that the asymptotic constant $\sigma[g_q]$ is exactly the opposite of the Stieltjes constant $\gamma_q$. We then have the following values
$$
\begin{array}{|c|c|c|}
\hline \overline{\sigma}[g_q] & \sigma[g_q] & \gamma[g_q] \\
\hline \emptybox \infty & -\gamma_q & -\gamma_q \\
\hline
\end{array}
$$

\begin{itemize}
\item \emph{Alternative representations of $\sigma[g_q]=\gamma[g_q]$}
\begin{eqnarray*}
\gamma_q &=& \sum_{k=1}^{\infty}\left(\frac{(\ln k)^q}{k}-\frac{(\ln(k+1))^{q+1}-(\ln(k))^{q+1}}{q+1}\right),\\
\gamma_q &=& \int_1^{\infty}\frac{\{t\}-\frac{1}{2}}{t^2}{\,}(\ln t)^{q-1}(q-\ln t){\,}dt{\,}\qquad (q\geq 1),\\
\gamma_q &=& \int_1^{\infty}\left(\frac{(\ln\lfloor t\rfloor)^q}{\lfloor t\rfloor}-\frac{(\ln t)^q}{t}\right)dt{\,}.
\end{eqnarray*}

\item \emph{Generalized Binet's function}. For any $r\in\N$ and any $x>0$
$$
J^{r+1}[\gamma_q](x) ~=~ \gamma_q(x)+\frac{(\ln x)^{q+1}}{q+1}+\sum_{j=1}^rG_j\,\Delta^{j-1}g_q(x).
$$

\item \emph{Analogue of Raabe's formula}
\begin{equation}\label{eq:RgSt4}
\int_x^{x+1}\gamma_q(t){\,}dt ~=~ -\frac{(\ln x)^{q+1}}{q+1}{\,},\qquad x>0.
\end{equation}

\item \emph{Alternative characterization}. The function $f(x)=\gamma_q(x)$ is the unique solution lying in $\cC^0\cap\cK^0$ to the equation
$$
\int_x^{x+1}f(t){\,}dt ~=~ -\frac{(\ln x)^{q+1}}{q+1}{\,}, \qquad x>0.
$$
\end{itemize}

\parag{Inequalities} The following inequalities hold for any $x>0$, any $a>0$, and any $n\in\N$.
\begin{itemize}
\item \emph{Symmetrized generalized Wendel's inequality} (equality if $a\in\{0,1\}$)

If $x\geq e^q$, we have
$$
\left|\gamma_q(x+a)-\gamma_q(x)\right| ~\leq ~ \lceil a\rceil\left|\frac{(\ln x)^q}{x}\right|.
$$

\item \emph{Symmetrized generalized Wendel's inequality} (discrete version)

If $n\geq e^q$, we have
$$
\left|\gamma_q(x) - \gamma_q-\frac{(\ln x)^q}{x}-\sum_{k=1}^{n-1}\left(\frac{(\ln(x+k))^q}{x+k}-\frac{(\ln k)^q}{k}\right)\right| ~\leq ~ \lceil x\rceil\left|\frac{(\ln n)^q}{n}\right|.
$$

\item \emph{Symmetrized Stirling's and Burnside's formulas-based inequalities}

If $x\geq e^q$, we have
$$
\left|\gamma_q\left(x+\frac{1}{2}\right)+\frac{(\ln x)^{q+1}}{q+1}\right| ~\leq ~ \left|\gamma_q(x)+\frac{(\ln x)^{q+1}}{q+1}\right| ~\leq ~ \left|\frac{(\ln x)^q}{x}\right|{\,}.
$$

\item \emph{Further inequalities}. For $0<x\leq 1$, we use the following approximations (see Nan-Yue and Williams \cite[p.\ 148]{NanWil94})
$$
\left|\gamma_0(x)-\frac{1}{x}\right| ~\leq ~\gamma
$$
and
$$
\left|\gamma_q(x)-\frac{(\ln x)^q}{x}\right| ~\leq ~ \frac{(3+(-1)^q)(2q)!}{q^{q+1}(2\pi)^q}{\,},\quad q\in\N^*.
$$
\end{itemize}

\parag{Generalized Stirling's and related formulas} For any $a\geq 0$, we have the following limits and
asymptotic equivalence as $x\to\infty$,
$$
\gamma_q(x+a)-\gamma_q(x) ~\to~ 0,\qquad\gamma_q(x)+\frac{(\ln x)^{q+1}}{q+1} ~\to ~ 0,
$$
$$
\gamma_q(x+a) ~\sim ~ -\frac{(\ln x)^{q+1}}{q+1}{\,}.
$$
\emph{Burnside-like approximation} (better than Stirling-like approximation)
$$
\gamma_q(x)+\frac{1}{q+1}\left(\ln\left(x-\frac{1}{2}\right)\right)^{q+1} ~\to ~ 0.
$$
\emph{Further results} (obtained by differentiation)
$$
\gamma'_q(x)+\frac{(\ln x)^q}{x} ~\to ~ 0,\qquad \gamma'_q(x+a) ~\sim ~ -\frac{(\ln x)^q}{x}{\,}.
$$
For any $r\in\N$,
$$
\gamma^{(r)}_q(x+a)-\gamma^{(r)}_q(x) ~\to~ 0,\qquad D^r_x\left(\gamma_q(x)+\frac{(\ln x)^{q+1}}{q+1}\right) \to ~ 0.
$$
$$
D^r_x\left(\gamma_q(x)+\frac{1}{q+1}\left(\ln\left(x-\frac{1}{2}\right)\right)^{q+1}\right) \to ~ 0.
$$

\parag{Asymptotic expansions} For any $m,r\in\N^*$ we have the following expansion as $x\to\infty$
$$
\frac{1}{m}\,\sum_{j=0}^{m-1}\gamma_q\left(x+\frac{j}{m}\right) ~=~ -\frac{(\ln x)^{q+1}}{q+1}+\sum_{k=1}^r\frac{B_k}{m^kk!}{\,}g_q^{(k-1)}(x)+O\left(g_q^{(r)}(x)\right).
$$
Setting $m=1$ in this latter formula, we obtain
$$
\gamma_q(x) ~=~ -\frac{(\ln x)^{q+1}}{q+1}+\sum_{k=1}^r\frac{B_k}{k!}{\,}g_q^{(k-1)}(x)+O\left(g_q^{(r)}(x)\right).
$$
Let us detail this expansion when $q=1$. We first observe that
$$
g_1^{(k-1)}(x) ~=~ (-1)^k(k-1)!\,\frac{\ln x -H_{k-1}}{x^k}{\,},\qquad k\in\N^*.
$$
Using \eqref{eq:ExpAs0psi}, we then obtain
\begin{eqnarray*}
\lefteqn{\frac{1}{m}\sum_{j=0}^{m-1}\gamma_1\left(x+\frac{j}{m}\right) + (\ln x){\,}\frac{1}{m}\sum_{j=0}^{m-1}\psi\left(x+\frac{j}{m}\right)}\\
&=& \frac{(\ln x)^2}{2}+\sum_{k=1}^r\frac{(-1)^{k-1}{\,}B_k{\,}H_{k-1}}{k(mx)^k}+O\left(x^{-r-1}\right).
\end{eqnarray*}
Setting $m=1$ in this latter formula, we get
$$
\gamma_1(x) ~=~ \frac{(\ln x)^2}{2}-\psi(x)\ln x+ \sum_{k=1}^r\frac{(-1)^{k-1}{\,}B_k{\,}H_{k-1}}{k{\,}x^k}+O\left(x^{-r-1}\right){\,}.
$$
Setting $r=5$ for instance, we obtain
$$
\gamma_1(x) ~=~ \frac{(\ln x)^2}{2}-\psi(x)\ln x-\frac{1}{12x^2}+\frac{11}{720x^4}+O\left(x^{-6}\right).
$$

\parag{Generalized Liu's formula} For any $q\geq 1$ and any x > 0 we have
$$
\gamma_q(x) ~=~ -\frac{(\ln x)^{q+1}}{q+1}+\frac{(\ln x)^q}{2x}+\int_0^{\infty}\frac{\{t\}-\frac{1}{2}}{(x+t)^2}{\,}(\ln(x+t))^{q-1}(q-\ln(x+t)){\,}dt.
$$

\parag{Series representations} Since the function $g_q(x)$ lies in $\cD^{-1}_{\N}$, we only have the following series representations of $\gamma_q(x)$.
\begin{itemize}
\item \emph{Eulerian and Weierstrassian forms}. We have
$$
\gamma_q(x) ~=~ \gamma_q+\frac{(\ln x)^q}{x}+\sum_{k=1}^{\infty}\left(\frac{(\ln(x+k))^q}{x+k}-\frac{(\ln k)^q}{k}\right),
$$
$$
\gamma_q(x) ~=~ \frac{(\ln x)^q}{x}+\sum_{k=1}^{\infty}\left(\frac{(\ln(x+k))^q}{x+k}-\frac{(\ln(k+1))^{q+1}-(\ln k)^{q+1}}{q+1}\right).
$$
The series can be differentiated term by term infinitely many times. For instance, we get
$$
\gamma'_q(x) ~=~ \sum_{k=0}^{\infty}\frac{(\ln(x+k))^{q-1}}{(x+k)^2}{\,}(q-\ln(x+k)).
$$

\item The analogue of Gauss' limit coincides with the Eulerian form.

\item \emph{Gregory's formula-based series representation}. For any $x>0$ satisfying the assumptions of Proposition~\ref{prop:6699rem}, we obtain
\begin{eqnarray*}
\gamma_q(x) + \frac{(\ln x)^{q+1}}{q+1} &=& \sum_{n=0}^{\infty}G_{n+1}\Delta_x^n\frac{(\ln x)^q}{x}\\
&=& \sum_{n=0}^{\infty}|G_{n+1}|\sum_{k=0}^n(-1)^k\tchoose{n}{k}\frac{(\ln(x+k))^q}{x+k}{\,}.
\end{eqnarray*}
Setting $x=1$ in this identity (provided that $x=1$ satisfies the assumptions of Proposition~\ref{prop:6699rem}), we obtain the Fontana-Mascheroni's series expression for $\gamma_q$
$$
\gamma_q ~=~ \sum_{n=0}^{\infty}|G_{n+1}|\sum_{k=0}^n(-1)^k\tchoose{n}{k}\frac{(\ln(k+1))^q}{k+1}{\,}.
$$
This latter expression can be found in Blagouchine \cite[p.\ 383]{Bla16} and the references therein.
\end{itemize}

\parag{Analogue of Gauss' multiplication formula} The following analogue of Gauss' multiplication formula was previously known (see also Blagouchine \cite[p.~542]{Bla15}) but it can be derived straightforwardly from our results.

For any $m\in\N^*$ and any $x>0$, we have
$$
\sum_{j=0}^{m-1}\gamma_q\left(\frac{x+j}{m}\right) ~=~ -\frac{m}{q+1}\left(\ln\frac{1}{m}\right)^{q+1}+m\sum_{j=0}^q\tchoose{q}{j}\left(\ln\frac{1}{m}\right)^j\gamma_{q-j}(x).
$$
In particular,
$$
\sum_{j=1}^m\gamma_q\left(\frac{j}{m}\right) ~=~ -\frac{m}{q+1}\left(\ln\frac{1}{m}\right)^{q+1}+m\sum_{j=0}^q\tchoose{q}{j}\left(\ln\frac{1}{m}\right)^j\gamma_{q-j}{\,}.
$$
Corollary~\ref{cor:Riem482} provides the following limits for $x>0$
\begin{eqnarray*}
\lim_{m\to\infty} \sum_{j=0}^q\tchoose{q}{j}\left(\ln\frac{1}{m}\right)^j\left(\gamma_{q-j}(mx)-\gamma_{q-j}(m)\right) &=& -\frac{(\ln x)^{q+1}}{q+1}{\,},\\
\lim_{m\to\infty} \left(-\frac{1}{q+1}\left(\ln\frac{1}{m}\right)^{q+1}{\!}
+\sum_{j=0}^q\tchoose{q}{j}\left(\ln\frac{1}{m}\right)^j\gamma_{q-j}(mx)\right) &=& -\frac{(\ln x)^{q+1}}{q+1}{\,}.
\end{eqnarray*}
For instance, setting $q=1$ in these formulas yields
\begin{eqnarray*}
\lim_{m\to\infty} \big(\gamma_1(mx)-\gamma_1(m)+(\ln m)(\psi(mx)-\psi(m))\big) &=& -\frac{1}{2}(\ln x)^2{\,},\\
\lim_{m\to\infty} \left(\gamma_1(mx)-\frac{1}{2}(\ln m)^2+\psi(mx)\ln m\right) &=& -\frac{1}{2}(\ln x)^2{\,}.
\end{eqnarray*}
Now, setting $m=2$ in the multiplication formula, we obtain the following analogue of Legendre's duplication formula
$$
\gamma_q\left(\frac{x}{2}\right)+\gamma_q\left(\frac{x+1}{2}\right) ~=~ -\frac{2}{q+1}\left(\ln\frac{1}{2}\right)^{q+1}+2\sum_{j=0}^q\tchoose{q}{j}\left(\ln\frac{1}{2}\right)^j\gamma_{q-j}(x).
$$
When $q=0$ and $q=1$, the multiplication formula reduces to the known formulas
\begin{eqnarray*}
\sum_{j=0}^{m-1}\psi\left(\frac{x+j}{m}\right) &=& m(\psi(x)-\ln m){\,},\\
\sum_{j=0}^{m-1}\gamma_1\left(\frac{x+j}{m}\right) &=& -\frac{m}{2}(\ln m)^2+m(\ln m)\,\psi(x)+m\,\gamma_1(x).
\end{eqnarray*}

\parag{Analogue of Wallis's product formula} The analogue of Wallis's formula for the function $g_q(x)$ is
\begin{equation}\label{eq:Wallqqgamma}
\sum_{k=1}^{\infty}(-1)^k\frac{(\ln k)^q}{k} ~=~ -\frac{(\ln 2)^{q+1}}{q+1}+\sum_{j=0}^{q-1}\tchoose{q}{j}{\,}(\ln 2)^{q-j}\gamma_j{\,}.
\end{equation}
This formula was established by Briggs and Chowla \cite[Eq.~(8)]{BriCho55}. For $q=1$, it reduces to
$$
\sum_{k=1}^{\infty}(-1)^k\,\frac{\ln k}{k} ~=~ -\frac{(\ln 2)^2}{2}+\gamma\ln 2{\,}.
$$
For $q=2$, we obtain
$$
\sum_{k=1}^{\infty}(-1)^k\,\frac{(\ln k)^2}{k} ~=~ -\frac{(\ln 2)^3}{3}+\gamma (\ln 2)^2+2\gamma_1 \ln 2{\,}.
$$
These latter two formulas were also established by Hardy~\cite{Har12}.

As an aside, let us establish conversion formulas between the sequences $q\mapsto\gamma_q$ and $q\mapsto\eta^{(q)}(1)$, where $\eta(s)$ is the Dirichlet eta function\index{Dirichlet's eta function} introduced in \eqref{eq:Dirich663let} and $\eta^{(q)}(1)$ stands for the limiting value of $\eta^{(q)}(s)$ as $s\to 1$. To ease the computations, let us instead consider the conversion formulas between the sequences $q\mapsto\gamma_q$ and $q\mapsto\lambda_q$, where
$$
\lambda_q ~=~ \frac{1}{q+1}{\,}(\ln 2)^{q+1}+(-1)^{q+1}\,\eta^{(q)}(1){\,},\qquad q\in\N.
$$
Using \eqref{eq:Wallqqgamma}, we can readily derive the following equations
\begin{equation}\label{eq:diiro4}
\lambda_q ~=~ \sum_{k=0}^{q-1}\tchoose{q}{k}{\,}(\ln 2)^{q-k}\,\gamma_k{\,},\qquad q\in\N.
\end{equation}
These equations actually consist of an infinite consistent triangular system. Solving this system provides the following conversion formula
\begin{equation}\label{eq:diiro4i}
\gamma_q ~=~ \sum_{k=0}^q\tchoose{q}{k}\,\frac{B_{q-k}}{k+1}{\,}(\ln 2)^{q-k-1}\,\lambda_{k+1}{\,},\qquad q\in\N,
\end{equation}
that is,
$$
\gamma_q ~=~ -\frac{B_{q+1}}{q+1}{\,}(\ln 2)^{q+1}+\sum_{k=0}^q(-1)^k\,\tchoose{q}{k}\,\frac{B_{q-k}}{k+1}(\ln 2)^{q-k-1}\eta^{(k+1)}(1){\,},\quad q\in\N.
$$
Indeed, plugging \eqref{eq:diiro4i} in the right side of \eqref{eq:diiro4} we obtain for any $q\in\N$
\begin{eqnarray*}
\sum_{k=0}^{q-1}\tchoose{q}{k}{\,}(\ln 2)^{q-k}\,\gamma_k
&=& \sum_{k=0}^{q-1}\tchoose{q}{k}{\,}(\ln 2)^{q-k}\,\sum_{j=0}^k\tchoose{k}{j}\,\frac{B_{k-j}}{j+1}{\,}(\ln 2)^{k-j-1}\,\lambda_{j+1}\\
&=& \sum_{j=0}^{q-1}\tchoose{q}{j}{\,}(\ln 2)^{q-j-1}\,\frac{\lambda_{j+1}}{j+1}\,\sum_{k=j}^{q-1}\tchoose{q-j}{k-j}{\,}B_{k-j}{\,},
\end{eqnarray*}
where the inner sum reduces to $0^{q-j-1}$. The latter quantity then reduces to $\lambda_q$, as expected.

\begin{remark}
The conversion formulas \eqref{eq:diiro4} and \eqref{eq:diiro4i} are not quite new. In essence, they were established by Liang and Todd \cite[Eq.\ (3.6)]{LiaTod72} and Nan-Yue and Williams \cite[Eqs.\ (1.9) and (7.1)]{NanWil94}.
\end{remark}

\parag{Generalized Webster's functional equation} For any $m\in\N^*$ and any $a>0$, there is a unique eventually monotone solution $f\colon\R_+\to\R$ to the equation
$$
\sum_{j=0}^{m-1}f\left(x+a{\,}j\right) ~=~ -\frac{1}{x}(\ln x)^q{\,},
$$
namely
$$
f(x) ~=~ S_{q,am}\left(\frac{x+a}{am}\right)-S_{q,am}\left(\frac{x}{am}\right){\,},
$$
where
$$
S_{q,am}(x) ~=~ \frac{1}{am}\,\sum_{j=0}^q{q\choose j}(\ln(am))^j\,\gamma_{q-j}(x).
$$
For instance, the unique eventually monotone solution $f\colon\R_+\to\R$ to the equation
$$
f(x)+f(x+1) ~=~ -\frac{1}{x}\ln x
$$
is
$$
f(x) ~=~ \gamma_1(x)-\gamma_1\left(\frac{x}{2}\right)+(\ln 2)\,\psi(x)+\frac{1}{2}(\ln 2)^2.
$$

\parag{Rational arguments theorem} Let us apply Proposition~\ref{prop:GauProv5} to the function $g_q(x)$. For any $a,b\in\N^*$ with $a<b$ and any $j\in\{0,\ldots,b-1\}$ we have
$$
S_j^b[g_q] ~=~ b{\,}(-1)^{q+1}\sum_{i=0}^q{q\choose i}(\ln b)^{q-i}D^i_s\,\mathrm{Li}_s(z)\big|_{(s,z)=(1,\omega_b^j)},
$$
where $\mathrm{Li}_s(z)$ is the polylogarithm function.\index{polylogarithm function} Hence, we have
$$
\gamma_q\left(\frac{a}{b}\right)-\gamma_q ~=~ (-1)^{q+1}\sum_{i=0}^q{q\choose i}(\ln b)^{q-i}\sum_{j=0}^{b-1}(1-\omega_b^{-aj})D^i_s\,\mathrm{Li}_s(z)\big|_{(s,z)=(1,\omega_b^j)}{\,}.
$$
We note that a more practical formula was derived in the special case when $q=1$ by Blagouchine~\cite{Bla15} as a generalization of Gauss' digamma theorem.

\index{Stieltjes constants!generalized Stieltjes constants|)}

\section{Higher order derivatives of the Hurwitz zeta function}
\label{sec:HDHu4Z8F3}
\index{Hurwitz zeta function!higher order derivatives|(}

Let $s\in\R\setminus\{1\}$ and $q\in\N$. Differentiating $q$ times both sides of \eqref{eq:HuZeRec5} we obtain
$$
\zeta^{(q)}(s,x+1)-\zeta^{(q)}(s,x) ~=~ (-1)^{q+1}x^{-s}(\ln x)^q{\,},\qquad x>0,
$$
where $\zeta^{(q)}(s,x)$ stands for $D^q_s\,\zeta(s,x)$. This equation shows that the investigation of the higher order derivatives of the Hurwitz zeta function can be carried out using our results. To keep our presentation simple, we will focus on some selected results only.

The interested reader can find an earlier study of these functions in Ramanujan's second notebook \cite[p.~36 \emph{et seq.}]{Ber83}.

\parag{ID card} The following basic information can be easily derived.
$$
\begin{array}{|c|c|c|c|}
\hline g_{s,q}(x) & \text{Membership} & \deg g_{s,q} & \Sigma g_{s,q}(x) \\
\hline \emptybox -x^{-s}(-\ln x)^q & \begin{array}{rl}\emptybox\cC^{\infty}\cap\cD^{-1}\cap\cK^{\infty},&\text{if $s>1$},\\ \cC^{\infty}\cap\cD^{\lfloor 1-s\rfloor}\cap\cK^{\infty},&\text{if $s<1$}.\end{array} & \begin{array}{c} -1\\ \null +\lfloor 1-s\rfloor_+\end{array} & \begin{array}{c}\zeta^{(q)}(s,x)\\ -\zeta^{(q)}(s)\end{array} \\
\hline
\end{array}
$$
We observe that this investigation can be regarded as a simultaneous generalization of the studies of the Hurwitz zeta function and the generalized Stieltjes constants. For the latter, we observe that
$$
(-1)^q\,\lim_{s\to 1}g_{s,q}(x) ~=~ -\frac{1}{x}(\ln x)^q.
$$
Setting $s=0$ in our results may also be very informative as it produces formulas involving the well-studied quantities $\zeta^{(q)}(0)$ and $\zeta^{(q)}(0,x)-\zeta^{(q)}(0)$ for any $q\in\N$.

\begin{project}
\textsl{Find a closed-form expression for the integral}
$$
\int_1^x\gamma_q(t){\,}dt.
$$
We apply Proposition~\ref{prop:an4tR8} to $g_q(x)=-\frac{1}{x}(\ln x)^q$. Using \eqref{eq:RgSt4} we obtain
\begin{eqnarray*}
\int_1^x\gamma_q(t){\,}dt &=& \Sigma_x\int_x^{x+1}\gamma_q(t){\,}dt ~=~ -\frac{1}{q+1}\,\Sigma(\ln x)^{q+1}\\
&=& \frac{(-1)^{q+1}}{q+1}\,\Sigma g_{0,q+1}(x){\,},
\end{eqnarray*}
that is,
$$
\int_1^x\gamma_q(t){\,}dt ~=~ \frac{(-1)^{q+1}}{q+1}\left(\zeta^{(q+1)}(0,x)-\zeta^{(q+1)}(0)\right).
$$
In particular,
$$
\gamma_q(x) ~=~ \frac{(-1)^{q+1}}{q+1}\, D_x\zeta^{(q+1)}(0,x){\,}.\qedhere
$$
\end{project}

\parag{Analogue of Bohr-Mollerup's theorem} The function $\zeta^{(q)}(s,x)$ can be characterized as follows.
\begin{quote}
\emph{All solutions $f_{s,q}\colon\R_+\to\R$ to the equation
$$
f_{s,q}(x+1)-f_{s,q}(x) ~=~ g_{s,q}(x)
$$
that lie in $\cK^{\lfloor 1-s\rfloor_+}$ are of the form $f_{s,q}(x)=c_{s,q}+\zeta^{(q)}(s,x)$, where $c_{s,q}\in\R$.}
\end{quote}

\parag{Extended ID card} The asymptotic constant $\sigma[g_{s,q}]$ satisfies the identity
$$
\sigma[g_{s,q}] ~=~ \int_0^1\zeta^{(q)}(s,t+1){\,}dt - \zeta^{(q)}(s) ~=~ \frac{-q!}{(1-s)^{q+1}}-\zeta^{(q)}(s).
$$
Hence we have the following values
$$
\begin{array}{|c|c|c|}
\hline \overline{\sigma}[g_{s,q}] & \sigma[g_{s,q}] & \gamma[g_{s,q}] \\
\hline \emptybox \begin{array}{rl}\infty,&\text{if $s>1$},\\ -\zeta^{(q)}(s),&\text{if $s<1$}.\end{array} & \frac{-q!}{(1-s)^{q+1}}-\zeta^{(q)}(s) & \sigma[g_{s,q}]-\sum_{j=1}^{\lfloor 1-s\rfloor_+}G_j\Delta^{j-1}g_{s,q}(1) \\
\hline
\end{array}
$$

\begin{itemize}
\item \emph{Alternative representations of $\sigma[g_{s,q}]$}
\begin{eqnarray*}
\sigma[g_{s,q}] &=& \lim_{n\to\infty}\left(\sum_{k=1}^{n-1}g_{s,q}(k)-\int_1^ng_{s,q}(t){\,}dt
+\sum_{j=1}^{\lfloor 1-s\rfloor_+}G_j\Delta^{j-1}g_{s,q}(n)\right),\\
\sigma[g_{s,q}] &=& \sum_{j=1}^{\lfloor 1-s\rfloor_+}G_j\Delta^{j-1}g_{s,q}(1)\\
&& \null
-\sum_{k=1}^{\infty}\left(\int_k^{k+1}g_{s,q}(t){\,}dt-\sum_{j=0}^{\lfloor 1-s\rfloor_+}G_j\Delta^jg_{s,q}(k)\right).
\end{eqnarray*}
Setting $s=0$ in the previous formulas, we obtain
\begin{eqnarray*}
(-1)^q(q!+\zeta^{(q)}(0)) &=& \lim_{n\to\infty}\left(\sum_{k=1}^n(\ln k)^q-\int_1^n(\ln t)^q{\,}dt-\frac{1}{2}(\ln n)^q\right)\\
&=& \sum_{k=1}^{\infty}\left(\frac{1}{2}(\ln k)^q-\int_k^{k+1}(\ln t)^q{\,}dt\right).
\end{eqnarray*}
The left-hand quantity can actually be related to the Stieltjes constants in a very simple way. Indeed, on differentiating both sides of \eqref{eq:7sfds}, we obtain the following surprising identity
$$
(-1)^q(q!+\zeta^{(q)}(0)) ~=~ \sum_{n=0}^{\infty}\frac{\gamma_{n+q}}{n!}{\,}.
$$

\item \emph{Generalized Binet's function}. For any $r\in\N$ and any $x>0$
$$
J^{r+1}[\Sigma g_{s,q}](x) ~=~ \zeta^{(q)}(s,x)-\int_x^{x+1}\zeta^{(q)}(s,t){\,}dt+\sum_{j=1}^rG_j\,\Delta^{j-1}g_{s,q}(x).
$$

\item \emph{Analogue of Raabe's formula}. We have
$$
\int_1^x g_{s,q}(t){\,}dt ~=~ \frac{q!-\Gamma(q+1,(s-1)\ln x)}{(1-s)^{q+1}}{\,},\quad x>0,
$$
and hence the analogue of Raabe's formula is
\begin{eqnarray*}
\int_x^{x+1}\zeta^{(q)}(s,t){\,}dt &=& -\frac{\Gamma(q+1,(s-1)\ln x)}{(1-s)^{q+1}}\\
&=& -q!{\,}\frac{x^{1-s}}{(1-s)^{q+1}}{\,}\sum_{j=0}^q\frac{((s-1)\ln x)^j}{j!}{\,},\quad x>0.
\end{eqnarray*}
\end{itemize}

\parag{Generalized Stirling's and related formulas} For any $a\geq 0$ we have
$$
\zeta^{(q)}(s,x+a)-\zeta^{(q)}(s,x)-\sum_{j=1}^{\lfloor 1-s\rfloor_+}\tchoose{a}{j}\,\Delta^{j-1}g_{s,q}(x) ~\to ~0 \qquad\text{as $x\to\infty$},
$$
with equality if $a\in\{0,1,\ldots,\lfloor 1-s\rfloor_+\}$. Also, we have the following analogue of Stirling's formula
$$
\zeta^{(q)}(s,x)-\int_x^{x+1}\zeta^{(q)}(s,t){\,}dt+\sum_{j=1}^{\lfloor 1-s\rfloor_+}G_j\,\Delta^{j-1}g_{s,q}(x) ~\to ~0\qquad\text{as $x\to\infty$}.
$$
Setting $s=0$ in this latter formula and then simplifying the resulting expression, we obtain
$$
\zeta^{(q)}(0,x)+\Gamma(q+1,-\ln x)+\frac{1}{2}(-1)^{q+1}(\ln x)^q ~\to ~0\qquad\text{as $x\to\infty$}.
$$
We also have
$$
\zeta^{(q)}(s,x+a) ~\sim ~ \int_x^{x+1}\zeta^{(q)}(s,t){\,}dt\qquad\text{as $x\to\infty$}.
$$
Finally, if $s>-1$, then we have the following analogue of Burnside's formula
$$
\zeta^{(q)}(s,x)-\int_{x-\frac{1}{2}}^{x+\frac{1}{2}}\zeta^{(q)}(s,t){\,}dt ~\to ~ 0{\,},\qquad\text{as $x\to\infty$},
$$
which provides a better approximation of $\zeta^{(q)}(s,x)$ than the analogue of Stirling's formula.

\parag{Eulerian and Weierstrassian forms} If $s>1$, then for any $x>0$, we simply have
$$
\zeta^{(q)}(s,x) ~=~ -\sum_{k=0}^{\infty}g_{s,q}(x+k)
$$
and this series converges uniformly on $\R_+$ and can be integrated and differentiated term by term. If $s<1$, then for any $x>0$, we obtain the following Eulerian form
\begin{eqnarray*}
\zeta^{(q)}(s,x)-\zeta^{(q)}(s) &=& -g_{s,q}(x)+\sum_{j=0}^{\lfloor -s\rfloor}\tchoose{x}{j+1}\Delta^jg_{s,q}(1)\\
&& \null + \sum_{k=1}^{\infty}\left(-g_{s,q}(x+k)+\sum_{j=0}^{\lfloor 1-s\rfloor}\tchoose{x}{j}\,\Delta^j g_{s,q}(k)\right)
\end{eqnarray*}
and the Weierstrassian form can be obtained similarly. Both associated series converge uniformly on any bounded subset of $[0,\infty)$ and can be integrated and differentiated term by term. Note that the case where $(s,q)=(0,2)$ can be found in Ramanujan's second notebook \cite[p.\ 26--27]{Ber83}.

\parag{Gregory's formula-based series representation} For any $x>0$ satisfying the assumptions of Proposition~\ref{prop:6699rem}, we have
\begin{eqnarray*}
\zeta^{(q)}(s,x) &=& \int_x^{x+1}\zeta^{(q)}(s,t){\,}dt-\sum_{n=0}^{\infty}G_{n+1}\Delta^ng_{s,q}(x)\\
&=& \int_x^{x+1}\zeta^{(q)}(s,t){\,}dt-\sum_{n=0}^{\infty}|G_{n+1}|\,\sum_{k=0}^n(-1)^k\tchoose{n}{k}{\,}g_{s,q}(x+k){\,}.
\end{eqnarray*}
Setting $x=1$ in this identity (provided that $x=1$ satisfies the assumptions of Proposition~\ref{prop:6699rem}) yields a series expression for $\zeta^{(q)}(s)$ that is the analogue of Fontana-Mascheroni series
$$
\zeta^{(q)}(s) ~=~ \frac{-q!}{(1-s)^{q+1}}-\sum_{n=0}^{\infty}|G_{n+1}|\,\sum_{k=0}^n(-1)^k\tchoose{n}{k}{\,}g_{s,q}(k+1){\,},
$$
which can also be obtained differentiating the analogue of Fontana-Mascheroni series for the Hurwitz zeta function. For instance, we have
$$
\zeta''(0) ~=~ -2+\sum_{n=0}^{\infty}|G_{n+1}|\,\sum_{k=0}^n(-1)^k\tchoose{n}{k}{\,}(\ln(k+1))^2
$$
and this latter value is also known to be (see, e.g., Berndt \cite[p.~25]{Ber83})
$$
\frac{1}{2}\,\gamma^2-\frac{\pi^2}{24}-\frac{1}{2}(\ln(2\pi))^2+\gamma_1{\,}.
$$

\parag{Analogue of Gauss' multiplication formula} Upon differentiating the analogue of Gauss' multiplication formula for the Hurwitz zeta function, we immediately obtain the following multiplication formula. For any $m\in\N^*$ and any $x>0$, we have
$$
\sum_{j=0}^{m-1}\zeta^{(q)}\left(s,\frac{x+j}{m}\right) ~=~ m^s\,\sum_{j=0}^q\tchoose{q}{j}(\ln m)^{q-j}\,\zeta^{(j)}(s,x).
$$
Moreover, Corollary~\ref{cor:Riem482} provides the following limit for any $x>0$ and any $s<1$
$$
\lim_{m\to\infty} \sum_{j=0}^q\tchoose{q}{j}(\ln m)^{q-j}\,\frac{\zeta^{(j)}(s,mx)}{m^{1-s}} ~=~ -\frac{\Gamma(q+1,(s-1)\ln x)}{(1-s)^{q+1}}{\,}.
$$
Also, for any $s\neq 1$, we have
$$
\lim_{m\to\infty} \sum_{j=0}^q\tchoose{q}{j}(\ln m)^{q-j}\,\frac{\zeta^{(j)}(s,mx)-\zeta^{(j)}(s,m)}{m^{1-s}} ~=~ \frac{q!-\Gamma(q+1,(s-1)\ln x)}{(1-s)^{q+1}}{\,}.
$$

\parag{Analogue of Wallis's product formula}  When $s<1$, the form of the analogue of Wallis's product formula strongly depends upon the value of $s$. If $s>1$, then we have
\begin{eqnarray*}
\eta^{(q)}(s) &=& \sum_{k=1}^{\infty}\frac{(-1)^{k-1}}{k^s}{\,}(-\ln k)^q\\
&=& \zeta^{(q)}(s)-2^{1-s}\sum_{j=0}^q\tchoose{q}{j}\left(\ln\frac{1}{2}\right)^{q-j}\zeta^{(j)}(s),
\end{eqnarray*}
where $s\mapsto\eta(s)$ is Dirichlet's eta function.\index{Dirichlet's eta function} Just as we did for the formulas \eqref{eq:diiro4} and \eqref{eq:diiro4i}, we can easily establish the following conversion formulas for $s>1$
\begin{eqnarray*}
\mu_q(s) &=& \sum_{k=0}^{q-1}\tchoose{q}{k}\left(\ln\frac{1}{2}\right)^{q-k}\zeta^{(k)}(s){\,},\qquad q\in\N{\,},\\
\zeta^{(q)}(s) &=& \sum_{k=0}^q\tchoose{q}{k}\,\frac{B_{q-k}}{k+1}\left(\ln\frac{1}{2}\right)^{q-k-1}\mu_{k+1}(s){\,},\qquad q\in\N{\,},
\end{eqnarray*}
where
$$
\mu_q(s) ~=~ 2^{s-1}(\zeta^{(q)}(s)-\eta^{(q)}(s))-\zeta^{(q)}(s){\,},\qquad q\in\N.
$$

\index{Hurwitz zeta function!higher order derivatives|)}

\section{The Catalan number function}
\index{Catalan number function|(}

The Catalan number function is the restriction to $\R_+$ of the map $x\mapsto C_x$ defined on $(-\frac{1}{2},\infty)$ by
$$
C_x ~=~ \frac{1}{x+1}{2x\choose x}.
$$
This function satisfies the equation
$$
C_{x+1} ~=~ \left(4-\frac{6}{x+2}\right) C_x{\,}.
$$
The additive version of this equation reads $\Delta f=g$, where the function $g$ is the logarithm of a rational function. We observe that such equations have been thoroughly investigated by Anastassiadis \cite[p.~41]{Ana64} (see also Kuczma \cite{Kuc60}).

The equation above shows that the Catalan number function can be investigated using our results. Let us briefly study this function.

\parag{ID card} The function $C_x$ is clearly a $\Gamma$-type function and we immediately derive the following basic information.
$$
\begin{array}{|c|c|c|c|}
\hline g(x) & \text{Membership} & \deg g & \Sigma g(x) \\
\hline \emptybox \ln\left(4-\frac{6}{x+2}\right) & \cC^{\infty}\cap\cD^1\cap\cK^{\infty} & 0 & \ln C_x \\
\hline
\end{array}
$$

\parag{Analogue of Bohr-Mollerup's theorem} The function $C_x$ can be characterized as follows.
\begin{quote}
\emph{All solutions $f\colon\R_+\to\R_+$ to the equation
$$
(x+2){\,}f(x+1) ~=~ (4x+2){\,}f(x)
$$
for which $\ln f$ lies in $\cK^1$ are of the form $f(x)=c{\,}C_x$, where $c>0$.}
\end{quote}

\parag{Extended ID card} We have the following values:
$$
\begin{array}{|c|c|c|}
\hline \overline{\sigma}[g] & \sigma[g] & \gamma[g] \\
\hline \emptybox \frac{1}{2}\left(3+\ln\frac{1}{8\pi}\right) & \frac{1}{2}\left(3+\ln\frac{8}{27\pi}\right) & \frac{1}{2}\left(3+\ln\frac{4}{27\pi}\right) \\
\hline
\end{array}
$$
We also have the inequality
$$
|\gamma[g]| ~\leq ~ \frac{25}{8}\ln 5+\frac{39}{8}\ln 3-16\ln 2+\frac{3}{4} ~\approx ~ 0.04
$$
and the following representations
\begin{eqnarray*}
\gamma[g] &=& \int_1^{\infty}\frac{3(\{t\}-\frac{1}{2})}{(t+2)(2t+1)}{\,}dt{\,},\\
\sigma[g] &=& \int_0^1\ln C_{t+1}{\,}dt.
\end{eqnarray*}
Moreover, the analogue of Raabe's formula is
$$
\int_x^{x+1}\ln C_t{\,}dt ~=~ \ln\left(\frac{e^{\frac{3}{2}}(4x+2)^{x+\frac{1}{2}}}{\sqrt{\pi}{\,}(x+2)^{x+2}}\right),\qquad x>0.
$$

\parag{Generalized Stirling's and related formulas} For any $a\geq 0$, we have
$$
\frac{C_{x+a}}{C_x} ~\sim ~ 4^a\qquad\text{and}\qquad C_x ~\sim ~ \frac{4^x}{x^{3/2}\,\sqrt{\pi}}\qquad\text{as $x\to\infty$}.
$$
Also, the analogue of Burnside's formula gives
$$
\ln C_x - \ln\left(\frac{e^{\frac{3}{2}}(4x)^x}{\sqrt{\pi}{\,}(x+\frac{3}{2})^{x+\frac{3}{2}}}\right) ~\to ~0 \qquad\text{as $x\to\infty$}.
$$

\parag{Restriction to the natural integers} For any $n\in\N^*$ we have
$$
C_n ~=~ \frac{1}{n+1}{\,}\tchoose{2n}{n}.
$$

\parag{Eulerian and Weierstrassian forms} For any $x>0$, we have
$$
C_x ~=~ \frac{x+2}{4x+2}{\,}2^x\,
\prod_{k=1}^{\infty}\frac{\left(2-\frac{3}{k+3}\right)^x}{\left(2-\frac{3}{k+2}\right)^{x-1}\left(2-\frac{3}{x+k+2}\right)}
$$
and
$$
C_x ~=~ \frac{x+2}{4x+2}{\,}e^{-\frac{x}{2}}\,
\prod_{k=1}^{\infty}\frac{1+\frac{x}{k+2}}{1+\frac{2x}{2k+1}}{\,}e^{\frac{3x}{(k+2)(2k+1)}}.
$$

\index{Catalan number function|)} 

\chapter{Defining new multiple $\log\Gamma$-type functions}
\label{chapter:11}

In the previous chapter, we tested our results on some multiple $\log\Gamma$-type functions that are well-known special functions. It is clear, however, that there are many other multiple $\log\Gamma$-type functions that are still to be introduced and investigated, simply as principal indefinite sums of standard functions.

In this chapter, we introduce and investigate the following functions (we use the acronym PIS for ``principal indefinite sum'')
\begin{itemize}
\item The PIS of the digamma function.
\item The PIS of the Hurwitz zeta function.
\item The PIS of the generating function for the Gregory coefficients.
\end{itemize}
The latter two examples are examined here in a broad way. A deeper investigation of these examples can be carried out simply by following all the steps and recipes given in Chapter~\ref{chapter:9}.

\section{The PIS of the digamma function}
\index{digamma function!principal indefinite sum|(}
\index{principal indefinite sum!of the digamma function|(}

Let us see what our theory tells us when $g(x)=\psi(x)$ is the digamma function. We first observe that $g$ lies in $\cC^{\infty}\cap\cD^1\cap\cK^{\infty}$.

Using summation by parts, we can easily see that
$$
\Sigma\psi(x) ~=~ (x-1)(\psi(x)-1).
$$
Moreover, from the identity $H_{x-1}=\psi(x)+\gamma$, we obtain immediately
$$
\Sigma_x H_{x-1} ~=~ (x-1)(H_{x-1} -1).
$$

This example may seem very basic at first glance, but since $H_x$ is the discrete analogue of the function $\ln x$, we expect an important analogy between $\Sigma\psi(x)$ and $\Sigma\ln x=\ln\Gamma(x)$, at least in terms of asymptotic behaviors. Actually, the analogue of Burnside's formula shows that the function
$$
\ln\Gamma\left(x-\frac{1}{2}\right)+\frac{1}{2}(1-\ln(2\pi))
$$
is a very good approximation of $\Sigma\psi(x)$.

Interestingly, using \eqref{eq:convGPSI} we can easily derive the following additional identity
$$
\Sigma\psi(x) ~=~ \frac{1}{2}\left(1-\ln(2\pi)\right)+ D\ln G(x),\qquad x>0,
$$
where $G$ is the Barnes $G$-function\index{Barnes's $G$-function} (see Section~\ref{sec:Barnes558}).

\begin{project}
\textsl{Find a closed-form expression for the function $\Sigma_x\psi^2(x)$.} Using again summation by parts, we obtain
$$
\Sigma_x\psi^2(x) ~=~ (x-1)\,\psi^2(x)-(2x-1)\,\psi(x)+2x-2-\gamma.
$$
We also note that the function $\psi^2(x)$ lies in $\cC^{\infty}\cap\cD^1\cap\cK^{\infty}$, just as does the function $\psi(x)$. The investigation of this new function in the light of our results is left to the reader.
\end{project}

\parag{ID card} The following basic information about the functions $\psi(x)$ and $\Sigma\psi(x)$ follows trivially from the discussion above.
$$
\begin{array}{|c|c|c|c|}
\hline g(x) & \text{Membership} & \deg g & \Sigma g(x) \\
\hline \emptybox \psi(x) & \emptybox\cC^{\infty}\cap\cD^1\cap\cK^{\infty} & 0 & (x-1)(\psi(x)-1) \\
\hline
\end{array}
$$

\parag{Analogue of Bohr-Mollerup's theorem} The function $\Sigma\psi(x)$ can be characterized as follows.
\begin{quote}
\emph{All eventually convex or concave solutions $f\colon\R_+\to\R$ to the equation
$$
f(x+1)-f(x) ~=~ \psi(x)
$$
are of the form $f(x)=c+\Sigma\psi(x)$, where $c\in\R$.}
\end{quote}

\parag{Extended ID card} It is not difficult to see that
$$
\sigma[g] ~=~ \int_0^1\Sigma\psi(t+1){\,}dt ~=~ \frac{1}{2}(1-\ln(2\pi)).
$$
Hence we have the values
$$
\begin{array}{|c|c|c|}
\hline \overline{\sigma}[g] & \sigma[g] & \gamma[g] \\
\hline \emptybox \infty & \frac{1}{2}(1-\ln(2\pi)) &  \frac{1}{2}(1-\ln(2\pi)+\gamma) \\
\hline
\end{array}
$$

\begin{itemize}
\item \emph{Alternative representations of $\sigma[g]$}
\begin{eqnarray*}
\sigma[g] &=& -\frac{1}{2}\,\gamma -\sum_{k=1}^{\infty}\left(\ln k-\psi(k)-\frac{1}{2k}\right),\\
\sigma[g] &=& -\frac{1}{2}\,\gamma +\int_1^{\infty}\left(\{t\}-\frac{1}{2}\right)\psi_1(t){\,}dt,\\
\sigma[g] &=& \lim_{n\to\infty}\left(\left(n-\frac{1}{2}\right)\psi(n)-\ln\Gamma(n)-n+1\right).
\end{eqnarray*}

\item \emph{Alternative representations of $\gamma[g]$}
\begin{eqnarray*}
\sigma[g] &=& \int_1^{\infty}\left(\psi(\lfloor t\rfloor)-\psi(t)+\frac{1}{2\lfloor t\rfloor}\right)dt,\\
\sigma[g] &=& \int_1^{\infty}\left(\psi(\lfloor t\rfloor)-\psi(t)+\frac{\{t\}}{\lfloor t\rfloor}\right)dt.
\end{eqnarray*}

\item \emph{Generalized Binet's function}. For any $q\in\N^*$ and any $x>0$,
\begin{eqnarray*}
J^{q+1}[\Sigma\psi](x) &=& \Sigma\psi(x)-\frac{1}{2}(1-\ln(2\pi))-\ln\Gamma(x)+\frac{1}{2}\,\psi(x)\\
&& \null +\sum_{j=0}^{q-2}G_{j+2}(-1)^j\,\mathrm{B}(j+1,x),
\end{eqnarray*}
where $(x,y)\mapsto\mathrm{B}(x,y)$ is the beta function.

\item \emph{Analogue of Raabe's formula}
$$
\int_x^{x+1}\Sigma\psi(t){\,}dt ~=~ \frac{1}{2}(1-\ln(2\pi))+\ln\Gamma(x),\qquad x>0.
$$

\item \emph{Alternative characterization}. The function $f=\Sigma\psi$ is the unique solution lying in $\cC^0\cap\cK^1$ to the equation
$$
\int_x^{x+1}f(t){\,}dt ~=~ \frac{1}{2}(1-\ln(2\pi))+\ln\Gamma(x),\qquad x>0.
$$
\end{itemize}

\parag{Inequalities} The following inequalities hold for any $x>0$, any $a\geq 0$, and any $n\in\N^*$.
\begin{itemize}
\item \emph{Symmetrized generalized Wendel's inequality} (equality if $a\in\{0,1\}$)
\begin{eqnarray*}
|\Sigma\psi(x+a)-\Sigma\psi(x)-a\psi(x)| &\leq & |a-1|{\,}|\psi(x+a)-\psi(x)|\\
& \leq & \lceil a\rceil\,\frac{|a-1|}{x}{\,}.
\end{eqnarray*}

\item \emph{Symmetrized generalized Wendel's inequality} (discrete version)
$$
|\Sigma\psi(x)-f^1_n[\psi](x)| ~\leq ~ |x-1|{\,}|\psi(n+x)-\psi(n)| ~\leq ~ \lceil x\rceil\,\frac{|x-1|}{n}{\,},
$$
where
$$
f^1_n[\psi](x) ~=~ (n+x-1)(\psi(n)-\psi(x+n))+(x-1)\,\psi(x)+1.
$$

\item \emph{Symmetrized Stirling's formula-based inequalities}
\begin{eqnarray*}
\lefteqn{\left|\Sigma\psi\left(x+\frac{1}{2}\right)-\frac{1}{2}(1-\ln(2\pi))-\ln\Gamma(x)\right|}\\
&\leq & \left|\Sigma\psi(x)-\frac{1}{2}(1-\ln(2\pi))-\ln\Gamma(x)+\frac{1}{2}\,\psi(x)\right|\\
&\leq & x\ln x-\ln\Gamma(x)-\frac{1}{2}\,\psi(x)-x+\frac{1}{2}\ln(2\pi) ~\leq ~ \frac{1}{2x}{\,}.
\end{eqnarray*}

\item \emph{Generalized Gautschi's inequality}
\begin{eqnarray*}
(a-\lceil a\rceil)\,\psi(x+\lceil a\rceil) &\leq & (a-\lceil a\rceil){\,}(\Sigma\psi)'(x+\lceil a\rceil)\\
&\leq & (\Sigma\psi)(x+a)-(\Sigma\psi)(x+\lceil a\rceil)\\
&\leq & (a-\lceil a\rceil)\,\psi(x+\lfloor a\rfloor).
\end{eqnarray*}
\end{itemize}

\parag{Generalized Stirling's and related formulas} For any $a\geq 0$, we have the following limits and asymptotic equivalence as $x\to\infty$,
$$
\Sigma\psi(x+a)-\Sigma\psi(x)-a\psi(x) ~\to ~0,\qquad\Sigma\psi(x+a) ~\sim ~ \ln\Gamma(x),
$$
$$
\Sigma\psi(x)-\ln\Gamma(x)+\frac{1}{2}\,\psi(x) ~\to ~ \frac{1}{2}(1-\ln(2\pi)),
$$
$$
\Sigma\psi(x)-\ln\Gamma\left(x-\frac{1}{2}\right) ~\to ~ \frac{1}{2}(1-\ln(2\pi)).
$$

\parag{Asymptotic expansions} For any $q\in\N^*$ we have the following expansion as $x\to\infty$
$$
\Sigma\psi(x) ~=~ \frac{1}{2}(1-\ln(2\pi))+\sum_{k=0}^q\frac{B_k}{k!}\,\psi_{k-1}(x)+O(\psi_q(x)).
$$
Setting $q=3$ for instance, we get
$$
\Sigma\psi(x) ~=~ \frac{1}{2}(1-\ln(2\pi))+\ln\Gamma(x)-\frac{1}{2}\,\psi(x)+\frac{1}{12}\,\psi_1(x)+O(x^{-3}).
$$

\parag{Generalized Liu's formula} For any $x>0$, we have
$$
\Sigma\psi(x) ~=~ \frac{1}{2}(1-\ln(2\pi))+\ln\Gamma(x)-\frac{1}{2}\,\psi(x)-\int_0^{\infty}\left(\{t\}-\frac{1}{2}\right)\psi_1(x+t){\,}dt.
$$

\parag{Limit and series representations} Let us briefly examine the main limit and series representations of $\Sigma\psi(x)$. The additional representations obtained by differentiation and integration are left to the reader.
\begin{itemize}
\item \emph{Eulerian and Weierstrassian forms}. We have
$$
\Sigma\psi(x) ~=~ -\gamma x-\psi(x)-\sum_{k=1}^{\infty}\left(\psi(x+k)-\psi(k)-\frac{x}{k}\right),
$$
$$
\Sigma\psi(x) ~=~ -(1+\gamma) x-\psi(x)-\sum_{k=1}^{\infty}\left(\psi(x+k)-\psi(k)-x\,\psi_1(k)\right).
$$
\item \emph{Analogue of Gauss' limit}. We have
$$
\Sigma\psi(x) ~=~ (x-1)\,\psi(x)+1 + \lim_{n\to\infty}(n+x-1)(\psi(n)-\psi(x+n)).
$$
\end{itemize}

\parag{Gregory's formula-based series representation} For any $x>0$ we have
$$
\Sigma\psi(x) ~=~ \frac{1}{2}(1-\ln(2\pi))+\ln\Gamma(x)-\frac{1}{2}\,\psi(x) + \sum_{n=0}^{\infty}|G_{n+2}|\,\mathrm{B}(n+1,x){\,}.
$$
Setting $x=1$ in this identity yields the following analogue of Fontana-Mascheroni’s series
$$
\sum_{n=2}^{\infty}\frac{|G_n|}{n-1} ~=~ -\frac{1}{2}+\frac{1}{2}\ln(2\pi)-\frac{1}{2}\,\gamma{\,},
$$
and the right-hand value is precisely the generalized Euler constant\index{Euler's constant!generalized} $\gamma[\psi]$ associated with the digamma function. We also observe that this latter identity was obtained by Kowalenko~\cite[p.~431]{Kow10}.

\parag{Analogue of Gauss' multiplication formula} Since we do not have any simple expression for the function $\Sigma_x\psi(\frac{x}{m})$, it seems difficult to find a usable multiplication formula here. We had the same difficulty in the investigation of the Barnes $G$-function (see Section~\ref{sec:Barnes558}). However, we can use Proposition~\ref{prop:8Stir44Gau7Mult} to derive the following convergence result. For any $m\in\N^*$ we have
$$
\sum_{j=0}^{m-1}\Sigma\psi\left(\frac{x+j}{m}\right)-m\,\ln\Gamma\left(\frac{x}{m}\right)
+\frac{1}{2}\,\psi\left(\frac{x}{m}\right) ~\to ~ \frac{m}{2}{\,}(1-\ln(2\pi))\quad\text{as $x\to\infty$}.
$$

\parag{Analogue of Wallis's product formula} The following analogue of Wallis's formula was already found in Project~\ref{app:WallPsi5}
$$
\lim_{n\to\infty}\left(-\ln(4n)+2\sum_{k=1}^{2n}(-1)^k\psi(k)\right) ~=~ \gamma{\,}.
$$

\parag{Generalized Webster's functional equation} For any $m\in\N^*$, there is a unique eventually monotone solution $f\colon\R_+\to\R$ to the equation
$$
\sum_{j=0}^{m-1}f\left(x+\frac{j}{m}\right) ~=~ \psi(x)
$$
namely
$$
f(x) ~=~ \Sigma\psi\left(x+\frac{1}{m}\right)-\Sigma\psi(x).
$$

\parag{Analogue of Euler's series representation of $\gamma$} We have $(\Sigma\psi)'(1)=-1-\gamma$ and
$$
(\Sigma\psi)^{(k)}(1) ~=~ k\,\psi_{k-1}(1) ~=~ (-1)^k k!\,\zeta(k),\qquad k\geq 2.
$$
The Taylor series expansion of $\Sigma\psi(x+1)$ about $x=0$ is
$$
\Sigma\psi(x+1) ~=~ (-1-\gamma)x+\sum_{k=2}^{\infty}\zeta(k)(-x)^k,\qquad |x|<1.
$$
Integrating both sides of this equation on $(0,1)$, we obtain
$$
\sum_{k=2}^{\infty}(-1)^k\frac{\zeta(k)}{k+1} ~=~ 1+\frac{1}{2}(\gamma -\ln(2\pi)).
$$

\parag{Analogue of the reflection formula} For any $x\in\R\setminus\Z$, we have
$$
\Sigma\psi(1+x)+\Sigma\psi(1-x) ~=~ 1-\pi x\cot(\pi x).
$$

\index{principal indefinite sum!of the digamma function|)}
\index{digamma function!principal indefinite sum|)}

\section{The PIS of the Hurwitz zeta function}
\index{Hurwitz zeta function!principal indefinite sum|(}
\index{principal indefinite sum!of the Hurwitz zeta function|(}


In this section we apply our theory to investigate the function
$$
x ~\mapsto ~\zeta_2(s,x) ~\stackrel{\text{def}}{=} ~ \Sigma_x\,\zeta(s,x)
$$
for any fixed $s\in\R\setminus\{1\}$.

Using summation by parts, we observe that if $s\neq 2$ we have
$$
\zeta_2(s,x) ~=~ (x-1)\,\zeta(s,x)-\zeta(s-1,x)+\zeta(s-1).
$$
If $s=2$, then
$$
\zeta_2(2,x) ~=~ \Sigma_x\psi_1(x) ~=~ (x-1)\,\psi_1(x)+\psi(x)+\gamma.
$$

To keep this investigation simple, here we focus on some selected results only and we restrict ourselves to the case when $s>2$, for which the sequence $n\mapsto\zeta(s,n)$ is summable. In this case, by \eqref{eq:diffzz092} we obtain immediately the following surprising identity (see also Paris \cite{Par21})
$$
\sum_{k=1}^{\infty}\zeta(s,k) ~=~ \zeta(s-1).
$$
We also have
$$
\int_1^{\infty}\zeta(s,t){\,}dt ~=~ \frac{\zeta(s-1)}{s-1}{\,}.
$$

\parag{ID card} We can easily summarize the basic information as follows:
$$
\begin{array}{|c|c|c|c|}
\hline g_s(x) & \text{Membership} & \deg g_s & \Sigma g_s(x) \\
\hline \emptybox \zeta(s,x) & \emptybox\cC^{\infty}\cap\cD^{-1}\cap\cK^{\infty} & -1 & \zeta_2(s,x) \\
\hline
\end{array}
$$

\parag{Analogue of Bohr-Mollerup's theorem} The function $\zeta_2(s,x)$ can be characterized as follows.
\begin{quote}
\emph{All eventually monotone solutions $f_s\colon\R_+\to\R$ to the equation
$$
f_s(x+1)-f_s(x) ~=~ \zeta(s,x)
$$
are of the form $f_s(x)=c_s+\zeta_2(s,x)$, where $c_s\in\R$.}
\end{quote}

\parag{Extended ID card} We immediately have
$$
\sigma[g_s] ~=~ \sum_{k=1}^{\infty}\zeta(s,k)-\int_1^{\infty}\zeta(s,t){\,}dt ~=~ \frac{s-2}{s-1}\,\zeta(s-1).
$$
Hence we have the values
$$
\begin{array}{|c|c|c|}
\hline \overline{\sigma}[g_s] & \sigma[g_s] & \gamma[g_s] \\
\hline \emptybox \infty & \frac{s-2}{s-1}\,\zeta(s-1) & \gamma[g_s]=\sigma[g_s] \\
\hline
\end{array}
$$

\begin{itemize}
\item \emph{Alternative representations of $\sigma[g_s]=\gamma[g_s]$}
\begin{eqnarray*}
\sigma[g_s] &=& \int_0^1\zeta_2(s,t+1){\,}dt ~=~ \int_1^{\infty}\left(\zeta(s,\lfloor t\rfloor)-\zeta(s,t)\right){\,}dt{\,},\\
\sigma[g_s] &=& \frac{1}{2}\,\zeta(s)+s\,\int_1^{\infty}\left(\frac{1}{2}-\{t\}\right)\zeta(s+1,t){\,}dt.
\end{eqnarray*}

\item \emph{Analogue of Raabe's formula}
$$
\int_x^{x+1}\zeta_2(s,t){\,}dt ~=~ \zeta(s-1)-\frac{\zeta(s-1,x)}{s-1}{\,},\qquad x>0.
$$
\end{itemize}

\parag{Inequalities and asymptotic analysis} For any $a\geq 0$ and any $x>0$, we have
\begin{eqnarray*}
\left|\zeta_2(s,x+a)-\zeta_2(s,x)\right| &\leq & \lceil a\rceil{\,}\zeta(s,x){\,},\\
\left|\zeta_2(s,x)-\zeta(s-1)+\frac{\zeta(s-1,x)}{s-1}\right| &\leq & \zeta(s,x).
\end{eqnarray*}
In particular, we have
$$
\zeta_2(s,x) ~\to ~\zeta(s-1)\qquad\text{as $x\to\infty$}.
$$

\parag{Generalized Liu's formula} For any $x>0$ we have
\begin{eqnarray*}
\zeta_2(s,x) &=& \zeta(s-1)-\frac{\zeta(s-1,x)}{s-1}-\frac{1}{2}\,\zeta(s,x)\\
&& \null +s\,\int_0^{\infty}\left(\{t\}-\frac{1}{2}\right)\zeta(s+1,x+t){\,}dt.
\end{eqnarray*}

\parag{Eulerian and Weierstrassian forms} For any $x>0$, we have
$$
\zeta_2(s,x) ~=~ \zeta(s-1)-\sum_{k=0}^{\infty}\zeta(s,x+k).
$$
and this series converges uniformly on $\R_+$ and can be integrated and differentiated term by
term.

\parag{Gregory's formula-based series representation} For any $x>0$ we have
\begin{eqnarray*}
\zeta_2(s,x) &=& \zeta(s-1)-\frac{\zeta(s-1,x)}{s-1}-\sum_{n=0}^{\infty}G_{n+1}\,\Delta_x^n\zeta(s,x)\\
&=& \zeta(s-1)-\frac{\zeta(s-1,x)}{s-1}-\sum_{n=0}^{\infty}|G_{n+1}|\,\sum_{k=0}^n(-1)^k\tchoose{n}{k}\zeta(s,x+k){\,}.
\end{eqnarray*}
Setting $x=1$ in this identity yields the analogue of Fontana-Mascheroni series
$$
\sum_{n=0}^{\infty}|G_{n+1}|\,\sum_{k=0}^n(-1)^k\tchoose{n}{k}\zeta(s,k+1) ~=~ \frac{s-2}{s-1}\,\zeta(s-1){\,}.
$$

\parag{Analogue of Wallis's product formula} The analogue of Wallis's formula is
\begin{eqnarray*}
\sum_{k=1}^{\infty}(-1)^{k-1}\zeta(s,k) &=& (2-2^{1-s})\zeta(s)+(1-2^{1-s})\zeta(s-1)\\
&& \null -2^{1-s}\,\sum_{k=0}^{\infty}\zeta\left(s,k+\frac{1}{2}\right).
\end{eqnarray*}
This formula is actually obtained by combining Proposition~\ref{prop:SigTr62} with the duplication formula for the Hurwitz zeta function
$$
2\,\zeta(s,2x) ~=~ 2^{1-s}\zeta(s,x)+2^{1-s}\zeta\left(s,x+\frac{1}{2}\right).
$$
On the other hand we also have (see Paris \cite{Par21})
$$
\sum_{k=1}^{\infty}(-1)^{k-1}\zeta(s,k) ~=~ (1-2^{-s})\,\zeta(s),\qquad s>1.
$$
Combining this formula with the analogue of Wallis's formula, we derive the following identity
$$
\sum_{k=0}^{\infty}\zeta\left(s,k+\frac{1}{2}\right) ~=~ (2^{s-1}-2^{-1})\,\zeta(s)+(2^{s-1}-1)\,\zeta(s-1).
$$

\parag{Taylor series expansion} We have
$$
(\Sigma g_s)^{(k)}(1) ~=~ -k!\,\tchoose{-s}{k}\,\zeta(s+k-1),\qquad k\in\N^*.
$$
The Taylor series expansion of $\zeta_2(s,x+1)$ about $x=0$ is
$$
\zeta_2(s,x+1) ~=~ -\sum_{k=1}^{\infty}\tchoose{-s}{k}\,\zeta(s+k-1){\,}x^k{\,},\qquad |x|<1.
$$

\index{principal indefinite sum!of the Hurwitz zeta function|)}
\index{Hurwitz zeta function!principal indefinite sum|)}

\section{The PIS of the generating function for the Gregory coefficients}
\label{sec:PISGre47}
\index{principal indefinite sum!of the generating function for the Gregory coefficients|(}
\index{Gregory coefficients}

Let us investigate the function $\Sigma h_p$ for any $p\in\N^*$, where $h_p\colon\R_+\to\R$ is defined by the equation
$$
h_p(x) ~=~ \frac{x^p}{\ln(x+1)} ~=~ x^p\,\mathrm{li}'(x+1)\qquad\text{for $x>0$}
$$
and $\mathrm{li}(x)$ is the logarithmic integral function\index{logarithmic integral function} defined for all positive real numbers $x\neq 1$ by the integral
$$
\mathrm{li}(x) ~=~ \int_0^x\frac{1}{\ln t}{\,}dt{\,}.
$$
Incidentally, when $p=1$, this function reduces to the ordinary generating function for the sequence $n\mapsto G_n$. That is,
$$
h_1(x) ~=~ \sum_{n=0}^{\infty}G_n{\,}x^n,\qquad |x|<1.
$$
More generally, $h_p(x)=x^{p-1}h_1(x)$ is the ordinary generating function for the right-shifted sequence $n\mapsto G_{n-p+1}$, that is the sequence
$$
0,\ldots,0,G_0,G_1,G_2,\ldots
$$
with $p-1$ leading $0$'s.

We also note that the function $h_p$ has the following integral representation
$$
h_p(x) ~=~ x^{p-1}\int_0^1(x+1)^s{\,}ds.
$$
This latter representation actually suggests introducing, for any $p\in\N^*$, the function $g_p\colon\R_+\to\R$ defined by the equation
$$
g_p(x) ~=~ \int_0^1(x+1)^{s+p-1}{\,}ds ~=~ \frac{x(x+1)^{p-1}}{\ln(x+1)}\qquad\text{for $x>0$}.
$$
The conversion formulas between the $h_p's$ and the $g_p's$ are simply given by the following equations
\begin{eqnarray*}
g_p(x) &=& \sum_{k=1}^p\tchoose{p-1}{k-1}{\,}h_k(x){\,},\\
h_p(x) &=& \sum_{k=1}^p{\,}(-1)^{p-k}\tchoose{p-1}{k-1}{\,}g_k(x){\,}.
\end{eqnarray*}
In particular, we have $g_1=h_1$.

Since the function $g_p$ has a nicer integral form than $h_p$, for the sake of simplicity we will investigate the function $\Sigma g_p$ for any $p\in\N^*$. By Proposition~\ref{prop:gStHa}, the function $\Sigma h_p$ can then be obtained by applying the operator $\Sigma$ to both sides of the second conversion formula above.

\begin{remark}
We observe that the function $g_p$ is also the ordinary generating function for the sequence $n\mapsto\psi_n(p-1)$, where $\psi_n$ is the $n$th degree Bernoulli polynomial of the second kind\index{Bernoulli polynomials!of the second kind} (see Section~\ref{sec:BP2k27}).
\end{remark}

\parag{ID card} It is not difficult to see that both $g_p$ and $h_p$ lie in $\cC^{\infty}\cap\cD^p\cap\cK^{\infty}$ and hence also in $\cK^p$. We also have $\deg g_p=\deg h_p=p-1$.

From the integral form of $g_p$ above, we can easily derive the following explicit form of $\Sigma g_p$ (after replacing $1-s$ with $s$ in the integral)
$$
\Sigma g_p(x) ~=~ \int_0^1\zeta(s-p,2){\,}ds-\int_0^1\zeta(s-p,x+1){\,}ds,
$$
that is,
$$
\Sigma g_p(x) ~=~ \tau_p-\int_0^1\zeta(s-p,x+1){\,}ds{\,},
$$
with
$$
\tau_p ~=~ -1+\int_0^1\zeta(s-p){\,}ds{\,},
$$
where $\zeta(s,x)$ is the Hurwitz zeta function.

\begin{remark}
For any integer $n\geq 2$, the \emph{harmonic number function of order $n$}\index{harmonic number function!of order $n$} is defined on $(-1,\infty)$ by
$$
x ~\mapsto ~ H^{(n)}_x ~=~ \zeta(n)-\zeta(n,x+1),
$$
see, e.g., Srivastava and Choi \cite[p.~266]{SriCho12}. Extending this definition to noninteger orders by writing
$$
H_x^{(s)} ~=~ \zeta(s)-\zeta(s,x+1),\qquad s\in\R\setminus\{1\},
$$
we obtain the following very compact integral representation
$$
\Sigma g_p(x) ~=~ -1+\int_0^1H_x^{(s-p)}{\,}ds{\,},\qquad x>0.\qedhere
$$
\end{remark}

\parag{Analogue of Bohr-Mollerup's theorem} Thus defined, $\Sigma h_p$ is a $\log\Gamma_p$-type function that lies in $\cC^{\infty}\cap\cD^{p+1}\cap\cK^{\infty}$. This function can be characterized as follows.
\begin{quote}
\emph{All solutions $f\colon\R_+\to\R$ to the equation $\Delta f=h_p$ that lie in $\cK^p$ are of the form
$$
f(x) ~=~ c_p + \sum_{k=1}^p{\,}(-1)^{p-k}\tchoose{p-1}{k-1}{\,}\Sigma g_k(x){\,},
$$
where $c_p\in\R$.}
\end{quote}

\parag{Extended ID card} Let us compute the asymptotic constant associated with the function $g_p$. We have
\begin{eqnarray*}
\sigma[g_p] &=& \int_0^1\Sigma g_p(t+1){\,}dt ~=~ \tau_p-\int_0^1\!\int_0^1\zeta(s-p,t+2){\,}dt{\,}ds\\
&=& \tau_p +\int_0^1\frac{2^{s+p}}{s+p}{\,}ds{\,}.
\end{eqnarray*}
Using the change of variable $u=2^{s+p}$, we finally obtain
$$
\sigma[g_p] ~=~ \tau_p + \int_{2^p}^{2^{p+1}}\frac{1}{\ln t}{\,}dt ~=~ \tau_p +\mathrm{li}(2^{p+1})-\mathrm{li}(2^p).
$$
Now, we have
\begin{eqnarray*}
\int_1^xg_p(t){\,}dt &=& \int_0^1\frac{(x+1)^{s+p}-2^{s+p}}{s+p}{\,}ds\\ &=& \mathrm{li}((x+1)^{p+1})-\mathrm{li}((x+1)^p)-\mathrm{li}(2^{p+1})+\mathrm{li}(2^p)
\end{eqnarray*}
and hence the analogue of Raabe's formula is
$$
\int_x^{x+1}\Sigma g_p(t){\,}dt ~=~ \tau_p +\mathrm{li}((x+1)^{p+1})-\mathrm{li}((x+1)^p){\,},\qquad x>0.
$$

\parag{Generalized Stirling's and related formulas when $p=1$} For any $a\geq 0$, we have the following limits and
asymptotic equivalence as $x\to\infty$,
$$
\Sigma g_1(x+a)-\Sigma g_1(x)-a{\,}\frac{x}{\ln(x+1)} ~\to ~ 0,
$$
$$
\Sigma g_1(x) -\mathrm{li}((x+1)^2)+\mathrm{li}(x+1)+\frac{x}{2\ln(x+1)}~\to ~ \tau_1{\,},
$$
$$
\Sigma g_1(x+a) ~\sim ~ \mathrm{li}((x+1)^2)-\mathrm{li}(x+1).
$$
Upon differentiation,
$$
D\Sigma g_1(x)-\frac{x-\frac{1}{2}}{\ln(x+1)} ~\to ~0,\qquad D^{k+1}\Sigma g_1(x) ~\to ~0,\quad k\in\N^*,
$$
$$
D\Sigma g_1(x+a) ~\sim ~ \frac{x}{\ln(x+1)}{\,},
$$
where
$$
D\Sigma g_1(x) ~=~ \int_0^1(s-1)\,\zeta(s,x+1){\,}ds.
$$

\parag{Limit and series representations when $p=1$} The Eulerian and Weierstrassian forms are
$$
\Sigma g_1(x) ~=~ -g_1(x)+x{\,}g_1(1)-\sum_{k=1}^{\infty}\left(g_1(x+k)-g(k)-x\,\Delta_kg_1(k)\right)
$$
and
$$
\Sigma g_1(x) ~=~ -g_1(x)+x{\,}D\Sigma g_1(1)-\sum_{k=1}^{\infty}\left(g_1(x+k)-g(k)-x{\,}g'_1(k)\right),
$$
where
$$
D\Sigma g_1(1) ~=~ \int_0^1(s-1)\,\zeta(s,2){\,}ds ~=~ \frac{1}{2}-\int_0^1s\,\zeta(1-s){\,}ds.
$$

\parag{Gregory's formula-based series representation when $p=1$} Proposition~\ref{prop:6699rem} provides the following series representation: for any $x>0$ we have
\begin{eqnarray*}
\Sigma g_1(x) &=& \tau_1 +\mathrm{li}((x+1)^2)-\mathrm{li}(x+1)-\sum_{n=0}^{\infty}G_{n+1}\,\Delta^ng(x)\\
&=& \tau_1 +\mathrm{li}((x+1)^2)-\mathrm{li}(x+1)-\sum_{n=0}^{\infty}|G_{n+1}|\,\sum_{k=0}^n(-1)^k\tchoose{n}{k}\,\frac{x+k}{\ln(x+k+1)}{\,}.
\end{eqnarray*}
Setting $x=1$ in this identity, we obtain the following analogue of Fontana-Mascheroni’s series
$$
\sigma[g_1] ~=~ \tau_1 +\mathrm{li}(4)-\mathrm{li}(2) ~=~ \sum_{n=0}^{\infty}|G_{n+1}|\,\sum_{k=0}^n(-1)^k\tchoose{n}{k}\,\frac{k+1}{\ln(k+2)}{\,}.
$$

\parag{Analogue of Gauss' multiplication formula} For any $m\in\N^*$ and any $x>0$, we have
$$
\sum_{j=0}^{m-1}\Sigma g_p\left(x+\frac{j}{m}\right) ~=~ m{\,}\tau_p-\int_0^1\sum_{j=0}^{m-1}\zeta\left(s-p,x+1+\frac{j}{m}\right)ds.
$$
Using the multiplication formula for the Hurwitz zeta function, we then obtain the following analogue of Gauss' multiplication formula
$$
\sum_{j=0}^{m-1}\Sigma g_p\left(x+\frac{j}{m}\right) ~=~ m{\,}\tau_p-\int_0^1m^{s-p}\,\zeta(s-p,mx+m){\,}ds.
$$

Now, using \eqref{eq:MultThmGeneq} we obtain
\begin{eqnarray*}
\Sigma_x{\,} g_p\left(\frac{x}{m}\right) &=& \sum_{j=0}^{m-1}\Sigma g_p\left(\frac{x+j}{m}\right) - \sum_{j=1}^m\Sigma g_p\left(\frac{j}{m}\right)\\
&=& \int_0^1m^{s-p}\left(\zeta(s-p,m+1)-\zeta(s-p,x+m)\right)ds.
\end{eqnarray*}
Corollary~\ref{cor:Riem482} then tells us that the sequences
$$
m ~\mapsto ~ \int_0^1m^{s-p-1}\left(\zeta(s-p,2m)-\zeta(s-p,mx+m)\right)ds
$$
and
$$
m ~\mapsto ~ \int_0^1m^{s-p-1}\left(\zeta(s-p,m+1)-\zeta(s-p,mx+m)\right)ds
$$
converge to the integrals
$$
\int_1^x g_p(t){\,}dt \qquad\text{and}\qquad \int_0^x g_p(t){\,}dt{\,},
$$
respectively.

\index{principal indefinite sum!of the generating function for the Gregory coefficients|)}

\chapter{Further examples}
\label{chapter:12}

The scope of applications of our theory is very wide since it applies to any function lying in the domain of the map $\Sigma$. In Chapter~\ref{chapter:10}, we made a thorough study of some standard special functions. In Chapter~\ref{chapter:11}, we defined and investigated new functions as principal indefinite sums of known functions. In the present chapter, we briefly discuss further examples that the reader may want to explore in more detail.

\section{The multiple gamma functions}

The multiple gamma functions\index{multiple gamma function} introduced in Section~\ref{subsec:MLG-t} can also be studied through the sequence of functions $G_0, G_1,\ldots$, defined by (see Srivastava and Choi \cite[p.~56]{SriCho12})
$$
G_p(x) ~=~ \Gamma_p(x)^{(-1)^{p-1}},\qquad p\in\N.
$$
Equivalently, we have $G_0(x)=x$ and
$$
\ln G_p(x) ~=~ \Sigma\ln G_{p-1}(x)\qquad\text{for all $p\in\N^*$}.
$$
Clearly, the function $\ln G_{p-1}(x)$ lies in $\cC^{\infty}\cap\cD^p\cap\cK^{\infty}$ and we have $\deg(\ln\circ G_p)=p$. Moreover, this sequence of functions can naturally be extended to $p=-1$ by defining
$$
G_{-1}(x) ~=~ 1+\frac{1}{x}{\,}.
$$

Just as for the gamma function and the Barnes $G$-function,\index{Barnes's $G$-function} we can derive the following asymptotic equivalence: for any $a\geq 0$,
$$
G_p(x+a) ~\sim ~ \prod_{j=0}^pG_{p-j}(x)^{{a\choose j}} \qquad\text{as $x\to\infty$},
$$
with equality if $a\in\{0,1,\ldots,p\}$. We also have the following product representation
$$
G_p(x) ~=~ \frac{1}{G_{p-1}(x)}{\,}
\prod_{k=1}^{\infty}\frac{G_{p-1}(k)}{G_{p-1}(x+k)}{\,}G_{p-2}(k)^x G_{p-3}(k)^{{x\choose 2}}{\,}\cdots {\,} G_{-1}(k)^{{x\choose p}}
$$
and the recurrence formula
$$
\ln G_p(x) ~=~ -(x-1){\,}\sigma[D\ln\circ G_{p-1}]+\int_1^x\Sigma D\ln G_{p-1}(t){\,}dt.
$$
For example, one can show that
\begin{multline*}
\ln G_3(x) ~=~ -\frac{1}{8}{\,}x(x-1)(2x-5)+\frac{1}{4}{\,}x(x-2)\ln(2\pi)+\tchoose{x-1}{2}\ln\Gamma(x)\\
\null -\frac{1}{2}(2x-3){\,}\psi_{-2}(x)+\psi_{-3}(x)-x\,\psi_{-3}(1).
\end{multline*}
This latter formula can also be established using the characterization of $G_3$ as a $3$-convex solution to the equation $\Delta f(x)=\ln G_2(x)$.

\section{The regularized incomplete gamma function}
\index{regularized incomplete gamma function}

Consider the $2$-variable function $Q(x,s)=\Gamma(x,s)/\Gamma(x)$ on $\R_+^2$, where $\Gamma(x,s)$ is the upper incomplete gamma function. Thus defined, the function $Q(x,s)$ satisfies the difference equation
$$
Q(x+1,s)-Q(x,s) ~=~ \frac{e^{-s}s^x}{\Gamma(x+1)}{\,}.
$$
For any $s>0$, we define the function $g_s\colon\R_+\to\R$ by
$$
g_s(x) ~=~ \frac{e^{-s}s^x}{\Gamma(x+1)}{\,}.
$$
This function lies in $\cC^{\infty}\cap\cD^{-1}\cap\cK^{\infty}$ and has the property that $\Sigma g_s(x)=Q(x,s)-e^{-s}$. We also note that the Eulerian form of $Q(x,s)$ is
\begin{eqnarray*}
Q(x,s) &=& 1-\sum_{k=0}^{\infty}g_s(x+k) ~=~ 1-\frac{e^{-s}s^x}{\Gamma(x+1)}\,\sum_{k=0}^{\infty}\frac{\Gamma(x+1)}{\Gamma(x+k+1)}{\,}s^k\\
&=& 1- \frac{e^{-s}s^x}{\Gamma(x+1)}\,\sum_{k=0}^{\infty}x^{\underline{-k}}{\,}s^k{\,},
\end{eqnarray*}
where $x^{\underline{-k}}=\Gamma(x+1)/\Gamma(x+k+1)$ for any $k\in\N$.

%

\section{The error function}

Recall that the Gauss error function\index{Gauss error function} $\mathrm{erf}(x)$ is defined by the equation
$$
\mathrm{erf}(x) ~=~ \frac{2}{\sqrt{\pi}}\,\int_0^xe^{-t^2}{\,}dt\qquad\text{for $x>0$}.
$$
To study this function, we could for instance work with the function $g(x)=\Delta\mathrm{erf}(x)$. Instead, let us consider the function $g\colon\R_+\to\R$ defined by the equation
$$
g(x) ~=~ \frac{2}{\sqrt{\pi}}{\,}e^{-x^2}\qquad\text{for $x>0$}.
$$
It clearly lies in $\cC^{\infty}\cap\cD^{-1}\cap\cK^{\infty}$. Thus, the Eulerian form of $\Sigma g$ is given by the identity
$$
\Sigma g(x) ~=~ \frac{2}{\sqrt{\pi}}\,\sum_{k=0}^{\infty}(e^{-(k+1)^2}-e^{-(k+x)^2}).
$$
The generalized Stirling formula yields the following limit
$$
\mathrm{erf}(x) + \frac{2}{\sqrt{\pi}}\,\sum_{k=0}^{\infty}e^{-(k+x)^2} ~\to ~ 1\qquad\text{as $x\to\infty$}.
$$
Incidentally, the analogue of Legendre's duplication formula provides the surprising identity
$$
\sum_{k=0}^{\infty}(e^{-(k+1)^2}-e^{-(k+\frac{x}{2})^2}-e^{-(k+\frac{x+1}{2})^2}+e^{-(k+\frac{1}{2})^2}
-e^{-(\frac{k+1}{2})^2}+e^{-(\frac{k+x}{2})^2}) ~=~ 0.
$$

\section{The exponential integral}

Recall that the exponential integral\index{exponential integral} $E_1(x)$ is defined by the equation
$$
E_1(x) ~=~ \int_x^{\infty}\frac{e^{-t}}{t}{\,}dt\qquad\text{for $x>0$}.
$$
Similarly to the previous example, let us consider the function $g\colon\R_+\to\R$ defined by the equation
$$
g(x) ~=~ \frac{e^{-x}}{x}\qquad\text{for $x>0$}.
$$
It lies in $\cC^{\infty}\cap\cD^{-1}\cap\cK^{\infty}$. Thus, the Eulerian form of $\Sigma g$ is given by the identity
$$
\Sigma g(x) ~=~ \sum_{k=0}^{\infty}\left(\frac{e^{-(k+1)}}{k+1}-\frac{e^{-(k+x)}}{k+x}\right).
$$
The generalized Stirling formula easily provides the following convergence result
$$
E_1(x) -\sum_{k=0}^{\infty}\frac{e^{-(k+x)}}{k+x} ~\to ~ 0\qquad\text{as $x\to\infty$}.
$$
Moreover, the analogue of Raabe's formula is
$$
\int_x^{x+1}\Sigma g(t){\,}dt ~=~ 1-\ln(e-1)-E_1(x),\qquad x>0.
$$

\section{The hyperfactorial function}
\label{sec:hypFacF4}

The hyperfactorial function\index{hyperfactorial function|textbf} (or $K$-function) is the function $K\colon\R_+\to\R_+$ defined by the equation $\ln K=\Sigma g$, where the function $g(x)=x\ln x$ lies in $\cC^{\infty}\cap\cD^2\cap\cK^{\infty}$. Since we also have
$$
g(x) ~=~ x+\Delta \psi_{-2}(x)-\psi_{-2}(1),
$$
we immediately derive (see also Example~\ref{ex:xlnx52})
$$
\ln K(x) ~=~ \Sigma g(x) ~=~ \tchoose{x}{2}+\psi_{-2}(x)-x\,\psi_{-2}(1) ~=~ (x-1)\ln\Gamma(x)-\ln G(x).
$$
Actually, $g$ also corresponds to the special case when $(s,q)=(-1,1)$ of the function $g_{s,q}$ investigated in Section~\ref{sec:HDHu4Z8F3}. Thus, we also have
$$
\Sigma g(x) ~=~ \zeta'(-1,x)-\zeta'(-1),
$$
where $\zeta'(-1)=\frac{1}{12}-\ln A$. Finally, we note that the integer sequence $n\mapsto K(n)$ is the sequence A002109 in the OEIS \cite{Slo}.

\section{The Hurwitz-Lerch transcendent}

The Hurwitz-Lerch transcendent\index{Hurwitz-Lerch transcendent} $\Phi(z,s,a)$ is a generalization of the Hurwitz zeta function defined as an analytic continuation of the series
$$
\sum_{k=0}^{\infty}z^k(a+k)^{-s}
$$
when $|z|<1$ and $a\in\mathbb{C}\setminus(-\N)$ (see, e.g., Srivastava and Choi \cite[p.~194]{SriCho12}). It satisfies the difference equation
$$
\Phi(z,s,a+1)-z^{-1}\Phi(z,s,a) ~=~ -z^{-1}a^{-s}.
$$
It follows that the modified function
$$
\overline{\Phi}(z,s,a) ~=~ -z^a\,\Phi(z,s,a)
$$
satisfies the difference equation
$$
\overline{\Phi}(z,s,a+1)-\overline{\Phi}(z,s,a) ~=~ z^aa^{-s}.
$$
Thus, for certain real values of $z$ and $s$, the restriction to $\R_+$ of the map $a\mapsto\overline{\Phi}(z,s,a)$ fits the assumptions of our theory. Its investigation is left to the reader.

\section{The Bernoulli polynomials}

Recall that, for any $n\in\N$, the $n$th degree Bernoulli polynomial $B_n(x)$ is defined by the equation\index{Bernoulli polynomials}
$$
B_n(x) ~=~ \sum_{k=0}^n\tchoose{n}{k}{\,}B_{n-k}{\,}x^k\qquad\text{for $x\in\R$},
$$
where $B_k$ is the $k$th Bernoulli number. This polynomial satisfies the difference equation
$$
B_n(x+1)-B_n(x) ~=~ n{\,}x^{n-1}.
$$
Thus, the function $g_n\colon\R_+\to\R$ defined by the equation $g_n(x)=n{\,}x^{n-1}$ for $x>0$ lies in $\cC^{\infty}\cap\cD^n\cap\cK^{\infty}$ and has the property that
$$
\Sigma g_n(x) ~=~ B_n(x)-B_n(1),
$$
that is, in view of \eqref{eq:HurwBern43}
$$
\Sigma g_n(x) ~=~ n\zeta(1-n)-n\zeta(1-n,x){\,},\qquad n\in\N^*.
$$
Thus, the $n$th degree Bernoulli polynomial can be characterized as follows.
\begin{quote}
\emph{All solutions $f_n\colon\R_+\to\R$ to the equation $f_n(x+1)-f_n(x)=n{\,}x^{n-1}$ that lie in $\cK^n$ are of the form $f_n(x)=c_n+B_n(x)$, where $c_n\in\R$.}
\end{quote}
Using the generalized Webster functional equation (Theorem~\ref{thm:FunctEq69}), we can also easily characterize the $n$th degree Euler polynomial $E_n(x)$, which is defined by the equation\index{Euler polynomials}
$$
E_n(x) ~=~ \frac{2^{n+1}}{n+1}\left(B_{n+1}\left(\frac{x+1}{2}\right)-B_{n+1}\left(\frac{x}{2}\right)\right).
$$
We then obtain the following statement.
\begin{quote}
\emph{All solutions $f_n\colon\R_+\to\R$ to the equation $f_n(x+1)+f_n(x)=2{\,}x^n$ that lie in $\cK^n$ are of the form $f_n(x)=c_n+E_n(x)$, where $c_n\in\R$.}
\end{quote}
Finally, we also easily retrieve the multiplication formula:
$$
\sum_{j=0}^{m-1}B_n\left(\frac{x+j}{m}\right) ~=~ \frac{1}{m^{n-1}}{\,}B_n(x){\,}\qquad x>0.
$$


\section{The Bernoulli polynomials of the second kind}
\label{sec:BP2k27}

For any $n\in\N$, the $n$th degree Bernoulli polynomial of the second kind\index{Bernoulli polynomials!of the second kind|textbf} is defined by the equation
$$
\psi_n(x) ~=~ \int_x^{x+1}\tchoose{t}{n}{\,}dt\qquad\text{for $x>0$}.
$$
In particular, we have $\psi_n(0)=G_n$. Also, these polynomials satisfy the difference equation
$$
\psi_{n+1}(x+1)-\psi_{n+1}(x) ~=~ \psi_n(x).
$$
Thus, the function $g_n\colon\R_+\to\R$ defined by the equation $g_n(x)=\psi_n(x)$ for $x>0$ lies in $\cC^{\infty}\cap\cD^{n+1}\cap\cK^{\infty}$ and has the property that
$$
\Sigma g_n(x) ~=~ \psi_{n+1}(x)-\psi_{n+1}(1).
$$
Thus, the Bernoulli polynomials of the second kind can be characterized as follows.
\begin{quote}
\emph{All solutions $f_n\colon\R_+\to\R$ to the equation $f_n(x+1)-f_n(x)=\psi_n(x)$ that lie in $\cK^{n+1}$ are of the form $f_n(x)=c_n+\psi_{n+1}(x)$, where $c_n\in\R$.}
\end{quote}

\chapter{Conclusion}
\label{chapter:13}

Krull-Webster's theory offered an elegant extension of Bohr-Mollerup's theorem and has proved to be a very nice and useful contribution to the resolution of the difference equation $\Delta f=g$ on the real half-line $\R_+$. In this book, we have provided a significant generalization of Krull-Webster's theory by considerably relaxing the asymptotic condition imposed on the function $g$, and we have demonstrated through various examples how this generalization provides a unified framework to investigate the properties of many functions. This framework has indeed enabled us to derive several general formulas that now constitute a powerful toolbox and even a genuine Swiss Army knife to investigate a large variety of functions.

The key point of this generalization was the discovery of expression \eqref{eq:defFpn} for the sequence $n\mapsto f^p_n[g](x)$ for any $p\in\N$. We also observe that our uniqueness and existence results strongly rely on Lemma~\ref{lemma:VarEpsIneq} together with identities \eqref{eq:ff01flam} and \eqref{eq:fngsum}. These results actually constitute the common core and even the fundamental cornerstone of all the subsequent formulas that we derived in this book. For instance, the generalized Stirling formula \eqref{eq:dgf7dds} has been obtained almost miraculously by merely integrating both sides of the inequality given in Lemma~\ref{lemma:VarEpsIneq} (see Remark~\ref{rem:WenSti41}). Similarly, Gregory's summation formula \eqref{eq:GregoryMN} has been derived instantly by integrating both sides of identity \eqref{eq:fngsum}, and we have shown how its remainder can be controlled using Lemma~\ref{lemma:VarEpsIneq} again.

Our results clearly shed light on the way many of the classical special functions, such as the polygamma functions and the higher order derivatives of the Hurwitz zeta function, can be systematically studied, sometimes by deriving identities and formulas almost mechanically.

Beyond this systematization aspect, our theory has enabled us to introduce a number of new important and useful objects. For instance, the map $\Sigma$ itself is a new concept that appears to be as fundamental as the basic antiderivative operation (cf.\ Definition~\ref{de:PIS43}). Both concepts are actually strongly related through, e.g., Propositions~\ref{prop:Burnside0}, \ref{prop:conv6v6}, and \ref{prop:8Raab0036}. Other concepts such as the asymptotic constant and the generalized Binet function also appear to be new fundamental objects that merit further study. For instance, it is remarkable that the asymptotic constant appears not only in the generalized Stirling formula (Theorem~\ref{thm:dgf7dds}), but also in many other important formulas, such as the generalized Euler constant (Proposition~\ref{prop:linksSG46}), the Weierstrassian form (Theorem~\ref{thm:Weierst}), the analogue of Raabe's formula (Proposition~\ref{prop:8Raab0036}), the analogue of Gauss' multiplication formula (Proposition~\ref{prop:CalcSum662}), the asymptotic expansion (Proposition~\ref{prop:Richardson9c}), and the generalized Liu formula (Proposition~\ref{prop:Liu471}).

Our work has also revealed how important and natural are the higher order convexity properties. Although these properties seem to be still poorly used in mathematical analysis, they actually constitute an essential and highly useful ingredient in the development of our theory and therefore also merit further investigation (see, e.g., Proposition~\ref{thm:intCpTpKp} and Remark~\ref{rem:I5mpPConv63}).

In conclusion, the results that we have obtained as well as the new concepts that we have introduced and explored show that this area of investigation is very rich and intriguing. We have just skimmed the surface, and there are a lot of questions that emerge naturally. We now list some below.

\begin{itemize}
\item Find a simple characterization of the domain of the map $\Sigma$ (see Proposition~\ref{prop:Conj541q} and Conjecture~\ref{conj:End661}).
\item Find necessary and sufficient conditions on the function $g$ to ensure both the uniqueness and existence of solutions lying in $\cK^p$ to the equation $\Delta f=g$ (cf.\ Webster's question in Appendix~\ref{chapter:B-KW562}).
\item Find a natural extension of the map $\Sigma$ to a larger domain, e.g., a real linear space of functions that would include not only the current admissible functions but also every function that has an exponential growth.
\item Find a general method to determine a simple or compact expression for the asymptotic constant $\sigma[g]$ associated with any function $g$ lying in $\cC^0\cap\mathrm{dom}(\Sigma)$ (cf.\ our discussion in Section~\ref{sec:Raabe448}).
\item Find general methods to determine analogues of Euler's reflection formula (cf.\ our discussion on Herglotz's trick in Section~\ref{sec:ReflFor62}) and Gauss' digamma theorem for any multiple $\log\Gamma$-type function.
\item Find necessary and sufficient conditions on the function $g$ for the function $\Sigma g$ to be of class $\cC^{\infty}$ or even real analytic.
\item Find an extension of our theory to functions of a complex variable. On this issue, it is noteworthy that a very nice ``complex'' counterpart of Bohr-Mollerup's characterization of the gamma function was established by Wielandt (see, e.g., Srinivasan \cite{Sri07} and Srivastava and Choi \cite[p.~12]{SriCho12} and the references therein).
\item Find an extension of our theory by replacing the equation $\Delta f=g$ with the more general first-order linear difference equation
$$
f(x+1)-a{\,}f(x) ~=~ g(x),
$$
where $a$ is a given constant. Consider also linear difference equations of any order. Partial results along this line can be found, e.g., in John \cite[Theorem C]{Joh39}.
\end{itemize}

\appendix
\chapter{Higher order convexity properties}
\label{chapter:pconv184}
\index{higher order convexity and concavity|(}

\noindent\textsl{Summary: We establish a number of basic facts about higher order convexity and concavity properties with the aim of proving Lemma~\ref{lemma:PrelKp}.}

\medskip

Lemma~\ref{lemma:PrelKp} is a fundamental element of our theory. It can be derived from more general results established by Kuczma \cite[Chapter~15]{Kuc85}. However, this derivation is not immediate and actually requires considerable attention. In this appendix, we prove Lemma~\ref{lemma:PrelKp} almost from scratch and using elementary means only.

Let $I$ be an arbitrary nonempty open real interval. We first observe that for any functions $f,g\colon I\to\R$ and any system $x_0 < x_1 < \cdots < x_n$ of $n+1$ points in $I$, we have
$$
(f+g)[x_0,x_1,\ldots,x_n] ~=~ f[x_0,x_1,\ldots,x_n]+g[x_0,x_1,\ldots,x_n].
$$
Moreover, for any $c\in\R$, if the function $h\colon I-c\to\R$ is defined by the equation $h(x)=f(x+c)$ for $x\in I-c$, then
$$
h[x_0,x_1,\ldots,x_n] ~=~ f[x_0+c,x_1+c,\ldots,x_n+c].
$$
These properties are immediate consequences of identity \eqref{eq:DivDiffPai4Dis82}.

We now present a proposition and an immediate corollary. Let $\Delta_{[h]}$ denote the forward difference operator with step $h$.

\begin{proposition}\label{prop:4d65SIOT}
For any $n\in\N$, any system $x_0 < x_1 < \cdots < x_n$ of $n+1$ points in $I$, any function $f\colon I\to\R$, and any $h\in\R\setminus\{0\}$ such that $x_0+h,x_n+h\in I$, we have
$$
\frac{1}{h}(\Delta_{[h]}f)[x_0,x_1,\ldots,x_n] ~=~ \sum_{k=0}^nf[x_0,\ldots,x_k,x_k+h,\ldots,x_n+h].
$$
\end{proposition}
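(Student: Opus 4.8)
The plan is to reduce the left-hand side to a difference of two divided differences of $f$ by linearity and shift-invariance, and then to telescope that difference into the sum on the right using the standard recurrence relation together with the symmetry of divided differences.

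First I would expand the difference operator. Since $\Delta_{[h]}f(x)=f(x+h)-f(x)$, the additivity of divided differences (noted just before the statement) gives
$$
(\Delta_{[h]}f)[x_0,\ldots,x_n] ~=~ (f(\boldsymbol{\cdot}+h))[x_0,\ldots,x_n]-f[x_0,\ldots,x_n].
$$
Applying the shift property with $c=h$ to the first term turns it into $f[x_0+h,\ldots,x_n+h]$, so it remains to prove that
$$
f[x_0+h,\ldots,x_n+h]-f[x_0,\ldots,x_n] ~=~ h\sum_{k=0}^nf[x_0,\ldots,x_k,x_k+h,\ldots,x_n+h].
$$

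Next I would set up a telescoping sum. For $k=0,1,\ldots,n+1$ define the hybrid divided difference
$$
P_k ~=~ f[x_0,\ldots,x_{k-1},\,x_k+h,\ldots,x_n+h],
$$
in which the first $k$ nodes are left unshifted and the last $n+1-k$ are shifted by $h$; thus $P_0=f[x_0+h,\ldots,x_n+h]$ and $P_{n+1}=f[x_0,\ldots,x_n]$, whence the difference above equals $\sum_{k=0}^n(P_k-P_{k+1})$. The consecutive terms $P_k$ and $P_{k+1}$ share all their nodes except one: $P_k$ carries $x_k+h$ exactly where $P_{k+1}$ carries $x_k$. The key elementary identity is $f[Z,a]-f[Z,b]=(a-b)\,f[Z,a,b]$ for $a\neq b$, which follows at once from the recurrence \eqref{eq:DivDiffRec7} by taking $a$ and $b$ as the two extreme nodes and invoking symmetry. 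Applying it with $a=x_k+h$, $b=x_k$, and $Z=(x_0,\ldots,x_{k-1},x_{k+1}+h,\ldots,x_n+h)$ yields $P_k-P_{k+1}=h\,f[Z,x_k+h,x_k]$, and since the node multiset $\{Z,x_k+h,x_k\}$ coincides with $\{x_0,\ldots,x_k,x_k+h,\ldots,x_n+h\}$, symmetry gives $P_k-P_{k+1}=h\,f[x_0,\ldots,x_k,x_k+h,\ldots,x_n+h]$. Summing over $k$ and dividing by $h$ completes the argument.

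The main obstacle I anticipate is bookkeeping rather than depth: one must check that the node multiset of $f[Z,x_k+h,x_k]$ is, after a symmetric reordering, exactly the one appearing on the right-hand side, and one must ensure the auxiliary identity is applied only where all quantities are defined. Because the tail is shifted by $h$, node coincidences can occur (for instance $x_k+h=x_{k+1}$ when $h=x_{k+1}-x_k$, or $x_j+h=x_i$ for negative $h$); in those cases the relevant divided differences involve repeated nodes and are understood through the differentiability hypotheses on $f$ adopted at the outset of the Newton interpolation material. I would remark that both \eqref{eq:DivDiffRec7} and the identity $f[Z,a]-f[Z,b]=(a-b)\,f[Z,a,b]$ remain valid in this generality, the recurrence requiring only that the two extreme nodes be distinct, which holds here since $a-b=h\neq 0$; hence the telescoping computation goes through without change.
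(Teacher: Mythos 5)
Your proof is correct and follows essentially the same route as the paper's: reduce the left-hand side to $\frac{1}{h}\bigl(f[x_0+h,\ldots,x_n+h]-f[x_0,\ldots,x_n]\bigr)$, telescope through the hybrid divided differences, and finish with the recurrence relation \eqref{eq:DivDiffRec7}. The paper compresses the last step into one sentence, whereas you spell out the identity $f[Z,a]-f[Z,b]=(a-b)\,f[Z,a,b]$, the symmetry-based reordering of the node multiset, and the possible node coincidences when $h<0$; these are exactly the details the paper leaves implicit.
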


\begin{proof}
Using a telescoping sum, we obtain
\begin{eqnarray*}
\frac{1}{h}(\Delta_{[h]}f)[x_0,x_1,\ldots,x_n] &=& \frac{1}{h}{\,}(f[x_0+h,x_1+h,\ldots,x_n+h]-f[x_0,x_1,\ldots,x_n])\\
&=& \frac{1}{h}\,\sum_{k=0}^n\big(f[x_0,\ldots,x_k+h,x_{k+1}+h,\ldots,x_n+h]\\
&& \null\hspace{5ex}\null -f[x_0,\ldots,x_k,x_{k+1}+h,\ldots,x_n+h]\big).
\end{eqnarray*}
We then conclude the proof using the recurrence relation \eqref{eq:DivDiffRec7}.
\end{proof}

\begin{corollary}\label{cor:4d65SIOTc}
Let $f$ lie in $\cK^p_+(I)$ for some $p\in\N$ and let $h\in\R\setminus\{0\}$. If the function $\frac{1}{h}\,\Delta_{[h]}f$ is defined on $I$, then it lies in $\cK^{p-1}_+(I)$.
\end{corollary}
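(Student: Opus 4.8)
The plan is to reduce the claim to a termwise application of the $p$-convexity of $f$ through the identity of Proposition~\ref{prop:4d65SIOT}. Fix an arbitrary system $x_0 < x_1 < \cdots < x_p$ of $p+1$ points in $I$; by the definition of $\cK^{p-1}_+(I)$ it suffices to show that $\left(\tfrac1h\Delta_{[h]}f\right)[x_0,\ldots,x_p]\geq 0$. Applying Proposition~\ref{prop:4d65SIOT} with $n=p$ gives
$$
\left(\tfrac1h\Delta_{[h]}f\right)[x_0,\ldots,x_p] ~=~ \sum_{k=0}^p f[x_0,\ldots,x_k,x_k+h,\ldots,x_p+h],
$$
so the whole matter comes down to showing that each summand is a divided difference of $f$ over $p+2$ admissible nodes of $I$ on which $p$-convexity forces nonnegativity.

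First I would treat the case $h>0$. Here the standing hypothesis that $\tfrac1h\Delta_{[h]}f$ is defined on $I$ means that $x+h$ lies in the domain $I$ of $f$ for every $x\in I$; hence all shifted nodes $x_j+h$ lie in $I$. I would next verify that, for each $k$, the $p+2$ nodes $x_0<\cdots<x_k$ together with $x_k+h<\cdots<x_p+h$ are pairwise distinct: a coincidence $x_i=x_j+h$ with $i\leq k\leq j$ would force $x_i>x_j$ and hence $i>j$, contradicting $i\leq j$. Since divided differences are symmetric, each summand is therefore a divided difference of $f$ over $p+2$ distinct points of $I$, which is nonnegative because $f\in\cK^p_+(I)$. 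Summing over $k$ yields the desired inequality.

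The case $h<0$ is the delicate one, since the distinctness argument breaks down: shifted nodes can now collide with original ones, while the definition of $p$-convexity speaks only of systems of distinct points. Rather than try to make sense of divided differences with repeated nodes (which would presuppose the very differentiability this appendix is working to establish), I would reduce to the positive case. Writing $k=-h>0$ and using $\tfrac1h\Delta_{[h]}f(x)=\tfrac1k\bigl(f(x)-f(x-k)\bigr)=\bigl(\tfrac1k\Delta_{[k]}f\bigr)(x-k)$, the function $\psi=\tfrac1k\Delta_{[k]}f$ lies in $\cK^{p-1}_+$ on the subinterval of $I$ where it is defined, by the case already settled. The hypothesis that $\tfrac1h\Delta_{[h]}f$ is defined on $I$ guarantees $x-k\in I$ for all $x\in I$, so $\tfrac1h\Delta_{[h]}f$ is exactly $\psi$ composed with the translation $x\mapsto x-k$; invoking the translation invariance of divided differences recorded just before Proposition~\ref{prop:4d65SIOT} (together with the trivial fact that the restriction of a $(p-1)$-convex function to a subinterval remains $(p-1)$-convex), this composition is again $(p-1)$-convex on $I$.

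The main obstacle is thus not the computation but the node-coincidence phenomenon for negative step, which is precisely what the translation reduction circumvents. Once both signs of $h$ are handled, the conclusion $\tfrac1h\Delta_{[h]}f\in\cK^{p-1}_+(I)$ follows since the system $x_0<\cdots<x_p$ was arbitrary. (As a sanity check, the base case $p=0$ recovers the familiar statement that the difference quotient of a nondecreasing function is nonnegative.)
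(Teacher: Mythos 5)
Your proof is correct, and its first half is exactly the argument the paper intends: Corollary~\ref{cor:4d65SIOTc} is stated there without proof, as an immediate consequence of Proposition~\ref{prop:4d65SIOT}, the point being that each summand $f[x_0,\ldots,x_k,x_k+h,\ldots,x_p+h]$ is a divided difference of $f$ at $p+2$ points of $I$, hence nonnegative by $p$-convexity. Where you genuinely depart from the paper is the case $h<0$, and the departure is warranted: there the termwise argument really does break down, since nodes can collide (take $h=x_0-x_1$, which makes $x_1+h=x_0$), Definition~\ref{de:pconcconv} speaks only of pairwise distinct nodes, and a divided difference with repeated nodes would require derivatives of $f$ that are not available at this stage — indeed the right-hand side of Proposition~\ref{prop:4d65SIOT} need not even be defined then. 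Your reduction $\frac{1}{h}\,\Delta_{[h]}f=\bigl(\frac{1}{k}\,\Delta_{[k]}f\bigr)\circ(x\mapsto x-k)$ with $k=-h$, followed by the translation invariance of divided differences, circumvents this cleanly; so your proposal in fact repairs a subtlety that the paper's ``immediate'' derivation glosses over (harmlessly for the paper itself, which only ever applies the corollary with the positive steps $h=1$ and $h=1/n$). One caveat of wording: you cannot invoke the settled $h>0$ case as a \emph{statement}, because the natural domain $J=I-k$ of $\psi=\frac{1}{k}\,\Delta_{[k]}f$ is not invariant under the shift $x\mapsto x+k$ unless $I=\R$ (note that your standing hypothesis forces $I$ to be unbounded below when $h<0$), so no admissible interval containing $J$ satisfies the corollary's hypothesis for the step $k$. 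What you must invoke — and what your write-up does support, since your $h>0$ argument is pointwise in the system of nodes — is the \emph{argument} of that case: for any system $y_0<\cdots<y_p$ in $J$, the nodes $y_i$ and $y_j+k$ all lie in $I$ and are pairwise distinct, so Proposition~\ref{prop:4d65SIOT} and the $p$-convexity of $f$ give $\psi[y_0,\ldots,y_p]\geq 0$; this is a point of phrasing, not a gap.
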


We can now readily see that Lemma~\ref{lemma:PrelKp}(b) is an immediate consequence of Corollary~\ref{cor:4d65SIOTc} (just take $h=1$).

The next result establishes Lemma~\ref{lemma:PrelKp}(c). Let us first observe that a pointwise limit of functions lying in $\cK^p_+(I)$ also lies in $\cK^p_+(I)$. This fact can be proved straightforwardly using identity \eqref{eq:DivDiffPai4Dis82}.

\begin{corollary}\label{cor:4d65SIOTc2}
If $f\colon I\to\R$ is differentiable and lies in $\cK^p_+(I)$ for some $p\in\N$, then the derivative $f'$ lies in $\cK^{p-1}_+(I)$.
\end{corollary}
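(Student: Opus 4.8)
The statement is Lemma~\ref{lemma:PrelKp}(c) in the case $j=1$, and the natural plan is to realize $f'$ as the pointwise limit, as $h\to 0^+$, of the difference quotients $\tfrac1h\Delta_{[h]}f$, each of which is $(p-1)$-convex, and then to invoke the stability of $\cK^{p-1}_+$ under pointwise limits recorded just above. To begin, I would fix an arbitrary system $x_0<x_1<\cdots<x_p$ of $p+1$ pairwise distinct points of $I$; by Definition~\ref{de:pconcconv} it suffices to prove that $f'[x_0,\ldots,x_p]\geq 0$. Differentiability of $f$ gives $\tfrac1h\Delta_{[h]}f(x_k)=\tfrac1h(f(x_k+h)-f(x_k))\to f'(x_k)$ for each $k$, so by the explicit formula \eqref{eq:DivDiffPai4Dis82} the divided differences converge:
\[
\Big(\tfrac1h\Delta_{[h]}f\Big)[x_0,\ldots,x_p] ~=~ \sum_{k=0}^p\frac{\tfrac1h\Delta_{[h]}f(x_k)}{\prod_{j\neq k}(x_k-x_j)} ~\longrightarrow~ \sum_{k=0}^p\frac{f'(x_k)}{\prod_{j\neq k}(x_k-x_j)} ~=~ f'[x_0,\ldots,x_p]
\]
as $h\to 0^+$. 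It therefore remains only to show that the left-hand side is nonnegative for all sufficiently small $h>0$.

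I would secure this nonnegativity directly through Proposition~\ref{prop:4d65SIOT}, which expands the quotient divided difference as
\[
\Big(\tfrac1h\Delta_{[h]}f\Big)[x_0,\ldots,x_p] ~=~ \sum_{k=0}^p f[x_0,\ldots,x_k,x_k+h,\ldots,x_p+h].
\]
Since $I$ is open, I would take $h>0$ small enough that $x_p+h\in I$ (any $h>0$ works when $I$ is unbounded above). For such $h$, each tuple $(x_0,\ldots,x_k,x_k+h,\ldots,x_p+h)$ consists of $p+2$ pairwise distinct points of $I$ in strictly increasing order—the original points $x_0<\cdots<x_k$ all lie strictly below the shifted points $x_k+h<\cdots<x_p+h$—so every summand is nonnegative by the $p$-convexity of $f$ (Definition~\ref{de:pconcconv}). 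Hence the left-hand side is nonnegative, and the limit computed above yields $f'[x_0,\ldots,x_p]\geq 0$, as required.

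The step that calls for the most care—and hence the main obstacle—is precisely the handling of domains. The tempting shortcut of applying Corollary~\ref{cor:4d65SIOTc} to all of $I$ and then passing to the pointwise limit does not quite work, since for a fixed $h$ the function $\tfrac1h\Delta_{[h]}f$ is defined only on $\{x\in I:x+h\in I\}$, a proper subinterval of $I$. One can still run that route by restricting to an arbitrary bounded open subinterval $I'$ with $\overline{I'}\subset I$: there $f$ remains $p$-convex, $\tfrac1h\Delta_{[h]}f$ is defined on $I'$ for all small $h>0$ and lies in $\cK^{p-1}_+(I')$ by Corollary~\ref{cor:4d65SIOTc}, and its pointwise limit $f'|_{I'}$ then lies in $\cK^{p-1}_+(I')$; since every finite system of points of $I$ sits inside some such $I'$, one recovers $f'\in\cK^{p-1}_+(I)$. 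The route through Proposition~\ref{prop:4d65SIOT} described above sidesteps this issue entirely by never leaving a fixed finite point system, and once the admissible range of $h$ is pinned down the remainder is routine—merely continuity of a finite sum together with the definition of the derivative.
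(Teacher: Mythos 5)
Your argument is correct, and it amounts to the paper's own proof carried out locally and more carefully. Indeed, the paper's actual proof is precisely the shortcut you flag as problematic: it writes $f'$ as the pointwise limit of the sequence $n\mapsto n\Delta_{[1/n]}f$, applies Corollary~\ref{cor:4d65SIOTc} to each of these functions, and concludes from the stability of $\cK^{p-1}_+(I)$ under pointwise limits. Your domain objection to that route is accurate: for fixed $n$ the quotient $n\Delta_{[1/n]}f$ is defined only on $\{x\in I: x+1/n\in I\}$, so when $I$ is bounded above it is never defined on all of $I$ and the hypothesis of Corollary~\ref{cor:4d65SIOTc} is not literally met — a point the paper glosses over. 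Your main argument repairs this at no cost: fixing $x_0<\cdots<x_p$, Proposition~\ref{prop:4d65SIOT} exhibits $\bigl(\tfrac1h\Delta_{[h]}f\bigr)[x_0,\ldots,x_p]$ as a sum of divided differences of $f$ at $p+2$ strictly increasing points of $I$ (all lying in $I$ once $x_p+h\in I$), hence nonnegative by $p$-convexity, and letting $h\to 0^+$ through the explicit formula \eqref{eq:DivDiffPai4Dis82} gives $f'[x_0,\ldots,x_p]\geq 0$. This uses exactly the same two ingredients as the paper — Proposition~\ref{prop:4d65SIOT} and the continuity of divided differences at fixed distinct nodes under pointwise convergence — but never requires a difference quotient defined on all of $I$, so it is valid for every open interval, bounded or not. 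The exhaustion by relatively compact subintervals $I'$ that you sketch is the other standard repair and works equally well, since membership in $\cK^{p-1}_+(I)$ is a condition on finite point systems and every such system lies in some such $I'$.
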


\begin{proof}
It is clear that the derivative $f'$ is the pointwise limit of the sequence $n\mapsto f_n$, where, for each $n\in\N^*$, the function $f_n\colon I\to\R$ is defined by the equation
$$
f_n ~=~ n\Delta_{[1/n]}f\qquad\text{for $n\in\N^*$}.
$$
We then conclude the proof using Corollary~\ref{cor:4d65SIOTc}.
\end{proof}

We now have the following corollary, which follows from Proposition~\ref{prop:2A4DiffInt4Pol}. It immediately establishes Lemma~\ref{lemma:PrelKp}(d).

\begin{corollary}
If $f\colon I\to\R$ is differentiable and $f'$ lies in $\cK_+^{p-1}(I)$ for some $p\in\N$, then $f$ lies in $\cK_+^p(I)$.
\end{corollary}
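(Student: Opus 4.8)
The plan is to verify directly the defining inequality for membership in $\cK^p_+(I)$, namely that $f[x_0,\ldots,x_{p+1}]\geq 0$ holds for every system $x_0<x_1<\cdots<x_{p+1}$ of $p+2$ points in $I$, and to deduce this from the corresponding inequality for $f'$ by means of Proposition~\ref{prop:2A4DiffInt4Pol}. The whole argument hinges on the third identity of that proposition, which converts an $(n+1)$st divided difference of $f$ into an $n$th divided difference of $f'$ at suitable intermediate points.

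First I would fix an arbitrary system $x_0<x_1<\cdots<x_{p+1}$ of $p+2$ points in $I$ and apply Proposition~\ref{prop:2A4DiffInt4Pol} with $n=p+1$. This produces $p+1$ points $\xi_0,\ldots,\xi_p$ in $I$ satisfying the interlacing $x_i<\xi_i<x_{i+1}$ for $i=0,\ldots,p$, together with the key identity
$$
(p+1)\,f[x_0,\ldots,x_{p+1}] ~=~ f'[\xi_0,\ldots,\xi_p].
$$
The essential observation is that the interlacing forces $\xi_0<\xi_1<\cdots<\xi_p$, so that $\xi_0,\ldots,\xi_p$ constitutes a legitimate system of $p+1$ pairwise distinct, increasingly ordered points of $I$, as required by the definition of $(p-1)$-convexity.

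Next, since $f'$ lies in $\cK^{p-1}_+(I)$, the very definition of $(p-1)$-convexity (Definition~\ref{de:pconcconv}) applied to this system gives $f'[\xi_0,\ldots,\xi_p]\geq 0$. Combining this with the displayed identity yields $f[x_0,\ldots,x_{p+1}]\geq 0$, and since the initial system was arbitrary, $f$ lies in $\cK^p_+(I)$, which is what we want.

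There is no genuinely hard step here: once the indices are matched, the statement is a one-line consequence of Proposition~\ref{prop:2A4DiffInt4Pol}. The only point that warrants a little care — and the closest thing to an obstacle — is the degenerate case $p=0$, where $\cK^{p-1}_+(I)=\cK^{-1}_+(I)$ must be read as the cone of nonnegative functions and one must confirm that Proposition~\ref{prop:2A4DiffInt4Pol} still applies (it does, since $n=p+1=1\in\N^*$). In that case the identity collapses to $f[x_0,x_1]=f'(\xi_0)\geq 0$, recovering the elementary fact that a function with nonnegative derivative is increasing, i.e.\ lies in $\cK^0_+(I)$.
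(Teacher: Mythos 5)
Your proof is correct and takes essentially the same route as the paper: the paper's own proof consists of the single remark that the result follows from Proposition~\ref{prop:2A4DiffInt4Pol} by using identity \eqref{eq:AppA3sq3} with $n=p+1$, which is exactly the argument you carry out. Your added details (the interlacing $\xi_0<\cdots<\xi_p$ guaranteeing an admissible system for the definition of $(p-1)$-convexity, and the check of the degenerate case $p=0$) are simply a fuller write-up of that same one-line deduction.
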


\begin{proof}
This result is an immediate consequence of Proposition~\ref{prop:2A4DiffInt4Pol} (just use identity \eqref{eq:AppA3sq3} with $n=p+1$).
\end{proof}

It remains to establish Lemma~\ref{lemma:PrelKp}(a). To this end, we present the following technical lemma, which provides a test for differentiability of real functions on $I$.

\begin{lemma}\label{lemma:Diff4Test72}
Let $n\in\N$, let $a,x_1,\ldots,x_n$ be $n+1$ pairwise distinct points in $I$ and let $f\colon I\to\R$. If the limit
$$
\lim_{h\to 0}f[a,a+h,x_1,\ldots,x_n]
$$
exists and is finite, then $f$ is differentiable at $a$.
\end{lemma}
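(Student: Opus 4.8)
The plan is to reduce the statement to the elementary fact that the existence of the limit of a difference quotient is precisely the definition of differentiability. The key device is to freeze the $n$ points $x_1,\ldots,x_n$ and regard the divided difference as a function of a single variable. Concretely, I would set
$$
g(t) ~=~ f[t,x_1,\ldots,x_n]
$$
for $t$ ranging over a neighborhood of $a$ small enough to avoid the points $x_1,\ldots,x_n$ (such a neighborhood exists because $a$ is distinct from each $x_j$). Since the divided difference is symmetric in its arguments, for small $h\neq 0$ the $n+2$ points $a,a+h,x_1,\ldots,x_n$ are pairwise distinct, and the recurrence relation \eqref{eq:DivDiffRec7}, applied after reordering the nodes as $a,x_1,\ldots,x_n,a+h$, yields
$$
f[a,a+h,x_1,\ldots,x_n] ~=~ \frac{f[a+h,x_1,\ldots,x_n]-f[a,x_1,\ldots,x_n]}{h} ~=~ \frac{g(a+h)-g(a)}{h}.
$$
Thus the hypothesis that $\lim_{h\to 0}f[a,a+h,x_1,\ldots,x_n]$ exists and is finite is exactly the statement that $g$ is differentiable at $a$.

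Next I would recover $f$ from $g$ using the explicit expression \eqref{eq:DivDiffPai4Dis82} for a divided difference over pairwise distinct nodes. Isolating the term carrying the variable node $t$, one has
$$
g(t) ~=~ \frac{f(t)}{\prod_{j=1}^n(t-x_j)}+\sum_{k=1}^n\frac{f(x_k)}{(x_k-t)\prod_{j\neq k}(x_k-x_j)},
$$
so that
$$
f(t) ~=~ \prod_{j=1}^n(t-x_j)\left(g(t)-\sum_{k=1}^n\frac{f(x_k)}{(x_k-t)\prod_{j\neq k}(x_k-x_j)}\right).
$$
On a neighborhood of $a$ avoiding the $x_j$, the polynomial $\prod_{j=1}^n(t-x_j)$ and each function $t\mapsto 1/(x_k-t)$ are differentiable at $a$, and $g$ is differentiable at $a$ by the previous paragraph. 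Hence $f$, being a finite sum and product of functions differentiable at $a$, is itself differentiable at $a$, which is the desired conclusion.

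There is essentially no hard analytic obstacle here: the entire content sits in the two algebraic identities for divided differences. The only points requiring care are bookkeeping ones — verifying that a neighborhood of $a$ can be chosen on which all the relevant divided differences and the auxiliary rational factors are well defined (guaranteed by the pairwise distinctness of $a,x_1,\ldots,x_n$), and checking that reordering the nodes before applying \eqref{eq:DivDiffRec7} is legitimate, which follows from the symmetry of the divided difference in its arguments. The base case $n=0$, where $g(t)=f[t]=f(t)$ and the identity collapses to $f[a,a+h]=(f(a+h)-f(a))/h$, makes transparent that the general argument is just this elementary case dressed up with the frozen nodes $x_1,\ldots,x_n$.
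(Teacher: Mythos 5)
Your proof is correct, but it follows a genuinely different route from the paper's. The paper argues by induction on $n$: it uses the recurrence \eqref{eq:DivDiffRec7} to peel off the fixed nodes $x_1,\ldots,x_n$ one at a time, solving at each stage for the lower-order divided difference, until it shows directly that $\lim_{h\to 0}f[a,a+h]$ exists (the cases $n=0,1$ are written out and the induction step is left as "clear"). You avoid induction altogether: a single application of \eqref{eq:DivDiffRec7}, after reordering nodes by symmetry, identifies the hypothesis as precisely the differentiability at $a$ of the frozen-node function $g(t)=f[t,x_1,\ldots,x_n]$, and then the explicit formula \eqref{eq:DivDiffPai4Dis82} lets you solve algebraically for $f$ in terms of $g$ and rational functions that are smooth near $a$. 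What your approach buys is a complete, non-inductive argument in which nothing is left to the reader, plus two small bonuses: the observation that the limit hypothesis is \emph{equivalent} to (not merely sufficient for) differentiability of $g$ at $a$, and an explicit expression for $f$ near $a$ from which $f'(a)$ could be computed. What the paper's route buys is economy of means — it needs only the recurrence, not the explicit expansion \eqref{eq:DivDiffPai4Dis82} — at the cost of an induction whose bookkeeping is only sketched. Both proofs are sound; yours is arguably the cleaner one to write in full.
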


\begin{proof}
This result can be easily proved by induction on $n$ using the recurrence relation \eqref{eq:DivDiffRec7}. To simplify the computations, let us consider the first two cases only. For $n=0$, we have trivially
$$
\lim_{h\to 0}f[a,a+h] ~=~ \lim_{h\to 0}\frac{f(a+h)-f(a)}{h}
$$
and $f$ is clearly differentiable at $a$ if this limit exists and is finite. For $n=1$, we get
$$
f[a,a+h,x_1] ~=~ \frac{f[a,x_1]-f[a,a+h]}{x_1-a-h}
$$
and hence
$$
\lim_{h\to 0} f[a,a+h] ~=~ f[a,x_1]-\lim_{h\to 0} (x_1-a-h){\,}f[a,a+h,x_1]
$$
and this limit is finite if so is the right-hand limit. The induction process is now clear.
\end{proof}

We now have the following proposition.

\begin{proposition}\label{prop:cKp4fDiff921}
If $f$ lies in $\cK^p_+(I)$ for some integer $p\geq 2$, then $f$ is differentiable on $I$.
\end{proposition}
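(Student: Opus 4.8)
The plan is to deduce differentiability at an arbitrary point $a\in I$ from the test in Lemma~\ref{lemma:Diff4Test72}, applied with $n=p-1\geq 1$: it suffices to exhibit pairwise distinct nodes $x_1,\dots,x_{p-1}\in I\setminus\{a\}$ for which $\lim_{h\to 0}f[a,a+h,x_1,\dots,x_{p-1}]$ exists and is finite. The starting observation is that, setting $\phi(t)=f[t,x_1,\dots,x_{p-1}]$ on the component of $I\setminus\{x_1,\dots,x_{p-1}\}$ containing $a$, the recurrence \eqref{eq:DivDiffRec7} together with the symmetry of divided differences gives $f[a,a+h,x_1,\dots,x_{p-1}]=\phi[a,a+h]=(\phi(a+h)-\phi(a))/h$. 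Thus the required limit is exactly $\phi'(a)$, and the whole problem reduces to showing that $\phi$ is differentiable at $a$. Moreover $\phi[t_0,t_1]=f[t_0,t_1,x_1,\dots,x_{p-1}]$ is a $p$th order divided difference of $f$, hence increasing in each place by Lemma~\ref{lemma:pCInc5}; by the same lemma $\phi\in\cK^1_+$, i.e.\ $\phi$ is convex near $a$, so it is continuous there and has finite one-sided derivatives $\phi'_-(a)\leq\phi'_+(a)$. (Continuity of $\phi$ also forces continuity of $f$ at $a$, since $f(t)=(t-x_1)\phi(t)+f(x_1)$ near $a$.)

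I would treat the base case $p=2$ first, since it carries the essential idea. Here $\phi(t)=f[t,c]$ with $c:=x_1$, and the recurrence yields $\frac{f(a+h)-f(a)}{h}=\frac{f(c)-f(a+h)}{c-a-h}-(c-a)\,f[a,a+h,c]$. Letting $h\to 0^{\pm}$ and using continuity of $f$ at $a$ together with $f[a,a+h,c]\to\phi'_{\pm}(a)$, one reads off (right to left) that the one-sided derivatives $f'_{\pm}(a)$ exist and satisfy $f'_{\pm}(a)=\frac{f(c)-f(a)}{c-a}-(c-a)\,\phi'_{\pm}(a)$, whence $f'_+(a)-f'_-(a)=-(c-a)\,(\phi'_+(a)-\phi'_-(a))$. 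Since $\phi'_+(a)-\phi'_-(a)\geq 0$, the sign of $f'_+(a)-f'_-(a)$ is opposite to that of $c-a$. Choosing the probe $c>a$ gives $f'_+(a)\leq f'_-(a)$, while choosing $c<a$ (both possible as $I$ is open) gives the reverse inequality; hence $f'_+(a)=f'_-(a)$ and $f$ is differentiable at $a$.

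For the general case $p\geq 3$ I would reduce to the base case by freezing nodes. Fix a small open subinterval $J\ni a$ with $\overline{J}\subset I$, choose $x_1\in J\setminus\{a\}$, and choose $x_2,\dots,x_{p-1}$ in $I$ outside $\overline{J}$ (all on one side, which is possible since $I$ is open). Setting $\eta(s)=f[s,x_2,\dots,x_{p-1}]$, the map $\eta[u,v,w]=f[u,v,w,x_2,\dots,x_{p-1}]$ is again a $p$th order divided difference of $f$, hence nonnegative and increasing in each place by Lemma~\ref{lemma:pCInc5}; thus $\eta$ is $2$-convex on $J$ (where it has no singularities, as no $x_j$ with $j\geq 2$ lies in $J$). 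The base case applies to $\eta$ on $J$ and shows $\eta$ is differentiable at $a$. Finally $\phi(t)=\eta[t,x_1]=(\eta(t)-\eta(x_1))/(t-x_1)$ is then differentiable at $a$ (the denominator being nonzero there), so $\lim_{h\to 0}f[a,a+h,x_1,\dots,x_{p-1}]=\phi'(a)$ exists and is finite, and Lemma~\ref{lemma:Diff4Test72} yields differentiability of $f$ at $a$.

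The delicate point, and the heart of the argument, is the base case: everything hinges on the fact that $p$th order divided differences are monotone in each argument (Lemma~\ref{lemma:pCInc5}), which makes the auxiliary function $\phi$ (resp.\ $\eta$) convex, and on the sign reversal obtained by moving the probe node $c$ from one side of $a$ to the other. It is precisely this freedom to place a node on either side of $a$—absent when $p=1$, where no auxiliary node is available and convexity alone cannot rule out a corner—that upgrades the one-sided differentiability of a convex function to genuine two-sided differentiability. I would take some care to justify rigorously that the limits $f'_{\pm}(a)$ exist as finite numbers (reading the displayed recurrence identity from right to left, whose right-hand side has a finite limit) and that the nodes can indeed be placed so that $\eta$ is $2$-convex on an open interval containing both $a$ and $x_1$.
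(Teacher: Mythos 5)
Your proof is correct, but it follows a genuinely different route from the paper's. The paper also concludes via the differentiability test of Lemma~\ref{lemma:Diff4Test72}, but with only $n=p-2$ frozen nodes and a single mechanism uniform in $p\geq 2$: by Lemma~\ref{lemma:pCInc5} the $(p+1)$-point divided difference map is increasing in each place, hence bounded on a compact box $J^{p+1}$ around $a$; writing
$$
f[a,a+h_1,a+h_2,x_1,\ldots,x_{p-2}] ~=~ \frac{f[a,a+h_2,x_1,\ldots,x_{p-2}]-f[a,a+h_1,x_1,\ldots,x_{p-2}]}{h_2-h_1}{\,},
$$
this bound makes $h\mapsto f[a,a+h,x_1,\ldots,x_{p-2}]$ Lipschitz near $h=0$, so the limit exists by the Cauchy criterion. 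You instead take $n=p-1$ nodes, which forces you to prove genuine differentiability of the section $\phi(t)=f[t,x_1,\ldots,x_{p-1}]$, not merely existence of a limit; you achieve this by the two-sided probe argument in the base case $p=2$ (the sign of $f'_+(a)-f'_-(a)=-(c-a)\bigl(\phi'_+(a)-\phi'_-(a)\bigr)$ reverses as the probe $c$ crosses $a$), and you reduce $p\geq 3$ to $p=2$ by freezing $x_2,\ldots,x_{p-1}$ outside a neighborhood of $a$. What your route buys: it avoids the boundedness-on-a-compact-box step (whose verification, for a map defined only on pairwise distinct tuples, needs some care), it isolates exactly where $p\geq 2$ enters (the probe can be placed on either side of $a$, which is impossible for $p=1$, where convex functions may have corners), and it produces explicit relations between the one-sided derivatives of $f$ and of $\phi$. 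What the paper's route buys: it is shorter, needs no induction or case analysis, and does not require establishing continuity of $f$ at $a$ as a preliminary step. One slip to correct in your write-up: in the reduction step you assert that $\eta[u,v,w]=f[u,v,w,x_2,\ldots,x_{p-1}]$ is ``nonnegative and increasing in each place''; the nonnegativity is neither true in general nor needed---monotonicity in each place alone, fed into the converse direction of Lemma~\ref{lemma:pCInc5} applied to $\eta$ on $J$ with order $2$, already yields the $2$-convexity of $\eta$ that your argument uses.
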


\begin{proof}
Let $a\in I$ and let $J$ be a compact subinterval of $I$ whose interior contains $a$. Let $\mathcal{I}_{p+1}$ denote the set of tuples of $I^{p+1}$ whose components are pairwise distinct. By Lemma~\ref{lemma:pCInc5}, the restriction of the map
$$
(z_0,\ldots,z_p) ~\mapsto ~ f[z_0,\ldots,z_p]
$$
to $\mathcal{I}_{p+1}$ is increasing in each place, hence this map is bounded on $\mathcal{I}_{p+1}\cap J^{p+1}$.

Let $x_1,\ldots,x_{p-2}$ be $p-2$ pairwise distinct points in $J$, and distinct from $a$, and let $h_1,h_2$ be sufficiently small distinct nonzero real numbers such that $a+h_1,a+h_2$ lie in $J$. Using \eqref{eq:DivDiffRec7}, we get
\begin{multline*}
f[a,a+h_1,a+h_2,x_1,\ldots,x_{p-2}]\\
=~ \frac{f[a,a+h_2,x_1,\ldots,x_{p-2}]-f[a,a+h_1,x_1,\ldots,x_{p-2}]}{h_2-h_2}
\end{multline*}
Thus, there exists $C_J>0$ such that
$$
\left|f[a,a+h_2,x_1,\ldots,x_{p-2}]-f[a,a+h_1,x_1,\ldots,x_{p-2}]\right| ~\leq ~ C_J{\,}|h_2-h_1|.
$$
It follows that for any sequence $n\mapsto h_n$ converging to zero, the sequence
$$
n ~\mapsto ~ f[a,a+h_n,x_1,\ldots,x_{p-2}]
$$
is a Cauchy sequence whose limit does not depend on the sequence $n\mapsto h_n$.
Therefore, the limit
$$
\lim_{h\to 0}f[a,a+h,x_1,\ldots,x_{p-2}]
$$
exists and is finite. By Lemma~\ref{lemma:Diff4Test72}, $f$ is differentiable at $a$.
\end{proof}

We are now in a position to prove Lemma~\ref{lemma:PrelKp}(a).

\begin{proposition}
If $f$ lies in $\cK^{p+1}_+(I)$ for some $p\in\N$, then $f$ lies in $\cC^p(I)$.
\end{proposition}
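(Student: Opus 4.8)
The plan is to argue by induction on $p$, using the two results just established as the engine of the inductive step: Proposition~\ref{prop:cKp4fDiff921}, which upgrades $(p+1)$-convexity (with $p+1\geq 2$) to differentiability, and Corollary~\ref{cor:4d65SIOTc2}, which says that differentiating a differentiable $p$-convex function drops the order of convexity by one. Combining these two facts lets one peel off derivatives one at a time while keeping control of the convexity order.

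For the base case $p=0$, I would invoke the classical fact that an ordinary convex function, i.e.\ a member of $\cK^1_+(I)$, is continuous on the open interval $I$ (indeed locally Lipschitz on every compact subinterval of $I$). This yields $\cK^1_+(I)\subset\cC^0(I)$, which is precisely the assertion for $p=0$.

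For the inductive step, I would assume the inclusion $\cK^p_+(I)\subset\cC^{p-1}(I)$ for some $p\geq 1$ and let $f$ lie in $\cK^{p+1}_+(I)$. Since $p+1\geq 2$, Proposition~\ref{prop:cKp4fDiff921} guarantees that $f$ is differentiable on $I$, and then Corollary~\ref{cor:4d65SIOTc2} places $f'$ in $\cK^p_+(I)$. By the induction hypothesis, $f'$ lies in $\cC^{p-1}(I)$; hence $f$, being differentiable with a derivative of class $\cC^{p-1}$, lies in $\cC^p(I)$. This closes the induction and proves the proposition for the cone $\cK^{p+1}_+(I)$.

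Finally, to recover the full statement of Lemma~\ref{lemma:PrelKp}(a), namely $\cK^{p+1}(I)\subset\cC^p(I)$, I would appeal to the symmetry recorded in Proposition~\ref{prop:convCones48}: a function $f$ lies in $\cK^{p+1}_-(I)$ exactly when $-f$ lies in $\cK^{p+1}_+(I)$, and $\cC^p(I)$ is stable under negation; the inclusion for the negative cone then follows immediately, and taking the union gives the result. I do not anticipate any real obstacle in this argument — the only point demanding care is the base case, where one must remember that a convex function need not be differentiable but is always continuous, which is exactly why the induction must be started at $p=0$ with a continuity argument rather than one rung lower with a differentiability argument.
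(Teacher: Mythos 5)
Your proof is correct and follows essentially the same route as the paper's: induction on $p$ with the convex-functions-are-continuous fact as the base case, and Proposition~\ref{prop:cKp4fDiff921} combined with Corollary~\ref{cor:4d65SIOTc2} driving the inductive step. The closing remark on passing from $\cK^{p+1}_+(I)$ to the full set $\cK^{p+1}(I)$ by negation is a harmless addition that the paper leaves implicit.
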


\begin{proof}
We proceed by induction on $p$. The case when $p=0$ is folklore and can be found, e.g., in Artin~\cite[Theorem~1.5]{Art15}. Suppose that the result holds for some $p\in\N$ and let us show that it still holds for $p+1$. Let $f$ lie in $\cK^{p+2}_+(I)$. By Proposition~\ref{prop:cKp4fDiff921} and Corollary~\ref{cor:4d65SIOTc2}, $f$ is differentiable on $I$ and $f'$ lies in $\cK^{p+1}_+(I)$. Using our induction hypothesis, we see that $f'$ lies in $\cC^{p}(I)$, and hence $f$ lies in $\cC^{p+1}(I)$.
\end{proof}

Let us end this study with an interesting generalization of Lemma~\ref{lemma:pCInc5}. It is an immediate corollary of the following proposition.

\begin{proposition}
Let $n,m\in\N$, let $x_0,\ldots,x_{n-1},y_0,\ldots,y_m$ be $n+m+1$ pairwise distinct points in $I$, let $f\colon I\to\R$, and let $g\colon I\setminus\{x_0,\ldots,x_{n-1}\}\to\R$ be defined by the equation
$$
g(x) ~=~ f[x_0,\ldots,x_{n-1},x]\qquad\text{for $x\in I\setminus\{x_0,\ldots,x_{n-1}\}$}.
$$
Then we have
$$
g[y_0,\ldots,y_m] ~=~ f[x_0,\ldots,x_{n-1},y_0,\ldots,y_m].
$$
\end{proposition}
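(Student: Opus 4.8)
The plan is to proceed by induction on $m$, relying only on the symmetry of divided differences and the standard recurrence relation \eqref{eq:DivDiffRec7}. Since all $n+m+1$ points are pairwise distinct, both of these tools apply freely at every stage, and in particular every intermediate divided difference that arises has pairwise distinct nodes drawn from the original list.

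For the base case $m=0$, the claim reduces to $g[y_0]=g(y_0)$, which holds by the very definition of $g$, since $g(y_0)=f[x_0,\ldots,x_{n-1},y_0]$. For the inductive step, I would assume the identity for $m-1$ (for every admissible choice of points and every function) and first apply \eqref{eq:DivDiffRec7} to the left-hand side, writing
$$
g[y_0,\ldots,y_m] ~=~ \frac{g[y_1,\ldots,y_m]-g[y_0,\ldots,y_{m-1}]}{y_m-y_0}{\,}.
$$
The induction hypothesis then converts each of the two $m$-point divided differences of $g$ into an $(n+m)$-point divided difference of $f$, namely $g[y_1,\ldots,y_m]=f[x_0,\ldots,x_{n-1},y_1,\ldots,y_m]$ and $g[y_0,\ldots,y_{m-1}]=f[x_0,\ldots,x_{n-1},y_0,\ldots,y_{m-1}]$.

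It remains to recognize the resulting difference quotient as $f[x_0,\ldots,x_{n-1},y_0,\ldots,y_m]$. To this end I would invoke the symmetry of divided differences to reorder the arguments of $f[x_0,\ldots,x_{n-1},y_0,\ldots,y_m]$ so that $y_0$ occupies the first position and $y_m$ the last, then apply \eqref{eq:DivDiffRec7} to peel off $y_0$ and $y_m$; a second use of symmetry shows that the two $(n+m)$-point divided differences so produced are precisely the two displayed above, with matching denominator $y_m-y_0$. This closes the induction and establishes the proposition.

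The argument is entirely routine, and I do not anticipate any genuine obstacle: the only point demanding care is the bookkeeping of the symmetry-induced reorderings and the verification that the recurrence \eqref{eq:DivDiffRec7} is legitimately applicable at each step (which is guaranteed by the pairwise distinctness hypothesis). As an alternative, one could substitute $g(y_j)=f[x_0,\ldots,x_{n-1},y_j]$ directly into the explicit expression \eqref{eq:DivDiffPai4Dis82} for $g[y_0,\ldots,y_m]$ and expand, but the ensuing manipulation of the product denominators is markedly more cumbersome than the short induction above, so I would prefer the inductive route.
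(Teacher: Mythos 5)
Your proof is correct and follows essentially the same route as the paper's: induction on $m$ using the recurrence relation \eqref{eq:DivDiffRec7} (together with the symmetry of divided differences) to split and reassemble the difference quotients. The paper merely writes out the cases $m=0$ and $m=1$ and declares the induction obvious, so your explicit general step is, if anything, a more complete rendering of the same argument.
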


\begin{proof}
This result can be easily proved by induction on $m$ for any fixed value of $n$, simply by using the recurrence relation \eqref{eq:DivDiffRec7}. To simplify the computations, let us consider the first two cases only. For $m=0$, we have trivially
$$
g[y_0] ~=~ g(y_0) ~=~ f[x_0,\ldots,x_{n-1},y_0].
$$
For $m=1$, we have
\begin{eqnarray*}
g[y_0,y_1] &=& \frac{g(y_1)-g(y_0)}{y_1-y_0} ~=~ \frac{f[x_0,\ldots,x_{n-1},y_1]-f[x_0,\ldots,x_{n-1},y_0]}{y_1-y_0}\\
&=& f[x_0,\ldots,x_{n-1},y_0,y_1].
\end{eqnarray*}
The induction process is now obvious.
\end{proof}

\begin{corollary}\label{cor:A7Ijp15r}
Let $j,p\in\N$, with $j\leq p$, and let $\mathcal{I}_{j+1}$ denote the set of tuples of $I^{j+1}$ whose components are pairwise distinct. A function $f\colon I\to\R$ lies in $\cK^p_+(I)$ if and only if the restriction of the map
$$
(z_0,\ldots,z_j) ~\mapsto ~ f[z_0,\ldots,z_j]
$$
to $\mathcal{I}_{j+1}$ is $(p-j)$-convex in each place.
\end{corollary}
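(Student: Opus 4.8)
The plan is to deduce the equivalence directly from the preceding proposition together with the symmetry of divided differences and the very definition of $p$-convexity (Definition~\ref{de:pconcconv}). First I would observe that, since the map $(z_0,\ldots,z_j)\mapsto f[z_0,\ldots,z_j]$ is symmetric, being $(p-j)$-convex in each place is the same as being $(p-j)$-convex in one fixed place, say the last one; so it suffices to fix $j$ pairwise distinct points $x_1,\ldots,x_j\in I$ and study the single-variable function $g(x)=f[x_1,\ldots,x_j,x]$, defined on $I\setminus\{x_1,\ldots,x_j\}$. By the preceding proposition, its divided differences are $g[w_0,\ldots,w_m]=f[x_1,\ldots,x_j,w_0,\ldots,w_m]$, so a divided difference of $g$ of order $m$ is exactly a divided difference of $f$ of order $j+m$.

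For the forward implication, I would assume $f\in\cK^p_+(I)$. Given any increasing system $w_0<\cdots<w_{p-j+1}$ of $(p-j)+2$ points in $I\setminus\{x_1,\ldots,x_j\}$, the preceding identity gives $g[w_0,\ldots,w_{p-j+1}]=f[x_1,\ldots,x_j,w_0,\ldots,w_{p-j+1}]$, a divided difference of $f$ at $p+2$ pairwise distinct points. By symmetry these points may be reordered increasingly without changing the value, and $p$-convexity of $f$ then forces it to be nonnegative. Hence $g$ is $(p-j)$-convex; as $x_1,\ldots,x_j$ were arbitrary, the map is $(p-j)$-convex in the last place, and therefore, by symmetry and relabeling, in each place.

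For the converse, I would assume the map is $(p-j)$-convex in each place and take any system $t_0<\cdots<t_{p+1}$ of $p+2$ points in $I$. Setting $x_i=t_{i-1}$ for $i=1,\ldots,j$ and $w_k=t_{j+k}$ for $k=0,\ldots,p-j+1$, the $w_k$ all exceed the $x_i$, so they are increasing and lie in $I\setminus\{x_1,\ldots,x_j\}$; the assumed $(p-j)$-convexity in the last place yields $f[t_0,\ldots,t_{p+1}]=g[w_0,\ldots,w_{p-j+1}]\geq 0$, so $f\in\cK^p_+(I)$. I expect the only delicate point to be the bookkeeping around the domain $I\setminus\{x_1,\ldots,x_j\}$, which is not an interval: one must read ``$(p-j)$-convex in each place'' as the nonnegativity of the relevant order-$(p-j+1)$ divided difference for admissible (pairwise distinct, hence automatically off the fixed nodes) arguments, and then check that the points chosen in the converse genuinely avoid the fixed nodes, which they do by construction. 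The special case $j=p$ recovers Lemma~\ref{lemma:pCInc5}.
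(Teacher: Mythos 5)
Your proof is correct and takes essentially the same approach as the paper: the paper presents this corollary as an immediate consequence of the preceding proposition (the composition identity $g[y_0,\ldots,y_m]=f[x_0,\ldots,x_{n-1},y_0,\ldots,y_m]$), and your argument is precisely that deduction, with the symmetry of divided differences and the bookkeeping around the punctured domain $I\setminus\{x_1,\ldots,x_j\}$ made explicit.
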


\index{higher order convexity and concavity|)}

\chapter{On Krull-Webster's asymptotic condition}
\label{chapter:A-KW561}

\noindent\textsl{Summary: We show that our uniqueness and existence results fully generalize a recent attempt by Rassias and Trif \cite{RasTri07} to solve the particular case when $p=2$.}

\medskip

Recall that the original asymptotic condition imposed by Krull and Webster on the function $g$ is that, for each $x>0$,
$$
g(x+t)-g(t) ~\to ~0\qquad\text{as $t\to\infty$}{\,};
$$
see Eq.~\eqref{eq:KWac3g}. Using our notation, this means that the function $g$ lies in $\cR^1_{\R}$. Geometrically, this condition also means that the chord to the graph of $g$ on any fixed length interval has an asymptotic zero slope. Only fixed length intervals whose left endpoints are integers are to be considered if the condition reduces to requiring that $g\in\cR^1_{\N}$. The restriction of our uniqueness and existence results to the case when $p=1$ shows that this condition can actually be relaxed into $g\in\cD^1_{\N}$, which means that the chord to the graph of $g$ on any interval of the form $[n,n+1]$, $n\in\N^*$, has an asymptotic zero slope. The function $g(x)=\ln x$ is a typical example that shows, just as does every function whose derivative vanishes at infinity, that those functions need not behave asymptotically like constant functions.

It remains, however, that Krull-Webster's asymptotic condition is rather restrictive. As already mentioned in Chapter~\ref{chapter:1}, this condition is not satisfied by the functions $g(x)=x\ln x$ and $g(x)=\ln\Gamma(x)$. To overcome this restriction, Rassias and Trif \cite{RasTri07} proposed a modification of Webster's results by considering solutions lying in $\cK^2$ and replacing the asymptotic condition with a more appropriate one. Specifically, they considered any function $g\colon\R_+\to\R$ for which there exists a number $a>0$ such that
\begin{equation}\label{eq:RasTriAsC}
\lim_{t\to\infty}~g(x+t)-g(t)-x\ln t ~=~ x\ln a,\qquad\text{for all $x>0$.}
\end{equation}
It turns out that both functions $g(x)=x\ln x$ and $g(x)=\ln\Gamma(x)$ satisfy this alternative condition. However, the identity function $g(x)=x$ does not.

Let us now show that our asymptotic condition that $g\in\cD^2_{\R}$ generalizes not only Rassias and Trif's \eqref{eq:RasTriAsC} but also many other similar conditions.

\begin{proposition}\label{prop:RT4}
Let $\varphi\colon\R_+\to\R$ and suppose that $g\colon\R_+\to\R$ has the property that, for each $x>0$,
$$
g(x+t)-g(t)-x{\,}\varphi(t) ~\to ~0\qquad\text{as $t\to\infty$.}
$$
Then $g$ lies in $\cR^2_{\R}\subset\cD^2_{\R}$. In particular, $\cR^2_{\R}$ contains all the functions that satisfy Rassias and Trif's condition.
\end{proposition}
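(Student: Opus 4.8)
The plan is to unwind the definition of $\cR^2_{\R}$ and exploit the case $x=1$ of the hypothesis to identify $\varphi$ with $\Delta g$ up to a term that vanishes at infinity. Recall from \eqref{eq:deflambdapt} that, for $p=2$,
$$
\rho^2_a[g](x) ~=~ g(x+a) - g(a) - x\,\Delta g(a),
$$
since $\Delta^0 g(a)=g(a)$ and $\Delta^1 g(a)=\Delta g(a)=g(a+1)-g(a)$. Thus proving $g\in\cR^2_{\R}$ amounts to showing that, for each fixed $x>0$, this quantity tends to $0$ as $a\to\infty$.

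First I would apply the hypothesis with $x=1$. This gives
$$
\Delta g(t)-\varphi(t) ~=~ g(1+t)-g(t)-\varphi(t) ~\to ~0\qquad\text{as $t\to\infty$},
$$
so that $\varphi$ and $\Delta g$ agree asymptotically. The key step is then the elementary decomposition
$$
\rho^2_a[g](x) ~=~ \big(g(x+a)-g(a)-x\,\varphi(a)\big) + x\,\big(\varphi(a)-\Delta g(a)\big).
$$
The first summand tends to $0$ as $a\to\infty$ directly by hypothesis applied to the given $x$, while the second is $x$ times a quantity tending to $0$ by the previous paragraph, hence tends to $0$ for each fixed $x$. Therefore $\rho^2_a[g](x)\to 0$ for every $x>0$, which is exactly the statement $g\in\cR^2_{\R}$. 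The inclusion $\cR^2_{\R}\subset\cD^2_{\R}$ is then immediate from Proposition~\ref{prop:4042RsDs5}.

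Finally, for the claim concerning Rassias and Trif's condition, I would simply rewrite \eqref{eq:RasTriAsC} as
$$
g(x+t)-g(t)-x\,\ln(at) ~\to ~0\qquad\text{as $t\to\infty$},
$$
which is precisely the hypothesis of the proposition with the choice $\varphi(t)=\ln(at)$; the membership $g\in\cR^2_{\R}$ then follows from the general result.

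I do not anticipate a genuine obstacle here: the entire content is the algebraic identity in the decomposition together with the observation that $\varphi$ is determined, asymptotically, by the $x=1$ instance of the hypothesis. The only mild point requiring care is that the hypothesis is posited separately for each fixed $x$, so the conclusion is likewise pointwise in $x$ — but that is all the definition of $\cR^2_{\R}$ demands, so no uniformity in $x$ is ever needed.
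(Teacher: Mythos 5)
Your proof is correct and is essentially the paper's own argument: your decomposition $\rho^2_a[g](x) = \bigl(g(x+a)-g(a)-x\,\varphi(a)\bigr) + x\bigl(\varphi(a)-\Delta g(a)\bigr)$ is precisely the paper's identity $\rho_t^2[g](x) = \rho^{\varphi}_t[g](x)-x\,\rho^{\varphi}_t[g](1)$, written out after noting that the $x=1$ instance of the hypothesis gives $\varphi(t)-\Delta g(t)\to 0$. The treatment of Rassias--Trif's condition via $\varphi(t)=\ln(at)$ also matches the paper.
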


\begin{proof}
For any $t>0$ and any $g\colon\R_+\to\R$, define the function $\rho^{\varphi}_t[g]\colon [0,\infty)\to\R$ by the equation
$$
\rho^{\varphi}_t[g](x) ~=~ g(x+t)-g(t)-x{\,}\varphi(t)\qquad\text{for $x>0$}.
$$
Let also $\cR^{\varphi}_{\R}$ be the set of functions $g\colon\R_+\to\R$ with the property that, for each $x>0$, $\rho^{\varphi}_t[g](x)\to 0$ as $t\to\infty$. Then we immediately see that
$$
\rho_t^2[g](x) ~=~ \rho^{\varphi}_t[g](x)-x\rho^{\varphi}_t[g](1),
$$
which shows that $\cR^{\varphi}_{\R}\subseteq\cR^2_{\R}$. Now, if $g$ satisfies Rassias and Trif's condition, then it lies in the set $\cup_{a>0}\cR^{\varphi_a}_{\R}$, where $\varphi_a(x)=\ln(ax)$, and hence it also lies in $\cR^2_{\R}$.
\end{proof}

Proposition~\ref{prop:RT4} can be generalized to $\cR^p_{\R}$ for any value of $p\geq 2$ as follows.

\begin{proposition}
Let $p\geq 2$ be an integer, let $\varphi_1,\ldots,\varphi_{p-1}\colon\R_+\to\R$, and suppose that $g\colon\R_+\to\R$ has the property that, for each $x>0$,
$$
g(x+t)-g(t)-\sum_{j=1}^{p-1}\tchoose{x}{j}\,\varphi_j(t) ~\to ~0\qquad\text{as $t\to\infty$}.
$$
Then $g$ lies in $\cR^p_{\R}\subset\cD^p_{\R}$.
\end{proposition}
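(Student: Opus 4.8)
The plan is to generalize the two-step argument used for $p=2$ in Proposition~\ref{prop:RT4}. I would introduce the auxiliary function $\lambda_t\colon[0,\infty)\to\R$ defined by
$$
\lambda_t(x) ~=~ g(x+t)-g(t)-\sum_{j=1}^{p-1}\tchoose{x}{j}\,\varphi_j(t),
$$
so that the hypothesis of the statement is precisely that, for each fixed $x>0$, we have $\lambda_t(x)\to 0$ as $t\to\infty$. Since $\cR^p_{\R}\subset\cD^p_{\R}$ by Proposition~\ref{prop:4042RsDs5}, it suffices to prove that $g$ lies in $\cR^p_{\R}$, i.e.\ that $\rho^p_t[g](x)\to 0$ as $t\to\infty$ for each $x>0$. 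The whole difficulty is therefore to express $\rho^p_t[g]$ in terms of the controlled quantities $\lambda_t(0),\lambda_t(1),\ldots,\lambda_t(p-1)$.

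The key is to exploit the interpolation-error interpretation of $\rho^p_t[g]$ recorded in \eqref{eq:LErrIn}. By construction $\lambda_t(x)=g(x+t)-Q_t(x)$, where $Q_t(x)=g(t)+\sum_{j=1}^{p-1}\tchoose{x}{j}\varphi_j(t)$ is a polynomial of degree at most $p-1$ in $x$. Hence the Newton forward interpolation polynomial of $\lambda_t$ at the $p$ nodes $0,1,\ldots,p-1$ differs from that of the map $x\mapsto g(x+t)$ exactly by $Q_t$ (a polynomial of degree $\leq p-1$ being reproduced with no interpolation error), so the two interpolation errors coincide. Writing out the forward Newton formula \eqref{eq:NewtonInt} at $a=0$ and using $\lambda_t(0)=0$, this yields the identity
$$
\rho^p_t[g](x) ~=~ \lambda_t(x)-\sum_{j=1}^{p-1}\tchoose{x}{j}\,\Delta^j\lambda_t(0),
$$
where $\Delta^j\lambda_t(0)=\sum_{k=0}^j(-1)^{j-k}\tchoose{j}{k}\lambda_t(k)$. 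For $p=2$ this reduces to $\rho^2_t[g](x)=\lambda_t(x)-x\,\lambda_t(1)$, recovering the identity used in Proposition~\ref{prop:RT4}.

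With this identity in hand the conclusion is immediate: we have $\lambda_t(0)=0$ for all $t$, and $\lambda_t(k)\to 0$ for each $k\in\{1,\ldots,p-1\}$ by hypothesis, so each finite difference $\Delta^j\lambda_t(0)$, being a fixed linear combination of $\lambda_t(0),\ldots,\lambda_t(j)$ with $j\leq p-1$, tends to $0$; since $\lambda_t(x)\to 0$ as well, we obtain $\rho^p_t[g](x)\to 0$ for every $x>0$, i.e.\ $g\in\cR^p_{\R}$. I expect the only delicate point to be the bookkeeping in the second step: one must check that the hypothesis controls $\lambda_t$ at exactly enough points---namely the $p-1$ nonzero integer nodes, together with the automatic vanishing $\lambda_t(0)=0$---to pin down the entire degree-$(p-1)$ correction polynomial, and hence $\rho^p_t[g]$, at every real $x$. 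Everything else is the routine verification that subtracting a polynomial of degree $\leq p-1$ leaves the Newton interpolation error unchanged.
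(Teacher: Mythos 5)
Your proof is correct, and in substance it establishes exactly the identity on which the paper's own proof rests: since $\Delta^j_x\tchoose{x}{i}\big|_{x=0}=\tchoose{0}{i-j}$ equals $1$ if $i=j$ and $0$ otherwise, one has $\Delta^j\lambda_t(0)=\Delta^jg(t)-\varphi_j(t)$, so your formula $\rho^p_t[g](x)=\lambda_t(x)-\sum_{j=1}^{p-1}\tchoose{x}{j}\,\Delta^j\lambda_t(0)$ coincides term by term with the paper's closed form $\rho^p_t[g](x)=\rho_t^{\boldsymbol{\varphi}}[g](x)-\sum_{i=1}^{p-1}\tchoose{x}{i}(\Delta^ig(t)-\varphi_i(t))$. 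What genuinely differs is the derivation. The paper builds the identity by a recursive peeling: it sets $\psi^1_t=\rho_t^{\boldsymbol{\varphi}}[g]$ and $\psi^{j+1}_t=\psi^j_t-\tchoose{x}{j}\,\psi^j_t(j)$, verifies the closed form of this recursion, and observes that each step visibly preserves pointwise convergence to zero; this is elementary and self-contained but requires checking the recursion's closed form (dismissed as ``not difficult to see''). You instead obtain the identity in one stroke from the interpolation-error interpretation \eqref{eq:LErrIn}: interpolation at the nodes $0,1,\ldots,p-1$ is a linear operator reproducing polynomials of degree at most $p-1$, so subtracting the polynomial $Q_t$ leaves the Newton interpolation error unchanged. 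Your route is more conceptual and explains \emph{why} the correction term must be a fixed linear combination of the $\lambda_t(k)$'s, at the mild cost of invoking interpolation theory rather than bare algebra. The concluding limit argument—$\lambda_t(x)\to 0$ by hypothesis, $\lambda_t(0)=0$ identically, and each $\Delta^j\lambda_t(0)$ a fixed finite linear combination of $\lambda_t(0),\ldots,\lambda_t(j)$ with $j\leq p-1$—is the same in both proofs, as is the final appeal to $\cR^p_{\R}\subset\cD^p_{\R}$ (Proposition~\ref{prop:4042RsDs5}).
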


\begin{proof}
For any $t>0$ and any $g\colon\R_+\to\R$, define the function $\rho_t^{\boldsymbol{\varphi}}[g]\colon [0,\infty)\to\R$ by the equation
$$
\rho_t^{\boldsymbol{\varphi}}[g](x) ~=~ g(x+t)-g(t)-\sum_{j=1}^{p-1}\tchoose{x}{j}\,\varphi_j(t).
$$
Define also the functions $\psi_t^{\boldsymbol{\varphi},1}[g],\ldots,\psi_t^{\boldsymbol{\varphi},p}[g]\colon [0,\infty)\to\R$ recursively by the equations $\psi_t^{\boldsymbol{\varphi},1}[g]=\rho_t^{\boldsymbol{\varphi}}[g]$ and
$$
\psi_t^{\boldsymbol{\varphi},j+1}[g] ~=~ \psi_t^{\boldsymbol{\varphi},j}[g]-\tchoose{x}{j}\,\psi_t^{\boldsymbol{\varphi},j}[g](j),\qquad j=1,\ldots,p-1.
$$
Then, it is not difficult to see that
$$
\psi_t^{\boldsymbol{\varphi},j}[g](x) ~=~ \rho_t^{\boldsymbol{\varphi}}[g](x)-\sum_{i=1}^{j-1}\tchoose{x}{i}(\Delta^ig(t)-\varphi_i(t))
$$
and hence $\psi_t^{\boldsymbol{\varphi},p}[g]=\rho^p_t[g]$. Thus, if the function $g\colon\R_+\to\R$ has the property that, for each $x>0$, $\rho^{\boldsymbol{\varphi}}_t[g](x)\to 0$ as $t\to\infty$, then it lies in $\cR^p_{\R}$.
\end{proof} 

\chapter{On a question raised by Webster}
\label{chapter:B-KW562}

\noindent\textsl{Summary: We discuss conditions on the function $g$ to ensure both the uniqueness (up to an additive constant) and existence of solutions to the equation $\Delta f=g$ that lie in $\cK^p$.}

\medskip

A natural question raised by Webster~\cite[p.\ 606]{Web97b}, and that we now extend to any value of $p\in\N$, is the following.
\begin{quote}
\emph{Find necessary and sufficient conditions on the function $g\colon\R_+\to\R$ to ensure both the uniqueness (up to an additive constant) and existence of solutions lying in $\cK^p_+$ (resp.\ $\cK^p_-$) to the equation $\Delta f=g$.}
\end{quote}

Lemma~\ref{lemma:PrelKp}(b) shows that a necessary condition for this to occur is that $g\in\cK^{p-1}_+$ (resp.\ $g\in\cK^{p-1}_-$). Also, our uniqueness and existence results show that a sufficient condition is that $g\in\cD^p\cap\cK^p_-$ (resp.\ $g\in\cD^p\cap\cK^p_+$). It is tempting to believe that this latter condition is also necessary. The following two examples support this idea.
\begin{enumerate}
\item[(a)] Both functions
$$
\ln\Gamma(x)\qquad\text{and}\qquad\textstyle{\ln\Gamma(x)+\ln\left(1+\frac{1}{2}\sin(2\pi x)\right)}
$$
are solutions to the equation $\Delta f=g$ that lie in $\cK_+^0$, where $g(x)=\ln x$ does not lie in $\cD^0\cup\cK^0_-$ (see Example~\ref{ex:unicGa4}).
\item[(b)] Both functions
$$
2^x\qquad\text{and}\qquad 2^x + \sin(2\pi x)
$$
are solutions to the equation $\Delta f=g$ that lie in $\cK^p_+$ for any $p\in\N$, where $g(x)=2^x$ does not lie in $\cD^p\cup\cK^p_-$.
\end{enumerate}
Nevertheless, the following proposition shows that in general the condition above is not necessary.

\begin{proposition}\label{prop:sdf089sadf4}
There exists a function $f\in\cC^0\cap\cK^0$ such that
\begin{itemize}
\item[(a)] $\Delta f$ does not lie in $\cD^0\cup\cK^0$, and
\item[(b)] for any function $\varphi\in\cK^0$ satisfying $\Delta\varphi =\Delta f$ we have that $f-\varphi$ is constant.
\end{itemize}
\end{proposition}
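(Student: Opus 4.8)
The plan is to exhibit an explicit, concrete function rather than argue abstractly, exploiting the fact that $\cK^0$ consists of \emph{eventually monotone} functions. First I would build $f\colon\R_+\to\R$ as a continuous, non-decreasing, piecewise-linear ``staircase'' that alternates unit-length plateaus with unit-length rises: for every integer $n$ large enough, $f$ is constant on $[2n,2n+1]$ and rises linearly by a fixed height $1$ on $[2n+1,2n+2]$ (on a bounded initial segment one simply extends $f$ continuously and non-decreasingly). Being non-decreasing on a neighborhood of infinity, $f$ lies in $\cC^0\cap\cK^0_+$, so the membership requirement is immediate. The two features I am deliberately engineering are: the \emph{plateaus of length exactly $1$ recurring at arbitrarily large arguments}, which will force uniqueness, and the \emph{sharp alternation between flat and steep behavior}, which will make $\Delta f$ wild.

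For assertion (a) I would simply evaluate $\Delta f$ at integers. On a plateau one has $\Delta f(2n)=f(2n+1)-f(2n)=0$, while $\Delta f(2n+1)=f(2n+2)-f(2n+1)=1$. Hence the sequence $n\mapsto\Delta f(n)$ equals $0,1,0,1,\ldots$, which does not converge to $0$; this gives $\Delta f\notin\cD^0$ (in either the $\N$ or $\R$ sense). Moreover $\Delta f(2n)=0<\Delta f(2n+1)$ and $\Delta f(2n+1)>\Delta f(2n+2)=0$ for all large $n$, so $\Delta f$ is neither eventually non-decreasing nor eventually non-increasing, whence $\Delta f\notin\cK^0$. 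Together these give $\Delta f\notin\cD^0\cup\cK^0$.

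For assertion (b) I would use that any $\varphi$ with $\Delta\varphi=\Delta f$ differs from $f$ by a $1$-periodic function $\omega:=f-\varphi$. Fix $\varphi\in\cK^0$ and choose $N$ so that $\varphi$ is monotone on $[N,\infty)$. On any plateau $[2n,2n+1]\subset[N,\infty)$ the function $f$ is a constant $c_n$, so $\varphi=c_n-\omega$ there; monotonicity of $\varphi$ on this interval therefore forces $\omega$ to be monotone on $[2n,2n+1]$. Since $\omega$ is $1$-periodic its endpoint values coincide, $\omega(2n)=\omega(2n+1)$, and a monotone function with equal endpoint values on a closed interval is constant on it; hence $\omega$ is constant on a full period, and by periodicity constant on all of $\R_+$. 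Thus $f-\varphi$ is constant. I emphasize that this argument never uses continuity of $\varphi$ (only the wide-sense monotonicity from $\cK^0$), which is important since $\cK^0$ does not encode continuity; it is exactly the plateau trick that replaces the unavailable linear-space structure, so that Fact~\ref{fact:rlo3.4} cannot be applied directly to the non-$\cK^0$ difference $f-\varphi$.

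The hard part is not any single computation but the simultaneous balancing of the two opposing demands: the recurrent unit plateaus must persist out to infinity (to kill every nonconstant period), yet the increments must both fail to vanish and fail to be eventually monotone. The staircase design resolves this tension cleanly, and the only delicate point I would be careful to spell out is that a monotone function agreeing at the two endpoints of a length-one interval is constant there, together with the observation that constancy on a single period of a $1$-periodic function propagates to all of $\R_+$.
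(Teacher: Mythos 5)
Your proof is correct, and it follows the same basic template as the paper's: the paper also builds a continuous nondecreasing staircase (the polygonal line through the points $(4n,4n)$ and $(4n+2,4n+4)$, i.e., slope-$2$ rises alternating with flats, whose integer increments form the $4$-periodic sequence $2,2,0,0,\ldots$) and uses the flats recurring at infinity, together with the eventual monotonicity of $\varphi$, to force the $1$-periodic difference $\omega=f-\varphi$ to be constant. Where you genuinely differ is in the execution of the uniqueness step and in the design choice that enables it. The paper's plateaus have length $2$, and its argument is a two-case proof by contradiction: if $\omega(x)<\omega(y)$ it compares $\varphi(x+4n+2)$ with $\varphi(y+4n+2)$, and if $\omega(x)>\omega(y)$ it compares $\varphi(y+4n+2)$ with $\varphi(x+4n+3)$; this second comparison needs two points one unit apart lying on the same flat, which is exactly why the paper's flats span two periods. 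Your plateaus span exactly one period, and you replace the case analysis by the observation that on such a plateau $\varphi=c_n-\omega$, so $\omega$ is monotone there, and a monotone function whose endpoint values on a unit interval agree (by $1$-periodicity) must be constant on that interval, hence constant everywhere by periodicity. This is a cleaner mechanism---in effect a one-period localization of Fact~\ref{fact:rlo3.4}---and it buys a shorter, case-free argument; both versions correctly avoid assuming any continuity of $\varphi$. Your treatment of (a) is also sound: the integer increments $0,1,0,1,\ldots$ neither tend to zero nor are eventually monotone, which rules out $\cD^0_{\N}$ (hence also $\cD^0_{\R}$) and $\cK^0$.
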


\begin{proof}
Let $f\in\cK^0_+$ be the function whose graph is the polygonal line through the points $(4n,4n)$ and $(4n+2,4n+4)$ for all $n\in\N$. Thus the sequence $n\mapsto\Delta f(n)$ is the $4$-periodic sequence $2,0,0,2,2,0,0,2,\ldots$ and hence condition (a) holds. Now, let $\varphi\in\cK^0$ be such that $\Delta\varphi =\Delta f$. Clearly, we must have $\varphi\in\cK^0_+$. For the sake of a contradiction, suppose that the $1$-periodic function $\omega =f-\varphi$ is not constant. That is, there exist $0<x<y\leq 1$ such that $\omega(x)\neq\omega(y)$. There are two exclusive cases to consider.
\begin{itemize}
\item[(a)] Suppose that $\omega(x)<\omega(y)$. For large integer $n$, we then have
$$
0 ~\leq ~ \varphi(y+4n+2)-\varphi(x+4n+2) ~=~ \omega(x)-\omega(y) ~<~ 0.
$$
\item[(a)] Suppose that $\omega(x)>\omega(y)$. For large integer $n$, we then have
$$
0 ~\leq ~ \varphi(x+4n+3)-\varphi(y+4n+2) ~=~ \omega(y)-\omega(x) ~<~ 0.
$$
\end{itemize}
In both cases we reach a contradiction, and hence condition (b) holds.
\end{proof}

We note that the function $f$ arising from Propositon~\ref{prop:sdf089sadf4} is such that $g=\Delta f$ does not lie in $\cD^0\cup\cK^0$. The following proposition shows that if the equation $\Delta f=g$ has a unique solution (up to an additive constant) and if $g\in\cK^p$ for some $p\in\N$, then necessarily $g\in\cD^p\cap\cK^p$ (see also Corollary~\ref{cor:dfmm7saa5}).

\begin{proposition}
Let $g\colon\R_+\to\R$ and $p\in\N$, and suppose that the sequence $n\mapsto\Delta^p g(n)$ is eventually decreasing. Suppose also that there exists a unique (up to an additive constant) function $f\in\cK^p_+$ satisfying the equation $\Delta f=g$. Then $g$ lies in $\cD^p_{\N}$.
\end{proposition}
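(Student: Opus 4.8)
The plan is to reduce the statement to a single assertion about the limit of the sequence $n\mapsto\Delta^pg(n)$ and then to contradict the uniqueness hypothesis. First I would record that this limit exists and is nonnegative. Since $f\in\cK^p_+$, there is $a>0$ with $f$ being $p$-convex on $[a,\infty)$, so by the second part of Lemma~\ref{lemma:pCInc5} the function $\Delta^pf$ is increasing on $[a,\infty)$; hence $\Delta^pg(x)=\Delta^{p+1}f(x)=\Delta^pf(x+1)-\Delta^pf(x)\ge 0$ for $x\ge a$. In particular $\Delta^pg(n)\ge 0$ for large $n$, and since $n\mapsto\Delta^pg(n)$ is eventually decreasing it converges to some $\ell\ge 0$. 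Because $g\in\cD^p_{\N}$ is precisely the statement $\ell=0$, it remains to rule out $\ell>0$.

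So suppose $\ell>0$. The aim is to produce a non-constant $1$-periodic function $\omega$ with $f+\omega\in\cK^p_+$: since $\Delta(f+\omega)=g$, the function $f+\omega$ would then be a solution lying in $\cK^p_+$ and differing from $f$ by a non-constant periodic function, contradicting uniqueness. The positivity of $\ell$ furnishes a uniform lower bound at the integer scale: by \eqref{eq:DelDD62} we have $f[n,n+1,\ldots,n+p+1]=\Delta^{p+1}f(n)/(p+1)!=\Delta^pg(n)/(p+1)!\ge\ell/(p+1)!>0$ for large $n$, so $f$ is strictly $p$-convex over every block of $p+2$ consecutive integers. A natural candidate for the perturbation is the periodic interpolation-error profile $\omega_0(x)=\binom{\{x\}}{p+1}$, whose restriction to each interval $[k,k+1)$ is a polynomial of degree $p+1$ with leading coefficient $1/(p+1)!$; consequently $\omega_0[x_0,\ldots,x_{p+1}]=1/(p+1)!$ for every configuration contained in a single unit interval, so that $f+t\,\omega_0$ acquires a uniform positive margin on such configurations for every $t>0$.

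The hard part will be to control the divided differences of $f+t\,\omega_0$ over configurations $x_0<\cdots<x_{p+1}$ that straddle an integer at small spread. There $\omega_0$ fails to be smooth (its $p$-th derivative jumps at integers), so $\omega_0[x_0,\ldots,x_{p+1}]$ can be large and negative, whereas $f$ is only known to lie in $\cC^{p-1}$ by Lemma~\ref{lemma:PrelKp}(a), and its fine-scale divided differences are \emph{not} controlled by the integer-scale bound coming from $\ell$. Thus the fixed profile $\omega_0$ need not work, and the perturbation must in fact be adapted to the local structure of $f$. I expect the cleanest route to be a renormalization/compactness argument on the local profiles $y\mapsto\rho^{p+1}_n[f](y)$, that is, the interpolation errors \eqref{eq:LErrIn}: show that, after the normalization dictated by $\ell>0$, these are $p$-convex with a uniformly positive $(p+1)$-st difference and admit a genuinely periodic $p$-convex perturbation in a limiting profile, and then transfer this perturbation back to $f$ at large arguments using \eqref{eq:fxnEfxSum}. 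An alternative route would first try to upgrade the integer inequality $\Delta^{p+1}g(n)\le 0$ to eventual real $p$-concavity $g\in\cK^p_-$ and then derive a contradiction with uniqueness through the existence theory (cf.\ Corollary~\ref{cor:dfmm7saa5}); but passing from integer data to genuine $p$-concavity on the half-line, and converting non-admissibility of $g$ into honest non-uniqueness, are themselves the delicate points.
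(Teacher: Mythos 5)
Your opening step is exactly the paper's: since $f\in\cK^p_+$ we have $\Delta^pg=\Delta^{p+1}f\ge 0$ eventually, so the eventually decreasing sequence $n\mapsto\Delta^pg(n)$ converges to some $C\ge 0$, and everything reduces to ruling out $C>0$ by contradicting uniqueness with a non-constant $1$-periodic perturbation; this part of your proposal is correct and is also how the paper begins. From there on, however, you have a plan rather than a proof. You rightly discard the profile $\omega_0(x)=\binom{\{x\}}{p+1}$, whose $(p+1)$-st order divided differences over clustered configurations straddling an integer are uncontrollable against those of $f$ (which is only known to be of class $\cC^{p-1}$); but what you propose instead -- a ``renormalization/compactness argument'' on the profiles $\rho^{p+1}_n[f]$ -- is never formulated: no limit object is identified, no reason is given why it should admit a periodic $p$-convex perturbation, and no mechanism is given for transferring such a perturbation back to $f$. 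This is a genuine gap, located exactly at the step you yourself call the hard part.

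The paper closes that gap with a short construction that bypasses $f$ altogether -- and it is, in substance, the ``alternative route'' you mention and set aside. Write $\tilde g(x)=g(x)-C\binom{x}{p}$, so that $\Delta^p\tilde g(n)$ decreases to $0$. The paper asserts that $\tilde g$ lies in $\cD^p\cap\cK^p_-$ (this is your first ``delicate point''; it is immediate when $g$ is eventually $p$-concave, which is the situation the decreasing hypothesis is meant to capture, cf.\ Lemma~\ref{lemma:pCInc5}, since a polynomial of degree $p$ has vanishing $(p+1)$-st order divided differences). The existence Theorem~\ref{thm:exist} then gives $\Sigma\tilde g\in\cK^p_+$, and for all sufficiently small $\tau>0$ the two functions
$$
F(x) ~=~ \Sigma\tilde g(x)+C\binom{x}{p+1}\qquad\text{and}\qquad \varphi(x) ~=~ F(x)+\tau\sin(2\pi x)
$$
are solutions to the equation $\Delta h=g$ that lie in $\cK^p_+$, contradicting uniqueness because $\varphi-F$ is $1$-periodic and non-constant. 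The idea you were missing is to take the periodic perturbation smooth and to dominate it not by any margin of $f$, but by the $(p+1)$-st derivative of the explicit polynomial $C\binom{x}{p+1}$, which is identically $C>0$: for small $\tau$ the sum $C\binom{x}{p+1}+\tau\sin(2\pi x)$ has strictly positive $(p+1)$-st derivative, hence is $p$-convex on all of $\R_+$ by iterating Lemma~\ref{lemma:PrelKp}(d), and adding it to $\Sigma\tilde g$ stays in $\cK^p_+$ because $\cK^p_+$ is a convex cone (Corollary~\ref{cor:convCones48}). So your second ``delicate point'' -- converting non-admissibility into honest non-uniqueness -- has a completely elementary resolution; the lesson is not to perturb the given $f$, whose regularity is insufficient, but to rebuild a solution in which the margin $C$ sits in a smooth polynomial summand that can absorb $\tau\sin(2\pi x)$.
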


\begin{proof}
For the sake of a contradiction, suppose that the assumptions are satisfied and that the sequence $n\mapsto\Delta^p g(n)$ does not approach zero. Since this sequence is eventually nonnegative (because we eventually have $\Delta^p g=\Delta^{p+1}f\geq 0$), it must converge to a value $C>0$. It follows that the function $\tilde g(x)=g(x)-C{x\choose p}$ lies in $\cD^p\cap\cK^p_-$ and hence $\Sigma\tilde g$ lies in $\cK^p_+$. Now, for any $0<\tau <C/(2\pi)^p$, the functions
\begin{eqnarray*}
f(x) &=& \Sigma\tilde g(x)+C\tchoose{x}{p+1},\\
\varphi(x) &=& \Sigma\tilde g(x)+C\tchoose{x}{p+1}+\tau\sin(2\pi x),
\end{eqnarray*}
lie in $\cK^p_+$ by Lemma~\ref{lemma:PrelKp}(d); indeed, we have
$$
D^{p+1}(C\tchoose{x}{p+1}+\tau\sin(2\pi x)) ~\geq ~ C-\tau (2\pi)^p ~>~ 0.
$$
Moreover, these functions are solutions to the equation $\Delta f=g$ and satisfy $\varphi(1)=f(1)$. This contradicts the uniqueness assumption.
\end{proof}

\begin{remark}
We observe that if $f$ and $\varphi$ are solutions to $\Delta f=g$, then for any $x>0$ and any $p\in\N^*$, we have $\Delta^p f(x)\geq 0$ if and only if $\Delta^p\varphi(x)\geq 0$. Indeed, suppose on the contrary that $\Delta^p f(x)\geq 0$ and $\Delta^p\varphi(x)< 0$ for some $x>0$. Then
$$
0 ~\leq ~ \Delta^pf(x) ~=~ \Delta^{p-1}g(x) ~=~ \Delta^p\varphi(x) ~< ~ 0,
$$
a contradiction.
\end{remark}

Thus, Webster's question still remains a very interesting open problem whose solution would certainly shed light on the theory developed in this book.

Regarding uniqueness issues only, the following two results (John \cite{Joh39}) are also worth mentioning. Generalizations of these results to higher convexity properties would be welcome.

\begin{proposition}[{see \cite{Joh39}}]
Let $g\colon\R_+\to\R$ have the property that
$$
\inf_{x\in\R_+}g(x) ~=~ 0.
$$
Then there is at most one (up to an additive constant) solution $f$ to the equation $\Delta f=g$ that is increasing.
\end{proposition}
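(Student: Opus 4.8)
The plan is to reduce the statement to a claim about periodic functions and then exploit the hypothesis $\inf g = 0$ through a single sharp two-sided inequality. Suppose $f_1$ and $f_2$ are both increasing solutions to $\Delta f = g$. As noted in Chapter~\ref{chapter:1}, any two solutions differ by a $1$-periodic function, so $\omega = f_1 - f_2$ satisfies $\omega(x+1)=\omega(x)$ for all $x>0$, and the whole task is to prove that $\omega$ is constant. First I would record that monotonicity forces $g = \Delta f_2 \geq 0$, so that the hypothesis $\inf_{x\in\R_+}g(x)=0$ furnishes a sequence $x_n$ with $g(x_n)\to 0$. This sequence is the only place where the hypothesis enters, and it is the engine of the argument.

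The key step is to establish the inequality
$$
|\omega(y)-\omega(x)| ~\leq ~ g(x)\qquad\text{whenever } x<y<x+1.
$$
I would prove this by sandwiching. For the upper bound, monotonicity of $f_1$ gives $f_1(x+1)\geq f_1(y)$; rewriting both sides through $f_1=f_2+\omega$ and using $\omega(x+1)=\omega(x)$ yields $\omega(y)-\omega(x)\leq f_2(x+1)-f_2(y)$, and since $f_2(y)\geq f_2(x)$ this is at most $f_2(x+1)-f_2(x)=g(x)$. For the lower bound, monotonicity of $f_1$ gives $f_1(y)\geq f_1(x)$, hence $\omega(y)-\omega(x)\geq -(f_2(y)-f_2(x))$, and since $y<x+1$ gives $f_2(y)\leq f_2(x+1)$ the right-hand side is at least $-g(x)$. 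This self-contained estimate replaces any appeal to Fact~\ref{fact:rlo3.4}.

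To finish, I would apply the inequality along the minimizing sequence. Writing $t_n=\{x_n\}$ and letting $y$ range over $(x_n,x_n+1)$, the fractional part $\{y\}$ sweeps out all of $[0,1)\setminus\{t_n\}$, so by $1$-periodicity one gets $|\omega(s)-\omega(t_n)|\leq g(x_n)$ for every $s\in[0,1)\setminus\{t_n\}$. Thus the oscillation of $\omega$ over $[0,1)$ away from the single point $t_n$ is at most $2g(x_n)\to 0$. Passing to a subsequence with $t_n\to c$, I would conclude that $\omega(s_1)=\omega(s_2)$ for any $s_1,s_2\in[0,1)\setminus\{c\}$, so $\omega$ equals a constant $K$ off $c$; the value $\omega(c)$ is then pinned down either because $t_n$ is eventually equal to $c$ (and we compare directly to $\omega(c)$) or because some $t_n\neq c$ remains available so that $|\omega(c)-\omega(t_n)|=|\omega(c)-K|\leq g(x_n)\to 0$. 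Either way $\omega\equiv K$, which is the desired conclusion.

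The main obstacle I anticipate is precisely the bookkeeping at the exceptional point $c$: the inequality only controls $\omega$ on $[0,1)$ with one point removed, and an increasing function need not be continuous, so one cannot simply invoke continuity to recover $\omega(c)$. Handling this by a short case split on whether the fractional parts $t_n$ stabilize or vary is the delicate part, but once the sharp inequality $|\omega(y)-\omega(x)|\leq g(x)$ is in hand the rest is routine.
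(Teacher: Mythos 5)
Your proof is correct. Bear in mind, though, that the paper itself offers no proof of this proposition: it is stated with the attribution ``see \cite{Joh39}'' and the argument is deferred to John's 1939 paper, so your write-up supplies a genuinely self-contained proof rather than a variant of an in-paper one. Your route sits naturally alongside the paper's uniqueness machinery while working under strictly weaker hypotheses: the two-sided bound $|\omega(y)-\omega(x)|\leq g(x)$ for $x<y<x+1$ is the $p=0$ sandwiching trick that underlies Lemma~\ref{lemma:VarEpsIneq} and the Wendel-type estimates, except that you apply it to the $1$-periodic difference $\omega=f_1-f_2$ of two solutions rather than to a single solution. Where Theorem~\ref{thm:unic} (with $p=0$) needs $g(n)\to 0$ along the integers, here one only has $\inf_{\R_+}g=0$; replacing the integer translates by a minimizing sequence $x_n$ and using periodicity to pull the estimate back to the fundamental domain $[0,1)$ is exactly what lets the conclusion reach beyond Theorems~\ref{thm:unic} and \ref{thm:uniqzz0}, whose decay hypotheses fail for a merely infimum-zero $g$. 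This is also why Fact~\ref{fact:rlo3.4} could not have been invoked anyway: $\omega$ is a difference of increasing functions and need not lie in $\cK^0$, so a quantitative bound is unavoidable, and yours is the right one. Your bookkeeping at the exceptional point is sound, including the dichotomy on whether the fractional parts $t_n$ stabilize; the one case you leave implicit, a subsequence with $t_n\to 1$, makes the exceptional point disappear (then $[0,1)\setminus\{c\}=[0,1)$) and the argument only easier.
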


\begin{proposition}[{see \cite{Joh39}}]
Let $g\colon\R_+\to\R$ have the property that
$$
\liminf_{x\to\infty}\frac{g(x)}{x} ~=~ 0.
$$
Then there is at most one (up to an additive constant) solution $f$ to the equation $\Delta f=g$ that is convex.
\end{proposition}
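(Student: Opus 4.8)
The plan is to prove the contrapositive-flavoured statement by a rigidity argument: if the equation $\Delta f=g$ had two distinct (modulo constants) convex solutions, then $g$ would be forced to grow at least linearly along the whole half-line, contradicting $\liminf_{x\to\infty}g(x)/x=0$. First I would take two convex solutions $f_1,f_2$ and set $\omega=f_1-f_2$. As recalled in the introduction, any two solutions of $\Delta f=g$ differ by a $1$-periodic function, so $\omega$ has period $1$, and it suffices to show that $\omega$ is constant. Suppose it is not; then I may pick $0<p<q<1$ with $\omega(p)\neq\omega(q)$, and set $\alpha=\omega(q)-\omega(p)\neq 0$. Interchanging $f_1$ and $f_2$ if necessary (which replaces $\omega$ by $-\omega$), I may assume $\alpha<0$. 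Since each $f_i$ is convex, i.e.\ lies in $\cK^1_+(\R_+)$, the second part of Lemma~\ref{lemma:pCInc5} (with $p=1$) shows in particular that $g=\Delta f_1$ is nondecreasing on $\R_+$; this monotonicity will be crucial at the very end.

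The core is the slope analysis over one period, and here I would lean directly on Lemma~\ref{lemma:pCInc5}: for $f_i\in\cK^1_+(\R_+)$ the divided difference $f_i[z_0,z_1]=(f_i(z_1)-f_i(z_0))/(z_1-z_0)$, i.e.\ the chord slope, is increasing in each argument. Writing $\beta=q-p$ and $\beta'=1-\beta$, I introduce the two ``gap slopes'' $A_i(n)=f_i[n+p,\,n+q]$ and $B_i(n)=f_i[n+q,\,n+1+p]$. Slope monotonicity gives the chain $A_i(n)\le B_i(n)\le A_i(n+1)$, together with the sandwich $g(n-1)\le A_i(n)\le g(n+1)$ (because $[n+p,n+q]\subset(n,n+1)$ lies to the right of $[n-1,n]$ and to the left of $[n+1,n+2]$). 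Using periodicity of $\omega$ one computes the two constants $A_1(n)-A_2(n)=\alpha/\beta$ and $B_1(n)-B_2(n)=-\alpha/\beta'$, whence $(B_1(n)-A_1(n))-(B_2(n)-A_2(n))=-\alpha\,(1/\beta+1/\beta')=:\delta>0$ since $\alpha<0$. As convexity forces $B_2(n)-A_2(n)\ge 0$, this yields $B_1(n)-A_1(n)\ge\delta$ for every $n$: a nonconstant periodic perturbation costs a fixed positive amount of slope increase in each period.

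Finally I would accumulate these increments. From $A_1(n+1)\ge B_1(n)\ge A_1(n)+\delta$ I get $A_1(n)\ge A_1(1)+(n-1)\delta$, and combining with $g(m)\ge A_1(m-1)$ and the fact that $g$ is nondecreasing, any real $x$ with $m=\lfloor x\rfloor$ satisfies $g(x)\ge g(m)\ge A_1(1)+(m-2)\delta$ while $x<m+1$, so $g(x)/x\ge (A_1(1)+(m-2)\delta)/(m+1)\to\delta$ as $x\to\infty$. Hence $\liminf_{x\to\infty}g(x)/x\ge\delta>0$, contradicting the hypothesis; therefore $\omega$ is constant and the convex solution is unique up to an additive constant. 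The step I expect to be the main obstacle is precisely this passage from a \emph{per-period} slope jump to a genuine linear lower bound on $g$ valid for \emph{all} large real $x$: the slope estimates naturally produce a lower bound only along the integers (through $A_1(n)$), and a bound along a subsequence would be useless, since the real $\liminf$ could still vanish; it is the monotonicity of $g=\Delta f_1$ supplied by Lemma~\ref{lemma:pCInc5} that fills the gaps between integers and converts the integer-scale growth into the contradiction with $\liminf_{x\to\infty}g(x)/x=0$.
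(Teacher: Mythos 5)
Your proof is correct. Note that the paper itself offers no argument for this proposition: it is stated without proof, with a citation to John \cite{Joh39}, so your slope-rigidity argument stands on its own rather than paralleling anything in the text. The chain of estimates checks out: convexity gives $A_i(n)\le B_i(n)\le A_i(n+1)$ together with the sandwich $g(n-1)\le A_i(n)\le g(n+1)$; periodicity of $\omega$ gives the two constants $A_1(n)-A_2(n)=\alpha/\beta$ and $B_1(n)-B_2(n)=-\alpha/\beta'$, whence the per-period slope surplus $B_1(n)-A_1(n)\ge\delta=-\alpha\left(1/\beta+1/\beta'\right)>0$ since $B_2(n)-A_2(n)\ge 0$; induction yields $A_1(n)\ge A_1(1)+(n-1)\delta$, and the monotonicity of $g=\Delta f_1$ converts this integer-scale linear growth into $\liminf_{x\to\infty}g(x)/x\ge\delta>0$, the desired contradiction. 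One step you should make explicit: the existence of $0<p<q<1$ with $\omega(p)\neq\omega(q)$ is not automatic for an arbitrary nonconstant $1$-periodic function (such a function could, a priori, differ from a constant only at the integers), but here $\omega=f_1-f_2$ is continuous because convex functions on the open half-line are continuous, and a nonconstant continuous $1$-periodic function cannot be constant on $(0,1)$; one sentence fixes this. Finally, your last appeal to the monotonicity of $g$ can be bypassed: for real $x$ one has $g(x)=f_1[x,x+1]\ge f_1[n+p,n+q]=A_1(n)$ whenever $n+q\le x$, by the same chord-slope comparison, so taking $n=\lfloor x\rfloor-1$ gives the linear lower bound for all real $x$ directly and slightly shortens the ending.
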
 

\chapter{Asymptotic behaviors and bracketing}
\label{chapter:C-StBr49}

\noindent\textsl{Summary: We show that by considering higher and higher values of $p$ in Corollary~\ref{cor:6GenStFo0BaIneqCo} we obtain closer and closer bounds for the generalized Binet function\index{Binet's function!generalized} $J^{p+1}[\Sigma g]$.}

\medskip

We have seen in Example~\ref{ex:StriCons} that the inequalities
$$
\left(1+\frac{1}{x}\right)^{-\frac{1}{2}} \leq ~ \frac{\Gamma(x)}{\sqrt{2\pi}{\,}e^{-x}{\,}x^{x-\frac{1}{2}}} ~\leq ~ \left(1+\frac{1}{x}\right)^{\frac{1}{2}}
$$
hold for any $x>0$ and that tighter inequalities can also be obtained by using different values of the integer $p\geq 1$ in Corollary~\ref{cor:6GenStFo0BaIneqCo}. In this appendix we show that and how this feature applies in general to multiple $\log\Gamma$-type functions.

Let $g$ lie in $\cC^0\cap\cD^p\cap\cK^p$, where $p=1+\deg p$. By Corollary~\ref{cor:6GenStFo0BaIneqCo}, for any $x>0$ such that $g$ is $p$-convex or $p$-concave on $[x,\infty)$ we have the inequalities
$$
-\overline{G}_p\left|\Delta^p g(x)\right| ~\leq ~ J^{p+1}[\Sigma g](x) ~\leq ~ \overline{G}_p\left|\Delta^p g(x)\right|.
$$

Let us now show how tighter inequalities can be obtained. For any $r\in\N$, define the functions $\alpha_r[\Sigma g]\colon\R_+\to\R$ and $\beta_r[\Sigma g]\colon\R_+\to\R$ respectively by the equations
\begin{eqnarray*}
\alpha_r[\Sigma g](x) &=& -\overline{G}_{p+r}\left|\Delta^{p+r} g(x)\right|-\sum_{j=p+1}^{p+r}G_j\Delta^{j-1}g(x){\,},\\
\beta_r[\Sigma g](x) &=& \overline{G}_{p+r}\left|\Delta^{p+r} g(x)\right|-\sum_{j=p+1}^{p+r}G_j\Delta^{j-1}g(x){\,},
\end{eqnarray*}
for $x>0$.

We immediately see that the equality
$$
\alpha_r[\Sigma g](x) ~=~ \beta_r[\Sigma g](x)
$$
holds if and only if $\Delta^{p+r}g(x)=0$. Moreover, by Corollary~\ref{cor:6GenStFo0BaIneqCo}, if $g\in\cK^{p+r}$ and if $x>0$ is so that $g$ is $(p+r)$-convex or $(p+r)$-concave on $[x,\infty)$, then the following inequalities hold:
$$
\alpha_r[\Sigma g](x) ~\leq ~ J^{p+1}[\Sigma g](x) ~\leq ~ \beta_r[\Sigma g](x).
$$
The following proposition shows that these inequalities get tighter and tighter as the value of $r$ increases.

\begin{proposition}\label{prop:ineqT792}
Let $g$ lie in $\cC^0\cap\cD^p\cap\cK^{p+r+1}$ for some $r\in\N$, where $p=1+\deg g$. Let $x>0$ be so that $g|_{[x,\infty)}$ lies in
$$
\cK^{p+r}([x,\infty))\cap\cK^{p+r+1}([x,\infty)).
$$
Then, we have
$$
\alpha_r[\Sigma g](x) ~\leq ~ \alpha_{r+1}[\Sigma g](x) ~\leq ~ \beta_{r+1}[\Sigma g](x) ~\leq ~ \beta_r[\Sigma g](x).
$$
These inequalities are strict if $\Delta^{p+r}g(x+1)\neq 0$.
\end{proposition}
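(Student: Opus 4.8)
The plan is to reduce everything to two short sign-bookkeeping computations, after first recording the monotonicity facts that flow from the hypotheses. The middle inequality $\alpha_{r+1}[\Sigma g](x)\le\beta_{r+1}[\Sigma g](x)$ is immediate from the definitions, since the two expressions differ only by $2\,\overline{G}_{p+r+1}\,|\Delta^{p+r+1}g(x)|\ge 0$. Thus all the content lies in the two outer inequalities $\alpha_r\le\alpha_{r+1}$ and $\beta_{r+1}\le\beta_r$, and I would prove these by showing that $\alpha_{r+1}-\alpha_r\ge0$ and $\beta_r-\beta_{r+1}\ge0$.

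First I would set up the sign data. Since $g\in\cD^p\subset\cD^{p+r}\cap\cD^{p+r+1}$, both $\Delta^{p+r}g$ and $\Delta^{p+r+1}g$ tend to $0$ at infinity, and by Lemma~\ref{lemma:pCInc5} they are monotone on $[x,\infty)$. Writing $\delta=+1$ if $g$ is $(p+r)$-convex on $[x,\infty)$ and $\delta=-1$ if it is $(p+r)$-concave, Corollary~\ref{cor:dfmm7s} (applied at order $p+r+1$) forces the sign at order $p+r$ to be opposite to that at order $p+r+1$. Setting $A=|\Delta^{p+r}g(x)|$ and $B=|\Delta^{p+r+1}g(x)|$, monotonicity towards $0$ then yields the clean relations $\Delta^{p+r}g(x)=-\delta A$ and $\Delta^{p+r+1}g(x)=\delta B$ (note the common $\delta$). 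Moreover, since $|\Delta^{p+r}g|$ decreases on $[x,\infty)$ while $\Delta^{p+r}g(x)$ and $\Delta^{p+r}g(x+1)$ share the sign $-\delta$, one obtains $B=A-|\Delta^{p+r}g(x+1)|$, hence the crucial bound $B\le A$ together with $A-B=|\Delta^{p+r}g(x+1)|$.

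Next I would carry out the subtraction. Using the telescoping relation $J^{q+1}[\Sigma g]-J^q[\Sigma g]=G_q\Delta^{q-1}g$ from \eqref{eq:Binet6141378}, only the single term $j=p+r+1$ survives in each difference of sums, so
\begin{align*}
\alpha_{r+1}-\alpha_r &= \overline{G}_{p+r}\,A-\overline{G}_{p+r+1}\,B-G_{p+r+1}\,\Delta^{p+r}g(x),\\
\beta_r-\beta_{r+1} &= \overline{G}_{p+r}\,A-\overline{G}_{p+r+1}\,B+G_{p+r+1}\,\Delta^{p+r}g(x).
\end{align*}
Substituting $\overline{G}_{p+r}=\overline{G}_{p+r+1}+|G_{p+r+1}|$, $G_{p+r+1}=(-1)^{p+r}|G_{p+r+1}|$, and $\Delta^{p+r}g(x)=-\delta A$, these collapse to
\begin{align*}
\alpha_{r+1}-\alpha_r &= \overline{G}_{p+r+1}\,(A-B)+|G_{p+r+1}|\,A\,\bigl(1+\delta(-1)^{p+r}\bigr),\\
\beta_r-\beta_{r+1} &= \overline{G}_{p+r+1}\,(A-B)+|G_{p+r+1}|\,A\,\bigl(1-\delta(-1)^{p+r}\bigr).
\end{align*}
Each parenthesis $1\pm\delta(-1)^{p+r}$ lies in $\{0,2\}$, and $A-B\ge0$, $A\ge0$, $\overline{G}_{p+r+1}\ge0$, $|G_{p+r+1}|\ge0$; hence both differences are nonnegative, which is exactly $\alpha_r\le\alpha_{r+1}$ and $\beta_{r+1}\le\beta_r$. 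For strictness, note $\overline{G}_{p+r+1}=\sum_{j\ge p+r+2}|G_j|>0$, so if $\Delta^{p+r}g(x+1)\ne0$ then $A-B=|\Delta^{p+r}g(x+1)|>0$ and the common term $\overline{G}_{p+r+1}(A-B)$ is strictly positive, making the outer inequalities strict (the middle one being strict precisely when $\Delta^{p+r+1}g(x)\ne0$).

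The only genuinely delicate point, and the one I would watch most carefully, is the sign bookkeeping: keeping the convexity sign $\delta$ (which governs the sign of $\Delta^{p+r}g(x)$ and, with the opposite sign, of $\Delta^{p+r+1}g(x)$) separate from the intrinsic Gregory sign $(-1)^{p+r}$ of $G_{p+r+1}$. The pleasant feature that makes the argument go through is that, after substitution, the product $\delta(-1)^{p+r}$ appears only inside a factor $1\pm\delta(-1)^{p+r}\in\{0,2\}$, so the final nonnegativity is independent of both signs; the sign analysis is needed solely to produce $B\le A$ and to identify the $\Delta$'s. Before writing anything down I would re-verify the orientation conventions of Lemma~\ref{lemma:pCInc5} and Corollary~\ref{cor:dfmm7s} against the $g(x)=\ln x$, $p=1$ example, to make sure no sign has been flipped.
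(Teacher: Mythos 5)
Your proof is correct and follows essentially the same route as the paper's: both arguments reduce the two outer inequalities to differences of consecutive bounds, use Lemma~\ref{lemma:pCInc5} and Corollary~\ref{cor:dfmm7s} to pin down the (alternating) signs of $\Delta^{p+r}g$ and $\Delta^{p+r+1}g$ on $[x,\infty)$, and then invoke the identities $\overline{G}_{p+r}=\overline{G}_{p+r+1}+|G_{p+r+1}|$ and $G_{p+r+1}=(-1)^{p+r}|G_{p+r+1}|$. The only difference is organizational: where the paper fixes the convexity pattern and then splits on the sign of $G_{p+r+1}$, you keep both signs symbolic and observe that they enter only through the factors $1\pm\delta(-1)^{p+r}\in\{0,2\}$, so your single computation subsumes the paper's case analysis and yields the same strictness criterion via $A-B=\left|\Delta^{p+r}g(x+1)\right|$.
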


\begin{proof}
We already know that the central inequality holds. Now, using Corollary~\ref{cor:dfmm7s}, we can assume that $g$ is $(p+r)$-convex and $(p+r+1)$-concave on $[x,\infty)$; the other case can be dealt with similarly. By Lemma~\ref{lemma:pCInc5}, it follows that $\Delta^{p+r}g\leq 0$ and $\Delta^{p+r+1}g\geq 0$ on $[x,\infty)$. Let us show that the first inequality holds; the third one can be established similarly.

We have two exclusive cases to consider.
\begin{itemize}
\item If $G_{p+r+1}<0$, then
\begin{eqnarray*}
\Delta_r{\,}\alpha_r[\Sigma g](x) &=& -\overline{G}_{p+r+1}\left(\Delta^{p+r+1}g(x)+\Delta^{p+r}g(x)\right)\\
&=& -\overline{G}_{p+r+1}\Delta^{p+r}g(x+1).
\end{eqnarray*}
\item If $G_{p+r+1}>0$, then
$$
\Delta_r{\,}\alpha_r[\Sigma g](x) ~=~ -\overline{G}_{p+r}\Delta^{p+r}g(x+1)+G_{p+r+1}\left(\Delta^{p+r+1}g(x)-\Delta^{p+r}g(x)\right).
$$
\end{itemize}
In both cases, we can see that $\Delta_r{\,}\alpha_r[\Sigma g](x)\geq 0$. Moreover, we have $\Delta_r{\,}\alpha_r[\Sigma g](x)$ $> 0$ if $\Delta^{p+r}g(x+1)\neq 0$.
\end{proof}

It is natural to wonder how the inequalities in Proposition~\ref{prop:ineqT792} behave as $r\to_{\N}\infty$. The following proposition, which is a reformulation of Proposition~\ref{prop:6699rem}, answers this question and provides a series representation for $J^{p+1}[\Sigma g]$.

\begin{proposition}\label{prop:ddozj40}
Let $g$ lie in $\cC^0\cap\cD^p\cap\cK^{\infty}$, where $p=1+\deg g$. Let $x>0$ be so that for every integer $q\geq p$ the function $g$ is $q$-convex or $q$-concave on $[x,\infty)$. Suppose also that the sequence $q\mapsto\Delta^qg(x)$ is bounded. Then the following assertions hold.
\begin{enumerate}
\item[(a)] The sequence $q\mapsto\beta_q[\Sigma g](x)-\alpha_q[\Sigma g](x)$ converges to zero.
\item[(b)] The sequence $n\mapsto G_n\Delta^{n-1}g(x)$ is summable.
\item[(c)] We have
$$
\Sigma g(x) ~=~ \sigma[g]+\int_1^xg(t)dt-\sum_{j=1}^{\infty}G_j\Delta^{j-1}g(x).
$$
Equivalently,
$$
J^{p+1}[\Sigma g](x) ~=~ -\sum_{j=p+1}^{\infty}G_j\Delta^{j-1}g(x).
$$
\end{enumerate}
\end{proposition}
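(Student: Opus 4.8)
The plan is to treat Proposition~\ref{prop:ddozj40} as a repackaging of Proposition~\ref{prop:6699rem}, extracting from it precisely the bracketing behaviour of the quantities $\alpha_r[\Sigma g](x)$ and $\beta_r[\Sigma g](x)$ introduced above. The backbone of the argument is the telescoping identity that follows at once from \eqref{eq:Binet64378S}, namely
$$
J^{p+r+1}[\Sigma g](x) - J^{p+1}[\Sigma g](x) ~=~ \sum_{j=p+1}^{p+r}G_j\,\Delta^{j-1}g(x),
$$
valid for every $r\in\N$, together with the definitions of $\alpha_r$ and $\beta_r$ and the bound of Corollary~\ref{cor:6GenStFo0BaIneqCo}. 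Throughout, the standing hypotheses (that $g$ lies in $\cC^0\cap\cD^p\cap\cK^{\infty}$, that $g$ is $q$-convex or $q$-concave on $[x,\infty)$ for all $q\geq p$, and that the sequence $q\mapsto\Delta^qg(x)$ is bounded) guarantee that both Corollary~\ref{cor:6GenStFo0BaIneqCo} and Proposition~\ref{prop:6699rem} apply at the point $x$.

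For assertion (a), I would simply subtract the two defining expressions to obtain
$$
\beta_r[\Sigma g](x) - \alpha_r[\Sigma g](x) ~=~ 2\,\overline{G}_{p+r}\,\bigl|\Delta^{p+r}g(x)\bigr|.
$$
Since the sequence $q\mapsto\Delta^qg(x)$ is bounded by hypothesis, say by $M$, and the sequence $n\mapsto\overline{G}_n$ decreases to $0$ (as recorded just after its definition), the right-hand side is at most $2M\,\overline{G}_{p+r}$, which tends to $0$ as $r\to_{\N}\infty$. This settles (a), and it is the only place where the boundedness hypothesis is used in an essential way.

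For assertions (b) and (c), I would invoke Proposition~\ref{prop:6699rem} directly: under the present hypotheses it gives $J^{q+1}[\Sigma g](x)\to 0$ as $q\to_{\N}\infty$. Feeding this into the telescoping identity above shows that the partial sums $\sum_{j=p+1}^{p+r}G_j\Delta^{j-1}g(x) = J^{p+r+1}[\Sigma g](x) - J^{p+1}[\Sigma g](x)$ converge, as $r\to\infty$, to $-J^{p+1}[\Sigma g](x)$; adjoining the finitely many terms $\sum_{j=1}^pG_j\Delta^{j-1}g(x)$ then yields the convergence of $\sum_{n\geq 1}G_n\Delta^{n-1}g(x)$, which is assertion (b). The first identity in (c) is exactly the series representation already furnished by Proposition~\ref{prop:6699rem}, and the equivalent form $J^{p+1}[\Sigma g](x) = -\sum_{j\geq p+1}G_j\Delta^{j-1}g(x)$ follows by substituting that representation into \eqref{eq:Binet64378S} evaluated at $q=p$ and cancelling the first $p$ terms of the series.

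The computations here are entirely routine, and no genuine obstacle arises, because Proposition~\ref{prop:6699rem} already carries the analytic weight (the vanishing of $J^{q+1}[\Sigma g]$ at infinity, itself a consequence of Lemma~\ref{lemma:6699rem} and $\overline{G}_n\to 0$). The only point demanding a little care is the bookkeeping of indices when passing between the bracketing functions $\alpha_r,\beta_r$, the generalized Binet functions $J^{q+1}[\Sigma g]$, and the partial sums of the Gregory series, so that the limit $-J^{p+1}[\Sigma g](x)$ is identified correctly in both (b) and (c).
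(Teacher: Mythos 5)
Your proposal is correct and takes essentially the same approach as the paper: the paper states Proposition~\ref{prop:ddozj40} without an explicit proof, noting only that it is a reformulation of Proposition~\ref{prop:6699rem}, and your argument is precisely the bookkeeping that reformulation requires. Assertion (a) from the definitions of $\alpha_r,\beta_r$ together with the boundedness hypothesis and $\overline{G}_n\to 0$, and assertions (b) and (c) from $J^{q+1}[\Sigma g](x)\to 0$ (Proposition~\ref{prop:6699rem}) via the telescoping consequence of \eqref{eq:Binet64378S}, is exactly the intended derivation, with all indices handled correctly.
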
 

\chapter{Generalized Webster's inequality}
\label{chapter:E-GenWeIn5}
\index{Webster's inequality!generalized|(}

\noindent\textsl{Summary: Webster~\cite{Web97b} provided bounds for $\rho_x^{p+1}[\Sigma g](a)$ in the special case when $p=1$. We generalize Webster's bounds to any integer $p\in\N$ and use integration to provide new bounds for $J^{p+1}[\Sigma g](x)$ that are tighter than those given in Theorem~\ref{thm:6GenStFo0BaIneq}.}

\medskip

As we mentioned in Section~\ref{sec:6Gen4St2Fo0}, one can show that if $g$ lies in $\cD^1\cap\cK^1$ and if $x>0$ and $a>0$ are so that $g$ is concave on $[x+a,\infty)$, then the following double inequality holds\index{Webster's inequality}
\begin{eqnarray*}
\lefteqn{\sum_{k=0}^{\lfloor a\rfloor}g(x+k) + (\{a\}-1){\,}g(x+a)-a{\,}g(x) ~\leq ~ \rho^2_x[\Sigma g](a)}\\
&\leq & \sum_{k=0}^{\lfloor a\rfloor} g(x+k)-g(x+a)+\{a\}{\,}g(x+\lfloor a\rfloor+1)-a{\,}g(x).
\end{eqnarray*}
This result was proved in the multiplicative notation by Webster~\cite[Eq.~(6.4)]{Web97b} to establish the limit \eqref{eq:convRes79} in the case when $p=1$. In the following proposition, we generalize this inequality to any value of $p\in\N$. We call it the \emph{generalized Webster inequality}.

\begin{proposition}[Generalized Webster's inequality]\label{prop:E4Web7Inq6}
Let $f\colon\R_+\to\R$ and $g\colon\R_+\to\R$ be functions such that $\Delta f=g$ on $\R_+$. Let also $x>0$ and $a\geq 0$. The following assertions hold.
\begin{enumerate}
\item[(a)] If $f$ is monotone on $[x+a,\infty)$, then
$$
0 ~\leq ~ \pm\Big(\rho_x^1[f](a)+g(x+a)-\sum_{k=0}^{\lfloor a\rfloor}g(x+k)\Big) ~\leq ~ \pm {\,}g(x+\lfloor a\rfloor +1),
$$
where $\pm$ stands for $1$ or $-1$ according to whether $f$ lies in $\cK^0_+$ or $\cK^0_-$.
\item[(b)] If $f$ is $p$-convex or $p$-concave on $[x+a,\infty)$ for some $p\in\N^*$, then
\begin{eqnarray*}
0 &\leq & \pm\,\varepsilon_{p+1}(\{a\})\,\rho^{p+1}_{x+\lfloor a\rfloor +1}[f](\{a\})\\
&\leq & \pm\,\varepsilon_{p+1}(\{a\})\,\frac{\{a\}}{p}\,\rho^p_{x+\lfloor a\rfloor +1}[g](\{a\} -1),
\end{eqnarray*}
where $\varepsilon_{p+1}(\{a\})=0$, if $a\in\N$, and $\varepsilon_{p+1}(\{a\})=(-1)^p$, otherwise, and $\pm$ stands for $1$ or $-1$ according to whether $f$ lies in $\cK^p_+$ or $\cK^p_-$. Moreover, we have
\begin{eqnarray*}
\lefteqn{\rho^{p+1}_{x+\lfloor a\rfloor +1}[f](\{a\}) ~=~ \rho^{p+1}_x[f](a)+g(x+a)}\\
&& \null + \sum_{j=1}^p\left(\tchoose{a}{j}-\tchoose{\{a\}}{j}\right)\Delta^{j-1}g(x)
-\sum_{j=0}^p\tchoose{\{a\}}{j}\,\sum_{k=0}^{\lfloor a\rfloor}\Delta^jg(x+k).
\end{eqnarray*}
\end{enumerate}
\end{proposition}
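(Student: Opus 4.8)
The plan is to establish both parts by reducing the generalized Webster inequality to the key Lemma~\ref{lemma:VarEpsIneq}, which is precisely the tool that governs the sign and magnitude of the remainders $\rho^{p+1}_a[f]$. The overall strategy is to telescope the difference $f(x+a)-f(x)$ across the integer steps $x, x+1, \ldots, x+\lfloor a\rfloor$, isolate the fractional increment at the top, and then apply the convexity-based bounds to that single fractional piece.

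For assertion (a), I would start from the telescoping identity \eqref{eq:fxnEfxSum}, which gives $f(x+\lfloor a\rfloor+1)-f(x) = \sum_{k=0}^{\lfloor a\rfloor} g(x+k)$. Subtracting this from $f(x+a)-f(x)=\rho^1_x[f](a)+\ldots$ and using the definition of $\rho^1_x[f]$ in \eqref{eq:deflambdapt}, the quantity $\rho_x^1[f](a)+g(x+a)-\sum_{k=0}^{\lfloor a\rfloor}g(x+k)$ collapses to the single difference $f(x+\lfloor a\rfloor+1)-f(x+a)$, which is exactly $\pm(f(x+\lfloor a\rfloor+1)-f(x+a))$. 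Since $f$ is monotone (that is, $0$-convex or $0$-concave) on $[x+a,\infty)$, and $x+a \le x+\lfloor a\rfloor+1$, this difference has the stated sign and is bounded in absolute value by $g(x+\lfloor a\rfloor+1) = f(x+\lfloor a\rfloor+2)-f(x+\lfloor a\rfloor+1)$ via the monotonicity of $f$ and the definition of $g=\Delta f$; this is essentially the $p=0$ instance of the squeeze furnished by Lemma~\ref{lemma:VarEpsIneq}.

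For assertion (b), the core is the last displayed identity, which I would prove first as a pure algebraic rearrangement valid for any $f$ with $\Delta f = g$, with no convexity assumed. The idea is to expand $\rho^{p+1}_{x+\lfloor a\rfloor+1}[f](\{a\})$ via \eqref{eq:deflambdapt}, writing $f(x+a)$ as the top term (since $x+\lfloor a\rfloor+1+\{a\}-1 = x+a$), then re-express all the $\Delta^{j}g(x+\lfloor a\rfloor+1)$ terms back to base point $x$ using the summation identity $\Delta^{j-1}g(x+\lfloor a\rfloor+1)-\Delta^{j-1}g(x) = \sum_{k=0}^{\lfloor a\rfloor}\Delta^j g(x+k)$ (an instance of telescoping for the higher differences), together with the binomial shift $\binom{a}{j}-\binom{\{a\}}{j}$ that arises from comparing the Newton expansions about $x$ and about $x+\lfloor a\rfloor+1$. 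Once this identity is in place, the inequality itself follows by applying Lemma~\ref{lemma:VarEpsIneq} directly to $f$ at base point $a_0 = x+\lfloor a\rfloor+1$ with the fractional argument $\{a\}\in[0,1)$: the lemma gives $0 \le \pm\varepsilon_{p+1}(\{a\})\rho^{p+1}_{a_0}[f](\{a\}) \le \pm\varepsilon_{p+1}(\{a\})|\binom{\{a\}-1}{p}|(\Delta^p f(a_0+\{a\})-\Delta^p f(a_0))$, and I would recognize the middle bound as the claimed expression by rewriting $\binom{\{a\}-1}{p}$ and the $\Delta^p f$ difference in terms of $\rho^p_{a_0}[g](\{a\}-1)$ and the factor $\{a\}/p$.

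The main obstacle I anticipate is the bookkeeping in the algebraic identity of part (b): getting the binomial coefficients $\binom{a}{j}$, $\binom{\{a\}}{j}$ and the two summation ranges to match requires care, because the Newton expansion is being recentered from $x$ to $x+\lfloor a\rfloor+1$ and the argument is simultaneously shifted from $a$ to $\{a\}$. In particular, verifying that the coefficient of each $\Delta^{j-1}g(x)$ comes out as $\binom{a}{j}-\binom{\{a\}}{j}$ rather than some other combination will be the delicate point; I would handle this by treating it as an identity between polynomials in the formal variables $\binom{a}{j}$ and checking it degree by degree, or equivalently by noting that both sides satisfy the same difference equation in $\lfloor a\rfloor$ and agree when $a\in\N$. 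The final translation of the Lemma~\ref{lemma:VarEpsIneq} bound into the factor $\frac{\{a\}}{p}\,\rho^p_{a_0}[g](\{a\}-1)$ is then a routine identification using $\binom{\{a\}-1}{p} = \frac{\{a\}-1-p}{\{a\}}\cdots$ and the relation $\Delta^p f = \Delta^{p-1}g$.
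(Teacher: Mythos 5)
Your plan for assertion (a) and for the algebraic identity in assertion (b) follows the paper's own route: telescoping via \eqref{eq:fxnEfxSum} plus monotonicity for (a), and expansion of $\rho^{p+1}_{x+\lfloor a\rfloor +1}[f](\{a\})$ through \eqref{eq:deflambdapt} followed by the telescoping relation $\Delta^jf(x+\lfloor a\rfloor +1)-\Delta^jf(x)=\sum_{k=0}^{\lfloor a\rfloor}\Delta^jg(x+k)$ for the identity. One slip in (a): the quantity $\rho_x^1[f](a)+g(x+a)-\sum_{k=0}^{\lfloor a\rfloor}g(x+k)$ collapses to $f(x+a+1)-f(x+\lfloor a\rfloor +1)$, not to $f(x+\lfloor a\rfloor +1)-f(x+a)$. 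With your identification, monotonicity only yields the bound $\pm{\,}g(x+a)$, not $\pm{\,}g(x+\lfloor a\rfloor +1)$; with the correct one, the sandwich $x+\lfloor a\rfloor +1\leq x+a+1\leq x+\lfloor a\rfloor +2$ gives exactly the stated inequalities. This is a fixable slip.

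The genuine gap is the second inequality of assertion (b). You propose to get it from Lemma~\ref{lemma:VarEpsIneq} applied at $a_0=x+\lfloor a\rfloor +1$ with argument $\{a\}$, and then to ``recognize'' the lemma's middle bound $\left|\tchoose{\{a\}-1}{p}\right|\left(\Delta^pf(a_0+\{a\})-\Delta^pf(a_0)\right)$ as $\frac{\{a\}}{p}\,\rho^p_{a_0}[g](\{a\}-1)$ up to sign. No such recognition is possible: the two expressions are genuinely different. Already for $p=1$, the lemma's bound is $(1-\{a\})\left(g(x+a+1)-g(x+\lfloor a\rfloor +1)\right)$ whereas the proposition's bound is $\{a\}\left(g(x+\lfloor a\rfloor +1)-g(x+a)\right)$; they involve $g$ at different points with different weights. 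This is no accident, since the very purpose of this appendix is to produce bounds \emph{finer} than those furnished by Lemma~\ref{lemma:VarEpsIneq} (i.e., than Theorem~\ref{thm:6GenStFo0BaIneq}), so the inequality cannot be a reformulation of that lemma, and a tighter bound cannot be deduced from a looser one. The paper instead runs a dedicated divided-difference computation on the $(p+2)$-point configuration $a_0,a_0+1,\ldots,a_0+p-1,x+a,x+a+1$: combining the recurrence \eqref{eq:DivDiffRec7} with \eqref{eq:LDDiv}, \eqref{eq:DeltaXLam}, and \eqref{eq:deflambdapt} gives the exact identity
$$
\{a\}^{\underline{p+1}}{\,}f[a_0,a_0+1,\ldots,a_0+p-1,x+a,x+a+1] ~=~ \{a\}\,\rho^p_{a_0}[g](\{a\}-1)-p\,\rho^{p+1}_{a_0}[f](\{a\}),
$$
and the $p$-convexity (or $p$-concavity) of $f$ on $[x+a,\infty)$ then fixes the sign of the left-hand side, which is precisely the claimed inequality. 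Only the first inequality of (b) follows directly from Lemma~\ref{lemma:VarEpsIneq}, as you state; your proof needs this additional argument (or an equivalent one) to be complete.
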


\begin{proof}
Let us first prove assertion (a). Using monotonicity of $f$, we get
$$
\pm {\,} f(x+\lfloor a\rfloor +1) ~\leq ~ \pm {\,} f(x+a+1) ~\leq ~ \pm {\,} f(x+\lfloor a\rfloor +2),
$$
or equivalently, using \eqref{eq:fxnEfxSum},
\begin{eqnarray*}
\pm\Big(f(x)+\sum_{k=0}^{\lfloor a\rfloor}g(x+k)\Big) &\leq & \pm {\,}(f(x+a)+g(x+a))\\
&\leq & \pm\Big(f(x)+\sum_{k=0}^{\lfloor a\rfloor +1}g(x+k)\Big).
\end{eqnarray*}
This proves assertion (a). Let us now prove assertion (b). The first inequality immediately follows from Lemma~\ref{lemma:VarEpsIneq}. To see that the second inequality holds, we first observe that
\begin{eqnarray*}
\lefteqn{\{a\}^{\underline{p+1}}{\,}f[x+\lfloor a\rfloor +1,\ldots,x+\lfloor a\rfloor +p,x+a,x+a+1]}\\
&=& (\{a\}-p)\,\{a\}^{\underline{p}}{\,}f[x+\lfloor a\rfloor +1,\ldots,x+\lfloor a\rfloor +p,x+a+1]\\
&& \null - \{a\}{\,}(\{a\}-1)^{\underline{p}}{\,}f[x+\lfloor a\rfloor +1,\ldots,x+\lfloor a\rfloor +p,x+a]\qquad\text{(by \eqref{eq:DivDiffRec7})}\\
&=& (\{a\}-p)\,\rho^p_{x+\lfloor a\rfloor +1}[f](\{a\})-\{a\}\,\rho^p_{x+\lfloor a\rfloor +1}[f](\{a\} -1)\qquad\text{(by \eqref{eq:LDDiv})}\\
&=& \{a\}\,\rho^{p-1}_{x+\lfloor a\rfloor +1}[g](\{a\} -1)-p\,\rho^p_{x+\lfloor a\rfloor +1}[f](\{a\})\qquad\text{(by \eqref{eq:DeltaXLam})}\\
&=& \{a\}\,\rho^p_{x+\lfloor a\rfloor +1}[g](\{a\} -1)-p\tchoose{\{a\}}{p}\,\Delta^pf(x+\lfloor a\rfloor +1)\\
&& \null -p\,\rho^p_{x+\lfloor a\rfloor +1}[f](\{a\})\qquad\text{(by \eqref{eq:deflambdapt})}\\
&=& \{a\}\,\rho^p_{x+\lfloor a\rfloor +1}[g](\{a\} -1)-p\,\rho^{p+1}_{x+\lfloor a\rfloor +1}[f](\{a\})\qquad\text{(by \eqref{eq:deflambdapt})}
\end{eqnarray*}
Now, since $f$ is $p$-convex or $p$-concave on $[x+a,\infty)$, we have
$$
0 ~\leq ~ \pm\,\varepsilon_{p+1}(\{a\}){\,}\{a\}^{\underline{p+1}}{\,}f[x+\lfloor a\rfloor +1,\ldots,x+\lfloor a\rfloor +p,x+a,x+a+1],
$$
and hence
$$
0 ~\leq ~ \pm\,\varepsilon_{p+1}(\{a\}){\,}\Big(\frac{\{a\}}{p}\,\rho^p_{x+\lfloor a\rfloor +1}[g](\{a\} -1)-\rho^{p+1}_{x+\lfloor a\rfloor +1}[f](\{a\})\Big).
$$
This proves the second inequality. Finally, using \eqref{eq:deflambdapt} and then \eqref{eq:fxnEfxSum} we obtain
\begin{eqnarray*}
\lefteqn{\rho^{p+1}_{x+\lfloor a\rfloor +1}[f](\{a\}) - \rho^{p+1}_x[f](a)}\\
&=& f(x+a+1)-\sum_{j=0}^p\tchoose{\{a\}}{j}\,\Delta^jf(x+\lfloor a\rfloor +1)-f(x+a)+\sum_{j=0}^p\tchoose{a}{j}\,\Delta^jf(x)\\
&=& g(x+a) + \sum_{j=1}^p\left(\tchoose{a}{j}-\tchoose{\{a\}}{j}\right)\Delta^jf(x)
-\sum_{j=0}^p\tchoose{\{a\}}{j}\,\sum_{k=0}^{\lfloor a\rfloor}\Delta^jg(x+k).
\end{eqnarray*}
This completes the proof.
\end{proof}

The generalized Webster inequality applies to multiple $\log\Gamma$-type functions simply by taking $f=\Sigma g$ in Proposition~\ref{prop:E4Web7Inq6}, provided that $g$ lies in $\cD^p\cap\cK^p$ for some $p\in\N$. This inequality then provides bounds for the quantity $\rho_x^{p+1}[\Sigma g](a)$.

We now show how narrow bounds for $J^{p+1}[\Sigma g](x)$ can be derived by ``integrating'' the generalized Webster inequality. We also show that these new bounds are narrower than the generalized Stirling's formula-based inequalities given in Theorem~\ref{thm:6GenStFo0BaIneq} and Corollary~\ref{cor:6GenStFo0BaIneqCo}.

Let us begin with the special case when $p=0$. Thus, let $g$ lie in $\cC^0\cap\cD^0\cap\cK^0$ and let $x>0$ be so that $g$ is monotone on $[x,\infty)$. Corollary~\ref{cor:6GenStFo0BaIneqCo} provides the following bounds for $J^1[\Sigma g](x)$
$$
-|g(x)| ~\leq ~ J^1[\Sigma g](x) ~\leq ~ |g(x)|.
$$
The following proposition provides a finer approximation of $J^1[\Sigma g](x)$, where the absolute error is bounded at $x$ by $|g(x+1)|$.

\begin{proposition}\label{prop:E4Web7Inq6Int0}
Let $g$ lie in $\cC^0\cap\cD^0\cap\cK^0$ and let $x>0$ be so that $g$ is monotone on $[x,\infty)$. Then we have
\begin{eqnarray*}
0 &\leq & \pm\bigg(g(x)-\int_0^1g(x+t){\,}dt\bigg) ~\leq ~ \pm {\,}(-1){\,} J^1[\Sigma g](x)\\
&\leq & \pm\bigg(g(x)+g(x+1)-\int_0^1g(x+t){\,}dt\bigg) ~\leq ~ \pm g(x),
\end{eqnarray*}
where $\pm$ stands for $1$ or $-1$ according to whether $\Sigma g$ lies in $\cK^0_+$ or $\cK^0_-$.
\end{proposition}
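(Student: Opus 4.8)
The plan is to specialize the generalized Webster inequality (Proposition~\ref{prop:E4Web7Inq6}) to $p=0$ with $f=\Sigma g$ and then integrate it over the unit interval. First I would record the two $p=0$ specializations I need. By the definition \eqref{eq:Binet64378} of the generalized Binet function together with $G_0=1$, we have $J^1[\Sigma g](x)=\Sigma g(x)-\int_x^{x+1}\Sigma g(t)\,dt$, and by \eqref{eq:deflambdapt} we have $\rho^1_x[\Sigma g](a)=\Sigma g(x+a)-\Sigma g(x)$. I also need to pin down the sign $\pm$: since $g$ lies in $\cD^0\cap\cK^0$ and is monotone on $[x,\infty)$, the existence Theorem~\ref{thm:exist} guarantees that $\Sigma g$ is monotone on $[x,\infty)$ in the opposite sense, so that $\Sigma g\in\cK^0_-$ precisely when $g\in\cK^0_+$ and conversely. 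This is exactly the orientation governing the symbol $\pm$ in the statement, and it also ensures that $\Sigma g$ is monotone on every subinterval $[x+a,\infty)$ with $a\geq 0$, as required to invoke Proposition~\ref{prop:E4Web7Inq6}(a).

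Next I would apply Proposition~\ref{prop:E4Web7Inq6}(a) with $f=\Sigma g$ and $0\leq a<1$, for which $\lfloor a\rfloor=0$, so that $\sum_{k=0}^{\lfloor a\rfloor}g(x+k)$ reduces to $g(x)$ and the bound $g(x+\lfloor a\rfloor+1)$ becomes $g(x+1)$. Combined with the formula for $\rho^1_x[\Sigma g](a)$ above, this gives
\[
0 ~\leq ~ \pm\big(\Sigma g(x+a)-\Sigma g(x)+g(x+a)-g(x)\big) ~\leq ~ \pm{\,}g(x+1)
\]
for every $a\in(0,1)$. Since $g\in\cC^0$ and $\Sigma g\in\cC^0$ by Proposition~\ref{prop:intMLGt}(a), all terms are continuous in $a$, while the flanking bounds $0$ and $\pm g(x+1)$ are constant in $a$; integrating over $a\in(0,1)$ and using $\int_0^1\Sigma g(x+a)\,da=\int_x^{x+1}\Sigma g(t)\,dt=\Sigma g(x)-J^1[\Sigma g](x)$ together with $\int_0^1 g(x+a)\,da=\int_0^1 g(x+t)\,dt$, I obtain
\[
0 ~\leq ~ \pm\Big(-J^1[\Sigma g](x)+\int_0^1 g(x+t)\,dt-g(x)\Big) ~\leq ~ \pm{\,}g(x+1).
\]

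Finally I would rearrange. The left half yields $\pm\big(g(x)-\int_0^1 g(x+t)\,dt\big)\leq\pm(-1){\,}J^1[\Sigma g](x)$, which is the second asserted inequality, while the right half yields $\pm(-1){\,}J^1[\Sigma g](x)\leq\pm\big(g(x)+g(x+1)-\int_0^1 g(x+t)\,dt\big)$, which is the third. The two outermost inequalities then follow directly from the monotonicity of $g$ on $[x,\infty)$: when $g\in\cK^0_+$ (so $\pm=-1$) we have $g(x)\leq g(x+t)\leq g(x+1)$ for $t\in(0,1)$, and when $g\in\cK^0_-$ (so $\pm=+1$) these reverse; in either case $\pm\big(g(x)-\int_0^1 g(x+t)\,dt\big)\geq 0$ and $\pm\big(g(x+1)-\int_0^1 g(x+t)\,dt\big)\leq 0$, the latter being equivalent to $\pm\big(g(x)+g(x+1)-\int_0^1 g(x+t)\,dt\big)\leq\pm g(x)$. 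The only delicate point is the sign bookkeeping: one must track $\pm$ consistently between $g$ and $\Sigma g$ through the existence theorem and check that the single integration of the generalized Webster inequality produces exactly the asserted combination of $J^1[\Sigma g](x)$, $g(x)$, $g(x+1)$, and the integral. I expect no genuine obstacle beyond this careful accounting.
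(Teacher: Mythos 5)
Your proposal is correct and follows essentially the same route as the paper: the outer inequalities come directly from the monotonicity of $g$ on $[x,\infty)$, and the two inner ones come from integrating assertion (a) of Proposition~\ref{prop:E4Web7Inq6} (with $f=\Sigma g$, so that $\lfloor a\rfloor=0$ for $a\in(0,1)$) over the unit interval. The only cosmetic difference is that the paper normalizes the sign once by negating $g$ so that $\Sigma g\in\cK^0_+$, whereas you track the symbol $\pm$ through both cases; the content is identical.
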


\begin{proof}
Negating $g$ is necessary, we can assume that it lies in $\cK^0_-$, which means that $\Sigma g$ lies in $\cK^0_+$. This immediately establishes the first and the last inequalities. The two inner inequalities can then be obtained by integrating the expressions in assertion (a) of Proposition~\ref{prop:E4Web7Inq6} on $a\in (0,1)$.
\end{proof}

\begin{example}
Let us apply Proposition~\ref{prop:E4Web7Inq6Int0} to $g(x)=\frac{1}{x}$. For any $x>0$, we have the following inequalities
$$
\ln x -\frac{1}{x} ~\leq ~ \ln(x+1)-\frac{1}{x}-\frac{1}{x+1} ~\leq ~ \psi(x) ~\leq ~ \ln(x+1)-\frac{1}{x} ~\leq ~ \ln x.
$$
The inner approximation has an absolute error that is bounded at any $x>0$ by the quantity $\frac{1}{x+1}$.
\end{example}

Let us now assume that $p\geq 1$. Thus, let $g$ lie in $\cC^0\cap\cD^p\cap\cK^p$ for some $p\in\N^*$ and let $x>0$ be so that $g$ is $p$-convex or $p$-concave on $[x,\infty)$. Then we have seen in Theorem~\ref{thm:6GenStFo0BaIneq} that the following inequalities hold
$$
0 ~\leq ~ \pm (-1)^pJ^{p+1}[\Sigma g](x) ~\leq ~ \pm (-1)^{p+1}{\,}B^p[g](x),
$$
where $\pm$ stands for $1$ or $-1$ according to whether $g$ lies in $\cK^p_+$ or in $\cK^p_-$, and
\begin{eqnarray*}
B^p[g](x) &=& \int_0^1\tchoose{t-1}{p}{\,}(\Delta^{p-1}g(x+t)-\Delta^{p-1}g(x)){\,}dt\\
&=& \int_0^1\tchoose{t-1}{p}\,\Delta^{p-1}g(x+t){\,}dt-(-1)^p\,\overline{G}_p\,\Delta^{p-1}g(x).
\end{eqnarray*}

In the following proposition, we give finer bounds for $J^{p+1}[\Sigma g](x)$. To this end, we introduce the quantity
$$
A^p[g](x) ~=~ J^{p+1}[g](x)+\frac{1}{p}\int_0^1t\,\rho^p_{x+1}[g](t-1){\,}dt.
$$
It is not difficult to see that this quantity can be rewritten as follows
$$
A^p[g](x) ~=~ J^{p+1}[g](x)+\frac{1}{p}\int_0^1t{\,}g(x+t){\,}dt-\frac{1}{p}\sum_{j=1}^pjG_j\,\Delta^{j-1}g(x+1).
$$
Indeed, using \eqref{eq:deflambdapt} we clearly have
$$
\int_0^1t\,\rho^p_{x+1}[g](t-1){\,}dt ~=~ \int_0^1t{\,}g(x+t){\,}dt-\sum_{j=0}^{p-1}\int_0^1t\tchoose{t-1}{j}{\,}dt ~ \Delta^jg(x+1)
$$
where
$$
\int_0^1t\tchoose{t-1}{j}{\,}dt ~=~ (j+1)\int_0^1\tchoose{t}{j+1}{\,}dt ~=~ (j+1){\,}G_{j+1}.
$$
We also observe that $A^1[g]=B^1[g]$.

\begin{proposition}\label{prop:E4Web7Inq6Intp}
Let $g$ lie in $\cC^0\cap\cD^p\cap\cK^p$ for some $p\in\N^*$ and let $x>0$ be so that $g$ is $p$-convex or $p$-concave on $[x,\infty)$. Then, we have
\begin{eqnarray*}
0 ~\leq ~ \pm (-1)^{p+1}{\,}J^{p+1}[g](x) &\leq & \pm (-1)^pJ^{p+1}[\Sigma g](x)\\
&\leq & \pm (-1)^{p+1}{\,}A^p[g](x) ~\leq ~ \pm (-1)^{p+1}{\,}B^p[g](x),
\end{eqnarray*}
where $\pm$ stands for $1$ or $-1$ according to whether $g$ lies in $\cK^p_+$ or in $\cK^p_-${\,}.
\end{proposition}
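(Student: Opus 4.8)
The plan is to apply the generalized Webster inequality (Proposition~\ref{prop:E4Web7Inq6}(b)) to $f=\Sigma g$, restrict the parameter $a$ to the unit interval $(0,1)$, and integrate. Write $s=+1$ if $g$ is $p$-convex on $[x,\infty)$ and $s=-1$ if it is $p$-concave there. Two preliminary observations drive everything. First, by the existence Theorem~\ref{thm:exist} the function $\Sigma g$ is $p$-concave (resp.\ $p$-convex) on $[x,\infty)$ whenever $g$ is $p$-convex (resp.\ $p$-concave), so the convexity sign attached to $\Sigma g$ in Proposition~\ref{prop:E4Web7Inq6}(b) is $-s$; this is the source of the alternation between $(-1)^p$ and $(-1)^{p+1}$ in the statement. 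Second, for $a\in(0,1)$ we have $\lfloor a\rfloor=0$, $\{a\}=a$ and $\varepsilon_{p+1}(a)=(-1)^p$, and the ``moreover'' identity of Proposition~\ref{prop:E4Web7Inq6}(b) collapses (via \eqref{eq:DeltaXLam00} with $\Delta\Sigma g=g$) to $\rho^{p+1}_{x+1}[\Sigma g](a)=\rho^{p+1}_x[\Sigma g](a)+\rho^{p+1}_x[g](a)$. Integrating this over $(0,1)$ and recalling $J^{q}[h](x)=-\int_0^1\rho^q_x[h](a)\,da$ from \eqref{eq:Binet643780} gives $\int_0^1\rho^{p+1}_{x+1}[\Sigma g](a)\,da=-J^{p+1}[\Sigma g](x)-J^{p+1}[g](x)$.

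With these reductions the first three inequalities fall out by integrating the two halves of the chain in Proposition~\ref{prop:E4Web7Inq6}(b) over $(0,1)$. The leftmost inequality $0\le s(-1)^{p+1}J^{p+1}[g](x)$ comes most directly from integrating $0\le s(-1)^p\rho^{p+1}_x[g](a)$, which is the first inequality of Lemma~\ref{lemma:VarEpsIneq} applied to $g$ itself. Integrating the left inequality $0\le -s(-1)^p\rho^{p+1}_{x+1}[\Sigma g](a)$ of Proposition~\ref{prop:E4Web7Inq6}(b) and substituting the displayed value of the integral yields $s(-1)^{p+1}J^{p+1}[g](x)\le s(-1)^pJ^{p+1}[\Sigma g](x)$. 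Integrating the right inequality, whose upper integrand is $-s(-1)^p\frac{a}{p}\rho^p_{x+1}[g](a-1)$, and recognizing $\frac1p\int_0^1 a\,\rho^p_{x+1}[g](a-1)\,da=A^p[g](x)-J^{p+1}[g](x)$ from the definition of $A^p[g]$, gives $s(-1)^pJ^{p+1}[\Sigma g](x)\le s(-1)^{p+1}A^p[g](x)$ after the common term $J^{p+1}[g](x)$ cancels. Each of these steps is pure bookkeeping once the sign $-s$ and the collapse of the ``moreover'' identity are in place.

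The remaining and genuinely nontrivial step is the last inequality $s(-1)^{p+1}A^p[g](x)\le s(-1)^{p+1}B^p[g](x)$, i.e.\ $s(-1)^{p+1}(B^p[g](x)-A^p[g](x))\ge0$; this is precisely the claim that the Webster-type bound is sharper than the generalized Stirling bound of Theorem~\ref{thm:6GenStFo0BaIneq}. For $p=1$ one has $A^1[g]=B^1[g]$, so the inequality is an equality and there is nothing to prove; the content lies in $p\ge2$. The difficulty is that $A^p[g](x)$ and $B^p[g](x)$ are built from divided differences of \emph{different orders}. Using \eqref{eq:LDDiv} together with the recurrence \eqref{eq:DivDiffRec7} to absorb the factor $a$, and evaluating $\int_0^1(a-1)^{\underline{p}}\,da=p!\,(-1)^p\overline{G}_p$ via \eqref{eq:GintF4}, one rewrites
$$
A^p[g](x)=\int_0^1\frac{a-p}{p}\,(a-1)^{\underline{p}}\,g[x+1,\ldots,x+p,x+a]\,da+(-1)^p\overline{G}_p\,\Delta^pg(x),
$$
an order-$p$ divided difference integrated against a kernel of constant sign $(-1)^{p+1}$ on $(0,1)$, whereas $B^p[g](x)=\int_0^1\binom{a-1}{p}\Delta^{p-1}g(x+a)\,da-(-1)^p\overline{G}_p\,\Delta^{p-1}g(x)$ involves the order-$(p-1)$ divided difference $\Delta^{p-1}g(x+a)$.

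The plan for this last step is to bring the two expressions to a common form by applying \eqref{eq:DivDiffRec7} once more to split $\Delta^{p-1}g(x+a)=(p-1)!\,g[x+a,\ldots,x+a+p-1]$ against the neighbouring order-$p$ differences, and then to show that the resulting single integrand carries the sign $s(-1)^p$ throughout $(0,1)$. The sign control is supplied by Lemma~\ref{lemma:pCInc5}: $p$-convexity/concavity of $g$ on $[x,\infty)$ makes the divided-difference map monotone in each of its arguments, which pins down the signs of every divided difference occurring and of its increments as the nodes are displaced by $a$. I expect this sign-chasing across the two node configurations $\{x+1,\ldots,x+p,x+a\}$ and $\{x+a,\ldots,x+a+p-1\}$, together with the bookkeeping of the two $\overline{G}_p$ constant terms, to be the main obstacle; the falling-factorial factorizations and the identities \eqref{eq:GintF4} for $\int_0^1\binom{a-1}{p}\,da$ are the routine ingredients that make the signs line up.
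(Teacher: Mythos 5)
Your handling of the first three inequalities is correct and coincides with the paper's own proof: both routes integrate the two sides of Proposition~\ref{prop:E4Web7Inq6}(b), applied to $f=\Sigma g$, over $a\in(0,1)$, using the sign reversal from Theorem~\ref{thm:exist} ($\Sigma g$ is $p$-concave where $g$ is $p$-convex), the collapse $\rho^{p+1}_{x+1}[\Sigma g](a)=\rho^{p+1}_x[\Sigma g](a)+\rho^{p+1}_x[g](a)$, and the identification $\frac1p\int_0^1 t\,\rho^p_{x+1}[g](t-1)\,dt=A^p[g](x)-J^{p+1}[g](x)$; your integral rewriting of $A^p[g](x)$ is also correct. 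The problem is the fourth inequality, which you explicitly leave as a plan, and the plan as stated does not go through.

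Concretely, subtracting your expression for $A^p[g](x)$ from $B^p[g](x)$ and folding both $\overline{G}_p$-terms into the integral via \eqref{eq:GintF4} (note that $\Delta^{p-1}g(x)+\Delta^pg(x)=\Delta^{p-1}g(x+1)$) gives
$$
B^p[g](x)-A^p[g](x) ~=~ \int_0^1 (a-1)^{\underline{p}}{\,}\Theta(a){\,}da,
$$
where
$$
\Theta(a) ~=~ \frac1p\big(g[x+a,\ldots,x+a+p-1]-g[x+1,\ldots,x+p]\big)+\frac{p-a}{p}{\,}g[x+1,\ldots,x+p,x+a].
$$
Assume, negating $g$ if necessary, that $g$ is $(p-1)$-convex on $[x,\infty)$ (Corollary~\ref{cor:dfmm7s}). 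Then Lemma~\ref{lemma:pCInc5} --- your proposed tool --- makes the first bracketed difference \emph{nonpositive} (each node $x+a+i$ lies below $x+1+i$), while $(p-1)$-convexity makes the last term \emph{nonnegative}: the integrand is a sum of two terms of opposite signs, so monotonicity of divided differences cannot by itself determine its sign. What is needed is an exact identity that turns this difference into a visibly signed quantity. That is precisely the Claim in the paper's proof: pointwise in $t\in(0,1)$,
\begin{multline*}
\tchoose{t-1}{p}\big(\Delta^{p-1}g(x+t)-\Delta^{p-1}g(x)\big)+\rho_x^{p+1}[g](t)-\frac tp{\,}\rho^p_{x+1}[g](t-1)\\
=~\frac1p{\,}t^{\underline{p+1}}\sum_{j=1}^{p-1}g[x+j,\ldots,x+p-1,x+t,\ldots,x+t+j],
\end{multline*}
an identity proved by telescoping \eqref{eq:DivDiffRec7} across the whole chain of mixed node sets $j=1,\ldots,p-1$ (followed by a further binomial computation), not by a single application. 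Integrating it over $(0,1)$ exhibits $B^p[g](x)-A^p[g](x)$ as a sum of $p-1$ integrals of $(p+1)$-point divided differences against the single-signed kernel $t^{\underline{p+1}}$, and only at that point does $(p-1)$-convexity finish the argument. Your observation that $A^1[g]=B^1[g]$ settles $p=1$, but for $p\geq 2$ this telescoping identity (or an equivalent one) is the missing step, and without it the proposal is incomplete.
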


\begin{proof}
Recall that if $g$ lies in $\cK^p_+$ (resp.\ $\cK^p_-$), then $\Sigma g$ lies in $\cK^p_-$ (resp.\ $\cK^p_+$). The first inequality is then clear. The second and third inequalities are obtained by integrating the expressions in assertion (b) of Proposition~\ref{prop:E4Web7Inq6} on $a\in (0,1)$. To establish the fourth inequality, we first prove the following claim.

\begin{claim}
For any $g\in\R_+\to\R$, any $p\in\N^*$, any $x>0$, and any $0<t<1$, we have
\begin{multline*}
\tchoose{t-1}{p}(\Delta^{p-1}g(x+t)-\Delta^{p-1}g(x)) + \rho_x^{p+1}[g](t)-\frac{t}{p}\,\rho^p_{x+1}[g](t-1)\\
=~ \frac{1}{p}{\,}t^{\underline{p+1}}\,\sum_{j=1}^{p-1}g[\underbrace{x+j,\ldots,x+p-1}_{\textstyle{\text{$p-j$ places}}},\underbrace{x+t,\ldots,x+t+j}_{\textstyle{\text{$j+1$ places}}}].
\end{multline*}
\end{claim}

\begin{proof}[Proof of the claim]
Using \eqref{eq:deflambdapt}, it is easy to see that the claimed identity holds when $p=1$, in which case the right-hand side is identically zero. Hence, we can assume that $p\geq 2$. Using \eqref{eq:DivDiffRec7}, we then obtain
\begin{multline*}
\frac{1}{p}{\,}t^{\underline{p+1}}\,\sum_{j=1}^{p-1}g[x+j,\ldots,x+p-1,x+t,\ldots,x+t+j]\\
=~ \frac{1}{p}{\,}\frac{t^{\underline{p+1}}}{t}\,\sum_{j=1}^{p-1}\big(g[x+j+1,\ldots,x+p-1,x+t,\ldots,x+t+j]\\
\null - g[x+j,\ldots,x+p-1,x+t,\ldots,x+t+j-1]\big),
\end{multline*}
where the latter sum telescopes to
$$
g[x+t,\ldots,x+t+p-1]-g[x+1,\ldots,x+p-1,x+t].
$$
Thus, using \eqref{eq:LDDiv} we see that the right-hand side of the claimed identity reduces to
$$
\tchoose{t-1}{p}\,\Delta^{p-1}g(x+t)-\frac{t-p}{p}\,\rho^{p-1}_{x+1}[g](t-1)
$$
Now, subtracting the left-hand side of the claimed identity from this latter expression, we get
$$
\frac{p-t}{p}\,\rho^{p-1}_{x+1}[g](t-1)+\frac{t}{p}\,\rho^p_{x+1}[g](t-1)
-\rho^{p+1}_x[g](t)+\tchoose{t-1}{p}\,\Delta^{p-1}g(x).
$$
Using identities \eqref{eq:deflambdapt}, \eqref{eq:fng2}, and the trivial identity $\frac{t}{p}\tchoose{t-1}{p-1}=\tchoose{t}{p}$, it follows that the latter expression becomes
$$
-\tchoose{t}{p}\,\Delta^{p-1}g(x+1)+\sum_{j=0}^p\tchoose{t}{j}\Delta^jg(x)
-\sum_{j=0}^{p-2}\tchoose{t-1}{j}\,\Delta^jg(x+1)+\tchoose{t-1}{p}\,\Delta^{p-1}g(x).
$$
Substituting $g(x)+\Delta g(x)$ for $g(x+1)$ in this latter expression, we obtain
\begin{multline*}
-\left(\tchoose{t}{p}{\,}\Delta^{p-1}g(x)+\tchoose{t}{p}{\,}\Delta^p g(x)\right) + \left(\tchoose{t}{p}\,\Delta^p g(x)+\tchoose{t}{p-1}\,\Delta^{p-1} g(x)+\sum_{j=0}^{p-2}\tchoose{t}{j}\Delta^jg(x)\right)\\
\null -\left(\sum_{j=0}^{p-2}\tchoose{t-1}{j}\Delta^jg(x)+\sum_{j=0}^{p-2}\tchoose{t-1}{j}\Delta^{j+1}g(x)\right)
+\tchoose{t-1}{p}\,\Delta^{p-1}g(x).
\end{multline*}
Collecting terms, this latter expression reduces to
\begin{multline*}
\tchoose{t}{p-1}{\,}\Delta^{p-1}g(x)-\tchoose{t-1}{p-1}{\,}\Delta^{p-1}g(x)+\sum_{j=1}^{p-2}\left(\tchoose{t}{j}-\tchoose{t-1}{j}\right)\Delta^jg(x)
-\sum_{j=1}^{p-1}\tchoose{t-1}{j-1}\Delta^jg(x)\\
=~ \tchoose{t-1}{p-2}{\,}\Delta^{p-1}g(x)+\sum_{j=1}^{p-2}\tchoose{t-1}{j-1}\,\Delta^jg(x)
-\sum_{j=1}^{p-1}\tchoose{t-1}{j-1}\Delta^jg(x) ~=~ 0.
\end{multline*}
This completes the proof of the claim
\end{proof}

Let us now establish the fourth inequality. Negating $g$ if necessary, we can assume that it lies in $\cK^p_-$. Using the claim, we have immediately that
$$
B^p[g](x)-A^p[g](x) ~=~ \sum_{j=1}^{p-1}\int_0^1\frac{t^{\underline{p+1}}}{p}{\,}g[x+j,\ldots,x+p-1,x+t,\ldots,x+t+j]{\,}dt,
$$
where the divided difference\index{divided difference} of $g$ has $p+1$ arguments and is therefore nonnegative since $g$ is $(p-1)$-convex by Corollary~\ref{cor:dfmm7s}. This completes the proof.
\end{proof}

\begin{example}[The gamma function]\label{ex:E5Gam66m}
Let us apply Proposition~\ref{prop:E4Web7Inq6Intp} to the function $g(x)=\ln x$ with $p=1$ (recall here that $A^1[g]=B^1[g]$). We obtain the following inequalities for $x>0$
$$
0 ~\leq ~ \frac{1}{2}{\,}(2x+1)\ln\left(1+\frac{1}{x}\right)-1 ~\leq ~ J(x) ~\leq ~ \frac{1}{2}{\,}(x+1)^2\ln\left(1+\frac{1}{x}\right)-\frac{x}{2}-\frac{3}{4}{\,}.
$$
This provides an approximation of Binet's function\index{Binet's function} $J(x)$ with an absolute error that is bounded at any $x>0$ by
$$
\frac{x^2}{2}\ln\left(1+\frac{1}{x}\right)-\frac{x}{2}+\frac{1}{4}{\,},
$$
that is, $\frac{1}{6x}-\frac{1}{8x^2}+O(x^{-3})$ as $x\to\infty$. In the multiplicative notation, we obtain
$$
1 ~\leq ~ e^{-1}\left(1+\frac{1}{x}\right)^{x+\frac{1}{2}} \leq ~ \frac{\Gamma(x)}{\sqrt{2\pi}{\,}e^{-x}{\,}x^{x-\frac{1}{2}}} ~\leq ~ e^{-\frac{x}{2}-\frac{3}{4}}\left(1+\frac{1}{x}\right)^{\frac{1}{2}(x+1)^2},
$$
thus retrieving \eqref{eq:6fAppF42}. In turn, these inequalities provide an approximation of the log-gamma function with the same absolute error.
\end{example}

\begin{example}[The Barnes $G$-function, see Section~\ref{sec:Barnes558}]\index{Barnes's $G$-function}
Let us apply Proposition~\ref{prop:E4Web7Inq6Intp} to the function $g(x)=\ln\Gamma(x)$ with $p=2$. After some calculus we obtain the following inequalities for $x>0$
\begin{eqnarray*}
0 &\leq & \ln\Gamma(x)+x-\left(x-\frac{1}{2}\right)\ln x-\frac{1}{12}\ln\left(1+\frac{1}{x}\right)-\frac{1}{2}\ln(2\pi)\\
& \leq & -\ln G(x)+\psi_{-2}(x)-\frac{1}{2}\ln\Gamma(x)+\frac{1}{12}\ln x+\frac{1}{12}-\frac{1}{4}\ln(2\pi)-2\ln A\\
& \leq & \frac{1}{2}{\,}\psi_{-2}(x)+\frac{3}{4}\ln\Gamma(x)-\frac{1}{12}{\,}(3x^2+6x-4)\ln x+\frac{3}{8}{\,}x^2+\frac{1}{2}{\,}x\\
&& \null -\frac{1}{8}(2x+3)\ln(2\pi)-\frac{1}{2}\ln A\\
& \leq & \frac{1}{12}{\,}(x+1)^2(2x+5)\ln\left(1+\frac{1}{x}\right)-\frac{1}{72}{\,}(12x^2+48x+49).
\end{eqnarray*}
Here, the absolute error is bounded by $\frac{1}{16x}-\frac{59}{1440x^2}+O(x^{-3})$ as $x\to\infty$.
\end{example}

\begin{remark}[Bounds for the generalized Euler constant]\label{rem:EzzBou0gam332}\index{Euler's constant!generalized}
If $g$ lies in $\cC^0\cap\cD^p\cap\cK^p$ for $p=1+\deg g$ and if $g$ is $p$-convex or $p$-concave on $[1,\infty)$, then \eqref{eq:RpmiIneq91} and \eqref{eq:RpmiIneq91b} provide bounds for the generalized Euler constant (see Definition~\ref{de:GEC587})
$$
\gamma[g] ~=~ -J^{p+1}[\Sigma g](1).
$$
Finer bounds can now be obtained as follows. Under the assumptions of Proposition~\ref{prop:E4Web7Inq6Intp}, we have
$$
\pm (-1)^p J^{p+1}[g](1) ~\leq ~ \pm (-1)^{p+1}\gamma[g] ~\leq ~ \pm (-1)^p A^p[g](1).
$$
For instance, when $g(x)=\ln\Gamma(x)$, we obtain
$$
1-\frac{7}{12}\ln 2-\frac{1}{2}\ln\pi ~\leq ~ \gamma[\ln\circ\Gamma] ~=~ \sigma[\ln\circ\Gamma] ~\leq ~ \frac{7}{8}-\frac{1}{2}\ln A-\frac{3}{8}\ln(2\pi).
$$
Thus, $\gamma[\ln\circ\Gamma]\approx 0.045$ lies in the interval $[0.023,0.062]$, with amplitude $<0.039$.
\end{remark}

\parag{Searching for finer approximations} We now end this appendix with an interesting observation about the approximations of $J^{p+1}[\Sigma g](x)$ (or equivalently $\Sigma g(x)$) given in Propositions~\ref{prop:E4Web7Inq6Int0} and \ref{prop:E4Web7Inq6Intp}.

For any $p\in\N$ and any $g\in\cC^0\cap\cD^p\cap\cK^p$, define the function $\varepsilon^p[g]\colon\R_+\to\R$ by the equation
$$
\varepsilon^p[g](x) ~=~
\begin{cases}
|g(x+1)|, & \text{if $p=0$},\\
|A^p[g](x)-J^{p+1}[g](x)|, & \text{if $p\geq 1$}.
\end{cases}
$$
Let us show that, if $g$ is $p$-convex or $p$-concave on $[x,\infty)$, then the function $\varepsilon^p[g]$ decreases to zero on $[x,\infty)$. This is clear if $p=0$, so we can assume that $p\geq 1$. We know from Theorem~\ref{thm:6GenStFo0BaIneq} that the function $|B^p[g]|$ vanishes at infinity, and hence so does the function $\varepsilon^p[g]$ by Proposition~\ref{prop:E4Web7Inq6Intp}. On the other hand, using \eqref{eq:LDDiv} we see that
\begin{eqnarray*}
\varepsilon^p[g](x) &=& \left|\int_0^1\frac{t}{p}\,\rho^p_{x+1}[g](t-1){\,}dt\right|\\
&=& \left|\int_0^1\frac{t^{\underline{p+1}}}{p}{\,}g[x+1,\ldots,x+p,x+t]{\,}dt\right|,
\end{eqnarray*}
and this function is monotone by Lemma~\ref{lemma:pCInc5}.

In terms of approximations of $\Sigma g(x)$ given in Propositions~\ref{prop:E4Web7Inq6Int0} and \ref{prop:E4Web7Inq6Intp}, this observation shows that, for any $m\in\N$, the approximation of $\Sigma g(x+m)$ is finer than that of $\Sigma g(x)$ and it is actually finer and finer as $m$ increases.

Thus, finer approximations of $\Sigma g(x)$ can be obtained using the following procedure.
\begin{quote}
\begin{enumerate}
\item[Step 1.] Replace $x$ with $x+m$ in Propositions~\ref{prop:E4Web7Inq6Int0} and \ref{prop:E4Web7Inq6Intp}.
\item[Step 2.] Use the substitution (cf.~\eqref{eq:56zzSec32S6})
$$
\Sigma g(x+m) ~=~ \Sigma g(x)+\sum_{k=0}^{m-1} g(x+k)
$$
in the expression of $J^{p+1}[\Sigma g](x+m)$.
\end{enumerate}
\end{quote}

Note that we already used this trick when we investigated the generalized Gautschi inequality (see Remark~\ref{rem:Gautschi44zz}).

\begin{example}[The gamma function]
Let $m\in\N$. Replacing $x$ with $x+m$ in the following approximation of the gamma function (see Example~\ref{ex:E5Gam66m})
$$
e^{-1}\left(1+\frac{1}{x}\right)^{x+\frac{1}{2}} \leq ~ \frac{\Gamma(x)}{\sqrt{2\pi}{\,}e^{-x}{\,}x^{x-\frac{1}{2}}} ~\leq ~ e^{-\frac{x}{2}-\frac{3}{4}}\left(1+\frac{1}{x}\right)^{\frac{1}{2}(x+1)^2}.
$$
and then using the substitution
$$
\Gamma(x+m) ~=~ (x+m-1)^{\underline{m}}\,\Gamma(x)
$$
we finally obtain
\begin{eqnarray*}
e^{-1}\left(1+\frac{1}{x+m}\right)^{x+m+\frac{1}{2}} &\leq & \frac{(x+m-1)^{\underline{m}}\,\Gamma(x)}{\sqrt{2\pi}{\,}e^{-x-m}{\,}(x+m)^{x+m-\frac{1}{2}}} \\
&\leq & e^{-\frac{x+m}{2}-\frac{3}{4}}\left(1+\frac{1}{x+m}\right)^{\frac{1}{2}(x+m+1)^2}.
\end{eqnarray*}
This double inequality provides an approximations of the log-gamma function with an absolute error that is bounded by $\frac{1}{6(x+m)}-\frac{1}{8(x+m)^2}+O(x^{-3})$ as $x\to\infty$.
\end{example}

\index{Webster's inequality!generalized|)}

\chapter{On the differentiability of $\Sigma g$}
\label{chapter:Diff7SigLog6}

\noindent\textsl{Summary: We establish Proposition~\ref{prop:71Surp4Not9Pres}, which states that, for every $p\in\N$, there exists a function $g$ lying in $\cC^{p+1}\cap\cD^p\cap\cK^p$ for which $\Sigma g$ does not lie in $\cC^{p+1}$.}

\medskip

To establish Proposition~\ref{prop:71Surp4Not9Pres}, we first show that it is enough to consider the special case when $p=0$. Suppose that there exists a function $g\colon\R_+\to\R$ lying in $\cC^1\cap\cD^0\cap\cK^0$ such that $\Sigma g$ does not lie in $\cC^1$. By Proposition~\ref{prop:LMpGpLMp1}, its antiderivative
$$
G(x) ~=~ \int_1^xg(t){\,}dt
$$
clearly lies in $\cC^2\cap\cD^1\cap\cK^1$. By Proposition~\ref{prop:an4tR8}, we also have
$$
D\Sigma G(x) ~=~ \Sigma g(x)-\sigma[g],\qquad x>0.
$$
Since we assumed that $\Sigma g$ does not lie in $\cC^1$, it follows that $\Sigma G$ cannot lie in $\cC^2$. Iterating this process, we obtain that the statement is true for any $p\in\N$.

We now construct a function $g$ lying in $\cC^1\cap\cD^0\cap\cK^0$ (and even in $\cC^{\infty}$) and such that the function $\Sigma g$ does not lie in $\cC^1$.

Consider first the function $\Psi\colon\R\to\R$ defined by
$$
\Psi(x) ~=~
\begin{cases}
\alpha\,\exp\left(1-\frac{1}{1-4x^2}\right), & \text{if $x\in\left(-\frac{1}{2},\frac{1}{2}\right)$},\\
0{\,}, & \text{otherwise},
\end{cases}
$$
where
$$
\frac{1}{\alpha} ~=~ \int_{-1/2}^{1/2}\exp\Big(1-\frac{1}{1-4x^2}\Big){\,}dx{\,}.
$$
Thus defined, $\Psi$ is a bump function that is of class $\cC^{\infty}$ with the compact support
$$
\mathrm{supp}(\Psi) ~=~ \textstyle{\left[-\frac{1}{2},\frac{1}{2}\right]}.
$$
For every $m\in\N^*$, define the function $\Psi_m\colon\R\to\R$ by the equation
$$
\Psi_m(x) ~=~ \Psi(2^m(x-m))\quad\text{for $x\in\R$}.
$$
We clearly have that
\begin{equation}\label{eq:FSupp0Psm4}
\mathrm{supp}(\Psi_m) ~=~ \textstyle{\left[m-\frac{1}{2^{m+1}},m+\frac{1}{2^{m+1}}\right]},
\end{equation}
$$
\int_{\R_+}\Psi_m(x){\,}dx ~=~ \frac{1}{2^m},\quad\text{and}\quad\Psi_m(m) ~=~ \alpha{\,}.
$$
Now, define the functions $\overline{\Psi}\colon\R_+\to\R$ and $g\colon\R_+\to\R$ by
$$
\overline{\Psi}(x) ~=~ \sum_{m=1}^{\infty}\Psi_m(x).
$$
and
$$
g(x) ~=~ -1+\int_0^x \overline{\Psi}(t){\,}dt.
$$
Then, we can easily see that the function $g$ lies in $\cC^{\infty}\cap\cD^0\cap\cK^0_+$, and hence the function $\Sigma g$ exists and lies in $\cC^0\cap\cD^1\cap\cK^0_-$.

We now have the following claim, which establishes Proposition~\ref{prop:71Surp4Not9Pres}.

\begin{claim}
For any $m\in\N^*$, the function $\Sigma g$ is not differentiable at $m$. More precisely, we have
$$
\lim_{h\to 0}\frac{\Sigma g(m+h)-\Sigma g(m)}{h} ~=~ {-\infty}.
$$
\end{claim}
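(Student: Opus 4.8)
The plan is to work with an explicit series representation of $\Sigma g$. First I would upgrade the membership $g\in\cC^{\infty}\cap\cD^0_{\N}\cap\cK^0_+$ to $g\in\cD^{-1}_{\N}\cap\cK^0_+$. Indeed, since $\int_0^{\infty}\overline{\Psi}=\sum_{m\geq 1}2^{-m}=1$ we have $g(x)\to 0$, and writing $g(n)=-\int_n^{\infty}\overline{\Psi}$ and using that the supports \eqref{eq:FSupp0Psm4} are pairwise disjoint and each bump is even about its centre, a direct computation gives $g(n)=-3\cdot 2^{-(n+1)}$; hence $n\mapsto g(n)$ is summable. Theorem~\ref{thm:existzz0} and Proposition~\ref{prop:diffzz0} (see also \eqref{eq:diffzz092}) then supply
$$
\Sigma g(x) ~=~ \sum_{k=1}^{\infty}g(k)-\sum_{k=0}^{\infty}g(x+k),\qquad x>0,
$$
where both series converge.

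Next I would rewrite the difference quotient at an integer $m$ in terms of $g'=\overline{\Psi}$. Subtracting the representation at $m$ from that at $m+h$, splitting the convergent series, and applying the fundamental theorem of calculus to each finite increment $g(m+k+h)-g(m+k)=\int_{m+k}^{m+k+h}\overline{\Psi}$ (legitimate since $g\in\cC^{\infty}$ with $g'=\overline{\Psi}$), I obtain, for $0<|h|<1$,
$$
\frac{\Sigma g(m+h)-\Sigma g(m)}{h} ~=~ -\sum_{k=0}^{\infty}\frac{1}{h}\int_{m+k}^{m+k+h}\overline{\Psi}(t)\,dt.
$$
By continuity of $\overline{\Psi}$ each term tends to $-\overline{\Psi}(m+k)=-\alpha$ as $h\to 0$, so the task is to show that this series of averages diverges to $+\infty$, rather than merely equalling the formal sum of the limiting values.

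The heart of the argument is a quantitative lower bound on finitely many terms. Since $\Psi$ is continuous, even, and satisfies $\Psi(0)=\alpha$, I would fix $\delta\in(0,\tfrac12]$ with $\Psi(x)\geq\alpha/2$ for $|x|\leq\delta$; then $\Psi_{m+k}(t)\geq\alpha/2$ whenever $|t-(m+k)|\leq\delta\,2^{-(m+k)}$, and on such an interval $\overline{\Psi}=\Psi_{m+k}$ because the supports \eqref{eq:FSupp0Psm4} are pairwise disjoint. Consequently, for every index $k$ with $2^{m+k}\leq\delta/|h|$ the corresponding average is at least $\alpha/2$ (using $[m+k,m+k+h]$ when $h>0$ and $[m+k-|h|,m+k]$ when $h<0$, where evenness of $\Psi$ handles the left half). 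The number of such indices is $\lfloor\log_2(\delta/|h|)-m\rfloor+1$, which tends to $\infty$ as $h\to 0$, so the series is bounded below by $\tfrac{\alpha}{2}\bigl(\lfloor\log_2(\delta/|h|)-m\rfloor+1\bigr)$. This forces both one-sided limits of the difference quotient to be $-\infty$, establishing the claim and hence Proposition~\ref{prop:71Surp4Not9Pres}.

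The main obstacle is exactly the non-uniformity just exploited: term by term each average converges to $-\alpha$, so one cannot interchange the infinite sum with the limit $h\to 0$, and the convergence $\frac{1}{h}\int_{m+k}^{m+k+h}\overline{\Psi}\to\alpha$ degrades with $k$ since $\Psi_{m+k}$ has width $2^{-(m+k)}\to 0$. The resolution is the explicit counting estimate, which leverages the key structural feature of the construction: the bumps keep a \emph{fixed} height $\alpha$ while their widths shrink geometrically, so at scale $h$ the difference quotient still resolves every bump wider than $h$, and there are about $\log_2(1/h)$ of these, each contributing at least $\alpha/2$.
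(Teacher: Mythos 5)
Your proof is correct, and its skeleton matches the paper's: both express the difference quotient at an integer as minus an infinite series of local averages of $g'=\overline{\Psi}$ (equivalently, of divided differences $g[m+k,m+k+h]$), and both exploit the fact that the bumps keep the fixed height $\alpha$ while their widths shrink geometrically. You reach that representation slightly differently --- you first upgrade $g$ to $\cD^{-1}_{\N}\cap\cK^0_+$ via the (correct) computation $g(n)=-3\cdot 2^{-(n+1)}$ and then invoke Theorem~\ref{thm:existzz0}, whereas the paper reduces to $m=1$ using $\Delta\Sigma g=g$ and gets the series straight from the definition of $\Sigma$; both derivations are legitimate. The genuine difference is the blow-up argument. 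The paper shows that each map $h\mapsto g[k,k+h]$ decreases from the value $\alpha$ (concavity of $g$ on $[k,k+\frac{1}{2})$, via Lemma~\ref{lemma:pCInc5}), obtains the bound $\leq -m\alpha/2$ only along a sequence $h_m\to 0$, and then needs a second step --- monotonicity in $h$ of the whole series --- to convert this into an actual limit, with $h<0$ handled by a ``similarly''. Your counting estimate (every bump of width at least $|h|/\delta$ contributes an average $\geq\alpha/2$, and there are $\lfloor\log_2(\delta/|h|)-m\rfloor+1$ such bumps) holds for every sufficiently small $h$ of either sign, so it yields the two-sided limit $-\infty$ in one stroke, with an explicit rate of order $\frac{\alpha}{2}\log_2(1/|h|)$, and uses no convexity machinery at all. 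One step you should make explicit: discarding the uncounted terms requires that every average $\frac{1}{h}\int_{m+k}^{m+k+h}\overline{\Psi}(t)\,dt$ is nonnegative, which is true for either sign of $h$ because $\overline{\Psi}\geq 0$; this is the exact analogue of the paper's use of $g[k,k+h]\geq 0$ to drop the tail, and without it the lower bound on the series does not follow from the bound on finitely many terms.
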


\begin{proof}
Since $g$ lies in $\cC^{\infty}$ and satisfies the equation $\Sigma g(x+1)=\Sigma g(x)+g(x)$, it is enough to prove the claim for $m=1$. For any $h>0$, we have
\begin{eqnarray*}
\frac{1}{h}{\,}(\Sigma g(1+h)-\Sigma g(1)) &=& \frac{1}{h}{\,}\Sigma g(1+h) ~=~ -\frac{1}{h}\,\sum_{k=1}^{\infty}(g(k+h)-g(k))\\
&=& -\sum_{k=1}^{\infty}g[k,k+h].
\end{eqnarray*}
Now, for any $k\in\N^*$ the function $g$ is increasing and concave on $[k,k+\frac{1}{2})$ (because its derivative $g'|_{[k,k+\frac{1}{2})}=\Psi_k|_{[k,k+\frac{1}{2})}$ is nonnegative and decreasing). We then see that the function
$$
h ~\mapsto ~ g[k,k+h]
$$
is nonnegative and continuously decreases (by Lemma~\ref{lemma:pCInc5}) on $[0,\frac{1}{2})$ with maximum value $g[k,k]=g'(k)=\Psi_k(k)=\alpha$. It follows that, for any integers $1\leq k\leq m$, there exists $0<\delta_{k,m}<\frac{1}{2}$ such that
$$
\frac{\alpha}{2} ~\leq ~ g[k,k+h] ~\leq ~ \alpha\qquad\text{for all $h\in (0,\delta_{k,m})$}.
$$
Thus, for any $m\in\N^*$, there exists
$$
0 ~<~ h_m ~<~ \min_{k=1,\ldots,m}\delta_{k,m},
$$
such that
$$
\frac{\alpha}{2} ~\leq ~ g[k,k+h_m] ~\leq ~ \alpha\qquad k=1,\ldots,m.
$$
Thus, we have
$$
\frac{1}{h_m}{\,}\Sigma g(1+h_m) ~=~  -\sum_{k=1}^{\infty}g[k,k+h_m] ~\leq ~ -\sum_{k=1}^mg[k,k+h_m] ~\leq ~ -m\,\frac{\alpha}{2}{\,},
$$
which shows that the function $\Sigma g$ cannot be right-differentiable at $1$.

Now, since the function
$$
h ~\mapsto ~ \frac{1}{h}{\,}\Sigma g(1+h) ~=~ -\sum_{k=1}^{\infty}g[k,k+h]
$$
is increasing on $[0,\frac{1}{2})$, we can easily see that
$$
\lim_{h\to 0^+} \frac{1}{h}\,\Sigma g(1+h) ~=~ {-\infty}.
$$
Similarly, we obtain the same limit when $h\to 0^-$.
\end{proof}

Thus, we have shown that $\Sigma g$ is a continuous and decreasing function that is not differentiable at each positive integer. Let us now establish the interesting fact that $\Sigma g$ is of class $\cC^{\infty}$ on $\R_+\setminus\N$.

\begin{claim}
The function $\Sigma g$ is of class $\cC^{\infty}$ on $\R_+\setminus\N$.
\end{claim}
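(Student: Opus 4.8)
The plan is to show that the function $\Sigma g$ inherits the local smoothness of $g$ away from the positive integers by exploiting the explicit series representation
$$
\frac{1}{h}\,\Sigma g(x+h)-\frac{1}{h}\,\Sigma g(x) ~=~ -\sum_{k=0}^{\infty}g[x+k,x+h+k]
$$
together with the fact that, on any compact subinterval of $\R_+\setminus\N$, only finitely many of the bumps $\Psi_m$ can be ``active'' in the relevant sense. The key observation is that $g'=\overline{\Psi}$ is a sum of bump functions $\Psi_m$ with shrinking compact supports given by \eqref{eq:FSupp0Psm4}, namely $\mathrm{supp}(\Psi_m)=[m-2^{-(m+1)},m+2^{-(m+1)}]$. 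These supports accumulate only at the positive integers; consequently, away from $\N$ the function $g$ is locally polynomial of degree at most $1$ (in fact locally equal to a constant, since $g'$ vanishes identically off $\bigcup_m\mathrm{supp}(\Psi_m)$).

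First I would fix $x_0\in\R_+\setminus\N$ and choose a compact neighborhood $[x_0-\delta,x_0+\delta]\subset\R_+\setminus\N$ disjoint from all the supports $\mathrm{supp}(\Psi_m)$. On such a neighborhood $g$ is constant, hence trivially smooth. The difficulty is not the smoothness of $g$ itself near $x_0$, but the smoothness of the infinite sum $\Sigma g(x)=\lim_{n\to\infty}f^0_n[g](x)$, which pools together the behavior of $g$ at the translates $x+k$ for all $k\in\N$. Although $x$ avoids the integers, the translates $x+k$ will in general fall inside supports $\mathrm{supp}(\Psi_m)$. The plan is therefore to appeal to Theorem~\ref{thm:TBTDiff}: since $g$ lies in $\cC^{\infty}\cap\cD^0\cap\cK^{\infty}$, for every $r\in\N$ the sequence $n\mapsto D^rf^0_n[g]$ converges uniformly on any bounded subset of $\R_+$ to $D^r\Sigma g$, provided the relevant derivatives behave well. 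Here, however, Theorem~\ref{thm:TBTDiff} applies on all of $\R_+$ only when the convexity order matches the differentiability order, and $g\in\cK^0$ with $r\geq 1$ exceeds it; this is precisely the obstruction exploited in the previous claim. So the argument must be local.

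The main step is then to differentiate the Eulerian series term by term on the restricted domain. Using \eqref{eq:56zzSec32S6} one may write, for $x$ in the chosen neighborhood and any $r\in\N^*$,
$$
D^r\Sigma g(x) ~=~ -\sum_{k=0}^{\infty}g^{(r)}(x+k),
$$
the formal term-by-term differentiated series. The convergence and legitimacy of this differentiation follow because $g^{(r)}=\overline{\Psi}^{(r-1)}=\sum_{m}\Psi_m^{(r-1)}$, and each $\Psi_m^{(r-1)}$ is supported on $\mathrm{supp}(\Psi_m)$. For $x$ ranging over the fixed compact neighborhood of $x_0$, the translate $x+k$ meets $\mathrm{supp}(\Psi_m)$ only when $m$ and $k$ are linked by $m=k+\lfloor x_0\rfloor$ (up to the finitely many indices near the endpoints), so for each $x$ at most one term of the inner sum over $m$ is nonzero, and the tail estimate $\sup_{x}|g^{(r)}(x+k)|\leq\sup|\Psi_{k+\lfloor x_0\rfloor}^{(r-1)}|$ decays rapidly because the bumps $\Psi_m$ are compressed by the factor $2^m$. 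I would make this quantitative by bounding $\|\Psi_m^{(r-1)}\|_\infty$ by a constant times $2^{m(r-1)}\|\Psi^{(r-1)}\|_\infty$ and noting that this is nevertheless summable once weighted appropriately — here care is needed, since the derivatives of the compressed bumps grow like $2^{m(r-1)}$. The resolution is that on the fixed compact neighborhood away from $\N$, the translates $x+k$ fall in the \emph{interior} of each support away from the bump's own center region in a controlled way, and more directly, one simply invokes uniform convergence of $n\mapsto D^rf^0_n[g]$ on the compact set via the dominated tail, since for large $k$ the point $x+k$ lands deeper and the contributing bump has exponentially small support contribution.

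The hard part will be justifying the interchange of the infinite sum and the derivative operator $D^r$ uniformly on the compact neighborhood, i.e.\ establishing that $\sum_k g^{(r)}(x+k)$ converges uniformly there for every $r$. I expect to handle this by a Weierstrass $M$-test: I would show that on the neighborhood each summand $g^{(r)}(x+k)$ is eventually zero or, when nonzero, is bounded by a term of a convergent series independent of $x$. The cleanest route is to observe that, because the neighborhood is disjoint from $\N$ while $\mathrm{supp}(\Psi_m)$ shrinks to $\{m\}$, there is an index $K$ beyond which $x+k$ for $k\geq K$ either misses every support entirely (giving a zero term) or the single active bump $\Psi_{k+\lfloor x_0\rfloor}$ contributes a derivative bounded uniformly in $x$; the delicate exponential growth in $r$ is harmless since $r$ is fixed and only finitely many bumps near $x_0+k$ can be struck for each $k$. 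Once uniform convergence of all derivative series is in hand on an arbitrary compact subset of $\R_+\setminus\N$, the standard theorem on term-by-term differentiation of uniformly convergent series yields $\Sigma g\in\cC^{\infty}(\R_+\setminus\N)$, completing the proof.
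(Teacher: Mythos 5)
Your overall strategy is the same as the paper's: restrict to a compact interval at positive distance from $\N$, differentiate the defining series term by term, and justify the interchange by uniform convergence coming from the geometry of the shrinking supports \eqref{eq:FSupp0Psm4}. However, the decisive step is never actually carried out, and the fallback you leave yourself would make the argument fail. You justify uniform convergence of $\sum_k g^{(r)}(x+k)$ by a dichotomy: for $k\geq K$, either $x+k$ misses every support (zero term) \emph{or} the single active bump $\Psi_{k+\lfloor x_0\rfloor}$ ``contributes a derivative bounded uniformly in $x$.'' The second branch is fatal, not harmless: since $\Psi_m(x)=\Psi(2^m(x-m))$, one has $\|\Psi_m^{(r-1)}\|_\infty = 2^{m(r-1)}\|\Psi^{(r-1)}\|_\infty$, so if that branch occurred for infinitely many $k$ the terms of your series would admit no summable majorant (for $r\geq 2$ they could even blow up pointwise), and the Weierstrass $M$-test you invoke would collapse. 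Uniform-in-$x$ boundedness of each individual term is of no use here; what is needed --- and what is true --- is that the tail is \emph{identically zero}.

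That is precisely the paper's key observation, and it is easy to prove: if $[x_0-\delta,x_0+\delta]\subset (N,N+1)$ lies at distance $d>0$ from $\{N,N+1\}$, then for $x$ in this interval the translate $x+k$ lies in $(N+k,N+k+1)$ at distance at least $d$ from both endpoints, while the only supports meeting that open interval are those of $\Psi_{N+k}$ and $\Psi_{N+k+1}$, of radii $2^{-(N+k+1)}$ and $2^{-(N+k+2)}$. Hence as soon as $2^{-(N+k+1)}<d$, the point $x+k$ misses \emph{every} support, uniformly in $x$; the differentiated partial sums $D^rf_n^0[g]$ are then constant in $n$ for $n\geq k_0$ on the interval, and uniform convergence (hence $\cC^{\infty}$ smoothness by the standard theorem on uniform convergence and differentiation) is immediate. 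You should replace the dichotomy, and the vague ``dominated tail''/``exponentially small support contribution'' language (a small support does not make derivative values small --- quite the opposite), by this clean statement. A minor additional slip: your opening reduction assumes you can pick a neighborhood of $x_0$ disjoint from \emph{all} the supports; this is impossible when $x_0$ itself lies inside some $\mathrm{supp}(\Psi_m)$ (such non-integer points exist), but fortunately that assumption plays no role in the rest of the argument.
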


\begin{proof}
Since $g$ lies in $\cC^{\infty}$ and satisfies the equation $\Sigma g(x+1)=\Sigma g(x)+g(x)$, it is enough to show that $\Sigma g$ is of class $\cC^{\infty}$ on $(0,1)$, or equivalently, on every compact interval $[a,b]$, with $0<a<b<1$.

By the existence Theorem~\ref{thm:exist}, the sequence $n\mapsto f_n^0[g]$, with
$$
f_n^0[g](x) ~=~ \sum_{k=1}^{n-1}g(k)-\sum_{k=0}^{n-1}g(x+k),
$$
converges uniformly to $\Sigma g$ on $[a,b]$. Let us now show that the sequence $n\mapsto Df_n^0[g]$, with
$$
Df_n^0[g](x) ~=~ -\sum_{k=0}^{n-1}\overline{\Psi}(x+k),
$$
converges uniformly on $[a,b]$. In view of identity \eqref{eq:FSupp0Psm4}, it is then clear that there exists $k_0\in\N^*$ for which
$$
\mathrm{supp}(\Psi_k) ~\cap ~ [a+k,b+k]~\cap ~\mathrm{supp}(\Psi_{k+1}) ~=~ \varnothing\qquad\text{for every $k\geq k_0$}.
$$
Thus, for any integer $k\geq k_0$ and any $x\in [a,b]$, we have $\overline{\Psi}(x+k)=0$. Therefore, we have
$$
Df_n^0[g](x) ~=~ -\sum_{k=0}^{k_0-1}\overline{\Psi}(x+k),\qquad x\in [a,b],~n\geq k_0.
$$
It follows that the sequence $n\mapsto Df_n^0[g]|_{[a,b]}$ is eventually constant and hence uniformly convergent on $[a,b]$. Using the classical result on uniform convergence and differentiation, we obtain that $\Sigma g$ is of class $\cC^1$ on $[a,b]$. An immediate adaptation of this proof shows that $\Sigma g$ is of class $\cC^{\infty}$ on $[a,b]$.
\end{proof}

\backmatter

\clearpage
\addcontentsline{toc}{chapter}{Bibliography}
\chaptermark{Bibliography}

\clearpage
\addcontentsline{toc}{chapter}{Analogues of properties of the gamma function}
\chaptermark{Analogues of properties of the gamma function}
\chapter*{Analogues of properties of the gamma function}

\begin{tabbing}
\textsl{Analogue of Bohr-Mollerup's theorem.} Theorems~\ref{thm:int1} and \ref{thm:unic} \\
\textsl{Analogue of Burnside's formula.} Section~\ref{sec:6An4Buzzrns2} \\
\textsl{Analogue of Euler's infinite product (Eulerian form).} Section~\ref{sec:Eul4For631} \\
\textsl{Analogue of Euler's reflection formula.} Section~\ref{sec:ReflFor62} \\
\textsl{Analogue of Euler's series representation of $\gamma$.} Equation~\eqref{eq:EulerAnal5571} \\
\textsl{Analogue of Fontana-Mascheroni's series.} Section~\ref{sec:8AnzzFont2Mas} \\
\textsl{Analogue of Gauss' digamma theorem.} Section~\ref{sec:8GauDigTh7} \\
\textsl{Analogue of Gauss' limit.} Theorems~\ref{thm:int1} and \ref{thm:unic} \\
\textsl{Analogue of Gauss' multiplication formula.} Section~\ref{sec:GaussMultF51} \\
\textsl{Analogue of Gautschi's inequality.} Section~\ref{sec:8Gautschi334} \\
\textsl{Analogue of Legendre's duplication formula.} Section~\ref{sec:GaussMultF51} \\
\textsl{Analogue of Raabe's formula.} Section~\ref{sec:Raabe448} \\
\textsl{Analogue of Wallis's product formula.} Section~\ref{sec:Wallis582} \\
\textsl{Analogue of Weierstrass' infinite product (Weierstrassian form).} Section~\ref{sec:WF73} \\
\\
\textsl{Generalized Binet's function.} Section~\ref{sec:6Asym4Cons6Bine} \\
\textsl{Generalized Euler's constant.} Section~\ref{sec:GEc53} \\
\textsl{Generalized Liu's formula.} Section~\ref{sec:8AsyzzExp3Rel2} \\
\textsl{Generalized Stirling's constant.} Definition~\ref{de:GSC556} \\
\textsl{Generalized Stirling's formula.} Sections~\ref{sec:6Gen4St2Fo0} and \ref{sec:8AsyzzExp3Rel2} \\
\textsl{Generalized Webster's functional equation.} Section~\ref{sec:8GenWEb3Fuc7E} \\
\textsl{Generalized Webster's inequality.} Appendix~\ref{chapter:E-GenWeIn5} \\
\textsl{Generalized Wendel's inequality.} Section~\ref{sec:6Wen0delI6eq2} \\
\end{tabbing}

\clearpage
\addcontentsline{toc}{chapter}{Index}
\chaptermark{Index}
\printindex
\end{document}